\documentclass{amsart}

\usepackage{Preset}
\usepackage[makeroom]{cancel}
\usepackage{nccmath}
\usepackage{tikz-cd}
\usepackage{mathrsfs}
\usepackage[latin1]{inputenc}
\usepackage[T1]{fontenc}
\usepackage{tikz}
\usepackage[colorlinks=true,linkcolor=blue!50!black,anchorcolor=red,citecolor=blue,filecolor=black,menucolor=black,runcolor=black,urlcolor=black]{hyperref}

\usetikzlibrary{decorations.pathmorphing}
\usetikzlibrary{shapes,arrows,graphs}
\usetikzlibrary{positioning}
\setlist[enumerate,1]{label={\textnormal{(\arabic*)}}}

\DeclareFontFamily{U}{mathb}{\hyphenchar\font45}
\DeclareFontShape{U}{mathb}{m}{n}{ <5> <6> <7> <8> <9> <10> gen * mathb <10.95> mathb10 <12> <14.4> <17.28> <20.74> <24.88> mathb12 }{}
\DeclareSymbolFont{mathb}{U}{mathb}{m}{n}
\DeclareFontSubstitution{U}{mathb}{m}{n}
\DeclareMathSymbol{\selfmap}{3}{mathb}{"FD}

\title[Rigidity of the attractor of neutral renormalization]{Rigidity of the attractor of neutral renormalization}
\author{Dzmitry Dudko}  
\email{dzmitry.dudko@stonybrook.edu}  
\author{Willie Rush Lim}
\email{willie\_rush\_lim@brown.edu}
\author{Mikhail Lyubich}  
\email{mlyubich@stonybrook.edu}  
\date{}

\begin{document}

\begin{abstract}
    We prove combinatorial rigidity for the full renormalization attractor of neutral quadratic polynomials. 
    More precisely, any two bi-infinite renormalization towers with the same combinatorics are conformally conjugate on neighborhoods of their Mother Hedgehogs.
    The rigidity theorem includes arbitrary irrational combinatorics and respective parabolic enrichments. The proof relies on a comprehensive analysis of neutral cascades, i.e. transcendental dynamical systems arising from the rescaled limits of the first return maps of neutral quadratic polynomials. 
    We establish uniform butterfly bounds for neutral cascades and prove that any two combinatorially equivalent neutral cascades are affinely conjugate.
    We also prove rigidity for parabolic towers with equivalent backward combinatorics.
\end{abstract}

\maketitle

\setcounter{tocdepth}{1}
\tableofcontents

\begingroup
\renewcommand{\addcontentsline}[3]{}


\setcounter{equation}{0}
\renewcommand{\theequation}{\arabic{equation}}

\section*{Prologue}

Rigidity is a fundamental phenomenon in Geometry and Dynamics.
Roughly speaking, it suggests that a weak (topological- or combinatorial-like) equivalence between two objects of a certain class can be promoted to a stronger  (geometric- or analytic-like) equivalence. A classical example, inspiring
numerous follow-up activities, is the Mostow Rigidity Theorem from the 1960s: 
two compact hyperbolic manifolds of dimension at least $3$ with isomorphic fundamental groups are in fact the same up to hyperbolic isometry. 

In the early 1980s, Thurston introduced rigidity ideas into Holomorphic Dynamics by establishing the combinatorial rigidity for complex postcritically finite rational maps: combinatorial equivalence between such maps implies that they are in fact equivalent up to a M\"obius conjugacy (with a minor and well-understood exception for flexible Latt\`es examples). This was the last missing ingredient for the 
Monotonicity of Entropy Conjecture for the \emph{real} Logistic Family 
\begin{equation}
\label{eq:LogFamily}
    x\mapsto \lambda x(1-x) \colon [0,1]\selfmap ,\qquad \lambda\in [0,4].
\end{equation}
The Combinatorial Theory for this family
based on kneading sequences, was designed by Milnor and Thurston in the mid-1970s. Even though the Logistic Family has been initially studied within the framework of Real Dynamics, 
the Thurston Rigidity Theorem, as well as numerous further results, 
are based upon complex methods.

At about the same time as Thurston proved his Rigidity Theorem,   
Douady and Hubbard initiated  an extensive  exploration of the Complex Quadratic Family 
(the complexification of the Logistic Family), 
which led them to formulation of the celebrated {\em MLC Conjecture} 
(``the Mandelbrot set is Locally Connected''). 
It was realized soon that this conjecture is equivalent 
to the {\em Combinatorial Rigidity of all (non-hyperbolic) complex polynomials}.
And a little later (after Yoccoz's results circa 1990)  these conjectures were tightly linked to
even more  fundamental theory: {\em Dynamical Renormalization}. 

Renormalization ideas were introduced into Dynamics in the mid-1970s following the discovery of 
{\em Feigenbaum Universality}: 
the period-doubling cascade in the Logistic Family~\eqref{eq:LogFamily} appeared to 
exhibit a geometrically universal scenario for transition from regular to chaotic dynamics. 
Soon thereafter, many other universal scaling phenomena were discovered in Low-Dimensional Dynamics. Over time, it became apparent that Universality is only one of the many important 
manifestations of Renormalization Theory;
in particular, Rigidity phenomenon was recognized as a conceptual foundation for the theory. 

Here is a brief summary of the Renormalization framework. Instead of iterating a given dynamical system, one can ``renormalize space-time'' by considering first return maps 
to smaller scales and  then iterating 
the corresponding renormalization operators on an infinite-dimensional space of maps. Initial \emph{a priori} bounds (precompactness of the renormalization orbits) 
are needed to establish the existence of the renormalization attractor. 
This naturally leads to 
a Combinatorial Rigidity Problem for this attractor. 
It is a prerequisite for analyzing the geometric nature of convergence toward the attractor and ultimately
for establishing the Hyperbolicity of the renormalization.
It governs the multi-scale geometric structure
in the original space of non-hyperbolic systems
that has numerous deep applications.

 Feigenbaum-Coullet-Tresser Universality itself was extensively explored  and confirmed in the 1980--90s  
 in the framework of Douady-Hubbard's quadratic-like (ql) maps with bounded combinatorics
 in the work of Sullivan \cite{SulICM,Sul92}, McMullen \cite{McM96}, 
 and the third author of this paper \cite{L99}. 
 In particular, the Universality Conjecture was proven for real ql maps of bounded combinatorial type. 
 
 By the end of the 1990s, the scope of the initial bounded-type renormalization program was extended to include all maps in the Logistic Family,
 leading to the \emph{Regular vs. Stochastic} Dichotomy in~\cite{L02}: with respect to the Lebesgue measure, the dynamics of almost every map in~\eqref{eq:LogFamily} 
 is governed by either an attracting cycle (regular case) or an ergodic absolutely continuous invariant measure with positive characteristic exponent (stochastic case).
 In this development, various rigidity results played a crucial role; see \S\ref{sss:ql-maps} and~\S\ref{sss:puzzle} for more details.

The Neutral Family
\begin{equation}
\label{eq:Famneutral} z\mapsto e^{2\pi i \theta} z+z^2, \qquad \theta\in \R/\Z
\end{equation}
represents 
the main cardioid of the Mandelbrot set $\MM$. Active study of~\eqref{eq:Famneutral} began in the 1980s, although many aspects of the theory of Neutral Dynamics can be traced back to Siegel's 1942 work on neutral germs $z\mapsto e^{2\pi i \theta} z+O(z^2)$, 
which presented the first advance in the Small Divisors Problem. 
Siegel's work is now recognized as the simplest prototypical model for 
the  Kolmogorov--Arnold--Moser (KAM) Theory.

Along with many analogies,
there are fundamental differences between the Logistic 
and Neutral 
families. Together, however, it appears that these two families model most of the geometric features of the Mandelbrot set $\MM$.  
So, we believe that 
the Full Renormalization Theory for  $\MM$ is some kind of interpolation 
between the Logistic and Neutral theories. We refer to a recent survey~\cite{DudICM} for more details on the current status of the Renormalization Theory for $\MM$.

In fact,  the contemporary Neutral Renormalization Theory consists of two parts: 
Siegel (bounded-type) and Near-Parabolic (high-type). Both theories have a long history 
and by now are essentially complete.  
The Siegel case~\cite{McM98,DLS} is non-perturbative, 
but it belongs to the \emph{JLC regime} 
when the Julia set is locally connected.
The Near-Parabolic case deals with 
the non-JLC regime and relies on the perturbative theory of Inou-Shishikura \cite{IS}, which has led to many spectacular applications; see~\S\ref{sss:Neutral:types} for more details.

The Unified Renormalization Theory for the Neutral Family~\eqref{eq:Famneutral} was put forward by the first and the third authors in~\cite{DLy22}, where uniform pseudo-Siegel bounds were established for all maps in~\eqref{eq:Famneutral}. They imply (by~\cite{DLy26a}) the existence of the renormalization attractor for \eqref{eq:Famneutral} under the sector renormalization.

In this paper, we establish the {\bf Rigidity Theorem} for the 
{\bf full renormalization attractor} in the Neutral Family;
see Theorems~\ref{main-thm:rigidity} and~\ref{main-thm:cascades}. This is the first instance when such a theorem is proven in the non-perturbative  non-JLC regime. 
The Rigidity Theorem is established 
in the framework of Transcendental Dynamics; see~\S\ref{ss:outline} for more details. 
Some applications of the theorem are summarized in~\S\ref{ss:intro:appl}.

Let us finally remark that many global properties of the Neutral Family  
are closely linked to those of (\emph{uni}-)\emph{critical circle maps}. The Renormalization Theory for the latter was fully developed by de Faria, de Melo \cite{dF99,dFdM00}, and Yampolsky \cite{Y01,Y03a,Y03b} by the early 2000s. 
In the bounded-type case, the Douady-Ghys surgery \cite{D87,G84}, introduced in the 1980s, allows one to transfer \emph{a priori} bounds from circle maps to neutral maps. More generally, Neutral Complex Dynamics is widely regarded as a natural 
(albeit, much more  delicate)   counterpart 
to Circle Dynamics, 
with the latter providing valuable  
conceptual and technical insights into the former.

\vspace{0.3cm}

\endgroup

\section{Introduction}
\label{sec:intro}

The introduction is organized as follows. Theorem~\ref{main-thm:rigidity} describes the conformal rigidity of the attractor of sector renormalization and is stated in~\S\ref{ss:intro:main:thm} after recalling the necessary background. In \S\ref{ss:intro.TD}, we discuss the global transcendental dynamical systems called neutral cascades that are induced by bi-infinite towers in the attractor. Theorem~\ref{main-thm:cascades} states the affine rigidity of the attractor in the framework of these neutral cascades; this is the main viewpoint in the paper. Theorem~\ref{main-thm:backward-rigidity} states the rigidity for backward parabolic towers and serves as a preparation for uniform hyperbolicity; see~\S\ref{sss:unif_hyperb} for details. Further applications are summarized in the rest of~\S\ref{ss:intro:appl}. Lastly, \S\ref{ss:history} provides detailed historical background.

\subsection{Sector renormalization}\label{ss:SectRenorm}

A key development in neutral dynamics
has been the theory of \emph{sector renormalization}.
For a neutral holomorphic germ $g(z) = e^{2\pi i \theta}z + O(z^2)$, one can construct a fundamental sector near the fixed
point at 0 and conformally glue its sides using the dynamics; see Figure \ref{fig:sector-renormalization-local} for an
illustration.
The first return map then produces a new neutral holomorphic germ $\Rsec g$. 
The procedure can be iterated to a sequence of renormalized maps $(\Rsec)^n g$.

\begin{figure}
    \centering
    \begin{tikzpicture}[scale=1.3]
    \draw[white] (-4.1,0)--(-4.08,0);
    \draw[red!50!black] (-1,0) -- (-3,0) .. controls (-2.5,0.4) and (-1.75,0.5) .. (-1.25,1);
    \draw[green!50!black] (-1.25,1) .. controls (-1,0.6) .. (-1,0);
    \filldraw[color=black] (-3,0) circle (0.03);
    \node at (-3,-0.2) {\small $0$};
    \node at (-1.4,-0.2) {\small $\gamma$};
    \node at (-1.7,1) {\small $g(\gamma)$};
    \draw[-latex] (-2.1,0) .. controls (-2.05,-0.15) and (-2,-0.05) .. (-2,0.02) .. controls (-2,0.2) .. (-2.2,0.44);
    \node at (-1.85,0.25) {\small $g$};

    \draw[-latex] (-0.5,0.25) -- (0.9,0.25);
    \node at (0.2,0.5) {\small glue};
        
    \draw[green!50!black] (3,0.2) circle [radius = 1.5cm];
    \draw[red!50!black] (3,0.2) .. controls (2.5,0) and (2.1,-0.7) .. (1.82,-0.75);
    \filldraw[color=black] (3,0.2) circle (0.03);
    \node at (3,0) {\small $0$};

    \draw[-latex] (4.1,0.5) .. controls (4.7,0.5) and (4.6,1.3) .. (3.9,1);
    \node at (4.85,0.85) {\small $\Rsec g$};
\end{tikzpicture}
    \caption{A schematic picture of sector renormalization}
    \label{fig:sector-renormalization-local}
\end{figure}

The concept 
of sector renormalization appeared in the work of Douady and Ghys \cite{D87}. 
In his pioneering work \cite{Yoc95}, Yoccoz carefully studied iterations of sector renormalizations and proved the optimality of the Brjuno condition for analytic linearization.

In this paper, we will consider neutral quadratic polynomials
\[
    f_\theta(z) := e^{2\pi i \theta} z + z^2.
\]
They 
form the simplest family of non-linear maps admitting a neutral fixed point.
Recently, the first and third authors \cite{DLy22} proved uniform \emph{a priori bounds} for all neutral quadratic polynomials in the following sense.
For every irrational $\theta$, there exists a closed uniform quasidisk $\hat{Z}_\theta$, called a \emph{pseudo-Siegel disk}, on which $f_\theta$ is injective, contains the fixed point $0$, the critical point and the whole postcritical set.
In the bounded-type regime, $\hat{Z}_\theta$ is constructed by filling in the parabolic fjords of the Siegel disk $Z_\theta$.
An immediate application is the existence of the \emph{Mother Hedgehog} $H_\theta$ for $f_\theta$ for every irrational $\theta$, that is, a completely invariant full continuum containing both the neutral fixed point and the critical point.

In the subsequent work \cite{DLy26a}, it was further shown that such pseudo-Siegel bounds provide uniform bounds on sector renormalization and allow one to upgrade Yoccoz's local sectors to sectors of uniform size that well capture the critical orbit. 
More precisely, for every irrational $\theta$, one can construct sector renormalizations $(\Rsec)^n (f_{\theta})$, $n \geq 1$, that admit uniform pseudo-Siegel disks. 
This implies that the space of forward renormalization orbits 
\begin{equation}
    \label{eq:intro.renorm.attr}
\towsec := \left\{ \seq{ (\mathcal{R}_{\text{sec}} )^{m+n} f_\theta }_{n \geq 0} \: : \: m \geq 0, \; \theta \text{ is irrational}\right\}
\end{equation}
is pre-compact.

Every tower $\fbold = \seq{f_n}_{n\geq 0}$ in $\overline{\towsec}$ admits a sequence of pre-renormalizations $f_0^{[n]}$ in the dynamical plane of $f_0$ that project to $f_n$ under the gluing of an appropriate sector. 
If $f_0$ is parabolic (e.g. $z\mapsto z+z^2$), then $f_0^{[1]}$ will be a Lavaurs map of $f_0$ coming from parabolic implosion phenomena; otherwise, $f_0^{[1]}$ is a genuine iterate of $f_0$. 
Each $\fbold$ also admits a unique \emph{Mother Hedgehog} $H(\fbold)$ that is completely invariant under the semigroup generated by $f_0^{[n]}$, $n\geq 0$.

\subsection{The renormalization attractor}\label{ss:intro:main:thm}
The pre-compactness of $\towsec$ implies that renormalization orbits converge to a \emph{renorm-attractor} $\attr$.
Every element of $\attr$ can be described as a bi-infinite renormalization tower $\fboldbar = \seq{f_n}_{n\in \Z}$ where $\Rsec f_n = f_{n+1}$ for all $n \in \Z$.
For every $n \in \Z$, the dynamical plane of $f_n$ houses a unique Mother Hedgehog $H_n(\fboldbar)$.

The combinatorics of a bi-infinite tower $\fboldbar = \seq{f_n}_{n\in \Z} \in \attr$ can be described by an element $\tttheta(\fboldbar) = \seq{ (\varepsilon_n, \abar_n) }_{n \in \Z}$ of the compact bi-infinite sequence space 
    \[
        \TheBiCpt = 
        \left( \{-,+\} \times \{2,3,4,\ldots,\infty\} 
        \right)^{\Z}.
    \]
The symbols record the orientation and the return times at every renormalization level; the value $\infty$ records parabolic bifurcation.
The space $\TheBiCpt$ can be interpreted as the natural extension of an appropriate compactification of the space of irrationals 
\[
    \Irrat = \left( -\frac{1}{2},\frac{1}{2} \right) \backslash \Q.
\]
Our first main theorem states that these combinatorial data completely determine the geometry.

\begin{thmx}[Conformal Rigidity of bi-infinite towers]
    \label{main-thm:rigidity}
    Any two combinatorially equivalent bi-infinite renormalization towers $\fboldbar = \seq{f_n}_{n \in \Z}$ and $\gboldbar = \seq{g_n}_{n \in \Z}$ in the renorm-attractor $\attr$ are conformally conjugate: for all $n \in \Z$, $f_n$ and $g_n$ are conformally conjugate on some definite neighborhoods of $H_n(\fboldbar)$ and $H_n(\gboldbar)$ respectively. 
\end{thmx}

A more precise version of this theorem can be found in Theorem \ref{main-thm:rigidity-precise}.

\begin{remark}[Combinatorial data vs. multiplier data]\label{rem:comb.eq vs mult}
    Combinatorial equivalence is stronger than having the same sequence of multipliers.
    This relation is strict due to the parabolic enrichment phenomenon.
    However, for irrational combinatorics, the two properties are equivalent.
\end{remark}

\begin{remark}[No hybrid equivalence for forward towers]\label{rem:intro.forw.towers}
    For combinatorially equivalent forward towers in $\overline{\towsec}$, we do not expect quasiconformal conjugacy on the neighborhood of the Mother Hedgehog. This is one fundamental difference from all other renormalization theories developed in the non-perturbative regime: bounded-type Siegel maps \cite{McM98,AL22}, ql maps~(\S\ref{sss:ql-maps}) and critical circle maps with arbitrary irrational rotation number~(\S\ref{sss:circle-maps}). The reason is that the local QC geometry near the $\alpha'$-points (the iterated preimages of the fixed point $0$) encodes the anti-renormalization history, potentially making the corresponding local geometries of the two forward-equivalent towers incompatible; see Remark~\ref{rem:anti.renorm.history} for details. For forward-equivalent towers, we will develop the notion of QC Thurston equivalence; see~\S\ref{sss:qc.Thurst.eq} and Theorem \ref{thm:qc-thurston-equivalence}.
\end{remark}

\begin{remark}[On the notion of Rigidity] Note that the terminology in this paper and in other related dynamical contexts is slightly inconsistent. 
Combinatorial Rigidity (e.g., for quadratic polynomials) usually refers to the assertion that a combinatorial equivalence can be promoted to the strongest possible equivalence (e.g. a M\"obius conjugacy). By contrast, the terms QC, Conformal, Affine, $C^{1+\alpha}$- Rigidity emphasize the nature of the resulting conjugacy. 
\end{remark}

Theorem~\ref{main-thm:rigidity} implies that the renorm-attractor admits a topological horseshoe structure. 
Let
\begin{equation}
    \label{eq:dfn.Hors}
\Hors :=\attr/_{\sim_\text{conf}}
\end{equation}
be the space of bi-infinite renormalization towers in $\attr$ up to conformal conjugacy 
on definite neighborhoods of their Mother Hedgehogs.
This is the {\em Full Renormalization Horseshoe} for the Neutral Family. 
By definition, the \emph{quotient topology} on $\Hors$ is defined so that the quotient map $\attr\to \Hors$ is continuous. 

\begin{corx}
\label{main-corollary}
    $\Rsec: \Hors \to \Hors$ is conjugate to the shift map on $\TheBiCpt$.
\end{corx}

Theorem~\ref{main-thm:cascades} below provides another viewpoint on $\attr$ as the space of neutral cascades with some natural topology. 

In an upcoming paper, 
we will refine Corollary~\ref{main-corollary} by constructing a canonical cylinder renormalization operator 
$\renorm_{\textnormal{cyl}}$  whose action on the corresponding  attractor $\attr(\renorm_{\textnormal{cyl}})$
is conjugate to the action of $\Rsec$ on $\Hors$; see~\S\ref{sss:RRcyl}. 

\subsection{Transcendental dynamics of neutral cascades} \label{ss:intro.TD} Transcendental dynamics allows us to 
deal with 
the crucial technical problem that sector renormalization acts on maps that are \emph{not} branched coverings.

For any irrational $\theta$, every rescaled limit of the quadratic dynamical systems $\{f_\theta^k \}_{k \geq 1}$ about the critical value of $f_\theta$ is a global transcendental dynamical system called a \emph{neutral cascade}.
Formally, it is a commutative semigroup $\Fbold = (\Fbold^P)_{P \in \Tbold}$ of $\sigma$-proper maps onto $\C$.
The parametrizing semigroup $\Tbold$ is totally ordered and has an infinite generating set $\{Q_{[n]}\}_{n\in\Z}$. 
The set of relations is determined by a unique element $\tttheta(\Fbold)$ of $\TheBiCpt$ called the \emph{combinatorics} of the neutral cascade.

The complete geometry of a bi-infinite tower is recorded in a single neutral cascade.
More precisely, for every bi-infinite tower $\fboldbar=\seq{f_n}_{n\in\Z}$ in $\attr$, there exists a neutral cascade $\Fbold = (\Fbold^P)_{P \in \Tbold}$ with the same combinatorics such that, 
after some conformal change of coordinates, each of $\Fbold^{[n]} = \Fbold^{Q_{[n]}}$ is an analytic extension of $f_n$.
See Figure \ref{fig:transfer-to-cascade}.


Theorem \ref{main-thm:rigidity} is deduced from the following transcendental rigidity.

\begin{thmx}[Affine Rigidity of neutral cascades]
    \label{main-thm:cascades}
    Any two neutral cascades $\Fbold = (\Fbold^P)_{P \in \Tbold}$ and $\Gbold = (\Gbold^P)_{P \in \Tbold}$ with the same combinatorics are affinely conjugate: there exists an affine map $h: \C \to \C$ such that $h \circ \Fbold^P = \Gbold^P \circ h$ for all $P \in \Tbold$.
\end{thmx}

Analogous results are known in the bounded-type regime and form an important part of its theory; see \S~\ref{sss:Neutral:types}. 
Theorem \ref{main-thm:cascades} gives us a complete dynamical universality of various asymptotic self-similarities (without exponential convergence) taking into account all possible combinatorics and parabolic enrichments.

Our Rigidity Theorem also implies that various important  dynamical sets 
(such as Mother Hedgehogs and pseudo-Siegel disks)
depend uniformly continuously on the combinatorics (cf. Proposition \ref{prop:uniform-continuity}).
This continuity property will be used in the upcoming
construction of the cylinder renormalization operator across the Full Renormalization Horseshoe.

We would like to emphasize a deep  connection between
the Renormalization Theory for neutral quadratic polynomials and 
the Renormalization Theory for analytic critical circle maps.
Our Rigidity Theorem is the neutral counterpart of the Rigidity Theorem for critical circle maps proven by Yampolsky \cite{Y01}.
A key technical ingredient of Yampolsky's proof is \emph{complex beau bounds} in the form of \emph{butterflies} \cite{Y99}.
Similarly, in the course of proving Theorem \ref{main-thm:rigidity}, 
we obtain \emph{hoglet butterfly bounds} (Figure~\ref{fig:pseudo-butterfly}) associated to bi-infinite towers, 
overcoming a new fundamental technical problem, 
the {\em Cremer phenomenon}, that does not appear in the theory of critical circle maps.
A more detailed summary is given in \S\ref{ss:outline}.

\subsection{Parabolic backward-infinite towers}

The techniques developed for the cascade rigidity also yield a new type of rigidity theorem for parabolic towers.

\begin{thmx}[Backward rigidity of parabolic towers]
    \label{main-thm:backward-rigidity}
    Consider two bi-infinite renormalization towers $\fboldbar = \seq{f_n}_{n \in \Z}$ and $\gboldbar = \seq{g_n}_{n \in \Z}$ in $\attr$ with combinatorics 
    $\tttheta(\fboldbar) = \seq{ (\varepsilon_n, \abar_n) }_{n \in \Z}$
    and $\tttheta(\gboldbar)=\seq{ (\varepsilon'_n, \abar'_n) }_{n \in \Z}$ where
\[
    \abar_1 = \abar'_1 = \infty
    \qquad \text{ and } \qquad (\varepsilon'_n, \abar'_n) = (\varepsilon_n, \abar_n) \quad \text{ for all } n \leq 1.
\]
    Then, for all $n \leq 0$, $f_n$ and $g_n$ are conformally conjugate on some definite neighborhoods of $H_n(\fboldbar)$ and $H_n(\gboldbar)$ respectively. 
\end{thmx}

This theorem essentially states that all possible neutral enrichments of parabolic maps in the renorm-attractor also belong to the attractor.
It is a backward parabolic analog of Theorem \ref{main-thm:cascades}
needed for  the upcoming proof of  hyperbolicity of the renorm-attractor 
(compare~\S\ref{sss:unif_hyperb}). 

\subsection{Applications}\label{ss:intro:appl} 

Let us outline a number of applications 
(immediate and potential) of the results of this paper.

\subsubsection{On the boundedness of limiting parabolic basins} 
Our results, in particular the boundedness of \emph{lakes} (Corollary \ref{cor:bounded-lakes}), 
imply the boundedness of the immediate parabolic basins for parabolic maps in $\Hors$.
To the best of our knowledge, this property was not known before.

\subsubsection{On the cylinder renormalization operator}\label{sss:RRcyl}

Our results also imply the {\em uniform continuity of the Mother Hedgehog} of neutral cascades on the combinatorics 
(Proposition \ref{prop:uniform-continuity}).
(Note that such a property, in the Inou-Shishikura class, 
played an important role in the constructions of Julia sets of positive area.) 

In an upcoming paper, we will use this continuity property to construct a cylinder renormalization operator $\renorm_{\textnormal{cyl}}$ across $\Hors$ with the property  that the associated quotient map $\attr(\renorm_{\textnormal{cyl}})\to \Hors$ in~\eqref{eq:dfn.Hors} is a conjugacy. 
Unlike sector renormalization acting on conformal classes, 
$\renorm_{\textnormal{cyl}}$ acts on affine classes 
(i.e.  on normalized maps). 
This operator is similar to Yampolsky's cylinder renormalization operator for critical circle maps \cite{Y03a,Y03b}. 
We will discuss this construction a little further in \S\ref{ss:cylinder}.

\subsubsection{On the uniform hyperbolicity}\label{sss:unif_hyperb} The present work was designed as a foundation 
for the uniform hyperbolicity of the attractor $\Hors$ for the cylinder renormalization. In \S\ref{ss:uniform-hyperbolicity},
we will outline our strategy for the upcoming proof.  
A key step  is a general formulation of Theorem \ref{main-thm:backward-rigidity}
(see Theorem \ref{thm:backward-rigidity-general}) for backward parabolic towers with a priori bounds.

\subsubsection{On the canonicity of the renorm-attractor} Rigidity of the renorm-attractor $\attr$ justified in this paper implies that $\attr$ is canonical in the sense that the same attractor arises for any natural degree two semi-global family with a neutral cycle.  Indeed, by \cite{DLL25,DLu25}, sector renormalization can be initiated from the neutral family arising from the boundary of any disjoint-type hyperbolic component.  Outside the Sierpi\'nski case, the classical Douady-Hubbard straightening theory for ql maps is not available in this setting. We expect that the Rigidity Theorem, together with the hyperbolicity of the cylinder renormalization operator discussed in~\S\ref{sss:RRcyl} and~\S\ref{sss:unif_hyperb}, will provide an adequate substitute.

\subsection{Historical remarks}
\label{ss:history}
As explained in the Prologue, 
the Neutral Renormalization Theory has a long history 
intimately related to other major renormalization themes in Complex Dynamics. 
Here we give a more detailed account on this historical development 
noting parallels and differences between different theories.

\subsubsection{Neutral Family}
\label{sss:Neutral:types}

The Renormalization Theory of the Neutral Family $\{f_\theta\}_{\theta \in \Irrat}$ 
has been developed through two complementary regimes.

In the \emph{bounded-type} regime, governed by Siegel phenomenon, Douady-Ghys' quasiconformal surgery \cite{D87,G84} shows that $f_\theta$ admits a quasiconformal Siegel disk whose boundary contains the critical point.
Stirnemann \cite{St94} gave a computer-assisted proof of a golden-mean renormalization fixed point, and McMullen \cite{McM98} proved existence and rigidity of periodic-type neutral cascades and exponential convergence of renormalizations toward them.
Avila and the third author \cite[\S4]{AL22} established quasiconformal rigidity of bounded-type Siegel maps, with regularity improvable to $C^{1+\alpha}$ via McMullen's method.
Gaidashev and Yampolsky \cite{GY22} gave a computer-assisted proof 
of the golden mean hyperbolicity via almost commuting pairs.
The first and third authors, jointly with Nikita Selinger \cite{DLS},
introduced \emph{Pacman renormalization operator} --- a branched-covering version of sector renormalization --- and proved hyperbolicity of the renormalization periodic points, with applications to self-similarity of the Mandelbrot set near Siegel parameters.
It followed up with  the work  \cite{DLy23} that described  the {\em puzzle structure} 
for periodic-type neutral cascades to construct the first examples  of satellite infinitely renormalizable parameters
of bounded type for which the  MLC holds.
Moreover, for some of these parameters, the Julia set has positive area. 

On the other hand,
in the \emph{high-type} regime, every renormalization of $f_\theta$ is uniformly close to being simply parabolic. 
In a breakthrough work \cite{IS},
Inou and Shishikura  constructed a renorm-invariant class of maps that contains renormalizations of high-type neutral quadratic polynomials and proved uniform hyperbolicity of the corresponding near-parabolic renorm-horseshoe. 
These results found  numerous important applications that were previously inaccessible, 
including the existence of positive area Julia sets \cite{BC12, AL22}, 
an essentially complete description, in this class, 
of  the measure-theoretic and topological properties of the postcritical set of $f_\theta$ \cite{Che13, Che18, Che19, SY24, Che25}, 
and MLC at some satellite infinitely  renormalizable parameters (of unbonded type) \cite{CS15}. 
Note that non-locally connected Julia sets (the \emph{non-JLC} phenomenon) 
arise exactly due to near-parabolic effects.

\subsubsection{Quadratic-like renormalization}
\label{sss:ql-maps}

In a seminal paper \cite{DH85}, 
Douady and Hubbard  introduced the class of quadratic-like (ql) maps 
and the associated renormalization,
as a tool for explaining  presence of small copies of the Mandelbrot set $\MM$ within itself. 

In an address to the ICM-86 in Berkeley,
Dennis Sullivan \cite{SulICM} laid down an inspiring program  suggesting 
the Feigenbaum-Coullet-Tresser Universality  \cite{F1,F2,TC,CT}
(particularly, its dynamical part) should be built upon the foundation of
the Douady-Hubbard ql renormalization combined with ideas of Teichm\"uller Theory. 
The key analytic input to this approach is \emph{a priori} (\emph{beau}) \emph{bounds}, 
which would give precompactness for the family of renormalizations.
In this direction
Sullivan \cite{Sul92,dMvS93} proved such bounds for real infinitely renormalizable ql maps of bounded type, 
showed convergence of renormalizations to an attractor, 
and proved combinatorial rigidity of the attractor.

Assuming \emph{a priori bounds}, McMullen \cite{McM96} 
then treated the general complex combinatorics of bounded type: 
he proved $C^{1+\alpha}$ rigidity of the postcritical set, showed exponential convergence of renormalizations, and established the transcendental cascade structure for bi-infinite towers.

Using a different method, the  third author established a priori bounds for a class of 
complex polynomials (that includes certain bounded and unbounded cases) \cite{LY97}.
Then he proved,  
assuming a priori bounds, hyperbolicity of renormalization 
for bounded type combinatorics \cite{L99}.
Applied to  real infinitely renormalizable maps of bounded type, 
it provided  Parameter Universality in the  Feigenbaum-Coullet-Tresser  Conjecture, 
completing this renormalization story.

In the real setting, unbounded combinatorics splits into two types.
The \emph{high-type} combinatorics is amenable to puzzle techniques; see \S\ref{sss:puzzle} below.
The \emph{essentially bounded} combinatorics degenerate via parabolic bifurcation, as in the neutral and the critical circle settings;
complex beau bounds here were established in \cite{LY97} and used by Hinkle \cite{Hin00} to prove rigidity of the corresponding bi-infinite towers.
Together, these gave the geometric foundation for the Real Renormalization Theory and its Full Renormalization Horseshoe \cite{L02}.

As was already mentioned, 
on the complex side,  first beau bounds  results appeared in \cite{Lyu97} for high-type maps; see \S\ref{sss:puzzle} below.
The general bounded combinatorics was later treated by Kahn \cite{K06} (primitive case) 
and the first and the third authors \cite{DLy26b} (satellite case), via Kahn's Near-Degenerate Regime.
Partial unification of these developments 
appeared \cite{KL08,KL09a}. 
The existence and hyperbolicity of the corresponding Renormalization Horseshoes then follows from \cite{L02,AL11}.

\subsubsection{Puzzle renormalization}
\label{sss:puzzle}
Yoccoz's puzzle techniques was introduced as a tool to get a geometric control of quadratic Julia sets
and then transfer it  to the parameter plane, 
yielding MLC at all parameters that are at most finitely renormalizable (see \cite{Hub93,Mil00}). 
This machinery was  then formulated by the third author in terms of {\em generalized renormalization} of
{\em generalized ql maps}, 
whose range of applications was  extended to the case of {\em infinitely renormalizable} maps.
It allowed him to control intermediate scales in between two consecutive ql renorm levels. 
In this paper we will refer to this machinery as {\em puzzle renormalization}.
In \cite{Lyu97,Lyu00}, it was used to prove the linear growth of the moduli in the Principal Nest of puzzle
and parapuzzle pieces
and deduce from it the  dynamical and parameter beau bounds for  high-type combinatorics
satisfying  the ``Secondary Limb Condition'' (SLC), accompanied with the corresponding Combinatorial Rigidity Theorem.  
This was the first  genuinely complex class 
of parameters for which  beau bounds and combinatorial rigidity were established. 

Since the SLC is satisfied for real maps,
these results covered real quadratic polynomials of high type,
leading to the Combinatorial Rigidity for \emph{all} real quadratic polynomials \cite{Lyu97}. 
Moreover, according to \cite{Lyu00}, the Phase-Parameter relation on deep puzzle scales 
becomes {\em almost linear}, 
yielding the first part for the \emph{Regular vs. Stochastic} Dichotomy in the real quadratic family
(asserting that almost all real non-hyperbolic parameters that are at most finitely renormalizable are stochastic).
It followed with the construction of the Full Hyperbolic Renormalization Horseshoe for real ql maps \cite{L02}
implying the second part of the Dichotomy (asserting that the set of  
real infinitely renormalizable parameters has zero measure).

\subsubsection{Critical circle maps}
\label{sss:circle-maps}

Renormalization for analytic critical circle maps is combinatorially similar to neutral renormalization, though technically it is more conveniently formulated via real-analytic commuting pairs $(f_-,f_+)$ rather than single maps.

Herman and \'{S}wi\k{a}tek \cite{H87,Sw87}  proved \emph{real a priori bounds}, yielding quasisymmetric conjugacy for any two critical circle maps with the same irrational rotation number.
In the 1990s, de Faria, de Melo, and Yampolsky \cite{dF99,dFdM00,Y99} obtained \emph{complex butterfly bounds} for commuting pairs, giving:
\begin{enumerate}
    \item[(a)] QC+$C^{1+\alpha}$ rigidity \cite{dFdM00,Y99,KY06};
    \item[(b)] exponential convergence of renormalizations to an attractor \cite{dFdM99};
    \item[(c)] rigidity of the attractor \cite{Y01},  a prototype of Theorem \ref{main-thm:rigidity}.
\end{enumerate}
Together, (b) and (c) give dynamical universality.
An analog of (a) is known for bounded-type Siegel maps \cite{AL22}; in general, it may not hold for semi-global neutral maps with arbitrary irrational rotation number; see Remark \ref{rem:intro.forw.towers}.

Yampolsky's cylinder renormalization operator for Siegel maps
(with roots, in the parabolic and near-parabolic  settings, in the work of \'{E}calle, Voronin and Douady)  
was shown to be uniformly hyperbolic \cite{Y03a,Y03b}. 
It implies parameter universality conjectured by physicists \cite{FKS,ORSS}.

Finally, the second author \cite{Lim23a,Lim23b,Lim24} recently developed the Renormalization Theory of bounded-type critical quasicircle maps, allowing imbalanced criticalities and extending both the critical circle maps (balanced criticality) and the Siegel maps (zero inner criticality) settings.
In \cite{Lim24}, the periodic-type cascade structure for such maps was analyzed to establish hyperbolicity of renormalization periodic points.

\subsubsection{Summary}

As we see, the current paper on the Rigidity of Neutral Towers 
(and its projected continuation on the Hyperbolicity of the Full Neutral Renormalization Horseshoe) is deeply related to all earlier one-dimensional renormalization theories 
(for logistic and ql, critical circle, Siegel, and near-parabolic maps) elaborated above. 

What makes this paper substantially different is the need to handle the non-JLC regime in the non-perturbative setting. 
What makes this possible is the systematic use of Transcendental Dynamics, as summarized in~\S\ref{ss:outline}, combined with pseudo-Siegel a priori bounds~\cite{DLy22}.

\subsection{Acknowledgements}
The first author was partially supported by the Simons Fellows grant and by the NSF grants DMS 2055532 and DMS 2246485. The second author is grateful to the Institute for Mathematical Sciences at Stony Brook for their hospitality. The third author thanks the NSF for their continuing support.


\section{Outline, notations, and conventions}
\label{sec:guide}

This section offers a guide on how to read this paper.  Subsection~\S\ref{ss:outline} summarizes main ingredients in the proof of the rigidity theorem; see Figure~\ref{fig:roadmap}.  Subsection~\ref{ss:organization} briefly describes the organization of the paper; see Figure~\ref{fig:dependency}. Notations and conventions are summarized in~\S\ref{ss:notation}.

\subsection{Summary of the main steps and tools towards the rigidity}
\label{ss:outline} Most of our analysis is carried out within the framework of Transcendental Dynamics and centers around Theorem~\ref{main-thm:cascades}. To deal with various non-JLC type geometric subtleties (cf. Remarks~\ref{rem:intro.forw.towers},~\ref{rem:anti.renorm.history}), we develop a ladder of QC Thurston equivalence: 
first on the pseudo-Siegel disk of forward towers (see~\S\ref{sss:qc.Thurst.eq}), 
then on the top pseudo-Siegel half-planes $\hat{\Zbold}$ of neutral cascades (see~\S\ref{sss:TD.outline}), 
then on the  pseudo-bubbles $\widehat \Bubb_P$ of external cascades, 
and finally on the butterfly structures. 
The latter is included in what we refer to as the Hoglet Butterfly Bounds; see~\S\ref{sss:outl.HogletBounds}. They allow us to apply the pullback argument to construct a QC conjugacy on butterflies for combinatorially equivalent neutral cascades; see~\S\ref{sss:outl.PA}. This leads to an affine conjugacy for combinatorially equivalent neutral cascades.

\subsubsection{QC Thurston equivalence on pseudo-Siegel disks}\label{sss:qc.Thurst.eq} In Theorem \ref{thm:qc-thurston-equivalence}, we prove that any two combinatorially equivalent towers $\fbold = \seq{f_n}_{n\geq 0}$ and $\gbold = \seq{g_n}_{n\geq 0}$ in $\overline{\towsec}$ are \emph{QC Thurston equivalent} (on their pseudo-Siegel disks): there exists a sequence of uniformly quasiconformal maps $h_n: \D \to \D$, $n \geq 0$ such that each $h_n$
\begin{itemize}
    \item maps the pseudo-Siegel disks in the dynamical plane of $f_n$ to those in the dynamical plane of $g_n$, 
    \item is a conjugacy between the Mother Hedgehog in the dynamical plane of $f_n$ to that in the dynamical plane of $g_n$, conformal in the interior, and
    \item is equivariant under the sector renormalization change of variables.
\end{itemize}
This is proven using a variant of the Pullback Argument on the triangulated pseudo-Siegel disks: by lifting triangulations under sectorial anti-renormalization, we obtain equivariance on the Mother Hedgehog; see Figure~\ref{fig:triangulation}.

\subsubsection{Transcendental Dynamics}\label{sss:TD.outline} Neutral transcendental cascades $\Fbold = (\Fbold^P)_{P \in \Tbold}$ are formalized in~\S\ref{sec:transcendental-dynamics}. They arise as rescaled limits of neutral quadratic polynomials. This viewpoint justifies the transcendental $\sigma$-proper structure for neutral cascades. 

Every neutral cascade $\Fbold $ can also be obtained from a bi-infinite tower
$\seq{f_n}_{n\in \Z} \in \overline{\towsec}$ by linearizing the renormalization change of coordinates so that the dynamical plane of $f_n$ ``linearly embeds'' into $f_{n-1}$; see~Figure~\ref{fig:transfer-to-cascade}. Every $\Fbold$ has the associated \emph{Mother Hedgehog} $\Hbold = \Hbold(\Fbold)$. It is the unique $\Fbold$-invariant closed subset of $\C$ that contains the closure of the critical orbit of $\Fbold$ and such that $\Hbold \cup \{\infty\}$ is a full compact subset of the Riemann sphere. 
Filling in the parabolic fjords of $\Hbold$ at all levels gives us the \emph{top pseudo-Siegel half-plane} $\hat{\Zbold}$, which is uniformly qc equivalent to the standard lower half-plane.

Geometric bounds for $\Hbold$ and $\hat{\Zbold}$ are obtained by transferring the corresponding bounds from bi-infinite towers $\seq{f_n}_{n\in \Z}$ to neutral cascades $\Fbold$; see again~Figure~\ref{fig:transfer-to-cascade}. Consequently, QC Thurston equivalence of forward-equivalent towers (cf. \S\ref{sss:qc.Thurst.eq}) induces QC Thurston equivalence between combinatorially equivalent neutral cascades on their top pseudo-Siegel half-planes.

The top pseudo-Siegel half-plane $\hat{\Zbold}$ cannot be iterated forward. For instance, in the Cremer case, $\hat{\Zbold}$ is not contained in the domain of $\Fbold^P$ for any $P \in \Tbold$. Our primary interest lies in the preimages of $\hat{\Zbold}$ under $\Fbold^P$, which are called \emph{pseudo-bubbles}. 
Each pseudo-bubble encloses a \emph{hoglet}, the corresponding preimage of $\Hbold$.

\begin{figure}
    \centering
    \begin{tikzpicture}[scale=0.63]
        \draw (-2,3.5) -- (2,3.5) -- (2,1.5) -- (-2,1.5) -- (-2,3.5);
        \node at (0,2.8) {QC Thurs. eq.};
        \node at (0,2.2) {on $\hat{\Zbold}$};

        \draw[-latex] (2,2.5) -- (6,2.5);
        \node at (4,3.4) {\scalebox{0.75}{pass to}};
        \node at (4,3) {\scalebox{0.75}{ext. coord.}};
        \node at (4,2.1) {\scalebox{0.75}{estimates on}};
        \node at (4,1.6) {\scalebox{0.75}{hoglets, lakes, \dots}};

        \draw (6,3.75) -- (10.4,3.75) -- (10.4,1.25) -- (6,1.25) -- (6,3.75);
        \node at (8.1,3.1) {QC Thurs. eq.};
        \node at (8.1,2.5) {on butterfly};
        \node at (8.1,1.9) {structures};

        \draw[-latex] (10.4,2.5) -- (14,1);
        \node at (13.6,3.3) {\scalebox{0.75}{Pullback Argument}};
        \node at (13.6,2.8) {\scalebox{0.75}{+}};
        \node at (13.6,2.3) {\scalebox{0.75}{Dynamical Hairiness}};

        \draw (14,1) -- (18,1) -- (18,-1) -- (14,-1) -- (14,1);
        \node at (16,0.3) {QC conjugacy};
        \node at (16,-0.3) {for butterflies};

        \draw[-latex] (14,-1) -- (10.2,-2.5);
        \node at (13.55,-2.3) {\scalebox{0.75}{spread $+$ zoom out}};

        \draw (6.2,-3.5) -- (10.2,-3.5) -- (10.2,-1.5) -- (6.2,-1.5) -- (6.2,-3.5);
        \node at (8.2,-2.2) {global QC};
        \node at (8.2,-2.8) {conjugacy};

        \draw[-latex] (6.2,-2.5) -- (2,-2.5);
        \node at (4.1,-3) {\scalebox{0.75}{NILF}};

        \draw (-2,-3.5) -- (2,-3.5) -- (2,-1.5) -- (-2,-1.5) -- (-2,-3.5);
        \node at (0,-2.2) {affine};
        \node at (0,-2.8) {conjugacy};
    \end{tikzpicture}
    \caption{Roadmap of the proof of Theorem \ref{main-thm:cascades}, see~\S\ref{ss:outline}. 
    A key step is to prepare the Pullback Argument~(\S\ref{sss:outl.HogletBounds},~\S\ref{sss:outl.PA}).}
    \label{fig:roadmap}
\end{figure}

\subsubsection{External coordinates}\label{sss:outl.extern.coord}
To control the hoglets and pseudo-bubbles, 
we transfer these objects to \emph{external coordinates} relative to $\Hbold$
(which can be viewed as an inverse to the Douady-Ghys surgery).
It allows us to apply methods of real one-dimensional dynamics.
More precisely, we uniformize the complement of $\Hbold$ to the upper half-plane $\UHP$ via some normalized Riemann mapping $\Psi$.
In doing so, we obtain a semigroup
\[
    \Fext^{P}(z) = \Psi \circ \Fbold^{P} \circ \Psi^{-1}(z) \qquad \text{ for all } P \in \Tbold.
\]
which we call an \emph{external cascade} $\Fext$.
This external cascade extends continuously to a semigroup of self-homeomorphisms of $\R$ and admits Real Bounds (Proposition \ref{prop:R-apb-transcendental}) analogous to analytic critical circle maps.

External coordinates convert the subtle dynamics of $\Fbold$ on the Mother Hedgehog $\Hbold$ to the well-understood dynamics of $\Fext$ on $\R$. Non-JLC-type structures of $\Hbold$ are encoded in ``parabolic fjords" of $\Fext$ attached to  $\R$; 
the angular size of these fjords can be explicitly estimated 
(cf. Figure~\ref{fig:angular-control-fjords} and  Lemma~\ref{lem:fjord-angle-control}).

The downside of the  external coordinates transfer 
is that it break the direct dynamical relationship between the hoglets and the Mother Hedgehog. This is relevant for the NILF Theorem (cf. \S\ref{sss:outl.NIFL}). 

\subsubsection{Hoglets, pseudo-bubbles, and lakes}\label{sss:outl.lakes} As noted in~\S\ref{sss:TD.outline}, our primary interest lies in understanding the dynamics of hoglets and their enclosing pseudo-bubbles. Hoglets are dynamically meaningful objects, but they can exhibit subtle non-JLC-type geometry. We bypass this issue by obtaining uniform geometric control of pseudo-bubbles. Pseudo-bubbles inherit the QC Thurston equivalence from the top pseudo-Siegel half-plane $\hat{\Zbold}$. 

A separate result by the first and the second authors~\cite{DLi26} implies that pseudo-bubbles are uniformly QC and have uniform angular control and uniformly bounded size relative to the relevant renormalization scale (Proposition \ref{prop:bubble-bounds}). 
These properties are referred to as Pseudo-Bubble Bounds throughout this paper. 
See also Figure~\ref{fig:pseudo-bubble-bounds} for an illustration. 
Together with Real Bounds and the angular control of fjords, they imply the disjointness of primary pseudo-bubbles (Lemma~\ref{lem:primary-bubble-disjoint}).

We also establish geometric control for hoglet chains; see Figures~\ref{fig:bubble-chain-01},~\ref{fig:bubble-chain-parabolic}, \ref{fig:lake-boundary}. This leads to the boundedness of \emph{lakes}, 
i.e. the preimages of the upper half-plane under $\Fext$ (Corollary~\ref{cor:bounded-lakes}).

Along the lines, we justify in~\S\ref{ss:alpha-points} the existence of {\em alpha-points} within hoglets and, in~\S\ref{ss:alpha'.hyperb}, establish the hyperbolicity of the $\alpha'$-Dynamics associated to sectorial towers. The alpha-points are the roofs of hoglets. They are also cut-points of the finite-time escaping set of $\Fbold$; compare with~\cite{DLy23}, where such a property was established and widely used for periodic combinatorics. For sectorial towers, the $\alpha'$-hyperbolicity implies that the local geometry at $\alpha'$ points (i.e. preimages of $0$) is stable under a small perturbation. This is one of the ingredients for our 
Hyperbolicity Program~(\S\ref{ss:uniform-hyperbolicity}). 

\subsubsection{Hoglet butterfly bounds}\label{sss:outl.HogletBounds} In~\S\ref{sec:butterfly}, we assemble various components to construct hoglet butterflies together with geometric bounds required for the Pullback Argument. The boundedness of butterflies relative to the combinatorial scale follows from the boundedness of lakes. As Figure~\ref{fig:pseudo-butterfly} illustrates, two wings of a butterfly are anchored to three primary hoglets which are enclosed in pseudo-bubbles. QC Thurston equivalence on these pseudo-bubbles eventually leads to QC Thurston equivalence of butterflies for combinatorially equivalent neutral cascades. The Pullback Argument for butterflies~(\S\ref{sss:outl.PA}) is then run relative to ``dynamically invariant'' hoglets rather than pseudo-bubbles.

To simplify the analysis, we construct butterflies relative to the Sectorial return times. With additional care, we could also justify butterflies relative to Commuting Pair return times as illustrated in Figure~\ref{ss:regularity-of-butterflies}. Refer to Remark~\ref{rem:sector.return.time} for details.

Parabolic limits require extra treatments. For instance, the non-escaping set of a butterfly may contain a parabolic basin. To ensure that the non-escaping set ends up being nowhere dense, we introduce the concept of enriched butterflies (cf. Definition \ref{def:enriched-butterfly}) by adding deeper level maps.

\subsubsection{Hairiness and NILF}\label{sss:outl.NIFL} In Section~\ref{sec:hairiness-NILF}, we prove a number of expansion-type results, two of which are the following.
\begin{itemize}
    \item Dynamical Hairiness Theorem \ref{thm:no-wandering-domains}: iterated preimages of $\Hbold$ under $\Fbold$ are dense in the plane.
    \item \emph{NILF} Theorem \ref{thm:NILF}: the Julia set of $\Fbold$ supports No Invariant Line Field.
\end{itemize}
These theorems are stated in terms of the original transcendental dynamics $\Fbold$, although most of the analysis is carried out in external coordinates. The Hairiness Theorem implies that $\Fbold$ has no wandering domains and the Fatou set, if non-empty, comes from the lift of Siegel disks (the interior of $\Hbold$). The NILF Theorem implies that neutral cascades have trivial QC deformation theory. Therefore, any global QC conjugacy is necessarily affine; see Figure~\ref{fig:roadmap}.

Both the Hairiness and NILF Theorems rely on the property that a point in the upper half-plane outside of hoglets has infinitely many iterates with definite hyperbolic expansion relative to the scale; see Remark~\ref{rem:expanding.iterates} for more details. The NILF Theorem also uses the uniform non-linearity illustrated in Figure~\ref{fig:non-linearity} and the area-zero result by the second author~\cite{Lim26b} for boundaries of hoglets (compare with \cite{Che13,Che19}). 
The area-zero property implies the absence of invariant line fields on the boundaries of hoglets. This separate treatment is necessary because external coordinates break a direct relation between hoglets and the Mother Hedgehog; see~\S\ref{sss:outl.extern.coord}.

\subsubsection{Pullback Argument and conclusions}\label{sss:outl.PA} Consider two combinatorially equivalent neutral cascades $\Fbold$ and $\Gbold$. On every renormalization level, they possess QC Thurston equivalent butterflies with uniform geometric bounds. By running Pullback Argument and using the Hairiness Theorem, we construct QC conjugacy with uniform dilatation. Since butterflies can be selected on an arbitrary shallow level, we can construct a global QC conjugacy that is conformal on the Fatou set. 
By the NILF Theorem, the QC conjugacy is in fact affine. This establishes Theorem~\ref{main-thm:cascades}. The proof is explained in Section \ref{sec:qc-rigidity}.

A quantitative version of Theorem \ref{main-thm:rigidity} (Theorem \ref{main-thm:rigidity-precise}) 
follows from Theorem \ref{main-thm:cascades}. 
Corollary \ref{main-corollary} follows from the continuity of combinatorial data across $\overline{\towsec}$ 
and is explained at the beginning of Section \ref{sec:horseshoe}. The proof of Theorem \ref{main-thm:backward-rigidity} goes along the same lines after one particular modification. Instead of Mother Hedgehogs, we work with Mother Flowers for parabolic maps, which are independent of the forward enrichment combinatorics. 
By filling-in parabolic fjords, Mother Flowers thicken to uniformly quasiconformal pseudo-Siegel flowers.
Details are supplied in Section \ref{sec:backward-rigidity}.

\subsection{Organization}
\label{ss:organization}

The logic of this paper follows the dependency map displayed in Figure \ref{fig:dependency}. Most sections in the paper have brief summaries at their beginning.

Sections \ref{sec:combinatorics} and \ref{sec:pseudo-Siegel} establish the combinatorial and analytic foundations for the  sector renormalization, including the combinatorial space $\TheBiCpt$, pseudo-Siegel disks, and Mother Hedgehogs. 
These are based on previous works \cite{DLy22,Lim26a,DLi26,DLy26a,Lim26b}.
The genuinely new content begins in Section \ref{sec:qc-thurston-equivalence} where we prove the QC Thurston equivalence of combinatorially equivalent forward towers of sector renormalizations.

Most of the content of this paper is dedicated to the dynamics of neutral cascades.
Section \ref{sec:transcendental-dynamics} discusses the construction of neutral cascades and their dynamical sets such as pseudo-Siegel half-planes and Mother Hedgehogs.
Here, we also establish Real Bounds for external cascades.
In Section \ref{sec:bounds}, we establish Pseudo-Bubble Bounds and use them to control the geometry of principal pseudo-bubbles 
and of lakes.
Section \ref{sec:hairiness-NILF} discusses the global qualitative behavior of neutral cascades:
here Dynamical Hairiness Theorem \ref{thm:no-wandering-domains} and NILF Theorem \ref{thm:NILF} are proven.
Then, Section \ref{sec:butterfly} discusses the construction and the uniform geometric and dynamical properties of hoglet butterflies as well as enriched butterflies.

Section \ref{sec:qc-rigidity} applies all of these ingredients to finally prove Theorems \ref{main-thm:rigidity} and \ref{main-thm:cascades}.
Section \ref{sec:backward-rigidity} discusses the construction of Mother Flowers replacing Mother Hedgehogs to prove Theorem \ref{main-thm:backward-rigidity}.
The paper is concluded  in Section \ref{sec:horseshoe} 
with a discussion of the Full Renormalization Horseshoe; 
this includes a strategy for a proof of uniform hyperbolicity, which we aim to carry through in follow-up papers.

\begin{figure}
    \centering
    \begin{tikzpicture}[
        >=stealth,
        node distance=1cm and 1.25cm,
        sec/.style={
            draw,
            rectangle,
            rounded corners=1pt,
            minimum width=0.45cm,
            minimum height=0.45cm,
            inner sep=2pt,
            font=\large
        },
        arrow/.style={
            ->,
            thick
        }
    ]

    \node[sec] (s3) {\S\ref{sec:combinatorics}};
    \node[sec, right=of s3] (s4) {\S\ref{sec:pseudo-Siegel}};
    \node[sec, right=of s4] (s5) {\S\ref{sec:qc-thurston-equivalence}};
    \node[sec, below=of s4] (s6) {\S\ref{sec:transcendental-dynamics}};
    \node[sec, below=of s6] (s7) {\S\ref{sec:bounds}};
    \node[sec, right=of s7] (s8) {\S\ref{sec:hairiness-NILF}};
    \node[sec, above=of s8] (s9) {\S\ref{sec:butterfly}};
    \node[sec, right=of s5, yshift=-0.8cm] (s10) {\S\ref{sec:qc-rigidity}};
    \node[sec, right=of s10] (s11) {\S\ref{sec:backward-rigidity}};
    \node[sec, right=of s11] (s12) {\S\ref{sec:horseshoe}};

    \draw[arrow] (s3) -- (s4);
    \draw[arrow] (s4) -- (s5);
    
    \draw[arrow] (s4) -- (s6);
    \draw[arrow] (s6) -- (s7);
    \draw[arrow] (s7) -- (s8);
    \draw[arrow] (s8) -- (s9);
    
    \draw[arrow] (s9) -- (s10);
    \draw[arrow] (s5) -- (s10);
    
    \draw[arrow] (s10) -- (s11);
    \draw[arrow] (s11) -- (s12);
    \end{tikzpicture}
    \caption{Dependency structure of this paper.}
    \label{fig:dependency}
\end{figure}

\subsection{Notation and conventions}
\label{ss:notation}

For any open topological disk $U \subsetneq \C$, 
\begin{itemize}
    \item denote by $\partial^c U$ the Carath\'eodory boundary of $U$;
    \item denote by $\textnormal{rad}(U,u)$ the conformal radius of $U$ about a point $u$ in $U$.
\end{itemize}

For any open proper sub-domain $U \subsetneq \C$ of the complex plane, 
\begin{itemize}
    \item denote by $\dist_{U}(x,y)$ the distance between two points $x$ and $y$ in $U$ with respect to the Poincar\'e hyperbolic metric of $U$;
    \item denote by $\D_U(x,r)$ the open round disk centered at a point $x$ in $U$ with radius $r>0$ with respect to the Poincar\'e hyperbolic metric of $U$.
\end{itemize}

For any subset $X$ of the closed upper half plane $\overline{\UHP}$, we define
\begin{itemize}
    \item the real boundary: $\parR X = \partial X \cap \R$,
    \item the essential real boundary: $\parRess X = \overline{\textnormal{int}_{\R}(\parR X)}$,
    \item the upper-half boundary: $\parH X = \partial X \cap \UHP$.
\end{itemize}

For any compact subset $S$ of $\C$ and any constant $\delta > 0$, denote
\[
\nbh_{\delta}(S) := \{ z \in \C \: | \: \dist(z, S) < \delta \, \diam(S) \}.
\]

\subsubsection{On half-Poincar\'e domains}
\label{sss:poincare}

\begin{definition}
    For any interval $I \subset \R$ and any angle $\tau \in (0,\pi)$, we define the \emph{half-Poincar\'e domain} $\poincare_{\tau}(I) \subset \UHP$ with \emph{base} $I$ and \emph{angle} $\tau$ as follows.
    Equip the open domain $\C \backslash ( \overline{\R \backslash I} )$ with the hyperbolic metric; in it, $I$ is a hyperbolic geodesic. 
    Given any angle $\tau \in (0, \pi)$, we define the Poincar\'e neighborhood $P_\tau(I)$ of $I$ with angle $\tau>0$ to be the set of points in $\C \backslash ( \overline{\R \backslash I} )$ that are of hyperbolic distance less than $\log\cot\frac{\tau}{4}$ away from $I$. 
    The upper part is the domain
\[
    \poincare_\tau(I) := P_\tau(I) \cap \UHP.
\]
    The boundary of $\poincare_\tau(I)$ is the union of the interval $I$ and the unique circular arc in $\UHP$ that has the same endpoints as $I$ and meets $\R$ with external angle $\tau$.
\end{definition}

\begin{lemma}[Schwarz Lemma]
    Consider a univalent map $h: \UHP \to \UHP$ that extends continuously to a homeomorphism $h: I \to J$ between two compact real intervals $I$ and $J$.
    Then, for any angle $\tau \in (0,\pi)$,
    \[
        h\left( \poincare_\tau (I) \right) \subset \poincare_\tau (J).
    \]
\end{lemma}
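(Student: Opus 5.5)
The plan is to reduce the statement to the classical Schwarz--Pick lemma by reflection. Set $\Omega_I := \C \setminus \overline{\R\setminus I}$ and $\Omega_J := \C \setminus \overline{\R\setminus J}$. Here I take $I$, $J$ to be the open intervals in the definition of the Poincar\'e neighborhoods; the compact versions only affect the endpoints. By definition, $\poincare_\tau(I)$ is the set of points of $\UHP$ at $\Omega_I$-hyperbolic distance less than $\log\cot\frac{\tau}{4}$ from the geodesic $I$, and likewise for $J$. It therefore suffices to extend $h$ to a holomorphic map $\tilde h\colon \Omega_I \to \Omega_J$ that maps $I$ onto $J$. The hyperbolic metrics are contracted under such a map. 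So if $z\in\poincare_\tau(I)$, there is $x\in I$ with $\dist_{\Omega_I}(z,x)<\log\cot\frac{\tau}{4}$. Then $\dist_{\Omega_J}(h(z),h(x)) \le \dist_{\Omega_I}(z,x)$, with $h(x)\in J$ and $h(z)\in\UHP$, so $h(z)\in\poincare_\tau(J)$.

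The extension comes from Schwarz reflection. Since $h$ maps $\UHP$ into $\UHP$ and extends continuously to $I$ with $h(I)=J\subset\R$, the map $h$ takes real values on the interior of $I$. Define $\tilde h(z)=\overline{h(\bar z)}$ on the lower half-plane, and use the continuous extension on the interior of $I$. By the Schwarz reflection principle (real boundary values on an open interval), $\tilde h$ is holomorphic on $\Omega_I$. Its image lies in $\UHP\cup \mathbb{L}\cup J \subset \Omega_J$, where $\mathbb{L}$ denotes the lower half-plane, because $h(\UHP)\subset\UHP$ and conjugation preserves this. Univalence of $h$ is not even needed for the contraction; only holomorphy and the mapping property are used.

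I would also check that the geodesic description matches the circular-arc description. This is not strictly needed for the inclusion as stated, since the statement is purely in terms of the hyperbolic-distance definition. Still, one can note that the family $\{\poincare_\tau\}$ is Möbius-natural: normalize $I=J=(0,\infty)$ via real Möbius maps. Then $\Omega$ is the slit plane $\C\setminus(-\infty,0]$, the hyperbolic neighborhoods of the geodesic $(0,\infty)$ are symmetric sectors, and the constant $\log\cot\frac{\tau}{4}$ matches exterior angle $\tau$.

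The only delicate point is the endpoints. Continuity is given on the compact $I$, but reflection is applied only across the open interval, and the hyperbolic-distance argument uses only interior points of $I$. Points $x$ of $I$ realizing or approximating the distance can be taken in the interior, because the endpoints are on $\partial\Omega_I$. So I expect no real obstacle. The main step is simply the verification that $\tilde h(\Omega_I)\subset\Omega_J$, which holds because real values of $\tilde h$ occur only on $I$ and are mapped into $J$.
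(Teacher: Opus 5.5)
Your proof is correct: reflecting across $\textnormal{int}(I)$ gives a holomorphic map $\tilde h\colon \Omega_I \to \Omega_J$ with $\tilde h(\textnormal{int}\, I)\subset \textnormal{int}\, J$. Schwarz--Pick contraction of the hyperbolic metrics then gives the inclusion directly from the distance definition of $\poincare_\tau$. The paper states this lemma without proof as a standard fact, and your reflection-plus-Schwarz--Pick argument is the standard justification it implicitly relies on; as you note, univalence of $h$ is not needed.
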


Half-Poincar\'e domains will be used from Section \ref{sec:hairiness-NILF} onwards.

\subsubsection{On the special constant $\threshold$}
\label{sss:threshold}

Throughout this paper, the notation $\threshold \in \N$ will be reserved for the combinatorial threshold that governs when a renormalization level is declared \emph{bounded} or \emph{near-parabolic}.
A related constant is 
\[
\threshold' := \lfloor e^{\sqrt{\log\threshold}} \rfloor
\]
which is explicitly used in the construction of pseudo-Siegel disks (cf. \S\ref{ss:pseudo-siegel}). 
Throughout this paper, $\threshold$ will be assumed to be a fixed sufficiently high number.
Constants that depend only on $\threshold$ will be called $\threshold$-\emph{uniform}.
Constants that are independent of any parameter including $\threshold$ will be called \emph{uniform} or \emph{universal}.
We will denote 
\begin{itemize}
    \item $g \succ h$ if there exists a universal constant $k \in (0,1)$ such that $g \geq k h$;
    \item $g \asymp h$ if $g \succ h$ and $h \succ g$;
    \item $g = O(1)$ if $1 \succ g$;
    \item $g \Msucc h$ if there exists an $\threshold$-uniform constant $k \in (0,1)$ such that $g \geq k h$;
    \item $g \Masymp h$ if $g \Msucc h$ and $h \Msucc g$;
    \item $g = O_{\scriptscriptstyle \threshold}(1)$ if $1 \Msucc g$.
\end{itemize}
Whenever we say that 
\begin{center}
    ``property X holds if $\threshold$ is sufficiently high``
\end{center}
we mean that there exists a uniform constant $\threshold_0 \in \N$ such that when $\threshold \geq \threshold_0$, property X holds.

\subsubsection{On combinatorics}
\label{sss:notation-for-combinatorics}

We denote
\begin{itemize}
    \item $\Irrat = \left( -\frac{1}{2},\frac{1}{2} \right) \backslash \Q$;
    \item $\Sigma^{\N} = ( \{-,+\} \times \N_{\geq 2} )^{\N}$;
    \item $\TheCpt = ( \{-,+\} \times \overline{\N}_{\geq 2} )^{\N}$, the parabolic compactification of irrationals;
    \item $\TheBiCpt = ( \{-,+\} \times \overline{\N}_{\geq 2} )^{\Z}$, the bi-infinite version of $\TheCpt$;
    \item $\gauss =$ the Gauss-like map $\Irrat \to \Irrat$ where $\gauss(\theta) \equiv -\frac{1}{\theta}$ (mod $1$);
    \item $\mathfrak{X} = $ the map $\Sigma^{\N} \to \Irrat$ described in (\ref{eqn:symbolic-irrational});
    \item $\bar{\mu} =$ the rotation number map $\TheCpt \to \T$ described in Proposition \ref{prop:rotation-number-map};
    \item $\shift = $ standard shift map on either $\TheCpt$ or $\TheBiCpt$;
    \item $\textnormal{proj} = $ the projection map $\TheBiCpt \to \TheCpt, \langle (\varepsilon_n, \abar_n) \rangle_{n \in \Z} \mapsto \langle (\varepsilon_n, \abar_n) \rangle_{n \geq 1}$.
\end{itemize}
For $\theta \in \Irrat$ and $n \geq 0$, denote
\begin{itemize}
    \item $ b_n(\theta) = a_n + \frac{1+\varepsilon_n \varepsilon_{n+1}}{2}$ where $\seq{(\varepsilon_n,\abar_n)}_{n\geq 1} = \mathfrak{X}^{-1}(\theta)$;
    \item $q_{[n]} = q_{[n]}(\theta)$, the sequence described in Proposition \ref{prop:q[n]}.
\end{itemize}
For $\ttheta = \seq{(\varepsilon_n,\abar_n)}_{n\geq 1} \in \TheCpt$, denote
\begin{itemize}
    \item $b_n = b_n(\ttheta) := \abar_n + \frac{1+\varepsilon_n \varepsilon_{n+1}}{2}$ for $n \geq 0$;
    \item $\Kont = \Kont_{\ttheta}$, the continuant group of $\ttheta$ (cf. Definition \ref{def:time-group}), an abelian group with generators $\qq_{[n]}(\ttheta)$, $n \geq 0$, equipped with the time order ``$<$'';
    \item $\Time = \Time_{\ttheta} := \{ p \in \Kont_{\ttheta} \: : \: p > 0 \}$, the time semigroup of $\ttheta$.
\end{itemize}
For $\tttheta = \seq{(\varepsilon_n,\abar_n)}_{n\in \Z} \in \TheBiCpt$, denote
\begin{itemize}
    \item $b_n = b_n(\tttheta) := \abar_n + \frac{1+\varepsilon_n \varepsilon_{n+1}}{2}$ for $n \in \Z$;
    \item $\Kbold = \Kbold_{\tttheta}$, the continuant group of $\tttheta$ (cf. \S~\ref{sss:continuant-group}), an abelian group with generators $Q_{[n]}(\ttheta)$, $n \in \Z$, equipped with the time order ``$<$'';
    \item $\Tbold = \Tbold_{\tttheta} := \{ P \in \Kbold_{\ttheta} \: : \: P > 0 \}$, the time semigroup of $\tttheta$;
    \item $\{ Q_n = Q_n(\tttheta) \}_{n \in \Z}$, the slow return times of $\tttheta$, another set of generators of $\Kbold_{\tttheta}$ with relations $\{a_n=a_n(\tttheta)\}_{n\in\Z} \in \TheBiCpt$ (c.f \S\ref{sss:conversion});
    \item $\Tbold_{\Arch, n} = \Tbold_{\Arch, n, \tttheta}$, the sub-semigroup of $\Tbold$ consisting of elements that are not infinitely larger than $Q_n$ for $n \in \Z$ (cf. Section \ref{sec:hairiness-NILF}).
\end{itemize}

\subsubsection{On dynamical objects related to $\overline{\towsec}$}
\label{sss:notation-for-towsec}

The space $\overline{\towsec}$ will be introduced in \S\ref{ss:renormalization-towers}.
Every element $\fbold$ of $\overline{\towsec}$ is an infinite forward tower $\fbold = \seq{f_n}_{n\geq 0}$ of sector renormalizations of holomorphic maps with a neutral fixed point at $0$.
It comes with renormalization combinatorics $\ttheta = \ttheta(\fbold)$ which is an element of $\TheCpt$.
The properties of the objects listed below are described in Theorem \ref{thm:renorm-limits}.
For every $n \geq 0$, denote:
\begin{itemize}
    \item $\ttheta_n = \shift(\ttheta_n)$ and $\theta_n = \bar{\mu}(\ttheta_n)$, the rotation number of $f_n$ at $0$;
    \item the continuant group $\Kont^{n} = \Kont_{\ttheta_n}$ of $\ttheta_n$ with generators $\qq^n_{[m]} = \qq_{[m]}(\ttheta_n)$, $m \geq 0$;
    \item the time semigroup $\Time^n = \Time_{\ttheta_n}$ of $\ttheta_n$;
    \item the $n$\textsuperscript{\textnormal{th}} semigroup $\mathcal{F}_n = \{ f_n ^p \}_{p \in \Time^n}$ parametrized by $\Time^n$;
    \item the nest of pseudo-Siegel pinched disks $\hat{Z}_n^{[m]}=\hat{Z}_n^{[m]}(\fbold)$, $m \geq -1$ where each $\hat{Z}_n^{[m]}$ is almost invariant under $f_n^{[m+1]} := f_n^{\qq^n_{[m+1]}} \in \mathcal{F}_n$;
    \item the Mother Hedgehog $H_n=H_n(\fbold)$ completely invariant under $\mathcal{F}_n$;
    \item the unique bi-infinite critical orbit $\{v^n_{p} = v^n_{p}(\fbold)\}_{p \in \Kont^{n}}$ in $H_n$ where $v^n_{0}$ is the unique critical value of $f_n$;
    \item the postcritical set $P_n = P_n(\fbold) := \overline{ \big\{ v^n_{p} \big\}_{p \in \Tbold^n \cup \{0\}} }$ of $\mathcal{F}_n$, which is the boundary of $H_n$;
    \item a nest of renormalization sectors $S_n^j = S_n^j(\fbold)$, $j \geq 1$ where
    \[
        S_n := S_n^1 \supset S_n^2 \supset S_n^3 \supset \ldots;
    \]
    \item a conformal gluing map 
    \[
        \psi_n = \psi_{n,\fbold}: S_n \to \D
    \]
    for the top sector $S_n$ that projects $\hat{Z}_n^{[m-1]}$, $S_n^{m+1}$, and $f_n^{[m]}$ 
    to $\hat{Z}_{n+1}^{[m-2]}$, $S_{n+1}^{m}$, and $f_{n+1}^{[m-1]}$ respectively for every $m \geq 1$.
\end{itemize}
In Section \ref{sec:transcendental-dynamics}, we will also introduce the renorm-attractor $\attr$, a bi-infinite version of $\overline{\towsec}$.
Every element $\fboldbar$ of $\attr$ is a bi-infinite tower $\seq{f_n}_{n\in\Z}$ of sector renormalizations of holomorphic maps with a neutral fixed point at $0$.
It comes with renormalization combinatorics $\tttheta = \tttheta(\fboldbar)$, an element of $\TheBiCpt$, and a corresponding continuant group $\Tbold = \Tbold_{\tttheta}$.
For every $n \in \Z$, we have an element $\fbold_n = \seq{ f_{n+k} }_{k\geq 0}$ of $\overline{\towsec}$ which comes with analogous objects $\ttheta_n$, $\theta_n$, $\Time^{n}$, $\{ \qq^n_{[m]} \}_{m \geq 0}$, $\mathcal{K}^n$, $\mathcal{F}_n$, $\{ \hat{Z}_n^m \}_{m \geq -1}$, $H_n$, $\{ v^n_p \}_{p \in \Time^{n}}$, $P_n$, $\{S_n^j\}_{j \geq 1}$, and $\psi_n$.

\subsubsection{On neutral cascades}
\label{sss:cascade}

In Section \ref{sec:transcendental-dynamics}, we will introduce the notion of a \emph{neutral cascade} $\Fbold = (\Fbold^P: \Dom(\Fbold^P) \to \C )_{P \in \Tbold}$ associated to an element $\fboldbar = \seq{ f_n }_{n \in \Z}$ of $\attr$.
It comes with
\begin{itemize}
    \item $\tttheta = \tttheta(\Fbold)$, an element of $\TheBiCpt$ which determines the continuant group $\Kbold = \Kbold_{\tttheta}$ and the time semigroup $\Tbold = \Tbold_{\tttheta}$ parametrizing $\Fbold$;
    \item $\Ibold^{\leq P} = \C \backslash \Dom(\Fbold^P)$, the time $P \in \Tbold$ escaping set of $\Fbold$;
    \item $\{ \Sbold^m \}_{m \in \Z}$, the nest of infinite sectors described in Proposition \ref{prop:trans-sectors};
    \item $\hat{\Zbold}^{[m]} = $ the level $m$ pseudo-Siegel pinched half-plane of $\Fbold$ for $m \in \Z$;
    \item $\hat{\Zbold} = \cup_m \hat{\Zbold}^{[m]}$, the top pseudo-Siegel half-plane;
    \item $\Hbold = \cap_m \hat{\Zbold}^{[m]}$, the Mother Hedgehog of $\Fbold$;
    \item $\{ C_P \}_{P \in \Kbold}$, the bi-infinite critical orbit of $\Fbold$ on the boundary of $\Hbold$;
    \item $\Psi: \C \backslash \Hbold \to \UHP$, the normalized Riemann mapping described in \S\ref{ss:trans-external-coordinates};
    \item $\Bbold_P = $ the primary hoglet of generation $P \in \Tbold$ (cf. \S\ref{ss:bubbles-and-pseudo-bubbles});
    \item $\hat{\Bbold}_P =$ the primary pseudo-bubble of generation $P \in \Tbold$ (cf. \S\ref{ss:bubbles-and-pseudo-bubbles});
    \item $\aalpha_P =$ the alpha-point of $\Bbold_P$ for $P \in \Tbold$ (cf. \S\ref{ss:alpha-points}).
\end{itemize}
Via the Riemann mapping $\Psi$, $\Fbold$ is transformed to an \emph{external cascade} $\Fext = (\Fext^P: \Dom(\Fext^P) \to \UHP)_{P \in \Tbold}$.
In external coordinates, we denote
\begin{itemize}
    \item $\{ \crit_P = \Psi(C_P) \}_{P \in \Tbold}$, the bi-infinite ``critical`` orbit located along $\R$;
    \item $\tiling_Q =$ the tiling of $\R$ by $\{\crit_{-P}\}_{0 < P \leq Q}$ for any $Q \in \Tbold$;
    \item $\Fjord^{[n]} = $ the union of fjords of level $n \in \Z$;
    \item $\Fjord = \cup_n \Fjord_n$, the union of all fjords;
    \item $\fjord_n = $ the principal level $n$ fjord (cf. Definition \ref{def:parabolic-fjord});
    \item $\Iext^{\leq P} = \Psi(\Ibold^{\leq P})$, $\Bubb_P = \Psi(\Bbold_P)$, $\hat{\Bubb}_P = \Psi(\hat{\Bubb}_P)$, and $\alpha_P = \Psi(\aalpha_P)$ for $P \in \Tbold$;
    \item $\lake_P = $ the primary lake of generation $P \in \Tbold$ with $0$ on its real boundary;
    \item $\Bchain_n =$ the (enriched) chain of hoglets $\Gamma_{n,j}$ for any $n \in \Z$ (cf. (\ref{eqn:hoglet-chain-old}));
    \item $\hat{\Bchain}_n =$ the pseudo-bubble version of $\Bchain_n$;
    \item $\beta_n =$ the unique fixed point of $\Fext^{Q_n}: \lake_{Q_n} \cup \partial^c \lake_{Q_n} \to \overline{\UHP}$ for any $n \in \Z$;
    \item $\Dext_n =$ the immediate parabolic basin of $\beta_n$ for any parabolic depth $n \in \Z$.
\end{itemize}


\section{The combinatorics of sector renormalization}
\label{sec:combinatorics}

In this section, we summarize the combinatorial foundations of sector renormalization. All the proofs can be found in \cite{Lim26a}. The key notations introduced in this section are listed in \S\ref{sss:notation-for-combinatorics}. Let us provide a brief summary.

We denote by $\Irrat$ the set of normalized irrational numbers; see~\eqref{eq:irrat.dfn}. 
In \S\ref{ss:sect.renorm}, we demonstrate that for every $\theta\in \Irrat$, the combinatorial data of the rigid rotation $\rotate_\theta$ under sector renormalization can be uniquely encoded by a sequence $\seq{ (\varepsilon_n, \abar_n) }_{n\geq 1}\in \The$. Theorem~\ref{thm:X.theta} asserts that this encoding naturally gives us the modified continued fraction expansion~\eqref{eqn:symbolic-irrational}.

The set of irrational rotation numbers $\Irrat$ has a parabolic compactification $\TheCpt$, see~\eqref{eq:dfn.ovlTheta}, which satisfies the associated universal property stated in Theorem~\ref{thm:ovlTheta:univers}. In ~\S\ref{ss:natural-extension}, we introduce the natural extension $\TheBiCpt$ of $\TheCpt$ with respect to sector renormalization. Associated to every element $\tttheta$ of $\TheBiCpt$ is the \emph{time semigroup} $\Tbold_{\tttheta}$ and the full \emph{continuant group} $\Kbold_{\tttheta} = \Tbold_{\tttheta} \cup \{0\} \cup -\Tbold_{\tttheta}$. In the irrational case, $\Tbold_{\tttheta}$ can be thought as a natural parametrization of the associated cascades of translations; see~\S\ref{sss:irrat.cascade}. In general, $\Tbold_{\tttheta}$ will be used in Section \ref{sec:transcendental-dynamics} to parametrize transcendental neutral cascades $\Fbold$.

\subsection{Sector renormalization of irrational rotations}
\label{ss:sect.renorm}

Denote
\begin{equation}
\label{eq:irrat.dfn}
    \Irrat := \left( -\frac{1}{2},\frac{1}{2} \right) \backslash \Q.
\end{equation}

For $\theta \in \Irrat$, denote
\[
    \varepsilon(\theta) := \frac{\theta}{|\theta|} \in \{-1,+1\}, \quad \quad 
    \abar(\theta) := \left\lfloor \frac{1}{|\theta|} \right\rfloor \in \N_{\geq 2},
\]
and let
\[
    \rotate_\theta : \bar{\D} \to \bar{\D}, \qquad z \mapsto e^{2\pi i \theta} z
\]
be the rigid rotation by angle $\theta$ on the closed unit disk $\bar{\D} \subset \C$.
The sector renormalization of $\rotate_\theta$ is constructed as follows.

Let $S$ be a sector on $\bar{\D}$ bounded by two arcs $\gamma$ and $\rotate_\theta(\gamma)$ where $\gamma$ is a radial arc from $0$ to a point on the unit circle.
The power map $\psi(z) = z^{1/|\theta|}$ glues the two sides of the sector $S$ together, sending $S$ onto $\bar{\D}$.
The first return map of $\rotate_\theta$ back to $S$ is a piecewise continuous map consisting of a pair of iterates $(\rotate_\theta^{\abar}, \rotate_\theta^{\abar+1})$, where $\abar = \abar(\theta)$.
Under $\psi$, this pair projects onto a new rigid rotation $\rotate_{\gauss(\theta)}: \bar{\D} \to \bar{\D}$, called the sector renormalization of $\rotate_{\theta}$.
By elementary computation, the new rotation number $\gauss(\theta)$ is the unique irrational number in $\Irrat$ with
\[
    \gauss(\theta) \equiv -\frac{1}{\theta} \quad (\textnormal{mod }1).
\]
This procedure gives us a continuous infinite-to-one surjective self-map $\gauss: \Irrat \to \Irrat$.

Consider the infinite sequence space
\[
    \The := \left\{ \seq{ (\varepsilon_n, \abar_n) }_{n\geq 1} \: : \: \varepsilon_n \in \{-1,+1\}, \abar_n \in \N_{\geq 2} \right\}.
\]
For every $\sigma = \seq{(\varepsilon_n, \abar_n )}_{n\geq 1}$, we denote
\begin{equation}
   \label{eq:dfn.b_n}
    b_n = b_n(\sigma) := \abar_n + \frac{1 + \varepsilon_n \varepsilon_{n+1}}{2} \qquad \text{ for } n \geq 1
\end{equation}
and the associated modified continued fraction rel $\Big(-\frac 12, \frac 12\Big)$

\begin{align}
\label{eqn:symbolic-irrational}
    \mathfrak{X}( \sigma ) := \cfrac{1}{\varepsilon_1 b_1 - \cfrac{1}{\varepsilon_2 b_2 - \frac{1}{\varepsilon_3 b_3 - \ldots}}}.
\end{align}

\begin{theorem}\label{thm:X.theta}
    The function $\mathfrak{X}(\cdot)$ above is a well-defined homeomorphism from $\The$ onto $\Irrat$ with inverse
    \[
        \mathfrak{X}^{-1}(\theta) = \seq{ \big( \varepsilon( \gauss^{n-1}(\theta)),\;\abar(\gauss^{n-1}(\theta) ) \big) }_{n\geq 1}.
    \]
    The map $\mathfrak{X}$ is a conjugacy between the standard shift map $\shift: \The \to \The$ and the map $\gauss: \Irrat \to \Irrat$.
\end{theorem}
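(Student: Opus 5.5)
The plan is to reduce everything to a single-step identity, namely that $\theta$ and $\gauss(\theta)$ are related by one Möbius map. Then I iterate it, using a uniform contraction estimate.

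\emph{Step 1 (one-step identity).} For $\theta\in\Irrat$ put $\varepsilon=\varepsilon(\theta)$, $\abar=\abar(\theta)$, $\varepsilon'=\varepsilon(\gauss(\theta))$ and $b=\abar+\frac{1+\varepsilon\varepsilon'}{2}$. I claim
\[
\theta = T_{\varepsilon,b}(\gauss(\theta)), \qquad T_{\varepsilon,b}(y):=\frac{1}{\varepsilon b-y}.
\]
Take $\theta>0$ and write $1/\theta=\abar+x$ with $x\in(0,1)$ irrational.
\begin{itemize}
\item If $x<\tfrac12$, then $\gauss(\theta)=-x$. Hence $\varepsilon'=-1$, $b=\abar$, and $1/\theta=b-\gauss(\theta)$.
\item If $x>\tfrac12$, then $\gauss(\theta)=1-x>0$. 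Hence $\varepsilon'=+1$, $b=\abar+1$, and again $1/\theta=b-\gauss(\theta)$.
\end{itemize}
The case $\theta<0$ is symmetric, with the sign $\varepsilon=-1$ in front of $b$.

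Conversely, fix a symbol $(\varepsilon_1,\abar_1)$ and take $y\in\Irrat$ with sign $\varepsilon_2$. The same case check shows that $x=T_{\varepsilon_1,b_1}(y)$ lies in $\Irrat$, with $\varepsilon(x)=\varepsilon_1$, $\abar(x)=\abar_1$ and $\gauss(x)=y$. For example, when $\varepsilon_1=\varepsilon_2=+$ we get $1/x=\abar_1+1-y\in(\abar_1+\tfrac12,\abar_1+1)$. Irrationality of $x$ is inherited from $y$, since $T$ has integer coefficients.

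\emph{Step 2 (convergence and continuity).} Since $b\ge 2$, for $y\in[-\tfrac12,\tfrac12]$ we have $|\varepsilon b-y|\ge \tfrac32$. Hence each $T_{\varepsilon,b}$ maps $[-\tfrac12,\tfrac12]$ into $[-\tfrac23,\tfrac23]$. I would rather have an invariant interval, so I restrict to $[-\tfrac12,\tfrac12]$ using the sign information from Step 1. On that interval $|T'_{\varepsilon,b}|\le \tfrac49$.

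Consequently the compositions $T_{\varepsilon_1,b_1}\circ\dots\circ T_{\varepsilon_n,b_n}$, applied to any point of $[-\tfrac12,\tfrac12]$, converge at rate $(4/9)^n$, independently of the tail. This makes $\mathfrak X$ well defined. It also makes $\mathfrak X$ continuous, because the first $n$ symbols determine $\mathfrak X(\sigma)$ up to $(4/9)^n$. Moreover $\mathfrak X(\sigma)=T_{\varepsilon_1,b_1}(\mathfrak X(\shift\sigma))$.

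\emph{Step 3 (bijection and conjugacy).} Let $\theta\in\Irrat$ and let $\sigma$ be the coding sequence in the statement. Iterating Step 1 gives $\theta=T_{\varepsilon_1,b_1}\circ\dots\circ T_{\varepsilon_n,b_n}(\gauss^n\theta)$ for every $n$. Letting $n\to\infty$ and using Step 2 yields $\mathfrak X(\sigma)=\theta$, so $\mathfrak X$ is surjective onto $\Irrat$.

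For injectivity and the inverse formula, I show that for every $\sigma\in\The$ the value $\mathfrak X(\sigma)$ lies in $\Irrat$ and has code $\sigma$. The code then follows by the converse part of Step 1 applied to $y=\mathfrak X(\shift\sigma)$, once I know that $y$ is irrational, lies in $(-\tfrac12,\tfrac12)$, and has sign $\varepsilon_2$. This same identity $\gauss(\mathfrak X(\sigma))=\mathfrak X(\shift\sigma)$ also gives the conjugacy $\mathfrak X\circ\shift=\gauss\circ\mathfrak X$.

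Finally, $\mathfrak X^{-1}$ is continuous because $\varepsilon(\cdot)$ and $\abar(\cdot)$ are locally constant on $\Irrat$, and $\gauss$ is continuous there.

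\emph{Main obstacle: the boundary cases.} I must exclude that a limit value $\mathfrak X(\shift^k\sigma)$ equals $0$, $\pm\tfrac12$, or a rational number, or has the wrong sign. The plan has three parts.
\begin{itemize}
\item \emph{Sign.} The closed bounds in Step 2 already place $\mathfrak X(\shift^k\sigma)$ in the closed half-interval with sign $\varepsilon_{k+1}$, and the image of $T$ avoids $0$. Then $|T|\le \tfrac12$, with equality only if $y=\mp\tfrac12$ with a specific sign.
\item \emph{Rationality.} If $\mathfrak X(\sigma)$ were rational, say $p/q$, then $\mathfrak X(\shift\sigma)=\varepsilon_1b_1-q/p$ has denominator $|p|<q$. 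So the denominators strictly decrease along the shift orbit, forcing some $\mathfrak X(\shift^k\sigma)\in\{0,\pm\tfrac12\}$.
\item \emph{Conclusion.} This is incompatible with the strict inequalities unless the sign pattern $\varepsilon_{k+1},\varepsilon_{k+2}$ contradicts the bound. I expect to verify this by the explicit interval computation of Step 1.
\end{itemize}
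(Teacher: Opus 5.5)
The paper does not prove this statement; Section~3 defers all proofs to \cite{Lim26a}, so there is no in-paper argument to match. Judged on its own, your route is the standard elementary proof and is correct in substance. It consists of the one-step identity $\theta=T_{\varepsilon,b}(\gauss\theta)$ and its converse, nested-interval contraction, and descent of denominators. Steps~1 and~3, the continuity of $\mathfrak X$ and $\mathfrak X^{-1}$, and the conjugacy are fine.

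An alternative route is suggested by the conversion $(\varepsilon_n,\abar_n)\leadsto a_n$ of \S\ref{sss:conversion}. Whenever $\varepsilon_n\varepsilon_{n+1}=+1$, insert a partial quotient $1$ and lower the next one by $1$. This turns $|\mathfrak X(\sigma)|$ into an ordinary infinite continued fraction, so irrationality comes from the classical theory. Your direct argument avoids that bookkeeping. Two places in your write-up need repair.

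First, $[-\tfrac12,\tfrac12]$ itself is not invariant; for example $T_{+,2}(\tfrac12)=\tfrac23$. So ``applied to any point of $[-\tfrac12,\tfrac12]$'' is not right as stated. The invariant system is the pair $J_+=[0,\tfrac12]$, $J_-=[-\tfrac12,0]$:
\[
T_{\varepsilon_n,b_n}(J_{\varepsilon_{n+1}})\subset \varepsilon_n\Big[\tfrac{1}{\abar_n+1},\tfrac{1}{\abar_n}\Big]\subset J_{\varepsilon_n},
\qquad |T_{\varepsilon_n,b_n}'|\le\tfrac14 \text{ on } J_{\varepsilon_{n+1}}.
\]
The truncations should be the values at $0\in J_+\cap J_-$. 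This places every $x_k:=\mathfrak X(\shift^k\sigma)$ in $\varepsilon_{k+1}[\frac{1}{\abar_{k+1}+1},\frac{1}{\abar_{k+1}}]$. Hence $x_k\neq 0$, $|x_k|\le\tfrac12$, and $x_k$ has sign $\varepsilon_{k+1}$.

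Second, your boundary-case paragraph is wrong in one place and unfinished in another.
\begin{itemize}
\item On $J_{\varepsilon_2}$ one has $|T_{\varepsilon_1,b_1}(y)|=\tfrac12$ only when $b_1=2$ and $y=0$, not at $y=\mp\tfrac12$; there the value is at most $\tfrac25$.
\item More importantly, the descent needs no endgame. All $x_k$ are nonzero with $|x_k|\le\tfrac12$. So if $x_0=p_0/q_0$, then $x_{k+1}=\varepsilon_{k+1}b_{k+1}-1/x_k$ has reduced denominator $q_{k+1}=|p_k|\le q_k/2$ for every $k$. This is an infinite strictly decreasing sequence of positive integers, a contradiction already.
\end{itemize}
Hence every $x_k$ is irrational. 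Since $\pm\tfrac12$ are rational, $x_k\in\Irrat$ follows without any strict-inequality analysis, and your vague ``Conclusion'' bullet can be dropped. The converse of Step~1 then gives the code of $\mathfrak X(\sigma)$, hence injectivity and the inverse formula, exactly as you planned.
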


Fix $\theta \in \Irrat$ and denote $\theta_n := \gauss^n(\theta)$.
We will now describe a particular choice of the renormalization sector $S$ for $\rotate_\theta$ that is more appropriate for iteration.

For $k \in \Z$, denote 
\[
    v_k = v_k(\theta) := \rotate_\theta^k(1).
\]
For any two distinct integers $k,l \in \Z$, we denote by $\triangle_{\theta}(k,l)$ the closed radial sector bounded by the radial line segment from $0$ to $v_k$, the radial line segment from $0$ to $v_l$, and the shortest circular arc in $\partial \D$ joining $v_k$ and $v_l$.
We will in particular consider the sector
\[
    S_\theta := \triangle_{\theta}(-\abar-1,-\abar),
\]
where $\abar=\abar(\theta)$. 
This sector contains the point $v_0 = 1$. 

The first return map of $\rotate_{\theta}$ back to $S_{\theta}$ is the piecewise linear map
\[
    \textnormal{FRM}_{\theta}(z) = \begin{cases}
        \rotate_{\theta}^{\abar+1}(z) & \text{ if } z \in \triangle_{\theta}(-\abar-1,-2\abar-1) , \\
        \rotate_{\theta}^{\abar}(z) & \text{ if } z \in \triangle_{\theta}(-2\abar-1,-\abar) .
    \end{cases}
\]
See Figure \ref{fig:sector-rotation}.
The power map 
\[
    \psi_{\theta}: S_{\theta} \to \bar{\D}, \quad \psi_\theta(z) = z^{1/|\theta|}
\]
sends the interior of $S_{\theta}$ conformally onto the unit disk minus the radial slit 
\[
\gamma_\theta = \{ \arg z = - 2\pi \, \gauss(\theta) \},
\]
and it glues the two radial edges of $S_{\theta}$ together onto the slit. 
It also projects $\textnormal{FRM}_{\theta}(z)$ to the renormalization $\rotate_{\gauss(\theta)}$.
Since $\gamma_{\theta}$ is disjoint from the sector $S_{\theta_1}$, we obtain a well-defined nest of closed sectors 
\[
S_{\theta}^1 \ \supset \ S_{\theta}^2 \ \supset \ 
S_{\theta}^3 \ \supset \ S_{\theta}^4 \ \supset \ \ldots
\]
where 
    \[
    S_{\theta}^n := \psi_{\theta_0}^{-1} \circ \psi_{\theta_1}^{-1} \circ \ldots \circ \psi_{\theta_{n-2}}^{-1}(S_{\theta_{n-1}})
    \quad \text{ for } n\geq 2.
    \]
By design, the sector $S_{\theta}^n$ always contains the real interval $[0,1] \subset \R$.
Proposition \ref{prop:q[n]} below implies that the nested intersection $\cap_n S_{\theta}^n$ is indeed equal to $[0,1]$.

\begin{figure}    
    \centering
    \begin{tikzpicture}[scale=1]
        \filldraw[white,fill opacity=0.5,fill=green!25!white] (-3.5,0) -- (-1,0) arc (0:17:2.5cm) -- cycle;        
        \filldraw[white,fill opacity=0.5,fill=yellow!30!white] (-3.5,0) -- ({2.5*cos(-32)-3.5},{2.5*sin(-32)}) arc (-32:0:2.5cm) -- cycle;
        \draw [black,domain=0:360,smooth] plot ({2.5*cos(\x)-3.5}, {2.5*sin(\x)});
        \draw [blue,thick,domain=-32:17,-latex] plot ({1.25*cos(\x)-3.5}, {1.25*sin(\x)});
        \draw[black,line width=0.5pt] ({2.5*cos(-32)-3.5},{2.5*sin(-32)}) -- (-3.5,0) -- ({2.5*cos(17)-3.5},{2.5*sin(17)}); 
        \draw[gray!70!white,line width=0.5pt] (-3.5,0) -- (-1,0);

        \filldraw[white,fill opacity=0.5,fill=yellow!30!white] (0.5,0) -- ({2.5*cos(-15)+0.5},{2.5*sin(-15)}) arc (-15:17:2.5cm) -- cycle;   
        \filldraw[white,fill opacity=0.5,fill=green!25!white] (0.5,0) -- ({2.5*cos(-32)+0.5},{2.5*sin(-32)}) arc (-32:-15:2.5cm) -- cycle;
        \draw[black,line width=0.5pt] ({2.5*cos(17)+0.5},{2.5*sin(17)}) -- (0.5,0) -- ({2.5*cos(-32)+0.5},{2.5*sin(-32)});        
        \draw[gray!70!white,line width=0.5pt] ({2.5*cos(-15)+0.5},{2.5*sin(-15)}) -- (0.5,0);
        \draw [black,domain=-32:17] plot ({2.5*cos(\x)+0.5}, {2.5*sin(\x)});

        \filldraw (-1,0) circle (2pt);
        \filldraw ({2.5*cos(-49)-3.5},{2.5*sin(-49)}) circle (2pt);
        \filldraw ({2.5*cos(-98)-3.5},{2.5*sin(-98)}) circle (2pt);
        \filldraw ({2.5*cos(-147)-3.5},{2.5*sin(-147)}) circle (2pt);
        \filldraw ({2.5*cos(-196)-3.5},{2.5*sin(-196)}) circle (2pt);
        \filldraw ({2.5*cos(-245)-3.5},{2.5*sin(-245)}) circle (2pt);
        \filldraw ({2.5*cos(-294)-3.5},{2.5*sin(-294)}) circle (2pt);
        \filldraw ({2.5*cos(-343)-3.5},{2.5*sin(-343)}) circle (2pt);
        \filldraw ({2.5*cos(-392)-3.5},{2.5*sin(-392)}) circle (2pt);

        \node [black, font=\bfseries] at (1.75,2) {$\varepsilon =+1$, $\abar= 7$};
        \node [blue, font=\bfseries] at (-1.95,-0.3) {\small $\rotate_{\theta}$};
        \draw[yellow!30!black,line width=0.5pt,-latex] (1.5,0.5).. controls (0.75,1.1) and (-0.5,-0.7) .. (-0.95,-0.7) ;
        \draw[green!50!black,line width=0.5pt,-latex] (1.5,-0.8) .. controls (0.5,-1) and (0,0.2) .. (-0.9,0.4);
        \node [green!50!black, font=\bfseries] at (0.4,-0.9) {$\rotate^8_{\theta}$};
        \node [yellow!30!black, font=\bfseries] at (0.4,0.8) {$\rotate^7_{\theta}$};
        \node [black, font=\bfseries] at (-0.67,-0.1) {\small $v_{0}$};
        \node [black, font=\bfseries] at (-1.4,-2) {\small $v_{-1}$};
        \node [black, font=\bfseries] at (-0.75,0.85) {\small $v_{-7}$};
        \node [black, font=\bfseries] at (-1,-1.45) {\small $v_{-8}$};

        \filldraw[white,fill opacity=0.5,fill=green!25!white] (-3.5,-5.5) -- (-1,-5.5) arc (0:-17:2.5cm) -- cycle;        
        \filldraw[white,fill opacity=0.5,fill=yellow!30!white] (-3.5,-5.5) -- (-1,-5.5) arc (0:32:2.5cm) -- cycle;
        \draw [black,domain=0:360,smooth] plot ({2.5*cos(\x)-3.5}, {2.5*sin(\x)-5.5});        
        \draw [blue,thick,domain=32:-17,-latex] plot ({1.25*cos(\x)-3.5}, {1.25*sin(\x)-5.5});
        \draw[black,line width=0.5pt] ({2.5*cos(32)-3.5},{2.5*sin(32)-5.5}) -- (-3.5,-5.5) -- ({2.5*cos(-17)-3.5},{2.5*sin(-17)-5.5});        
        \draw[gray!70!white,line width=0.5pt] ({2.5*cos(0)-3.5},{2.5*sin(0)-5.5}) -- (-3.5,-5.5);

        \filldraw[white,fill opacity=0.5,fill=yellow!30!white] (0.5,-5.5) -- ({2.5*cos(-17)+0.5},{2.5*sin(-17)-5.5}) arc (-17:15:2.5cm) -- cycle;        
        \filldraw[white,fill opacity=0.5,fill=green!25!white] (0.5,-5.5) -- ({2.5*cos(15)+0.5},{2.5*sin(15)-5.5}) arc (15:32:2.5cm) -- cycle;
        \draw[black,line width=0.5pt] (0.5,-5.5) -- ({2.5*cos(-17)+0.5},{2.5*sin(-17)-5.5}) arc (-17:32:2.5cm) -- cycle;        
        \draw[gray!70!white,line width=0.5pt] ({2.5*cos(15)+0.5},{2.5*sin(15)-5.5}) -- (0.5,-5.5);
        
        \filldraw (-1,-5.5) circle (2pt);
        \filldraw ({2.5*cos(49)-3.5},{2.5*sin(49)-5.5}) circle (2pt);
        \filldraw ({2.5*cos(98)-3.5},{2.5*sin(98)-5.5}) circle (2pt);
        \filldraw ({2.5*cos(147)-3.5},{2.5*sin(147)-5.5}) circle (2pt);
        \filldraw ({2.5*cos(196)-3.5},{2.5*sin(196)-5.5}) circle (2pt);
        \filldraw ({2.5*cos(245)-3.5},{2.5*sin(245)-5.5}) circle (2pt);
        \filldraw ({2.5*cos(294)-3.5},{2.5*sin(294)-5.5}) circle (2pt);
        \filldraw ({2.5*cos(343)-3.5},{2.5*sin(343)-5.5}) circle (2pt);
        \filldraw ({2.5*cos(392)-3.5},{2.5*sin(392)-5.5}) circle (2pt);

        \node [black, font=\bfseries] at (1.75,-3.5) {$\varepsilon=-1$, $\abar = 7$};
        \node [blue, font=\bfseries] at (-1.95,-5.25) {$\rotate_{\theta}$};
        \draw[green!50!black,line width=0.5pt,-latex] (1.5,-4.7).. controls (0.75,-4.1) and (-0.5,-5.9) .. (-0.95,-5.9) ;
        \draw[yellow!30!black,line width=0.5pt,-latex] (1.5,-6) .. controls (0.2,-6.2) and (-0.2,-5) .. (-0.95,-4.8);
        \node [green!50!black, font=\bfseries] at (0.4,-4.45) {$\rotate^8_{\theta}$};
        \node [yellow!30!black, font=\bfseries] at (0.4,-6.2) {$\rotate^7_{\theta}$};
        \node [black, font=\bfseries] at (-0.68,-5.48) {\small $v_{0}$};
        \node [black, font=\bfseries] at (-1.45,-3.55) {\small $v_{-1}$};
        \node [black, font=\bfseries] at (-0.75,-6.4) {\small $v_{-7}$};
        \node [black, font=\bfseries] at (-1,-4) {\small $v_{-8}$};
\end{tikzpicture}
    \caption{Two examples of the first return map on the shaded sector $S_{\theta}$.}
    \label{fig:sector-rotation}
\end{figure}

\begin{proposition}[First return times]
\label{prop:q[n]}
    Let $\sigma = \seq{ (\varepsilon_n, \abar_n)}_{n\geq 1} \in \The$ and $\theta = \mathfrak{X}(\sigma)$.
    Recall the notation $b_n$, $\theta_n$, and $v_n$ associated to $\sigma$ and $\theta$ introduced above.
    Consider the increasing sequence of integers $\{q_{[n]}=q_{[n]}(\sigma)\}_{n\geq 0}$ where
\[
    q_{[0]} = 1, \qquad q_{[1]} = b_1, 
\]
and recursively for $n\geq 2$,
\[
    q_{[n]} = b_n q_{[n-1]} - \varepsilon_{n-1} \varepsilon_n  q_{[n-2]}.
\]
    Then, for $n \geq 1$,
\begin{enumerate}
    \item $\rotate_{\theta}^{q_{[n]}}$ is the rigid rotation by angle $|\theta_0\ldots\theta_{n-1}|\theta_n$;
    \item $q_{[n]}$ is the unique smallest positive integer that satisfies 
    \[
    \left| [v_0, v_{q_{[n]}]}] \right| \leq \frac{1}{2} \left| [v_0, v_{q_{[n-1]}}] \right|,
    \]
    where $|\cdot|$ denotes the normalized angular measure on $\partial \D$;
    \item if we denote 
    \[
        \check{q}_{[n]} := q_{[n]} - \varepsilon_n \varepsilon_{n+1} q_{[n-1]},
    \]
    then the sector $S_{\theta}^n$ is equal to 
    $ \triangle_{\theta} (-q_{[n]}, -\check{q}_{[n]})$, and the first return map of $\rotate_\theta$ back to $S_{\theta}^n$ is the pair of iterates
    \[
        \rotate_\theta^{q_{[n]}}: 
        \triangle_{\theta} (-q_{[n]}, -q_{[n]} - \check{q}_{[n]})
        \to  
        \triangle_{\theta}(0,-\check{q}_{[n]})
    \]
    and
    \[ 
        \rotate_\theta^{\check{q}_{[n]}}: \triangle_{\theta}(-q_{[n]} - \check{q}_{[n]}, -\check{q}_{[n]}) 
        \to 
        \triangle_{\theta}(-q_{[n]}, 0). 
    \]
\end{enumerate}
\end{proposition}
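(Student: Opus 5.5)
We argue by induction on $n$, tracking the nest of sectors $S_\theta^n$ together with the conjugacies $\psi_{\theta_0},\dots,\psi_{\theta_{n-1}}$. The basic input is the one-step computation already carried out for $S_\theta = S^1_\theta$: the first return map of $\rotate_\theta$ to $\triangle_\theta(-\abar-1,-\abar)$ is the pair $(\rotate_\theta^{\abar+1},\rotate_\theta^{\abar})$. Under $\psi_\theta(z)=z^{1/|\theta|}$, this pair projects to $\rotate_{\theta_1}$. The branch that returns to a neighbourhood of $v_0=1$ in the direction of $\rotate_{\theta_1}$ is $\rotate_\theta^{b_1}$. The choice between $\abar_1$ and $\abar_1+1$ is exactly $(1+\varepsilon_1\varepsilon_2)/2$, determined by the sign of $\theta_1$, i.e. by which side of $v_0$ the next sector $S_{\theta_1}$ lies. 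On $\partial\D$ near $1$, $\psi_\theta$ multiplies angles by $1/|\theta|$. Therefore the displacement $|\theta|\theta_1$ of $\rotate_\theta^{b_1}$ maps to the displacement $\theta_1$ of $\rotate_{\theta_1}$. This gives (1) for $n=1$, with $q_{[1]}=b_1$.

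For the inductive step, we apply the $n=1$ statement to $\rotate_{\theta_{n-1}}$ inside $S_{\theta_{n-1}}$ and pull back through $\psi_{\theta_0}^{-1}\circ\dots\circ\psi_{\theta_{n-2}}^{-1}$. The key identity is that one application of $\rotate_{\theta_{n-1}}$ on $S^{n-1}_\theta$ is represented by the first return pair $(\rotate_\theta^{q_{[n-1]}},\rotate_\theta^{\check q_{[n-1]}})$. Hence $b_n$ applications, with the appropriate branches, give $\rotate_\theta^{q_{[n]}}$, where
\[
q_{[n]}=b_nq_{[n-1]}-\varepsilon_{n-1}\varepsilon_n q_{[n-2]}.
\]
Here the correction term comes from $\check q_{[n-1]}=q_{[n-1]}-\varepsilon_{n-1}\varepsilon_nq_{[n-2]}$ being used exactly once. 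To see this, we track which of the two subsectors of $S^{n-1}_\theta$ the orbit visits, which is bookkeeping identical to the $n=1$ figure. Angles are linear under the composed power maps with factor $|\theta_0\cdots\theta_{n-2}|^{-1}$, so the rotation angle is $|\theta_0\cdots\theta_{n-1}|\theta_n$. This proves (1). Part (3) follows in the same pass. The sector $S^n_\theta$ is the pullback of $\triangle_{\theta_{n-1}}(-b_n, -b_n+\varepsilon\cdot)$, and its edges are orbit points $v_{-q_{[n]}}$ and $v_{-\check q_{[n]}}$. The first return map to $S^n_\theta$ is the pullback of the pair for $S_{\theta_{n-1}}$, namely $(\rotate_\theta^{q_{[n]}},\rotate_\theta^{\check q_{[n]}})$, and the subsector boundaries are the pulled-back orbit points.

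For (2), we use (1) to get $|[v_0,v_{q_{[n]}}]|=|\theta_0\cdots\theta_n|$. Since $|\theta_n|<1/2$, this is at most half of $|[v_0,v_{q_{[n-1]}}]|=|\theta_0\cdots\theta_{n-1}|$. For minimality, we take $0<k<q_{[n]}$ and show $|[v_0,v_k]|>\tfrac12|\theta_0\cdots\theta_{n-1}|$. The point $v_k$ must avoid the sector $S^n_\theta$ widened appropriately, because $q_{[n]}$ and $\check q_{[n]}$ are first return times to $S^n_\theta$. We use (3) to identify the arcs $[v_0,v_{-q_{[n]}}]$ and $[v_0,v_{-\check q_{[n]}}]$; one of them has length $|\theta_0\cdots\theta_{n-1}|(1-|\theta_n|)>\tfrac12|\theta_0\cdots\theta_{n-1}|$. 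This is the main obstacle: the first-return property of a sector based at $v_0$ has to be converted into a closest-return statement on both sides of $v_0$. I would handle it by taking $k\le q_{[n]}$ with $v_k$ close to $v_0$ and showing that some point of the segment $v_{k-?},\dots$ enters $S^n_\theta$ earlier than allowed. Uniqueness is immediate from the strict inequalities, since $\theta$ is irrational.
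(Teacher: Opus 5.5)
You leave the minimality half of (2) as an unexecuted plan (``$v_{k-?}$''), and that is the genuine gap. Write $\|x\|$ for the distance from $x$ to $\mathbb{Z}$ and $w=|\theta_0\cdots\theta_{n-1}|$ for the angular width of $S^n_\theta$. Using only the orbit of $v_0$ cannot work, because $v_0$ sits asymmetrically in $S^n_\theta$, at distance $\|q_{[n]}\theta\|=w|\theta_n|$ from the edge $v_{-q_{[n]}}$. Knowing $v_k\notin S^n_\theta$ for $0<k<q_{[n]}$ therefore only gives $\|k\theta\|>w|\theta_n|$ on that side.

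The missing idea is to use the first-return property from (3) at all points of $S^n_\theta$ at once. Suppose $0<k<q_{[n]}$ and $\|k\theta\|\le w/2$. Then $S^n_\theta\cap\rotate_\theta^{-k}(S^n_\theta)$ is an arc of width $w-\|k\theta\|\ge w/2$. The piece $\triangle_\theta(-q_{[n]}-\check q_{[n]},-\check q_{[n]})$, where the return time is $\check q_{[n]}$, is isometric to its image $\triangle_\theta(-q_{[n]},0)$. By (1) that image has width $w|\theta_n|<w/2$. So the overlap arc contains a point of the other piece, whose first return time is $q_{[n]}>k$. Yet this point lands in $S^n_\theta$ at time $k$, a contradiction. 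This treats the cases $\check q_{[n]}<q_{[n]}$ and $\check q_{[n]}>q_{[n]}$ uniformly and makes (2) a corollary of (1) and (3).

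Your induction for (1) and (3) is sound in outline, but its only real content is asserted, not shown: that exactly one of the $b_n$ level-$(n-1)$ steps costs $\check q_{[n-1]}$. Make two facts explicit.
\begin{enumerate}
\item By (3) at level $n-1$, a step of $\rotate_{\theta_{n-1}}$ costs $q_{[n-1]}$ if it does not cross the slit (the ray through $\rotate_{\theta_{n-1}}^{-1}(1)$), and $\check q_{[n-1]}$ if it does.
\item Since $b_n\theta_{n-1}=\varepsilon_n+|\theta_{n-1}|\theta_n$, an orbit segment of length $b_n$ (resp.\ $b_n-\varepsilon_n\varepsilon_{n+1}$) starting in the corresponding piece of $S_{\theta_{n-1}}$ winds once around the circle and crosses the slit exactly once.
\end{enumerate}
This gives the return times $q_{[n]}$ and $\check q_{[n]}$ simultaneously. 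The edges of $S^n_\theta$ are then $v_{-q_{[n]}}$ and $v_{-\check q_{[n]}}$, because each branch sends the adjacent edge to $v_0$. Also, your ``$-b_n+\varepsilon\cdot$'' should read $-b_n+\varepsilon_n\varepsilon_{n+1}$.
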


Essentially, item (3) of the proposition above says that $\rotate_\theta^{q_{[n]}}|_{S_{\theta}^n}$ is the $n$\textsuperscript{th} pre-renormalization of $\rotate_\theta$:
\[
    \rotate_{\theta_n} = \rotate_\theta^{q_{[n]}}|_{S_{\theta}^n} \Big/ \rotate_\theta^{q_{[n-1]}}.
\]

\begin{example}
    The golden mean irrationals in $\Irrat$ are $\theta_{\textnormal{gm}} = \frac{3-\sqrt{5}}{2}$ and $-\theta_{\textnormal{gm}} = \frac{\sqrt{5}-3}{2}$. We have
    \begin{align*}
        \mathfrak{X}^{-1}(\theta_{\textnormal{gm}}) &= \seq{ (+,2), (+,2), (+,2), (+,2),\ldots },\\
        \mathfrak{X}^{-1}(-\theta_{\textnormal{gm}}) &= \seq{ (-,2), (-,2), (-,2), (-,2), \ldots }.
    \end{align*}
    The corresponding modified continued fraction expansions are
    \[
        \theta_{\textnormal{gm}} = \cfrac{1}{3 - \frac{1}{3 - \frac{1}{3 - \ldots}}} \quad \text{ and } \quad 
        -\theta_{\textnormal{gm}} = \cfrac{1}{-3 - \frac{1}{-3 - \frac{1}{-3 - \ldots}}}.
    \]
    Both $\theta_{\textnormal{gm}}$ and $-\theta_{\textnormal{gm}}$ have the same first return times, namely
    \[
        q_{[0]} = 1, \quad q_{[1]} = 3, \quad q_{[2]}=8, \quad q_{[3]} = 21, \quad q_{[4]} = 55, \quad q_{[5]} = 144, \quad \ldots.
    \]
\end{example}

\subsection{The parabolic compactification of irrationals}
\label{ss:compactification}

Let $\overline{\N} = \N \cup \{\infty\}$, where the addition of the point $\displaystyle \infty = \lim_{n \to \infty} n$ makes it a compactification of the discrete space $\N$.
Let 
\begin{equation}
    \label{eq:dfn.ovlTheta}
\overline{\Sigma} := \{-1,+1\} \times \overline{\N}_{\geq 2} \qquad \text{ and } \qquad
    \TheCpt := \overline{\Sigma}^{\N}.
\end{equation}
The infinite sequence space $\TheCpt$ is a Cantor set. The inclusion map $\iota: \The \xhookrightarrow[]{} \TheCpt$ has a dense image.
Through $\iota \circ \mathfrak{X}^{-1}$, the space $\TheCpt$ together with the shift map $\shift$ is a compactification of the dynamical system $(\Irrat,\gauss)$.

\begin{theorem}[Universal property of $\TheCpt$]\label{thm:ovlTheta:univers}
    The embedding 
    \[
    \bar{\iota} :=\iota \circ \mathfrak{X}^{-1}: \Irrat \to \TheCpt
    \]
    is the smallest compactification of $\Irrat$ satisfying the following properties.
    The dynamical system $(\Theta,\gauss)$ extends to a continuous self-map on $\TheCpt$, which is $(\TheCpt, \shift)$, and various natural embeddings of $\Irrat$ into some compact spaces extend continuously.
\end{theorem}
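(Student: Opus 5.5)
The plan is to make precise the two halves of the statement, existence and minimality, and to treat the first-return data $\theta \mapsto (\varepsilon(\theta), \abar(\theta))$ as the basic "natural embedding" from which everything else is generated.

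\textbf{Existence.} First, $\TheCpt = \overline{\Sigma}^{\N}$ is compact and metrizable, since it is a countable product of the compact spaces $\{-1,+1\}\times\overline{\N}_{\geq 2}$. By Theorem~\ref{thm:X.theta}, $\bar\iota = \iota\circ\mathfrak{X}^{-1}$ is a homeomorphism onto $\The$. Its image is dense because every cylinder in $\TheCpt$, including those with $\abar_n=\infty$, contains finite-valued sequences. The shift $\shift$ is continuous on $\TheCpt$, and by Theorem~\ref{thm:X.theta} it restricts to $\gauss$ on $\bar\iota(\Irrat)$. Next, I would check that the natural maps extend continuously:
\begin{itemize}
\item the rotation number map $\bar\mu\colon\TheCpt\to\T$;
\item the return-time data $q_{[n]}$ of Proposition~\ref{prop:q[n]}, viewed in $\overline{\N}$ or through the continuant group;
\item the coordinate data $(\varepsilon,\abar)$.
\end{itemize}
For $\bar\mu$, if $\abar_n\to\infty$ along a sequence, then the tail $\cfrac{1}{\varepsilon_n b_n-\cdots}$ in \eqref{eqn:symbolic-irrational} tends to $0$ uniformly, because $|\mathfrak{X}|<1/2$ on every tail. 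So the continued fraction truncates continuously to a finite (rational, parabolic) value. This gives continuity at points having some $\abar_n=\infty$, by an induction on the first index $n$ with $\abar_n=\infty$ and the uniform contraction of the Möbius maps $t\mapsto 1/(\varepsilon b - t)$ on $[-1/2,1/2]$.

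\textbf{Minimality.} Let $j\colon\Irrat\to K$ be any compactification satisfying the listed properties. That is, $\gauss$ extends to a continuous $\hat\gauss\colon K\to K$, and the natural first-return map $\phi=(\varepsilon,\abar)\colon\Irrat\to\overline{\Sigma}$ extends to a continuous $\hat\phi\colon K\to\overline{\Sigma}$. Define
\[
\Pi\colon K\to\TheCpt,\qquad \Pi(x)=\seq{\hat\phi(\hat\gauss^{\,n-1}x)}_{n\geq1}.
\]
Each coordinate of $\Pi$ is continuous, so $\Pi$ is continuous. By Theorem~\ref{thm:X.theta}, $\Pi\circ j=\bar\iota$, and $\Pi$ intertwines $\hat\gauss$ with $\shift$. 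Since $\Pi(K)$ is compact and contains the dense set $\bar\iota(\Irrat)$, $\Pi$ is surjective. Hence $\TheCpt$ is a quotient of $K$ under $\Irrat$, which is exactly what "smallest" means. It remains to check that no proper quotient of $\TheCpt$ still has the properties. If two distinct points $\ttheta\neq\ttheta'$ were identified, they would differ at some first coordinate $n$. Then $\hat\phi\circ\shift^{n-1}$ separates them, so the quotient could not carry a continuous extension of $\phi$ compatible with $\gauss$.

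\textbf{Main obstacle.} The hardest part is pinning down exactly which "natural embeddings" are required, so that both directions hold at once. On one side, $\phi$ (or something from which $\phi$ is recoverable continuously) must be included, since otherwise minimality fails; for example $\bar\mu$ alone collapses different parabolic enrichments, as noted in Remark~\ref{rem:comb.eq vs mult}. On the other side, each natural map must actually extend to $\TheCpt$. I would first try taking the family to consist of $\phi$, $\bar\mu$ and the multiplier map. I would then verify the continuity of $\bar\mu$ at the points with $\abar_n=\infty$ by the uniform-contraction argument above; this is the only genuinely analytic step.
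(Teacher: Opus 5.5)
\textbf{There is a genuine gap in the minimality half.} The paper does not prove this statement itself. It quotes it from \cite[Theorem B]{Lim26a}, where the list of ``natural embeddings'' is fixed.

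Your existence half is fine. Compactness, density of $\bar\iota(\Irrat)$, $\shift$ extending $\gauss$, and continuity of $\bar\mu$ via the uniform contraction of $t\mapsto 1/(\varepsilon b-t)$ are all correct.

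The problem is that you put the coding map $\phi=(\varepsilon,\abar)$ itself among the natural embeddings. It is not an embedding, since it is not injective. Once it is allowed, the statement becomes a tautology. $\TheCpt$ is by construction the closure of the itinerary embedding $\theta\mapsto\seq{\phi(\gauss^{n-1}\theta)}_{n\geq 1}$, and your map $\Pi$ merely restates this. The content of the theorem is that genuinely natural embeddings force this compactification. That is, $\phi$ must be continuously recoverable from them on every compactification on which they and $\gauss$ extend.

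The rest of your proposed family does not achieve this. $\bar\mu$ and the multiplier $e^{2\pi i\theta}$ carry the same information. Take the points $\seq{(+,\infty),(\varepsilon_2,\abar_2),(\varepsilon_3,\abar_3),\ldots}$ and $\seq{(-,\infty),(\varepsilon_2,\abar_2),(\varepsilon_3,\abar_3),\ldots}$. Both have $\bar\mu=0$ and the same image under $\shift$. The same happens for $\seq{(+,2),(-,\infty),\ldots}$ and $\seq{(-,2),(+,\infty),\ldots}$ with a common tail, both at $\bar\mu=1/2$. Hence the closure of $\theta\mapsto(\mu(\gauss^{n}\theta))_{n\geq 0}$ in $\T^{\N}$ is a strictly smaller compactification on which $\gauss$ and $\mu$ still extend continuously. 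This is exactly why, as the introduction remarks, equality of multiplier sequences is strictly weaker than combinatorial equivalence. So with $\phi$ removed your minimality claim is false, and with $\phi$ kept it is empty.

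What is missing is an embedding that remembers the side from which a parabolic parameter is approached, i.e.\ the direction of parabolic implosion. You also need a lemma that $(\varepsilon_1,\abar_1)$ is a locally constant function of its continuous extension, including at points with $\abar_1=\infty$.

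One concrete choice is $\theta\mapsto$ the cyclic order of $\{k\theta \bmod 1\}_{k\in\Z}$, viewed in the compact space of cyclic orders on $\Z$.
\begin{itemize}
\item $\varepsilon_1$ is the orientation of the triple $(0,\theta,2\theta)$.
\item $\abar_1\geq M$ holds exactly when the cyclic order on $\{0,1,\ldots,M\}$ is monotone.
\end{itemize}
Both are therefore determined by finitely many triples and extend continuously to the closure. Limits of first pre-renormalizations or Lavaurs maps would play the same role, as in the proof of Proposition~\ref{prop:continuity-of-combinatorics}.

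You would still need to check that such an embedding extends continuously to $\TheCpt$. For the cyclic order, this amounts to showing that the cyclic order of any finitely many points $k\theta$ is locally constant on $\TheCpt$. That holds because fixing $\varepsilon_n$ at a parabolic position fixes the side from which the limiting rational is approached. With this lemma, your itinerary map $\Pi$ does give minimality; without it, your argument proves only a trivial reformulation of the statement.
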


Refer to \cite[Theorem B]{Lim26a} for the details of these natural embeddings.
Roughly, this formal theorem states that $\TheCpt$ is the most appropriate compactification of the irrationals that takes into account parabolic implosion.
For this reason, $\TheCpt$ is called the \emph{parabolic compactification} of the space of irrationals $\Irrat$.

\begin{definition}
    An element $\ttheta = \seq{ (\varepsilon_n, \abar_n) }_{n\geq 1}$ of $\TheCpt$ is said to be \emph{irrational} if $\abar_n < \infty$ for all $n \geq 1$, \emph{enriched rational} otherwise.
\end{definition}

Here is another useful property of $\TheCpt$.

\begin{proposition}[The rotation number map]
\label{prop:rotation-number-map}
    Under $\bar{\iota}$, the embedding $\mu: \Irrat \to \T$, $ \theta \mapsto \theta \textnormal{ (mod } 1)$ into the circle $\T=\R/\Z$ extends to a surjective continuous map
\[
    \bar{\mu} : \TheCpt \to \T,
\]
    called the rotation number map.
    For every rational number $\theta$ in $\mathbb{T}$, $\bar{\mu}^{-1}(\theta)$ is homeomorphic to $\TheCpt$ itself.
\end{proposition}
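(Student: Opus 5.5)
The plan is to define $\bar{\mu}$ explicitly on finite-depth truncations and then check continuity, surjectivity, and the fiber structure. For $\ttheta = \seq{(\varepsilon_n,\abar_n)}_{n\geq 1} \in \TheCpt$, let $N = N(\ttheta) \in \N \cup \{\infty\}$ be the first index with $\abar_N = \infty$.

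\textbf{Definition of $\bar{\mu}$.} If $N=\infty$, set $\bar{\mu}(\ttheta) = \mathfrak{X}(\ttheta) \bmod 1$, using Theorem~\ref{thm:X.theta}. If $N<\infty$, define $\bar{\mu}(\ttheta)$ as the rational value of the terminated continued fraction~\eqref{eqn:symbolic-irrational}, setting the tail $\frac{1}{\varepsilon_N b_N - \cdots}$ to $0$. Equivalently, in the recursion
\[
\theta_{n-1} = \frac{1}{\varepsilon_n b_n - \theta_n},
\]
we put $\theta_{N-1} = 0$ and unwind the Möbius maps $T_n(x) = 1/(\varepsilon_n b_n - x)$. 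Here $b_{N-1}$ still depends on $\varepsilon_N$, so we must check that the resulting value does not depend on $\varepsilon_N$. This uses the boundary identity $T_{n}(\pm\tfrac12)$: the two possible endpoint conventions give the same rational.

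\textbf{Continuity.} Fix $\ttheta$ and an approximant $\ttheta'$ agreeing with it on the first $k$ symbols. Then both $\bar{\mu}(\ttheta)$ and $\bar{\mu}(\ttheta')$ lie in
\[
T_1\circ\cdots\circ T_k\bigl([-\tfrac12,\tfrac12]\bigr).
\]
If $N>k$ for every $k$ considered, these composed intervals shrink to a point, by the standard contraction of modified continued fractions. The constraint $b_n \geq 2$, together with the sign bookkeeping, keeps the iterated Möbius maps contracting. This is where Proposition~\ref{prop:q[n]}(1) gives the explicit length $|\theta_0\cdots\theta_{k-1}| \to 0$. If $N \leq k$, the key point is that when $\abar_N = m \to \infty$, we have $\theta_{N-1} \in [-\tfrac{1}{m}, \tfrac{1}{m}]$, so $\theta_{N-1} \to 0$ regardless of all later symbols. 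Applying the fixed continuous map $T_1\circ\cdots\circ T_{N-1}$ then gives continuity at $\ttheta$. Since $\TheCpt$ is compact metrizable, sequential continuity suffices.

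\textbf{Surjectivity and the fiber over a rational.} Surjectivity follows because $\bar{\mu}(\TheCpt)$ is compact and contains the dense set $\Irrat \bmod 1$. For a rational $p/q$, I expect its finite modified expansion to be unique: the shortest word $w = (\varepsilon_1,\abar_1),\dots,(\varepsilon_{N-1},\abar_{N-1})$ of Gauss-like steps reaching $0$, with $\abar_N = \infty$. The main obstacle is exactly this claim that the fiber over $p/q$ is
\[
\{\ttheta : \ttheta \text{ begins with } w \text{ followed by } (\pm,\infty)\}.
\]
Two difficulties arise. One is the endpoint ambiguity $\pm\tfrac12$ and the $\varepsilon_N$ choice. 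The other is that elements with a finite prefix whose Möbius image hits $p/q$ only in the limit must be excluded. I would handle this by showing the open intervals $T_1\circ\cdots\circ T_k\bigl((-\tfrac12,\tfrac12)\bigr)$ for distinct prefixes of the same length are disjoint. Then only the prefix $w$ can produce $p/q$, except through the $\varepsilon_N$ freedom.

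Granting the fiber description, it equals $\{\pm\} \times \{\infty\} \times \TheCpt$, which is a prefix-fixed cylinder. Possibly it is further identified by the $\varepsilon_N$ freedom, which only doubles it. Since $\TheCpt$ is a Cantor set, $\{\pm\}\times\TheCpt \cong \TheCpt$, for example by absorbing the sign into the first coordinate alphabet. Hence the fiber is homeomorphic to $\TheCpt$ by Brouwer's characterization of the Cantor set.
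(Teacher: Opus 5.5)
The paper does not prove this proposition itself; it defers to \cite{Lim26a}. So I judge your argument on its own merits. Your continuity and surjectivity steps are essentially sound, but the treatment of $\varepsilon_N$ and the fiber analysis contain a genuine error.

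The terminated value is \emph{not} independent of $\varepsilon_N$, and continuity forbids it from being so. Irrationals with $\theta\to\tfrac13^{+}$ have codes $\seq{(+,2),(+,m),\dots}$, and those with $\theta\to\tfrac13^{-}$ have codes $\seq{(+,3),(-,m),\dots}$, where $m\to\infty$. Hence $\bar{\mu}$ must send both $\seq{(+,2),(+,\infty),\dots}$ and $\seq{(+,3),(-,\infty),\dots}$ to $\tfrac13$. By contrast, $\seq{(+,3),(+,\infty),\dots}$, for which $b_1=4$, goes to $\tfrac14$. The identity $\frac{1}{c-\frac12}=\frac{1}{(c-1)+\frac12}$ that you invoke concerns an intermediate $\theta_n=\pm\tfrac12$. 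At the terminal step $\theta_{N-1}=0$ one gets $T_{N-1}(0)=1/(\varepsilon_{N-1}b_{N-1})$, which really does change with $\varepsilon_N$.

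Consequently your fiber description is wrong. Over $\tfrac13$ the fiber is the union of two cylinders: sequences beginning with $(+,3),(-,\infty)$, and sequences beginning with $(+,2),(+,\infty)$. It is not the Gauss word $w=\seq{(+,3)}$ followed by $(\pm,\infty)$. Your final count of two cylinders is right only by accident.

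The step you planned for uniqueness of the prefix also fails. Nonzero rationals are exactly the common endpoints of adjacent closed cylinder intervals, so they are reached from two different prefixes, and disjointness of open intervals cannot exclude that. Moreover, the sets $T_1\circ\cdots\circ T_k\bigl((-\tfrac12,\tfrac12)\bigr)$ depend only on the integers $\varepsilon_j b_j$. These coincide for $(\abar_k,\varepsilon_{k+1})=(a,-\varepsilon_k)$ and $(a-1,\varepsilon_k)$, so the sets are not even disjoint across distinct prefixes. Relatedly, $T_k$ is not determined by the first $k$ symbols. In your continuity step you should use $T_1\circ\cdots\circ T_{k-1}$ applied to $\varepsilon_k[\frac{1}{\abar_k+1},\frac{1}{\abar_k}]$.

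The repair is that you do not need to identify the fiber at all.
\begin{itemize}
\item Define $\bar{\mu}$ with $b_{N-1}$ computed from the actual $\varepsilon_N$. Your continuity argument then goes through, since product neighbourhoods fix $\varepsilon_N$.
\item Since $\mathfrak{X}$ takes only irrational values, the fiber over a rational consists of enriched rationals.
\item $\bar{\mu}(\ttheta)$ depends only on the first $N$ symbols. So together with any such $\ttheta$, the fiber contains every sequence agreeing with $\ttheta$ in those symbols.
\item That cylinder is a copy of $\TheCpt$, so the fiber has no isolated points.
\item The fiber is also nonempty (by surjectivity), closed in $\TheCpt$ hence compact, metrizable and totally disconnected.
\end{itemize}
By Brouwer's theorem it is therefore a Cantor set, hence homeomorphic to $\TheCpt$. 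If you want the explicit picture, induction on the length of the finite Gauss-like expansion shows that each rational fiber is exactly two such cylinders, one for each side of approach.
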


Dynamically, the set $\bar{\mu}^{-1}(\theta)$ in Proposition~\ref{prop:rotation-number-map} represents all possible Lavaurs enrichment combinatorics of a map with a parabolic fixed point.

\begin{definition}
\label{def:time-group}
For every $\ttheta = \seq{ (\varepsilon_n, \abar_n) }_{n \geq 1} \in \TheCpt$, denote
\[
    b_n = b_n(\ttheta) = 
    \begin{cases}
        \abar_n + \cfrac{1+ \varepsilon_n \varepsilon_{n+1}}{2} & \text{ if } \abar_n < \infty, \\
        \infty & \text{ if } \abar_n = \infty.
    \end{cases}
\]
Define an additive abelian group $\Kont_{\ttheta}$, called the \emph{continuant group of $\ttheta$}, to be the group generated by the infinite sequence 
\[
\{\qq_{[n]} = \qq_{[n]}(\ttheta)\}_{n \geq 0}
\]
subject to the relations
\[
    \qq_{[n]} = b_n \qq_{[n-1]} - \varepsilon_{n-1} \varepsilon_n  \qq_{[n-2]} \qquad \text{ for all } n \geq 1 \text{ with } \abar_n < \infty.
\]
In the above, we set $\qq_{[-1]} = 0$ is the identity element.
Each $\qq_{[n]}$ will be referred to as the \emph{$n$\textsuperscript{th} return time} of $\ttheta$.
\end{definition}

In the special case where $\ttheta$ is irrational, $\qq_{[0]} \mapsto 1$ induces an isomorphism between $\Kont_{\ttheta}$ and $\Z$ that sends each $\qq_{[n]}$ to the number $q_{[n]}$ coming from Proposition \ref{prop:q[n]}.

\begin{proposition}[Time order]
\label{prop:chronological-order-01}
    For every $\ttheta = \seq{ (\varepsilon_n, \abar_n) }_{n \geq 1} \in \TheCpt$, the continuant group $\Kont_{\ttheta}$ admits a unique translation-invariant total order $<$, called the time order of $\Time_{\ttheta}$, with the property that for all $n \geq 0$,
    \begin{enumerate}
        \item $0 < \qq_{[n]} < \qq_{[n+1]}$,
        \item if $\abar_{n+1} = \infty$, then $k \qq_{[n]} < \qq_{[n+1]}$ for all $k \geq 1$.
    \end{enumerate}
\end{proposition}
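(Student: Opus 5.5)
The plan is to first describe $\Kont_{\ttheta}$ explicitly as a free abelian group, then build the order lexicographically, and finally show that conditions (1)–(2) leave no freedom.

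\emph{Step 1: structure of $\Kont_{\ttheta}$.} Let $J_0=0$ and let $J_1<J_2<\dots$ be the indices $n\ge1$ with $\abar_n=\infty$; there may be finitely or infinitely many, or none. For such $n$ there is no relation, so $\qq_{[n]}$ is a new generator. For every other $n$, the relation expresses $\qq_{[n]}$ through $\qq_{[n-1]},\qq_{[n-2]}$.

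By induction on $n$, every $\qq_{[n]}$ is an integer combination of the free generators $e_j:=\qq_{[J_j]}$ with $J_j\le n$. Since the relations only eliminate the non-free generators, we get
\[
\Kont_{\ttheta}\cong\bigoplus_j \Z e_j .
\]
More precisely, let $J=J_j$ be the largest free index $\le n$. Then
\[
\qq_{[n]}=c_n\,e_j+(\text{element of }\Kont_{<J}),
\]
where $\Kont_{<J}$ is the subgroup generated by $\qq_{[i]}$ with $i<J$. The integers $c_n$ satisfy the recursion
\[
c_J=1,\qquad c_{J-1}=0,\qquad c_n=b_nc_{n-1}-\varepsilon_{n-1}\varepsilon_nc_{n-2}.
\]

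\emph{Step 2: existence.} I define the order so that $x>0$ iff the coefficient of the highest $e_j$ appearing in $x$ is positive. In addition, inside the bottom block $\Kont_{<J_1}$ (in the irrational case, all of $\Kont_{\ttheta}$), I use the ordering pulled back from $\Z$ via $\qq_{[0]}\mapsto1$. This order is total and translation invariant.

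Condition (2) is immediate: $\qq_{[n+1]}$ is a new top generator, while $k\qq_{[n]}$ lies in the lower subgroup.

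For condition (1) I must show that $c_n$ is positive and strictly increasing in $n$ within each block. The same is needed in the bottom block for the integers $\qq_{[n]}\mapsto q_{[n]}$. The recursion for $c_n$ starting from $(c_{J-1},c_J)=(0,1)$ is exactly the recursion of Proposition~\ref{prop:q[n]} for the shifted sequence $\seq{(\varepsilon_{J+k},\abar_{J+k})}_{k\ge1}$. I can replace the later infinite entries by finite ones (say $2$) without changing $c_n$ for $n$ inside the block, since those entries are not used there. Then $c_n=q_{[n-J]}$ for an honest irrational, which Proposition~\ref{prop:q[n]} gives as an increasing sequence of positive integers. This handles $0<\qq_{[n]}<\qq_{[n+1]}$ both inside a block and at a block's start.

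One point needs care: when the top coefficients of $\qq_{[n]}$ and $\qq_{[n+1]}$ are equal. This cannot happen, because $c$ is strictly increasing. I expect this bookkeeping, in particular checking that $b_n$ and the sign in the relation match Proposition~\ref{prop:q[n]} after the shift, to be the main routine obstacle.

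\emph{Step 3: uniqueness.} Let $\prec$ be any translation-invariant total order satisfying (1)–(2). I show by induction on the top free index that $x>0$ implies $x\succ0$.

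In the bottom block, $\Kont_{<J_1}\cong\Z$ is generated by $\qq_{[0]}\succ0$, so $\prec$ is forced there.

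Now take $x$ whose top free generator is $e_j=\qq_{[J]}$ with coefficient $c>0$, and write $x=c\,\qq_{[J]}+y$ with $y\in\Kont_{<J}$. The element $y$ is an integer combination $\sum_{i<J}m_i\qq_{[i]}$. By (1), $0\prec\qq_{[i]}\preceq\qq_{[J-1]}$ for all $i<J$, so $-k\qq_{[J-1]}\preceq y$ with $k=\sum|m_i|$. Then (2), applied with $n=J-1$, gives
\[
x\succeq \qq_{[J]}-k\qq_{[J-1]}\succ0.
\]
If the top coefficient is zero, the induction hypothesis applies to the lower subgroup. Hence $\prec$ coincides with the order built in Step 2.
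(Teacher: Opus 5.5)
Your proof is correct. The paper does not prove this proposition in the text, since Section~\ref{sec:combinatorics} defers all of its proofs to \cite{Lim26a}, so there is no in-paper argument to compare against. Your route is a complete, self-contained proof. It presents $\Kont_{\ttheta}$ as the free abelian group on $\qq_{[0]}$ and the parabolic generators $\qq_{[J_j]}$, orders it lexicographically with later generators dominant, and forces uniqueness via $x\succeq \qq_{[J]}-k\,\qq_{[J-1]}\succ 0$.

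The same estimate also shows why this is the natural order. Under the specialization $\qq_{[n]}\mapsto q_{[n]}(\theta)$ at irrationals $\theta$ near $\ttheta$ in $\TheCpt$, one has $q_{[J]}(\theta)\ge (b_J(\theta)-1)\,q_{[J-1]}(\theta)$ with $b_J(\theta)\to\infty$. This swamps any fixed combination of lower return times, so positivity in $\Kont_{\ttheta}$ is detected by positivity of these integer specializations. The bi-infinite analogue of this is what the paper uses later, for $P^{\#}_k$ in Section~\ref{sec:transcendental-dynamics}.

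A few simplifications are possible.
\begin{itemize}
\item The extra clause about the bottom block in Step~2 is redundant. The lexicographic order already restricts to the order of $\Z$ on $\Z\,\qq_{[0]}$.
\item Step~3 needs no induction. Your displayed estimate directly handles every $x>0$ whose top generator is $\qq_{[J_j]}$ with $j\ge 1$, and the case $j=0$ is the bottom block.
\item You can drop the replacement trick and the appeal to Proposition~\ref{prop:q[n]}, whose proof is likewise only cited. Monotonicity of $c_n$ follows in two lines. Set $d_n=c_n-c_{n-1}$, so $d_J=1$. For $n>J$ in the block, $d_n=(b_n-1)c_{n-1}+c_{n-2}$ if $\varepsilon_{n-1}\varepsilon_n=-1$, and $d_n\ge d_{n-1}$ if $\varepsilon_{n-1}\varepsilon_n=+1$. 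This uses only $b_n\ge \abar_n\ge 2$ and $c_{J-1}=0$.
\end{itemize}
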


\begin{definition}
    We define the \emph{time semigroup} of $\ttheta$ to be the commutative semigroup equal to the positive cone of $<$, i.e.
    \[
    \Time_{\ttheta} := \{P \in \Kont_{\ttheta} \: : \: P > 0 \},
    \]
\end{definition}

Unlike the continuant group, the set of generators of the time semigroup $\Time_{\ttheta}$ is more complicated, e.g.
    \[
        \{\qq_{[0]}\} \cup \{ \qq_{[n+1]} - \qq_{[n]} \}_{n \geq 0} \cup \bigcup_{\abar_{n+1} = \infty} \{ \qq_{[n+1]} - k \qq_{[n]} \}_{k \geq 2}.
    \]
In \S\ref{ss:renormalization-towers}, this time semigroup will be used to parametrize the semigroup of pre-renormalizations of neutral quadratic polynomials.

\subsection{The natural extension}
\label{ss:natural-extension}

Consider the bi-infinite sequence space
\[
    \TheBiCpt := \overline{\Sigma}^{\Z} = 
    \left\{ \seq{ (\varepsilon_n, \abar_n) }_{n\in \Z} \: : \: 
    \varepsilon_{n} \in \{-1,+1\},
    \abar_n \in \overline{\N}_{\geq 2} \text{ for all } n \right\},
\]
equipped with the infinite product topology. 
The projection map
\[
    \textnormal{proj}: \TheBiCpt \to \TheCpt, \quad 
    \seq{ \ldots, (\varepsilon_{-1}, \abar_{-1}), (\varepsilon_0, \abar_0); (\varepsilon_1, \abar_1), \ldots } \mapsto \seq{ (\varepsilon_n, \abar_n) }_{n\geq 1}
\]
is continuous and the shift map $\shift: \TheBiCpt \to \TheBiCpt$ given by
\[
\shift( \seq{ \ldots, (\varepsilon_{-1}, \abar_{-1}), (\varepsilon_0, \abar_0); (\varepsilon_1, \abar_1), \ldots } ) = \seq{ \ldots, (\varepsilon_{0}, \abar_{0}), (\varepsilon_1, \abar_1); (\varepsilon_2, \abar_2), \ldots }
\]
is a homeomorphism.
In other words, $\shift: \TheBiCpt \to \TheBiCpt$ is the natural extension of the shift map $\shift: \TheCpt \to \TheCpt$.

\begin{definition}
    An element $\tttheta = \seq{(\varepsilon_n, \abar_n)}_{n \in \Z}$ of $\TheBiCpt$ is called \emph{irrational} if $\abar_n < \infty$ for all $n$, \emph{enriched rational} if otherwise.
\end{definition}

\subsubsection{The continuant group}
\label{sss:continuant-group}

\begin{definition}
\label{def:time-group-bi-inf}
    For every $\tttheta = \seq{ (\varepsilon_n, \abar_n) }_{n \in \Z} \in \TheBiCpt$, 
    denote
\[
    b_n = b_n(\ttheta) = 
    \begin{cases}
        \abar_n + \cfrac{1+ \varepsilon_n \varepsilon_{n+1}}{2} & \text{ if } \abar_n < \infty, \\
        \infty & \text{ if } \abar_n = \infty.
    \end{cases}
\]
    Again, we define the \emph{continuant group} $\Kbold_{\tttheta}$ of $\tttheta$ to be the additive abelian group generated by the bi-infinite sequence of symbols $\{Q_{[n]}= Q_{[n]}(\tttheta)\}_{n\in\Z}$ subject to the relations
\[
    Q_{[n]} = b_n Q_{[n-1]} - \varepsilon_{n-1} \varepsilon_n  Q_{[n-2]} 
    \qquad \text{ for all }
    n \in \Z \text{ with } \abar_n < \infty.
\]
    We will call $Q_{[n]}$ the \emph{$n$\textsuperscript{th} return time} of $\tttheta$.
\end{definition}

In what follows, we will fix $\tttheta = \seq{ (\varepsilon_n, \abar_n) }_{n \in \Z} \in \TheBiCpt$.

\subsubsection{Conversion}
\label{sss:conversion}

The group $\Kbold_{\tttheta}$ admits another generating set 
\[
\{Q_n=Q_n(\tttheta)\}_{n\in\Z}
\]
and the relations amongst the $Q_n$'s are governed by a bi-infinite sequence 
\[
\{a_n = a_n(\tttheta)\}_{n\in\Z} \in \overline{\N}^{\Z}.
\]
Each $Q_n$ will be called the $n$\textsuperscript{th} \emph{slow return time} of $\tttheta$.
The conversion 
\[
(\varepsilon_n, \abar_n,Q_{[n]}) \leadsto (a_n,Q_n)
\]
is as follows.
\begin{itemize}
    \item Consider the strictly increasing sequence of integers $\{t_n\}_{n \in \Z}$ defined inductively as follows.
    First, we set $t_0 = 0$.
    Then, for $n \geq 1$,
    we set
\[
    t_n = \begin{cases}
        t_{n-1}+1 & \text{ if } \varepsilon_{n} \varepsilon_{n+1} = -,\\
        t_{n-1}+2 & \text{ if } \varepsilon_{n} \varepsilon_{n+1} = +,
    \end{cases}
\]
and 
\[
    t_{-n} = \begin{cases}
        t_{-n+1}-1 & \text{ if } \varepsilon_{-n+1} \varepsilon_{-n+2} = -,\\
        t_{-n+1}-2 & \text{ if } \varepsilon_{-n+1} \varepsilon_{-n+2} = +.
    \end{cases}
\]
    \item Consider the sequence of positive integers $\{a_n\}_{n\in \Z}$ where for all $n \in \Z$,
\begin{itemize}
    \item if $\varepsilon_n \varepsilon_{n+1} = -$, set $a_{t_n + 1} = \abar_{n+1}$;
    \item if $\varepsilon_n \varepsilon_{n+1} = +$, set $a_{t_n} = 1$ and $a_{t_n + 1} = \abar_{n+1}-1$.
\end{itemize}
\end{itemize}
Then, for all $n \in \Z$, we set
\[
    Q_{t_n} := Q_{[n]},
\]
and whenever $\varepsilon_n \varepsilon_{n+1} = +$, we set 
\[
    Q_{t_n-1} := Q_{[n]}-Q_{[n-1]}.
\]
The corresponding set of relations for this new generating set is simpler: for every $m \in \Z$ with $a_m < \infty$, we have
\[
    Q_m = a_m Q_{m-1} + Q_{m-2}.
\]

\subsubsection{The time semigroup}

Similar to Proposition \ref{prop:chronological-order-01}, we have:

\begin{proposition}[Time order]
\label{prop:time-order}
    The group $\Kbold_{\tttheta}$ admits a unique translation-invariant total order $<$, called the time order, such that for all $n \in \Z$,
    \begin{enumerate}
        \item $0 < Q_{[n]} < Q_{[n+1]}$,
        \item if $\abar_{n+1} = \infty$, then $k Q_{[n]} < Q_{[n+1]}$ for all $k \geq 1$.
    \end{enumerate}
\end{proposition}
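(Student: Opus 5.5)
We construct the order explicitly: first determine the structure of $\Kbold_{\tttheta}$ from the relations, then build the order as a limit of the one-sided orders of Proposition~\ref{prop:chronological-order-01}, checking existence and uniqueness.

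\textbf{Step 1: structure of the group.} For each $m\in\Z$, let $\Kbold^{\ge m}$ be the subgroup generated by $\{Q_{[n]}\}_{n\ge m-1}$. Up to the index shift $n\mapsto n-m$, this is the one-sided continuant group of the shifted sequence $\shift^{m}\tttheta$, with generators $Q_{[m-1]},Q_{[m]}$ playing the roles of $\qq_{[-1]}=0$ and $\qq_{[0]}$. The difference is that here $Q_{[m-1]}$ is a genuine nonzero generator rather than the identity.

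To handle this, I would instead use the two-generator description. If $\abar_n<\infty$, the relation lets us solve for $Q_{[n-2]}$ in terms of $Q_{[n-1]},Q_{[n]}$, since the coefficient $\varepsilon_{n-1}\varepsilon_n$ is $\pm1$. Hence the subgroups
\[
L_m := \langle Q_{[m-1]}, Q_{[m]}\rangle
\]
increase as $m$ decreases, and $\Kbold_{\tttheta}=\bigcup_m \langle Q_{[n]} : n\ge m-1\rangle$ is a directed union. When $\abar_n=\infty$ there is no relation, and $Q_{[n]}$ is a new free generator. So each finitely generated piece $\Kbold_{m,M}=\langle Q_{[n]}: m-1\le n\le M\rangle$ is free abelian of rank $1+\#\{m<n\le M : \abar_n=\infty\}+1$, with explicit bases formed by consecutive pairs between infinite symbols together with the new generators.

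\textbf{Step 2: existence of the order.} On each $\Kbold_{m,M}$ I would define a lexicographic-type order. Write the indices $n$ with $\abar_n=\infty$ inside $(m,M]$ as $n_1<\dots<n_k$. Each element is uniquely written as
\[
c_0Q_{[n_1-1]}+d_0Q_{[n_1-2]}+\sum_j c_jQ_{[n_j]}+\dots
\]
The order compares first the coefficient of the "highest infinite block", and within a block of finite $\abar$ it compares via the real embedding given by the irrational-rotation model. That is, within a finite block the group $\langle Q_{[n-1]},Q_{[n]}\rangle$ embeds in $\R$ by sending $Q_{[j]}\mapsto |\theta_{m}\cdots\theta_{j-1}|$-type lengths, as in Proposition~\ref{prop:q[n]}(1). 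Equivalently, $Q_{[j]}$ is sent to the positive return lengths $\ell_j$ for the rotation of renormalized level. The lengths come from the finite continued fraction of the block, and the sign conventions make $\ell_j>0$ and strictly increasing in $j$.

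Checking properties (1) and (2) on each piece then reduces to two facts:
\begin{enumerate}
\item Within a block, $\ell_j<\ell_{j+1}$. This is the statement that $b_n\ge2$ with $b_n\ell_{n-1}-\varepsilon\varepsilon'\ell_{n-2}>\ell_{n-1}$, which is elementary.
\item Across an infinite symbol, $Q_{[n]}$ dominates lexicographically, which gives $kQ_{[n-1]}<Q_{[n]}$ for all $k$.
\end{enumerate}
Compatibility under the inclusions $\Kbold_{m,M}\subset\Kbold_{m',M'}$ is needed, so that the directed union inherits a total, translation-invariant order. This is where Proposition~\ref{prop:chronological-order-01} helps: it gives the order on $\Kbold^{\ge m}$ modulo the extra bottom generator. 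Extending downward to $m-1$ within a finite block only rescales the real embedding, which preserves the order.

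\textbf{Step 3: uniqueness.} Any order satisfying (1) and (2) is determined on each finite block. Inside a block, (1) forces the positivity of all $Q_{[j]}$, and since every element of the rank-two block is a positive combination of consecutive $Q_{[j]}$'s, the Euclidean-type algorithm decides every sign. Across an infinite symbol, (2) forces the lexicographic dominance. Since every element lies in some $\Kbold_{m,M}$, the order is unique.

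\textbf{Main obstacle.} I expect the main obstacle to be the sign decision inside a finite block. One must show that each nonzero $xQ_{[n-1]}+yQ_{[n]}$ has its sign determined by (1) alone. The plan is to descend levels using the relation, reducing to coefficients of opposite signs with smaller ratio. Irrationality of the limiting ratio guarantees no ties in the doubly infinite backward direction. When the backward sequence is bounded away from $\infty$ forever, this is exactly the statement that the real embedding is injective.
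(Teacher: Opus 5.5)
Your Step 1 and the overall architecture are right. $\Kbold_{\tttheta}$ is the directed union of the free groups $\Kbold_{m,M}$. Across an infinite symbol, (2) does force dominance: if $\abar_{n+1}=\infty$, then every $\sum_{j\le n}x_jQ_{[j]}$ is bounded in absolute value by $(\sum|x_j|)\,Q_{[n]}<Q_{[n+1]}$, using (1). So in your basis (bottom pair plus the new generators $Q_{[n_j]}$) the order is lexicographic with the highest $Q_{[n_j]}$ first. A minor point: $L_{n-1}=L_n$ when $\abar_n<\infty$, and the $L$'s are not nested when $\abar_n=\infty$, so ``$L_m$ increase as $m$ decreases'' is inaccurate.

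\textbf{The gap.} The genuine gap is the order on the bottom rank-two block, which is where all the content of existence lies, and your construction fails there. First, the rotation lengths $|\theta_m\cdots\theta_{j-1}|$ of Proposition~\ref{prop:q[n]}(1) \emph{decrease} in $j$. Their signed versions (the translation lengths of the irrational model) realize the space order $\lhd$ of Proposition~\ref{prop:space-order}, with alternating signs, and are determined by the \emph{forward} tail. The time order is instead determined by the \emph{backward} tail. Second, a ``finite continued fraction'' truncated at the bottom of the piece does not work either; this is equivalent to applying Proposition~\ref{prop:chronological-order-01} modulo $Q_{[m-1]}$. The subgroup $\langle Q_{[m-1]}\rangle$ is not convex unless $\abar_m=\infty$, and extending downward is not a rescaling.

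Concretely, take $(\varepsilon_n,\abar_n)\equiv(+,2)$. Then $b_n=3$, $Q_{[n]}=3Q_{[n-1]}-Q_{[n-2]}$, and $3Q_{[0]}-8Q_{[-1]}=-Q_{[-3]}$, which must be negative. The truncated lengths with $\ell_{m-1}=0$, $\ell_m=1$ assign this element the values $+1$, $0$, $-1$ for $m=-1,-2,-3$. So your orders on the pieces are incompatible, and for some truncations not even total.

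\textbf{The missing idea.} Define the bottom order globally, from the whole backward tail. Suppose $\abar_n<\infty$ for all $n<n_1$. Then (1) forces the positive solution $t$ of $t_n=b_nt_{n-1}-\varepsilon_{n-1}\varepsilon_nt_{n-2}$, and $r_n=t_{n-1}/t_n$ satisfies $r_n=g_n(r_{n-1})$ with $g_n(r)=1/(b_n-\varepsilon_{n-1}\varepsilon_n r)$.
\begin{itemize}
\item Each $g_n$ maps $[0,1]$ into itself, since $b_n\ge2$, and is $1$-Lipschitz.
\item Each $g_n$ is in fact $\tfrac14$-Lipschitz unless $b_n=2$ and $\varepsilon_{n-1}\varepsilon_n=+1$.
\item That exceptional case forces $b_{n-1}=\abar_{n-1}+1\ge3$, so two consecutive indices are never both exceptional.
\end{itemize}
Hence $\bigcap_k g_0\circ\cdots\circ g_{-k}([0,1])$ is a single point. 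This is the backward continued fraction, and it is the only ratio compatible with (1) for all $n\le 0$. Order $\langle Q_{[n]}:n<n_1\rangle$ by this $t$.

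If instead there is an infinite symbol at or below the bottom of a piece, no real embedding is used. The bottom pair is ordered lexicographically with respect to $(Q_{[n_0]},Q_{[n_0-1]})$, where $n_0$ is the nearest such symbol. Defined this way, compatibility across pieces is automatic, and your checks of (1) and (2) go through.

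\textbf{Uniqueness.} The claim ``every element is a positive combination of consecutive $Q_{[j]}$'s'' is false as stated. Use the slow generators instead; they are all positive under (1), since $Q_{t_n-1}=Q_{[n]}-Q_{[n-1]}$. Write $v=\alpha Q_k+\beta Q_{k-1}$ with $\alpha,\beta$ of opposite signs, and substitute $Q_k=a_kQ_{k-1}+Q_{k-2}$. Either the sign of $v$ can be read off, or $|\alpha|+|\beta|$ strictly decreases, because $a_k\ge1$. If an infinite $a_k$ is reached, dominance decides. So termination comes from integrality, not irrationality. Injectivity of $t$ on the bottom block (irrationality of $t_{-1}/t_0$) is then a consequence of this descent rather than an input.
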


\begin{definition}
    We define the \emph{time semigroup} of $\tttheta \in \TheBiCpt$ to be the totally ordered commutative semigroup $\Tbold_{\tttheta}$ equal the positive cone of $<$, i.e.
\[
    \Tbold_{\tttheta} := \{P \in \Kbold_{\underline{\ttheta}} \: : \: P > 0 \},
\] 
\end{definition}

Unlike the continuant group, the set of generators of the time semigroup $\Tbold_{\tttheta}$ is more complicated, e.g.
\begin{align*}
    \{ Q_{[n]}\}_{n \in \Z} \cup 
    \{Q_{[n+1]}-Q_{[n]}\}_{n\in\Z}
    \cup 
    \bigcup_{\abar_{n+1} = \infty} \{Q_{[n+1]}- k Q_{[n]}\}_{k \geq 1}.
\end{align*}
In terms of the slow return times, $\Tbold_{\tttheta}$ has a simpler generating set:
    \[
        \{Q_m\}_{m \in \Z} \cup \bigcup_{a_{m+1} = \infty} \{ Q_{m+1} - k Q_m \}_{m \in \Z}.
    \]
Starting from Section \ref{sec:transcendental-dynamics}, this time semigroup will be used to parametrize neutral cascades.
The slow return times will be used more prominently starting from Section \ref{sec:bounds}.

\subsubsection{The space order and topological cascades}

The group $\Kbold_{\tttheta}$ admits another natural order $\lhd$ that is most easily described in terms of the slow generators $\{Q_n\}_{n \in \Z}$.

\begin{proposition}[Space order]
\label{prop:space-order}
    The group $\Kbold_{\tttheta}$ admits a unique translation invariant total order $\lhd$, called the space order, that satisfies the following properties.
    \begin{enumerate}
        \item $\varepsilon_1 Q_n \lhd 0$ if $n$ is even, $0 \lhd \varepsilon_1 Q_n$ if $n$ is odd.
        \item For $n \in \Z$ with $a_{n+1} = \infty$, then $-kQ_n$ is always $\lhd$-between $0$ and $Q_{n-1}$ for all $k \geq 1$.
    \end{enumerate}
\end{proposition}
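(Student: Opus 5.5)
The plan is to build $\lhd$ explicitly as a lexicographic order assembled from archimedean pieces, and then to show that properties (1) and (2) leave no freedom at any stage. Let $J:=\{m\in\Z : a_m=\infty\}$. For each $m\in J$ there is no relation linking $Q_m$ to $Q_{m-1},Q_{m-2}$. Cut $\Z$ at the points of $J$ into maximal \emph{blocks} of consecutive indices on which every $a_i<\infty$, except possibly at the first index of the block. A block may be finite, one-sided infinite, or all of $\Z$; the last case is the irrational situation.

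\emph{Step 1 (a chain of subgroups).} For $m\in J$ let $\Kbold^{(m)}$ be the subgroup generated by $\{Q_i\}_{i\geq m-1}$. These subgroups form a chain decreasing in $m$. Consecutive quotients $\Kbold^{(m)}/\Kbold^{(m')}$, with $m<m'$ consecutive in $J$, are generated by the images of $Q_{m-1},Q_m,\dots,Q_{m'-2}$. The only relations among these images are the recursions $Q_i=a_iQ_{i-1}+Q_{i-2}$ for $m<i<m'$, modulo the images of $Q_{m'-1},Q_{m'}$. Using these relations from the top down, each quotient is free of rank $\le 2$, generated by the images of $Q_{m-1}$ and $Q_m$.

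\emph{Step 2 (order on each quotient).} On each quotient I define a homomorphism to $\R$ by $Q_i\mapsto \varepsilon_1(-1)^{i+1}\lambda_i$. Here the $\lambda_i>0$ are chosen to satisfy $\lambda_{i-2}=a_i\lambda_{i-1}+\lambda_i$, which is exactly what makes the map respect $Q_i=a_iQ_{i-1}+Q_{i-2}$. The image of $Q_{m'-1}$ must be sent to $0$ modulo the next subgroup, so the natural normalization is $\lambda_{m'-1}=0$. Then $\lambda_{m'-2}=1$, and the $\lambda_i$ for $i<m'-2$ are determined by running the recursion backwards.

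In a block that is infinite to the right, the ratio $\lambda_m/\lambda_{m-1}$ is instead the continued fraction $[0;a_{m+1},a_{m+2},\dots]$, which is irrational. In that case the quotient embeds injectively into $\R$, with rank $2$. In the bi-infinite block ($J=\emptyset$) the same formulas give an injective embedding of all of $\Kbold$. The signs are arranged so that property (1) holds: $\varepsilon_1Q_n\lhd0$ for $n$ even and $0\lhd\varepsilon_1Q_n$ for $n$ odd.

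\emph{Step 3 (lexicographic assembly and property (2)).} Declare $P\rhd 0$ if the image of $P$ in the first (largest-scale) quotient where it is nonzero is positive. This is a translation-invariant total order, and each $\Kbold^{(m)}$ is a convex subgroup. Property (2) follows: $Q_n$ with $a_{n+1}=\infty$ lies in a strictly deeper convex subgroup than $Q_{n-1}$, so every multiple $-kQ_n$ is infinitesimal relative to $Q_{n-1}$. By (1), $Q_n$ and $Q_{n-1}$ have opposite signs, so $-kQ_n$ has the sign of $Q_{n-1}$ and is smaller in absolute value, i.e.\ lies between $0$ and $Q_{n-1}$.

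\emph{Step 4 (uniqueness).} First, (2) forces $\Kbold^{(m)}$ to be infinitesimal relative to $Q_{m-2}$, so any admissible order induces the same convex filtration and hence is lexicographic with respect to it. Second, on each archimedean quotient an order is determined by an order-embedding into $\R$, unique up to positive scaling. Its values on generators must satisfy the recursion, with the signs prescribed by (1). A nested-interval argument shows that the sign pattern of (1) forces $\lambda_i>0$, and then the recursion forces the ratios, as in the classical identification of the continued-fraction expansion from the itinerary.

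I expect the main obstacle to be this last uniqueness step in the infinite and bi-infinite blocks. There one must check that positivity of all $\lambda_i$, for $i$ tending to $\pm\infty$, pins down a single ratio. This amounts to convergence of the continued fraction in both directions; for the backward direction one uses $a_i\ge1$ and the growth of the continuants. A second point to watch is that the quotient in Step 1 is genuinely torsion-free of rank $\le2$ when the block is finite.
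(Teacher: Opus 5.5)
The paper does not prove this proposition; Section 3 defers all its proofs to \cite{Lim26a}. So the comparison below is with what a complete proof needs. Your architecture is the natural one: a convex filtration by the $\Kbold^{(m)}$, $m\in J$, lexicographic assembly, and archimedean pieces embedded in $\R$ via positive solutions of $\lambda_{i-2}=a_i\lambda_{i-1}+\lambda_i$. The existence half works after two repairs.

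\emph{Rank of the finite pieces.} For consecutive $m<m'$ in $J$, eliminate top-down using $Q_{i-2}=Q_i-a_iQ_{i-1}$ and $Q_{m'-1}\equiv 0$. This shows the quotient is $\cong\Z$, generated by the image of $Q_{m'-2}$. The same holds for the top piece $\Kbold/\Kbold^{(\min J)}$ when $J$ is bounded below, which your Step 1 omits. ``Rank $\le 2$'' is not enough: your homomorphism kills $Q_{m'-1}$, and it must be injective on the quotient for the order to be total.

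\emph{Exhaustion and separation.} When $J$ is infinite, you need $\bigcup_{m\in J}\Kbold^{(m)}=\Kbold$ and $\bigcap_{m\in J}\Kbold^{(m)}=0$. Otherwise ``the first quotient where $P$ is nonzero'' need not exist. The intersection statement follows, for instance, from the bottom-up basis $\{Q_{m-1}\}\cup\{Q_j\}_{j\in J,\,j\ge m}$ of $\Kbold^{(m)}$.

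The genuine gap is in uniqueness. You say (2) forces $\Kbold^{(m)}$ to be infinitesimal relative to $Q_{m-2}$, and then argue only about ratios of an embedding into $\R$. But (2) by itself does not bound the free generator $Q_m$ or the later $Q_i$ in terms of $Q_{m-1}$. Moreover, an arbitrary admissible order has a priori no embedding into $\R$: $\Z^2$ carries lexicographic orders, and for $J=\emptyset$ ruling these out is the whole content of uniqueness.

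The missing lemma uses (1). For $i\notin J$, the elements $Q_i$ and $-a_iQ_{i-1}$ both have the sign of $Q_{i-2}$, so
\[
|Q_{i-2}| \;=\; a_i\,|Q_{i-1}|+|Q_i| \qquad (i\notin J).
\]
Hence $|Q_{i-1}|<|Q_{i-2}|$ for $i\notin J$, and (2) gives the same for $i\in J$; so $|Q_i|$ is strictly decreasing in $i$.

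\emph{Convexity of the filtration.} Every generator $Q_i$, $i\ge m-1$, of $\Kbold^{(m)}$ lies below $|Q_{m-1}|$, so by (2) all of $\Kbold^{(m)}$ is infinitesimal relative to $Q_{m-2}$. The piece above is $\Z$ generated by $Q_{m-2}$, and convex subgroups are pure. An induction along $J$ then shows $\Kbold^{(m)}$ is exactly convex.

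\emph{Archimedean rank-two pieces.} Consider the right-infinite block, or $J=\emptyset$. Suppose $C$ were a proper nonzero convex subgroup. Monotonicity shows no $Q_i$ lies in $C$, since two consecutive ones would generate the piece. In the quotient, which is $\cong\Z$, the identity then makes $|\bar Q_i|$ a strictly decreasing sequence of positive integers as $i\to+\infty$, a contradiction. Only after this does H\"older's theorem supply the embedding on which your ratio argument runs.

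Your anticipated obstacle is misplaced. In the bi-infinite block, positivity of two consecutive $\lambda_i$ propagates backwards automatically through $\lambda_{i-2}=a_i\lambda_{i-1}+\lambda_i$. The ratio $\lambda_i/\lambda_{i-1}=[0;a_{i+1},a_{i+2},\dots]$ is fixed by the forward direction alone, so no backward convergence or continuant growth is needed. Likewise, once an embedding exists, $\lambda_i>0$ is immediate from (1); nested intervals are needed only to pin down the ratio.
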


The next proposition asserts that small time implies large space.

\begin{proposition}[Proper discontinuity]
\label{prop:proper-discontinuity}
    For $P \in \Kbold_{\tttheta}$ and $n \in \Z$, 
    \[
        0 < |P| \leq Q_n 
        \quad \xRightarrow[]{\qquad} \quad
        P \textnormal{ is not } \lhd\textnormal{-between } Q_n \textnormal{ and } -Q_n.
    \]
\end{proposition}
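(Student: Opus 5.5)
We argue by reduction to the slow generators and an inductive description of the two orders. Fix $n$ and $P \in \Kbold_{\tttheta}$ with $0<|P|\le Q_n$. Since $|P|\le Q_n$ and, by Proposition~\ref{prop:time-order}, the time order is Archimedean-free only across parabolic levels ($a_{m+1}=\infty$), the first step is to write $P$ in a ``greedy'' normal form with respect to the slow generators: $P = c_n Q_n + c_{n-1}Q_{n-1} + \dots$, where only finitely many generators of index $\le n$ occur (elements of size at most $Q_n$ lie in the subgroup generated by $\{Q_m\}_{m\le n}$, since $Q_{n+1}$ and higher are time-larger unless the relation $Q_{m}=a_mQ_{m-1}+Q_{m-2}$ expresses them through lower ones). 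We choose the Ostrowski-type expansion: coefficients $0\le c_m\le a_{m+1}$, with the usual Zeckendorf-type constraint that $c_m=a_{m+1}$ forces $c_{m-1}=0$, and for parabolic levels $a_{m+1}=\infty$ an arbitrary integer coefficient. The case $|P|=Q_n$ is treated separately: then $P=\pm Q_n$, and $\pm Q_n$ is not strictly between $Q_n$ and $-Q_n$ by definition.

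The second step is to compare time and space orders on this normal form, mimicking the classical fact for rotations that $q_n$ is a closest return: the point $v_{q_n}$ is closer to $v_0$ than any $v_k$ with $0<|k|<q_{n+1}$, and dually the arc $[v_{-q_n},v_{q_n}]$ contains no $v_k$ with $0<|k|<q_n$. For irrational $\tttheta$ we realize $\Kbold_{\tttheta}$ inside $\R$ via a rotation-type cascade: by Proposition~\ref{prop:space-order}(1) the signs of $Q_n$ alternate around $0$ in $\lhd$, and the slow relation shows $|Q_n|_{\lhd}$ (the space-size, realized as $\|Q_n\theta\|$-type quantities $\lambda_n$ with $\lambda_{n-2}=a_n\lambda_{n-1}+\lambda_n$) decreases. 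Then $P\lhd$-between $-Q_n$ and $Q_n$ means its space-displacement is smaller than $\lambda_n$, while $0<|P|\le Q_n$ in time; the standard Ostrowski estimate shows the displacement of any nonzero combination with time-size $<Q_{n+1}$ is at least $\lambda_n$, with equality only for $\pm Q_n$. We verify this by induction on the number of nonzero coefficients, using $\lambda_{m-1}>c_m\lambda_m+\lambda_{m+1}$-type inequalities.

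The third step handles enriched rational $\tttheta$. Here we use that both orders are determined by finitely many relations on any finitely generated piece: given the finitely many generators involved in $P$, approximate $\tttheta$ by irrational $\tttheta'$ agreeing on a large window of indices, replacing each $\infty$ by a huge finite $a_m$. By uniqueness in Propositions~\ref{prop:time-order} and~\ref{prop:space-order}, the orders restricted to the relevant finitely generated subgroup coincide with the limits of those for $\tttheta'$ (property (2) of each proposition is exactly what large $a_m$ enforces in the limit). Thus the irrational case transfers. The main obstacle I expect is this second step: organizing the Ostrowski normal form so that the inequality is sharp and covers negative $P$ and the boundary case $|P|=Q_n$ cleanly; I would first try to prove the slightly stronger statement that for $0<|P|<Q_{n+1}$, $P\ne\pm Q_n$, the element $P$ lies $\lhd$-outside $[-Q_n,Q_n]$, by induction on $n$ using the relation $Q_{n+1}=a_{n+1}Q_n+Q_{n-1}$.
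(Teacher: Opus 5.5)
The paper itself gives no proof of this proposition: like everything in Section~\ref{sec:combinatorics}, it is imported from \cite{Lim26a}, so your plan has to stand on its own. Your overall architecture can be made to work: the irrational case first, then enriched $\tttheta$ by approximation together with the uniqueness clauses of Propositions~\ref{prop:time-order} and~\ref{prop:space-order}. Two steps, however, do not hold up as written.

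\emph{Steps 1--2.} These rest on a finite Ostrowski expansion $P=\sum_{m\le n}c_mQ_m$. In the bi-infinite group there is no bottom generator playing the role of $q_0=1$. Your parenthetical only addresses indices above $n$, and nothing you say shows that the greedy expansion terminates below $n$. Without finiteness, the induction on the number of nonzero coefficients has nothing to run on.

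The detour is also unnecessary. For irrational $\tttheta$, the relations $Q_m=a_mQ_{m-1}+Q_{m-2}$ let you express every $Q_m$ through $Q_n$ and $Q_{n+1}$ (use $Q_{m-2}=Q_m-a_mQ_{m-1}$ going down). So $Q_n$ and $Q_{n+1}$ generate $\Kbold_{\tttheta}$, and you can write $P=uQ_n+vQ_{n+1}$.
\begin{itemize}
\item Since $0<Q_n<Q_{n+1}$ in time, the hypothesis $|P|\le Q_n$ excludes $u=0$ and $uv>0$.
\item If $v=0$, then $P=uQ_n$ with $u\neq 0$, which is not strictly between $\pm Q_n$.
\item If $uv<0$: $Q_n$ and $Q_{n+1}$ lie on opposite sides of $0$ in $\lhd$ by Proposition~\ref{prop:space-order}(1). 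Hence $uQ_n$ and $vQ_{n+1}$ lie on the same side, and $P$ is strictly beyond $Q_n$ or $-Q_n$.
\end{itemize}
The same three lines prove your stronger statement for $0<|P|<Q_{n+1}$, $P\neq\pm Q_n$.

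\emph{Step 3.} Here the justification is incorrect.
\begin{itemize}
\item The orders are not determined by finitely many relations on finitely generated pieces. For irrational $\tttheta$, the whole group $\Kbold_{\tttheta}\cong\Z^2$ is finitely generated, yet its orders depend on the entire sequence.
\item The uniqueness in Propositions~\ref{prop:time-order} and~\ref{prop:space-order} concerns orders on all of $\Kbold_{\tttheta}$, so it cannot be invoked on a subgroup.
\item You must also say how $P$ is transported: $\Kbold_{\tttheta'}$ has rank $2$, while $\Kbold_{\tttheta}$ has rank $2$ plus the number of parabolic levels.
\end{itemize}

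A working version is as follows. Replace each $\abar_j=\infty$ by a finite $A_{k,j}$, growing fast in $k$, and keep all other entries. Then $Q_{[j]}\mapsto Q_{[j]}$ defines a surjective homomorphism $\pi_k\colon\Kbold_{\tttheta}\to\Kbold_{\tttheta_k}$ compatible with the slow generators, and $\pi_k(P)\neq0$ for large $k$ whenever $P\neq 0$.

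Pass to a diagonal subsequence along which the time-sign and the space-sign of $\pi_k(P)$ stabilize for every $P$. The limiting positive cones define translation-invariant total orders on $\Kbold_{\tttheta}$ satisfying items (1) and (2) of both propositions. For instance, $jQ_{[n]}<Q_{[n+1]}$ as soon as $A_{k,n+1}\ge j+1$. Likewise, $-jQ_n$ lies between $0$ and $Q_{n-1}=Q_{n+1}-a_{n+1}Q_n$ as soon as $a_{n+1}\ge j$ in $\tttheta_k$.

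By uniqueness these limiting orders are $<$ and $\lhd$. The hypothesis $0<|P|\le Q_n$ and the conclusion then both pass through $\pi_k$, so the irrational case yields the general one. With these two repairs your argument is complete.
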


The following theorem roughly says that $(\Kbold_{\tttheta},+,\lhd)$ is one-dimensional and can be optimally represented as an invertible real dynamical system.

\begin{theorem}[Invertible topological cascades]
\label{thm:topological-cascade}
    There exists a dynamical system
    \[
        A_{\tttheta} = \{ A_{\tttheta}^P : \R \to \R \}_{P \in \Kbold_{\tttheta}}
    \]
    parametrized by $\Kbold_{\tttheta}$ with the following properties.
    \begin{enumerate}
        \item Each map $A_{\tttheta}^P$ is a monotonically increasing self-homeomorphism of $\R$.
        \item The map $(\Kbold_{\tttheta}, \lhd) \to (\R,<)$, $P \mapsto A_{\tttheta}^P(0)$ is an order-preserving injective map with a dense image.
    \end{enumerate}
    Moreover,
    \begin{enumerate}[start=3]
        \item $A_{\tttheta}$ is unique up to topological conjugacy.
        \item If $\bar{a}_{n+1} = \infty$ for some $n$, then $A^{Q_{[n]}}$ admits fixed points.
        \item If $\tttheta$ is irrational, then each $A_{\tttheta}^P$ can be chosen to be a translation.
        Such a group of translations $A_{\tttheta}$ is unique up to linear conjugacy. 
    \end{enumerate}
\end{theorem}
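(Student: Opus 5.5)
We build the cascade $A_{\tttheta}$ directly from the ordered group $(\Kbold_{\tttheta},+,\lhd)$ by the standard Hölder/Denjoy-type construction. Translation invariance of $\lhd$ lets $P\in\Kbold_{\tttheta}$ act on $(\Kbold_{\tttheta},\lhd)$ by $R\mapsto R+P$, which is an order automorphism. The task is to embed $(\Kbold_{\tttheta},\lhd)$ order-preservingly into $(\R,<)$ with dense image so that these translations extend to increasing homeomorphisms of $\R$. This gives items (1) and (2). Uniqueness (3) is the usual argument: two such systems are conjugated on the dense orbit of $0$ by $A^P_{\tttheta}(0)\mapsto A'^P(0)$. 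This map is order preserving and equivariant, and it extends to a homeomorphism of $\R$ because both images are dense and contain no gaps.

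\textbf{Step 1: the order is dense without endpoints.} Proposition~\ref{prop:proper-discontinuity}, combined with the fact that the $Q_n$ grow in time as $n\to+\infty$ and shrink as $n\to-\infty$, gives arbitrarily $\lhd$-close elements. Indeed, the elements $Q_n$ for $n\to-\infty$ alternate sides of $0$ by Proposition~\ref{prop:space-order}(1), and the relation $Q_m=a_mQ_{m-1}+Q_{m-2}$ lets us nest them: the $\lhd$-intervals $(Q_{n},Q_{n-1})$ or $(Q_{n-1},Q_n)$ shrink onto $0$. Since $\Kbold_{\tttheta}$ is countable, Cantor's theorem gives an order embedding $\iota:(\Kbold_{\tttheta},\lhd)\to(\R,<)$ with dense image, and $\iota$ is a bijection onto a countable dense subset.

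\textbf{Step 2: extend the translations, and handle gaps.} Each translation $R\mapsto R+P$ conjugates under $\iota$ to an increasing bijection of $\iota(\Kbold)$. This bijection extends to a homeomorphism of $\R$ by monotone completion, since a monotone bijection between dense subsets extends continuously. The group law is preserved, so $A^{P+P'}=A^P\circ A^{P'}$. Injectivity and density in (2) hold by construction.

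\textbf{Step 3: fixed points and translations.} For (4), when $\abar_{n+1}=\infty$, Proposition~\ref{prop:space-order}(2) shows that all $-kQ_n$ lie $\lhd$-between $0$ and $Q_{n-1}$. The images $\iota(-kQ_n)$ therefore form a monotone bounded sequence and converge to some $x\in\R$. Since $A^{Q_n}$ maps $\iota(-kQ_n)$ to $\iota(-(k-1)Q_n)$, continuity gives $A^{Q_n}(x)=x$. The relation between $Q_n$ and $Q_{[n]}$ from \S\ref{sss:conversion} transfers this fixed point to $A^{Q_{[n]}}$; this conversion bookkeeping is where care is needed.

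For (5), in the irrational case all relations are finite. The group $\Kbold_{\tttheta}$ is then Archimedean under $\lhd$: no element is infinitely small relative to another, because any $P$ is a finite combination of finitely many $Q_m$ and Proposition~\ref{prop:proper-discontinuity} bounds multiples. Hölder's theorem then embeds $(\Kbold_{\tttheta},+,\lhd)$ into $(\R,+,<)$ as an ordered subgroup, unique up to positive scaling. Taking $\iota$ to be this embedding makes every $A^P$ a translation, and linear uniqueness is exactly the uniqueness in Hölder's theorem.

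The main obstacle is the Archimedean verification in (5). It amounts to showing that no positive multiple $kP$ stays $\lhd$-below a fixed $Q_m$ for all $k$. I would prove this by contradiction: such a $P$ would violate Proposition~\ref{prop:proper-discontinuity} at a deep level $n$, since $|kP|$ in time is comparable to $k$ times a fixed finite time.
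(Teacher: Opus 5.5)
Your plan for (1)--(4) works, but your argument for the Archimedean step in (5), which you rightly flag as the crux, cannot succeed. Your argument uses only two facts: Proposition~\ref{prop:proper-discontinuity}, and that each $P$ is a finite combination of $Q_m$'s with finite time. Both hold verbatim for enriched $\tttheta$, where $\lhd$ is \emph{not} Archimedean: by Proposition~\ref{prop:space-order}(2), all $-kQ_n$ stay between $0$ and $Q_{n-1}$ when $a_{n+1}=\infty$. So your argument proves too much. The mechanism also runs the wrong way. Proper discontinuity only says that small time forces large space. The multiples $kP$ have time $k|P|\to\infty$, so their staying inside a bounded $\lhd$-interval contradicts nothing. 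The finiteness of all $a_n$ must enter somewhere, and in your sketch it never does.

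Here is one way to supply it.
\begin{itemize}
\item When every relation holds, the coefficient of $Q_{m-2}$ in $Q_m=a_mQ_{m-1}+Q_{m-2}$ is $1$, so $\Kbold_{\tttheta}$ is free abelian on any two consecutive $Q_{m-1},Q_m$. In particular each $Q_n$ is primitive.
\item Suppose $\lhd$ were non-Archimedean. Then the convex subgroup $C$ generated by an infinitely small positive element is a nonzero proper pure subgroup of $\Z^2$, so $C=\langle E\rangle$. Moreover, $E$ lies strictly $\lhd$-between $g$ and $-g$ for every $g\notin C$.
\item The $Q_n$ are primitive, positive and strictly increasing in time, so at most one of them lies in $C$.
\item For every other $n$, Proposition~\ref{prop:proper-discontinuity} forces $|E|>Q_n$ in time. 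This is impossible, because $Q_{n+2}>2Q_n$ makes the $Q_n$ unbounded in time.
\end{itemize}
With the Archimedean property in hand, H\"older's theorem applies. You also need its image, a non-cyclic subgroup of $\R$, to be dense in order to get (2).

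The rest of your plan is sound. Cantor's theorem plus monotone completion gives (1)--(2), the orbit map $A^P(0)\mapsto A'^P(0)$ gives (3), and (4) follows from the monotone limit of $\iota(-kQ_{t_n})$. The conversion in \S\ref{sss:conversion} gives $Q_{t_n}=Q_{[n]}$ and $a_{t_n+1}=\infty$ directly.

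One correction in Step 1: density comes from large times, i.e.\ $n\to+\infty$. Given $0\lhd P$, choose $n$ with $|P|<Q_n$. Proposition~\ref{prop:proper-discontinuity} then puts one of $\pm Q_n$ strictly between $0$ and $P$. Your sentence about $n\to-\infty$ has the direction reversed, since those $Q_n$ are spatially large.
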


The dynamical system $A_{\tttheta}$ is called an \emph{invertible topological cascade} with combinatorics $\tttheta$.
The non-invertible version is $(A_{\tttheta}^P)_{P \in \Tbold_{\tttheta}}$, which we simply refer to as a \emph{topological cascade}.
This will be realized by the action of the external cascade $\Fext$ on $\R$ introduced in \S\ref{ss:trans-external-coordinates}.

\subsubsection{Irrational cascades of translations}\label{sss:irrat.cascade} Let illustrate the dynamical meaning of the time semigroup $\Tbold_{\tttheta}$ in the case of an irrational $\tttheta \in \TheBiCpt$. 

In the irrational case, $\tttheta$ encodes a unique bi-infinite tower of rigid rotations $(\rotate_{\theta_n})_{n\in\Z}$.
As we lift to the universal cover, each $\rotate_{\theta_n}$ lifts to a translation $T^{[n]}(x) = x+ \tilde{\ell}_n$ that can be chosen such that
\[ 
\rotate_{\theta_n} \ = T^{[n]} \mod T^{[n-1]}.
\]
The numbers $(\tilde \ell_n)_{n \in \Z}$ are unique up to a simultaneous rescaling. 
The group of translations generated by $\{T^{[n]}\}_{n\in\Z}$ is isomorphic to $\Kbold_{\tttheta}$ via the identification $Q_{[n]} \mapsto T^{[n]}$;
hence, the elements of such a group can be written as $T^P$, $P \in \Kbold_{\tttheta}$ where $T^{Q_{[n]}} = T^{[n]}$.

The group $(T^P)_{P \in \Kbold_{\tttheta}}$ is the invertible topological cascade of translations mentioned in item (5) of Theorem~\ref{thm:topological-cascade}. 
The corresponding (non-invertible) cascade of translations $(T^P)_{P \in \Tbold_{\tttheta}}$ is the linear model for the dynamics on the Siegel set $\Zbold(\Fbold)$ of neutral cascades $\Fbold$ with combinatorics $\tttheta$; see Proposition \ref{prop:eventually-brjuno}.

With appropriate modifications, a similar combinatorial framework is also available for an enriched rational $\tttheta$ by taking $A_{\tttheta}^{Q_{[n]}}$ mod $A_{\tttheta}^{Q_{[n-1]}}$. 


\section{Sector renormalization and pseudo-Siegel bounds}
\label{sec:pseudo-Siegel}
 
In this section, we summarize the main results of the pseudo-Siegel theory for the Neutral Family
\[
f_\theta(z) = e^{2\pi i \theta} z + z^2, \qquad \theta \in \Irrat.
\]

It begins in \S\ref{ss:pseudo-siegel} with the structure of the Mother Hedgehog and the construction of pseudo-Siegel disks of $f_\theta$ following \cite{DLy22}.
In \S\ref{ss:real-bubble-bounds}, we recall Real Bounds of the external map associated to $f_\theta$ and Pseudo-Bubble Bounds of \cite{DLi26}, which give uniform geometric control on hoglets and pseudo-bubbles; see Figure~\ref{fig:pseudo-bubble-bounds}.
In \S\ref{ss:sector-renormalization}, we discuss the construction of sector renormalizations of $f_\theta$ that are compatible with pseudo-Siegel disks and Mother Hedgehogs following \cite{DLy26a}; see Figure~\ref{fig:sec-renorm}.

Finally, in~\S\ref{ss:renormalization-towers}, we introduce the compact space of sector renormalization towers $\overline{\towsec}$, see~\eqref{eq:dfn.Rsec}, and analyze parabolic limits in $\overline{\towsec}$. 
The combinatorial data provides a natural semi-conjugacy from $\overline{\towsec}$ to the parabolic compactification $\TheCpt$ of rotation numbers; see Proposition~\ref{prop:continuity-of-combinatorics}. 
We also state the area-zero theorem for the boundary of Mother Hedgehogs from~\cite{Lim26b}.

The key notations introduced in this section are listed in~\S\ref{sss:notation-for-towsec}.

\subsection{Pseudo-Siegel disks} 
\label{ss:pseudo-siegel}

Let
\[
    \Irrat_{\text{bdd}} := 
    \left\{ \theta \in \Irrat \: : \: \inf_n |\gauss^n(\theta)| > 0 \right\}
    = 
    \left\{ \theta \in \Irrat \: : \: \sup_n \abar(\gauss^n(\theta)) < \infty \right\}.
\]
Elements of $\Irrat_{\text{bdd}}$ are called \emph{bounded-type} irrationals. 
One special subset is the set $\Irrat_{\text{EGM}}$ of \emph{eventually golden mean} (EGM) irrationals, that is, the set of irrationals $\theta$ such that  $|\gauss^k(\theta)| = \theta_{\text{gm}}$ for some $k \geq 0$.

For any irrational $\theta \in \Irrat$, denote by 
\[
v_{\theta,-1}=-\frac{e^{2\pi i \theta}}{2}
\qquad \text{and} \qquad
v_{\theta,0} = f_{\theta}(v_{\theta,-1}) = -\frac{e^{4\pi i \theta}}{4}
\]
the critical point and the critical value of $f_\theta$ respectively.

\begin{definition}
    For $\theta \in \Irrat$, a subset $H$ of $\C$ is called a \emph{Mother Hedgehog} of $f_\theta$ if it is a full compact connected subset of $\C$ containing $0$ and $v_{\theta,-1}$ such that $f_\theta: H \to H$ is a homeomorphism.
\end{definition}

For $\theta \in \Irrat_{\text{bdd}}$, Douady-Ghys surgery \cite{D87,G84} implies that $f_\theta$ admits a Siegel disk $Z_\theta$ with quasiconformal boundary passing through the critical point. In this case, $f_\theta$ admits a unique Mother Hedgehog which is simply the closure of $Z_\theta$.

\begin{theorem}[\cite{DLy22,DLy26a}]
\label{thm:mother-hedgehog}
    For every $\theta \in \Irrat$, $f_\theta$ admits a unique Mother Hedgehog $H_\theta$. It has the following properties.
    \begin{enumerate}
        \item The boundary of $H_\theta$ is the postcritical set of $f_\theta$. 
        \item The critical value $v_{\theta,0}$ is an accessible point of $H_\theta$ from infinity and $H_\theta \backslash \{v_{\theta,0}\}$ is connected.
        \item $H_\theta$ is star-like, i.e. a bouquet of arcs called internal rays centered at the fixed point $0$. 
        It can be written as $H_\theta = \bigcup_{\phi \in Q_{\theta}} I_{\phi}$ where $Q_\theta$ is a subset of $\R/ \Z$ and $f_{\theta}$ sends each internal ray $I_{\phi}$ onto $I_{\phi+\theta}$.
        \item The internal ray of $H_\theta$ landing at $v_{\theta,0}$ is a uniform quasiarc. 
        \item $H_{\theta}$ depends uniformly continuously on $\theta \in \Irrat$ (as a subspace of $\TheCpt$) in the Hausdorff topology and the normalized Riemann mapping
        \[
            X_{\theta} : \C \backslash \overline{\D} \to \C \backslash H_{\theta}, \quad 
            \quad X(1) = v_{\theta,0}
        \]
        also depends continuously on $\theta$ in the compact-open topology.
    \end{enumerate}
\end{theorem}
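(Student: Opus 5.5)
The plan is to build $H_\theta$ from the uniform pseudo-Siegel disks of \cite{DLy22} as a nested intersection, and to read off each listed property from that construction.

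\textbf{Construction.} For each irrational $\theta$, the work \cite{DLy22} provides a nest of pseudo-Siegel disks $\hat Z^{m}_\theta$, $m\geq -1$. They have the following features:
\begin{itemize}
\item each is a closed uniform quasidisk containing $0$ and the critical point;
\item each is almost invariant under the return map $f_\theta^{q_{[m+1]}}$, in the sense that $f_\theta$ is injective on $\hat Z^{m}_\theta$ up to a small, controlled error;
\item they are obtained by filling in the parabolic fjords of level $\ge m$.
\end{itemize}
I would set
\[
H_\theta := \bigcap_{m} \hat Z^{m}_\theta .
\]
As a decreasing intersection of full continua containing $0$ and $v_{\theta,-1}$, this is a full compact connected set.

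\textbf{Invariance.} Invariance $f_\theta(H_\theta)=H_\theta$ comes from almost invariance. The discrepancy $f_\theta(\hat Z^{m})\,\triangle\, \hat Z^{m}$ is contained in fjords of level $m$. Their diameters shrink to $0$ by the uniform bounds, since the fjords have definite angular control and lie at scale comparable to the $m$-th renormalization interval. Injectivity of $f_\theta$ on $\hat Z^{-1}_\theta$ then makes $f_\theta|_{H_\theta}$ a homeomorphism.

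\textbf{Uniqueness.} Suppose $H'$ is another Mother Hedgehog. It contains the critical orbit and is forward invariant, so it contains the postcritical set $P_\theta$. Each pseudo-Siegel disk is bounded by arcs of hyperbolic geodesics in $\C\setminus P_\theta$ or, equivalently, by pieces of $P_\theta$ plus fjord closures. I would argue that $H'$, being full and $f_\theta$-homeomorphic, cannot exit $\hat Z^m$ across these arcs: any excursion would be mapped, under the return maps, to a branch crossing the critical point non-injectively. This gives $H'\subset H_\theta$. For the reverse inclusion, I would use that $\partial H_\theta=P_\theta\subset H'$ and that $H'$ is full. Property (1) itself comes from the construction: points of $\partial\hat Z^m$ not in $P_\theta$ lie on fjord boundaries, and these collapse.

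\textbf{Star-likeness and the quasiarc.} For (3), I would use the Yoccoz/Pérez-Marco hedgehog structure. The Carathéodory rotation on $\partial H_\theta$ is semi-conjugate to $\rotate_\theta$, which yields internal rays $I_\phi$ as limits of the radial structure of the pseudo-Siegel disks under the uniformization of $\hat Z^m$. For (2) and (4), I would use that the ray to $v_{\theta,0}$ is a limit of uniformly qc arcs in the quasidisks $\hat Z^m$. Accessibility from infinity follows because $v_{\theta,0}$ lies on the outer boundary of every $\hat Z^m$, away from any fjord, since it is the critical value.

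\textbf{Continuity.} For (5), I would take a limit $\theta_k\to\ttheta\in\TheCpt$. The quasidisks converge by compactness of the uniform bounds, and the limiting nest defines the Mother Hedgehog of the limit map. Uniqueness then forces Hausdorff convergence, and Carathéodory kernel convergence gives continuity of $X_\theta$.

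\textbf{Main obstacle.} I expect uniqueness to be the hardest step. It requires ruling out extra "hairs" of a competing invariant continuum inside Cremer-type regions, where no Siegel disk is available. To handle this, I would first try the argument through renormalization sectors. A competitor $H'$ would project under the sector gluing $\psi$ to a Mother Hedgehog of $\Rsec f_\theta$. Comparing at deep levels, where the pseudo-Siegel disks are uniformly small relative to the sector scale, should then force the two sets to coincide.
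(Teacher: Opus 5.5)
Your construction step assumes the object it is meant to produce. For $\theta$ of unbounded type, pseudo-Siegel disks are not an independent input. In this paper $\hat Z^{[m]}_\theta$ is \emph{defined} as $H_\theta$ together with parabolic fjords cut off by dams, i.e.\ hyperbolic geodesics of $\RS\setminus H_\theta$ joining precritical points $x'_n,y'_n\in\partial H_\theta$. Moreover, \cite{DLy22} constructs these disks only for eventually golden mean $\theta$. So setting $H_\theta:=\bigcap_m\hat Z^{[m]}_\theta$ is circular, and so are your invariance argument and your derivation of (1) ``from the construction''.

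The missing idea is the approximation scheme sketched after Theorem~\ref{thm:pseudo-siegel}. For eventually golden mean $\theta$ the Mother Hedgehog is simply $\overline{Z_\theta}$, a quasidisk by Douady--Ghys. The real content of \cite{DLy22} is that filling its fjords gives pseudo-Siegel disks that are almost invariant and $K(\threshold)$-quasiconformal \emph{with constants independent of $\theta$}. For arbitrary $\theta$ one approximates by such $\theta_k$ and uses these uniform bounds to extract a limiting quasidisk $\hat Z_\theta$ on which $f_\theta$ is still injective; without uniformity, both the disks and the injectivity could degenerate. One then takes $H_\theta$ to be the non-escaping set of $f_\theta:\hat Z_\theta\to\C$, and properties (1)--(4) are inherited from the approximants.

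Your arguments for (3) and (4) also fail as stated. Only $\hat Z^{[-1]}$ and $\hat Z^{[0]}$ carry uniform quasiconformality constants; deeper levels degenerate to pinched disks in limits, so radial arcs of their uniformizations give no uniform quasiarc. P\'erez-Marco's theory does not show that a hedgehog is a bouquet of arcs, and the semiconjugacy of the boundary dynamics to $\rotate_\theta$ does not by itself produce internal rays. The arcs, and the quasiarc control in (4), have to be obtained by passing internal rays of the approximating Siegel disks to the limit, which again needs the uniform bounds.

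The uniqueness and continuity steps have holes too. For uniqueness, $H_\theta\subset H'$ does follow from (1) and fullness, but $H'\subset H_\theta$ is the whole difficulty. Your claim that an excursion of $H'$ outside $\hat Z^{[m]}$ would be carried across the critical point non-injectively is not justified. An extra hair of $H'$ attached at a point of $\partial H_\theta$ need never come near the critical point, and injectivity of $f_\theta$ on $H'$ alone does not stop it from crossing a dam into a fjord. The sector fallback only restates the problem: you give no mechanism forcing a projected competitor to coincide with the projected $H_\theta$, and the sectors of Theorem~\ref{thm:sectorial-bounds} are themselves cut along internal rays of $H_\theta$.

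For (5), uniform continuity on $\Irrat$ viewed inside $\TheCpt$ is equivalent to continuous extension to all of $\TheCpt$. That includes enriched rational points, where the limit map is parabolic and the limiting hedgehog must be invariant under Lavaurs maps. There the uniqueness statement, which concerns irrational $\theta$ only, says nothing, so ``uniqueness forces Hausdorff convergence'' breaks down. You need that subsequential limits depend only on $\ttheta$, which is exactly the uniform continuity of pseudo-Siegel disks proved in \cite{DLy26a}. Finally, kernel convergence of $\RS\setminus H_{\theta_k}$ gives convergence of the Riemann maps only up to rotation. The normalization $X_\theta(1)=v_{\theta,0}$ additionally requires uniform control of the access to the critical value.
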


\begin{remark}
    The bouquet presentation of $H_\theta$ as described in item (3) above is not unique when $H_\theta$ has interior.
\end{remark}

Observe that the Mother Hedgehog $H_\theta$ admits a unique bi-infinite critical orbit which we will denote from now on by
\[
    \ldots \, \xrightarrow[\quad]{f_\theta} \, v_{\theta,-2} \, \xrightarrow[\quad]{f_\theta} \, v_{\theta,-1} \, \xrightarrow[\quad]{f_\theta} \,
    v_{\theta,0} \, \xrightarrow[\quad]{f_\theta} \, v_{\theta,1} \, \xrightarrow[\quad]{f_\theta} \, v_{\theta,2} \, \xrightarrow[\quad]{f_\theta} \, \ldots.
\]

The proof of Theorem \ref{thm:mother-hedgehog} relies on pseudo-Siegel bounds. 
For $\theta \in \Irrat_{\text{bdd}}$, the quasiconformal dilatation of the Siegel disk $Z_{\theta}$ generally worsens as $\inf_n |\gauss^n(\theta)|$ gets closer to $0$. 
To rectify this issue, one can enlarge the Siegel disk to a \emph{pseudo-Siegel disk}, which is constructed as follows.

A pseudo-Siegel disk of $f_\theta$ is a closed quasidisk containing the postcritical set of $f_\theta$ on which $f_\theta$ is injective.
For $\theta \in \Irrat$, we will outline below the construction of a particular nested sequence of pseudo-Siegel disks
\[
    \ldots \subset \hat{Z}^{[2]}_\theta \subset \hat{Z}^{[1]}_\theta \subset \hat{Z}^{[0]}_\theta \subset \hat{Z}^{[-1]}_\theta \equiv \hat{Z}_\theta
\]
associated to $f_\theta$.

Consider the sequence $\mathfrak{X}^{-1}(\theta) = \seq{ ( \varepsilon_n, \abar_n ) }_{n \geq 1}$ as well as the corresponding sequence of first return times $q_{[n]} = q_{[n]}(\theta)$, $n \geq 0$ described in Section~\ref{sec:combinatorics}. 
Recall:   \[
        b_n := \abar_n + \frac{1+ \varepsilon_n \varepsilon_{n+1}}{2} \qquad
        \text{ and } \qquad
        \check{q}_{[n]} := q_{[n]} - \varepsilon_n \varepsilon_{n+1} q_{[n-1]}.
    \]
Also, denote $f_\theta^{[n]} \equiv f_\theta^{q_{[n]}}$.

Let us fix a sufficiently large constant $\threshold \in \N$.
This will be called the \emph{combinatorial threshold} for the pseudo-Siegel disks and it is independent of $\theta$.
For every integer $n \geq 0$, there will be two cases:
\begin{itemize}
    \item level $n$ is bounded: $\abar_{n+1}< \threshold$,
    \item level $n$ is near-parabolic (NP): $\abar_{n+1} \geq \threshold$.
\end{itemize}
We will also consider the number
\[
\threshold' := \lfloor e^{\sqrt{\log \threshold}}\rfloor,
\]
which is much smaller than $\threshold$. 
The construction of the pseudo-Siegel disks $\hat{Z}_{\theta}^{[m]}$ will technically depend on $\threshold$.

For every $n \geq 0$, set
\begin{equation}
\label{eqn:xn-yn}
    x_n = v_{\theta,-q_{[n]}}, \qquad 
    y_n = v_{\theta,-\check{q}_{[n]}}.
\end{equation}
For $n = 0$, both $x_0$ and $y_0$ are equal to the critical point $v_{\theta,-1}$ of $f_\theta$. 
For $n \geq 1$, the critical value $v_{\theta,0}$ is between $x_n$ and $y_n$ along the Carath\'eodory boundary $\partial^c H_\theta$ of $H_\theta$ and that $x_n$ and $y_n$ are identified under the iterate $f_{\theta}^{[n-1]}$.
When $\abar_{n+1} \geq \threshold$, we are also interested in the pre-critical points 
\[
x'_n = v_{\theta,-q_{[n+1]} + \threshold' q_{[n]}},\qquad
y'_n = v_{\theta,-\check{q}_{[n]} - \threshold' q_{[n]}}.
\]
on $\partial H_\theta$. 
It has the property that 
\[
(f_\theta^{[n]})^{b_{n+1}-2\threshold'-1}(x'_n) = y'_n \quad \text{ and } \quad (f_\theta^{[n]})^{\threshold'}(y'_n) = y_n,
\]
and along $\partial^c H_\theta$, these points are ordered as follows: 
\[
x_n < v_{\theta,0} < x'_n < y'_n < y_n.
\]
The sub-``interval`` $\left[x'_n, y'_n\right]$ of $\partial^c H_\theta$ is almost invariant under $f_\theta^{[n]}$.

For any NP level $n$, we define the \emph{principal level $n$ dam} $d^{[n]}_{\theta}$ to be the unique hyperbolic geodesic of $\RS \backslash H_{\theta}$ with endpoints $x'_n$ and $y'_n$ that does not separate $v_{\theta,0}$ from infinity.
In general, for $j \in \{0, 1,2,\ldots,q_{[n]}\}$, we define the $j$\textsuperscript{th} \emph{level $n$ dam} $d_{\theta,j}^{[n]}$ to be the unique connected component of $f_\theta^{-j}(d^{[n]}_{\theta})$ that has both endpoints on $\partial H_{\theta}$.
Observe that $d^{[n]}_{\theta,q_{[n]}}$ is combinatorially very close to $d^{[n]}_{\theta}$.

For every NP level $n$ and every $j \in \{0,1,\ldots,q_{[n]}-1\}$, the closure in $\C \backslash H_{\theta}$ of the disk enclosed by $d^{[n]}_{\theta,j}$ is called a \emph{parabolic fjord} of level $n$, which we will denote by $V^{[n]}_{\theta,j}$.
For every $m \geq 0$, we then define
    \[
        \hat{Z}_\theta^{[m-1]} := H_\theta \cup \bigcup_{\substack{ n\geq m, \\ \text{level } n \text{ is NP}}} \bigcup_{j=0}^{q_{[n]}-1} V^{[n]}_{\theta,j}.
    \]
From the construction, we have that for all $m \geq 0$,
\[
    \hat{Z}_\theta^{[m-1]} \supseteq \hat{Z}_\theta^{[m]}
\]
and equality holds if and only if level $m$ is bounded.

\begin{figure}
    \centering
\begin{tikzpicture}
    \node[anchor=south west,inner sep=0] (image) at (0,0) {\includegraphics[width=1\linewidth]{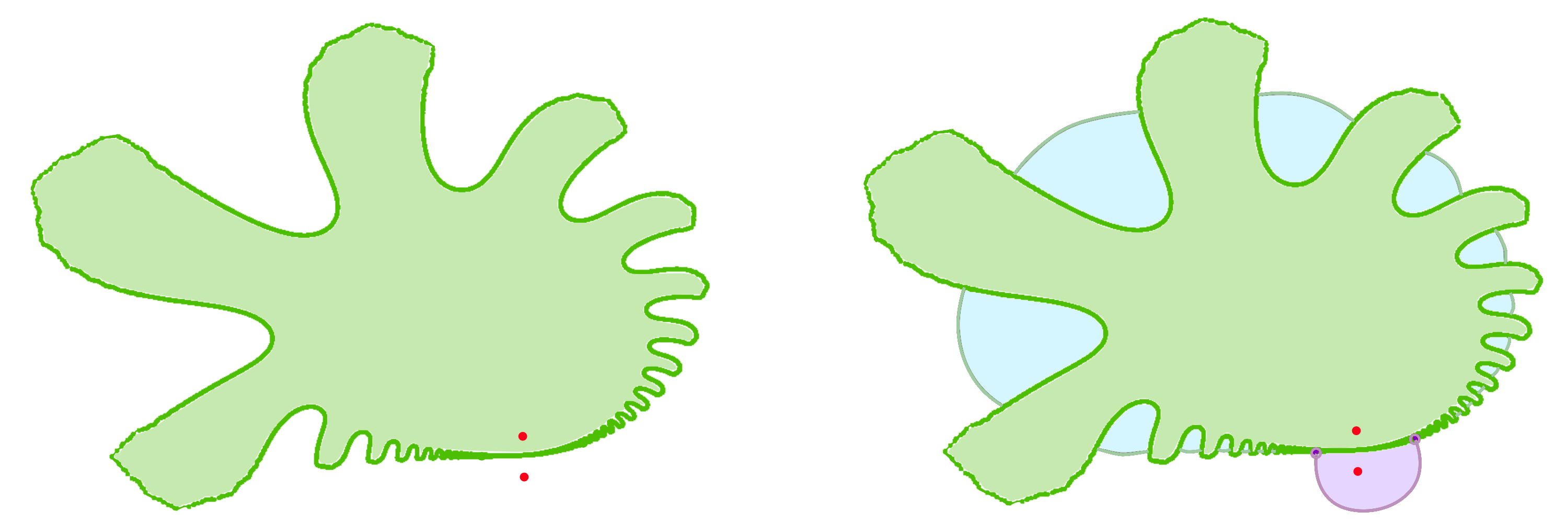}};
    \begin{scope}[
        x={(image.south east)},
        y={(image.north west)}
    ]
    
    \node [green!20!black, font=\bfseries] at (0.3,0.4) {$Z_\theta$};
    \node [green!20!black, font=\bfseries] at (0.83,0.4) {$\hat{Z}_\theta$};
    \end{scope}
\end{tikzpicture}
    \caption{The Siegel disk and a pseudo-Siegel disk (right) of $f_\theta$ when $\mathfrak{X}^{-1}(\theta) = \seq{ (+,100),(-,100),(+,2),(+,2), \ldots }$. The pseudo-Siegel disk $\wZ_\theta$ is obtained from $\overline Z_\theta$ by filling in level $1$ parabolic fjords (in blue) and one level $0$ parabolic fjord (in purple). The alpha and beta fixed points are marked in red. }
    \label{fig:PS-disk}
\end{figure}


\begin{theorem}[Pseudo-Siegel bounds \cite{DLy22, DLy26a}]
\label{thm:pseudo-siegel}
    For sufficiently high $\threshold \in \N$ and for all $\theta \in \Irrat$ and $m \geq -1$, the set $\hat{Z}_{\theta}^{[m]}$, $m \geq -1$ constructed above is a pseudo-Siegel disk of $f_\theta$. 
    They also satisfy the following properties.
    \begin{enumerate}
        \item Almost invariance: There exists a universal constant $C>0$ (independent of $\threshold$ and $\theta$) such that for all $m \geq 0$ and $j \in \{1,\ldots, q_{[m]}(\theta)\}$, the hyperbolic distance between every point on $d^{[m]}_{\theta,j}$ and the unique geodesic with the same endpoints as $d^{[m]}_{\theta,j}$ is bounded above by $C$.
        \item Uniform quasiconformality: There exists a constant $ K(\threshold)>0$ (independent of $\theta$) such that $K(\threshold) \to \infty$ as $\threshold \to \infty$ and $\hat{Z}_{\theta} = \hat{Z}_{\theta}^{[-1]}$ is $K(\threshold)$-quasiconformal.
        \item Uniform continuity: For every $m \geq -1$, the normalized Riemann mapping 
        \[
            X_{m,\theta}: \C \backslash \overline{\D} \to \RS \backslash \hat{Z}_{\theta}^{[m]}, \quad X_{m,\theta}(1) = v_{\theta,0}
        \]
        depends uniformly continuously on $\theta \in \Irrat$ (as a subspace of $\TheCpt$) in the $C^0$ topology. 
    \end{enumerate}
\end{theorem}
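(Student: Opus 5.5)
The theorem is cited from \cite{DLy22,DLy26a}, so the plan is to reconstruct its proof from the pseudo-Siegel machinery of those works rather than to find a new argument. I would argue by a downward induction on levels, organized through sector renormalization, rather than working directly in the dynamical plane of $f_\theta$.

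\emph{Step 1: injectivity and the basic pseudo-Siegel property.} I would first check that $\hat Z_\theta^{[m]}$ is a closed quasidisk containing the postcritical set $\partial H_\theta$ (Theorem~\ref{thm:mother-hedgehog}(1)) on which $f_\theta$ is injective. For a single NP level $n$, the principal fjord $V^{[n]}_{\theta,0}$ is bounded by the dam $d^{[n]}_\theta$, which joins $x_n'$ to $y_n'$. By almost invariance of $[x_n',y_n']$ under $f_\theta^{[n]}$, its preimages $V^{[n]}_{\theta,j}$ for $0\le j<q_{[n]}$ are pairwise disjoint and avoid the critical point. The critical point sits at $x_0$, which lies outside every dam window by the ordering $x_n<v_{\theta,0}<x_n'<y_n'<y_n$. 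Since $f_\theta$ is injective on $H_\theta$, it follows that $f_\theta$ is injective on $H_\theta$ together with these fjords, because each fjord maps onto the next one univalently.

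\emph{Step 2: almost invariance, item (1).} The key estimate is that the pullbacks $d^{[m]}_{\theta,j}$ stay within a universal hyperbolic distance $C$ of the geodesic with the same endpoints. I would prove this by passing to the external map: uniformize $\C\setminus H_\theta$ and apply Real Bounds. The window $[x_m',y_m']$ has length comparable to its translates at the combinatorial scale, and pulling back a geodesic under a degree-one map controlled by Koebe or the Schwarz lemma (in the form of the Schwarz Lemma for half-Poincar\'e domains stated above) distorts it by a bounded hyperbolic amount. A uniform constant is needed precisely because the total degeneration over $q_{[m]}$ iterates must not accumulate. I would obtain this by noting that intermediate pullbacks pass through levels $n>m$, where the inductive bounds already hold, and that the threshold $\threshold'\ll\threshold$ provides buffer iterates absorbing the error.

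\emph{Step 3: uniform quasiconformality and continuity, items (2) and (3).} This is the main obstacle. The plan is a multi-scale induction from deep levels upward. Assume the renormalized pseudo-Siegel disk at level $n+1$ is a $K$-quasidisk. Transfer it through the sector gluing $\psi_n^{-1}$ and spread it around by the $q_{[n]}$ iterates, with the dams from Step 2 controlling the seams. A bounded level then contributes a bounded, non-accumulating loss, since filled fjords make the boundary comparable to the rescaled previous one.

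The danger is growth of $K$ along long bounded stretches, and I would handle it in two ways:
\begin{itemize}
\item use the complex a priori bounds for bounded type, via Douady--Ghys, to reset $K$;
\item use the Inou--Shishikura near-parabolic geometry at NP levels, where filling fjords produces a quasidisk whose constant depends only on $\threshold$.
\end{itemize}

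Continuity in $\theta\in\TheCpt$ would follow from compactness. Namely, the uniform $K$ gives precompactness of the Riemann maps $X_{m,\theta}$. Any limit along $\theta_k\to\ttheta$, including parabolic enrichments, is identified with the construction for $\ttheta$ because the dams converge as hyperbolic geodesics with controlled endpoints $x_n',y_n'$. The endpoints themselves converge by the continuity of $H_\theta$ from Theorem~\ref{thm:mother-hedgehog}(5). Uniqueness of the limit then upgrades this to uniform continuity.
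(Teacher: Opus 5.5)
There is a genuine gap, on two levels.

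\emph{Circularity.} Your steps run against the paper's logical order. Steps~1--3 use the following inputs for arbitrary $\theta\in\Irrat$:
the Mother Hedgehog $H_\theta$ with Theorem~\ref{thm:mother-hedgehog}(1),(5);
Real Bounds (Theorem~\ref{thm:R-APB}) in external coordinates;
and the sector gluing maps of Theorem~\ref{thm:sectorial-bounds}.
In the paper all of these are \emph{consequences} of the pseudo-Siegel bounds:
\begin{itemize}
\item for $\theta$ not of bounded type, $H_\theta$ is only obtained as the non-escaping set of $f_\theta$ on $\hat Z^{[-1]}_\theta$, and its continuity in $\theta$ comes from item~(3);
\item Real Bounds are derived in \cite{DLi26} from the pseudo-Siegel bounds;
\item the uniform sectorial bounds of \cite{DLy26a} are built on them.
\end{itemize}
So for general $\theta$ the dams $d^{[n]}_\theta$ are not even defined before the theorem is proved. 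The argument has to start with bounded-type $\theta$ (in \cite{DLy22}, eventually golden mean), where $H_\theta=\overline{Z_\theta}$ is an a priori quasidisk, and reach general $\theta$ by limits. Given Real Bounds, your Step~2 would be fine, since it is essentially the Koebe argument of Lemma~\ref{lem:fjord-almost-invariance}; but those bounds are not yet available.

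\emph{No mechanism for a uniform constant.} This is the more serious problem. In a level-by-level induction, each transfer through a gluing map and each spreading-around costs a quasiconformal factor, and these compose. So $K$ grows with the number of levels climbed unless there is a self-improving mechanism. Eventually golden mean $\theta$ do give you a base case, but the number of levels above the golden-mean tail is unbounded. Your claim that a bounded level contributes a ``bounded, non-accumulating loss'' is precisely what has to be proved.

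Neither remedy you propose supplies this:
\begin{itemize}
\item Douady--Ghys surgery is a global construction for $f_\theta$, with a constant depending on the supremum of the partial quotients. It cannot reset $K$ at an intermediate level of a combinatorics containing large partial quotients.
\item Inou--Shishikura theory is perturbative and covers only high-type combinatorics, where every level is near-parabolic. It says nothing about a near-parabolic level reached after bounded ones, which is exactly the mixed, non-perturbative regime this theorem addresses.
\end{itemize}
As the paper indicates, \cite{DLy22} proves the bounds in the near-degenerate regime instead: for eventually golden mean $\theta$ one shows that a large degeneration (large extremal width) of the geodesic pseudo-Siegel disks cannot occur. This yields $\theta$-independent bounds. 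Uniform continuity, item~(3), is a separate result of \cite{DLy26a}.

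Your continuity argument has a further problem besides invoking Theorem~\ref{thm:mother-hedgehog}(5). At the enriched rational points of $\TheCpt$ there is no polynomial whose construction a subsequential limit could be compared with. You would need an independent argument identifying that limit.
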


The proof of pseudo-Siegel bounds is achieved in the near-degenerate regime. 
In \cite{DLy22}, geodesic pseudo-Siegel disks are constructed (a slight variation of $\hat{Z}_{\theta}$ where all dams are instead hyperbolic geodesics) for EGM irrationals and are shown to be almost invariant and uniformly quasiconformal. 
As a corollary, the existence of the Mother Hedgehog $H_{\theta}$ across all $\theta$ was shown as it is equal to the non-escaping set of $f_{\theta}: \hat{Z}_{\theta}^{[-1]} \to \C$. 
The uniform continuity of pseudo-Siegel disks across all bounded-type rotation numbers was proven in the follow up work \cite{DLy26a} and this results in Theorem \ref{thm:mother-hedgehog}. 
In the next subsection, we will describe in detail the uniform bounds on sector renormalization obtained in \cite{DLy26a}.

\subsection{Real Bounds and Pseudo-Bubble Bounds}
\label{ss:real-bubble-bounds}

Consider the quadratic polynomial $f_\theta$ for some irrational $\theta \in \Irrat$.
In this subsection, we will summarize the key results of \cite{DLi26}.
Consider the pseudo-Siegel disks $\hat{Z}^{[m]}_{\theta}$, $m \geq -1$, the Mother Hedgehog $H_\theta$, and the bi-infinite critical orbit in $H_\theta$ by $\{v_{\theta,n}\}_{n\in\Z}$ introduced in the previous subsection.

Let
\[
    \Psi_{\theta} : \RS \backslash H_{\theta} \to \RS \backslash \overline{\D}
\]
be the unique Riemann mapping fixing $\infty$ and sending the critical value $v_{\theta,0}$ of $f_{\theta}$ to $1$.
This normalization makes sense because by Theorem \ref{thm:renorm-limits} (6), $\Psi_{\theta}$ extends uniquely and continuously on the bi-infinite critical orbit $\{v_{\theta,n}\}_{n \in \Z}$.

Let 
\[
    B_{\theta}^1 := \overline{f_{\theta_k}^{-1}(H_{\theta})\backslash H_{\theta}}
    \quad \text{ and } \quad
    \mathsf{B}_{\theta}^1 := \Psi_\theta(B_{\theta,1})
\]
Then, the external map
\[
    \mathsf{f}_\theta \equiv 
    \mathsf{f}_\theta^{\textnormal{ext}} 
    := \Psi_\theta \circ f_\theta \circ \Psi_\theta^{-1} : 
    \RS \backslash ( \overline{\D} \cup \mathsf{B}_{\theta}^1) \to \RS \backslash \overline{\D}
\]
is a well-defined holomorphic double covering map branched at $\infty$.
Denote 
\[
\mathsf{v}_{\theta,n} = \mathsf{v}_{\theta,n}^{\textnormal{ext}}
:= \Psi_{\theta}(v_{\theta,n}) \in \partial \D
\qquad \text{ for every }
n \in \Z.
\]
For $n \in \N$, consider the tiling $\mathfrak{D}_{\theta}^{[n]}$ of $\partial \D$ obtained by removing $\{ \mathsf{v}_{\theta,-k} \}_{0<k\leq q_{[n]}(\theta)}$. 
Then, $\mathfrak{D}_{\theta}^{[n+1]}$ is a refinement of $\mathfrak{D}_{\theta}^{[n]}$ and every tile in $\mathfrak{D}_{\theta}^{[n]}$ decomposes into at least two tiles in $\mathfrak{D}_{\theta}^{[n+1]}$.

\begin{theorem}[Real Bounds]
\label{thm:R-APB}
    Let $\theta \in \Theta$ and let $I$ be a tile in $\mathfrak{D}_{\theta}^{[n]}$ for some $n \in \N$.
    \begin{enumerate}
        \item If $J \in \mathfrak{D}_{\theta}^{[n]}$ is adjacent to $I$, then $|I| \asymp |J|$.
        \item Let $J_1,J_2,\ldots,J_k$ be all the tiles in $\mathfrak{D}_{\theta}^{[n+1]}$ that are contained in $I$, labelled in consecutive order, then
        \[
            |J_j| \asymp \frac{|I|}{\min\{j, k+1-j\}^2} \qquad \text{ for all } j \in \{1,\ldots,k\}.
        \]
    \end{enumerate}
    In particular, the maximal diameter of tiles in $\mathfrak{D}_{\theta}^{[n]}$ tends to zero uniformly exponentially fast as $n \to \infty$.
\end{theorem}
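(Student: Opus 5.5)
The plan is to transfer the problem to the external circle map $\mathsf{f}_\theta$ and run the classical real-bounds argument for critical circle maps (Herman, \'Swi\k{a}tek, de Faria--de Melo, Yoccoz). By Theorem~\ref{thm:mother-hedgehog}, $\Psi_\theta$ extends continuously to the bi-infinite critical orbit. Moreover, $\mathsf{f}_\theta$ is a holomorphic double cover of $\RS\backslash\overline{\D}$ branched only at $\infty$, and its boundary action on $\partial\D$ is a homeomorphism combinatorially equivalent to the rotation $\rotate_\theta$. Concretely, the points $\mathsf{v}_{\theta,n}$ are ordered on $\partial\D$ exactly as $v_n(\theta)$ are, so the tilings $\mathfrak{D}^{[n]}_\theta$ have the combinatorics of the dynamical partitions of Proposition~\ref{prop:q[n]}. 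The one piece of non-linearity is the single ``critical'' point at $\mathsf{v}_{\theta,0}$: there the hedgehog $H_\theta$ has an accessible boundary point, $H_\theta\setminus\{v_{\theta,0}\}$ is connected, and $f_\theta$ folds around the critical point. So the circle map $\mathsf{f}_\theta|_{\partial\D}$ should behave like a critical circle map with one critical point.

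First, I will establish the basic distortion tool. Let $J\subset\partial\D$ be an arc whose forward iterates $\mathsf{f}^j_\theta(J)$, $0\le j<q$, are pairwise disjoint up to endpoints. Such families arise from the tilings. I will show that $\mathsf{f}^q_\theta$ has bounded cross-ratio distortion on $J$ relative to a definite neighborhood. In external coordinates this is a Koebe-type statement. By Schwarz reflection across $\partial\D$ away from the branch structure, $\mathsf{f}_\theta$ extends to a univalent map on a neighborhood of any arc not containing the critical value. The inverse branches along the orbit of $J$ then extend univalently to slit domains $\C\setminus(\partial\D\setminus T)$ for a slightly larger arc $T$. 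The disjointness of the orbit controls the total hyperbolic length via the Schwarz lemma for Poincar\'e domains. This step replaces the $C^2$ Denjoy/Yoccoz cross-ratio inequality; summing the distortion over disjoint intervals gives a uniform bound.

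Second, I will prove item (1), comparability of adjacent tiles, by the standard Swiatek argument. Suppose $I,J$ are adjacent tiles in $\mathfrak{D}^{[n]}_\theta$ with $|I|\ll|J|$. Pull back along the first return maps $\mathsf{f}^{q_{[n]}}$ and $\mathsf{f}^{\check q_{[n]}}$ to the tiles adjacent to $\mathsf{v}_{\theta,0}$. By the distortion step, the ratio degeneracy is transported, up to bounded factor, to the two tiles flanking the critical value. Near the critical value the local model is the square-root fold, and this forces the two sides to be comparable. That yields a contradiction with the degeneracy, and the bound is uniform in $\theta$.

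Third, I will prove item (2), the $1/\min\{j,k+1-j\}^2$ profile for $k=b_{n+1}$ large. Here $\mathsf{f}^{q_{[n]}}_\theta$ restricted to $I$ is an almost parabolic map: the $J_j$ are consecutive fundamental domains of a map close to translation-by-a-small-amount near a near-parabolic fixed point pair. The approach is Yoccoz's: bounded distortion from step one makes the return map on $I$ a bounded-nonlinearity diffeomorphism with no fixed points. A quadratic-like comparison, using the Fatou coordinate or an explicit $x\mapsto x+\epsilon+x^2$ model, then gives the inverse-square profile. I expect this to be the main obstacle, because uniformity must hold as $k\to\infty$, where one needs more than bounded distortion. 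I would first try the complex approach: use the fjord/dam geometry of Theorem~\ref{thm:pseudo-siegel} (almost invariance of dams) to produce a definite univalent extension of the return map around $I$, and then apply the standard estimate on fundamental domains of near-parabolic univalent maps. The exponential decay of tile diameters follows from (1) and (2): each tile splits into at least two tiles, each a definite fraction smaller.
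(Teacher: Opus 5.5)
There is a genuine gap at the heart of Step 2, and Step 3 inherits it. Yoccoz's argument needs $|J_1|\asymp|I|\asymp|J_k|$ and definite Koebe space as \emph{input}, and it does not produce them.

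The Herman--\'{S}wi\k{a}tek mechanism that makes adjacent tiles comparable is the non-flat critical point: pulling a degenerate pair of tiles back through the fold improves their ratio by a definite power. You assert that the local model is a square-root fold, but that describes $f_\theta$ in the dynamical plane, not $\mathsf f_\theta=\Psi_\theta\circ f_\theta\circ\Psi_\theta^{-1}$ on $\partial\D$. (Note also that the critical point of the circle map is $\mathsf v_{\theta,-1}$; $\mathsf v_{\theta,0}$ is the critical value.) After uniformizing $\C\setminus H_\theta$, the local law at $\mathsf v_{\theta,-1}$ depends on two things: the shape of $H_\theta$ near $v_{\theta,0}$, and the shape of the hoglet $B^1_\theta$ at its root.
\begin{enumerate}
\item For a bounded-type Siegel disk, the law is cubic up to a quasisymmetric change of coordinates (Douady--Ghys). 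The qs constant depends on the dilatation of $\partial Z_\theta$, which degenerates as the partial quotients grow.
\item The square-root law is only the idealization in which $H_\theta$ is a slit at $v_{\theta,0}$.
\item For general $\theta$, $\partial H_\theta$ carries parabolic fjords at infinitely many scales near the critical orbit. There is then no a priori power law, let alone one uniform in $\theta$.
\end{enumerate}
Moreover, real bounds for a given analytic critical circle map become universal only after finitely many renormalization levels. The theorem instead claims universal constants at every level $n$ and for every $\theta$. The missing idea is a uniformly quasiconformally controlled model of the critical point in external coordinates.

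In the paper, this missing input comes from the pseudo-Siegel bounds. For bounded type, \cite[\S3]{DLi26} derives item (1) and the endpoint cases $j\in\{1,k\}$ of item (2) from the uniformly quasiconformal pseudo-Siegel disk $\hat Z_\theta$, which contains the critical orbit and on which $f_\theta$ is injective. Only then is Yoccoz's argument \cite[Appendix B]{dFdM99} (or the Log Rule) used for the remaining $j$. The statement is extended to all $\theta$ by continuity of Mother Hedgehogs (Theorem~\ref{thm:mother-hedgehog}(5)). Your Step 3 is in line with the treatment of the interior indices, but without a substitute for the pseudo-Siegel input, Steps 2 and 3 have nothing to run on.

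A smaller fix in Step 1: on the circle, the reflected external map is branched at $0$ and $\infty$, so inverse branches do not extend to $\RS\setminus(\partial\D\setminus T)$. Pass to logarithmic coordinates, where the domain minus the lifted hoglets maps conformally onto a half-plane, and reflect there.
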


\begin{proof}
    In the bounded-type case, items (1) and (2) for $j \in \{1,k\}$ were proven in \cite[\S3]{DLi26} as a consequence of pseudo-Siegel bounds.
    Item (2) for all remaining $j$ follows from a general argument by Yoccoz for real maps with negative Schwarzian derivative; the proof can be found in \cite[Appendix B]{dFdM99}.
    Alternatively, (2) can also be proven directly using the Log Rule \cite{DLy22}.
    By continuity of Mother Hedgehogs (Theorem \ref{thm:mother-hedgehog} (5)), both (1) and (2) generally hold for any rotation number.
\end{proof}

As a consequence of Real Bounds, we have:

\begin{corollary}
    The external map $\mathsf{f}_\theta$ extends continuously to a unique circle homeomorphism $\mathsf{f}_{\theta}: \partial \D \to \partial \D$ with rotation number $\theta$. 
\end{corollary}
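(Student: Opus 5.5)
The plan is to obtain the circle homeomorphism as the continuous extension of $\mathsf{f}_\theta$ along the dense bi-infinite orbit $\{\mathsf{v}_{\theta,n}\}_{n\in\Z}$, and then identify its rotation number combinatorially.

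\textbf{Step 1: the map on the orbit.} By Theorem~\ref{thm:renorm-limits}~(6), $\Psi_\theta$ extends continuously to every point $v_{\theta,n}$. Since $f_\theta\colon H_\theta\to H_\theta$ is a homeomorphism, the external map sends a neighborhood of $\mathsf{v}_{\theta,n}$ in $\RS\setminus(\overline{\D}\cup\mathsf{B}^1_\theta)$ to a neighborhood of $\mathsf{v}_{\theta,n+1}$. Hence the boundary values satisfy $\mathsf{f}_\theta(\mathsf{v}_{\theta,n})=\mathsf{v}_{\theta,n+1}$. A little care is needed at $n=-1$: the critical point $v_{\theta,-1}$ is where the hoglet $B^1_\theta$ attaches to $H_\theta$. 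I would handle it by approaching only from the complement of $\mathsf{B}^1_\theta$, which is accessible because the critical point is a cut point.

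\textbf{Step 2: cyclic order and the rotation number.} Next I check that the cyclic order of $\{\mathsf{v}_{\theta,n}\}$ on $\partial\D$ matches the cyclic order of $\{\rotate_\theta^n(1)\}$. The prime ends of $\RS\setminus H_\theta$ at the orbit points are ordered along $\partial^c H_\theta$. By Theorem~\ref{thm:mother-hedgehog}~(3), $f_\theta$ rotates internal rays by $\theta$, so the angular order of the rays landing at the $v_{\theta,n}$ agrees with that of the rigid rotation. The ordering $x_n<v_{\theta,0}<y_n$ stated before Theorem~\ref{thm:pseudo-siegel} confirms this at every level.

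It follows that the map $\mathsf{v}_{\theta,n}\mapsto\mathsf{v}_{\theta,n+1}$ preserves cyclic order. Once extended, it will therefore have rotation number $\theta$, since it is conjugate on the orbit to $\rotate_\theta$.

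\textbf{Step 3: density and extension.} By the last clause of Theorem~\ref{thm:R-APB}, the tiles of $\mathfrak{D}^{[n]}_\theta$ have diameters tending to zero. So the backward orbit $\{\mathsf{v}_{\theta,-k}\}_{k>0}$ is dense in $\partial\D$, and in particular has no gaps.

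Moreover, $\mathsf{f}_\theta$ maps tiles of $\mathfrak{D}^{[n]}_\theta$ to arcs bounded by orbit points. These image arcs are unions of at most boundedly many tiles of the same or an adjacent level, using the first return structure of Proposition~\ref{prop:q[n]}~(3) and item~(1) of Theorem~\ref{thm:R-APB}. Their diameters therefore also shrink to zero.

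Consequently, the order-preserving map on the dense orbit is uniformly continuous, and it extends uniquely to a monotone continuous map of $\partial\D$. The map is surjective because its image contains the dense orbit and is closed. It is injective because its inverse, defined on the orbit by $\mathsf{v}_{\theta,n+1}\mapsto\mathsf{v}_{\theta,n}$, extends continuously by the same argument. This extension coincides with the boundary values of $\mathsf{f}_\theta$ wherever radial limits exist, which gives uniqueness.

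\textbf{Main obstacle.} The delicate point is Step 3's claim that images of small tiles stay small near the critical value $\mathsf{v}_{\theta,0}$. There $\mathsf{f}_\theta$ is a genuine branched double covering near $\infty$ rather than a local homeomorphism, and the hoglet $\mathsf{B}^1_\theta$ sits over $\mathsf{v}_{\theta,-1}$. I would handle this purely combinatorially, using the order argument rather than analytic boundary behaviour. The two tiles adjacent to $\mathsf{v}_{\theta,-1}$ in $\mathfrak{D}^{[n]}_\theta$ map to the two tiles adjacent to $\mathsf{v}_{\theta,0}$ in the tiling by $\{\mathsf{v}_{\theta,-k}\}_{-1\le k<q_{[n]}}$. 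By Real Bounds these tiles are comparable to level-$n$ tiles, hence small.
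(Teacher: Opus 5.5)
Your combinatorial core is the route the paper has in mind, where the corollary is stated as a direct consequence of Real Bounds. Density of the orbit $\{\mathsf{v}_{\theta,n}\}$, together with its rotation-compatible cyclic order, makes $\mathsf{v}_{\theta,n}\mapsto\mathsf{v}_{\theta,n+1}$ extend to a circle homeomorphism $h$ conjugate to the rigid rotation by $\theta$. The tile-image estimates in Step~3 and your ``main obstacle'' paragraph are unnecessary. A cyclic-order-preserving bijection between dense subsets of $\partial\D$ automatically extends to a homeomorphism, so density is the only input you need from Theorem~\ref{thm:R-APB}.

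\textbf{The gap.} You never prove the other half of the statement: that the holomorphic map $\mathsf{f}_\theta$ itself extends continuously to $\partial\D$ with boundary values $h$. You control $\mathsf{f}_\theta$ only at the orbit points (Step~1). You then assert that $h$ ``coincides with the boundary values of $\mathsf{f}_\theta$ wherever radial limits exist''. Nothing in your argument says what $\mathsf{f}_\theta(z)$ does as $z\to\zeta$ for $\zeta$ off the orbit. Agreement of radial limits alone would not give a continuous extension anyway. So, contrary to your plan to stay purely combinatorial, some analytic input about $\mathsf{f}_\theta$ is unavoidable.

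\textbf{How to fill it.} The missing step is short.
\begin{itemize}
\item Let $z\to\partial\D$ inside $\RS\setminus(\overline{\D}\cup\mathsf{B}^1_\theta)$. Then $\Psi_\theta^{-1}(z)$ leaves every compact subset of $\RS\setminus H_\theta$.
\item Since $f_\theta$ is continuous with $f_\theta(H_\theta)=H_\theta$, the point $f_\theta(\Psi_\theta^{-1}(z))$ also leaves every compact subset, so $|\mathsf{f}_\theta(z)|\to 1$.
\item The set $\mathsf{B}^1_\theta$ touches $\partial\D$ only at $\mathsf{v}_{\theta,-1}$, for instance by the angular control in Theorem~\ref{thm:bubble-bounds}. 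Hence Schwarz reflection extends $\mathsf{f}_\theta$ analytically across $\partial\D\setminus\{\mathsf{v}_{\theta,-1}\}$. The paper invokes Schwarz reflection in the same way in the analogous Proposition~\ref{prop:R-apb-transcendental}.
\item The resulting continuous boundary map agrees with $h$ on the dense orbit by your Step~1. Hence it equals $h$ on all of $\partial\D\setminus\{\mathsf{v}_{\theta,-1}\}$.
\item Its one-sided limits along the circle at $\mathsf{v}_{\theta,-1}$ are $h(\mathsf{v}_{\theta,-1})=\mathsf{v}_{\theta,0}$.
\end{itemize}
Alternatively, you could run the local argument of your Step~1, transporting small crosscuts by $f_\theta$, at every prime end rather than only at the orbit points.
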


\begin{lemma}
\label{lem:qc-fjord-external}
    There exists a universal constant $K>1$ such that for every $\theta \in \Theta$ and $m \geq -1$, the image under $\Psi_{\theta}$ of the boundary of $\hat{Z}_{\theta}^{[m]}$ is a $K$-quasicircle.
\end{lemma}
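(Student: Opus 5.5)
The plan is to show that $\Psi_\theta(\partial\hat Z^{[m]}_\theta)$ is made of arcs of $\partial\D$ together with images of dams, and that each dam's image is a uniformly controlled arc sitting over a complementary arc of $\partial\D$. By construction, $\RS\backslash\hat Z^{[m]}_\theta$ is $\RS\backslash H_\theta$ minus the parabolic fjords $V^{[n]}_{\theta,j}$ with $n\ge m+1$ NP and $0\le j<q_{[n]}$. Hence $\Psi_\theta(\partial\hat Z^{[m]}_\theta)$ is the closure of $\partial\D$ with each arc $\Psi_\theta(\partial V^{[n]}_{\theta,j}\cap\partial H_\theta)$ replaced by the curve $\Psi_\theta(d^{[n]}_{\theta,j})$. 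These arcs are the outermost fjord bases; nested fjords at deeper levels are swallowed.

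The first step is to control each dam in external coordinates. Hyperbolic geodesics of $\RS\backslash H_\theta$ go under $\Psi_\theta$ to hyperbolic geodesics of $\RS\backslash\overline\D$, i.e. circular arcs orthogonal to $\partial\D$. By Theorem~\ref{thm:pseudo-siegel}(1), each $d^{[n]}_{\theta,j}$ lies within universal hyperbolic distance $C$ of the geodesic with the same endpoints. So $\Psi_\theta(d^{[n]}_{\theta,j})$ is a curve in $\C\backslash\overline\D$ lying in a hyperbolic $C$-neighborhood of the orthogonal circular arc $\gamma$ joining its endpoints, and it is disjoint from $\overline\D$. For the principal dam, a geodesic by definition, this is exact. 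For $j\ge1$ it suffices to know that the dam is a uniform quasiarc: it is a homeomorphic lift of the principal dam under $f^{-j}_\theta$, and in external coordinates the external map $\mathsf f_\theta$ is univalent near these arcs.

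The second step passes from local to global control, and I expect it to be the main obstacle. A curve replacing each arc of a circle, fellow-travelling its orthogonal arc and with disjoint bases, gives a quasicircle with uniform constant, via Ahlfors' three-point condition, provided two things hold. First, each replacement curve must be a uniform quasiarc, with bounded turning relative to its base. Second, the replacement curves must not come close to each other relative to their sizes. The second condition is automatic, because distinct outermost bases are disjoint arcs of $\partial\D$ and the curves lie in the hyperbolic $C$-neighborhoods of the corresponding orthogonal arcs, i.e. within a definite Poincaré domain of their bases. I would check the three-point condition by cases: both points on one dam, points on $\partial\D$, or points on two different dams; in each case the comparison reduces to distances on $\partial\D$.

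For the first condition I would use Real Bounds (Theorem~\ref{thm:R-APB}) and the almost invariance. The endpoints $\Psi(x'_n),\Psi(y'_n)$ are well placed in the tiling $\mathfrak D^{[n]}_\theta$, and $\mathsf f_\theta^{j}$ acts on the relevant tiles with bounded distortion. Hence the lifted dams inherit bounded turning from the principal one, which is a geodesic. Alternatively, one can transfer quasiconformality directly: by Theorem~\ref{thm:pseudo-siegel}(2), $\hat Z_\theta$ is $K(\threshold)$-qc, but that constant depends on $\threshold$, so it cannot be used to get a universal $K$. The universal bound must instead come from $C$ being universal together with the universal constants in Real Bounds. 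Finally, the general $\theta$ case follows from bounded type by the continuity in Theorem~\ref{thm:pseudo-siegel}(3), since uniform quasicircles form a closed family.
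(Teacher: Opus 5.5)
Your architecture is the paper's: almost invariance puts each dam, in external coordinates, uniformly close to the orthogonal circular arc over its base; one then checks bounded turning; and general $\theta$ is handled by continuity. But your second step has a genuine gap. The claim that the separation condition is automatic because the outermost bases are disjoint is false, since disjointness is qualitative and yields no uniform constant. The hyperbolic $C$-neighborhoods of orthogonal arcs over two disjoint arcs of $\partial\D$ overlap as soon as the gap between the bases is small relative to their lengths. Even exact geodesics fail. In the $\UHP$ model, the semicircles over $[b-1,b]$ and $[b+\epsilon,b+1+\epsilon]$ are at horizontal distance about $\epsilon+2h^2$ at height $h$. The boundary arc joining those two points (down one dam, across $[b,b+\epsilon]$, up the other) has diameter about $h$. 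At $h=\sqrt{\epsilon}$ the ratio is $O(\sqrt{\epsilon})$, so bounded turning degenerates as $\epsilon\to 0$, and the limiting picture is a cusp.

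The missing input is the second ingredient of the paper's proof: by Real Bounds, fjords are uniformly separated in external coordinates. A level $n$ fjord base is what remains of a tile $I$ after discarding $\threshold'$ subtiles at each end. By Theorem~\ref{thm:R-APB}(2), the discarded ends have length $\asymp |I|$ while the base has length $\asymp |I|/\threshold'$, and bounded-distortion pullbacks preserve these proportions. Endpoints of fjord bases are cut points of the dynamical tilings, so a non-swallowed fjord of deeper level sits in the middle of a finer tile whose interior avoids all shallower bases. Hence any two outermost bases are at distance $\succ\threshold'$ times the length of the deeper one. This quantitative separation is what your three-point case analysis for points on two different dams actually requires.

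Separately, univalence of $\mathsf f_\theta$ near the dams does not by itself give bounded distortion of the lifted dams. You need Koebe space around the principal dam, which Real Bounds supply as in the proof of Lemma~\ref{lem:fjord-almost-invariance}.
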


\begin{proof}
    For bounded combinatorics, this was proven in \cite[Lemma 3.2]{DLi26} with uniform $K$. 
    The proof extends to all irrationals by the uniform continuity of Mother Hedgehogs (Theorem \ref{thm:mother-hedgehog} (5)).
    Indeed, by almost invariance, every dam is a uniform quasiarc close to a semicircle in external coordinates. 
    By Real Bounds, fjords are uniformly separated in external coordinates.
    These two ingredients imply the bounded turning property for the boundary of $\Psi_{\theta}(\partial \hat{Z}_{\theta}^{[m]})$.
\end{proof}

\begin{corollary}
\label{cor:qc-external-coords}
    There exists an $\threshold$-uniform constant $K>1$ such that for all $\theta \in \Theta$, 
    the Riemann mapping $\Psi_{\theta}: \RS \backslash \hat{Z}_{\theta}^{[-1]} \to \Psi_{\theta}(\RS \backslash \hat{Z}_{\theta}^{[-1]})$ extends to a global $K$-quasiconformal map of $\RS$. 
\end{corollary}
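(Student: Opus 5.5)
The plan is to extend $\Psi_{\theta}$ from $\RS \backslash \hat{Z}_{\theta}^{[-1]}$ across $\partial \hat{Z}_{\theta}^{[-1]}$ by gluing it to a quasiconformal map between the two complementary disks, namely $\hat{Z}_{\theta}^{[-1]}$ and the bounded complementary component of the quasicircle $\Psi_{\theta}(\partial \hat{Z}_{\theta}^{[-1]})$. Throughout I use the standard fact that a conformal map between two Jordan domains whose boundaries are quasicircles extends to a global quasiconformal homeomorphism of $\RS$. The dilatation of this extension is controlled only by the quasicircle constants of the two boundary curves.

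The first step is to record the two quasicircle constants.

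\begin{enumerate}
\item By Theorem~\ref{thm:pseudo-siegel}(2), $\partial \hat{Z}_{\theta} = \partial \hat{Z}_{\theta}^{[-1]}$ is a $K(\threshold)$-quasicircle, with $K(\threshold)$ independent of $\theta$. This is the only source of $\threshold$-dependence, which is why the final constant is $\threshold$-uniform rather than universal.
\item By Lemma~\ref{lem:qc-fjord-external} with $m=-1$, the curve $\Gamma := \Psi_{\theta}(\partial \hat{Z}_{\theta}^{[-1]})$ is a $K$-quasicircle for a universal $K$.
\end{enumerate}

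Note that $\Psi_\theta$ restricted to $\RS\backslash \hat{Z}_{\theta}^{[-1]}$ is a conformal map from the exterior of one quasidisk onto the exterior of another. It fixes $\infty$ and sends $v_{\theta,0}$ to $1$, and by Carathéodory it extends to a homeomorphism of the boundary quasicircles.

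The second step is to build the extension itself.

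\begin{enumerate}
\item Let $\phi_1 : \RS \backslash \overline{\D} \to \RS \backslash \hat{Z}_{\theta}^{[-1]}$ and $\phi_2 : \RS \backslash \overline{\D} \to \RS\backslash \overline{\mathrm{int}\,\Gamma}$ be the Riemann maps fixing $\infty$. Normalize them so that $\Psi_\theta = \phi_2 \circ \phi_1^{-1}$ on the exterior; this is possible after a rotation.
\item Each $\phi_i$ extends to a $K_i$-quasiconformal map of $\RS$, with $K_i$ depending only on the quasicircle constants, via Ahlfors' reflection across the quasicircle.
\item Denote these extensions by $\tilde\phi_1,\tilde\phi_2$. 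The map $\tilde\phi_2\circ\tilde\phi_1^{-1}$ is then a global $K_1K_2$-quasiconformal map of $\RS$ that agrees with $\Psi_\theta$ on $\RS\backslash\hat{Z}_{\theta}^{[-1]}$.
\end{enumerate}

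The dilatation $K_1K_2$ depends only on $K(\threshold)$ and the universal $K$. Hence it is $\threshold$-uniform.

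The main obstacle is making sure the quasiconformal extension constants are genuinely uniform, with no hidden dependence on normalization. Ahlfors reflection gives distortion bounds that are independent of scale, and the normalization at $\infty$ together with the point $1$ causes no problem, because the extension is built by conjugating with the fixed reflection in $\partial\D$. So the remaining care is only to check that Lemma~\ref{lem:qc-fjord-external} applies with $m=-1$ for every $\theta\in\Theta$, which it does by its statement.
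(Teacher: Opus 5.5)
Your proposal is correct and matches the paper's proof. The paper combines the $\threshold$-uniform quasiconformality of $\hat{Z}_\theta$ (citing Theorem~\ref{thm:renorm-limits}(4), where your Theorem~\ref{thm:pseudo-siegel}(2) is the more direct reference) with Lemma~\ref{lem:qc-fjord-external}. You have simply written out the standard fact the paper leaves implicit: a conformal map between the exteriors of two quasidisks extends to a global quasiconformal map whose dilatation depends only on the two quasicircle constants.
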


\begin{proof}
    This follows from the $\threshold$-uniform quasiconformality of the top pseudo-Siegel disk $\hat{Z}_{\theta}$ (Theorem \ref{thm:renorm-limits} (4)) and Lemma \ref{lem:qc-fjord-external}.
\end{proof}

Together, Corollary \ref{cor:qc-external-coords} and Real Bounds imply:

\begin{corollary}[Bounds on scaling ratio]
\label{cor:universal-scaling-law}
    There exist universal constants $C, \lambda_-, \lambda_+$ with $C>1$ and $1< \lambda_- \leq \lambda_+$ such that for all $n \in \N$,
    \begin{align*}
        C^{-1} \big| v_{\theta,q_{[n]}(\theta)} - v_{\theta,0} \big| 
        & \leq \big| v_{\theta,-q_{[n]}(\theta)} - v_{\theta,0} \big| \leq 
        C \big| v_{\theta,q_{[n]}(\theta)} - v_{\theta,0} \big|, \\
        C^{-1} \lambda_-^n 
        &\leq \big| v_{\theta,-q_{[n]}(\theta)} - v_{\theta,0} \big| \leq C \lambda_+^n.
    \end{align*}
\end{corollary}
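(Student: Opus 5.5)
The plan is to transfer the Real Bounds (Theorem~\ref{thm:R-APB}) from the circle $\partial\D$ in external coordinates back to the dynamical plane of $f_\theta$. The transfer uses the global $K$-quasiconformal extension of $\Psi_\theta$ from Corollary~\ref{cor:qc-external-coords}, which is quasisymmetric on $\RS$. Both displayed lines compare distances from the critical value $v_{\theta,0}$ to points of the critical orbit. Under $\Psi_\theta$ these become distances from $\mathsf v_{\theta,0}=1$ to $\mathsf v_{\theta,\pm q_{[n]}}$, which are controlled by sums of lengths of tiles of $\mathfrak D_\theta^{[n]}$ and $\mathfrak D_\theta^{[n+1]}$.

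\textbf{First line.} I will locate $\mathsf v_{\theta,-q_{[n]}}$ and $\mathsf v_{\theta,q_{[n]}}$ relative to $1$. The point $\mathsf v_{\theta,-q_{[n]}}$ is an endpoint of a tile of $\mathfrak D_\theta^{[n]}$ adjacent to (or containing) $1$. The point $\mathsf v_{\theta,q_{[n]}}=\mathsf f_\theta^{q_{[n]}}(1)$ lies on the opposite side of $1$ at a combinatorially comparable position, since $\mathsf f^{q_{[n]}}_\theta$ maps $[\mathsf v_{\theta,-q_{[n]}},1]$ onto $[1,\mathsf v_{\theta,q_{[n]}}]$. Items (1) and (2) of Theorem~\ref{thm:R-APB} then give $|1-\mathsf v_{\theta,-q_{[n]}}|\asymp|1-\mathsf v_{\theta,q_{[n]}}|$, because both arcs are bounded unions of adjacent level-$n$ and level-$(n+1)$ tiles. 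Near-parabolic levels are handled by the $\min\{j,k+1-j\}^{-2}$ law: only the extreme tiles $j=1$ or $j=k$ abut $1$. Finally, the $K$-quasisymmetry of $\Psi_\theta^{-1}$, applied to the triple $(v_{\theta,-q_{[n]}},v_{\theta,0},v_{\theta,q_{[n]}})$, converts this into the first inequality with a universal $C$.

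\textbf{Second line.} The upper bound is immediate. All points lie on the compact set $H_\theta$, and by Theorem~\ref{thm:mother-hedgehog}(5) $H_\theta$ varies continuously over the compact parameter space, so $|v_{\theta,-q_{[n]}}-v_{\theta,0}|\le\operatorname{diam}H_\theta\le C\le C\lambda_+^n$ for any $\lambda_+\ge1$. The lower bound is where I expect the real obstacle, and in fact I expect it to fail as literally written. The final clause of Theorem~\ref{thm:R-APB} says tile lengths decay uniformly exponentially. Combined with quasisymmetry, this forces $|v_{\theta,-q_{[n]}}-v_{\theta,0}|\to0$, so it cannot dominate $C^{-1}\lambda_-^n$ with $\lambda_->1$.

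The argument above, namely each tile splitting into at least two children of comparable size (item (2)) together with quasisymmetry, naturally yields the two-sided geometric estimate $C^{-1}\lambda_+^{-n}\le|v_{\theta,-q_{[n]}}-v_{\theta,0}|\le C\lambda_-^{-n}$. In other words, the displayed line holds with $\lambda_\pm^{n}$ replaced by reciprocal powers. I would prove that estimate, and I flag that the stated lower bound with a positive exponent appears to be a typo for this decay law rather than something provable.
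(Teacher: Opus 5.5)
Your argument, which pushes Real Bounds (Theorem~\ref{thm:R-APB}) from $\partial\D$ back to the dynamical plane through the quasiconformal extension of $\Psi_\theta$ in Corollary~\ref{cor:qc-external-coords}, is exactly the paper's one-line justification. You are also right that the second displayed line is misprinted and must be read as the decay estimate $C^{-1}\lambda_+^{-n}\le|v_{\theta,-q_{[n]}(\theta)}-v_{\theta,0}|\le C\lambda_-^{-n}$; this matches the paper's later conclusion, in the proof of Proposition~\ref{prop:trans-sectors}, that $\textnormal{rad}(\Sbold^m,0)$ grows exponentially as $m\to-\infty$. One detail is worth adding: that extension coincides with $\Psi_\theta$ only off the interior of $\hat Z_\theta$, so before applying quasisymmetry to the triple $(v_{\theta,-q_{[n]}},v_{\theta,0},v_{\theta,q_{[n]}})$ you should note that these points are not submerged under any parabolic fjord, which follows from the combinatorial placement of the dams $d^{[m]}_{\theta,j}$.
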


\begin{definition}
    We call the set $B_{\theta}^1$ the \emph{hoglet of generation $1$} of $f_{\theta}$.
    It is contained in a \emph{pseudo-bubble} $\hat{B}_{\theta}^1$ of generation $1$, which is the closure of the connected component of $f_{\theta}^{-1}\big( \textnormal{int}\hat{Z}_{\theta}\big)$ that avoids the Mother Hedgehog $H_{\theta}$. 
        
    In general, a \emph{hoglet} $B$ of $f_{\theta}$ of generation $g \geq 2$ is the closure of a connected component of $f_{\theta}^{-(g-1)}(B_{\theta}^1)$, and it is contained in a \emph{pseudo-bubble} $\hat{B}$, which is the unique connected component of $f_{\theta}^{-(g-1)}(\hat{B}_{\theta}^1)$ containing $B$. 
    The \emph{root} of a hoglet (resp. pseudo-bubble) of generation $g\geq 1$ is the unique critical point of $f_{\theta}^{g}$ contained in it.
        
    We say that a hoglet (resp. pseudo-bubble) is \emph{primary} if it is rooted at $v_{\theta,-k} \in H_\theta$ where $k$ is the generation, \emph{secondary} if it is rooted at a critical point on a primary hoglet away from $H_\theta$.

    A \emph{hoglet} (resp. \emph{pseudo-bubble}) in the \emph{external coordinates} is the image of a hoglet (resp. pseudo-bubble) of $f_\theta$ under $\Psi_\theta$.
\end{definition}

\begin{remark}[Bubbles vs. Hoglets]
\label{rem:name.hoglet} 
In this paper, we use the term ``hoglet'' to emphasize that the associated objects need not be locally connected. The setting of this paper is conceptually different from those of~\cite{DLy22} and~\cite{DLi26}. In those papers, uniform geometric bounds are established by approximating all neutral quadratic polynomials by polynomials of eventually-golden-mean type; accordingly, the term ``bubbles'' is used there. These bubbles can exhibit geometry close to the non-locally connected regime, but they still remain locally connected. By contrast, the present paper concerns the geometric analysis of the limiting dynamical planes, where hoglets can actually be non-locally connected. We keep the name pseudo-bubble as they are locally connected.
\end{remark}

In \cite{DLi26}, we apply Real Bounds and tools from near-degenerate regime to obtain uniform bounds for pseudo-bubbles.
Much of the analysis was done in external coordinates.

\begin{theorem}[Pseudo-Bubble Bounds {\cite{DLi26}}]
\label{thm:bubble-bounds}
    There exist a universal constant $K> 1$ and an $\threshold$-uniform constant $K' > 1$ such that the following holds for any $\theta \in \Irrat$ and any hoglet $B$ of $f_{\theta}$.
    Denote $v_k = v_{\theta,k}$ and $\mathsf{v}_k = \Psi_{\theta}(v_k)$ for all $k \in \Z$, and $q_n = q_n(\theta)$ for all $n \geq 0$.
    Let $\hat{B}$ be the corresponding pseudo-bubble and let $\mathsf{B} = \Psi_\theta(B)$ and $\hat{\mathsf{B}} = \Psi_\theta(\hat{B})$ be the corresponding objects in external coordinates.
\begin{enumerate}
        \item Both $\hat{B}$ and $\hat{\mathsf{B}}$ are $K'$-quasidisks.
        \item Suppose $B$ is a primary hoglet of some generation $k \geq 1$.
        \begin{itemize}
            \item Size control: Let $n \in \N$ be the unique integer with $q_n < k \leq  q_{n+1}$. Then,
            \[
                K^{-1} |v_{-k} - v_{q_n-k}| \leq \textnormal{diam}(B) \leq \textnormal{diam}(\hat{B}) \leq K |v_{-k} - v_{q_n-k}|,
            \]
            and
            \[
                K^{-1} |\mathsf{v}_{-k} - \mathsf{v}_{q_n-k}| \leq \textnormal{diam}(\mathsf{B}) \leq \textnormal{diam}(\hat{\mathsf{B}}) \leq K |\mathsf{v}_{-k} - \mathsf{v}_{q_n-k}|.
            \]
            \item Angular control: With respect to the hyperbolic metric of $\C \backslash \overline{\D}$, every point in $\hat{\mathsf{B}} \backslash \{\mathsf{v}_{-k}\}$ is of distance at most $K$ from the radial ray emanating from $\mathsf{v}_{-k}$ to infinity.
        \end{itemize}
        \item If $B$ is not a primary hoglet, then with respect to the hyperbolic metric of $\C \backslash H_\theta$, the diameter of $\hat{B}$ is at most $K$.
    \end{enumerate}
    \end{theorem}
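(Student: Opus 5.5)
The plan is to prove the Pseudo-Bubble Bounds first for eventually-golden-mean rotation numbers, with constants independent of $\theta$, and then pass to all $\theta\in\Irrat$ using the uniform continuity of Theorem~\ref{thm:mother-hedgehog}(5) and Theorem~\ref{thm:pseudo-siegel}(3). Since hoglets and pseudo-bubbles are pullbacks of $H_\theta$ and $\hat Z_\theta$ under iterates of $f_\theta$, and primary ones are rooted at $v_{-k}$, they vary continuously along with the Riemann maps $X_\theta$ and $X_{-1,\theta}$. Quasidisk constants, diameter comparisons and hyperbolic-distance bounds are all closed conditions, so they survive the limit. Most of the work is done in external coordinates. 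There, Real Bounds (Theorem~\ref{thm:R-APB}) give combinatorial control of the tilings $\mathfrak D^{[n]}_\theta$, and Lemma~\ref{lem:qc-fjord-external} and Corollary~\ref{cor:qc-external-coords} let us move between dynamical and external planes with $\threshold$-uniform distortion.

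\emph{Generation one.} The hoglet $\hat B^1_\theta$ is a preimage of the $K(\threshold)$-quasidisk $\hat Z_\theta$ under the critical branch of $f_\theta$. The critical value $v_0$ lies on $\partial\hat Z_\theta$, so the preimage is a quasidisk whose constant is controlled by that of $\hat Z_\theta$; this uses the accessibility of $v_0$ from Theorem~\ref{thm:mother-hedgehog}(2) and the quasiarc internal ray in (4). The same holds for $\Psi_\theta(\hat B^1_\theta)$ after applying Corollary~\ref{cor:qc-external-coords}.

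\emph{Primary pseudo-bubbles of generation $k$ with $q_n<k\le q_{n+1}$.} The branch of $f_\theta^{-(k-1)}$ along the backward critical orbit is univalent on a neighbourhood of $\hat B^1_\theta$ in $\C\setminus H_\theta$. In external coordinates this amounts to pulling back $\mathsf B^1$ by $\mathsf f_\theta^{-(k-1)}$ along the circle, and one needs a definite Koebe space around the pulled-back disk. I would get this space from Real Bounds. The tile of $\mathfrak D^{[n]}_\theta$ containing $\mathsf v_{-k}$ has length comparable to $|\mathsf v_{-k}-\mathsf v_{q_n-k}|$, and its neighbours are comparable to it. One then uses the standard cross-ratio/Koebe argument for circle homeomorphisms, as in the real-bounds theory of critical circle maps, together with the non-escaping structure of $\mathsf f_\theta$ near $\partial\D$. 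This argument yields three things at once: the quasidisk property, size comparable to the tile, and angular control. The angular control comes out as hyperbolic closeness to the radial ray, because the pullback stays in a Poincaré-type neighbourhood of the tile; here the Schwarz lemma for half-Poincaré domains applies. The bounds are then transferred back to the dynamical plane through the $K$-qc extension of $\Psi_\theta$ and the scaling bounds of Corollary~\ref{cor:universal-scaling-law}.

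\emph{Non-primary pseudo-bubbles.} These are pullbacks of primary ones under branches whose critical values lie on primary hoglets away from $H_\theta$. Using the angular control just established, the univalent pullback has definite space in $\C\setminus H_\theta$, which gives the hyperbolic diameter bound in item (3).

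\emph{Main obstacle.} The hardest part is obtaining Koebe space uniformly when $\abar_{n+1}$ is huge, i.e.\ in the near-parabolic regime. There the pullback passes through long parabolic fjords, and the pure circle-map argument loses control. For this step I would first use the almost invariance of dams (Theorem~\ref{thm:pseudo-siegel}(1)): dams are near semicircles in external coordinates, so a pseudo-bubble rooted inside a fjord stays trapped between consecutive dams. Its size is then dictated by the local tile, as in the Log Rule. If that is not enough, I would run a near-degenerate-regime argument (Kahn--Lyubich style) to exclude a wide annulus pinching the pseudo-bubble against $H_\theta$.
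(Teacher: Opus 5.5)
Your outer framework matches what the paper says about \cite{DLi26}, where this theorem is actually proved; the paper itself only cites it. There the bounds are obtained for eventually-golden-mean maps using Real Bounds and near-degenerate-regime tools, largely in external coordinates, and extended to all $\theta$ by continuity. Your generation-one step and your limiting step are fine.

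The gap is in the central step. You claim that the univalent pullback of $\hat{\mathsf B}^1_\theta$ along the backward critical orbit has a definite Koebe space supplied by Real Bounds. Consider the branch of $\mathsf f_\theta^{-(k-1)}$ with $\mathsf v_{-1}\mapsto \mathsf v_{-k}$.
\begin{itemize}
\item It extends by reflection only across arcs $J\ni\mathsf v_{-1}$ that avoid the critical values $\mathsf v_0,\dots,\mathsf v_{k-2}$ of $\mathsf f_\theta^{k-1}$ on $\partial\D$ (the images of the roots of hoglets of generation $<k$).
\item Since $q_n\le k-1$, the point $\mathsf v_{q_n-1}$ is among these. So $\partial\D\setminus J$ comes within $|\mathsf v_{-1}-\mathsf v_{q_n-1}|$ of the root, and this distance tends to $0$ exponentially by Theorem~\ref{thm:R-APB}.
\item But $\hat{\mathsf B}^1_\theta$ has definite size, so its hyperbolic diameter in any such reflected domain is unbounded.
\item Without reflection, the root lies on the boundary of the domain of univalence, and Koebe gives nothing there.
\end{itemize}
So size, quasidisk and angular control do not come out ``at once''. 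This already fails for the golden mean, not only at near-parabolic levels; the difficulty you label the ``main obstacle'' is therefore not the first one you would hit.

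The angular claim has a separate problem. Containment in a half-Poincar\'e domain over the tile does not give it: such a lens contains points near the ends of the tile that hug $\partial\D$, and these are at large hyperbolic distance from the radial ray at $\mathsf v_{-k}$. The cone condition needs two-sided bounded distortion at the root, which is the same missing space.

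What Koebe and the Schwarz lemma do legitimately give is propagation within one renormalization level. You pull back from the closest-return pseudo-bubble $\hat{\mathsf B}^{q_n}$ to generations in $(q_n,q_{n+1}]$. There the reflected branch is univalent on a slit neighbourhood of $\mathsf v_{-q_n}$ of size comparable to $|\mathsf v_{-q_n}-\mathsf v_0|$, which is the natural scale of $\hat{\mathsf B}^{q_n}$. This is exactly how the paper uses the bounds in the cascade setting, in the proofs of Corollary~\ref{cor:location-of-alpha} and Lemma~\ref{lem:estimate-lift-sector}. Those arguments, however, take size and angular control of the closest-return pseudo-bubble as input. Chaining them from one closest return to the next does not close up by itself, because each step consumes the bounds of the previous one and the constants are not a priori preserved.

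What your proposal lacks is an independent, scale-invariant mechanism that gives uniform bounds for closest-return pseudo-bubbles at every level, for example by excluding wide degenerations between a pseudo-bubble and $H_\theta$. According to the paper, \cite{DLi26} supplies this by combining Real Bounds with near-degenerate-regime tools. In your proposal such tools appear only as an unspecified fallback for near-parabolic levels, whereas the missing input is needed at all levels. Until that base step is supplied, items (1) and (2) for primary pseudo-bubbles of large generation remain unproved, and so does item (3), which you derive from them.
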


\begin{figure}
    \centering
    
    \begin{tikzpicture}
    \node[anchor=south west, inner sep=0] (image) at (0,0) {\includegraphics[width=1\linewidth]{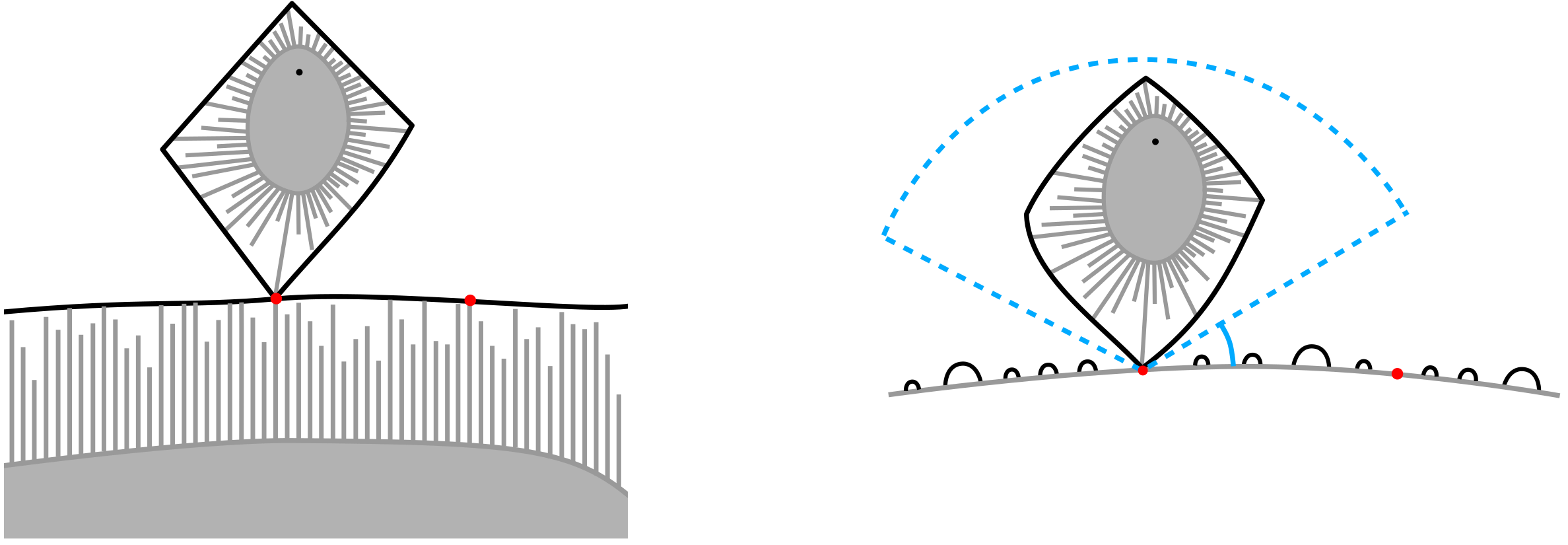}};
    \begin{scope}[
        x={(image.south east)},
        y={(image.north west)}
    ]
        \draw [-latex] (0.085,0.7) .. controls (0.05,0.7) and (0.04,0.6) .. (0.04,0.48);
        \node [black] at (0.05,0.75) {$f_\theta^{[n]}$};
        \draw [gray!50!black,-latex] (0.42,0.5) -- (0.54,0.5);
        \node [gray!50!black] at (0.2,0.08) {$H_\theta$};
        \node [red] at (0.3,0.495) {\scalebox{0.9}{$v_{0}$}};
        \node [red] at (0.125,0.485) {\scalebox{0.9}{$v_{-q_{[n]}}$}};
        \node [black] at (-0.02,0.43) {$\hat{Z}_{\theta}$};
        \node [black] at (0.27,0.9) {\scalebox{0.9}{$\hat{B}_{\theta}^{q_{[n]}}$}};
        \node [gray!50!black] at (0.97,0.2) {$\partial \D$};
        \node [gray!50!black] at (0.48,0.57) {$\Psi_\theta$};
        
        \node [red] at (0.89,0.235) {\scalebox{0.9}{$\mathsf{v}_{0}$}};
        \node [red] at (0.74,0.245) {\scalebox{0.9}{$\mathsf{v}_{-q_{[n]}}$}};
    \end{scope}
\end{tikzpicture}
    
    \caption{Bounds for principal pseudo-bubbles in the dynamical plane of $f_\theta$ (left) and in its external coordinate; see Theorem~\ref{thm:bubble-bounds}.}
    \label{fig:pseudo-bubble-bounds}
\end{figure}

See Figure \ref{fig:pseudo-bubble-bounds}.
Pseudo-Bubble Bounds will be vital in understanding the dynamics of geometric limits in later sections.
Here is the intuition.
When we zoom in about the critical value at an arbitrarily small scale, any visible primary pseudo-bubble has uniformly controlled regularity and diameter bounded above by the distance between its root and the critical value. (See Figure \ref{fig:bubbles-and-alphas}.)
Hence, the geometric limit of these pseudo-bubbles has uniformly controlled size, in particular bounded, and has uniform regularity.
This will be expanded in detail in \S\ref{ss:bubbles-and-pseudo-bubbles}.

\subsection{Sector renormalization}
\label{ss:sector-renormalization}

\begin{definition}
\label{def:sector}
    We define a (\emph{topological}) \emph{sector} $S$ to be an open Jordan domain in $\RS$ such that its boundary $\partial S$ is a concatenation of three arcs $\alpha, \alpha', \omega$, where $\alpha$ and $\alpha'$ are called the \emph{sides} of $S$ and $\omega$ is called the \emph{summit}\footnote{This is by analogy with the notion of the summit in a Saccheri quadrilateral.} of $S$. The \emph{vertex} of $S$ is the common endpoint of $\alpha$ and $\alpha'$.
    A \emph{conformal gluing map} for a sector $S$ is a univalent map $\phi: S \to \D$ with the following properties.
    \begin{itemize}
        \item There is a ray $\gamma$ in $\D$, called the \emph{slit} of $\phi$, that emanates from $0$ to a point on the boundary of $\D$ such that the image of $\phi$ is $\D \backslash \gamma$.
        \item The map $\phi$ extends continuously to a map from $\partial S$ onto $\partial \D \cup \gamma$. It sends the vertex of $S$ to $0$, maps each of the two sides of $S$ homeomorphically onto $\overline{\gamma}$, and maps the summit of $S$ onto $\partial \D$.
    \end{itemize}
\end{definition} 

Now, we are ready to describe sector renormalizations $(\Rsec)^n f_\theta$ of $f_\theta$, $\theta \in \Irrat$.
The theorem below states that they can be constructed to still admit uniform pseudo-Siegel disks and Mother Hedgehogs.

\begin{theorem}[{Sectorial bounds \cite{DLy26a}}]
\label{thm:sectorial-bounds}
    Pick any $\theta \in \Irrat$ and denote $f=f_{\theta}$, $H=H_{\theta}$, $\hat{Z}^{[m]} = \hat{Z}^{[m]}_\theta$ for all $m \geq -1$, and $\theta_n = \gauss^n(\theta)$ for all $n \geq 0$.
    There exists a nested sequence of sectors 
    \[
    S^1 \supset S^2 \supset S^3 \supset \ldots
    \]
    in the dynamical plane of $f$ such that the following properties hold for every $n \geq 1$.
\begin{enumerate}
    \item For any tip $z$ of $H$, denote by $I_z$ the internal ray in $H$ with endpoints $0$ and $z$.
    Consider the pre-critical points $x_n, y_n$ coming from (\ref{eqn:xn-yn})
    and let $z_n \in H$ be such that $\{z_n, f^{[n-1]}(z_n)\} = \{x_n,y_n\}$.
    The first side of $S^n$ is the concatenation of $I_{z_n}$ with an arc $\Gamma_n$ starting at $z_n$ which has diameter 
    \[
        \diam(\Gamma_n) \asymp |x_n - y_n|.
    \] 
    The second side of $S^n$ is the image of the first side under $f^{[n-1]}$, and $f^{[n]}(f^{[n-1]}(\Gamma_n))$ is a subarc of the arc $I_{f^{[n]}(f^{[n-1]}(z_n))}$.
    The summit of $S^n$ is a hyperbolic geodesic of $C \backslash H$ connecting the endpoints of $\Gamma_n$ and $f^{[n-1]}(\Gamma_n)$.
    \item Gluing $I_{z_n} \cup \Gamma_n$ with its image under $f^{[n-1]}$ gives us a conformal gluing map $\phi_n: (S^n,0) \to (\D,0)$ of $S^n$. 
    Under $\phi_n$, the action of $f^{[n]}$ on $S^n$ (modulo $f^{[n-1]}$) projects to a holomorphic map 
    \[
        (\Rsec)^n f := f_n: \Dom(f_n) \to \D,
    \]
    on some open domain $\Dom(f_n) \subset \D$ which has a neutral fixed point at $0$ with rotation number $\theta_n$.
    \item The map $\phi_n$ projects $H \cap \overline{S^n}$ to a set $H_n$, called the Mother Hedgehog of $f_n$, which satisfies \textnormal{(1)--(4)} in Theorem \ref{thm:mother-hedgehog}. 
    The intersection of $H_n$ and $\phi_n(\partial S^n)$ is the internal ray of $H_n$ ending at the critical point of $f_n$.
    \item For $m \geq -1$, the map $\phi_n$ projects $\hat{Z}^{[n+m]} \cap \overline{S^n}$ to a pseudo-Siegel disk $\hat{Z}^{[m]}_n$ of $f_n$ which satisfies the following properties:
    \begin{enumerate}[label=\textnormal{(\alph*)}]
        \item $\hat{Z}^{[m]}_n$ is almost invariant under the iterate $f^{[m+1]}_n := f_n^{q_{[m+1]}(\theta_n)}$;
        \item $\hat{Z}^{[-1]}_n$ and $\hat{Z}^{[0]}_n$ are $K$-quasidisks for some $\threshold$-uniform constant $K\geq 1$;
        \item there exist an $\threshold$-uniform constant $R$ and a universal constant $R'$ (independent of $\theta$, $n$, $m$) such that $0<R<R'<1$ and
            \[
            \D(0,R) \subset \hat{Z}^{[-1]}_n \subset \D(0,R').
            \]
    \end{enumerate}
    \item The arc $I_{z_n} \cup \Gamma_n$ can be chosen to depend uniformly continuously on $\theta \in \Irrat$ (as a subspace of $\TheCpt$). 
    Consequently, the gluing map $\phi_n$, the resulting map $f_n$, the pseudo-Siegel disks $\hat{Z}^{[m]}_n$, and the Mother Hedgehog $H_n$ of $f_n$ also depend uniformly continuously on $\theta$.
\end{enumerate}
\end{theorem}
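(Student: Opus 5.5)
The plan is to build the sectors $S^n$ inductively by descending through the renormalization levels, using the pseudo-Siegel disks $\hat{Z}^{[m]}$ and the Mother Hedgehog $H$ (Theorems~\ref{thm:mother-hedgehog} and~\ref{thm:pseudo-siegel}) as the main geometric scaffold. At level $n$, take the pre-critical points $x_n=v_{-q_{[n]}}$ and $y_n=v_{-\check q_{[n]}}$ on $\partial H$. These are identified by $f^{[n-1]}$, and we let $z_n$ be whichever of them is mapped to the other. The internal ray $I_{z_n}$ exists by star-likeness (item (3) of Theorem~\ref{thm:mother-hedgehog}). Its image $f^{[n-1]}(I_{z_n})$ is again an internal ray, since $f$ permutes internal rays, and by the combinatorics of Proposition~\ref{prop:q[n]}(3) the two rays bound a region of $H$ modeled on the sector $\triangle_\theta(-q_{[n]},-\check q_{[n]})$ of the rigid rotation.

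To close this up into a genuine sector we extend each ray outside $H$. Work in external coordinates $\Psi_\theta$, where $f$ acts on $\partial\D$ as a circle homeomorphism with Real Bounds (Theorem~\ref{thm:R-APB}). Choose $\Gamma_n$ so that its image under $\Psi_\theta$ is a radial segment from $\mathsf v_{z_n}$ of length comparable to the tile size $|\mathsf x_n-\mathsf y_n|$. The arc $f^{[n-1]}(\Gamma_n)$ is then automatically of comparable size by Real Bounds and bounded distortion of the external map near $\partial\D$. Corollary~\ref{cor:universal-scaling-law} and Corollary~\ref{cor:qc-external-coords} transfer the estimate $\diam\Gamma_n\asymp|x_n-y_n|$ back to the dynamical plane. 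The summit is the hyperbolic geodesic of $\C\setminus H$ joining the endpoints of $\Gamma_n$ and $f^{[n-1]}(\Gamma_n)$.

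Two points need care here. First, the construction must be \emph{nested}, $S^{n+1}\subset S^n$. We ensure this by choosing $\Gamma_{n+1}$ inside the tile of $\mathfrak D^{[n]}$ and using the exponential shrinking of tiles together with the geometry of sides given by Pseudo-Bubble Bounds (Theorem~\ref{thm:bubble-bounds}, angular control). Second, we need the extra requirement that $f^{[n]}(f^{[n-1]}(\Gamma_n))$ lands inside an internal ray. Here we use that the pseudo-bubble of generation $q_{[n]}$ rooted at $z_n$ sits at bounded angle, so we can choose $\Gamma_n$ as a pullback of the ray $I_{f^{[n]}f^{[n-1]}(z_n)}$ near $\partial H$.

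Gluing the two sides by $f^{[n-1]}$ produces a Riemann surface $S^n/f^{[n-1]}$ that is a topological disk with a marked point. Uniformizing it gives $\phi_n$ with a radial slit, and $f^{[n]}$ modulo $f^{[n-1]}$ descends to a holomorphic $f_n$ fixing $0$. The rotation number $\theta_n$ is read off from the combinatorics in Proposition~\ref{prop:q[n]}, and this gives items (1)–(2). For items (3)–(4), we observe that $H\cap\overline{S^n}$ and $\hat Z^{[n+m]}\cap\overline{S^n}$ are saturated under the gluing, because the sides are internal rays (extended into fjords consistently). Hence they project to an invariant full continuum and to pseudo-Siegel disks. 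Almost invariance (4a) comes from Theorem~\ref{thm:pseudo-siegel}(1), since dams of level $\ge n$ project to dams of $f_n$.

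\textbf{Main obstacle.} The key difficulty is the uniform estimates (4b), (4c) and continuity (5). The plan is to show that $(S^n,\hat Z^{[n-1]}\cap S^n)$, rescaled by $|x_n-y_n|$, has uniformly bounded geometry: the sides are uniform quasiarcs by item (4) of Theorem~\ref{thm:mother-hedgehog} and the choice of $\Gamma_n$, and the summit is at comparable distance. Then $\phi_n$ is uniformly quasiconformal relative to a model power map $z\mapsto z^{1/|\theta|}$ of the rotation. The quasidisk constant and the radii $R,R'$ follow from compactness once the modulus of $S^n\setminus\hat Z^{[n-1]}$ is bounded above and below. Continuity in $\theta\in\TheCpt$ is then obtained by choosing $\Gamma_n$ canonically in external coordinates and invoking the uniform continuity of $\Psi_\theta$, $H_\theta$, and $X_{m,\theta}$ (Theorem~\ref{thm:mother-hedgehog}(5), Theorem~\ref{thm:pseudo-siegel}(3)). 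Near-parabolic levels, where $\abar_n$ is huge and $x_n,y_n$ straddle deep fjords, are the hardest case; there I would rely on the fjord almost-invariance to keep the sides outside the fjords.
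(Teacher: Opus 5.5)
The paper gives no proof of this statement. It imports it from \cite{DLy26a}, and according to the paper the uniformity there comes from the pseudo-Siegel bounds, with near-parabolic levels handled in perturbed Fatou coordinates (cf.\ the proof of Lemma~\ref{lem:dividing-pair-extension}).

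Your construction of the sectors and of the gluing is in the right spirit, but several steps need repair.
\begin{itemize}
\item A radial segment in external coordinates has no reason to be mapped by $f^{[n]}\circ f^{[n-1]}$ into an internal ray of $H$. To meet the last requirement of item~(1), $\Gamma_n$ must be an initial subarc of the pullback of that internal ray, i.e.\ of the internal ray of the hoglet rooted at $z_n$. So your second description of $\Gamma_n$ is the right one, not your first. Then $\diam(\Gamma_n)\asymp|x_n-y_n|$ requires a lower bound on the size of that hoglet.
\item Theorem~\ref{thm:mother-hedgehog}(4) concerns only the ray landing at $v_{\theta,0}$. The rays $I_{x_n}$ and $I_{y_n}$ are its pullbacks under the $q_{[n]}$-th and $\check q_{[n]}$-th iterates of $f|_H$, and nothing uniform is known about them.
\item The multiplier of $f_n$ is not simply read off from Proposition~\ref{prop:q[n]}. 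When $f$ is not linearizable, $\phi_n$ is not a power map near the vertex, so a further argument is needed, e.g.\ approximation by bounded-type parameters together with~(5).
\end{itemize}

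The genuine gap is in (4b), (4c) and (5), which are the substance of the theorem.

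\textbf{Bounded geometry fails.} The pair $(S^n,\hat Z^{[n-1]}\cap S^n)$ rescaled by $|x_n-y_n|$ does not have bounded geometry. The sector $S^n$ contains the internal ray from its vertex $0$ to $v_{\theta,0}$, of diameter $\asymp 1$. Its width at the summit is $\asymp|x_n-y_n|$, which is exponentially small in $n$ (Corollary~\ref{cor:universal-scaling-law}). The rescaled sectors converge to \emph{infinite} sectors (Proposition~\ref{prop:trans-sectors}), and Real Bounds control only the summit end.

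\textbf{No uniform comparison with the rotation.} There is no uniformly quasiconformal comparison of $\phi_n$ with the power map of the rotation. The Douady--Ghys model exists only for bounded type, and its dilatation blows up with $\sup_k \bar a_k$; this is exactly why pseudo-Siegel disks are introduced. For non-Brjuno $\theta$ the set $H$ has empty interior (Theorem~\ref{thm:eventually-brjuno}). Near the vertex, $\hat Z^{[n-1]}\cap S^n$ then consists of hairs of $H$ and fjords of all levels $\ge n$.

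\textbf{The soft argument for (4c) proves too much.} Bounds on $\operatorname{mod}(\D\setminus\hat Z_n^{[-1]})$ together with a quasidisk constant cannot yield $\D(0,R)\subset\hat Z_n^{[-1]}$. The same hypotheses hold for $\hat Z^{[0]}_n$: it is also a $K$-quasidisk by (4b), and it lies between $H_n$ and $\hat Z^{[-1]}_n$. Yet for near-parabolic $f_n$, the repelling fixed point $\beta$ of Theorem~\ref{thm:beta-fixed-points} lies outside $\hat Z^{[0]}_n$ and satisfies $|\beta|\asymp|\theta_n|$. Hence no uniform disk $\D(0,R)$ fits inside $\hat Z^{[0]}_n$.

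So (4c) depends on how the top near-parabolic fjord fills in around the fixed point. That is precisely the near-parabolic analysis your sketch omits. You also cannot fall back on compactness of the renormalizations, since Theorem~\ref{thm:renorm-limits} rests on this theorem.

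\textbf{Continuity (5).} Here the delicate objects are not the arcs $\Gamma_n$ but the internal rays $I_{z_n}$. These are non-canonical when $H$ has interior, and must be chosen to vary continuously over $\TheCpt$, including through parabolic limits. Choosing $\Gamma_n$ canonically in external coordinates does not address this.
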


From the theorem above, each $f_n$ also admits a nest of sectors 
\[
S_n^1 \supset S_n^2 \supset S_n^3 \supset \ldots
\qquad \text{ where }
S_n^m = \phi_n(S^{n+m}).
\]
The conformal gluing map $\phi_{n+1}\circ \phi_n^{-1} :S_n^1 \to \D$ projects the first return map of $f_n$ back to $S_n^1$ to the map $f_{n+1}$. See Figure \ref{fig:sec-renorm}.

\begin{lemma}
\label{lem:size-of-sectors}
    For every $\theta \in \Theta$ and $m \geq 1$, the conformal radius of the $m$\textsuperscript{th} renormalization sector $S^m$ of $f_\theta$ about its critical value $v_{\theta,0}$ satisfies
    \[
        \textnormal{rad}(S^m , v_{\theta,0}) \asymp |v_{\theta,q_{[m]}(\theta)}-v_{\theta,0}|.
    \]
\end{lemma}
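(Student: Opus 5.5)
The plan is to prove the two inequalities separately, comparing $\textnormal{rad}(S^m,v_{\theta,0})$ with the scale $r_m := |x_m-y_m|$, where $x_m=v_{\theta,-q_{[m]}}$ and $y_m=v_{\theta,-\check q_{[m]}}$ are the points of \eqref{eqn:xn-yn}. First I will show $r_m \asymp |v_{\theta,q_{[m]}}-v_{\theta,0}|$. By Corollary~\ref{cor:universal-scaling-law}, $|v_{\theta,-q_{[m]}}-v_{\theta,0}|\asymp|v_{\theta,q_{[m]}}-v_{\theta,0}|$. The points $\check q_{[m]}=q_{[m]}\mp q_{[m-1]}$ are bounded combinations of return times, and in external coordinates the points $\mathsf v_{-q_{[m]}},\mathsf v_0,\mathsf v_{-\check q_{[m]}}$ bound adjacent tiles of $\mathfrak D_\theta^{[m]}$. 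Real Bounds (Theorem~\ref{thm:R-APB}(1)) then give comparable arcs. The quasiconformal extension of $\Psi_\theta$ off $\hat Z_\theta$ (Corollary~\ref{cor:qc-external-coords}), together with the quasisymmetry of the postcritical set, transfers this comparability to the dynamical plane. The result is $r_m\asymp |x_m-v_{\theta,0}|\asymp|y_m-v_{\theta,0}|$.

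\emph{Upper bound.} By Theorem~\ref{thm:sectorial-bounds}(1), $\partial S^m$ consists of the internal rays $I_{z_m}$ and $f^{[m-1]}(I_{z_m})$, the arcs $\Gamma_m$ and $f^{[m-1]}(\Gamma_m)$ with $\diam\asymp r_m$, and the summit geodesic. The two sides pass through $x_m$ and $y_m$, which are at distance $\asymp r_m$ from $v_{\theta,0}$. Koebe's $1/4$ theorem gives $\textnormal{rad}(S^m,v_{\theta,0})\le 4\,\dist(v_{\theta,0},\partial S^m)\le 4|x_m-v_{\theta,0}|\prec r_m$.

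\emph{Lower bound.} This is the main obstacle. By Koebe it suffices to show $\dist(v_{\theta,0},\partial S^m)\succ r_m$, meaning no part of $\partial S^m$ comes close to $v_{\theta,0}$. I would argue in external coordinates. The summit and $\Gamma_m$ live outside $H$ at scale $r_m$ and are anchored at $x_m,y_m$. The summit is a hyperbolic geodesic of $\C\setminus H$, so under $\Psi_\theta$ it is a quasi-semicircle over an arc containing the tiles on both sides of $\mathsf v_0$, and Real Bounds keep it a definite distance from $\mathsf v_0$. For the internal rays, I would use that $I_{z_m}$ and its image are uniform quasiarcs (Theorem~\ref{thm:mother-hedgehog}(4), pulled back by univalent branches). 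I would also use that the pseudo-Siegel disk $\hat Z^{[m-1]}$ is a uniform quasidisk whose boundary passes through $v_{\theta,0}$, $x_m$, $y_m$ with comparable mutual distances.

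Rather than estimating these pieces directly, I would use the gluing map $\phi_m$ of Theorem~\ref{thm:sectorial-bounds}(4c). In the renormalized plane, $\D(0,R)\subset \hat Z_m^{[-1]}\subset\D(0,R')$ with $\threshold$-uniform $R$, and the critical value of $f_m$ lies on $\partial H_m$ at modulus in $[R,R']$. So the image of $v_{\theta,0}$ sits at definite hyperbolic distance in $\D$. Pulling back, $\phi_m^{-1}$ restricted to $\D$ is univalent, hence $\textnormal{rad}(S^m,v_{\theta,0})\asymp|(\phi_m^{-1})'(\phi_m(v_{\theta,0}))|$. Koebe distortion on the disk of radius $(1-R')/2$ around $\phi_m(v_{\theta,0})$, which avoids the slit, compares this derivative to $\diam\phi_m^{-1}(\D(\phi_m(v_{\theta,0}),\cdot))$. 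That diameter is in turn $\succ r_m$, because the image contains a definite portion of $\hat Z^{[m-1]}\cap S^m$ joining $v_{\theta,0}$ to points at distance $\asymp r_m$.

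The delicate issue is that the constants must be universal rather than $\threshold$-uniform. To ensure this, I would work with the universal bound $R'$ and replace $\hat Z$ by $H$ together with Real Bounds wherever possible.
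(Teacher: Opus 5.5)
\paragraph{Upper bound.} This part is fine and is the paper's argument: $x_m\in\partial S^m$, Koebe's $1/4$ theorem, and Corollary~\ref{cor:universal-scaling-law}.

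\paragraph{Lower bound: the gap.} The lower bound has a genuine gap at the one step that carries all the content. Set $w_m:=\phi_m(v_{\theta,0})$, the critical value of $f_m$. You assert that the disk of radius $(1-R')/2$ about $w_m$ avoids the slit $\gamma$ of $\phi_m$. But Theorem~\ref{thm:sectorial-bounds}(4c) only gives $|w_m|\le R'$, i.e.\ a distance at least $1-R'$ from $\partial\D$.

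The slit does not stay near $\partial\D$. By Theorem~\ref{thm:sectorial-bounds}(1),(3) it starts at $0$ and runs along the internal ray of $H_m$ to the critical point of $f_m$. That critical point is a point of $\partial H_m\subset\D(0,R')$, exactly like $w_m$. Only then does the slit exit along $\phi_m(\Gamma_m)$. So $1-R'$ has nothing to do with $\dist(w_m,\gamma)$, and nothing you cite bounds that distance from below.

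The same quantity is hidden in your identity $\textnormal{rad}(S^m,v_{\theta,0})\asymp|(\phi_m^{-1})'(w_m)|$. That identity needs $\textnormal{rad}(\D\setminus\gamma,w_m)\asymp 1$; note also that $\phi_m^{-1}$ is defined on $\D\setminus\gamma$, not on $\D$.

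More fundamentally, $\gamma$ is the glued image of the sides of $S^m$. A definite lower bound on $\dist(w_m,\gamma)$ is therefore just the renormalized form of $\dist(v_{\theta,0},\partial S^m)\succ|v_{\theta,q_{[m]}}-v_{\theta,0}|$. Passing to the $f_m$-plane relocates the main obstacle rather than removing it.

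Your last step has the same defect. The claim that ``the image contains a definite portion \ldots joining $v_{\theta,0}$ to points at distance $\asymp r_m$'' needs a concrete point. For example, you would need some $v_{q_{[k]}}(f_m)$ with $k$ bounded lying in the Koebe disk, whose lift is $v_{\theta,q_{[m+k]}}$. That in turn requires scaling bounds for the postcritical set of $f_m$, which you have not established.

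\paragraph{What the paper does, and how to repair.} The paper uses the direct route you sketched and then abandoned. Theorem~\ref{thm:sectorial-bounds}(1) describes $\partial S^m$ explicitly:
\begin{itemize}
\item the internal rays ending at $x_m$ and $y_m$;
\item the arcs $\Gamma_m$ and $f^{[m-1]}(\Gamma_m)$, of diameter $\asymp|x_m-y_m|$;
\item a summit that is a hyperbolic geodesic of $\C\setminus H$.
\end{itemize}
In other words, the sectors of \cite{DLy26a} have uniformly bounded geometry at scale $|x_m-y_m|$ around $v_{\theta,0}$. Corollary~\ref{cor:universal-scaling-law} converts that scale into $|v_{\theta,q_{[m]}}-v_{\theta,0}|$, and Koebe's $1/4$ theorem then gives both inequalities.

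To repair your argument you have two options.
\begin{itemize}
\item Carry out the piece-by-piece distance estimate. For the summit and $\Gamma_m$, external coordinates plus Real Bounds are reasonable. The internal rays need more than being uniform quasiarcs: a quasiarc that ends at distance $\asymp r_m$ from $v_{\theta,0}$ can still pass much closer to it before ending.
\item Justify the separation of $w_m$ from $\gamma$ by an independent argument. One possibility is compactness over the limiting sectors of Theorem~\ref{thm:renorm-limits}(7), checking that no limiting slit reaches the critical value. This is not a consequence of (4c).
\end{itemize}
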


\begin{proof}
    This follows immediately from Corollary \ref{cor:universal-scaling-law} as well as the construction of $S^m$ as described in Theorem \ref{thm:sectorial-bounds}.
%
\end{proof}

\begin{figure}
    \centering
    
    \begin{tikzpicture}
    \node[anchor=south west, inner sep=0] (image) at (0,0) {\includegraphics[width=1\linewidth]{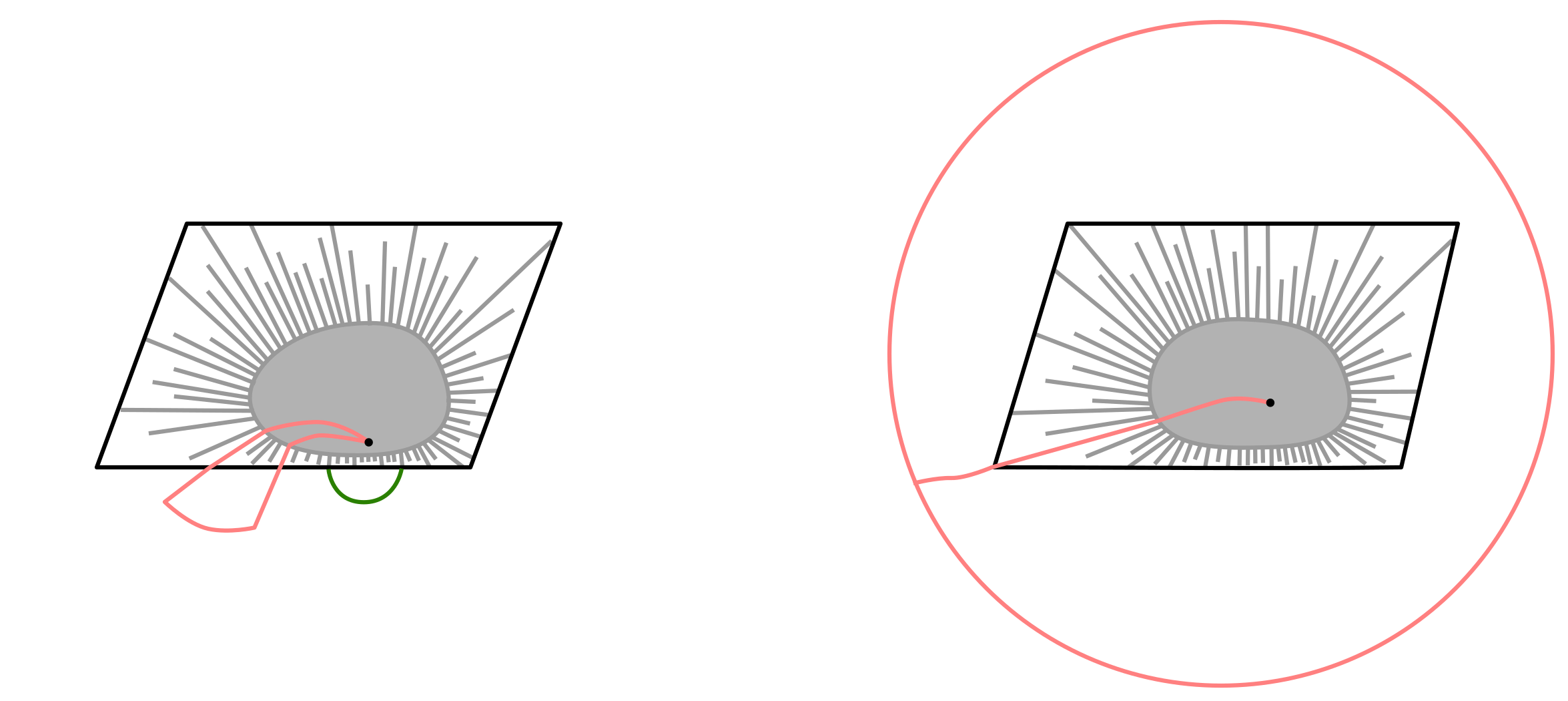}};
    \begin{scope}[
        x={(image.south east)},
        y={(image.north west)}
    ]
        \node [gray!30!black] at (0.24,0.47) {\scalebox{0.85}{$H_n$}};
        \node at (0.28,0.225) {\textcolor{green!50!black}{$\hat{Z}^{[-1]}_{n+1}$} $\supset\hat{Z}^{[0]}_n$ };
        \node [gray!30!black] at (0.8,0.49) {\scalebox{0.85}{$H_{n+1}$}};
        \node [black] at (0.8,0.25) {$\hat{Z}^{[-1]}_{n+1}$};
        
        \node [red] at (0.1,0.22) {$S_n^1$};
        \node [black] at (0.225,0.89) {$f_n$};
        \draw[-latex] (0.2,0.72) .. controls (0.18,0.87) and (0.27,0.87) .. (0.25,0.72);
        \node [black] at (0.775,0.89) {$f_{n+1}$};
        \draw[-latex] (0.75,0.72) .. controls (0.73,0.87) and (0.82,0.87) .. (0.80,0.72);
        \draw[red,-latex] (0.39,0.51) -- (0.54,0.51);
        \node [red] at (0.465,0.59) {\small $\phi_{n+1} \circ \phi_n^{-1}$};
    \end{scope}
\end{tikzpicture}
    
    \caption{Sector renormalization preserves pseudo-Siegel disks and Mother Hedgehogs; see Theorem~\ref{thm:sectorial-bounds}.}
    \label{fig:sec-renorm}
\end{figure}

\subsection{Renormalization towers} 
\label{ss:renormalization-towers}

Let us denote the collection of sector renormalizations of neutral quadratic polynomials by
\begin{align*}
    \orbsec := \{ \Rsec^n f_\theta \: : \: \theta \in \Irrat, n \geq 0 \},
\end{align*}
equipped with the topology of uniform convergence on compact subsets. 
It is forward invariant under $\Rsec$.
Let us also consider the infinite sequence space
\[
    \towsec = \left\{ \seq{\Rsec^n f }_{n\geq 0} \: : \: f \in \orbsec \right\},
\]
equipped with the topology where for every $k \geq 0$, the projection map $\towsec  \to \orbsec, \seq{\Rsec^n f}_{n\geq 0} \mapsto \Rsec^k f$ is continuous.

It was demonstrated in \cite{DLy26a} that renormalization sectors have uniformly bounded geometry.
This leads to an extension of Theorem \ref{thm:sectorial-bounds} below.

\begin{theorem}[\cite{DLy26a}]
\label{thm:renorm-limits}
    The spaces $\orbsec$ and $\towsec$ are pre-compact in the following sense. 
    For any infinite sequence of maps $g_1$, $g_2$, $ g_3$, $\ldots$ in $\orbsec$, there is a subsequence $N_k \to \infty$ such that for every $n \geq 0$, the following properties hold.
    \begin{enumerate}[label = \textnormal{(\arabic*)}]
        \item In $\TheCpt$, the rotation number of $g_{N_k}$ converges to an element $\ttheta = \seq{ (\varepsilon_m, \abar_m) }_{m \geq 1}$ of $\TheCpt$ as $k \to \infty$.
        \item The sequence of maps 
        \[
        f_{n,k} := \Rsec^n g_{N_k}: \Dom(f_{n,k}) \to \D
        \]
        converges to a holomorphic map $f_n : \Dom(f_n) \to \D$ uniformly on compact subsets of some open domain $\Dom(f_n) \subset \D$.
        \item The map $f_n$ admits a simple critical point $v_{-1}(g_n)$ and a neutral fixed point at $0$ with rotation number $\bar{\mu}(\ttheta_n)$ where $\ttheta_n = \shift^n(\ttheta)$. 
        If $f_n'(0) = 1$ or equivalently $\abar_{n+1} = \infty$, then $0$ is a simple parabolic fixed point of $f_n$.
        \item The domain $\Dom(f_n)$ contains a nested sequence of full connected compact sets 
        \[
            \ldots \subset \hat{Z}^{[1]}_n \subset \hat{Z}^{[0]}_n \subset \hat{Z}^{[-1]}_n = \hat{Z}_n
        \]
        called the \emph{pseudo-Siegel pinched disk} of $f_n$. 
        These have the following properties.
        \begin{enumerate}[label = \textnormal{(\alph*)}]
            \item Each $\hat{Z}^{[m]}_n$ is the Hausdorff limit of the level $m$ pseudo-Siegel disk of $f_{n,k}$ as $k \to \infty$.
            \item Both $\hat{Z}^{[-1]}_n$ and $\hat{Z}^{[0]}_n$ are closed $K$-quasidisks for some $\threshold$-uniform constant $K \geq 1$.
            \item There exist an $\threshold$-uniform constant $R$ and a universal constant $R'$ (independent of $\theta$, $n$, $m$) such that $0<R<R'<1$ and
            \[
            \D(0,R) \subset \hat{Z}^{[-1]}_n \subset \D(0,R').
            \]
        \end{enumerate} 

        \item Denote $f_n^{[0]} = f_n$. 
        Consider the time semigroup $\Time^n= \Time_{\ttheta_n}$ of $\ttheta_n$ and the corresponding return times $\qq_{n,[m]} = \qq_{[m]}(\ttheta_n) \in \Time^n$, $m \geq 0$. 
        The semigroup generated by the iterates of $f_{n,k}$ converges to a semigroup 
        \[
            \mathcal{F}_n = \{f_n^p \}_{p \in \Time^n}
        \]
        of commuting holomorphic maps parametrized by $\Time^n$ in the following sense.
        \begin{enumerate}[label = \textnormal{(\alph*)}]
            \item For every $m \geq 0$, the pre-renormalizations $f_{n,k}^{[m]}: \Dom(f_{n,k}^{[m]}) \to \D$ converge to a holomorphic map $f_n^{[m]} = f_n^{\qq_{n,[m]}}: \Dom(f_n^{[m]}) \to \D$ uniformly on compact subsets as $k \to \infty$. 
            In $\D \backslash \{0\}$, $\Dom(f_n^{[m]}) \backslash \{0\}$ is an open neighborhood of $\hat{Z}^{[m-1]}_n \backslash \{0\}$.
            Moreover, the map $f_n^{[m]}$ is injective on $\hat{Z}^{[m-1]}_n$, and $\hat{Z}^{[m-1]}_n$ is almost invariant under $f_n^{[m]}$. 
            \item If $\abar_{n+m+1} < \infty$, then $f_n^{[m+1]}$ is an iterate of $f_n^{[m]}$. 
            Else, $f_n^{[m+1]}$ is a ``Lavaurs map`` of $f_n^{[m]}$.
        \end{enumerate}
        
        \item The compact set
        \[
            H_n := \bigcap_{ m \geq -1} \hat{Z}^{[m]}_n
        \]
        satisfies the following properties.
        \begin{enumerate}[label = \textnormal{(\alph*)}]
            \item $H_n$ contains $0$ and the critical point of $f_n$.
            \item $H_n$ is the Hausdorff limit of the Mother Hedgehog of $f_{n,k}$, and the corresponding Riemann mapping $\C \backslash \overline{\D} \to \C \backslash H(f_{n,k})$ sending $1$ to the critical value $v_0(f_{n,k})$ converges uniformly on compact subsets to the Riemann mapping of the complement of $H_n$.
            \item Every map $\mathcal{F}_n$ restricts to a self-homeomorphism of $H_n$.
            \item $H_n$ satisfies the properties stated in Theorem \ref{thm:mother-hedgehog} (2)--(4) for $f_n$.
            \item The boundary of $H_n$ is equal to the \emph{postcritical set} $P_n$, that is, the closure of the orbit of the critical point of $f_n$ under the semigroup $\mathcal{F}_n$.
        \end{enumerate}

        \item 
        As $k \to \infty$, the nest of renormalization sectors $S^m(f_{n,k})$, $m \geq 1$ of $f_{n,k}$ described in Theorem \ref{thm:sectorial-bounds} converges to a nest of sectors 
        \[
        S_n^1 \supset S_n^2 \supset S_n^3 \supset \ldots
        \]
        in the Carath\'eodory topology of open disks with basepoint $0$.
        These sectors satisfy an analog of Theorem \ref{thm:sectorial-bounds} (1)--(4) for $f_n$. 
        In particular, for any $m \geq -1$ and $j \geq 1$, the conformal gluing map for the sector $S_n^j$ projects $f_n^{[m+j+1]}$ and $\hat{Z}^{[j+m]}_n$ into $f_{n+j}^{[m+1]}$ and $\hat{Z}^{[m]}_{n+j}$.
    \end{enumerate}
\end{theorem}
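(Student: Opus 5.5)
The plan is a standard diagonal compactness argument. Its geometric inputs are the uniform bounds of Theorem~\ref{thm:sectorial-bounds} and Theorem~\ref{thm:pseudo-siegel}, plus the compactness of $\TheCpt$.

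First, pass to a subsequence along which the combinatorics $\mathfrak{X}^{-1}$ of the rotation number of $g_{N_k}$ converges in the product topology of $\TheCpt$; this gives (1). Every $f_{n,k}=\Rsec^n g_{N_k}$ is defined on a domain containing $\hat Z^{[-1]}$ and maps into $\D$. So the family $\{f_{n,k}\}_k$ is uniformly bounded and normal on any domain where it is uniformly defined. The domains are controlled by Theorem~\ref{thm:sectorial-bounds}(4): the pseudo-Siegel disks are $\threshold$-uniform quasidisks pinched between $\D(0,R)$ and $\D(0,R')$. Hence each $\hat Z^{[m]}_{n,k}$ has a Hausdorff-convergent subsequence, and so do the sectors $S^j(f_{n,k})$ in the Carathéodory topology based at $0$, since they contain definite disks around $0$. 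A Cantor diagonal over $n$, $m$, $j$ then gives simultaneous convergence of the maps, the domains, the pseudo-Siegel sets and the sectors. The limits of the $K$-quasidisks $\hat Z^{[-1]}_n,\hat Z^{[0]}_n$ remain $K$-quasidisks by compactness of normalized quasiconformal maps, which gives (2) and (4)(a)--(c).

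For (3) and (5), take the limit of the pre-renormalizations $f^{[m]}_{n,k}$ on neighborhoods of $\hat Z^{[m-1]}_{n}\setminus\{0\}$. Almost invariance (Theorem~\ref{thm:pseudo-siegel}(1)) gives uniform hyperbolic control of the dams, so the domains of $f^{[m]}_{n,k}$ contain uniform neighborhoods of the pseudo-Siegel sets away from $0$. The same bound gives injectivity in the limit by Hurwitz. The multiplier converges to $e^{2\pi i\bar\mu(\ttheta_n)}$ by continuity of $\bar\mu$ (Proposition~\ref{prop:rotation-number-map}). When $\abar_{n+m+1}=\infty$, we have $q_{[m+1]}/q_{[m]}\to\infty$. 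In that case $f^{[m+1]}_{n,k}$ is a high iterate of a map converging to a parabolic germ, and standard parabolic implosion (Douady--Lavaurs) identifies its limit as a Lavaurs map. The group relations of Definition~\ref{def:time-group} pass to the limit, with the relation dropped exactly at infinite levels. This yields the semigroup $\mathcal F_n$ parametrized by $\Time^n$; commutativity is inherited from the approximants.

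For (6), define $H_n=\bigcap_m\hat Z^{[m]}_n$. Hausdorff convergence of the Mother Hedgehogs follows from the uniform continuity in Theorem~\ref{thm:mother-hedgehog}(5) and Theorem~\ref{thm:pseudo-siegel}(3). The Riemann maps then converge by Carathéodory kernel convergence. Invariance under $\mathcal F_n$ and the fact that the boundary equals the postcritical set pass to the limit via the uniform Real Bounds (Theorem~\ref{thm:R-APB}), which prevent the critical orbit from collapsing. Item (7) follows from Carathéodory convergence of the sector nests together with convergence of the gluing maps $\phi_n$, which are normalized univalent maps. The projection relations are closed conditions, so they persist in the limit.

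I expect the main obstacle to be the parabolic levels in (5)(b). There one must ensure that the Lavaurs limit is uniform on a full neighborhood of $\hat Z^{[m]}_n\setminus\{0\}$, and that it is consistent with the time order of Proposition~\ref{prop:chronological-order-01}. My first attempt would be to use the uniform almost invariance of the fjord dams to confine the relevant orbits to controlled Fatou coordinates.
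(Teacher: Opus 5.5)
Your diagonal-compactness frame is the right one. The paper itself only quotes the result from \cite{DLy26a}, attributing precompactness to the uniformly bounded geometry of the renormalization sectors. However, your identification of the limiting Mother Hedgehog in (6) does not go through. You derive Hausdorff convergence of $H(f_{n,k})$ from the uniform continuity in Theorems~\ref{thm:mother-hedgehog}(5) and~\ref{thm:pseudo-siegel}(3). Those results concern the quadratic polynomial $f_\theta$ itself, and Theorem~\ref{thm:sectorial-bounds}(5) concerns a fixed renormalization depth. A sequence in $\orbsec$ may be $g_{N_k}=\Rsec^{d_k}f_{\theta_k}$ with $d_k\to\infty$, which is exactly the case that produces the attractor. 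Then $H(f_{n,k})$ is a rescaled microscopic piece of $H_{\theta_k}$, and continuity in $\theta$ says nothing about it. Compactness only gives that a Hausdorff limit $H'$ of $H(f_{n,k})$ satisfies $H'\subseteq\bigcap_m\hat Z^{[m]}_n$. The reverse inclusion interchanges $\lim_k$ and $\bigcap_m$, so it needs $\hat Z^{[m]}_{n,k}\to H(f_{n,k})$ in the Hausdorff metric as $m\to\infty$, \emph{uniformly in $k$}. This is a thinness statement, not a diameter bound: near-parabolic fjords reach toward the $\beta$-cycles, which may approach the fixed point, so deep fjords need not be small. One route is to show that $H'$ is still full and that deep fjords lie in uniformly thin collars in external coordinates, and then use kernel convergence of the exteriors. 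Either way, what is required is a uniform bound from \cite{DLy26a}, not continuity in $\theta$. Items (6)(c) and (6)(e) inherit this gap. For (6)(e), ``the critical orbit does not collapse'' is not enough in any case. For non-Brjuno approximants with a Brjuno limit, the postcritical sets $\partial H(f_{n,k})=H(f_{n,k})$ converge to all of $H_n$, not to $\partial H_n$. So you must show directly two things: that critical-orbit points of bounded combinatorial time are uniformly dense in $\partial H(f_{n,k})$, and that the limiting orbit lies in $\partial H_n$.

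Three further steps need repair.

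(i) Theorem~\ref{thm:sectorial-bounds}(4)(c) only places $\hat Z_{n,k}$ inside $\D(0,R')$, and almost invariance only controls images of dams. Neither gives a neighborhood of $\hat Z_{n,k}$ inside the domain of size independent of $k$ (nor one of $\hat Z^{[m-1]}_{n,k}\setminus\{0\}$ for $f^{[m]}_{n,k}$). Normality on a fixed open set containing $\hat Z_n$ requires exactly this. This is where the uniformly bounded geometry of the sectors (cf.\ Theorem~\ref{thm:sectorial-bounds}(1)) must be used, together with Koebe distortion for the gluing maps. The same input is needed for the Carath\'eodory limits in (7) to be non-degenerate sectors. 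Note also that the sectors have their vertex at the fixed point $0$ and contain no disk about it. So Carath\'eodory compactness must be based at the critical value, with Lemma~\ref{lem:size-of-sectors} controlling the conformal radius.

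(ii) Hurwitz cannot give injectivity on $\hat Z^{[m-1]}_n$. Already for $m=0$, the critical point of $f_{n,k}$ lies in $H(f_{n,k})\subset\hat Z^{[-1]}_{n,k}$, so no neighborhood of $\hat Z^{[-1]}_n$ carries injective approximants. Injectivity on the closed set must come from uniform geometric control, e.g.\ uniformly quasiconformal extensions of $f_{n,k}|_{\hat Z_{n,k}}$.

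(iii) In (3), convergence of multipliers only gives $f_n'(0)=1$. Simplicity of the parabolic point requires that at most one fixed point besides $0$ can converge to $0$, namely the $\beta$ in the top regularized fjord of Theorem~\ref{thm:beta-fixed-points}. This has to be extracted from the fjord and hedgehog structure inside $\D(0,R)\subset\hat Z^{[-1]}_{n,k}$.
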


Note that the structure of the pseudo-Siegel pinched disks $\hat{Z}_n^{[m]}$ are similar to those for quadratic polynomials as described in the previous section.
Namely, each $\hat{Z}_n^{[m]}$ is the union of the Mother Hedgehog $H_n$ together with the level $s$ principal parabolic fjord and its preimages up to time $\qq^n_{[s]}$ for all $s \geq m$ with $\abar_{n+s+1} \geq \threshold$.

As we include the limiting maps $f_0$ and the limiting renormalization orbits $\seq{f_n}_{n\geq 0}$, we obtain compactifications $\overline{\orbsec}$ and $\overline{\towsec}$ of $\orbsec$ and $\towsec$ respectively. 
The space $\overline{\orbsec}$ is equipped with the topology of uniform convergence on compact subsets.
The space $\overline{\towsec}$ is equipped with the topology such that
\begin{itemize}
    \item the projection map
    \[
        \overline{\towsec} \to \overline{\orbsec}, \qquad \seq{f_n}_{n\geq 0} \mapsto f_0
    \]
    is continuous;
    \item the first pre-renormalization $\seq{f_n}_{n\geq 0} \mapsto f_0^{[1]}$ is continuous with respect to the topology of uniform convergence on compact subsets;
    \item the shift map (the renormalization operator)
    \begin{equation}
        \label{eq:dfn.Rsec}
        \Rsec: \overline{\towsec} \to \overline{\towsec}, \quad \seq{f_0,f_1,f_2,\ldots} \mapsto \seq{f_1,f_2,f_3,\ldots}
    \end{equation}
    is continuous.
\end{itemize}

Every forward tower $\fbold = \seq{f_n}_{n\geq 0}$ in $\overline{\towsec}$ comes with an associated combinatorics 
\[
    \ttheta(\fbold) = \seq{ (\varepsilon_n, \abar_n)}_{n\geq 1} \in \TheCpt.
\]
If $f_0$ has an irrationally indifferent fixed point, then $\ttheta(\fbold)$ represents the rotation number $f_0$.

\begin{proposition}
\label{prop:continuity-of-combinatorics}
    The combinatorial map 
    $\ttheta: \overline{\towsec} \to \TheCpt$
    is a semi-conjugacy between $\Rsec$ and the shift map $\shift$ on $\TheCpt$.
\end{proposition}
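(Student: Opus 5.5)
Two things need proof: the semi-conjugacy identity $\ttheta\circ\Rsec=\shift\circ\ttheta$, and continuity of $\ttheta\colon\overline{\towsec}\to\TheCpt$. The plan is to check both first on the dense subset $\towsec$ and then pass to the closure using Theorem~\ref{thm:renorm-limits}.

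On $\towsec$ a tower has the form $\fbold=\seq{\Rsec^n f_\theta}_{n\ge0}$ (up to shifting by $m$), and $\ttheta(\fbold)=\bar\iota(\gauss^m\theta)$. By Theorem~\ref{thm:sectorial-bounds}(2), $\Rsec$ of a map with rotation number $\theta_n$ has rotation number $\theta_{n+1}=\gauss(\theta_n)$. Theorem~\ref{thm:X.theta} says $\mathfrak{X}$ conjugates $\shift$ to $\gauss$, so the identity holds on $\towsec$.

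\textbf{Definition on the closure.} For a limit tower $\fbold\in\overline{\towsec}$, I define $\ttheta(\fbold)$ as the limit in $\TheCpt$ of the combinatorics along an approximating sequence $\fbold_k\in\towsec$; the limit exists along subsequences by Theorem~\ref{thm:renorm-limits}(1). The main obstacle is showing this is well defined, i.e. independent of the approximating sequence. The resulting continuity and semi-conjugacy then follow formally.

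\textbf{Reading $\abar_{n+1}$ off the limit.} I recover each symbol $(\varepsilon_{n+1},\abar_{n+1})$ from the limiting dynamics, which is intrinsic to $\fbold$.
\begin{itemize}
\item By Theorem~\ref{thm:renorm-limits}(3), $\abar_{n+1}=\infty$ exactly when $f_n$ is parabolic, i.e. $f_n'(0)=1$.
\item If $f_n'(0)\neq1$, the multiplier of $f_n$ at $0$ determines $\theta_n$ with $|\theta_n|>0$. Then $\abar_{n+1}=\lfloor 1/|\theta_n|\rfloor$, except possibly when $1/|\theta_n|$ is an integer.
\item In that boundary case I would instead use the return time: $\abar_{n+1}$ is the exponent for which $f_n^{[1]}=f_n^{b_{n+1}}$ on the Mother Hedgehog. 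This exponent is locally constant in $k$ by the convergence of pre-renormalizations in Theorem~\ref{thm:renorm-limits}(5a).
\end{itemize}

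\textbf{Reading $\varepsilon_{n+1}$ off the limit.} The sign $\varepsilon_{n+1}$ is the orientation of the rotation, recorded by the cyclic order of the critical orbit points $v_0$, $v_{q_{[n]}}$ on $\partial^c H_n$. By Theorem~\ref{thm:renorm-limits}(6b), $\partial^c H_n$ and the Riemann maps converge, so this order persists in the limit. The same applies in the parabolic case, where the Lavaurs map $f_n^{[1]}$ of (5b) fixes the side.

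\textbf{Conclusion.} Each coordinate of $\ttheta(\fbold)$ is therefore a function of the limit tower alone, so the limit is independent of the approximating sequence; in particular $\ttheta$ is well defined and continuous on $\overline{\towsec}$. Continuity of $\Rsec$ and of $\shift$, together with the identity on the dense set $\towsec$, gives $\ttheta\circ\Rsec=\shift\circ\ttheta$ everywhere. Surjectivity holds because $\ttheta$ has dense image $\bar\iota(\Irrat)$ in $\TheCpt$ and $\overline{\towsec}$ is compact.
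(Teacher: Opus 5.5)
Your overall architecture is the paper's. You prove the identity on $\towsec$, then show each symbol $(\varepsilon_{n+1},\abar_{n+1})$ is forced by the limit tower: by the multiplier, except at parabolic levels and when $1/|\theta_n|\in\N$. The step you use for the second exceptional case does not work.

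If $1/|\theta_n|=b\in\N$, then $f_{n+1}$ is parabolic, and there are two possibilities: $(\abar_{n+1},\varepsilon_{n+1}\varepsilon_{n+2})=(b,-)$ or $(b-1,+)$. Both give the same return time $b_{n+1}=\abar_{n+1}+\frac{1+\varepsilon_{n+1}\varepsilon_{n+2}}{2}=b$. So $f_n^{[1]}=f_n^{b_{n+1}}$ recovers only $b$, which the multiplier already gave you, and says nothing about $\abar_{n+1}$; your sentence conflates $\abar_{n+1}$ with $b_{n+1}$. Concretely, $\theta_{n,k}\downarrow 1/3$ forces $\abar_{n+1,k}=2$ and $\theta_{n,k}\uparrow 1/3$ forces $\abar_{n+1,k}=3$, with $b_{n+1,k}=3$ in both cases.

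The two possibilities differ only in the enrichment sign $\varepsilon_{n+2}$ of the parabolic map $f_{n+1}$, i.e.\ the side from which $\theta_{n+1,k}\to 0$. This is how the paper handles the case. It first settles the parabolic case. It then applies it one level down to get $\varepsilon_{n+2,k}=\varepsilon_{n+2}$ and $\abar_{n+2,k}\to\infty$. Finally it reads off $(\varepsilon_{n+1,k},\abar_{n+1,k})$ from $\theta_{n,k}=1/(\varepsilon_{n+1,k}b_{n+1,k}-\theta_{n+1,k})\to\theta_n$. When $\theta_n=\pm\frac12$, even $\varepsilon_{n+1}$ is invisible in the multiplier and must be obtained as $-\varepsilon_{n+2}$.

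So everything rests on the parabolic sign, and there you only assert the result. ``The Lavaurs map fixes the side'' is not an argument. The cyclic-order argument needs two repairs:
\begin{itemize}
\item Two points $v_0, v_{q_{[n]}}$ have no cyclic order. Use three, e.g.\ $v^n_{-1},v^n_0,v^n_1$, and check that they stay distinct in the limit.
\item Compact-open convergence of the Riemann maps in Theorem~\ref{thm:renorm-limits}(6) does not by itself make the external arguments of these boundary points converge, and that is what ``the order persists'' needs. You would need equicontinuity of the external circle maps, e.g.\ from Real Bounds. Then note that the limiting external map has rotation number $0$ and that its rotation-number-zero lift moves every non-fixed point in the direction $\varepsilon_{n+1}$.
\end{itemize}
The paper instead invokes Lavaurs theory: $f_{n,k}^{[1]}=f_{n,k}^{b_{n+1,k}}$ converges to the top Lavaurs map $f_n^{[1]}$, which pins down the side of approach. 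Once one of these routes is carried out and the boundary case is reduced to it, the rest of your proposal goes through: well-definedness and continuity of the extension, the identity by density, and surjectivity by compactness.
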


\begin{proof}
    The identity 
    \[
    \ttheta \circ \Rsec = \shift \circ \ttheta
    \]
    is obvious.
    To prove that $\ttheta$ is continuous, it is sufficient to show that both $\varepsilon_1(\fbold)$ and $\abar_1(\fbold)$ is continuous.
    
    Consider a sequence $\fbold_m = \seq{f_{n,m}}_{n\geq 0}$, $m = 1, 2,\ldots$ of forward towers in $\overline{\towsec}$ converging to $\fbold = \seq{f_n}_{n\geq 0}$.
    Let us write $\ttheta(\fbold_m) = \seq{(\varepsilon_{n,m},\abar_{n,m})}_{n \geq 1}$ and $\ttheta(\fbold) = \seq{(\varepsilon_n, \abar_n)}_{n \geq 1}$.
    Without loss of generality, we will assume $\varepsilon_1 = +$.
    Let us denote by $\theta_{n,m} = \bar{\mu}(\shift^n\ttheta(\fbold_m))$ the rotation number of $f_{n,m}$ and by $\theta_0 = \bar{\mu}(\shift^n\ttheta(\fbold))$ the rotation number of $f_n$.
    For all $n \geq 0$, when $m \to \infty$, since $f_{n,m} \to f_n$, then $\theta_{n,m} \to \theta_n$ as elements of $\T = \R/\Z$.
    
    Suppose first that $\abar_1 = \infty$.
    Since $\theta_{0,m} \to 0$ as $m \to \infty$, it must be the case that $\abar_{1,m}$ converges to $\infty$ as $m \to \infty$.
    It can be assumed (by slight perturbation) that each $f_{0,m}$ satisfies $\abar_{1,m} < \infty$ and $\abar_{1,m} \to \infty$ as $m \to \infty$.
    The map $f_{0,m}^{[1]}$ is the $b_{1,m}$\textsuperscript{th} iterate of $f_{0,m}$ for some $b_{1,m}=b_1(\ttheta(\fbold_m)) \in \{\abar_{1,m}, \abar_{1,m}+1\}$.
    The map $f_{0,m}^{[1]}$ converges to the top Lavaurs map $f_0^{[1]}$ of $f_0$.
    Therefore, from classical Lavaurs theory, we must have $\varepsilon_{1,m} = +$ for all sufficiently high $m$.

    Next, suppose instead that $\abar_1 < \infty$.
    There are two sub-cases.
    \begin{itemize}
        \item Suppose $\abar_2 < \infty$.
        Then, the rotation number $\theta_0 = \bar{\mu}(\ttheta)$ of $f_0$ is contained in the real interval $\left( \frac{1}{\abar_1+1}, \frac{1}{\abar_1} \right)$.
        Then, for all sufficiently high $m$, $\theta_{0,m}$ is in $\left( \frac{1}{\abar_1+1}, \frac{1}{\abar_1} \right)$ and thus $(\varepsilon_{1,m}, \abar_{1,m}) = (\varepsilon_1, \abar_1)$.
        \item Suppose $\abar_2 = \infty$.
        From the previous paragraph (applied to $f_{1,m}$), we already know that $\varepsilon_{2,m} = \varepsilon_{2}$ for all sufficiently high $m$ and $\abar_{2,m} \to \abar_2$ as $m \to \infty$.
        Since $\theta_{0,m} \to \theta_0$, then
        \[
            \frac{1}{\varepsilon_{1,m}(\abar_{1,m}+ \frac{1+\varepsilon_{1,m}\varepsilon_2}{2})} \xrightarrow[\quad]{} \frac{1}{\abar_{1,m}+ \frac{1+\varepsilon_2}{2}}
        \]
        as elements of $\T$.
        An elementary arithmetic exercise shows that $(\varepsilon_{1,m}, \abar_{1,m}) = (+,\abar_1)$ for all sufficiently high $m$.
    \end{itemize}
    This concludes the proof of continuity of $\varepsilon_1(\fbold)$ and $\abar_1(\fbold)$.
\end{proof}

A glossary of combinatorial and dynamical objects associated to elements $\fbold$ of $\overline{\towsec}$ is available in \S\ref{sss:notation-for-towsec} for the reader's convenience.
This includes the Mother Hedgehog $H_n(\fbold)$, the pseudo-Siegel disks $\hat{Z}_n^{[m]}(\fbold)$ for $m \geq -1$, the pre-renormalization semigroup $\mathcal{F}_n = \{f_n^p\}_{p \in \mathcal{T}^n}$, the renormalization sectors $S_n^j(\fbold)$, $j \geq 1$, etc.

\subsubsection{The Mother Hedgehog}

Let us discuss a few fundamental properties of Mother Hedgehog associated to forward towers in $\overline{\towsec}$.

\begin{theorem}[Zero area \cite{Lim26b}]
\label{thm:zero-area}
    For every $\fbold \in \overline{\towsec}$, the postcritical set $P_0(\fbold)$ has zero Lebesgue measure.
\end{theorem}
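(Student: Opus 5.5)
The plan is a McMullen-type density-point argument: show that at every point of $P_0(\fbold)$ the complement occupies a definite proportion of disks at arbitrarily small scales. Once this is established, no point of $P_0(\fbold)$ can be a Lebesgue density point, and the Lebesgue density theorem gives $\operatorname{area}(P_0(\fbold))=0$.

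By Theorem~\ref{thm:renorm-limits}~(6e), $P_0(\fbold)=\partial H_0(\fbold)$. Since $\fbold$ is a limit of towers of neutral quadratic polynomials and all constants below will be uniform, it suffices to prove the following uniform estimate.

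\emph{There exist universal constants $\delta>0$ and $\lambda\in(0,1)$ such that for every $z\in\partial H_0$ and every sufficiently small scale $r$, there is a scale $r'\in[\lambda r,r]$ for which $\D(z,r')\setminus \hat Z^{[m]}_0$ contains a round disk of radius at least $\delta r'$, where $m=m(r')$ is the level whose combinatorial scale is comparable to $r'$.}

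Because $H_0\subset \hat Z^{[m]}_0$ for every $m$, such holes avoid $H_0$, so they give the required definite porosity.

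To produce the holes, I would first handle points near the critical value. At $v_0$ and level $m$, the renormalization sector $S_0^m$ has size $\asymp |v_{q_{[m]}}-v_0|$ by Lemma~\ref{lem:size-of-sectors}. Under the gluing map, the pseudo-Siegel disk $\hat Z^{[-1]}_m$ of $f_m$ is a $K$-quasidisk contained in $\D(0,R')$ with $R'<1$, by Theorem~\ref{thm:renorm-limits}~(4c). Hence the region of $S_m$ lying outside $\D(0,R')$ is a definite annular piece disjoint from $\hat Z^{[m-1]}_0\cap S^m$. Pulling this back by the gluing map, whose distortion is bounded on compact parts of the sector by the bounded geometry of the sectors, produces a hole of definite relative size near $v_0$ at every scale $|v_{q_{[m]}}-v_0|$. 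Corollary~\ref{cor:universal-scaling-law} gives geometric decay of these scales, so the scales are spaced by a bounded factor.

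For a general $z\in\partial H_0$, I would transport this picture from the critical value. Since $z$ is a limit of postcritical points $v_p$, I would choose $p\in\Time^0$, or $-p$, such that the univalent branch of $f_0^{p}$, defined on the definite neighborhood $\Dom(f_0^{[m]})$ of $\hat Z^{[m-1]}_0$ given by Theorem~\ref{thm:renorm-limits}~(5a), carries a neighborhood of $v_0$ of scale comparable to $|v_{q_{[m]}}-v_0|$ onto a neighborhood of $z$ of scale comparable to $r$. This map must be univalent with definite Koebe space. Real Bounds, Theorem~\ref{thm:R-APB}, in external coordinates locate a combinatorial tile of level $m$ around $\Psi(z)$ that is comparable to $r$. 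Corollary~\ref{cor:qc-external-coords} converts tile sizes into Euclidean sizes. Finally, almost invariance of $\hat Z^{[m-1]}$ ensures that the hole is mapped into a hole rather than into a fjord.

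I expect the Koebe-space step to be the main obstacle. In the near-parabolic regime the relevant iterate $f_0^{p}$ travels through a long parabolic fjord, so it is not a first-return map. One also has to deal with parabolic and Lavaurs levels where $\abar=\infty$. My first attempt would be to decompose $p$ along the return times $\qq_{[m]}$ into pre-renormalization steps. The degenerate passages through fjords would be handled by Fatou-coordinate estimates, namely the almost-invariance of dams in Theorem~\ref{thm:pseudo-siegel}~(1). Within each single level, the extension space would be provided by the definite modulus between $\hat Z^{[m-1]}$ and $\partial\Dom(f^{[m]})$.
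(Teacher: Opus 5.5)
Your reduction is where the argument fails. You reduce to a universal weak-porosity estimate: holes of relative size $\delta$ at every point of $\partial H_0$ and at every scale up to a factor $\lambda$. That estimate is false, so no Koebe or Fatou-coordinate bookkeeping can prove it.

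The obstruction sits in the near-parabolic gates. Let $m$ be a near-parabolic level such that $f_{m+1}$ either has a Siegel disk of tiny conformal radius $\rho$ (for example, $\theta_{m+1}$ also small) or is Cremer. In the gate of $f_m$, the Fatou coordinate $w$ identifies $H_m$ with $\mathrm{Exp}^{-1}$ of a conformal copy of $H_{m+1}$. Thus $w$ is a logarithmic coordinate of $f_{m+1}$ at its fixed point, and the line $\mathrm{Im}\,w=t$ corresponds to a circle of radius $\approx e^{-2\pi t}$. Write $T=\frac1{2\pi}\log\frac1\rho$. For $C<t<T-C$ this circle meets $\partial H_{m+1}$. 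Indeed, it meets the full continuum $H_{m+1}$, which joins $0$ to points at distance $\asymp 1$. It also cannot lie inside the Siegel disk, whose conformal radius is $\rho$. By $1$-periodicity, every horizontal line at such heights meets the lifted postcritical set. Hence every disk in its complement lying in that band has $w$-diameter at most $1$.

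Now take $z\in\partial H_m$ in the middle of the gate, corresponding to $w_0$ at height $t\approx T/2$. For $1\ll s\ll\min(T,|\theta_m|^{-1})$, the inverse Fatou coordinate has bounded distortion on $\D(w_0,s)$. So the corresponding Euclidean disk $\D(z,r)$, and every $\D(z,r')$ with $r'\in[\lambda r,r]$, contains no disk of radius larger than $O(r'/(\lambda s))$ avoiding $P(f_m)$. Here $s$ can be made arbitrarily large, already for $f_0=f_\theta$ with two consecutive huge partial quotients and $m=0$. Since you need your constants, including the smallness threshold you intend to pass to limits, to be uniform over $\overline{\towsec}$, no universal $\delta,\lambda$ exist. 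The same phenomenon occurs near the parabolic point of a limit map whose enrichment is Cremer. So the failure is geometric, not a shortage of Koebe space. Any branch transporting the picture at $v_0$ to such a $z$ at these scales passes through the logarithmic region near the renormalized fixed point, where the change of variables has unbounded distortion.

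What the density argument actually requires is weaker. At every (or almost every) point of $P_0$, you need definite holes along \emph{some} sequence of scales tending to $0$. The real content is then to show that the renormalized positions of a point of $P_0$ cannot remain trapped in these logarithmic regions near the fixed points $0$ of $f_n$ at all deep levels, or do so only on a null set. One also has to control the renormalization change of variables there. According to the paper, this is what \cite{Lim26b} does: it estimates the renormalization change of coordinates in logarithmic coordinates and uses precompactness of $\overline{\towsec}$ to obtain uniform holes at the good levels. Your proposal has no substitute for this ingredient.

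A minor point: Corollary~\ref{cor:universal-scaling-law} alone does not give bounded ratios between consecutive scales at $v_0$. That comes from Real Bounds.
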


This theorem was proven by utilizing the pre-compactness of renormalizations and estimating the renormalization change of variables in logarithmic coordinates.
In the same paper, we also study the Brjuno condition.
Recall the rotation number map $\bar{\mu}: \TheCpt \to \T$ from Proposition \ref{prop:rotation-number-map}.

\begin{definition}
    We say that $\ttheta \in \TheCpt$ is \emph{Brjuno} if $\theta_n = \bar{\mu}(\shift^n(\ttheta)) \in \T$ is irrational for all $n \geq 0$ and, if normalized such that $|\theta_n| < \frac{1}{2}$, then the infinite sum
\[
    \log\frac{1}{|\theta_0|} + |\theta_0| \log\frac{1}{|\theta_1|} + |\theta_0\theta_1| \log\frac{1}{|\theta_2|} + |\theta_0\theta_1\theta_2| \log\frac{1}{|\theta_3|} +\ldots
\]
    converges to a finite number.
    More generally, we say that $\ttheta$ is \emph{eventually Brjuno} if there is some $m \geq 0$ such that $\shift^m(\ttheta)$ is Brjuno.
\end{definition}

From the definition above, every element of $\TheCpt$ that is eventually Brjuno but not Brjuno must be an enriched rational.
The following theorem is an extension of Yoccoz's theorem \cite{Yoc95} to renormalization towers $\overline{\towsec}$.

\begin{theorem}[The Brjuno condition \cite{Lim26b}]
\label{thm:eventually-brjuno}
    Let $\fbold = \seq{f_n}_{n\geq 0} \in \overline{\towsec}$ and let $\ttheta = \ttheta(\fbold)$ be its combinatorics.
    \begin{enumerate}
        \item The map $f_0$ admits a Siegel disk around $0$, which is equal to the interior of the Mother Hedgehog $H_0(\fbold)$, if and only if $\ttheta$ is Brjuno.
        \item If $\ttheta$ is eventually Brjuno, the interior of $H_0(\fbold)$ is non-empty and for sufficiently high $m \in \N$, the intersection of the interior of $H_0$ with the renormalization sector $S_0^m(\fbold)$ projects to the Siegel disk of $f_m$.
        \item If $\ttheta$ is not eventually Brjuno, the interior of $H_0(\fbold)$ is empty.
    \end{enumerate}
\end{theorem}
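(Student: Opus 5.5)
The statement immediately above is Theorem~\ref{thm:eventually-brjuno}, the extension of Yoccoz's Brjuno theorem to towers in $\overline{\towsec}$. My plan is to reduce everything to the renormalization tower and to use the Brjuno series as the quantity controlling the conformal radius of the linearization domain. I would combine three ingredients: Yoccoz's sector-renormalization estimates \cite{Yoc95}, the uniform sectorial bounds of Theorem~\ref{thm:sectorial-bounds} and Theorem~\ref{thm:renorm-limits}, and the identification of $\partial H_n$ with the postcritical set $P_n$.

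\textbf{Item (1).} First, the interior of $H_0$ is exactly the Siegel disk whenever a Siegel disk exists. Indeed, a Siegel disk $\Delta$ cannot meet the postcritical set $P_0=\partial H_0$ in its interior. Since $H_0$ is full, star-like, and invariant, $\Delta\subset \operatorname{int} H_0$. Conversely, every component of $\operatorname{int} H_0$ containing $0$ is invariant, and on it $f_0$ acts as a homeomorphism with bounded orbits, so it is linearizable and lies in $\Delta$. Star-likeness then rules out other components.

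For \emph{Brjuno implies linearizable}, I would use Yoccoz's renormalization estimate. The gluing maps $\phi_n$ have the form $z\mapsto z^{1/|\theta_n|}$ up to a uniformly bounded distortion, which follows from Theorem~\ref{thm:sectorial-bounds}(4c) and the uniform geometry of sectors. As a result, the conformal radius $r_n$ of the largest linearization disk satisfies
\[
\log r_0 \ \geq\ |\theta_0|\log r_1 - \log\tfrac{1}{|\theta_0|} - C .
\]
Iterating this inequality, with each $r_n$ uniformly bounded above and below for the renormalized maps at deep levels, yields a positive radius exactly when the Brjuno sum converges. To pass from finitely many levels to the limit, I would work with the approximating quadratic towers and take a Carathéodory limit, which Theorem~\ref{thm:renorm-limits}(6b) makes possible.

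For \emph{non-Brjuno implies not linearizable}, I would run the reverse inequality. The critical point lies on $\partial H_n$ within bounded distance of $0$, uniformly in $n$, again by Theorem~\ref{thm:renorm-limits}(4c). Hence any Siegel disk of $f_n$ has radius bounded above, and lifting through the sectors gives the opposite Yoccoz-type bound $\log r_0\leq |\theta_0|\log r_1-\log\frac1{|\theta_0|}+C$. Divergence of the Brjuno series then forces $r_0=0$. If some $\theta_n$ is rational, the corresponding $f_n$ is parabolic and cannot be linearizable; this obstruction propagates up to $f_0$ via the gluing maps.

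\textbf{Items (2) and (3).} If $\shift^m\ttheta$ is Brjuno, item (1) applied to $\Rsec^m\fbold$ shows that $f_m$ has a Siegel disk $\Delta_m=\operatorname{int}H_m$. By Theorem~\ref{thm:sectorial-bounds}(3), $\phi_m^{-1}$ carries $H_m$ onto $H_0\cap\overline{S^m_0}$, so its preimage is an open subset of $H_0$; this proves (2), and it persists for all larger $m$. For (3), suppose $\operatorname{int}H_0\neq\emptyset$. Because $\partial H_0$ has zero area by Theorem~\ref{thm:zero-area}, the interior is large. Star-likeness together with the shrinking of the sectors $S_0^m$ onto a single ray (the real bounds, Corollary~\ref{cor:universal-scaling-law}) then shows that some interior component meets $S_0^m$ in a set that projects to an open set containing $0$ in $H_m$ for large $m$. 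That projected set would be a Siegel disk of $f_m$, so $\shift^m\ttheta$ is Brjuno by (1).

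\textbf{Main obstacle.} I expect the hardest step to be the two-sided Yoccoz inequality with a \emph{uniform} additive constant $C$ in the non-JLC and parabolic-enriched setting. There the gluing maps are only comparable to power maps in logarithmic coordinates. I would first try to control them through the uniform quasidisk bounds on $\hat Z^{[-1]}_n$, combined with a Koebe distortion estimate on the lifted half-strips.
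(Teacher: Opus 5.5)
The paper does not prove this theorem; it quotes it from \cite{Lim26b}, where it is described as an extension of Yoccoz's theorem resting on estimates of the renormalization change of variables in logarithmic coordinates (which also give conformal radius $\asymp e^{-\mathrm{Br}(\theta)}$). Your plan for the analytic core of (1) follows that route, but two of its steps fail as written.

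\emph{Sufficiency is circular.} Iterating $\log r_0\ge|\theta_0|\log r_1-\log\frac1{|\theta_0|}-C$ requires a lower bound on $r_N$, which is what you are trying to prove. Moreover $r_N$ is not uniformly bounded below, since it is of order $e^{-\mathrm{Br}(\theta_N)}$. Carath\'eodory limits of quadratic approximants do not rescue this: the approximants agree with $\ttheta$ only on finite initial segments and may have no Siegel disk at all. In fact this direction is immediate. $f_0$ is univalent on $\D(0,R)\subset\hat Z_0$ with multiplier $e^{2\pi i\theta_0}$ and $\theta_0$ Brjuno, so Brjuno's theorem applies.

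\emph{Necessity is only named, not proved.} It rests entirely on your reverse inequality, and that does not follow from Theorem~\ref{thm:sectorial-bounds}(4c) and the shape of the sectors. Those bounds are macroscopic. The $\log\frac1{|\theta_0|}$ term and its sharpness come from the behaviour of the gluing maps near the vertex of the sector. The inequality must also use the critical point at every level, since it is false for linearizable germs such as rotations. This estimate is exactly what is attributed to \cite{Lim26b}, and your proposal identifies it without supplying it.

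\emph{The more serious gap is topological.} It concerns the claim in (1) that ``star-likeness rules out other components'' of $\operatorname{int}H_0$, and the whole argument for (3). Item (2) is fine once (1) is. The problems are these:
\begin{itemize}
\item Zero area of $\partial H_0$ says nothing about $\operatorname{int}H_0$.
\item A component of $\operatorname{int}H_0$ need not come near the vertex of $S_0^m$, so shrinking sectors alone give nothing.
\item Sullivan's no-wandering-domains theorem, which settles this for quadratic polynomials, is unavailable for these limit maps.
\end{itemize}
What is missing is a mechanism that turns an arbitrary interior component into interior at $0$ on some renormalization level. One natural route:
\begin{enumerate}
\item Use the action of $\mathcal F_0$ on internal rays (an irrational rotation at irrational levels, with Lavaurs maps at parabolic ones) to move the component onto a non-endpoint $z$ of the valuable ray $\bigcap_m\overline{S_0^m}$.
\item Show that for large $m$ a small disk around $z$ in $\operatorname{int}H_0$ is crossed by the thin sector $S_0^m$ from one side to the other.
\item Conclude that its image under the gluing map contains an arc in $\operatorname{int}H_m$ winding once around $0$.
\item Close this arc along the glued ray and use that $H_m$ is full to get $0\in\operatorname{int}H_m$. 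Then $f_m$ is linearizable and $\shift^m\ttheta$ is Brjuno by (1).
\end{enumerate}
In the Brjuno case the same construction puts that arc inside $\Delta_m$. Indeed, the region it bounds is a connected open subset of $\operatorname{int}H_m$ containing $0$, and $\partial\Delta_m\subset\partial H_m$. This forces the component to be $\Delta$. Without some argument of this kind, item (3) and the equality $\operatorname{int}H_0=\Delta$ are not established.
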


In \cite{Lim26b}, we also prove that in the Brjuno case, the conformal radius of the Siegel disk is uniformly comparable to $e^{-\textnormal{Br}(\theta)}$ where $\textnormal{Br}(\cdot)$ is the Brjuno-Yoccoz function.
This was done by applying estimates of the renormalization change of variables in logarithmic coordinates.

\subsubsection{Near-parabolic dynamics}

\begin{definition}
    We say that a map $f \in \overline{\orbsec}$ is \emph{near-parabolic} if the rotation number $\theta$ of $f$ at the fixed point $0$ satisfies $|\theta| < 1/\threshold$, (\emph{simply}) \emph{parabolic} if $\theta = 0$.
\end{definition}

\begin{definition}
    For any orbit $\fbold = \seq{f_n}_{n\geq 0} \in \overline{\towsec}$ with $f_n$ being near-parabolic for some $n \geq 0$, the set
\[
    \hat{V}_n = \hat{V}_n(\fbold) := \overline{ \hat{Z}_n^{[-1]}(\fbold) \backslash \hat{Z}_n^{[0]}(\fbold) },
\]
    which is non-empty, will be called the $n$\textsuperscript{th} \emph{top regularized fjord} of $\fbold$.
\end{definition}

\begin{theorem}[Simple parabolic bifurcation]
\label{thm:beta-fixed-points}
    Let $\fbold = \seq{f_n}_{n\geq 0} \in \overline{\towsec}$ and let $\theta$ be the rotation number of $f_0$.
    \begin{enumerate}
        \item If $f_0$ is near-parabolic but not parabolic, then there exists a unique repelling fixed point $\beta$ in the interior of $\hat{V}_n$ and it satisfies $|\beta| \asymp |\theta|$.
        \item If $f_0$ is parabolic, then the parabolic fixed point $\beta = 0$ is simple and it is on $\partial \hat{Z}_0^{[0]}(\fbold) \cap \textnormal{int} \hat{Z}_0^{[-1]}(\fbold)$.
    \end{enumerate}
\end{theorem}

\begin{proof}
    It was proven in \cite{DLy26a} that there exists a unique repelling fixed point $\beta$ in the interior of $\hat{V}_n$ and pre-compactness of renormalizations implies the occurrence of simple parabolic bifurcation.
    The estimate on $|\beta|$ then follows from compactness considerations; see \cite{Lim26b} for details.
\end{proof}

\section{QC Thurston equivalence for forward towers}
\label{sec:qc-thurston-equivalence}

Recall from Remark~\ref{rem:intro.forw.towers} and Figure~\ref{fig:roadmap} that QC Thurston equivalence is central to this paper. In a certain sense, it substitutes the role of hybrid equivalence employed in other non-perturbative renormalization theories.

\begin{theorem}[QC Thurston equivalence]
\label{thm:qc-thurston-equivalence}
    There exists an $\threshold$-uniform constant $K>1$ such that for any two combinatorially equivalent renormalization towers $\fbold = \seq{f_n}_{n\geq 0}$ and $\gbold = \seq{g_n}_{n\geq 0}$ in $\overline{\towsec}$, there exist a sequence of $K$-quasiconformal maps $\{h_n : \D \to \D\}_{n \geq 0}$ such that for every $n \geq 0$,
    \begin{enumerate}
        \item $h_n$ sends the critical value of $f_n$ to the critical value of $g_n$;
        \item $h_n$ sends $\hat{Z}^{[m]}_n(\fbold)$ onto $\hat{Z}^{[m]}_n(\gbold)$ for every $m \geq -1$;
        \item $h_n$ is a conjugacy between the pre-renormalization semigroups $\mathcal{F}_n|_{H_n(\fbold)}$ and $\mathcal{G}_n|_{H_n(\gbold)}$ of $\fbold$ and $\gbold$ on their corresponding Mother Hedgehogs;
        \item $h_n$ is conformal on the interior of $H_n(\fbold)$;
        \item for $m \geq 1$, $\psi_{n,\gbold} \circ h_n = h_{n+1} \circ \psi_{n,\fbold}$ on the renormalization sector $S_n(\fbold)$.
    \end{enumerate}
\end{theorem}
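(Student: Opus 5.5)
Our plan is to build the maps $h_n$ first on the Mother Hedgehogs, where combinatorics dictates the conjugacy, then extend them over the pseudo-Siegel disks and to $\D$. The key point is to construct everything \emph{simultaneously} for all levels $n$, so that the equivariance (5) is automatic. Throughout we use the uniform bounds of Theorem~\ref{thm:renorm-limits}.

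\textbf{Step 1: conjugacy on the hedgehogs.} Since $\ttheta(\fbold)=\ttheta(\gbold)$, the bi-infinite critical orbits $\{v^n_p(\fbold)\}_{p\in\Kont^n}$ and $\{v^n_p(\gbold)\}_{p\in\Kont^n}$ are indexed by the same group. In external coordinates these orbits realize the same topological cascade (Theorem~\ref{thm:topological-cascade}). The Real Bounds (Theorem~\ref{thm:R-APB}), passed to limits, show that their closures (the boundaries of $H_n$) carry the same cyclic order along $\partial^c H_n$. Hence $v^n_p(\fbold)\mapsto v^n_p(\gbold)$ extends to a homeomorphism $P_n(\fbold)\to P_n(\gbold)$ conjugating $\mathcal F_n$ to $\mathcal G_n$; it is quasisymmetric in external coordinates, with constants given by the Real Bounds.

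On the interior of $H_n$ (present only in the eventually Brjuno case, Theorem~\ref{thm:eventually-brjuno}), both maps are conjugate, after projecting to a deep level, to the rigid rotation by the same angle on Siegel disks. We conjugate there by the Riemann maps of the Siegel disks, normalized at the critical orbit. This is conformal and matches the boundary map, which settles (3) and (4) once we check that the lifts to level $n$ agree across the sectors. We arrange this by defining the interior map at level $n$ as the lift of the one at level $n+m$, and noting that lifts are compatible because the gluing maps $\psi_n$ are conformal.

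\textbf{Step 2: extension over the fjords and to $\D$.} On each pseudo-Siegel disk, the complement $\hat Z^{[m]}_n\setminus H_n$ is the union of parabolic fjords bounded by dams. Dams are determined combinatorially by their endpoints $x'_s,y'_s$ and their preimages, and they are almost geodesic (Theorem~\ref{thm:pseudo-siegel}(1)).

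We first choose $h_n$ on the principal fjords of each NP level as a $K$-qc map of the closed fjords. In external coordinates the fjords are uniform half-disks over corresponding intervals (Lemma~\ref{lem:qc-fjord-external}), so we take Beurling--Ahlfors type extensions of the boundary map. We then pull this back to the fjords' preimages using the injective dynamics, with the dams matched, and we extend over $\D\setminus\hat Z_n$ by the uniform quasidisk property (Theorem~\ref{thm:renorm-limits}(4b)). The map is then adjusted by a qc isotopy so that it respects the nest $\hat Z^{[m]}_n$, giving (2). The dilatation bound is uniform because, at a fixed level, only finitely many shapes of bounded geometry occur up to the combinatorial self-similarity.

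\textbf{Step 3: equivariance (5) via a pullback/triangulation argument.} This is the main obstacle, since naive choices at different levels need not commute with $\psi_n$. We would proceed as follows.

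First, triangulate $\hat Z_n$, with vertices on critical-orbit points and alpha/fjord points and edges along internal rays and dams, compatibly with the sector $S_n$, and make the triangulation match between $\fbold$ and $\gbold$.

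Second, define $h_n$ on $S_n$ as $\psi_{n,\gbold}^{-1}\circ h_{n+1}\circ\psi_{n,\fbold}$, and extend it to $\hat Z_n$ by spreading with $f_n^{[0]},f_n^{[1]}$ (the first-return structure), i.e. by lifting triangulations under sectorial anti-renormalization.

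Third, to handle the infinitely many levels, fix a deep level $N$ and start from the Step 2 maps $h_N$. Descend to $h_{N-1},\dots,h_0$ via this lifting, then let $N\to\infty$ and extract a limit by compactness of $K$-qc maps.

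Uniformity of $K$ is the crux, since repeated lifting must not accumulate dilatation. We would argue that each lift only redefines $h_n$ inside the projected sector, and that the gluing maps $\psi_n$ are conformal, so no dilatation is added there. The only new dilatation comes from interpolating on the finitely many fundamental triangles outside the sector, which have $\threshold$-uniformly bounded geometry by Theorem~\ref{thm:sectorial-bounds} and Lemma~\ref{lem:size-of-sectors}. Conformality on $\operatorname{int}H_n$ and the conjugacy on $H_n$ survive the limit because they are fixed at every stage.
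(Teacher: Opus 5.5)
Your Step~3 has the right architecture, and it is the paper's: lift $h_{n+1}$ through $\psi_n$, spread it over $\hat{Z}^{[0]}_n$ by the first-return structure (Lemma~\ref{lem:triangulation-first-return}), note that conformal lifting adds no dilatation, and pass to a limit. The gap is that you never supply the fact on which uniformity rests.

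The triangles of $\hat{Z}^{[0]}_n$ are filled by spreading, so they cost nothing. All new dilatation lives on $\D\setminus\hat{Z}^{[0]}_n$, in particular on the top regularized fjord $\hat{V}_n$. The inner boundary of $\hat{V}_n$ carries the map obtained by spreading with many different iterates: $\hat{\Delta}^{[0]}_n$ has $\qq^n_{[1]}+\check{\qq}^n_{[1]}\approx 2\abar_{n+1}$ triangles, which is unbounded in the near-parabolic regime. So ``finitely many shapes of bounded geometry'' fails exactly where it matters. Theorem~\ref{thm:sectorial-bounds} and Lemma~\ref{lem:size-of-sectors} say nothing about quasisymmetry of this spread-out boundary map, nor about the dams. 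You also need whatever you put on the dams to be reproduced by the next lift, so that the pieces glue.

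The paper obtains all of this from the Diffeo-tiling Theorem~\ref{thm:diffeo-tiling}. It provides canonical nests of tilings of $\partial\hat{Z}^{[m]}_n$, cut by the critical points plus $10$-adic tilings of the dams. These tilings are invariant under lifting by $\psi_n$ and spreading. They have $\threshold$-uniformly bounded geometry on $\partial\hat{Z}^{[-1]}_n$ and bounded inner geometry on $\partial\hat{V}_n$; this is a near-degenerate-regime input. Together they fix a unique uniformly quasisymmetric boundary correspondence at every level, which makes each interpolation uniformly qc and consistent across levels. The base case also needs Lemma~\ref{lem:dividing-pair-extension}, which in the near-parabolic case uses perturbed Fatou coordinates.

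Your Steps~1--2 also start from data you cannot yet have.
\begin{itemize}
\item Real Bounds live in external coordinates, where planar proximity of hairs is invisible. They do not show that $v^n_p(\fbold)\mapsto v^n_p(\gbold)$ extends continuously to $P_n(\fbold)$ in the plane; that topological conjugacy of hedgehogs is an output of the theorem.
\item The map $\Psi_{\gbold}^{-1}\circ E\circ\Psi_{\fbold}$, with $E$ a Beurling--Ahlfors extension on fjords bounded by $\partial H_n$, need not extend continuously to $\partial H_n$.
\item A homeomorphism that is qc off a zero-area, non-locally-connected set is not known to be qc unless it agrees there with some qc map (as in Rickman's lemma).
\item Riemann maps of Siegel disks need not extend to $\partial Z$, and the critical orbit need not lie on $\partial Z$.
\end{itemize}
The paper never glues along $\partial H_n$. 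It interpolates only across uniform quasiarcs: pseudo-Siegel boundaries, dams, and the rays $I^n_{-1},I^n_0$. The conjugacy on $H_n$ appears only in the limit, as the nested intersection of the $\hat{Z}^{[m]}_n$. Conformality on $\textnormal{int}\,H_n$ is obtained afterwards. One replaces $h_0$ on invariant subdisks $Z_{f_0}(\varepsilon)$ with analytic boundary, which $h_0$ maps to invariant circles in linearizing coordinates, and lets $\varepsilon\to1$. In the eventually-Brjuno case this is done at a deep Brjuno level $N$, and then re-lifted with slightly modified sectors for $\gbold$.
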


The proof of Theorem \ref{thm:qc-thurston-equivalence} has two essential ingredients.
The first is the natural dynamical triangulation of pseudo-Siegel disks arising from their compatibility with sector renormalization; see Figure \ref{fig:triangulation}.
The second is the uniformly quasiconformal tiling of the boundary of pseudo-Siegel disks.
Together, they allow us to apply a pullback argument to construct uniformly quasiconformal pseudo-conjugacies.
As we pass to a limit, we construct QC maps $h_n$'s that preserve pseudo-Siegel disks and are equivariant on the Mother Hedgehogs.
Lastly, we perform some soft surgery procedure to ensure conformality on the interior of the Mother Hedgehogs.

\subsection{Renormalization triangulations of pseudo-Siegel disks}
\label{ss:renorm-tiling}

Let us fix $\fbold = \seq{f_n}_{n\geq 0}$ in $\overline{\towsec}$ with combinatorics $\ttheta = \ttheta(\fbold) = \seq{ (\varepsilon_n, \abar_n)}_{n\geq 1}$. 
We will use the notation listed in \S\ref{sss:notation-for-towsec} associated to $\fbold$.
Below, we will describe a dynamical triangulation 
$\hat{\Delta}^{[m]}_n = \hat{\Delta}^{[m]}_n(\fbold)$ 
of the pseudo-Siegel pinched disks $\hat{Z}^{[m]}_n$ for every $n \geq 0$ and $m \geq -1$.
 
For every $m \geq 1$, denote 
\[
    \check{\qq}^n_{[m]} := \qq^n_{[m]} - \varepsilon_{n+m} \varepsilon_{n+m+1} \qq^n_{[m-1]}.
\]
For $p \in \Kont^{n}$, we will denote by $I^n_p$ the internal ray of the $n$\textsuperscript{th} Mother Hedgehog $H_n$ starting at $0$ and ending at $v^n_p$.
Recall that for every $m \geq 1$, the boundary of the renormalization sector $S_n^m$ contains two internal rays $I^n_{-\qq^n_{[m]}}$ and $I^n_{-\check{\qq}^n_{[m]}}$.
Denote by
\[
\psi_{n,m} : S_n^m \to \D
\]
the conformal gluing map for $S_n^m$ projecting $f_n^{[m]}$ to $f_{n+m}$.

Firstly, the triangulation $\hat{\Delta}^{[-1]}_n$ consists of two triangles, which are the closure of the connected components of $\hat{Z}^{[-1]}_n$ with the internal rays $I^n_{-1}$ and $I^n_0$ removed.
In general, for $m \geq 1$, we define $\hat{\Delta}^{[m-1]}_n$ out of $\hat{\Delta}^{[-1]}_{n+m}$ by pulling back and spreading around as follows.

The lift of $\hat{\Delta}^{[-1]}_{n+m}$ under $\psi_{n,m}$ is a triangulation of $\hat{Z}_n^{[m-1]} \cap S_n^m$ consisting of two elements $A_n^m$ and $\check{A}_n^m$.
The two triangles are separated by $I^n_0$.
Without loss of generality, we will assume that $A_n^m$ is bounded by $I^n_0$ and $I^n_{-\qq^n_{[m]}}$, and $\check{A}_n^m$ is bounded by $I^n_0$ and $I^n_{-\check{\qq}_{[m]}}$.
For $p \in \Time^n$, denote by $A_n^m(-p)$ (resp. $\check{A}_n^m(-p)$) the closure of the lift of the interior of $A_n^m$ (resp. $\check{A}_n^m$) under $f_n^p$ that contains the fixed point $0$ on its boundary.

\begin{lemma}
\label{lem:triangulation-first-return}
    The first return map of the semigroup $\mathcal{F}_n$ back to $\hat{Z}_n^{[m-1]} \cap S_n^m$ is the pair of maps
    \[
        f_n^{\qq^n_{[m]}}: \check{A}_n^m (-\qq^n_{[m]}) \to \check{A}_n^m (0), \quad 
        f_n^{\check{\qq}^n_{[m]}}: A_n^m(-\check{\qq}^n_{[m]}) \to A_n^m(0).
    \]
    In particular, for any point $z$ in $\hat{Z}_n^{[m-1]}$, there exists some time $p \in \Time^n \cup \{0\}$ such that $f_n^p(z)$ is in $\hat{Z}_n^{[m-1]} \cap S_n^m$; the smallest of such $p$ satisfies either $p< \qq^n_{[m]}$ or $p < \check{\qq}^n_{[m]}$.
\end{lemma}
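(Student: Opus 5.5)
The plan is to reduce the statement to the combinatorics of rigid rotations, namely Proposition~\ref{prop:q[n]}(3), by way of the Mother Hedgehog. I will first treat the case where $\ttheta_n$ is irrational and $\fbold$ is a genuine renormalization tower of some $f_\theta$. There $f_n$ restricted to $H_n$ is a homeomorphism. By Theorem~\ref{thm:mother-hedgehog}(3) it permutes internal rays, so it is combinatorially the rotation $\rotate_{\theta_n}$ acting on the set of ray angles. The region $\hat{Z}_n^{[m-1]} \cap S_n^m$ is bounded by the rays $I^n_{-\qq^n_{[m]}}$ and $I^n_{-\check{\qq}^n_{[m]}}$, and it corresponds to the sector $S^m_{\theta_n}=\triangle_{\theta_n}(-q_{[m]},-\check q_{[m]})$. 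The triangles $A_n^m$ and $\check A_n^m$ correspond to $\triangle(0,-q_{[m]})$ and $\triangle(0,-\check q_{[m]})$.

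Next I will identify the pulled-back triangles. $A_n^m(-p)$ is the component of the $f_n^{p}$-preimage of $\textnormal{int}\,A_n^m$ touching $0$. Since $f_n^p$ is injective on $\hat{Z}_n^{[m-1]}$ (Theorem~\ref{thm:renorm-limits}(5a), applied iteratively for times below $\qq^n_{[m]}$) and rays go to rays, $A_n^m(-p)$ is bounded by the rays $I^n_{-p}$ and $I^n_{-p-\qq^n_{[m]}}$. It corresponds to $\triangle(-p,-p-q_{[m]})$. Proposition~\ref{prop:q[n]}(3) then gives the two branches of the first return map. Concretely, $\check A^m_n(-\qq^n_{[m]})\leftrightarrow\triangle(-q_{[m]},-q_{[m]}-\check q_{[m]})$ and $A^m_n(-\check\qq^n_{[m]})\leftrightarrow\triangle(-q_{[m]}-\check q_{[m]},-\check q_{[m]})$. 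These are exactly the two pieces of $S^m$, and they map to $\check A^m_n(0)$ and $A^m_n(0)$ respectively.

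The key step is showing that no earlier return occurs inside the fjords, not just on $H_n$. The fjords of level $\ge m-1$ are attached to the critical orbit points $v^n_{-j}$ for $j<\qq^n_{[s]}$. Each fjord is filled in along with its dam, and the dams are combinatorial preimages of the principal dams, so a fjord lies in the triangle determined by its root rays. Hence "$z$ lies in a triangle" is decided by the ray angles of the boundary of that triangle in $\partial^c H_n$. Injectivity and almost invariance of $\hat{Z}_n^{[m-1]}$ under $f_n^{[m]}$ make the pieces $A^m_n(-p)$, $\check A^m_n(-p)$ for $0\le p<\check\qq$ (resp.\ $p<\qq$) tile $\hat{Z}_n^{[m-1]}$, mirroring the rotation tiling of $\bar\D$ by the iterates of the two subsectors. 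From this tiling, the "in particular" clause follows at once: a point in $A_n^m(-p)$ reaches $S_n^m$ at time $p$.

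Finally, I will extend the result to general towers in $\overline{\towsec}$, including enriched rational $\ttheta_n$, by a limiting argument. The objects $\hat{Z}$, $H_n$, $S_n^m$ and $f_n^{[m]}$ are Hausdorff or locally uniform limits (Theorem~\ref{thm:renorm-limits}(4)--(7)). The combinatorial identities are stated in $\Time^n$ with only finitely many generators involved at level $m$, so they pass to the limit. When $\abar=\infty$, times such as $k\qq_{[m-1]}$ become Lavaurs-type elements, and the ordering of Proposition~\ref{prop:chronological-order-01} replaces the integer ordering.

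I expect the main obstacle to be the fjord bookkeeping in the third step: justifying that the filled fjords are subdivided compatibly and that the dams move correctly under $f_n^p$. Here almost invariance is the only tool, and it must be used carefully.
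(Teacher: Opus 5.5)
Your proposal is correct and is essentially the paper's argument: the paper likewise reads the first-return structure off the construction of $\hat Z_n^{[m-1]}$ as $H_n$ together with principal dams and their pullbacks, doing $m=1$ directly and $m\ge 2$ by induction, whereas you get the hedgehog combinatorics for all $m$ at once from Proposition~\ref{prop:q[n]}(3) and add an approximation step, which the paper does not need because the dam-pullback description of $\hat Z_n^{[m]}$ is available directly for every tower in $\overline{\towsec}$. In your fjord step, two corrections: $\hat Z_n^{[m-1]}$ contains only the fjords of levels $s\ge m$, since a level-$(m-1)$ fjord would straddle cutting rays $I^n_{-p}$; and what makes the pulled-back triangles tile $\hat Z_n^{[m-1]}$ is not the geometric almost-invariance estimate but the combinatorial fact that each family $\{V^{[s]}_j\}_{0\le j<\qq^n_{[s]}}$ fills complete first-return columns over $S_n^m$, because the next pullback $d^{[s]}_{\qq^n_{[s]}}$ is combinatorially adjacent to the principal dam and hence back in $S_n^m$.
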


\begin{proof}
    The case when $m=1$ follows from the way the pseudo-Siegel disk $\hat{Z}^{[0]}_n$ is defined by principal dams and their pullbacks. 
    The case when $m \geq 2$ follows from induction in a similar way.
\end{proof}

This lemma implies that we have a well-defined triangulation $\hat{\Delta}^{[m-1]}_n$ of $\hat{Z}_n^{[m-1]}$ given by
\[
\hat{\Delta}^{[m-1]}_n = \left\{ A_n^m(-p) \right\}_{0 \leq p < \check{\qq}^n_{[m]}} \cup 
\left\{ \check{A}_n^m(-p) \right\}_{0 \leq p < \qq^n_{[m]}}.
\]
See Figure \ref{fig:triangulation} for an example when $m=1$.

\begin{figure}
        \centering
       \begin{tikzpicture}
    \node[anchor=south west,inner sep=0] (image) at (0,0) {\includegraphics[width=0.98\linewidth]{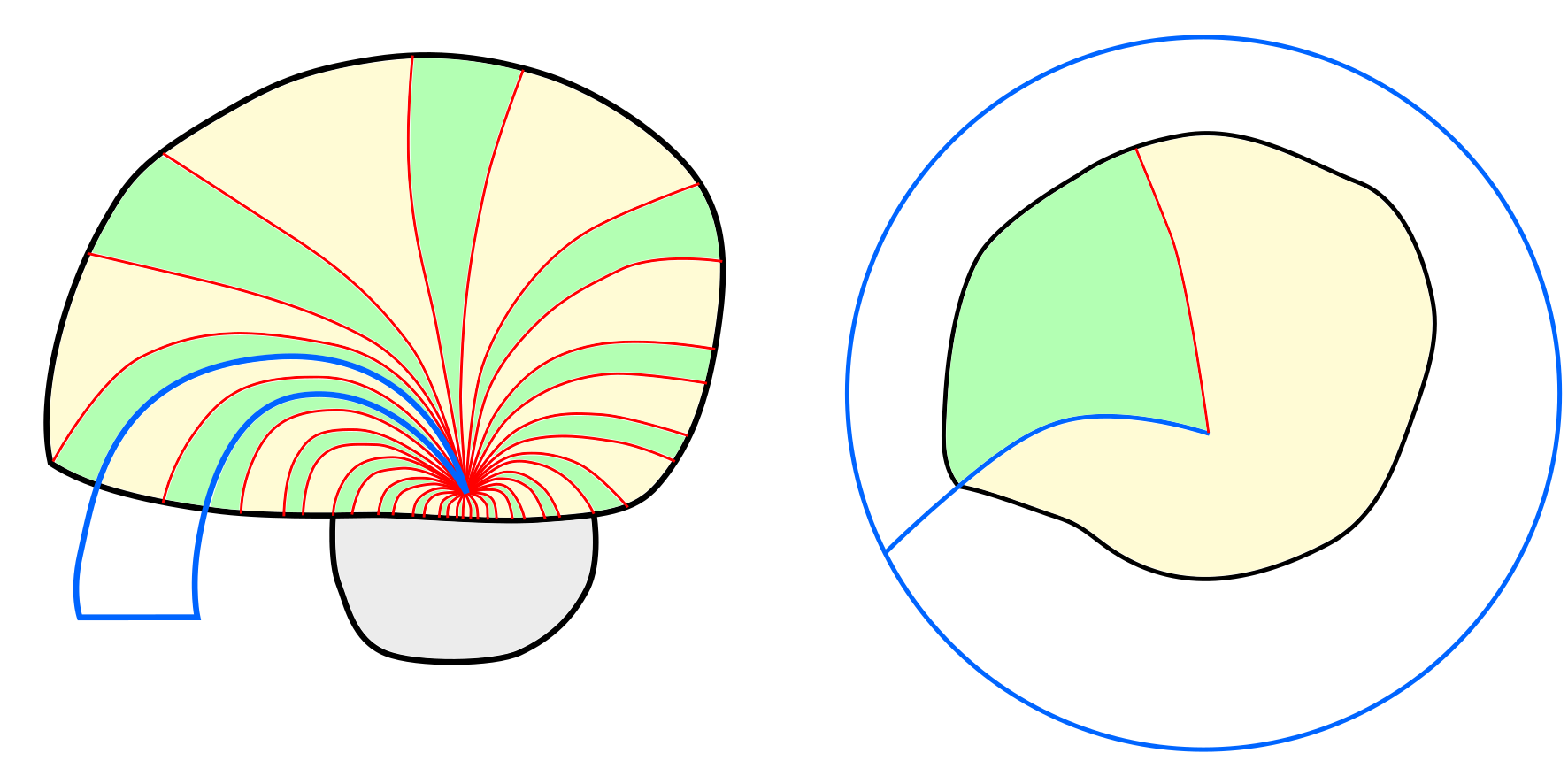}};
    \begin{scope}[
        x={(image.south east)},
        y={(image.north west)}
    ]
        \node [black] at (0.295,0.375) {\small $\bullet$};
        \draw[-latex] (0.32,0.965) .. controls (0.26,1.04) and (0.15,1) .. (0.095,0.865);
        \draw[-latex] (0.075,0.82) .. controls (-0.01,0.77) and (-0.03,0.55) .. (0.01,0.45);
        
        \node [black] at (0.035,0.4) {\small $\bullet$};
        \node [black] at (0.028,0.36) {\small $v_{n,-1}$};
        \node [black] at (0.105,0.355) {\small $\bullet$};
        \node [black] at (0.095,0.31) {\small $v_{n,0}$};
        \node [black] at (0.767,0.448) {$\bullet$};
        \node [black] at (0.77,0.41) {$0$};
        \node [black] at (0.45, 0.88) {\small $\hat{Z}_n^{[0]}$};
        \node [black] at (0.42, 0.24) {\small $\hat{Z}_n^{[-1]}$};
        \node [black] at (0.78, 0.2) {\small $\hat{Z}_{n+1}^{[-1]}$};
        \node [black] at (0.075,0.055) {\small $f_n$};
        \draw[-latex] (0.05,0.16) .. controls (0.055,0.09) and (0.105,0.09) .. (0.12,0.17);
        \node [black] at (0.69,0.64) {\small $f_n$};
        \draw[-latex] (0.69,0.475) .. controls (0.70,0.57) and (0.72,0.6) .. (0.755,0.62);
        \node [blue] at (0.35,0.03) {\small $\psi_n$};
        \draw[blue,-latex] (0.145,0.21) .. controls (0.25,0.03) and (0.45,0.04) .. (0.58,0.17);
        \node [blue] at (0.02,0.24) {\small $S_n$};
    \end{scope}
\end{tikzpicture}
    \caption{The triangulation $\hat{\Delta}_n^{[0]}$ of $\hat{Z}^{[0]}_n$ on the left is obtained by pulling back the triangulation $\hat{\Delta}_{n+1}^{[-1]}$ of $\hat{Z}^{[-1]}_{n+1}$ on the right and spreading it around.}
    \label{fig:triangulation}
\end{figure}

\begin{lemma}
\label{lem:triangulation-invariance-01}
    The triangulations $\hat{\Delta}_n^{[m]}$ of $\hat{Z}_n^{[m]}$ are invariant under sector renormalization, that is, for $m, n \geq 0$,
    \[
        \psi_{n,1}(\hat{\Delta}_n^{[m]} \cap S_n^1) = \hat{\Delta}_{n+1}^{[m-1]}.
    \]
\end{lemma}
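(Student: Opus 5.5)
The proof unwinds the definitions level by level. It uses three ingredients: the compatibility of the gluing maps $\psi_{n,m}$ under composition, the projection property in Theorem~\ref{thm:renorm-limits}(7), and the first-return description of Lemma~\ref{lem:triangulation-first-return}.

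\emph{Step 1: composition of gluing maps.} Fix $m,n\ge 0$. The deeper sector gluing map factors through the first one:
\[
\psi_{n,m+1} = \psi_{n+1,m}\circ \psi_{n,1} \quad \text{on } S_n^{m+1}\subset S_n^1 .
\]
Both sides are conformal gluing maps of $S_n^{m+1}$ that send $0$ to $0$. Both project $f_n^{[m+1]}$ to $f_{n+m+1}$, and both send the critical value to the critical value. The construction of the nest in Theorem~\ref{thm:sectorial-bounds}, where $S_n^j=\phi_n(S^{n+j})$, makes this identity hold by definition. Consequently
\[
\psi_{n,1}\big(A_n^{m+1}\big)=A_{n+1}^{m}, \qquad \psi_{n,1}\big(\check A_n^{m+1}\big)=\check A_{n+1}^{m}.
\]
Indeed, both sides are lifts of the two triangles of $\hat\Delta^{[-1]}_{n+m+1}$. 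The labelling is consistent because $\psi_{n,1}$ sends $I^n_0$ to $I^{n+1}_0$ and sends the boundary rays $I^n_{-\qq^n_{[m+1]}}$, $I^n_{-\check\qq^n_{[m+1]}}$ to $I^{n+1}_{-\qq^{n+1}_{[m]}}$, $I^{n+1}_{-\check\qq^{n+1}_{[m]}}$. This uses the identities $\qq^n_{[j+1]}\leftrightarrow \qq^{n+1}_{[j]}$ under renormalization.

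\emph{Step 2: spreading around.} The tiles of $\hat\Delta^{[m]}_n$ are the pullbacks $A_n^{m+1}(-p)$ for $0\le p<\check\qq^n_{[m+1]}$ and $\check A_n^{m+1}(-p)$ for $0\le p<\qq^n_{[m+1]}$. We must identify which of them lie in $S_n^1$ and what $\psi_{n,1}$ does to them. The first-return map of $\mathcal F_n$ to $S_n^1$ is the pair $f_n^{\qq^n_{[1]}}$, $f_n^{\check\qq^n_{[1]}}$, and $\psi_{n,1}$ projects it to $f_{n+1}$ (modulo the side identification by $f_n^{[0]}$). For a tile $T=A_n^{m+1}(-p)$ contained in $S_n^1$, consider the successive returns of its orbit $T, f_n(T),\dots, f_n^p(T)=A_n^{m+1}$ to $S_n^1$, and let $p'$ be the number of such returns. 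Then $\psi_{n,1}(T)$ is the lift of $A_{n+1}^{m}$ under $f_{n+1}^{p'}$ that touches $0$, namely $A_{n+1}^{m}(-p')$. Moreover $p\mapsto p'$ is exactly the renormalization time map $\Time^n\cap S_n^1\text{-returns}\to\Time^{n+1}$, which sends $\qq^n_{[m+1]}, \check\qq^n_{[m+1]}$ to $\qq^{n+1}_{[m]},\check\qq^{n+1}_{[m]}$. So the range constraints match, giving
\[
\psi_{n,1}(\hat\Delta_n^{[m]}\cap S_n^1)\subseteq \hat\Delta^{[m-1]}_{n+1}.
\]

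\emph{Step 3: surjectivity and the boundary tiles.} Every tile of $\hat\Delta^{[m-1]}_{n+1}$ lifts under $\psi_{n,1}^{-1}$, followed by the appropriate inverse branch of $f_n$, to a tile of $\hat\Delta^{[m]}_n$ inside $S_n^1$; this gives the reverse inclusion. The main obstacle is the meaning of ``$\hat\Delta_n^{[m]}\cap S_n^1$''. Tiles meeting the sides of $S_n^1$ may be cut by $\partial S_n^1$, and the glued image must be a single tile. To handle this, I will show that the sides $I^n_{-\qq^n_{[1]}}$, $I^n_{-\check\qq^n_{[1]}}$ of $S_n^1$ are unions of edges of $\hat\Delta^{[m]}_n$. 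They are internal rays $I^n_{-p}$ with $p\le \qq^n_{[m+1]}$, and these are edges by construction. Hence every tile lies either inside or outside $\overline{S_n^1}$. The two sides are identified by $f_n^{[0]}$, and this identification is compatible with the tiles because the triangulation is $\mathcal F_n$-equivariant. Finally, the filled fjords of $\hat Z^{[m]}_n$ project to fjords of $\hat Z^{[m-1]}_{n+1}$ by Theorem~\ref{thm:renorm-limits}(7), so the supports agree as well.
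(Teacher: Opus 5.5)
Your proposal is correct and follows the paper's route. The paper simply declares the lemma immediate from the construction, and you unwind exactly that construction via $\psi_{n,m+1}=\psi_{n+1,m}\circ\psi_{n,1}$ and the conjugacy of the first-return pair to $f_{n+1}$.

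One harmless slip: the side ray $I^n_{-\check\qq^n_{[1]}}$ can have $\check\qq^n_{[1]}=\qq^n_{[1]}+\qq^n_{[0]}$, which exceeds $\qq^n_{[m+1]}$ when $m=0$. It is nevertheless still an edge, since the tiling is cut out by all $v^n_{-p}$ with $0\le p<\qq^n_{[m+1]}+\check\qq^n_{[m+1]}$.
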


\begin{proof}
    This is immediate from the construction.
\end{proof}

The triangulation $\hat{\Delta}_n^{[m]}$ also induces a tiling of the boundary of $\hat{Z}_n^{[m]}$ given by
\begin{equation}
\label{eqn:zeroth-tiling}
    \mathscr{T}_{n,0}^{[m]} := \left\{ A \cap \partial \hat{Z}_n^{[m]} \: : \: A \in \hat{\Delta}_n^{[m]} \right\}.
\end{equation}
This is precisely the tiling cut out by the critical points $\{ v^n_{-p} \}_{0 \leq p < \qq^n_{[m+1]} + \check{\qq}^n_{[m+1]} }$.

\begin{theorem}[Diffeo-tiling]
\label{thm:diffeo-tiling}
    Consider $\fbold = \seq{f_n}_{n \geq 0} \in \overline{\towsec}$ and the corresponding pseudo-Siegel pinched disks $\hat{Z}_n^{[m]} = \hat{Z}_n^{[m]}(\fbold)$.
    For $n \geq 0$ and $m \geq -1$, there exists nest of tilings 
    \[
    \mathscr{T}_n^{[m]} = \left\{ \mathscr{T}_{n,k}^{[m]}  \right\}_{ k \geq 0 } 
    \]
    of the boundary of $\hat{Z}_n^{[m]}$ with the following properties.
    \begin{enumerate}
        \item For $n \geq 0$ and $m \geq -1$, $\mathscr{T}_{n,0}^{[m]}$ is equal to the tiling introduced above in \textnormal{(\ref{eqn:zeroth-tiling})}.
        \item For every NP level $n$, the top level $n$ dam $d_n = \partial \hat{Z}_n^{[-1]} \backslash \partial \hat{Z}_n^{[0]}$ admits a nest of tilings $\mathscr{T}(d_n) = \left\{ \mathscr{T}_k (d_n) \right\}_{k \geq 0}$ such that for $k \geq 0$,
        \[
            \mathscr{T}_{n,k+1}^{[-1]} = \mathscr{T}_k (d_n) \cup \left\{ A \in \mathscr{T}_{n,k}^{[0]}  \: : \: A \subset \partial \hat{Z}_n^{[-1]} \right\}.
        \]
        Moreover, $\mathscr{T}_0 (d_n)$ has $10$ tiles, and for every $k \geq 0$, every tile in $\mathscr{T}_k (d_n)$ is partitioned into exactly $10$ tiles in $\mathscr{T}_{k+1} (d_n)$.
        \item For $n \geq 0$, $m \geq 0$, and $k \geq 0$, the tiling $\mathscr{T}_{n,k}^{[m]}$ is obtained by pulling back $\mathscr{T}_{n+1,k}^{[m-1]}$ via the conformal gluing map $\psi_{n} : S_{n} \to \D$ and spreading it around by lifting under the semigroup $\mathcal{F}_n$ before first return to the sector $S_{n}$.
        \item For $n \geq 0$, $\mathscr{T}_n^{[-1]}$ has $\threshold$-uniformly bounded combinatorics and inner and outer geometry. 
        \item For every NP level $n$, 
        the symmetric difference $\mathscr{T}_{n,k+1}^{[-1]} \triangle \mathscr{T}_{n,k}^{[0]}$
        gives a nest of tilings of the boundary of the top regularized fjord $\hat{V}_n$ having $\threshold$-uniformly bounded inner geometry.
    \end{enumerate}
\end{theorem}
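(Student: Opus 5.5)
The plan is to build the tilings by a recursive construction over renormalization levels, exploiting the sector-renormalization equivariance of the triangulations (Lemma~\ref{lem:triangulation-invariance-01}). Level $k=0$ is fixed by item (1): we take $\mathscr{T}_{n,0}^{[m]}$ from (\ref{eqn:zeroth-tiling}). Given all $\mathscr{T}_{n',k}^{[m']}$, we define $\mathscr{T}_{n,k+1}$ in two steps.

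\textbf{Step one: the top pseudo-Siegel disk.} For $m=-1$ we set $\mathscr{T}_{n,k+1}^{[-1]}$ by the formula in item (2). If level $n$ is bounded, $\partial\hat Z^{[-1]}_n=\partial\hat Z^{[0]}_n$ and there is no dam, so we simply take $\mathscr{T}_{n,k+1}^{[-1]}:=\mathscr{T}_{n,k}^{[0]}$. If level $n$ is NP, the dam $d_n$ is a uniform quasiarc; by almost invariance (Theorem~\ref{thm:pseudo-siegel}(1)) it is hyperbolically close to a geodesic of $\C\setminus H_n$, and in external coordinates it is close to a semicircle (Lemma~\ref{lem:qc-fjord-external}). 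We tile $d_n$ by choosing a $K$-qc parametrization $[0,1]\to d_n$ with $K$ $\threshold$-uniform and pulling back the standard decimal tilings of $[0,1]$; this gives $10$ tiles, refined tenfold at each stage. This choice of decimal subdivision is a convention; what matters is bounded geometry.

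\textbf{Step two: deeper pseudo-Siegel disks.} For $m\ge0$, $\partial\hat Z_n^{[m]}\cap \overline{S_n}$ is the $\psi_n$-lift of $\partial\hat Z^{[m-1]}_{n+1}$. So we pull back $\mathscr{T}^{[m-1]}_{n+1,k}$ by $\psi_n$, then spread it by the first-return structure of Lemma~\ref{lem:triangulation-first-return}: every point of $\partial \hat Z_n^{[m]}$ lands in the sector before the first-return times, with $f_n$ injective there. This defines item (3), and because the spreading respects the triangulation $\hat\Delta$, it refines $\mathscr{T}_{n,0}^{[m]}$. Since $\mathscr{T}_{n,k+1}^{[-1]}$ depends only on level-$k$ data at deeper $m$, and $\mathscr{T}_{n,k}^{[m]}$ for $m\ge0$ depends on $\mathscr{T}_{n+1,k}^{[m-1]}$, induction on $k$ (with $m$ descending) is well-founded. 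Consistency of the nesting, i.e. that $\mathscr{T}_{\cdot,k+1}$ refines $\mathscr{T}_{\cdot,k}$, also follows by induction from the base case.

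\textbf{Step three: bounded geometry (items (4) and (5)).} This is the main obstacle. Bounded combinatorics is immediate: each tile splits into $b_{n+1}$-many tiles when bounded, or into $10$ dam tiles plus $2\threshold'$-ish fjord-adjacent tiles when NP. For the geometry, I would transfer to external coordinates, where Real Bounds (Theorem~\ref{thm:R-APB}) give bounded geometry of critical-orbit tilings. The composition of $\psi_n^{-1}$ with the univalent branches of $f_n^{-p}$ has bounded distortion on tiles by Koebe, since the pseudo-Siegel disks are uniform quasidisks (Theorem~\ref{thm:renorm-limits}(4b)) with definite room inside the sector; the uniform sector geometry comes from Lemma~\ref{lem:size-of-sectors}. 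The delicate part is near-parabolic levels. There, tiles near a fjord accumulate into long chains, and Koebe space degenerates as $\abar\to\infty$. I would handle this by treating the dam tiling separately via the quasiarc parametrization. Fjord-adjacent tiles are then controlled by the $\min\{j,k+1-j\}^{-2}$ law, which gives comparability of neighbors. This yields inner geometry of the tiling of $\partial\hat V_n$, which is exactly item (5), and outer geometry for $\partial\hat Z_n^{[-1]}$. Finally, uniformity under limits in $\overline{\towsec}$ follows by compactness (Theorem~\ref{thm:renorm-limits}).
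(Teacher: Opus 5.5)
You take a different route from the paper. The paper regards (1)--(3) as the construction, imports the bounds (4)--(5) from the near-degenerate construction of \cite{DLy22} for bounded-type $f_\theta$, and passes to limits in $\overline{\towsec}$. Deriving (4)--(5) instead from Real Bounds in external coordinates is plausible in principle. Outer geometry of $\partial\hat Z^{[-1]}_n$ is Euclidean geometry on the uniform quasicircle $\Psi(\partial\hat Z^{[-1]}_n)$ of Lemma~\ref{lem:qc-fjord-external}, and inner geometry follows because $\hat Z^{[-1]}_n$ is a uniform quasidisk.

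However, your construction of the dam tilings does not give bounded geometry, and this is a genuine gap. The problem sits at the endpoints $x'_0,y'_0$ of $d_n$. $\mathscr{T}^{[-1]}_{n,k+1}$ contains both $\mathscr{T}_k(d_n)$ and the tiles of $\mathscr{T}^{[0]}_{n,k}$ outside the fjord arc. Likewise, the tiling of $\partial\hat V_n$ in (5) contains $\mathscr{T}_k(d_n)$ and the tiles of $\mathscr{T}^{[0]}_{n,k}$ inside the arc. So bounded geometry forces, for every $k$, the end tile of $\mathscr{T}_k(d_n)$ at $x'_0$ to be comparable to the tile of $\mathscr{T}^{[0]}_{n,k}$ at $x'_0$.

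The tile of $\mathscr{T}^{[0]}_{n,k}$ shrinks from depth to depth by factors $\rho_k\in[c,1-c]$. These are dictated by the combinatorics and geometry of $f_{n+1},f_{n+2},\dots$; Real Bounds only confine each factor to that range. Your end tiles come from a fixed decimal subdivision pulled back by an arbitrary $K$-qc parametrization, so they shrink by factors $\sigma_k$ dictated by the parametrization. The ratio of the two adjacent tiles is then $\asymp\prod_{i\le k}\rho_i/\sigma_i$, and nothing keeps it bounded. For example, $\sigma_k\equiv 1/10$ for a round dam parametrized by arc length, while the $\rho_k$ are free in $[c,1-c]$, so the ratio generally drifts exponentially in $k$.

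Hence (4) and (5) fail at every dam endpoint and at every spread copy of a deeper dam. The tiling-respecting map $\xi_{n,-1}$ used in the proof of Theorem~\ref{thm:qc-thurston-equivalence} would then not be quasisymmetric there. Your remark that the decimal subdivision ``is a convention'' is exactly where this is lost.

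The missing idea is that the dam tiling must be adapted to its surroundings. At depth $k$, choose the children of the two end tiles of $\mathscr{T}_{k-1}(d_n)$ so that the new end tiles are comparable to the depth-$k$ tiles of $\mathscr{T}^{[0]}_{n,k}$ at $x'_0$ and $y'_0$, and distribute the remaining children evenly. This is possible because the $\rho_k$ lie in $[c,1-c]$, and it fits your depth-by-depth induction. Pulled-back and spread copies then inherit this compatibility through bounded-distortion maps.

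Two further steps also need repair.
\begin{enumerate}
    \item Your one-level Koebe bound for $\psi_n^{-1}$ composed with branches of $f_n^{-p}$ cannot be iterated over $k$ levels without compounding distortion. Each depth-$k$ tile should instead be realized under a single map of bounded distortion, as in the proof of Lemma~\ref{lem:fjord-almost-invariance}. The source tile is either a critical-orbit tile controlled by Real Bounds at the appropriate depth, or a principal dam tile.
    \item Converting the $\min\{j,k+1-j\}^{-2}$ law into \emph{inner} geometry of $\partial\hat V_n$ requires that $\Psi(\hat V_n)$ be a uniform quasidisk. You need to derive this from Lemma~\ref{lem:qc-fjord-external} and the almost invariance of dams (Theorem~\ref{thm:pseudo-siegel}(1)).
\end{enumerate}
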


\begin{proof}
    Items (1), (2), and (3) describe the construction of the nests of tilings.
    The main point of the theorem is that $\mathscr{T}(d_n)$ can be constructed such that items (4) and (5) hold.
    When $f_0$ is a neutral quadratic polynomial with bounded-type rotation number, such nests of tilings were constructed in \cite{DLy22} via near-degenerate regime.
    For a general tower $\fbold$ in $\overline{\towsec}$, this follows from taking a limit.
\end{proof}

\subsection{Proof of Theorem \ref{thm:qc-thurston-equivalence}}

Consider two renormalization towers $\fbold = \seq{f_n}_{n\geq 0}$ and $\gbold = \seq{g_n}_{n\geq 0}$ in $\overline{\towsec}$ with the same combinatorics $\ttheta = \seq{ (\varepsilon_n, \abar_n) }_{n\geq 1}$.
For $n \geq 0$, let $\ttheta_n = \shift^n(\ttheta)$, let $\Kont^{n}$ be the continuant group associated to $\ttheta_n$, and let $\Time^n$ be the corresponding time semigroup.

For $n \in \N$, let $\{v^n_p(\fbold)\}_{p \in \Kont^{n}}$ be the bi-infinite critical orbit of $\fbold$ on its $n$\textsuperscript{th} Mother Hedgehog $H_n(\fbold)$. 
For each $p \in \Kont^{n}$, we denote by $I^n_p(\fbold)$ the internal ray of $H_n(\fbold)$ landing at $v^n_p(\fbold)$.
In the dynamical plane of $g_n$, we will also use the same notation $v^n_p(\gbold)$ and $I^n_p(\gbold)$.

We will apply the triangulations and tilings described in the previous subsection.
We will need one more lemma before we proceed.

\begin{lemma}
\label{lem:dividing-pair-extension}
    Let $h$ be a $K_1$-quasisymmetric map from $\partial \hat{Z}_0^{[-1]}(\fbold)$ to $\partial \hat{Z}_0^{[-1]}(\gbold)$ with 
    \[
    h \big( v^0_0(\fbold) \big) = v^0_0(\gbold)
    \qquad \text{ and } \qquad 
    h \big( v^0_{-1}(\fbold) \big) = v^0_{-1}(\gbold).
    \]
    Then, $h$ admits a $K_2$-quasiconformal extension to the whole plane $\C$ such that
    \begin{enumerate}
        \item $h(I^0_j(\fbold)) = I^0_j(\gbold)$ for $j \in \{-1,0\}$;
        \item $h \circ f_0 = g_0 \circ h$ on $I^0_{-1}(\fbold)$;
        \item $K_2$ depends only on $K_1$ and $\threshold$.
    \end{enumerate}
\end{lemma}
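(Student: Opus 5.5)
We construct the extension in three pieces: outside the top pseudo-Siegel disk, inside it along the two distinguished internal rays, and on the two complementary triangles. Write $Z_f=\hat{Z}_0^{[-1]}(\fbold)$ and $Z_g=\hat{Z}_0^{[-1]}(\gbold)$. By Theorem~\ref{thm:renorm-limits}(4b), both are $K$-quasidisks with $\threshold$-uniform $K$. By (4c), they contain $\D(0,R)$ and lie in $\D(0,R')$, so they are uniformly well-shaped.

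\textbf{Outer extension.} Extend $h$ to a $K(K_1,\threshold)$-quasiconformal map $\C\backslash \operatorname{int} Z_f \to \C\backslash \operatorname{int} Z_g$. The standard route is to conjugate by the Riemann maps of the exteriors; these extend to uniformly quasiconformal maps of the plane because the boundaries are uniform quasicircles. This turns $h$ into a uniformly quasisymmetric circle map, which we then extend by Beurling--Ahlfors (or Douady--Earle).

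\textbf{The two internal rays.} By Theorem~\ref{thm:mother-hedgehog}(4) and its analogue for $f_0$ in Theorem~\ref{thm:renorm-limits}(6d), the ray $I^0_0$ landing at the critical value is a uniform quasiarc. The critical point $v^0_{-1}$ lies on $\partial Z_f$, and $f_0$ is injective on $Z_f$. The ray $I^0_{-1}$ is the $f_0$-preimage of $I^0_0$ inside $Z_f$ (Theorem~\ref{thm:renorm-limits}(6c)), and $f_0^{-1}$ is univalent on a definite neighborhood of $I^0_0$ minus the critical value. So $I^0_{-1}$ is also a quasiarc, except near $v^0_{-1}$, where it has the square-root geometry of a preimage under a map with a simple critical point. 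This local geometry is uniform, by compactness of $\overline{\towsec}$. With the endpoints matched, $I^0_{-1}$ and $I^0_0$ meet only at $0$ and have bounded turning and a definite angle there.

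Now choose an arbitrary uniformly quasisymmetric homeomorphism $I^0_0(\fbold)\to I^0_0(\gbold)$ fixing $0$ and the critical value; for instance, match the parametrizations by hyperbolic distance to $0$ inside the disk, or use the quasiarc parametrization. On $I^0_{-1}(\fbold)$, define $h := g_0^{-1}\circ h\circ f_0$, where $g_0^{-1}$ is the branch of the inverse on $Z_g$. This is forced by condition (2), and it is again uniformly quasisymmetric, because $f_0$ and $g_0$ are both locally $z\mapsto z^2$ at the critical point with uniform distortion elsewhere. Condition (1) then holds by construction.

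\textbf{The triangles.} Removing the two rays cuts $Z_f$ into the two triangles of $\hat{\Delta}^{[-1]}_0$. Each is a Jordan domain whose boundary consists of the two rays together with an arc of $\partial Z_f$. The boundary map is now defined on all of $\partial Z_f\cup I^0_{-1}\cup I^0_0$. We extend it quasiconformally into each triangle, for example by uniformizing each triangle to the disk and applying Beurling--Ahlfors again.

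The main obstacle is \emph{uniformity}: each triangle must be a uniform quasidisk, meaning its boundary has bounded turning, including at the corners $0$, $v^0_0$, $v^0_{-1}$ where the rays meet $\partial Z_f$ and each other. The boundary map must also be uniformly quasisymmetric across these corners.

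We handle this by compactness of $\overline{\towsec}$ combined with Theorem~\ref{thm:renorm-limits}. The configurations $(Z_f, I^0_{-1}, I^0_0)$ vary continuously in the Hausdorff sense and are uniform quasi-configurations, so the corner angles are bounded away from $0$. The critical-point corner is treated in the local square-root coordinate, where it becomes a regular boundary point.

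A quasisymmetric map on the boundary of a uniform quasidisk, with uniform control at finitely many marked corners, extends with dilatation depending only on these constants. Gluing the three extensions along quasiarcs, which are removable for quasiconformality, gives the $K_2$-quasiconformal map with $K_2=K_2(K_1,\threshold)$.
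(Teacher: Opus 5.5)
Your overall scheme (outer extension, the forced definition $h=g_0^{-1}\circ h\circ f_0$ on $I^0_{-1}$, then extension into the two triangles) matches the paper's argument when $f_0$ is not near-parabolic. The step where you get uniformity from compactness of $\overline{\towsec}$ fails in the near-parabolic regime, and the lemma must cover that regime.

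\textbf{Why the triangles degenerate.} Suppose $|\theta_0|<1/\threshold$, or $f_0$ is parabolic (parabolic limits occur in $\overline{\towsec}$ by Theorem~\ref{thm:renorm-limits}(3)). We have $I^0_0=f_0(I^0_{-1})$ and $f_0(z)-z=(e^{2\pi i\theta_0}-1)z+O(z^2)$. So for $z\in I^0_{-1}$ near $0$, the points $z$ and $f_0(z)\in I^0_0$ are at distance $O(|\theta_0||z|+|z|^2)$. Every boundary arc of either triangle joining them has diameter $\succ|z|$, since it passes through $0$ or goes around through $v^0_{-1},v^0_0$. Hence the bounded-turning constants of both triangles blow up as $\theta_0\to 0$.

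In the parabolic limit the thin triangle has a zero-angle cusp at $0$. Since $0\in\textnormal{int}\,\hat{Z}_0^{[-1]}$ there (Theorem~\ref{thm:beta-fixed-points}(2)), the other triangle is slit-like at $0$. These degenerate configurations belong to $\overline{\towsec}$ itself, so no compactness argument yields corner angles bounded away from $0$.

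\textbf{Why the free choice on $I^0_0$ is also a problem.} A power-type reparametrization of $I^0_0$ near $0$ is quasisymmetric. Together with its pullback to $I^0_{-1}$, it forces unbounded distortion of the thin region between the two rays, so no uniformly quasiconformal extension exists.

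\textbf{The missing ingredient.} The paper invokes this from \cite{DLy26a}: in the near-parabolic case, work in perturbed Fatou coordinates for $f_0$ and $g_0$. These maps have the same rotation number (or the same parabolic enrichment) by combinatorial equivalence. In these coordinates both act approximately as translation by $1$, and the thin triangle between $I^0_{-1}$ and $f_0(I^0_{-1})$ becomes a fundamental strip of bounded geometry. You then choose $h$ on $I^0_0$ and inside this strip with bounded distortion relative to these coordinates. With that choice, the equivariant definition on $I^0_{-1}$ and the extension are uniformly quasiconformal, with constants depending only on $K_1$ and $\threshold$. Away from the near-parabolic regime your argument is fine, since there the two rays do split $\hat{Z}_0^{[-1]}$ into two $\threshold$-uniform quasidisks.
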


\begin{proof}
    This is a consequence of \cite{DLy26a}.
    The idea is as follows.
    When $f_0$ is not near-parabolic, then $I_0^0(\fbold) \cup I_{-1}^0(\fbold)$ splits the pseudo-Siegel disk $Z_0^{[-1]}(\fbold)$ into two uniform quasidisks; the same goes to $\gbold$.
    In the near-parabolic case, uniformity is attained in perturbed Fatou coordinates.
\end{proof}

Inductively, over increasing $m \geq -1$, we will construct for every $n \geq 0$ a $K$-quasiconformal map $\xi_{n,m}: \D \to \D$ having the following properties.
    \begin{enumerate}[label={(\textnormal{\alph*})}]
        \item $\xi_{n,m}$ sends $v^n_0(\fbold)$ to $v^n_0(\gbold)$.
        \item For $k \in \{-1,\ldots, m\}$, $\xi_{n,m}$ sends the nest of tilings $\mathscr{T}_n^{[k]}(\fbold)$ onto the nest of tilings $\mathscr{T}_n^{[k]}(\gbold)$.
        \item $\xi_{n,m}$ sends the triangulation $\hat{\Delta}_n^{[m]}(\fbold)$ onto the triangulation $\hat{\Delta}_n^{[m]}(\gbold)$.
        \item $\xi_{n,m}$ is a pseudo-conjugacy in the following sense. 
        \begin{enumerate}[label={(\textnormal{\roman*})}]
            \item $\xi_{n,m}$ is equivariant on the internal rays making up $\hat{\Delta}_n^{[m]}(\fbold)$. 
            That is, for any two distinct internal rays $I^n_{-p}(\fbold)$ and $I^n_{-q}(\fbold)$ on $\cup \hat{\Delta}_n^{[m]}(\fbold)$ with $p < q$, then
            \[
                \xi_{n,m} \circ f_n^{q-p}(z) = g_n^{q-p} \circ \xi_{n,m}(z) \qquad \text{ for all } z \in I^n_{-q}(\fbold).
            \]
            \item When $m \geq 0$, $\xi_{n,m}$ is equivariant up until the first return back to the sector $S_n^{m+1}(\fbold)$. 
            That is, for every $z \in \hat{Z}_n^{[m]}(\fbold) \backslash S_n^{m+1}(\fbold)$, if $q(z) \in \Time^n$ is the smallest time such that $f_n^{q(z)}(z)$ is on the sector $S_n^{m+1}(\fbold)$, then,
        \[
            \xi_{n,m} \circ f_n^p(z) = g_n^p \circ \xi_{n,m}(z) \qquad \text{ for } 0 < p \leq q(z).
        \]
        \end{enumerate}
        \item The dilatation $K$ is $\threshold$-uniform over $\overline{\towsec}$.
    \end{enumerate}

    The construction starts with the case when $m=-1$.
    By Theorem \ref{thm:diffeo-tiling}, since $\fbold$ and $\gbold$ are combinatorially equivalent, there exists a unique $K_1$-quasisymmetric map
    $\xi_{n,-1}: \partial \hat{Z}_n^{[-1]}(\fbold) \to \partial \hat{Z}_n^{[-1]}(\gbold)$
    that satisfies properties (a) and (b) above.
    By Lemma \ref{lem:dividing-pair-extension}, $\xi_{n,-1}$ extends to a global $K_2$-quasiconformal map that satisfies (c), (d)(i), and (e).
    This resolves the base case of the induction.
    
    Next, let us construct the map $\xi_{n,m}$ out of $\xi_{n+1,m-1}$ for all $m,n \geq 0$.
    Consider the renormalization change of variables $\psi_{n,\fbold}: S_{n}(\fbold) \to \D$ from the dynamical plane of $f_n$ to the dynamical plane of $f_{n+1}$.
    Similarly, consider $\psi_{n,\gbold}: S_{n}(\gbold) \to \D$ associated to $\gbold$.
    By spreading around, the lift $\psi_{n,\gbold}^{-1} \circ \xi_{n+1,m-1} \circ \psi_{n,\gbold} : S_{n}(\fbold) \to S_{n}(\gbold)$ extends to a quasiconformal map $\xi_{n,m}: \hat{Z}_n^{[0]}(\fbold) \to \hat{Z}_n^{[0]}(\gbold)$ that satisfies (a), (c), (d), and sends nest of tilings $\mathscr{T}_n^{[k]}(\fbold)$ onto the nest of tilings $\mathscr{T}_n^{[k]}(\gbold)$ for $0 \leq k \leq m$.
    By Theorem \ref{thm:diffeo-tiling}, the map $\xi_{n,m}$ extends to a quasiconformal map on $\D$ that sends the nest of tilings $\mathscr{T}_n^{[-1]}(\fbold)$ onto the nest of tilings $\mathscr{T}_n^{[-1]}(\gbold)$, thus (b) holds.
    The dilatation of $\xi_{n,m}$ on $\hat{Z}_n^{[0]}(\fbold)$ is the same as that of $\xi_{n+1,m-1}$. By Theorem \ref{thm:diffeo-tiling}, the dilatation of $\xi_{n,m}$ outside of $\hat{Z}_n^{[0]}(\fbold)$ can be arranged to be a fixed $\threshold$-uniform constant $K>1$.
    Hence, $\xi_{n,m}$ also satisfies (e).

    For every $n \geq 0$, as $m \to \infty$, the map $\xi_{n,m}$ converges in subsequence to a $K$-quasiconformal map $h_n: \D \to \D$. It has the property that $h_n$ sends $\hat{Z}^{[m]}_n(\fbold)$ onto $\hat{Z}^{[m]}_n(\gbold)$ for all $m \geq -1$ and it respects the corresponding dynamical triangulations. On the nested intersection, $h_n$ is a conjugacy between $f_n^{[m]}: H_n(\fbold) \to H_n(\fbold)$ and $g_n^{[m]}: H_n(\gbold) \to H_n(\gbold)$ for all $m \geq 0$.
    
    Recall from Theorem \ref{thm:eventually-brjuno} that $H_n(\fbold)$ has non-empty interior if and only if $\fbold$ is eventually Brjuno.
    It remains to show that in this case, the conjugacy $h_n$ can be modified such that it is conformal in the interior of $H_n(\fbold)$.
    There are two cases to consider.
    \vspace{0.1in}
    
    \noindent \underline{Case 1:} $\fbold$ and $\gbold$ are Brjuno. \\
    We will perform a soft surgery procedure. 
    Consider their corresponding Siegel disks by $Z_{f_0} = \text{int} H_0(\fbold)$ and $Z_{g_0} = \text{int} H_0(\gbold)$
    and denote their Riemann mappings by 
    \[
    \eta_{f_0}: (Z_{f_0}, 0) \to (\D,0) 
    \qquad \text{and} \qquad 
    \eta_{g_0}: (Z_{g_0}, 0) \to (\D,0)
    \]
    respectively. 
    For any $\varepsilon \in (0,1)$, consider the invariant subdisk 
    \[
    Z_{f_0}(\varepsilon) = \{ z \in Z_{f_0} \: : \: |\eta_{f_0}(z)| < \varepsilon \}.
    \]
    Define the quasiconformal map 
    \[
        \phi : \D \to \D, \quad z \mapsto \eta_{g_0} \circ h_0 \circ \eta_{f_0}^{-1}(z).
    \]
    Observe that since $h_0$ is a conjugacy, $\phi$ will have to send concentric circles to concentric circles.
    So for each $\varepsilon \in (0,1)$, we can modify $\phi$ to be conformal on a disk of radius $1-\varepsilon$ as follows:
    \[
        \phi_{\varepsilon}(z) = \begin{cases}
            \frac{\phi(\varepsilon)}{\varepsilon} z & \textnormal{ for } |z|<\varepsilon, \\
            \phi(z) & \textnormal{ for } \varepsilon \leq |z| < 1.
        \end{cases}
    \]
    For $\varepsilon \in (0,1)$, we then define 
    \[
        \tilde{h}_{0,\varepsilon} : \D \to \D, \quad 
        \tilde{h}_{0,\varepsilon} (z) :=
        \begin{cases}
            h_0(z) & \text{ if } z \in \D \backslash Z_{f_0}(\varepsilon), \\
            \eta_{g_0}^{-1} \circ \phi_{\varepsilon} \circ \eta_{f_0}(z) & \text{ if } z \in Z_{f_0}(\varepsilon).
        \end{cases}
    \]
    The new map $\tilde{h}_{0,\varepsilon}$ is conformal on $Z_{f_0}(\varepsilon)$.
    Elsewhere, it remains $K$-quasiconformal and satisfies all the desired properties that $h_0$ possesses.  
    As we take $\varepsilon \to 1$, the map $\tilde{h}_{0,\varepsilon}$ converges in subsequence to desired $K$-quasiconformal map $\tilde{h}_0$ that is a conformal conjugacy between the Siegel disks and is equal to $h_0$ outside of the Siegel disks.
    \vspace{0.1in}
    
    \noindent \underline{Case 2:} $\fbold$ and $\gbold$ are eventually Brjuno but not Brjuno. \\ 
    Pick a number $N \geq 1$ such that $f_N$ and $g_N$ are Brjuno.
    By the argument above, we can modify the map $h_N$ to a map $\tilde{h}_N$ that is now conformal on the Siegel disk $Z_{f_N}$.
    Consider the triangulation $\hat{\Delta}_N^{[-1]}(\fbold)$ and let us define a new triangulation $\tilde{\Delta}_N^{[-1]}(\gbold) = h_N( \hat{\Delta}_N^{[-1]}(\fbold) )$ of the pseudo-Siegel disk $\hat{Z}^{[-1]}_N(\gbold)$.
    Combinatorially, $\tilde{\Delta}_N^{[-1]}(\gbold)$ is analogous to $\hat{\Delta}_N^{[-1]}(\gbold)$, but they may differ slightly on part of the internal rays within the Siegel disk $Z_{g_N}$.
    By lifting $\tilde{\Delta}_N^{[-1]}(\gbold)$ and spreading around, we have a triangulation $\tilde{\Delta}_0^{[N-1]}(\gbold)$ and a new renormalization sector $\tilde{S}_0^N(\gbold)$ such that they are combinatorially similar to $\hat{\Delta}_0^{[N-1]}(\gbold)$ and $S_N^0(\gbold)$ respectively and only differ slightly on part of the internal rays within the interior of $H_0(\gbold)$.
    The new sector also comes with a conformal gluing map $\tilde{\psi}_{0,N,\gbold}: \tilde{S}_0^N(\gbold) \to \D$ that projects $g_0^{[N]}$ onto $g_N$.
    Then, by lifting $\tilde{h}_N$ via the gluing maps $\psi_{0,N,\fbold}: S_0^N(\fbold) \to \D$ and $\tilde{\psi}_{0,N,\gbold}: S_0^N(\gbold) \to \D$, we obtain a new quasiconformal map $\tilde{h}_0$ that is now a conformal conjugacy on the interior and coincides with $h_0$ outside of the interior of $H_0(\fbold)$. 
\vspace{0.1in}

This wraps up the proof of Theorem \ref{thm:qc-thurston-equivalence}.


\section{Transcendental Dynamics}
\label{sec:transcendental-dynamics}
This section marks the beginning of our discussion on the renorm-attractor $\attr$ and the transcendental dynamics of neutral cascades. As it was mentioned in~\S\ref{sss:TD.outline}, in here we will transfer various geometric bounds for towers $\towsec$ from Section~\ref{sec:pseudo-Siegel} to neutral cascades. Refer to Figure~\ref{fig:transfer-to-cascade} for the main mechanism  of this transfer.

In \S\ref{ss:neutral-cascade}, we use the precompactness arising from pseudo-Siegel theory to construct \emph{neutral cascades} as the transcendental rescaled limits of the quadratic dynamical systems generated by $f_\theta$ for $\theta \in \Irrat$.
Then, in \S\ref{ss:cascade-tower}, we show that every bi-infinite tower $\fboldbar = \seq{ f_n }_{n \in \Z}$ in $\attr$ can be fit to a single dynamical plane of a neutral cascade $\Fbold = (\Fbold^P)_{P \in \Tbold}$; this makes the renormalization change of variables linear.
In \S\ref{ss:wZbounds for halfplanes}--\ref{ss:MotherHedg:halfplane}, we show that pseudo-Siegel disks and Mother Hedgehog of towers transfer to pseudo-Siegel (pinched) half planes $\hat{\Zbold}^{[m]}(\Fbold)$, $m\in\Z$ and the Mother Hedgehog $\Hbold(\Fbold)$ of neutral cascades $\Fbold$ satisfying the same key properties as discussed in Section \ref{sec:pseudo-Siegel}.

In \S\ref{ss:trans-external-coordinates}, we construct \emph{external cascades} $\Fext = \Fext_{\textnormal{ext}}$ from neutral cascades $\Fbold$ by uniformizing the complement of the Mother Hedgehog, and we establish Real Bounds (Proposition \ref{prop:R-apb-transcendental}) for the critical points $\{\crit_{-P}\}_{P \in \Tbold}$ of $\Fext$ (paralleling Theorem \ref{thm:R-APB}).
Lastly, we end in \S\ref{ss:near-parabolic-levels} with an estimate of parabolic fjords $\fjord$ (see Figure~\ref{fig:angular-control-fjords}) and the associated beta periodic points $\beta_{\fjord}$ in external coordinates.

The key notations introduced in this section are available in \S\ref{sss:cascade}.

\subsection{Neutral cascades}
\label{ss:neutral-cascade}

Another consequence of pseudo-Siegel bounds is that 
the set of pre-renormalizations of neutral quadratic maps, namely 
    \[
    \left\{ f^{[n]}_\theta \: : \: n \geq 0, \theta \in \Irrat \right\},
    \]
is pre-compact up to affine conjugacy. 
Below, we will make this observation more precise.

The following definition comes from McMullen.

\begin{definition}
\label{defn:sigma-proper}
    A holomorphic map $F: A \to B$ is said to be \emph{$\sigma$-proper} if there exist exhaustions $A_n$, $B_n$ of non-empty open subsets $A$, $B$ of $\C$ respectively such that for all $n$, $F : A_n \to B_n$ is a proper map; equivalently, every connected component of the preimage of a compact subset of $B$ under $F$ is compact.
\end{definition}

Following \S\ref{ss:pseudo-siegel}, for every irrational $\theta \in \Irrat$, we denote by $v_{\theta,0}$ the critical value of $f_{\theta}$ and by $\{v_{\theta,k}\}_{k \in \Z}$ the unique bi-infinite critical orbit of $f_{\theta}$ contained in the Mother Hedgehog $H_{\theta}$ of $f_{\theta}$.

In this subsection, we will fix
\begin{itemize}
    \item an increasing sequence of positive integers $\{n_k\}_{ k\geq 0}$, and 
    \item a sequence of irrationals $\{\theta_k\}_{k \geq 0}$ in $\Irrat$.
\end{itemize}

Recall the combinatorial threshold $\threshold \in \N$ for pseudo-Siegel disks.
Recall the homeomorphism $\mathfrak{X}: \Sigma^{\N} \to \Irrat$ and the projection map $\textnormal{proj}: \TheBiCpt \to \TheCpt$ from Section \ref{sec:combinatorics}.
By the compactness of $\TheCpt$, we have:

\begin{lemma}
    \label{lem:combinatorial-convergence}
    There exists an element $\tttheta = \seq{ (\varepsilon_n, \abar_n) }_{n \in \Z}$ of $\TheBiCpt$ such that, after passing to a subsequence over $k$, the following holds.
    Write $\mathfrak{X}^{-1}(\theta_k) = \seq{(\varepsilon_{k,j}, \abar_{k,j})}_{j \geq 1}$.
    Then, for all $m \in \N$, for $k \geq m$ and $j \in \{-m,\ldots, m\}$,
    \begin{itemize}
        \item $\varepsilon_{k,n_k+j} = \varepsilon_j$, 
        \item $\abar_{k,n_k+j} = \abar_j$ if $\abar_j < \infty$,
        \item $\abar_{k,n_k+j} \geq \threshold + m$ if $\abar_j = \infty$.
    \end{itemize}
    In particular, for $m \in \Z$, the sequences $\mathfrak{X}^{-1}(\gauss^{n_k+m}(\theta_k))$ converge to $\textnormal{proj}(\shift^m(\tttheta))$ in $\TheCpt$ as $k \to \infty$.
\end{lemma}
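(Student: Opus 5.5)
This is a compactness statement, and I would prove it by a diagonal extraction in the compact space $\overline{\Sigma} = \{-1,+1\}\times\overline{\N}_{\geq 2}$. The one subtlety is that the shifted index $n_k+j$ is only meaningful once $n_k+j\geq 1$. Since $\{n_k\}$ is increasing, $n_k\geq k$, so for $j\geq -m$ and $k\geq m$ we get $n_k+j\geq 0$. Replacing $n_k$ by $n_k+1$ if necessary (the statement is insensitive to this reindexing), every symbol $(\varepsilon_{k,n_k+j},\abar_{k,n_k+j})$ with $|j|\leq m\leq k$ is then defined.

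First, for each fixed $j\in\Z$, the sequence $k\mapsto(\varepsilon_{k,n_k+j},\abar_{k,n_k+j})$ is defined for all large $k$ and takes values in $\overline{\Sigma}$, which is sequentially compact. Hence it has a subsequence along which $\varepsilon_{k,n_k+j}$ is eventually constant, say equal to $\varepsilon_j$. Along that subsequence, either $\abar_{k,n_k+j}$ is eventually constant, equal to some $\abar_j<\infty$, or it tends to $\infty$, in which case we set $\abar_j=\infty$.

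Next, enumerate $\Z$ as $0,1,-1,2,-2,\dots$ and take nested subsequences, one for each $j$ in turn. The Cantor diagonal subsequence then works simultaneously for all $j$: along it, each coordinate eventually equals $(\varepsilon_j,\abar_j)$ when $\abar_j<\infty$, and $\abar_{k,n_k+j}\to\infty$ otherwise.

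The quantitative form of the statement requires the conclusions to hold for \emph{all} $k\geq m$, not merely for large $k$. I would get this by a further thinning. Choose the $m$-th term of the final subsequence, say $k_m$, so large that for every $|j|\leq m$:
\begin{itemize}
\item the sign $\varepsilon_{k_m,n_{k_m}+j}$ has already stabilized to $\varepsilon_j$;
\item $\abar_{k_m,n_{k_m}+j}=\abar_j$ if $\abar_j<\infty$;
\item $\abar_{k_m,n_{k_m}+j}\geq \threshold+m$ if $\abar_j=\infty$.
\end{itemize}
This is possible because only finitely many $j$ are involved. After relabelling the subsequence by $m$, any $k\geq m$ satisfies the requirements for the window $|j|\leq m$: the window at stage $k$ contains $\{-m,\dots,m\}$, and the lower bound $\threshold+k\geq\threshold+m$ holds.

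Finally, the ``in particular'' clause follows from Theorem~\ref{thm:X.theta}. It gives $\mathfrak{X}^{-1}(\gauss^{n}(\theta_k))=\shift^{n}\mathfrak{X}^{-1}(\theta_k)$, so the $j$-th coordinate of $\mathfrak{X}^{-1}(\gauss^{n_k+m}(\theta_k))$ is the symbol at index $n_k+m+j$, again up to the fixed index offset. Coordinatewise convergence of these symbols to $(\varepsilon_{m+j},\abar_{m+j})$, with the convention $\abar\to\infty$ in $\overline{\N}$, is exactly convergence in the product topology of $\TheCpt$ to $\textnormal{proj}(\shift^m(\tttheta))$.

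I do not expect a genuine obstacle. The only care needed is bookkeeping: fixing the indexing convention between $\shift$, $n_k$, and $j$, and arranging the diagonal so that the bounds hold uniformly for all $k\geq m$ rather than only eventually.
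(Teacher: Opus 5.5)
Your diagonal extraction in $\overline{\Sigma}$ is correct, and it is exactly the compactness argument the paper has in mind (the paper only cites compactness of $\TheCpt$). Your final thinning correctly makes the bounds hold for all $k\ge m$ rather than only eventually. One small point: the shift $n_k\mapsto n_k+1$ is unnecessary, since $n_k\to\infty$ lets you choose each $k_m$ with $n_{k_m}>m$ so that every symbol in the window is defined; it is also not literally harmless, because it shifts $\tttheta$ and the windows, but nothing in your argument depends on it.
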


From now on, we will pass to subsequence over $k$ such that the lemma above holds.
We will fix the limiting bi-infinite sequence $\tttheta = \seq{ (\varepsilon_n, \abar_n) }_{n \in \Z} \in \TheBiCpt$ described above.

For every $k \in \Z$, let $\{q^k_{[l]}\}_{l\geq 0}$ be the return times associated to $\theta_k$, and let 
\[
    u_k(z) := \varepsilon_{0} \cdot\frac{z-v_{\theta_k,0}}{v_{\theta_k,-q^k_{[n_k-1]}}-v_{\theta_k,0}}
\]
be the unique affine map sending the critical value $v_{\theta_k,0}$ of $f_{\theta_k}$ to $0$ and the pre-critical point $v_{\theta_k, -q^k_{[n_k-1]}}$ to $\varepsilon_0$.
For every $k \geq 1$ and $l \in \Z$, denote
\begin{equation}
    \label{eqn:hash-notation}
        f_k^{\#} := u_k \circ f_{\theta_k} \circ u_k^{-1} \qquad
        \text{ and } \qquad
        v^{\#}_{k,l} := u_k(v_{\theta_k, l}).
\end{equation}

Let $\{Q_{[m]}\}_{m \in \Z}$ be the generating set for the continuant group $\Kbold_{\tttheta}$.
For every element $P = \sum_{j=1}^s t_j Q_{[m_j]}$ of $\Kbold_{\tttheta}$, we denote
\[
    P_k^{\#} := \sum_{j=1}^s t_j q^k_{[n_k+m_j]},
\]
which is well-defined for sufficiently high $k \in \N$ (depending on $P$).
If $P$ is in the time semigroup $\Tbold_{\tttheta}$, then the number $P_k^{\#}$ is also positive for sufficiently high $k$.

\begin{lemma}
\label{lem:scaling-critical-value}
    Up to subsequence, for every $ m \in \Z $, the point $v^{\#}_{k, Q_{[m]}}$ converges to a point $C_{Q_{[m]}}$.
    There exist universal constants $K >1$, $\lambda_1$, and $\lambda_2$ with $1<\lambda_1<\lambda_2$ such that for every $m \in \Z$,
    \[
        K^{-1} \lambda_1^m \leq | C_{Q_{[m]}}| \leq K \lambda_2^m.
    \]
\end{lemma}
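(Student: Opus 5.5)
The plan is to reduce the statement to the scaling estimates of Corollary~\ref{cor:universal-scaling-law}, applied at level $n_k$ of $f_{\theta_k}$. Then take a diagonal subsequence. By definition,
\[
v^{\#}_{k,Q_{[m]}} = \varepsilon_0\,\frac{v_{\theta_k,q^k_{[n_k+m]}}-v_{\theta_k,0}}{v_{\theta_k,-q^k_{[n_k-1]}}-v_{\theta_k,0}} ,
\]
so it suffices to bound the ratio $|v_{\theta_k,q^k_{[n_k+m]}}-v_{\theta_k,0}|\,/\,|v_{\theta_k,-q^k_{[n_k-1]}}-v_{\theta_k,0}|$ above and below by $K^{\pm1}\lambda_{1,2}^{m}$, uniformly in $k$ for all $k$ large enough that $n_k+m\ge 0$.

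The first step is to compare forward and backward return points. Corollary~\ref{cor:universal-scaling-law} gives $|v_{\theta,q_{[n]}}-v_{\theta,0}|\asymp|v_{\theta,-q_{[n]}}-v_{\theta,0}|$ uniformly. This lets me replace the numerator by $|v_{\theta_k,-q^k_{[n_k+m]}}-v_{\theta_k,0}|$ at the cost of a universal constant.

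The second step is to control the ratio $|v_{-q_{[n+1]}}-v_0|/|v_{-q_{[n]}}-v_0|$ at a single level. The corollary as stated is a global growth bound from level $0$, which is not directly usable at a moving base level $n_k$. So I need the one-step version, which I would take from Real Bounds (Theorem~\ref{thm:R-APB}) together with the uniform quasiconformality of external coordinates (Corollary~\ref{cor:qc-external-coords}). Since each tile of $\mathfrak{D}^{[n]}$ splits into at least two tiles of $\mathfrak{D}^{[n+1]}$ with comparable adjacent tiles, the external distance $|\mathsf v_{-q_{[n+1]}}-\mathsf v_0|$ is at most a definite fraction $\rho<1$ of $|\mathsf v_{-q_{[n]}}-\mathsf v_0|$. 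It is also at least a definite fraction $c>0$ of it: the tile adjacent to $\mathsf v_0$ at level $n+1$ is the end tile $J_1$, with $|J_1|\asymp|I|$. Quasisymmetry of the $K$-qc extension $\Psi_\theta^{-1}$ transfers both bounds to the dynamical plane.

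Here the orientation must be set up with care. The exponent $m$ grows with $|C_{Q_{[m]}}|$, which corresponds to zooming out relative to the normalizing scale $Q_{[-1]}$. In that direction the ratio per level lies in $[\lambda_1,\lambda_2]$ with $\lambda_1=1/\rho'$ and $\lambda_2=1/c'$. Telescoping over the $m+1$ levels between $n_k-1$ and $n_k+m$ (for either sign of $m$) gives $K^{-1}\lambda_1^m\le|v^{\#}_{k,Q_{[m]}}|\le K\lambda_2^m$, with $K$ absorbing the forward/backward comparison constant.

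The third step is convergence. For each fixed $m$ the points $v^{\#}_{k,Q_{[m]}}$ lie in a fixed compact annulus, so a diagonal subsequence makes them converge for all $m\in\Z$ simultaneously. The bounds pass to the limit $C_{Q_{[m]}}$, and in particular $C_{Q_{[m]}}\neq 0$.

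The main obstacle is the one-step comparison at an arbitrary deep level. This includes near-parabolic levels where $\abar_{n+1}$ is huge, and there the lower bound $c$ must not degenerate. I expect the end-tile estimate $|J_1|\asymp|I|$ of Theorem~\ref{thm:R-APB}(2), which holds independently of $k$, to handle this. If it proves too weak, I would fall back on Lemma~\ref{lem:size-of-sectors} and compare conformal radii of consecutive sectors $S^{n}\supset S^{n+1}$ via their uniformly bounded geometry.
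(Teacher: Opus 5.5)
Your route is the paper's. The paper's proof simply invokes Corollary~\ref{cor:universal-scaling-law} and extracts a convergent subsequence. You are right that this corollary, being a bound relative to level $0$, is not directly usable at the moving base level $n_k$: dividing the two global bounds leaves a factor like $(\lambda_-/\lambda_+)^{n_k}$, which is not uniform in $k$. So deriving the one-step comparison from Theorem~\ref{thm:R-APB} and Corollary~\ref{cor:qc-external-coords} and then telescoping is exactly what is needed to make the paper's one-line argument honest. The forward/backward comparison and the diagonal extraction are fine.

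However, your orientation paragraph is wrong, and the inequality you finally write does not follow from your own estimates. Increasing $m$ means passing to deeper levels $n_k+m$, i.e. closer returns, so $|v^{\#}_{k,Q_{[m]}}|$ \emph{decreases} in $m$. This is consistent with $|C_{Q_{[-1]}}|=1$, and the paper later uses that $\textnormal{rad}(\Sbold^m,0)\asymp|C_{Q_{[m]}}|$ blows up as $m\to-\infty$. With your per-level factors in $[c',\rho']\subset(0,1)$, telescoping gives, after absorbing one level into $K$,
\[
K^{-1}\lambda_2^{-m}\le |C_{Q_{[m]}}|\le K\lambda_1^{-m}\quad (m\ge 0),\qquad K^{-1}\lambda_1^{-m}\le |C_{Q_{[m]}}|\le K\lambda_2^{-m}\quad (m<0).
\]
So the exponent is $-m$, and the roles of $\lambda_1,\lambda_2$ swap with the sign of $m$.

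A single bound $K^{-1}\lambda_1^m\le|C_{Q_{[m]}}|\le K\lambda_2^m$ valid for both signs of $m$ cannot hold for any points when $\lambda_1<\lambda_2$: as $m\to-\infty$ its lower bound exceeds its upper bound. The displayed statement is therefore a misprint, as is Corollary~\ref{cor:universal-scaling-law}, where $\lambda_\pm^{\,n}$ should read $\lambda_\pm^{-n}$. You should state the corrected two-sided exponential bound your argument actually proves, rather than bend the telescoping to match the literal display.
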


\begin{proof}
    By Corollary \ref{cor:universal-scaling-law}, we have 
    \[
    K^{-1} \lambda_1^m \leq \left| v^{\#}_{k,q^k_{[n_k+m]}} \right| \leq K \lambda_2^m
    \]
    for some universal constants $K \in (0,1)$ and $1 < \lambda_1 < \lambda_2$.
    Therefore, up to subsequence, as $k \to \infty$, the point $v^{\#}_{k,q^k_{[n_k+m]}}$ converges to $C_{Q_{[m]}}$ and this limit point satisfies the same inequality.
\end{proof}

The following theorem describes the limiting transcendental dynamics arising from the limit of the dynamical system generated by $f_k^{\#}$.

\begin{theorem}
\label{thm:transcendental-extension}
    Up to taking subsequence in $k$, the following properties hold.
    \begin{enumerate}
        \item There exists a collection of holomorphic $\sigma$-proper maps $\Fbold = (\Fbold^P)_{P \in \Tbold_{\tttheta}}$ such that
        \begin{enumerate}[label=\textnormal{(\alph*)}]
            \item each $\Fbold^P$ is a $\sigma$-proper map from a non-empty open subset $\Dom(\Fbold^P)$ of $\C$ onto the whole plane $\C$;
            \item for any two distinct elements $P$ and $Q$ of $\Tbold_{\tttheta}$ with $P > Q$, then 
            \[
                \Dom(\Fbold^P) = \Fbold^{-(P-Q)} \big( \Dom(\Fbold^Q) \big) \subset \Dom(\Fbold^Q);
            \]
            \item $\Fbold$ is a commutative semigroup parametrized by $\Tbold_{\tttheta}$:
            \begin{equation}
            \label{eq:thm:transcendental-extension}
                 \Fbold^P \circ \Fbold^Q(z) = \Fbold^{P+Q}(z) \qquad \text{ for all } P, Q \in \Tbold_{\tttheta} \text{ and } z \in \Dom(\Fbold^{P+Q});
            \end{equation}
            \item for every $P \in \Tbold_{\tttheta}$, 
            \[
                \lim_{k \to \infty}
                (f_k^{\#})^{P_k^{\#}}
                = \Fbold^P
            \]
            uniformly on compact subsets of $\Dom(\Fbold^{P})$.
        \end{enumerate}
        \item There exists a collection of distinct points $\{C_P \}_{P \in \Kbold_{\tttheta}}$ in $\C$ such that
        \begin{enumerate}[label=\textnormal{(\alph*)}]
            \item for any $P \in \Kbold_{\tttheta}$ and $Q \in \Tbold_{\tttheta}$, we have 
            \[
            \Fbold^Q(C_P) = C_{P+Q},
            \]
            and the local degree of $\Fbold^Q$ at $C_P$ is two if $P+Q \geq 0$ and one otherwise;
            \item the bi-infinite orbit $\{C_P\}_{P \in \Kbold_{\tttheta}}$ is a subset of $\cap_{P \in \Tbold_{\tttheta}} \Dom(\Fbold^P)$;
            \item for every $P \in \Kbold_{\tttheta}$, 
            \[
                \lim_{k \to \infty} v^{\#}_{k, P_k^{\#}} = C_{P}.
            \]
        \end{enumerate}
    \end{enumerate}
\end{theorem}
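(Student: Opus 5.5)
The plan is to obtain $\Fbold$ as a limit of the rescaled quadratic dynamics $f_k^{\#}$. Compactness comes from the sectorial bounds, and $\sigma$-properness from the fact that each $f_k^{\#}$ iterate is a proper (polynomial) map whose domain pieces converge in the Carath\'eodory sense.

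\textbf{Step 1 (the generators).} I would first treat the generators $P=Q_{[m]}$. By Lemma~\ref{lem:size-of-sectors} and Corollary~\ref{cor:universal-scaling-law}, the rescaling $u_k$ normalizes the sector $S^{n_k+m}(f_{\theta_k})$ to have conformal radius $\asymp |C_{Q_{[m]}}|$ about $0$, uniformly in $k$. On this sector the pre-renormalization $(f_k^{\#})^{q^k_{[n_k+m]}}$ is conjugated by the gluing map $\phi_{n_k+m}$ to $\Rsec^{n_k+m}f_{\theta_k}$, up to the identification modulo the previous return. Theorem~\ref{thm:renorm-limits} gives precompactness of $\Rsec^{n_k+m}f_{\theta_k}$ together with its pre-renormalizations $f^{[j]}$, $j\ge0$. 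The rescaled gluing maps are univalent on normalized domains, so by Koebe they are precompact too. Passing to a diagonal subsequence in $k$ (over all $m\in\Z$ and all finite combinations $P$), I obtain locally uniform limits $\Fbold^{Q_{[m]}}$ on the Carath\'eodory kernel of the normalized domains. For enriched levels ($\abar_j=\infty$), item (5b) of Theorem~\ref{thm:renorm-limits} guarantees that the limit is the appropriate Lavaurs-type map, which is consistent with the relation-free generator.

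\textbf{Step 2 (general times and the semigroup law).} A general $P\in\Tbold_{\tttheta}$ is a finite positive combination. Every iterate $(f_k^{\#})^{P_k^\#}$ is a polynomial, and I define $\Dom(\Fbold^P)$ as the set of points having a neighborhood on which these iterates are eventually uniformly bounded. Montel then gives normal-family convergence on $\Dom(\Fbold^P)$, after a further diagonal subsequence. The composition rule \eqref{eq:thm:transcendental-extension} and the domain relation (1b) pass to the limit because $(f_k^\#)^{P^\#_k+Q^\#_k}=(f_k^\#)^{P^\#_k}\circ(f_k^\#)^{Q^\#_k}$ holds exactly for each $k$.

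The main work is showing that $\Dom(\Fbold^P)\neq\emptyset$ and that $\Fbold^P$ is $\sigma$-proper onto $\C$. For the exhaustion I would take $B_n=\D(0,R_n)$ and let $A_n$ be a union of components of $(f_k^\#)^{-P_k^\#}(B_n)$. These are disks (or finite unions of disks) on which the iterate is a proper map of degree bounded independently of $k$, since the number of critical points met is controlled combinatorially by $P$. The key point is that components stay bounded: a component meeting a fixed compact set must have bounded diameter uniformly in $k$. I expect this to be the hardest step. I would argue it via the pseudo-Siegel disk $\hat Z$ and pullback: preimage components of the rescaled large disk that meet the Mother Hedgehog are trapped by the quasidisk geometry of Theorem~\ref{thm:renorm-limits}(4), and the other components have moduli controlled by Koebe distortion away from the postcritical set. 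A Hurwitz-type argument then shows the limit is proper of finite degree on each $A_n$ and surjects onto $B_n$, hence onto $\C$.

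\textbf{Step 3 (critical orbit).} For the points $C_P$, Lemma~\ref{lem:scaling-critical-value} gives convergence of $v^\#_{k,Q_{[m]}}$. Points with general $P$ lie on the postcritical set inside the hedgehog, which has bounded rescaled size, so they converge after a diagonal subsequence and satisfy $\Fbold^Q(C_P)=C_{P+Q}$ in the limit. Distinctness follows from Real Bounds (Theorem~\ref{thm:R-APB}): the external positions of these points stay separated at a definite ratio of scales. For the local degree, $(f_k^\#)^{Q^\#_k}$ passes through the critical point $v_{-1}$ exactly when $P_k^\#<0\le P_k^\#+Q_k^\#$, where it has degree two; otherwise it is univalent near $v_{P}$ on a definite Koebe neighborhood. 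Hurwitz transfers this local degree to the limit. Finally, membership of the points $C_P$ in every domain $\Dom(\Fbold^P)$, i.e. (2b), follows because the hedgehog is invariant and bounded.
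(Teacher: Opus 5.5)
\textbf{Gap in Step 2.} The gap is exactly where you expect the difficulty. You exhaust by round disks $B_n=\D(0,R_n)$. You then assert that $(f_k^{\#})^{P_k^{\#}}$ has degree bounded independently of $k$ on the relevant preimage components, because the number of critical points met is ``controlled combinatorially by $P$''. That is not a combinatorial fact.

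The critical values of $(f_k^{\#})^{P_k^{\#}}$ are the $v^{\#}_{k,i}$ with $0\le i<P_k^{\#}\to\infty$. At levels $j$ with $\abar_j=\infty$ one has $\abar_{k,n_k+j}\to\infty$, and unboundedly many of these critical values lie in parabolic fjords. Whether they enter a fixed disk is a geometric question. To get a limiting proper map you need target domains that
\begin{itemize}
\item[(i)] contain a number of critical values independent of $k$,
\item[(ii)] separate those critical values from the boundary by an annulus of modulus independent of $k$, and
\item[(iii)] still exhaust $\C$.
\end{itemize}
Round disks give none of this. Appealing to ``trapped by the quasidisk geometry'' and ``Koebe away from the postcritical set'' does not supply it either.

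The paper uses instead $V^P_{k,s}=\C\setminus(\mathfrak{R}_k\cup\mathfrak{U}^P_{k,s}\cup\mathfrak{W}^P_{k,s})$. This removes three pieces: the hyperbolic ray landing at $v^{\#}_{k,-1}$; the far piece of $\hat{Z}^{\#}_k(\threshold+s)$ cut off by the internal rays to $v^{\#}_{k,\pm Q_{[m_P-s]}}$; and the wedges over near-parabolic dams of level $<n_k+m_P$. For these domains:
\begin{itemize}
\item (i) holds by design.
\item (ii) follows from the uniform quasidisk geometry of $\hat{Z}^{\#}_k(\threshold+s)$ together with Real Bounds.
\item (iii) is a lower bound $r_s\to\infty$ on the conformal radius at $0$. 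This is the genuinely non-trivial input, from \cite{DLy26a}: in a parabolic fjord most critical values cluster near the $\alpha$-fixed point, i.e.\ near $\infty$ after rescaling.
\end{itemize}
Only then does one pull $V^P_{k,s}$ back around the precritical points $v^{\#}_{k,R_k^{\#}}$, $R<0$. Koebe gives these pullbacks definite size, one takes Carath\'eodory limits of the degree-$d_{P,R,s}$ coverings, and one glues over $s$ and $R$.

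\textbf{Problems in Step 3 and with the domain.} Step 3 rests on a false premise. The rescaled hedgehog $u_k(H_{\theta_k})$ does not have bounded size; it converges to $\Hbold\cup\{\infty\}$. So you have no a priori bound on $v^{\#}_{k,P_k^{\#}}$, and the same objection applies to your justification of (2)(b).

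The paper localizes instead. For fixed $P$ it takes $m_P$ with $|P|<Q_{[m_P]}$ and a threshold $M_P$ such that, for large $k$, $v^{\#}_{k,P_k^{\#}}$ lies on the subarc of $\partial\hat{Z}^{\#}_k(M_P)$ between $v^{\#}_{k,\pm Q_{[m_P]}}$. This uses Proposition~\ref{prop:proper-discontinuity}. That subarc is a quasiarc whose dilatation depends only on $M_P$ and whose diameter is bounded in terms of $m_P$ and $M_P$. Distinctness then follows from Real Bounds and the quasiconformality of external coordinates, as you say.

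Finally, you define $\Dom(\Fbold^P)$ as the normality set of $(f_k^{\#})^{P_k^{\#}}$. With that definition the identity (1)(b) is not automatic. You would have to show that local boundedness of $(f_k^{\#})^{P_k^{\#}}$ near a point forces local boundedness of $(f_k^{\#})^{Q_k^{\#}}$ there, and the exact identity at each $k$ does not give this. The paper sidesteps the issue by defining $\Dom(\Fbold^P)$ as the union, over decompositions of $P$ into generators, of iterated preimages of $\C$ under the limit branches.
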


From our normalization, we automatically have that
\[
    C_0 = 0 \qquad \text{ and } C_{Q_{[-1]}} = \varepsilon_0.
\]

\begin{proof}
    Below, we will use a diagonal argument to establish convergence up to subsequence countably many times.
    Throughout the proof, for $P \in \Kbold_{\tttheta}$, we will denote by $m_P$ the largest integer such that $|P|< Q_{[m_P]}$.
    
    For sufficiently high $k$, we will denote by
    \[
        H_k^{\#} := u_k(H_{\theta_{n_k}}) 
        \qquad \text{and} \qquad
        \hat{Z}_k^{\#}(\threshold) := u_k( \hat{Z}_{\theta_{n_k}}(M))
    \]
    the Mother Hedgehog of the rescaled map $f_k^{\#}$ and top pseudo-Siegel disk with combinatorial threshold $M$ respectively.
    \vspace{0.1in}

    \noindent\underline{Claim 1:} The collection of distinct points $\{C_P\}_{P \in \Kbold_{\tttheta}}$ satisfying (2)(c) exists.

    \begin{proof}
        Given any $M \geq \threshold$ and $m \in \Z$, for sufficiently high $k \in \N$ depending on $m$, we have a well-defined subarc $\gamma_{k,m,M}$ of $\partial \hat{Z}_k^{\#}(M)$ with endpoints $v^{\#}_{k,Q_{[m]}}$ and $v^{\#}_{k,-Q_{[m]}}$ that contains the critical value $v^{\#}_{k,0} = 0$.
        The arc $\gamma_{k,m,M}$ is a quasiarc with dilatation depending only on $M$ and not $k$.
        Hence, by Claim 1, $\gamma_{k,m,M}$ has Euclidean diameter bounded from both above and below by constants that depend on $M$ and $m$ but not on $k$.
        As $k \to \infty$, $\gamma_{k,m,M}$ converges in subsequence to a bounded non-degenerate quasiarc $\gamma_{\infty,m,M}$.
        
        Let us first fix $P \in \Kbold_{\tttheta}$. 
        By Proposition \ref{prop:proper-discontinuity}, $P$ must be between $-Q_{[m_P]}$ and $Q_{[m_P]}$ with respect to the space order $\lhd$.
        There exists some sufficiently high integer $M_P \geq \threshold$ (depending only on $P$) such that for sufficiently high $k \in \N$, the point $v^{\#}_{k,P_k^{\#}}$ is contained on the boundary of the rescaled pseudo-Siegel disk $\hat{Z}_k^{\#}(M_P)$.
        The point $v^{\#}_{k,P_k^{\#}}$ is contained in the quasiarc $\gamma_{k,m_P, M_P}$.
        Hence, $v^{\#}_{k,P_k^{\#}}$ is uniformly bounded and as $k \to \infty$, it converges in subsequence to a point $C_P$ in $\gamma_{\infty,m_P,M_P}$.

        Let $P, R \in \Kbold_{\tttheta}$ where $P \neq R$.
        It remains to show that $C_P \neq C_R$.
        Let $m = \max\{m_P,m_R\}$ and $M = \max\{M_P, M_R\}$.
        Then, the quasiarc $\gamma_{k,m,M}$ contains both $v^{\#}_{k,P_k^{\#}}$ and $v^{\#}_{k,R_k^{\#}}$.
        By Real Bounds (Theorem \ref{thm:R-APB}) and Corollary \ref{cor:qc-external-coords}, there exists a constant $\kappa=\kappa(m,M) > 1$ such that
        \[
            \kappa^{-1} \diam(\gamma_{k,m,M}) \leq |v^{\#}_{k,P_k^{\#}} - v^{\#}_{k,R_k^{\#}}| \leq \kappa \,\diam(\gamma_{k,m,M}).
        \]
        As $k \to \infty$, we obtain 
        \[
            \kappa^{-1} \diam(\gamma_{\infty,m,M}) \leq |C_P - C_R| \leq \kappa \,\diam(\gamma_{\infty,m,M}),
        \]
        and we are done.
    \end{proof}

    Next, we construct for every $P \in \Tbold_{\tttheta}$ and every $k \in \N$ the nest of domains 
    \[
        V^P_{k,1} \subset V^P_{k,2} \subset \ldots \subset V^P_{k,k}
    \]
    as follows.
    For $k \in \N$, let $\mathfrak{R}_k$ be the unbounded geodesic ray with respect to the hyperbolic metric of $\C\backslash H_k^{\#}$ that lands at the critical point $v^{\#}_{k,-1}$ of $f_k^{\#}$; such a ray is disjoint from $\hat{Z}_k^{\#}(\threshold)$.
    For $P \in \Tbold_{\tttheta}$, $k \in \N$, and $s \in \{1,\ldots,k\}$, we set
    \[
        V^P_{k,s} := \C \backslash ( \mathfrak{R}_k \cup \mathfrak{U}^P_{k,s} \cup \mathfrak{W}_{k,s}^P )
    \]
    where $\mathfrak{U}_{k,s}$ and $\mathfrak{W}_{k,s}^P$ are defined as follows.
    \begin{itemize}
        \item The internal rays of $H_k^{\#}$ landing at $v^{\#}_{k,Q_{[m_P-s]}}$ and $v^{\#}_{k,-Q_{[m_P-s]}}$ split $\hat{Z}_k^{\#}(\threshold + s)$ into two connected components.
        The closure of the connected component that does not contain the critical value $0$ is the set $\mathfrak{U}^P_{k,s}$.
        \item For $n \in \N$ with $\abar_{k,n+1} \geq \threshold+s$, let us define a \emph{wedge} of level $n$ of $\hat{Z}_k^{\#}(\threshold+s)$ to be a closed topological sector whose summit is a dam of level $n$ of $\hat{Z}_k^{\#}(\threshold+s)$ (cf. \S\ref{ss:pseudo-siegel}) and whose sides are the internal rays of $H_k^{\#}$ landing at the endpoints of the dam. 
        There are $q_{[n]}$ such dams, leading to $q_{[n]}$ wedges of level $n$.
        Then, the set $\mathfrak{W}_{k,s}^P$ is the union of all wedges of $\hat{Z}_k^{\#}(\threshold+s)$ of level $< n_k + m_P$ that are not contained in $\mathfrak{U}_{k,s}$.
    \end{itemize}
    Let $\textnormal{CV}^P_{k,s}$ denote the set of critical values of $(f_k^{\#})^{P_k^{\#}}$ in $V^P_{k,s}$.
    
    \vspace{0.1in}
    
    \noindent\underline{Claim 2:} The cardinality of $\textnormal{CV}^P_{k,s}$ is independent of $k$ and tends to $\infty$ as $s \to \infty$.

    \begin{proof}
        This property is straightforward by design.
    \end{proof}

    \vspace{0.1in}

    \noindent\underline{Claim 3:} There exists an annulus of modulus independent of $k$ that separates $\textnormal{CV}^P_{k,s}$ from $V^P_{k,s}$.

    \begin{proof}
        The rescaled pseudo-Siegel disk $\hat{Z}_k^{\#}(\threshold + s)$ is a quasidisk with dilatation independent of $k$.
        The claim follows from the property that the set of critical values of $(f_k^{\#})^{P_k^{\#}}$ along the boundary of $\hat{Z}_k^{\#}(\threshold + s)$ (in particular, not in the interior of wedges) are evenly spaced (cf. Theorem \ref{thm:R-APB}) independent of $k$.
    \end{proof}

    \vspace{0.1in}

    \noindent\underline{Claim 4:} There exists a sequence of positive real numbers $\{r_s\}_{s \in \N}$ increasing to $\infty$ such that for $P \in \Tbold_{\tttheta}$, for sufficiently high $k$, and for $s \in \{1,\ldots,k\}$, the conformal radius of $V^P_{k,s}$ at $0$ is bounded below by $r_s$.

    \begin{proof}
        The non-trivial part is to show that the lower bound $r_s$ tends to $\infty$ as $s \to \infty$. 
        Roughly, this amounts to saying that on a parabolic fjord, most of the critical values are close to the $\alpha$-fixed point.
        Such a property follows from \cite{DLy26a}.
    \end{proof}

    Pick $P \in \Tbold_{\tttheta}$ and $R \in \Kbold_{\tttheta}$ with $R<0$.
    There exists an integer $\mathbf{s}=\mathbf{s}(R) \geq 1$ such that for all $k \geq \mathbf{s}$, $V^P_{k,\mathbf{s}}$ contains the point $v^{\#}_{k,(P+R)_k^{\#}}$.
    For all $s \geq \mathbf{s}$, let $U^{P,R}_{k,s}$ be a disk neighborhood of $v^{\#}_{k,R_k^{\#}}$ such that 
        \begin{equation}
        \label{eqn:finite-branched-cov-map}
            (f_k^{\#})^{P_k^{\#}} : 
            \Big( U^{P,R}_{k,s}, v^{\#}_{k,R_k^{\#}} \Big)
            \to 
            \Big( V^P_{k,s}, v^{\#}_{k, (P+R)_k^{\#}} \Big)
        \end{equation}
    is a branched covering map of some degree $d_{P,R,s}$ independent of $k$ that grows to infinity as $s \to \infty$.
    \vspace{0.1in}

    \noindent\underline{Claim 5:} For sufficiently high $k$, the conformal radius of $ U^{P,R}_{k,s}$ at $v^{\#}_{k,R_k^{\#}}$ is bounded below by a positive constant independent of $k$.

    \begin{proof}
        This follows from local Koebe distortion control near $v^{\#}_{k,R_k^{\#}}$.
    \end{proof}

    The claims above guarantee that as $k \to \infty$, the map (\ref{eqn:finite-branched-cov-map}) converges in the Carath\'eodory topology to a degree $d_{P,R,s}$ branched covering map
        \[
            \Fbold^{P,R} : (U^{P,R}_s, C_R) \to (V^P_s, C_{P+R})
        \]
    The union of $V^{P}_s$ across all $s$ is the whole plane $\C$.
    Let $U^{P,R}$ be the union across all $s$ of the topological disk $U^{P,R}_s$.
    We have a $\sigma$-proper map
    \[
        \Fbold^{P,R} : (U^{P,R}, C_R) \to (V^P, C_{P+R}).
    \]
    
    \vspace{0.1in}
    
    \noindent\underline{Claim 6:} For $R_1, R_2 \in \Tbold_{\tttheta}$, if $U^{P,R_1}$ intersects $U^{P,R_2}$, then $U^{P,R_1} = U^{P,R_2}$ and $\Fbold^{P,R_1} = \Fbold^{P,R_2}$.

    \begin{proof}
        Suppose $U^{P,R_1}$ intersects $U^{P,R_2}$. 
        Then, for sufficiently high $s$ and $k$, $U^{P,R_1}_{k,s}$ intersects $U^{P,R_2}_{k,s}$, but since both of them are the preimage of the same domain $V^P_{k,s}$ under the same iterate of $f_k^{\#}$, it must be that $U^{P,R_1}_{k,s}$ is equal to $U^{P,R_2}_{k,s}$ and consequently $\Fbold^{P,R_1}$ is equal to $\Fbold^{P,R_2}$.
    \end{proof}
    
    The claim above allows us to take the union
        \[
            D(\Fbold^P) := \bigcup_{R <0} U^{P,R}.
        \]
    The union of $\Fbold^{P,R}$ across all $R$'s is a $\sigma$-proper map
        \[
            \Fbold^{P} : D(\Fbold^P) \to \C.
        \]
    To ensure that property (1)(b) is satisfied, we enlarge $D(\Fbold^P)$ as follows:
        \[
            \Dom(\Fbold^P) := \sum_{\substack{P_1 + \ldots + P_s = P, \\ P_j\textnormal{'s are generators of } \Tbold_{\tttheta}}} 
            \left( \Fbold^{P_1}|_{D(\Fbold^{P_1})} \right)^{-1} \circ 
            \ldots \circ
            \left( \Fbold^{P_s}|_{D(\Fbold^{P_s})} \right)^{-1}
            (\C).
        \]
    The commutative semigroup structure of $\Fbold = \big(\Fbold^P: \Dom(\Fbold^P) \to \C \big)_{P \in \Tbold_{\tttheta}}$ follows from the fact that they are limits of iterates of a single map $f_k^{\#}$ with the expected relations coming from the combinatorics.
    By now, all the desired properties have been established.
\end{proof}

\begin{remark}
    The open set $\Dom(\Fbold^P)$ is the natural maximal domain of definition of $\Fbold^P$.
    Indeed, we will see later in Theorem \ref{thm:no-wandering-domains} that for all time $P$, $\Dom(\Fbold^P)$ is a dense subset of the plane.
\end{remark}

\begin{remark}
    If $\tttheta$ is irrational, then $\Dom(\Fbold^P)$ is connected for all $P$.
    Else, $\Dom(\Fbold^P)$ will be disconnected for some time $P$.
\end{remark}

The semigroup $\Fbold$ in the theorem above is called a (normalized) \emph{neutral cascade}.
It comes with an element $\tttheta = \seq{ (\varepsilon_n, \abar_n) }_{n\in\Z}$ of $\TheBiCpt$, called the \emph{combinatorics of $\Fbold$}, such that $\Fbold$ is parametrized by the time semigroup of $\tttheta$.
Often, we will denote 
\[
    \Kbold = \Kbold_{\tttheta}, \qquad 
    \Tbold = \Tbold_{\tttheta}, \qquad
    \Fbold^{[m]} := \Fbold^{Q_{[m]}} 
\]
for simplicity.
The sequence $(\theta_k, n_k, u_k)$ that leads to Lemma \ref{lem:combinatorial-convergence}, Lemma \ref{lem:scaling-critical-value}, and Theorem \ref{thm:transcendental-extension} will be called a \emph{generating sequence} for $\Fbold$.
When such a sequence is fixed, we will often denote $q^k_{[l]} = q_{[l]}(\theta_k)$ and use the hash notation $f_k^{\#}$ and $v^{\#}_{k,l}$ as introduced in (\ref{eqn:hash-notation}).

\begin{lemma}
    \label{lem:crit-points}
    Let $\Fbold = (\Fbold^P)_{P \in \Tbold}$ be a neutral cascade.
    For every $P \in \Tbold$, 
    \begin{enumerate}
        \item a point $w \in \C$ is a critical point of $\Fbold^P$ if and only if there exist $R_1 \in \Tbold$ and $R_2 \in \Tbold \cup \{0\}$ with $R_1 + R_2\leq P$ such that $\Fbold^{R_2}(w) = C_{-R_1}$;
        \item a point $w \in \C$ is a critical value of $\Fbold^P$ if and only if $w = C_{Q}$ where $0 \leq Q < P$.
    \end{enumerate}
\end{lemma}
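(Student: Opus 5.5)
The plan is to first identify the critical points and critical values of a single map $\Fbold^P$ using the explicit local structure from the proof of Theorem~\ref{thm:transcendental-extension}. Then I will propagate through compositions using the semigroup law~\eqref{eq:thm:transcendental-extension} and the chain rule. The basic input is item (2)(a) of Theorem~\ref{thm:transcendental-extension}: for $Q\in\Tbold$ and $R\in\Kbold$, the local degree of $\Fbold^Q$ at $C_R$ is two exactly when $R<0\le R+Q$. So $C_R$ with $R<0$ is a critical point of $\Fbold^Q$ precisely when $-R\le Q$, and then its critical value is $C_{R+Q}$ with $0\le R+Q<Q$.

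\textbf{Approach via approximation.} I will characterize critical points of $\Fbold^P$ as limits of critical points of $(f_k^{\#})^{P_k^{\#}}$. By construction, $\Fbold^P$ is locally, on each component $U^{P,R}_s$, the Carath\'eodory limit of the finite-degree branched coverings~\eqref{eqn:finite-branched-cov-map}. These have degree $d_{P,R,s}$ independent of $k$. Hurwitz's theorem then says critical points of the limit are exactly limits of critical points of the approximants, with multiplicity, on compact subsets of $U^{P,R}_s$.

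For $f_k^{\#}$ the situation is explicit. The critical points of $(f_k^{\#})^{N}$ are the points $w$ with $(f_k^{\#})^{j}(w)=v^{\#}_{k,-1}$ for some $0\le j<N$. Equivalently, $(f_k^{\#})^{j+1}(w)=v^{\#}_{k,0}$. The critical values are therefore $v^{\#}_{k,l}$ with $0\le l<N$.

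\textbf{Item (2), critical values.} A critical value $w$ of $\Fbold^P$ in $V^P_s$ is a limit of critical values $v^{\#}_{k,l_k}$, $0\le l_k<P_k^{\#}$, that lie in the bounded region $V^P_s$. Claims~2--3 show that the critical values in $V^P_{k,s}$ form the finite set $\textnormal{CV}^P_{k,s}$, independent of $k$. Its elements are exactly the points $v^{\#}_{k,Q_k^{\#}}$ with $0\le Q<P$ that are not cut off by $\mathfrak U,\mathfrak W$, so their limits are $C_Q$. Conversely, each $C_Q$ with $0\le Q<P$ is a critical value. To see this, write $P=Q+R_1$ with $R_1=P-Q>0$. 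The point $C_{-R_1}$ satisfies $\Fbold^{P}(C_{-R_1})=C_{Q}$, and it is a critical point since $-R_1<0\le Q$, by (2)(a).

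\textbf{Item (1), critical points.} By the chain rule applied to $\Fbold^P=\Fbold^{P-R_2}\circ\Fbold^{R_2}$, the point $w$ is critical for $\Fbold^P$ iff one of two things holds: some intermediate image $\Fbold^{R_2}(w)$ is a critical point of $\Fbold^{P-R_2}$ located on the orbit, or $w$ is critical for $\Fbold^{R_2}$. Cleanly, I would argue as follows.

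For sufficiency: if $\Fbold^{R_2}(w)=C_{-R_1}$ with $R_1+R_2\le P$, then $\Fbold^{P}=\Fbold^{P-R_2}\circ\Fbold^{R_2}$. Moreover $\Fbold^{P-R_2}$ has local degree two at $C_{-R_1}$ because $-R_1+(P-R_2)\ge0$. Hence $w$ is critical.

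For necessity: by Hurwitz, $w=\lim w_k$ with $(f_k^{\#})^{j_k+1}(w_k)=v^{\#}_{k,0}$ and $j_k<P_k^{\#}$. The delicate step is converting the integers $j_k$ into elements $R_2\in\Tbold\cup\{0\}$ along a subsequence. I expect this to be the main obstacle, since $j_k$ might escape to ``infinitely large'' intermediate times in the enriched case.

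I would handle it using the critical-value statement (2) already proved. Since $\Fbold^P(w)$ is a critical value, $\Fbold^P(w)=C_Q$ with $0\le Q<P$. Now work in the local branched covering $\Fbold^{P,R}:U^{P,R}_s\to V^P_s$, which is a limit of finite-degree polynomial-like compositions. Its critical points are preimages of the finitely many critical points $C_{-R_1}$ visible in the limit. Each is realized as $\Fbold^{R_2}(w)=C_{-R_1}$ with $R_1=P-Q-R_2$. Because only finitely many combinatorial times enter at fixed $s$, the times $j_k$ along the subsequence have the form $(R_2)_k^{\#}$ for a fixed $R_2$.
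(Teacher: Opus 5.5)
Your sufficiency argument in (1) is correct, and so is your proof of (2) in both directions (reading Theorem~\ref{thm:transcendental-extension}(2)(a), as intended, as ``local degree two iff $R<0\le R+Q$''). Hurwitz does give critical points $w_k$ of $(f_k^{\#})^{P_k^{\#}}$ with $w_k\to w$.

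The gap is the necessity direction of (1), which is the real content of the lemma. Your last paragraph does not prove it. The sentence ``its critical points are preimages of the finitely many critical points $C_{-R_1}$ visible in the limit'' is exactly the assertion to be established. The closing claim also tracks the wrong time.

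The integer $j_k$ is the time at which $w_k$ hits the critical point $v^{\#}_{k,-1}$ of $f_k^{\#}$, and $v^{\#}_{k,-1}\to\infty$ under the rescaling $u_k$. Once you know $\Fbold^P(w)=C_Q$ with stable labels, you get $j_k+1=(P-Q)^{\#}_k=(R_1+R_2)^{\#}_k$. So $j_k$ is never of the form $(R_2)^{\#}_k$ for fixed $R_2$ and fixed $R_1>0$, and $(f_k^{\#})^{j_k}(w_k)$ has no limit in $\C$.

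What must be shown is different. Pick an intermediate time $g_k\le j_k$ at which the orbit of $w_k$ sits on the backward critical orbit at a point $v^{\#}_{k,-(j_k+1-g_k)}$ that stays bounded. You need to prove $g_k$ is combinatorially stable, i.e.\ $g_k=N^{\#}_k$ for a fixed $N$. ``Only finitely many combinatorial times enter at fixed $s$'' is a slogan, not an argument for this.

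The paper supplies this mechanism. If $w$ is not itself some $C_{-R_1}$, then $w_k$ lies in, and is not the root of, a unique hoglet of $f_k^{\#}$. The generation of that hoglet is the first time the orbit of $w_k$ lands on $H_k^{\#}$. The paper uses the finite set $\mathscr{T}_{P,R,\mathbf{s}}$ of differences $S_1-S_2$ of times with $C_{S_1},C_{S_2}\in U^{P,R}_{\mathbf{s}}$ to see that this generation equals $N^{\#}_k$ for a fixed $N\in\mathscr{T}_{P,R,\mathbf{s}}$. With $\Fbold^P(w)=C_M$, one then has $(f_k^{\#})^{N^{\#}_k}(w_k)=v^{\#}_{k,-(P-M-N)^{\#}_k}$. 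Passing to the limit gives $\Fbold^{N}(w)=C_{-(P-M-N)}$, i.e.\ $R_2=N$ and $R_1=P-M-N$.

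The paper also treats critical points in $\Dom(\Fbold^P)\setminus D(\Fbold^P)$, which are not covered by any single branched covering $U^{P,R}_s\to V^P_s$. It writes $P=P_1+\dots+P_m$ and applies the local argument to the factor $\Fbold^{P_{j_*}}$ that is critical along the orbit of $w$. Your argument only operates inside one $U^{P,R}_s$, so it omits this case as well.
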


\begin{proof}
    Item (2) follows from item (1), so below, we just need to prove (1).
    Let us use the notation from the proof of Theorem \ref{thm:transcendental-extension}.
    Suppose first that $w$ is contained in $D(\Fbold^P)$.
    If $w$ is equal to $C_{-R_1}$ for any $R_1 \in \Tbold$ with $R_1< P$, then we can set $R_2=0$ and we are done. 
    Suppose otherwise.
    
    There exist $R \in \Tbold$ and $\mathbf{s} \in \N$ such that $w$ is contained in $U_{\mathbf{s}}^{P,R}$.
    We will consider the finite collection of times
    \[
        \mathscr{T}_{P,R,\mathbf{s}} = 
        \left\{ 
        S_1-S_2 \in \Tbold \: : \: S_1, S_2 \in \Tbold, \: S_1>S_2, \: \text{and } C_{S_1} \textnormal{ and } C_{S_2} \text{ are in } U_{\mathbf{s}}^{P,R} 
        \right\}. 
    \]
    For all sufficiently high $k \in \N$, there exists a critical point $w_k$ of $(f_k^{\#})^{P_k^{\#}}$ in $U_{k,\mathbf{s}}^{P,R}$ such that $w_k \to w$ as $k \to \infty$.
    There exists $M \in \Tbold \cup \{0\}$ with $M < P$ such that $\Fbold^P(w) = C_M$ and 
    $(f_k^{\#})^{P_k^{\#}}(w_k) = v^{\#}_{k,M_k^{\#}}$ 
    for all sufficiently high $k$.
    The point $w_k$ is contained in, and not the root of, a unique hoglet of $f_k^{\#}$. 
    The generation of such a hoglet is of the form $N_k^{\#}$ for some element $N$ of $\mathscr{T}_{P,R,\mathbf{s}}$ independent of $k$, and it must satisfy $M+N<P$.
    The image of $w_k$ under $(f_k^{\#})^{N_k^{\#}}$ has to be the critical point $v^{\#}_{k,-(P-M-N)_k^{\#}}$. 
    As we take $k \to \infty$, we obtain that $\Fbold^{N}(w) = C_{-(P-M-N)}$.
    Then, we are done if we set $R_2 = N$ and $R_1 = P-M-N$.

    Lastly, if $w$ is not contained in $\Dom(\Fbold^P) \backslash D(\Fbold^P)$, then $P$ can be decomposed into a sum $P_1+\ldots + P_m$ such that $w$ is contained in 
    \[
        \left(\Fbold^{P_1}|_{D(\Fbold^{P_1})}\right)^{-1}  \circ \ldots \circ  \left( \Fbold^{P_{m-1}}|_{D(\Fbold^{P_{m-1}})} \right)^{-1} 
        \left( D(\Fbold^{P_m}) \right).
    \]
    Let $y_0 := w$ and for $j \in \{1,\ldots, m\}$, denote $y_j := \Fbold^{P_1+\ldots+P_j}(y_0)$.
    Then, there exists a unique $j_* \in \{1\ldots,m\}$ such that $y_{j_*-1}$ is a critical point of $\Fbold^{P_{j_*}}$.
    Apply the argument in the second paragraph to $\Fbold^{P_{j_*}}: y_{j_*-1} \mapsto y_{j_*}$, and we are done.
\end{proof}

Mimicking the Fatou-Julia theory in classical holomorphic dynamics, we introduce global dynamical sets of neutral cascades as follows.

\begin{definition}\label{dfn:escaping_set}
    Consider a neutral cascade $\Fbold = (\Fbold^P)_{P \in \Tbold}$. For every $P \in \Tbold$, define the time $P$ \emph{escaping set} of $\Fbold$ to be
\[
    \Ibold^{\leq P} := \C \,\backslash\, \textnormal{Dom} \left(\Fbold^P\right).
\]
    The union is called the \emph{finite-time escaping set} of $\Fbold$:
\[
    \Ibold^{<\infty} := \bigcup_{P \in\ \Tbold} \Ibold^{\leq P}.
\]
    We define the \emph{Fatou set} $\mathfrak{F}(\Fbold)$ of $\Fbold$ to be the set of points in $\C \backslash \Ibold^{<\infty}$ near which $\left\{ \Fbold^P\right\}_{P \in \Tbold}$ is a normal family. 
    We define the \emph{Julia set} to be the complement of the Fatou set: $\mathfrak{J}(\Fbold) = \C \backslash \mathfrak{F}(\Fbold)$; it contains $\Ibold^{<\infty}$.
\end{definition}

By Theorem \ref{thm:transcendental-extension} (1)(b), the escaping sets are nested: for $P,Q \in \Tbold$,
\[
    P < Q \quad \xRightarrow{\qquad} \quad \Ibold^{\leq P} \subset \Ibold^{\leq Q}.
\]

In order to study the fine dynamical properties of neutral cascades, we will need to take into account sector renormalization.

\subsection{Cascades for bi-infinite towers}\label{ss:Casc:Towers}
\label{ss:cascade-tower}

Let $\attr$ be the space of all bi-infinite sector renormalization orbits 
\[
    \attr := \left\{ \seq{\ldots, f_{-2},f_{-1},f_0; f_1,f_2,\ldots} \: : \:
    \seq{f_{n+m}}_{n \geq 0} \in \overline{\towsec} \text{ for all } m \in \Z
    \right\}.
\]
We equip $\attr$ with the topology where 
\begin{itemize}
    \item the renormalization operator $\Rsec: \attr \to \attr$, which simply acts as the shift map, is a self-homeomorphism, and
    \item the projection map
    \[
    \textnormal{Proj}: \attr \to \overline{\towsec}, \quad \seq{\ldots, f_{-2}, f_{-1}, f_0; f_1, f_2 \ldots} \mapsto \seq{f_n}_{n\geq 0}.
    \]
    is continuous.
\end{itemize}

Every element $\fboldbar = \seq{f_n}_{n \in \Z}$ of $\attr$ comes with some combinatorics 
\[
\tttheta\big(\fboldbar\big)=\seq{ (\varepsilon_n, \abar_n) }_{n \in \Z} \in \TheBiCpt
\]
such that the following diagrams commute.
\[
        \begin{tikzcd}[column sep=large, row sep=large]
	\attr & \attr && \attr & \attr \\
	\overline{\towsec} & \overline{\towsec} && \TheBiCpt & \TheBiCpt 
	\arrow["\textnormal{Proj}"', from=1-1, to=2-1]
	\arrow["\textnormal{Proj}", from=1-2, to=2-2]
	\arrow["\Rsec", from=1-1, to=1-2]
	\arrow["\Rsec"', from=2-1, to=2-2]
	\arrow["\tttheta"', from=1-4, to=2-4]
	\arrow["\tttheta", from=1-5, to=2-5]
	\arrow["\Rsec", from=1-4, to=1-5]
	\arrow["\shift"', from=2-4, to=2-5]
        \end{tikzcd}
\]

For the rest of the section, we will fix a bi-infinite tower 
\[
\fboldbar = \seq{f_n}_{n \in \Z} \in \attr
\]
and denote 
\[
\tttheta = \tttheta(\fboldbar) = \seq{ (\varepsilon_n, \abar_n)}_{n\in\Z} \in \TheBiCpt.
\]
Let
$\Kbold = \Kbold_{\tttheta}$ be the continuant group of $\tttheta$, 
and $\Tbold = \Tbold_{\tttheta}$ be the time semigroup of $\tttheta$.
Let $\{Q_{[n]}\}_{n\in\Z}$ be the return times of $\tttheta$ generating $\Kbold$.
We will also denote
\[
    \check{Q}_{[n]} := Q_{[n]} - \varepsilon_n \varepsilon_{n+1} Q_{[n-1]} \qquad \text{ for } n \in \Z.
\]
For every $n \in \Z$, we denote 
\[
    \fbold_n = \seq{f_{k+n}}_{k \geq 0} \in \overline{\towsec}
\]
We will use the notation from \S\ref{sss:notation-for-towsec}, namely $\ttheta_n = \ttheta(\fbold_n)$, $\Kont^{n}$, $\{ \qq^n_{[m]} \}_{m \geq 0}$, $\Time^n$, $\mathcal{F}_n = (f_n^p)_{p \in \Time^n}$, $\{ \hat{Z}_n^m \}_{m \geq -1}$, $H_n$, $\{ v^n_p \}_{p \in \Time^{n}}$, $P_n$, $\{S_n^j\}_{j \geq 1}$, and $\psi_n: S_n^1 \to \D \backslash \gamma_{n+1}$.
Here, $\gamma_{n+1}$ is the slit associated to the gluing map $\psi_n$.
We will also denote
\[
    \check{\qq}^n_{[m]} := \qq^n_{[m]} - \varepsilon_{n+m} \varepsilon_{n+m+1} \qq^n_{[m-1]}.
\]

Recall from Definition \ref{def:sector} the notion of sectors. 
We say that a sector $S$ is an \emph{infinite sector} if its vertex is at $\infty$.

\begin{proposition}
\label{prop:trans-sectors}
    Given a bi-infinite tower $\fboldbar = \seq{ f_n }_{n \in \Z}$ as above, there exist a neutral cascade $\Fbold = (\Fbold^P)_{P \in \Tbold}$ with combinatorics $\tttheta$, a nest of sectors 
    \[
    \ldots \supset \Sbold^{-2} \supset \Sbold^{-1} \supset \Sbold^0 \supset \Sbold^1 \supset \Sbold^2 \supset \ldots,
    \]
    all containing $0$, together with a bi-infinite sequence of conformal gluing maps 
    \[
    \pphi_n: (\Sbold^n,0) \to (\D, v_0(f_n))
    \]
    with slit $\gamma_n$ having the following properties. 
    \begin{enumerate}
        \item The conformal radius of $\Sbold^m$ about $0$ satisfies
    \[
        \textnormal{rad}(\Sbold^m,0) \asymp |\Fbold^{[m]}(0)|,
    \]
        and the union $\cup_n \Sbold^n$ is the whole complex plane $\C$.
        \item Let $(\theta_k, n_k, u_k)$ be a generating sequence for $\Fbold$. 
        For all $m \in \Z$, the rescaled sector $u_k(S^{n_k+m}(f_{\theta_k}))$ together with the marked point at the origin converges in the Carath\'eodory topology to $(\Sbold^m, 0)$ as $k \to \infty$.
        \item For $n \in \Z$, $\psi_n \circ \pphi_n = \pphi_{n+1}$ on $\Sbold^{n+1}$.
        \item For $n \in \Z$, $\Sbold^n$ is an infinite sector and its sides are rays of the form $\boldsymbol{A}^n \subset \Dom(\Fbold^{[n-1]})$ and $\Fbold^{[n-1]}(\boldsymbol{A}^n)$, and the gluing map $\pphi_n$ identifies points of the form $z$ and $\Fbold^{[n-1]}(z)$.
        \item The nested intersection $\cap_{n} \overline{\Sbold^n}$ is a uniformly quasiconformal infinite ray $I_0$ emanating from $0$.
        \item For $n \in \Z$ and $m \geq 1$, the first return map $\big( f_n^{\qq^n_{[m]}}, f_n^{\check{\qq}^n_{[m]}} \big)$ of the semigroup $\mathcal{F}_n$ back to $S_n^m$ lifts under $\pphi_n$ to the pair of maps $(\Fbold^{Q_{[n+m]}}, \Fbold^{\check{Q}_{[n+m]}})$ acting on $\Sbold^{n+m}$.
    \end{enumerate}
\end{proposition}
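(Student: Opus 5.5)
We realize the bi-infinite tower $\fboldbar$ as a limit of rescaled quadratic dynamics and build the cascade from a generating sequence via Theorem~\ref{thm:transcendental-extension}.

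\textbf{Step 1: approximating quadratic polynomials.} Since $\fbold_{-k}=\seq{f_{n-k}}_{n\ge0}\in\overline{\towsec}$ and $\towsec$ is dense in $\overline{\towsec}$, for each $k$ we choose $\theta_k\in\Irrat$ and $n_k\ge k$ such that $\Rsec^{n_k-k+j}f_{\theta_k}$ is within $1/k$ of $f_{j-k}$ for $0\le j\le 2k$. We take this closeness in the tower topology, i.e.\ for the maps, pre-renormalizations, pseudo-Siegel disks and sectors of Theorem~\ref{thm:renorm-limits}. By Proposition~\ref{prop:continuity-of-combinatorics}, the combinatorics then converge as in Lemma~\ref{lem:combinatorial-convergence} to $\tttheta$. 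Apply Theorem~\ref{thm:transcendental-extension} to $(\theta_k,n_k,u_k)$, passing to a subsequence, to get a cascade $\Fbold$ with combinatorics $\tttheta$.

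\textbf{Step 2: the limit sectors, items (1), (2), (4).} For each $m$, consider the rescaled sectors $u_k(S^{n_k+m}(f_{\theta_k}))$, based at $0=u_k(v_{\theta_k,0})$. Lemma~\ref{lem:size-of-sectors} and Lemma~\ref{lem:scaling-critical-value} give conformal radius $\asymp|v^{\#}_{k,Q_{[m]}}|$, which is bounded above and below independently of $k$. So a diagonal subsequence converges in the Carath\'eodory topology to pointed disks $(\Sbold^m,0)$, with $\textnormal{rad}(\Sbold^m,0)\asymp|C_{Q_{[m]}}|=|\Fbold^{[m]}(0)|$. This proves (2) and the first part of (1).

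The sectors are nested for each $k$, and the radii grow exponentially as $m\to-\infty$, so $\bigcup_m\Sbold^m=\C$. The vertex of $S^{n_k+m}$ is the fixed point, which in the $u_k$-coordinates sits at distance $\asymp|v^{\#}_{k,-q^k_{[0]}}|\to\infty$. Hence the limits are infinite sectors. Their sides are limits of $I_{z}\cup\Gamma$ and its $f^{[n_k+m-1]}$-image, and by (1)(d) of Theorem~\ref{thm:transcendental-extension} these converge to $\boldsymbol A^m$ and $\Fbold^{[m-1]}(\boldsymbol A^m)$, giving (4).

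\textbf{Step 3: gluing maps, items (3), (6).} Define $\pphi_{n,k}:=\psi^{(k)}_{n_k+n}\circ u_k^{-1}$, where $\psi^{(k)}_{n_k+n}$ is the sector-$(n_k+n)$ gluing map of $f_{\theta_k}$ (Theorem~\ref{thm:sectorial-bounds}), post-composed with the conformal change identifying its image dynamical plane with that of $\Rsec^{n_k+n}f_{\theta_k}\approx f_n$. These are univalent maps with controlled normalization, so they subconverge to a conformal gluing map $\pphi_n:\Sbold^n\to\D$ sending $0$ to $v_0(f_n)$. The relation $\psi_n\circ\pphi_n=\pphi_{n+1}$ holds at each finite $k$ by the tower structure $\psi_{n,m}$ composition, so it passes to the limit, giving (3). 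Item (6) follows similarly: at finite $k$, the first-return pair to $S^{n_k+n+m}$ consists of the iterates $q^k_{[n_k+n+m]}$ and $\check q^k_{[n_k+n+m]}$, and these converge to $\Fbold^{Q_{[n+m]}}$ and $\Fbold^{\check Q_{[n+m]}}$.

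\textbf{Step 4: the intersection ray, item (5); main obstacle.} At each finite $k$, $\bigcap_j S^j$ is the internal ray to the critical value (cf.\ Theorem~\ref{thm:mother-hedgehog}(4)), and this ray is a uniform quasiarc. The difficulty is to show that the limit is one quasiconformal infinite ray, rather than something thicker that the Carath\'eodory limit could produce. I would first argue that the nested intersection lies in the limit of the rescaled internal rays, which are uniform quasiarcs through $0$ escaping to $\infty$. Then, using item (1) on every scale $m$, I would show that $\Sbold^m$ pinches onto the ray: the sector width at scale $|C_{Q_{[m]}}|$ is comparable to that scale, while each deeper level $\Sbold^{m+j}$ has exponentially smaller size near $0$ by Real Bounds.

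The key technical point is uniform geometry of the sectors at all scales simultaneously, which I expect to come from the bounded sector geometry of \cite{DLy26a} together with Lemma~\ref{lem:qc-fjord-external}. This uniformity is also what is needed to justify the Carath\'eodory convergences in Steps 2 and 3.
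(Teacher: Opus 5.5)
Your Steps 1--3 track the paper's proof: a generating sequence from approximating quadratic polynomials, Carath\'eodory limits of the rescaled sectors with radii controlled by Lemma~\ref{lem:size-of-sectors} and Corollary~\ref{cor:universal-scaling-law}, and limits of the rescaled gluing maps, which give (2), (3) and (6). The genuine gap is item (5), and the route you sketch for it would not work.

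\emph{Why the pinching route fails.} Item (1) only controls the conformal radius of $\Sbold^{m+j}$ at $0$. Each $\Sbold^{m+j}$ is unbounded, so a small radius at $0$ (or Real Bounds on the critical orbit) says nothing about its width far from $0$. That far region is exactly where you would have to rule out the intersection being thicker than an arc.

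\emph{The missing inclusion.} Your preliminary sub-step, that $\bigcap_n\overline{\Sbold^n}$ lies in the limit of the rescaled internal rays, is the hard inclusion. Nested intersections do not commute with Carath\'eodory limits, and you give no mechanism for it.

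\emph{The same weakness in Step 2.} Knowing $u_k(0)\to\infty$ shows the limiting sides are unbounded. It does not show that the Carath\'eodory limit is a Jordan domain whose two sides meet only at $\infty$.

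The missing idea is to use the gluing maps you build in Step 3, and to prove (4) and (5) only after constructing them.

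\emph{Item (5).} By (3), $\pphi_{m+j}=\psi_{m+j-1}\circ\cdots\circ\psi_m\circ\pphi_m$. Hence $\Sbold^{m+j}=\pphi_m^{-1}(S^j_m)$ for all $j\ge 1$, where $S^j_m$ are the sectors of $f_m$, whose closures meet the slit $\gamma_m$ only at the vertex. In the $f_m$-plane, $\bigcap_j\overline{S^j_m}$ is already the internal ray of $H_m$ from the fixed point to $v_0(f_m)$. Therefore $\bigcap_n\overline{\Sbold^n}=\pphi_m^{-1}\bigl(\bigcap_j\overline{S^j_m}\setminus\{0\}\bigr)$ is automatically an arc from $0$ to the prime end of $\Sbold^m$ at the slit endpoint. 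No multiscale width estimate is needed. Uniform quasiconformality then follows by identifying this arc, via $\pphi_{k,m}\to\pphi_m$, with the Hausdorff limit of the rescaled internal rays of $u_k(H_{\theta_k})$ landing at $0$. These are uniform quasiarcs by Theorem~\ref{thm:mother-hedgehog}(4).

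\emph{Item (4).} Write $\Sbold^m=\pphi_{m-1}^{-1}(S^1_{m-1})$, so $\Sbold^m$ is a genuine Jordan sector. All deeper sides $\boldsymbol{A}^{m+l}$ land at one prime end of $\Sbold^m$, and its impression is independent of $m$. That impression is therefore disjoint from $\bigcup_m\Sbold^m=\C$, so it equals $\{\infty\}$. To get $\check{\boldsymbol{A}}^m=\Fbold^{[m-1]}(\boldsymbol{A}^m)$, pass to the limit in the finite-$k$ relation on the side. That relation says $\pphi_{k,m-1}^{-1}$ conjugates $\Rsec^{n_k+m-1}f_{\theta_k}$ to the rescaled pre-renormalization of level $n_k+m-1$. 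Taking this limit also shows the limiting side lies in $\Dom(\Fbold^{[m-1]})$, which your direct appeal to Theorem~\ref{thm:transcendental-extension}(1)(d) silently presupposes.
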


See Figure \ref{fig:transfer-to-cascade}.

\begin{figure}
        \centering
       \begin{tikzpicture}
    \node[anchor=south west,inner sep=0] (image) at (0,0) {\includegraphics[width=1\linewidth]{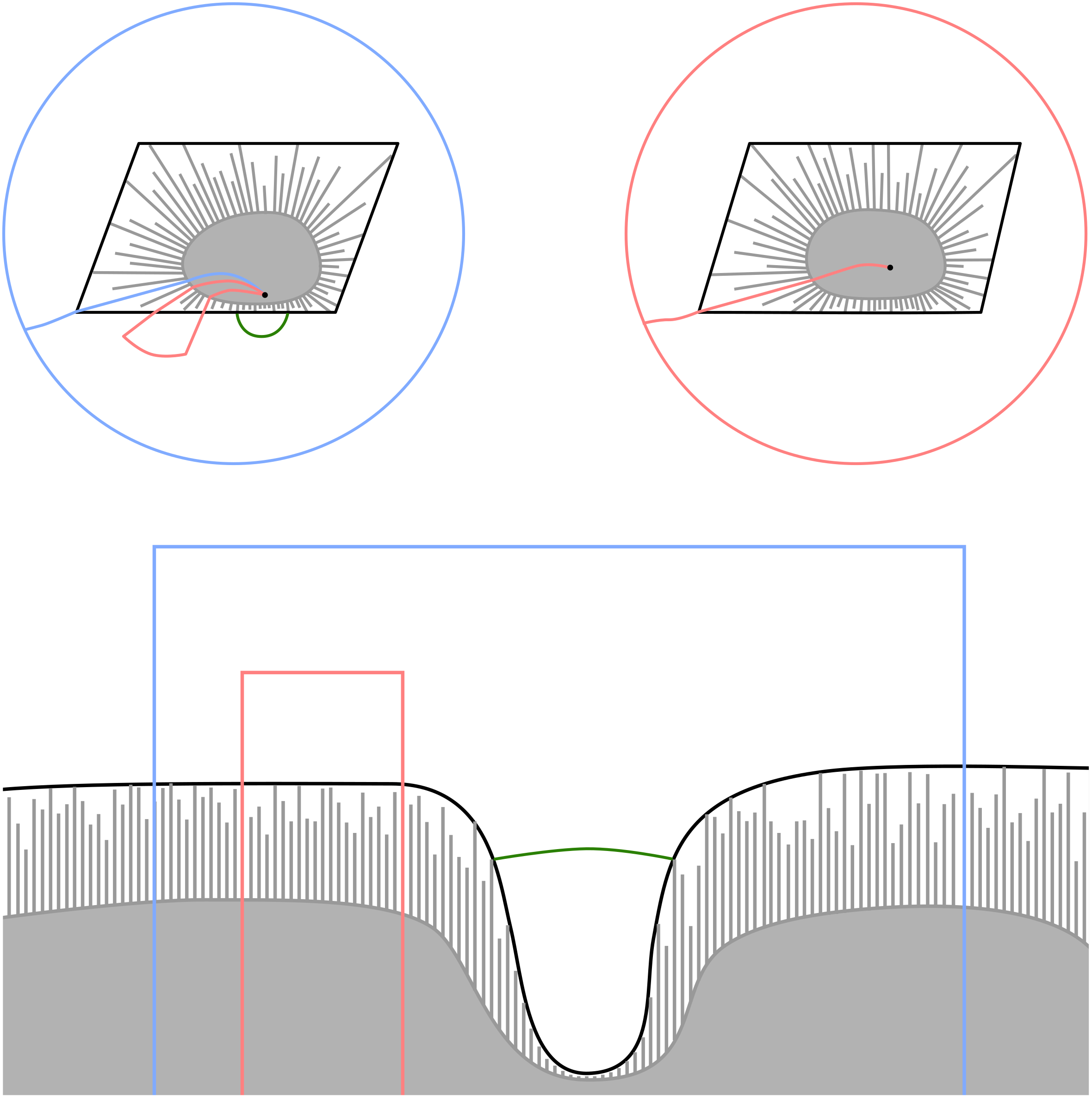}};
    \begin{scope}[
        x={(image.south east)},
        y={(image.north west)}
    ]
        \node [black] at (0.162,0.715) {\tiny $\bullet$};
        \node [black] at (0.725,0.715) {\tiny $\bullet$};
        \draw [black,-latex] (0.32,0.67) .. controls (0.3,0.6) and (0.42,0.62) .. (0.36,0.70);
        \node [black] at (0.37,0.605) {\small $f_n$};
        \draw [black,-latex] (0.9,0.67) .. controls (0.88,0.6) and (1,0.62) .. (0.94,0.70);
        \node [black] at (0.95,0.605) {\small $f_{n+1}$};
    
        \draw[red!60!white,-latex] (0.44,0.75) -- (0.56,0.75);
        \draw[red!60!white,-latex] (0.39,0.375) -- (0.65,0.6);
        \draw[blue!75!white,-latex] (0.175,0.45) .. controls (0.1,0.45) and (0.1,0.5) .. (0.1,0.6); 
        \node [blue!80!white] at (0.08,0.5) {$\pphi_{n}$};
        \node [red!80!white] at (0.54,0.55) {$\pphi_{n+1}$};
        \node [red!80!white] at (0.5,0.775) {$\psi_{n}$};
        \node [red!80!white] at (0.335,0.365) {\small $\Sbold^{n+1}$};
        \node [blue!70!white] at (0.83,0.46) { $\Sbold^{n}$};
        \node [black] at (0.276,0.286) {\footnotesize $\bullet$};
        \node [black] at (0.277,0.31) {\small $0$};

        \draw [black, -latex] (0.23,0.13) .. controls (0.26,0.11) and (0.34,0.11) .. (0.37,0.13);
        \draw [black, -latex] (0.16,0.07) .. controls (0.25,0) and (0.4,-0.01) .. (0.51,-0.01) .. controls (0.61,-0.01) and (0.77,0) .. (0.86,0.07);
        \node [black] at (0.3,0.095) {\small $\Fbold^{[n]}$};
        \node [black] at (0.51,-0.03) {\small $\Fbold^{[n-1]}$};

        \node[white!25!black] at (0.25,0.775) {\small $H_n$};
        \node[white!25!black] at (0.81,0.78) {\small $H_{n+1}$};
        \node[black] at (0.325,0.89) {\small $\hat{Z}_n^{[0]}$};
        \node[black] at (0.875,0.89) {\small $\hat{Z}_{n+1}^{[-1]}$};
        \node[green!50!black] at (0.25,0.67) {\small $\hat{Z}_n^{[-1]}$};
        \node [black] at (0.04,0.315) {$\hat{\Zbold}^{[n]}$};
        \node [green!50!black] at (0.55,0.26) {$\hat{\Zbold}^{[n-1]}$};
        \node [white!25!black] at (0.75,0.1) {$\Hbold$};
    \end{scope}
\end{tikzpicture}
    \caption{The gluing maps $\phi_n$ bridge the dynamical plane of a neutral cascade $\Fbold$ with the dynamical planes of a bi-infinite tower of sector renormalizations $\seq{f_n}_{n\in\Z}$.}
    \label{fig:transfer-to-cascade}
\end{figure}

\begin{proof}
    There exist a sequence of irrationals $\theta_k \in \Irrat$ and an increasing sequence of integers $n_k \to \infty$ such that for all $m \in \Z$ and all sufficiently high $k$,
    \[
    g_{k,m} := \Rsec^{n_k+m}(f_{\theta_k}) \to f_m 
    \quad \text{ as } k \to \infty.
    \]
    For every $k \geq 1$, let $u_k$ be the affine map from Theorem \ref{thm:transcendental-extension} (2)(d). 
    According to Theorem \ref{thm:transcendental-extension}, after passing to a subsequence, $(\theta_k, n_k, u_k)$ is a generating subsequence for a neutral cascade $\Fbold = (\Fbold^P)_{P \in \Tbold}$ with combinatorics $\tttheta$.
    In particular,
    for every $m \in \Z$, the polynomial map
    \[
        \Fbold_{k}^{[m]} := (f_k^{\#})^{[n_k+m]}
    \]
    converges to the $\sigma$-proper map $\Fbold^{[m]}$ as $k \to \infty$.
    Let $\{C_P\}_{P \in \Kbold}$ denote the critical orbit of $\Fbold$ from Theorem \ref{thm:transcendental-extension}.
    For every $m \in \Z$ and every sufficiently high $k \geq 1$, we will also denote the rescaling of the renormalization sector $S^{n_k+m}(f_{\theta_k})$ by
    \[
        \Sbold_k^m := u_k(S^{n_k+m}(f_{\theta_k})).
    \]

    \noindent \underline{Claim 1:} 
    For every $m \in \Z$, after passing to subsequence, the pointed domains $(\Sbold_k^m,0)$ converge as $k \to \infty$ to a pointed domain $(\Sbold^m,0)$ in the Carath\'eodory topology.
    Moreover, there is a universal constant $K>1$ such that the conformal radius of $\Sbold^m$ about $0$ satisfies
    \[
        K^{-1} |C_{Q_{[m]}}| \leq \text{rad}(\Sbold^m,0) \leq K |C_{Q_{[m]}}|.
    \]
    
    \begin{proof}
        For integers $k, l \geq 0$, denote $q^k_{[l]} = q_{[l]}(\theta_k)$. 
        By Lemma \ref{lem:size-of-sectors}, there exist universal constants $1< \lambda_1 <\lambda_2$ and $K > 1$ such that the conformal radius $\textnormal{rad}(\Sbold_k^m,0)$ of $\Sbold_k^m$ about $0$ satisfies
    \[
        K^{-1} 
        \left| v_{k, q^k_{[n_k+m]}}^{\#} \right|
        \leq \textnormal{rad}(\Sbold_k^m,0) \leq 
        K \left| v_{k, q^k_{[n_k+m]}}^{\#} \right|.
    \]
        By Corollary \ref{cor:universal-scaling-law}, this implies that the conformal radius of $(\Sbold_k^m,0)$ depends only on $m$.
        This implies the desired claim as $k \to \infty$.
    \end{proof}
    
    Together with Lemma \ref{lem:scaling-critical-value}, Claim 1 implies that the conformal radius of $(\Sbold^m,0)$ increases exponentially to infinity as $m \to -\infty$ . This implies property (1).
    
    Next, for all $m \in \Z$ and all sufficiently high $k \geq 1$, denote by $\varphi_{k,m}$ the conformal gluing map for the sector $S^{n_k+m}(f_{\theta_k})$ with some slit $\gamma_{k,n_k+m}$ which projects the return map $f_{\theta_k}^{[n_k+m]}$ (mod $f_{\theta_k}^{[n_k+m-1]}$) to $g_{k,m}$. 
    These come from Theorem \ref{thm:sectorial-bounds} (3).
    As $k \to \infty$, the slit $\gamma_{k,n_k+m}$ converges to the slit $\gamma_{m}$.
    By Claim 1, the conformal maps 
    \[
        \pphi_{k,m} := \varphi_{k,m} \circ u_k^{-1}: (\Sbold_k^m,0) \to \left(\D \backslash \gamma_{k,n_k+m}, v_0(g_{k,m}) \right)
    \]
    converge as $k\to \infty$ to a conformal map 
    \[
        \pphi_m: \left( \Sbold^m,0 \right) \to \left( \D \backslash \gamma_m, v_{m,0} \right) 
    \]
    in the Carath\'eodory topology.
    Hence, (2) holds.
    The maps $\pphi_m$, $m \in \Z$ satisfy property (3) because the conformal gluing map $\varphi_{k,m+1} \circ \varphi_{k,m}^{-1}$ for the first renormalization sector of $g_{k,m}$ converges to the conformal gluing map $\psi_m$ for $f_m$.
    \vspace{0.1in}
    
    \noindent \underline{Claim 2:} For $m \in \Z$, the domain $\Sbold^m$ satisfies property (4).

    \begin{proof}
        For $l \geq 1$, the $l$\textsuperscript{th} renormalization sector $S_m^l$ of $f_m$ is contained in $\D \backslash \gamma_{m}$ and its boundary touches the slit $\gamma_{m}$ precisely at its vertex at the origin.
        The gluing map $\psi_{m}$ for $S_m^1$ sends $S_m^L$ to $\D\backslash \gamma_{m+1}$ if $l=1$ and to $S_{m+1}^{l-1}$ if $l\geq 2$.
        Let $A_m^l$ and $\check{A}_m^l = f_{m}^{[l-1]}(A_m^l)$ be the sides of $S_m^l$, and let $B_m^l$ be the summit.
        Observe that 
        \[
        \pphi_{m-1}(\Sbold^m) = S_{m-1}^1
        \]
        and $\Sbold_m$ is bounded by arcs 
        \[
        \boldsymbol{A}^{m} := \pphi_{m-1}^{-1}(A_{m-1}^1), \quad 
        \check{\boldsymbol{A}}^{m} := \pphi_{m-1}^{-1}(\check{A}_{m-1}^1), \quad \text{and} \quad 
        \boldsymbol{B}^{m} := \pphi_{m-1}^{-1}(B_{m-1}^1).
        \]
        For all $l \geq 1$, we have 
        \[
        \boldsymbol{A}^m = \pphi_{m-l}^{-1}(A_{m-l}^l), \quad 
        \check{\boldsymbol{A}}^m = \pphi_{m-l}^{-1}(\check{A}_{m-l}^l),
        \quad \text{and} \quad 
        \boldsymbol{B}^m = \pphi_{m-l}^{-1}(B_{m-l}^l).
        \]

        For all $m \in \Z$ and $l \geq 1$, since the arcs $A_m^l$ and $\check{A}_m^l$ together land at $0$ on the boundary of $\D \backslash \gamma_{m}$, then the arcs $\boldsymbol{A}^{m+l}$ and $\check{\boldsymbol{A}}^{m+l}$ all accumulate at a unique point $x_m$ on the Carath\'eodory boundary of $\Sbold^{m}$.
        This point $x_m$ represents a compact set $X_m \subset \RS$ at which $\boldsymbol{A}^{m+l}$ accumulates for all $l\geq 1$.
        As we take $m \to -\infty$, we deduce that there is a compact set $X \subset \RS$ such that $X_m = X$ for all $m$.
        Since $X$ has to be disjoint from $\cup_m \Sbold^m$, then by item (1), we must have $X = \{\infty\}$.
        Hence, $\Sbold^m$ is indeed an infinite sector with sides $\boldsymbol{A}^m$ and $\check{\boldsymbol{A}}^m$.
        
        It remains to show that $\check{\boldsymbol{A}}^{m} = \Fbold^{[m-1]}(\boldsymbol{A}^m)$ for all $m \in \Z$.
        For every $k$, the sides of the sector $\Sbold_k^m$ are of the form $\boldsymbol{A}_k^m$ and $\Fbold_{k}^{[m-1]}(\boldsymbol{A}_k^m)$.
        As $k \to \infty$, the arcs $\boldsymbol{A}_k^m$, $\Fbold_{k}^{[m-1]}(\boldsymbol{A}_k^m)$, and the summit $\boldsymbol{B}_k^m$ of $\Sbold_k^m$ converge in the Hausdorff metric to $\boldsymbol{A}^m$, $\check{\boldsymbol{A}}^m$, and $\boldsymbol{B}^m$ respectively.
        We have 
        \[
            \Fbold_{k}^{[m-1]} \circ \pphi_{k,m-1}^{-1}(z) = \pphi_{k,m-1}^{-1} \circ g_{k,m-1}(z) \qquad \text{ for } z \in \pphi_{k,m-1}(\boldsymbol{A}_k^m).
        \]
        As $k \to \infty$, the equation above implies that
        \[
            \Fbold^{[m-1]}  \circ \pphi_{m-1}^{-1}(z) = \pphi_{m-1}^{-1} \circ f_{m-1}(z) \qquad \text{ for } z \in \boldsymbol{A}^m.
        \]
        Hence, $\check{\boldsymbol{A}}^m = \Fbold^{[m-1]}(\boldsymbol{A}^m)$.
    \end{proof}

    For every $m$, the nested intersection $\cap_n S_m^n$ is equal to the internal ray of $H_m$, an arc joining the fixed point $0$ and the critical value $v_{m,0}$ of $f_m$.
    As we lift under the map $\pphi_m$, we deduce that the nested intersection $\cap_n \Sbold^n$ is an infinite ray $I_0$ emanating from $0$.
    Observe that $I_0$ is equal to the Hausdorff limit as $k \to \infty$ of the nested intersection $\cap_{m \geq 1} \Sbold_k^m$, which is equal to the uniformly quasiconformal internal ray of the Mother Hedgehog of the quadratic polynomial $f_k^{\#}$ landing at its critical value at $0$.
    Hence, $I_0$ is a uniformly quasiconformal ray, which proves property (5).

    For all $m \in \Z$ and all sufficiently high $k$, 
    the map $g_{k,m}$ lifts under $\pphi_{k,m}$ to the map
    $\Fbold_k^{[m]}$ (mod $\Fbold_k^{[m-1]}$) acting on $\pphi_{k,m}^{-1}(\text{Dom}(g_{k,m})) \subset \Sbold_k^m$.
    As we take $k \to \infty$, we conclude that the map $f_m$ lifts under $\pphi_m$ to the map $\Fbold^{[m]}$ acting on $\pphi_m^{-1}(\text{Dom}(g_{m})) \subset \Sbold^m$.
    This implies property (6), and we are done.
\end{proof}

For the rest of this section, we will fix the neutral cascade $\Fbold = (\Fbold^P)_{P \in \Tbold}$, the infinite sectors $\Sbold^n$, and the conformal gluing maps $\pphi_n$ described in the proposition above.
We will denote by $\{Q_{[n]}\}_{n\in \Z}$ the standard generators of the continuant group $\Kbold$ and by $\{C_P\}_{P \in \Tbold}$ the bi-infinite critical orbit described in Theorem \ref{thm:transcendental-extension}.
The ray $I_0$ described in property (5) will be called a \emph{valuable internal ray} of $\Fbold$.

\subsection{Pseudo-Siegel half-planes}
\label{ss:wZbounds for halfplanes}

\begin{proposition}
\label{prop:trans-pseudo-siegel}
    There exists a sequence 
    \[
    \{\hat{\Zbold}^{[m]} = \hat{\Zbold}^{[m]}(\threshold)\}_{m\in\Z}
    \]
    of closed subsets of the plane with the following properties.
    \begin{enumerate}
        \item For every $m \in \Z$, 
        \begin{itemize}
            \item if $\abar_{m+1}< \threshold$, then $\hat{\Zbold}^{[m]} = \hat{\Zbold}^{[m-1]}$;
            \item if $\abar_{m+1} \geq\threshold$, then $\hat{\Zbold}^{[m]} \subsetneq \hat{\Zbold}^{[m-1]}$.
        \end{itemize}
        \item For every $m,n \in \Z$ with $m \geq -1$, the image of $\hat{\Zbold}^{[m+n]} \cap \Sbold^n$ under $\pphi_n$ is the level $m$ pseudo-Siegel pinched disk $\hat{Z}_n^{[m]}$ of $f_n$ with combinatorial threshold $\threshold$ with the slit $\gamma_n$ removed.
        \item Every connected component of $\hat{\Zbold}^{[m]}$ is unbounded and the complement $\C \backslash \hat{\Zbold}^{[m]}$ is connected, simply-connected, and unbounded.
        \item The union $\hat{\Zbold} = \cup_m \hat{\Zbold}^{[m]}$ is $K$-quasiconformally equivalent to the closed upper half plane for some $\threshold$-uniform constant $K>1$.
        \item Let $(\theta_k, n_k, u_k)$ be a generating sequence for $\Fbold$.
        For all $m \in \Z$, the rescaled pseudo-Siegel disk $u_k(\hat{Z}^{[n_k+m]}(f_{\theta_k}))$ of $f_{\theta_k}$ converges to $\hat{\Zbold}^{[m]} \cup \{\infty\}$ as $k \to \infty$ in the Hausdorff topology on compact connected subsets of $\RS$.
        \item For every $m \in \Z$, $\hat{\Zbold}^{[m-1]}$ is contained in $\Dom(\Fbold^{[m]})$ and $\Fbold^{[m]}$ is injective on $\hat{\Zbold}^{[m-1]}$.
    \end{enumerate}
\end{proposition}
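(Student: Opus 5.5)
The plan is to define $\hat{\Zbold}^{[m]}$ as Hausdorff limits of rescaled pseudo-Siegel disks, and then read off every property from the tower picture via the gluing maps $\pphi_n$ of Proposition~\ref{prop:trans-sectors}.

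\emph{Definition.} Fix a generating sequence $(\theta_k,n_k,u_k)$ as in Proposition~\ref{prop:trans-sectors}. For each $m\in\Z$, the rescaled sets $u_k(\hat{Z}^{[n_k+m]}(f_{\theta_k}))\cup\{\infty\}$ are compact connected subsets of $\RS$. After a diagonal subsequence they converge in the Hausdorff topology, and we let $\hat{\Zbold}^{[m]}$ be the limit with $\infty$ removed. This gives (5) by construction.

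\emph{Property (1).} By Lemma~\ref{lem:combinatorial-convergence}, $\abar_{k,n_k+m+1}$ is eventually equal to $\abar_{m+1}$ when this is finite, and tends to $\infty$ otherwise. Hence the threshold dichotomy at level $n_k+m$ eventually matches that at level $m$. Nestedness, and equality in the bounded case, pass to the limit. Strictness in the NP case follows because the top regularized fjord of $f_{m}$ is non-empty.

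\emph{Property (2).} This is the key identification. The rescaled sectors $\Sbold_k^n$ converge to $\Sbold^n$ in the Carath\'eodory sense. The gluing maps $\pphi_{k,n}$ converge to $\pphi_n$, and they carry $u_k(\hat{Z}^{[n_k+n+m]})\cap\Sbold^n_k$ onto the pseudo-Siegel disks $\hat{Z}^{[m]}$ of $g_{k,n}$ (Theorem~\ref{thm:sectorial-bounds}(4)). By Theorem~\ref{thm:renorm-limits}(4a), the latter converge to $\hat{Z}_n^{[m]}$. Passing to the limit gives $\pphi_n(\hat{\Zbold}^{[m+n]}\cap\Sbold^n)=\hat{Z}^{[m]}_n\setminus\gamma_n$.

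To make this rigorous, one needs that Hausdorff convergence commutes with restriction to the sectors. I would handle this using the uniform quasiconformality of the $\hat{Z}$'s and the uniform geometry of the sector boundaries. Concretely, the sides of the sectors are internal rays together with arcs $\Gamma_n$ of comparable size, so no mass escapes through the sector boundary.

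\emph{Property (6).} By Theorem~\ref{thm:renorm-limits}(5a), $f_n^{[1]}$ is injective on $\hat{Z}^{[0]}_n$. Combining (2) with Proposition~\ref{prop:trans-sectors}(6), $\Fbold^{[m]}$ is injective on $\hat{\Zbold}^{[m-1]}\cap\Sbold^{m-1}$. The sets $\Sbold^n$ exhaust $\C$ as $n\to-\infty$ (Proposition~\ref{prop:trans-sectors}(1)). Using the first-return structure (Lemma~\ref{lem:triangulation-first-return}), I would spread this injectivity over the whole of $\hat{\Zbold}^{[m-1]}\cap\Sbold^{n}$ for every $n<m$, which gives injectivity on all of $\hat{\Zbold}^{[m-1]}$. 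Alternatively, injectivity follows directly as a limit of injective polynomial iterates on uniform quasidisks, using Hurwitz's theorem. Containment in $\Dom(\Fbold^{[m]})$ follows from Theorem~\ref{thm:renorm-limits}(5a) transported by $\pphi_n$.

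\emph{Properties (3) and (4).} By Theorem~\ref{thm:renorm-limits}(4b), the disks $u_k(\hat{Z}^{[n_k+m]}(f_{\theta_k}))$ have complements in $\RS$ that are connected and simply connected. Since $\hat{Z}_n^{[-1]}$ is a $K$-quasidisk with $K$ $\threshold$-uniform, for fixed $M\le m$ the sets $\hat{\Zbold}^{[M]}$ are $K$-quasidisks through $\infty$. These properties survive the Hausdorff limit; unboundedness of components holds since each component of $\hat{\Zbold}^{[m]}$ is attached to $\Hbold$, which contains $I_0$.

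For (4), the increasing union $\bigcup_M\hat{\Zbold}^{[M]}$ is a nested union of uniform $K$-quasi-half-planes normalized at $0$ and $C_{Q_{[-1]}}=\varepsilon_0$. A compactness argument on normalized $K$-qc maps of the closed upper half plane then yields a $K$-qc map onto $\hat{\Zbold}$.

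\emph{Main obstacle.} I expect the main difficulty to be (2) together with (4). One has to show that the rescaled pseudo-Siegel disks at all shallower levels converge without degenerating, and that their union is a uniform quasi-half-plane rather than something pinched at $\infty$. The conformal radius bounds in Proposition~\ref{prop:trans-sectors}(1) should supply the needed uniform control.
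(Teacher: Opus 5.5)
\textbf{The gap is in (3) and (4).} Both rest on your claim that each $\hat{\Zbold}^{[M]}$ is a uniform $K$-quasidisk through $\infty$, and this claim is false. Theorem~\ref{thm:renorm-limits}(4b) gives uniform quasidisks only at levels $-1$ and $0$. By (2), $\hat{\Zbold}^{[M]}\cap\Sbold^n$ corresponds to the pinched disk $\hat Z_n^{[M-n]}$. By Lemma~\ref{lem:fjord-pseudo-siegel-connection}, $\hat{\Zbold}^{[M]}=\Hbold\cup\bigcup_{j>M}\Psi^{-1}(\Fjord^{[j]})$ contains no fjord of level $\le M$. So at large scales it is pinched at each of the NP levels below $M$; there may be infinitely many of them, possibly with $\abar_j\to\infty$, and then there is no uniform quasiconformal control. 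Moreover, $\Hbold$ has empty interior unless the combinatorics is eventually Brjuno, and in general it is only a bouquet of rays at $\infty$ (Proposition~\ref{prop:trans-mother}(6)). This is why (3) is phrased in terms of connected components. Only the full union $\hat{\Zbold}$ is a quasi-half-plane, so your compactness argument for (4) starts from a false premise.

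Two further problems affect (3). First, connectedness and simple connectivity of complements do not pass to Hausdorff limits: a sequence of arcs can converge to a circle. Second, your unboundedness argument uses $\Hbold$. But $\Hbold$ is only constructed later from this very proposition, and via $I_0$ it controls just one component.

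\textbf{How to repair (3).} Derive it from the sector description (2), which your limit argument does establish.
\begin{itemize}
\item $\C\setminus\hat{\Zbold}^{[m]}$ is the increasing union over $n\le m+1$ of the open topological disks $\Sbold^n\setminus\hat{\Zbold}^{[m]}=\pphi_n^{-1}(\D\setminus(\hat Z_n^{[m-n]}\cup\gamma_n))$. Hence it is connected and simply connected.
\item Every component of $\hat Z_n^{[m-n]}\setminus\gamma_n$ has $0$ on its boundary, and $\pphi_n$ sends the vertex $\infty$ of $\Sbold^n$ to $0$. Hence every component of $\hat{\Zbold}^{[m]}$ is unbounded.
\item The complement of $\hat{\Zbold}^{[m]}$ is unbounded because it contains $\C\setminus\hat{\Zbold}$ once (4) is known.
\end{itemize}

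\textbf{How to repair (4).} Work at the top level. The arc $\partial\hat{\Zbold}\cap\Sbold^m$ is the limit of rescaled subarcs of the $K(\threshold)$-quasicircles $\partial\hat Z^{[-1]}(f_{\theta_k})$. These pieces assemble into a quasi-line because their endpoints escape exponentially (Lemma~\ref{lem:scaling-critical-value}, Proposition~\ref{prop:trans-sectors}(1)). Equivalently, $\hat{\Zbold}$ is the limit of the genuine quasidisks $u_k(\hat Z^{[-1]}(f_{\theta_k}))$, not of the pinched ones.

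\textbf{The rest of your plan} is fine. It reverses the paper's order: the paper defines $\hat{\Zbold}^{[m]}:=\bigcup_{n\le m+1}\pphi_n^{-1}(\hat Z_n^{[m-n]}\setminus\gamma_n)$, so (1) and (2) are immediate and (5) is what needs proof. Three corrections are needed.
\begin{itemize}
\item The restriction-to-sectors step in (2) follows pointwise from kernel convergence of $(\Sbold^n_k,0)$ and local uniform convergence of $\pphi_{k,n}^{\pm1}$. It must not invoke quasiconformality of the pinched levels.
\item It is (2), not the construction, that makes all subsequential limits agree. That is what gives (5) along the full generating sequence.
\item The Hurwitz alternative in (6) fails. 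The critical points of the pre-renormalizations lie on $\partial\hat Z$, and Hurwitz controls only interior points. Your route through $\pphi_n$ is the correct one and is the paper's: take $n\ll m$ so the relevant points lie in one sector, and use injectivity of $f_n^{[m-n]}$ on $\hat Z_n^{[m-n-1]}$. No first-return spreading is needed.
\end{itemize}
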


See Figure \ref{fig:transfer-to-cascade} for an illustration of $\hat{\Zbold}$.

\begin{definition}
    The set $\hat{\Zbold}^{[m]} = \hat{\Zbold}^{[m]}(\threshold)$ above is called the \emph{level $m$ pseudo-Siegel pinched half-plane} of $\Fbold$ with \emph{combinatorial threshold} $\threshold \in \N$.
    The union $\hat{\Zbold}$ is called the \emph{top pseudo-Siegel half-plane} of $\Fbold$.
\end{definition}

\begin{proof}  
    The construction of $\hat{\Zbold}^{[m]}$, $m \in \Z$ is as follows.
    For every $n \in \Z$ with $n \leq m+1$, we have
    \[
    \psi_{n-1}(\hat{Z}_{n-1}^{[m-n+1]} \cap S_{n-1}^1) = \hat{Z}_{n}^{[m-n]} \backslash \gamma_n.
    \]
    This implies
    \[
        \pphi^{-1}_n \big( \hat{Z}_n^{[m-n]} \backslash \gamma_n \big) = 
        \pphi^{-1}_{n-1} \big( \hat{Z}_n^{[m-n+1]} \backslash \gamma_{n-1} \big) \cap \Sbold^n,
    \]
    and as a consequence, 
    \[
    \pphi_{m+1}^{-1} \big( \hat{Z}_{m+1}^{[-1]} \backslash \gamma_{m+1} \big) \, \subset \, 
    \pphi_{m}^{-1} \big( \hat{Z}_{m}^{[0]} \backslash \gamma_{m} \big) \, \subset \, 
    \pphi_{m-1}^{-1} \big( \hat{Z}_{m-1}^{[1]} \backslash \gamma_{m-1} \big) \, \subset \, \ldots.
    \]
    We set $\hat{\Zbold}^{[m]}$ to be the union of these sets:
    \[
        \hat{\Zbold}^{[m]} := \bigcup_{n \leq m+1} \pphi^{-1}_n \big( \hat{Z}_n^{[m-n]} \backslash \gamma_n \big).
    \]
    Properties (1) and (2) are now immediate.

    Let us prove property (3).
    Recall that each $\pphi_n$ sends the vertex of $\Sbold^n$ at infinity to the fixed point $0$ of $f_n$.
    Every connected component of $\hat{\Zbold}^{[m]}$ is unbounded because whenever $n \leq m+1$, every connected component of $\hat{Z}_n^{[m-n]} \backslash \gamma_n$ contains $0$ on its boundary.
    Moreover, observe that $\C \backslash \hat{\Zbold}^{[m]}$ is equal to
    \[
        \C \backslash \hat{\Zbold}^{[m]} = \bigcup_{n \leq m+1}  \pphi^{-1}_n \left( \D \backslash ( \hat{Z}_n^{[m-n]} \cup \gamma_n) \right)
    \]
    and each of the domains $\D \backslash ( \hat{Z}_n^{[m-n]} \cup \gamma_n)$ is an open topological disk.
    It remains to show that $\C \backslash \hat{\Zbold}$ is unbounded.
    To do this, we will construct an unbounded topological ray $\mathfrak{R}$ in $\C \backslash \hat{\Zbold}^{[m]}$ that is a concatenation of bounded arcs $\mathfrak{R}_0$, $\mathfrak{R}_{-1}$, $\mathfrak{R}_{-2}$, $\ldots$.
    For $n \leq 1$, pick a point $w_n$ on $\partial \D$ disjoint from $\gamma_n$ and let $w'_{n-1} = \psi_{n-1}^{-1}(w_n)$.
    For $n \leq 0$, let $\mathfrak{R}_n$ to be a proper arc in $\Sbold^{n} \backslash \big( \Sbold^{n+1} \cup \hat{\Zbold} \big)$ such that $\pphi_n(\mathfrak{R}_n)$ starts from $w'_{n}$ and ends at $w_n$.
    By Proposition \ref{prop:trans-sectors} (1), $\pphi_n^{-1}(w_n) \to \infty$ as $n \to - \infty$, so then the concatenation $\mathfrak{R}$ is indeed an unbounded ray avoiding $\hat{\Zbold}$.

    Let $(\theta_k, n_k, u_k)$ be a generating sequence for $\Fbold$.
    For $m \in \Z$, $j \geq -1$, and sufficiently high $k$, denote
    \[
        A^{[j]}_{k,m} = u_k \left( \partial \hat{Z}^{[j]}(f_{\theta_k}) \cap S^{n_k+m}(f_{\theta_k}) \right).
    \]
    Observe that $A^{[j]}_{k,m} = A^{[-1]}_{k,m}$ whenever $j \leq n_k + m -1$. 
    There exists an $\threshold$-uniform $K > 1$ such that $A^{[-1]}_{k,m}$ is a $K$-quasiarc.
    Let us use the notation $g_{k,m} = \Rsec^{[n_k+m]}f_{\theta_k}$ and $\pphi_{k,m}$ from the proof of Proposition \ref{prop:trans-sectors}.
    Then, $\pphi_{k,m}^{-1}(A^{[n_k + m + M]}_{k,m})$ is equal to the boundary of the pseudo-Siegel pinched disk $\hat{Z}^{[M]}(g_{k,m})$ of $g_{k,m}$.
    As $k \to \infty$, $\hat{Z}^{[M]}(g_{k,m})$ converges to the level $M$ pseudo-Siegel pinched disk $\hat{Z}^{[M]}_m$ of $f_m$.
    Hence, as $k \to \infty$, the quasiarc $A^{[-1]}_{k,m}$ indeed converges to the $K$-quasiarc $A_m = \partial \hat{\Zbold} \cap \Sbold^m$.

    By Lemma \ref{lem:scaling-critical-value} and Proposition \ref{prop:trans-sectors} (1), the distance between $\{0\}$ and the endpoints of $A_m$ grows uniformly exponentially as $m \to -\infty$.
    Therefore, the union $\cup_m A_m$ is $K$-quasiconformally equivalent to a straight line and it is equal to the boundary of $\hat{\Zbold}$ in $\C$.
    This implies property (4).

    Let $(\theta_k, n_k, u_k)$ be a generating sequence for $\Fbold$.
    For every $j, m \in \Z$ and every sufficiently high $k$, the arc $A^{[n_k+m]}_{k,j}$ converges to $\partial \hat{\Zbold}^{[m]} \cap \Sbold^{m-j}$ as $k \to \infty$.
    As we take $j \to \infty$, we observe that as $k \to \infty$, the image under $u_k$ of $\partial \hat{Z}^{[n_k+m]}(f_{\theta_k})$ converges to $\partial \hat{\Zbold}^{[m]} \cup \{\infty\}$ in the Hausdorff metric.
    This implies property (5).

    Lastly, property (6) follows from property (2), Proposition \ref{prop:trans-sectors} (6), and the fact that for all $n \in \Z$ and $m \geq -1$, the $m$\textsuperscript{th} pre-renormalization $f_n^{[m]}$ of $f_n$ is well-defined and injective on $\hat{Z}_n^{[m-1]}$.
\end{proof}

\subsection{The Mother Hedgehog}
\label{ss:MotherHedg:halfplane}

\begin{proposition}
\label{prop:trans-mother}
    There exists a unique closed subset $\Hbold = \Hbold(\Fbold)$ of $\C$ called the \emph{Mother Hedgehog} of $\Fbold$ with the following properties.
    \begin{enumerate}
        \item For every $n\in\Z$, the image of $\Hbold \cap \Sbold^n$ under $\pphi_n$ is the Mother Hedgehog $H_n$ of $f_n$ minus the slit $\gamma_n$.
        \item For any sufficiently high $\threshold \in \N$, $\Hbold$ is equal to the intersection of $\hat{\Zbold}^{[m]}(\threshold)$ across all $m \in \Z$. 
        \item Every connected component of $\Hbold$ is unbounded and the complement $\C \backslash \Hbold$ is connected, simply connected, and unbounded.
        \item Let $(\theta_k, n_k, u_k)$ be a generating sequence for $\Fbold$. 
        As $k \to \infty$, the rescaled Mother Hedgehog $u_k (H(f_{\theta_k}))$ converges to $\Hbold \cup \{\infty\}$ in the Hausdorff topology on compact connected subsets of $\RS$.
        \item $\Hbold$ is disjoint from $\Ibold^{<\infty}$ and for all $P \in \mathbf{T}$, $\Fbold^{P} : \Hbold \to \Hbold$ is a homeomorphism. 
        \item $\Hbold \cup \{\infty\}$ is a bouquet of arcs centered at $\infty$. More precisely, $\Hbold$ is a disjoint union of rays
        \[
            \Hbold = \bigcup_{\pphi \in \mathcal{Q}} \hair_\phi,
        \]
        where each $\hair_\phi$, called an \emph{internal ray} of $\Hbold$, is mapped by $\Fbold^P$ for any $P \in \Tbold$ onto some other internal ray $\hair_{\phi'}$.
        \item $0$ is a non-dividing point on the boundary of $\Hbold$ and it is accessible from the complement of $\Hbold$. Moreover, the ray $I_0$ from Proposition \ref{prop:trans-sectors} (5) is an internal ray.
        \item The boundary of $\Hbold$ is the postcritical set:
        \[
            \partial \Hbold = \overline{ \{ C_P \}_{P \in \Tbold} }.
        \]
    \end{enumerate}
\end{proposition}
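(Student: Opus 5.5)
I would construct $\Hbold$ exactly as $\hat{\Zbold}^{[m]}$ was constructed in Proposition~\ref{prop:trans-pseudo-siegel}. Set
\[
\Hbold := \bigcup_{n\in\Z} \pphi_n^{-1}\big(H_n\backslash\gamma_n\big).
\]
The union is consistent because $\psi_{n-1}(H_{n-1}\cap S^1_{n-1}) = H_n\backslash\gamma_n$ by Theorem~\ref{thm:renorm-limits}~(6)--(7). Together with $\psi_{n-1}\circ\pphi_{n-1}=\pphi_n$ (Proposition~\ref{prop:trans-sectors}~(3)), this gives $\pphi_n^{-1}(H_n\backslash\gamma_n)=\pphi_{n-1}^{-1}(H_{n-1}\backslash\gamma_{n-1})\cap\Sbold^n$, and item (1) follows. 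For item (2), note that $H_n=\bigcap_{m}\hat Z_n^{[m]}$. Intersecting the level-$m$ identities of Proposition~\ref{prop:trans-pseudo-siegel}~(2) over $m$ inside each sector $\Sbold^n$, and using $\bigcup_n\Sbold^n=\C$ from Proposition~\ref{prop:trans-sectors}~(1), gives $\Hbold=\bigcap_m\hat{\Zbold}^{[m]}$. The independence from $\threshold$ holds because $H_n$ itself does not depend on $\threshold$. Closedness follows from (2). Uniqueness is then automatic: any set satisfying (1) is determined on each $\Sbold^n$, and these sectors exhaust $\C$.

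Items (3), (6), (7) and (8) are transferred from the towers. Each $H_n$ is a star-like bouquet of internal rays at $0$ (Theorem~\ref{thm:renorm-limits}~(6d) and Theorem~\ref{thm:mother-hedgehog}~(3)). Since $\pphi_n$ sends the vertex $\infty$ of $\Sbold^n$ to $0$, each internal ray of $H_n$ other than the slit lifts to an arc of $\Hbold$ ending at $\infty$. As $n\to-\infty$ these arcs are nested extensions of one another, and their union is an unbounded ray $\hair_\phi$. This gives the bouquet structure in (6) and shows that every component of $\Hbold$ is unbounded. The complement $\C\backslash\Hbold=\bigcup_n\pphi_n^{-1}(\D\backslash(H_n\cup\gamma_n))$ is an increasing union of topological disks, so it is connected and simply connected. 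It is unbounded by the same ray construction as in Proposition~\ref{prop:trans-pseudo-siegel}~(3), since $\Hbold\subset\hat{\Zbold}$.

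For (7), $I_0=\bigcap_n\overline{\Sbold^n}$ is the lift of the internal rays of the $H_n$ landing at the critical values, so $I_0\subset\Hbold$. Accessibility of $0$ and the fact that it is non-dividing lift from Theorem~\ref{thm:mother-hedgehog}~(2) applied to $f_n$, since $\pphi_n(0)=v_0(f_n)$. For (8), Theorem~\ref{thm:renorm-limits}~(6e) gives $\partial H_n=P_n$. The orbit points $v^n_p$ lift under $\pphi_n$ to the points $C_P$, by Proposition~\ref{prop:trans-sectors}~(6) and Theorem~\ref{thm:transcendental-extension}~(2). Taking the union over $n$ gives $\partial\Hbold=\overline{\{C_P\}}$.

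Item (4) follows from Proposition~\ref{prop:trans-pseudo-siegel}~(5). Since $u_k(H(f_{\theta_k}))\subset u_k(\hat Z^{[n_k+m]}(f_{\theta_k}))$ for every $m$, every Hausdorff limit is contained in $\bigcap_m\hat{\Zbold}^{[m]}\cup\{\infty\}=\Hbold\cup\{\infty\}$. For the reverse inclusion I would use the Carath\'eodory convergence of the sectors, Proposition~\ref{prop:trans-sectors}~(2), together with the convergence of the Riemann maps of complements of Mother Hedgehogs in Theorem~\ref{thm:renorm-limits}~(6b): inside each $\Sbold^n$, the part of $H_n$ is approximated by rescaled hedgehog pieces.

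Item (5) is the step I expect to be the main obstacle. On $\Sbold^{n+1}\cap\Hbold$, the map $\Fbold^{[n+1]}$ is the lift of $f_n$ restricted to the self-homeomorphism $f_n\colon H_n\to H_n$ (Proposition~\ref{prop:trans-sectors}~(6)). The lift is injective on $\hat{\Zbold}^{[n]}\supset\Hbold$ by Proposition~\ref{prop:trans-pseudo-siegel}~(6), and on the first-return pieces it maps $\Hbold$ onto $\Hbold$. A general $P\in\Tbold$ can be written as a combination of return times $Q_{[m]}$ with $m$ very negative. I would show that $\Fbold^P$ preserves $\Hbold\cap\Sbold^m$-pieces, using the triangulation picture of Lemma~\ref{lem:triangulation-first-return} to see that every point of $\Hbold$ lies in the forward domain. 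This shows $\Hbold\subset\Dom(\Fbold^P)$, hence disjointness from $\Ibold^{<\infty}$. Surjectivity follows from invertibility of $\mathcal F_n$ on $H_n$, and continuity of the inverse follows because $\Fbold^P$ is injective on the closed set $\Hbold$ and proper on it. The parabolic (Lavaurs) case needs the most care: there $\Fbold^P$ may not be a literal iterate, and I would use the limit description in Theorem~\ref{thm:transcendental-extension}~(1)(d) together with Theorem~\ref{thm:renorm-limits}~(6c).
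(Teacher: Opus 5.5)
Your proposal follows the paper's proof essentially step for step. It uses the same definition $\Hbold=\bigcup_n\pphi_n^{-1}(H_n\backslash\gamma_n)$, the same transfer of (1)--(4) and (6)--(8) from the towers and from Proposition~\ref{prop:trans-pseudo-siegel}, and for (5) the same mechanism of passing to a very negative level $n$ and using that $\mathcal{F}_n$ acts by homeomorphisms on $H_n$.

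The one step to repair is the reduction in (5). You cannot in general rewrite $P$ through $Q_{[m]}$ with $m$ very negative: if $\abar_k=\infty$, then $Q_{[k]}$ exceeds every multiple of $Q_{[k-1]}$, so it is not in the span of the lower return times. Instead, do as the paper does:
\begin{itemize}
\item fix a generator $Q=Q_{[m+1]}-jQ_{[m]}$ and a point $z\in\Hbold$;
\item choose $n<m$ with $z\in\Sbold^{n+1}$;
\item read $Q$ as the positive time $\qq^n_{[m+1-n]}-j\qq^n_{[m-n]}\in\Time^n$ (Lavaurs times included), pull $\pphi_n(z)$ back within $H_n$, and lift.
\end{itemize}

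Two smaller points on (5). First, $\Hbold\subset\Dom(\Fbold^P)$ and injectivity of $\Fbold^P$ on $\Hbold$ already follow from (2), Proposition~\ref{prop:trans-pseudo-siegel}~(6) and the semigroup relations of Theorem~\ref{thm:transcendental-extension}~(1); the triangulation is not needed. Second, on $\Sbold^n$ it is $\Fbold^{[n]}$, not $\Fbold^{[n+1]}$, that lifts $f_n$.
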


Again, refer to Figure \ref{fig:transfer-to-cascade} for an illustration of $\Hbold$.

\begin{proof}
    For every $n\in \Z$, we have 
    \[
        \pphi_{n-1}^{-1}(H_{n-1}) \cap \Sbold^n = \pphi_n^{-1}(H_n \backslash \gamma_n).
    \]
    Let us set 
    \[
    \Hbold = \bigcup_n \pphi_n^{-1}(H_n \backslash \gamma_n).
    \]
    Property (1) is automatically satisfied.
    Property (2) follows from Proposition \ref{prop:trans-pseudo-siegel} (2) and the fact that $H_n$ is equal to $\cap_{m \geq -1} \hat{Z}^{[m]}_n$.
    Properties (3) and (4) follow from items (3) and (5) of Proposition \ref{prop:trans-pseudo-siegel}.

    By property (2) and Proposition \ref{prop:trans-pseudo-siegel} (6), we know that for all $Q \in \Tbold$, $\Fbold^Q$ is well-defined and injective on $\Hbold$.
    To prove property (5), it is sufficient to show that we always have $\Fbold^Q(\Hbold) = \Hbold$ where $Q$ is a generator of $\Tbold$, that is, 
    $Q = Q_{[m+1]} - j Q_{[m]}$ for some $m \in \Z$ and $j \geq 0$ where $0 \leq j \leq 1$ if $\abar_{m+1} < \infty$.
    Let us pick any point $z$ in $\Hbold$.
    Let $n \in \Z$ be sufficiently large negative number such that $n < m$ and that the infinite sector $\Sbold^{n+1}$ contains $z$.
    Consider the conformal gluing map $\pphi_n : \Sbold^n \to \D$; it sends $z$ to a point $\zeta$ in the Mother Hedgehog $H_n$ of $f_n$.
    Let $\zeta'$ be the unique point in $H_n$ such that $f_n^{\qq^n_{[m+1-n]}-j\qq^n_{[m-n]}}(\zeta') = \zeta$.
    Near $\zeta'$, $\pphi_n$ conjugates $\Fbold^Q$ and $f_n^{\qq^n_{[m+1-n]}-j\qq^n_{[m-n]}}$.
    Therefore, if we set $z' = \pphi^{-1}(\zeta')$, then $\Fbold^Q(z') = z$.
    This implies that $\Fbold^Q(\Hbold) = \Hbold$.
    
    Lastly, the remaining properties (6)--(8) follow from analogous properties for each $H_n$ as described in Theorem \ref{thm:mother-hedgehog} (2)--(4) and Theorem \ref{thm:renorm-limits} (6).
\end{proof}

Proposition \ref{prop:trans-mother} and Theorem \ref{thm:zero-area} automatically imply:

\begin{proposition}
\label{prop:zero-area-trans}
    The boundary of $\Hbold$ has zero Lebesgue measure.
\end{proposition}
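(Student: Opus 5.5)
The plan is to localize the statement to individual sectors. On each sector, Proposition~\ref{prop:trans-mother}~(1) relates $\partial\Hbold$ to the postcritical set of a single map $f_n$, and Theorem~\ref{thm:zero-area} says that set has zero area. Since conformal maps preserve sets of zero area, it then suffices to exhaust $\C$ by countably many sectors.

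First, Proposition~\ref{prop:trans-sectors}~(1) gives $\C=\bigcup_n \Sbold^n$. Because the sectors are nested and decrease as $n$ increases, this is a countable increasing union over $n\to-\infty$. It therefore suffices to show that $\partial\Hbold\cap\Sbold^n$ has zero Lebesgue measure for each fixed $n\in\Z$.

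Second, I fix $n$. The gluing map $\pphi_n\colon \Sbold^n\to\D\setminus\gamma_n$ is univalent. By Proposition~\ref{prop:trans-mother}~(1) it sends $\Hbold\cap\Sbold^n$ onto $H_n\setminus\gamma_n$. Since $\Sbold^n$ is open and $\pphi_n$ is a homeomorphism onto the open set $\D\setminus\gamma_n$, it carries relative boundaries to relative boundaries. Hence
\[
\pphi_n\big(\partial\Hbold\cap\Sbold^n\big)\subset \partial H_n .
\]
Theorem~\ref{thm:renorm-limits}~(6)(e) identifies $\partial H_n$ with the postcritical set $P_n=P_0(\fbold_n)$ of the forward tower $\fbold_n=\seq{f_{n+k}}_{k\ge0}\in\overline{\towsec}$. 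Theorem~\ref{thm:zero-area}, applied to $\fbold_n$, then gives $\operatorname{area}(P_n)=0$.

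Third, I pull this back. The inverse $\pphi_n^{-1}$ is conformal on $\D\setminus\gamma_n$, hence locally Lipschitz. Locally Lipschitz maps send null sets to null sets; to see this, cover the domain by countably many compact subsets on which the derivative is bounded. So $\partial\Hbold\cap\Sbold^n\subset\pphi_n^{-1}(P_n\setminus\gamma_n)$ has zero area, and the countable union over $n$ finishes the proof.

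The only step that needs care is the boundary bookkeeping in the second step. One point is that boundary points of $\Hbold$ lying on the sides of $\Sbold^n$ are not in the open sector. This does not matter, because each such point lies in the interior of $\Sbold^{n'}$ for some smaller $n'$, thanks to the exhaustion. The other point is the slit $\gamma_n$: it is excluded from the range of $\pphi_n$, so it plays no role in the measure estimate.
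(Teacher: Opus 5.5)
Your proof is correct and follows the same route as the paper, which simply asserts that Proposition~\ref{prop:trans-mother} and Theorem~\ref{thm:zero-area} together imply the result. You supply the details that the paper leaves implicit:
\begin{itemize}
\item the exhaustion $\C=\bigcup_n\Sbold^n$;
\item the transport of relative boundaries under the univalent map $\pphi_n$;
\item the identification $\partial H_n=P_n$;
\item the invariance of null sets under conformal maps.
\end{itemize}
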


This property will be used later in Section \ref{sec:hairiness-NILF}.

\begin{definition}
    We define the \emph{Siegel set} $\Zbold = \Zbold(\Fbold)$ of $\Fbold$ to be the interior of the Mother Hedgehog $\Hbold(\Fbold)$. 
\end{definition}

Consider the combinatorics $\tttheta \in \TheBiCpt$ of $\Fbold$, and consider the rotation number map $\bar{\mu} : \TheCpt \to \T$ from Proposition \ref{prop:rotation-number-map}.

\begin{definition}
\label{defn:brjuno}
    We say that $\Fbold$ is (\emph{eventually}) \emph{Brjuno} if for every $N \in \Z$, the corresponding combinatorics $\tttheta$ of $\Fbold$ has the property that $\bar{\mu}(\shift^N\tttheta)$ is (eventually) Brjuno.
\end{definition}

Proposition \ref{prop:trans-mother} and Theorem \ref{thm:eventually-brjuno} imply:

\begin{proposition}[The Brjuno condition]
\label{prop:eventually-brjuno}
\leavevmode
    \begin{enumerate}
        \item The Siegel set $\Zbold$ is non-empty if and only if $\Fbold$ is eventually Brjuno.
        \item Every connected component of $\Zbold$ is an unbounded topological disk with a single access to infinity.
        \item If $\Fbold$ is eventually Brjuno, then for all sufficiently high $N \in \Z$, the image of $\Zbold \cap \Sbold^N$ under $\boldsymbol{\phi}_N$ is the Siegel disk of $f_N$.
        \item If $\Fbold$ is Brjuno, then $\Zbold$ is connected and $\Fbold: \Zbold \to \Zbold$ is analytically conjugate to a cascade of translations with combinatorics $\tttheta$ acting on the lower half-plane (cf. Proposition \ref{thm:topological-cascade}).
        \item If $\Fbold$ is eventually Brjuno but not Brjuno, then
        \begin{itemize}
            \item $\Zbold$ has infinitely many connected components;
            \item if $N \in \Z$ denotes the smallest index such that $\bar{\mu}(\shift^N\tttheta)$ is an irrational, then every connected component of $\Zbold$ is invariant under $\Fbold^{Q_{[n]}}$ if and only $n \geq N$.
        \end{itemize}
    \end{enumerate}
\end{proposition}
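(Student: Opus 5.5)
The plan is to transfer Theorem~\ref{thm:eventually-brjuno} from the towers $\fbold_n=\seq{f_{n+k}}_{k\geq0}$ to the cascade plane, using the gluing maps $\pphi_n$ of Proposition~\ref{prop:trans-sectors}. Proposition~\ref{prop:trans-mother}~(1) says that $\Zbold\cap\Sbold^n$ is sent by $\pphi_n$ onto $\textnormal{int}H_n\setminus\gamma_n$. Since $\bigcup_n\Sbold^n=\C$ and the sectors are nested, $\Zbold\neq\emptyset$ holds if and only if $\textnormal{int}H_n\neq\emptyset$ for some (equivalently, all sufficiently negative) $n$. Here I read the Brjuno conditions in Definition~\ref{defn:brjuno} as conditions on $\textnormal{proj}\,\shift^N\tttheta$. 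If some $\textnormal{proj}\,\shift^N\tttheta$ is eventually Brjuno, then so is every other shift, because eventual Brjuno is a tail property. Item~(1) therefore follows directly from Theorem~\ref{thm:eventually-brjuno}~(2),(3) applied to $\fbold_n$. Item~(3) is Theorem~\ref{thm:eventually-brjuno}~(2) applied to $\fbold_n$ for $n\ll0$: for large $N$ the sector $S_n^{N-n}$ is glued onto the plane of $f_N$, where the interior becomes the Siegel disk of $f_N$. This is then transported by the identity $\pphi_N=\psi_{N-1}\circ\cdots\circ\psi_n\circ\pphi_n$ on $\Sbold^N$ from Proposition~\ref{prop:trans-sectors}~(3).

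For item~(4), assume the cascade is Brjuno. Then each $f_n$ has a Siegel disk $Z_n=\textnormal{int}H_n$ containing $0$. Through the linearization $\eta_n:(Z_n,0)\to(\D,0)$, the slit sector $Z_n\cap S_n^1$ is a genuine round sector in linear coordinates, and $\psi_n$ becomes the power map $z\mapsto z^{1/|\theta_n|}$ of \S\ref{ss:sect.renorm}. I take logarithms: $\log\eta_n\circ\pphi_n$ maps $\Zbold\cap\Sbold^n$ onto a vertical half-strip in the left half-plane, and the gluing becomes multiplication by $1/|\theta_n|$ up to sign conventions. Rescaling by the products $\prod|\theta_j|$ makes these half-strips compatible, and their union exhausts a half-plane. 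In this coordinate each $\Fbold^{Q_{[n]}}$ acts as a translation by $\tilde\ell_n$ as in \S\ref{sss:irrat.cascade}. Hence $\Zbold$ is connected and is analytically conjugate to the cascade of translations on a half-plane, which I rotate to the lower half-plane.

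For item~(2), in the Brjuno case connectedness and a single access to $\infty$ follow from (4). For a general component I would pass to the level $N$ from (3), where the component meets $\Sbold^N$ in the lift of the Siegel disk of $f_N$; pulling back along $\Fbold^P$ as in (5) gives topological disks whose only access to $\infty$ is the vertex of the sector. For item~(5), let $N$ be the smallest index with $\bar\mu(\shift^N\tttheta)$ irrational, so that $\abar_N=\infty$ and $f_{N-1}$ is parabolic. The component $\Zbold_0$ glued from Siegel disks at levels $\ge N$ is invariant under $\Fbold^{Q_{[n]}}$ for $n\ge N$, since these act as translations of its linear model. Under $\Fbold^{[N-1]}$, which is the parabolic return at that level, this component is moved off itself. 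The reason is that the Siegel disk of $f_N$ is seen in $f_{N-1}$ through a Lavaurs map (Theorem~\ref{thm:renorm-limits}~(5b)), and its $\Fbold^{[N-1]}$-translates are the distinct copies along the parabolic orbit. Moreover $\Fbold^{Q_{[n]}}$ for $n<N$ is a combination involving $Q_{[N-1]}$ with nonzero coefficient. This yields infinitely many components $\Fbold^{kQ_{[N-1]}}(\Zbold_0)$, and invariance exactly for $n\ge N$.

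I expect the main obstacle to be item~(5), and specifically showing that the translates $\Fbold^{kQ_{[N-1]}}(\Zbold_0)$ are pairwise disjoint and that no further components exist. I would first try to establish this in the plane of $f_{N-1}$. Each component of $\textnormal{int}H_{N-1}$ is an iterated preimage, along the Lavaurs/enrichment maps, of the Siegel disk of $f_N$, and distinct $f_{N-1}$-iterates of it are separated by the parabolic petals attached at $0$. I would then lift this picture via $\pphi_{N-1}$.
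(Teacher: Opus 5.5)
Your treatment of (1)--(4) follows the paper's route. The paper only deduces the proposition from Proposition~\ref{prop:trans-mother}~(1) and Theorem~\ref{thm:eventually-brjuno} via the gluing maps $\pphi_n$, and those items go through as you describe. Item (5) is different: the step ``under $\Fbold^{Q_{[N-1]}}$ this component is moved off itself'' is false.

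Since $f_N$ is Brjuno, $Z_N=\textnormal{int}\,H_N$ is a disk containing $0$, and the slit $\gamma_N$ starts at $0$. So $Z_N\setminus\gamma_N$ is a connected slit disk, and there are points $z$ on the side $\boldsymbol{A}^N$ of $\Sbold^N$ with $\pphi_N(z)\in Z_N\cap\gamma_N$. By Proposition~\ref{prop:trans-sectors}~(4), $\pphi_N$ identifies $z$ with $\Fbold^{Q_{[N-1]}}(z)$. Hence a small disk around $z$ lies in $\Zbold$. Its inner half lies in $\pphi_N^{-1}(Z_N\setminus\gamma_N)$, and its outer half is mapped by $\Fbold^{Q_{[N-1]}}$ into $\pphi_N^{-1}(Z_N\setminus\gamma_N)$. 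Therefore $\Fbold^{Q_{[N-1]}}(\Zbold_0)\cap\Zbold_0\neq\emptyset$, and $\Zbold_0$ is $\Fbold^{Q_{[N-1]}}$-invariant. In the plane of $f_{N-1}$ this says that the Siegel disk of $f_N$ contains the end of the \'Ecalle cylinder corresponding to the vertex $0$. Its lift is therefore one connected $f_{N-1}$-invariant domain abutting the parabolic point. The ``copies separated by parabolic petals'' you plan to establish do not exist, and all the $\Fbold^{kQ_{[N-1]}}(\Zbold_0)$ coincide.

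Your claims are also mutually inconsistent, and the inconsistency shows what is actually true. The $\Fbold^P$ are homeomorphisms of $\Hbold$ preserving $\partial\Hbold=\overline{\{C_P\}}$ (Proposition~\ref{prop:trans-mother}~(5),(8)). So they permute the components of $\Zbold$, and the stabilizer of a component is a subgroup of $\Kbold$. Since $\abar_{N+1}<\infty$, we have $Q_{[N-1]}=\varepsilon_N\varepsilon_{N+1}\bigl(b_{N+1}Q_{[N]}-Q_{[N+1]}\bigr)$. Hence invariance for $n=N,N+1$, which you correctly assert, forces invariance for $n=N-1$. Your remark that $Q_{[n]}$ for $n<N$ ``involves $Q_{[N-1]}$ with nonzero coefficient'' cannot separate these cases, because $Q_{[N+1]}$ involves it too. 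The only relation that breaks is the one at index $N$, where $\abar_N=\infty$. That dropped relation decouples $Q_{[N-2]}$ from $\langle Q_{[N-1]},Q_{[N]}\rangle$.

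Consequently the second bullet of (5) cannot be proved as indexed: the true statement is invariance if and only if $n\geq N-1$. The first generator that moves components is $Q_{[N-2]}$. For instance, when $\abar_{N-1}<\infty$, $f_{N-2}$ has rotation number $\varepsilon_{N-1}/b_{N-1}$ with $b_{N-1}\geq 2$, and it cyclically permutes the components of $\textnormal{int}\,H_{N-2}$ at $0$. Likewise, the infinitely many components in the first bullet must come from the $Q_{[N-2]}$-direction, e.g.\ $\Fbold^{kQ_{[N-2]}}(\Zbold_0)$, not from $\Fbold^{kQ_{[N-1]}}$-translates.
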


In the language of transcendental dynamics, every connected component of $\Zbold$ is a simply parabolic Baker domain.

\subsection{External coordinates}
\label{ss:trans-external-coordinates}

Consider the unique Riemann mapping
\[
    \Psi: \C \backslash \Hbold \to \UHP
\]
fixing $0$, $\varepsilon_0$, and $\infty$. For $P \in \Tbold$, define 
\[
    \Fext^{P} := \Psi \circ \Fbold^{P} \circ \Psi^{-1} : \Psi( \Fbold^{-P}(\C \backslash \Hbold) ) \to \UHP,
\]
This defines the semigroup 
\[
\Fext := (\Fext^P)_{P \in \Tbold}
\]
called the \emph{external cascade} associated to $\Fbold$.
Every connected component of the domain of $\Fext^P$ is called a \emph{lake} of \emph{generation} $P$.
We also denote the time $P$ escaping set in external coordinates by
\[
    \Iext^{\leq P} := \Psi \left( \Ibold^{\leq P} \right),
\]
which is a subset of the complement of the domain of $\Fext^P$.

\begin{lemma}
    Let $\lake$ be a lake of some generation $P \in \Tbold$.
    Then,
    \begin{enumerate}
        \item $\lake$ is a topological disk;
        \item $\Fext^P : \lake \to \C$ is a conformal isomorphism;
        \item the boundary of $\lake$ intersects $\Iext^{\leq P} \cup \{\infty\}$. 
    \end{enumerate}
\end{lemma}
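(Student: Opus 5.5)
Work in the original dynamical plane and transfer to external coordinates via $\Psi$. A lake $\lake$ of generation $P$ is a connected component of $\Psi(\Fbold^{-P}(\C\backslash\Hbold))$. Equivalently, $\Psi^{-1}(\lake)$ is a connected component $W$ of $(\Fbold^P)^{-1}(\C\backslash\Hbold)$ contained in $\C\backslash\Hbold$.

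\emph{Step 1 (critical points).} By Lemma \ref{lem:crit-points}(1), every critical point $w$ of $\Fbold^P$ satisfies $\Fbold^{R_2}(w)=C_{-R_1}\in\Hbold$. By Lemma \ref{lem:crit-points}(2), every critical value of $\Fbold^P$ is some $C_Q$ with $0\le Q<P$, hence lies in $\Hbold$. So $\Fbold^P$ has no critical values in the open set $\C\backslash\Hbold$. This set is simply connected by Proposition \ref{prop:trans-mother}(3).

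\emph{Step 2 (covering and injectivity).} Since $\Fbold^P$ is $\sigma$-proper (Theorem \ref{thm:transcendental-extension}(1)), its restriction $W\to\C\backslash\Hbold$ is a proper-on-exhaustions holomorphic map without critical points. I will argue that it is a covering map. Exhaust $\C\backslash\Hbold$ by compact disks $D_j$. Each component of the preimage of $D_j$ inside $W$ is compact, so $\Fbold^P$ is a proper local homeomorphism onto $D_j$, hence a finite covering. Because $D_j$ is simply connected, this covering is a homeomorphism. The preimage components of the nested $D_j$ containing a fixed point of $W$ increase and exhaust $W$. Therefore $\Fbold^P\colon W\to\C\backslash\Hbold$ is a conformal isomorphism, and $W$ is a topological disk. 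Conjugating by $\Psi$ gives (1) and (2). Here the target in (2) should be read as the upper half-plane $\UHP$, the image of $\C\backslash\Hbold$ under $\Psi$.

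\emph{Step 3 (boundary meets the escaping set or infinity).} Suppose instead that $\partial\lake\subset\UHP\cup\R$ avoids $\Iext^{\le P}\cup\{\infty\}$. Then $W$ is bounded, and its boundary in $\C$ lies in $\Dom(\Fbold^P)$. Boundary points of $W$ are limits of points of $W$, so by continuity $\Fbold^P(\partial W)\subset\overline{\C\backslash\Hbold}$. They cannot map into $\C\backslash\Hbold$, since then a neighborhood would map into $\C\backslash\Hbold$, contradicting maximality of the component $W$. Hence $\Fbold^P(\partial W)\subset\partial\Hbold$, which is bounded on compact sets in the image.

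The inverse branch $(\Fbold^P|_W)^{-1}$ is a conformal isomorphism from the unbounded domain $\C\backslash\Hbold$ onto the bounded set $W$. Take $z_n\in W$ with $\Fbold^P(z_n)\to\infty$. Then $z_n$ accumulates at some $z\in\overline W$ that is compact in $\Dom(\Fbold^P)$, and continuity of $\Fbold^P$ at $z$ contradicts $\Fbold^P(z_n)\to\infty$. Therefore $\partial W$ must meet $\Ibold^{\le P}$ or $W$ is unbounded; that is, $\partial\lake$ meets $\Iext^{\le P}\cup\{\infty\}$.

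The main obstacle is Step 2: we must verify carefully that $\sigma$-properness, in the sense of Definition \ref{defn:sigma-proper}, gives properness of the restriction to a single component over compact subsets of $\C\backslash\Hbold$. If that definition proves too weak, I would instead return to the construction in Theorem \ref{thm:transcendental-extension}. There, $\Fbold^P$ is a union of branched coverings $U^{P,R}_s\to V^P_s$ of finite degree, and one can apply Riemann–Hurwitz on $V^P_s\backslash\Hbold$ directly, which is simply connected and free of critical values.
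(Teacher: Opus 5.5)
Your Steps 1 and 2 are the paper's argument written out. By Lemma~\ref{lem:crit-points}, every critical value of $\Fbold^P$ lies on $\partial\Hbold$, so $\Fbold^P$ is an unbranched covering over the simply connected domain $\C\backslash\Hbold$, and each component maps isomorphically onto it. Both steps are correct. Your worry about Definition~\ref{defn:sigma-proper} is unfounded: compactness of the components of preimages of compact sets is exactly what your exhaustion argument needs. You are also right that the target in (2) is $\UHP$.

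The gap is in Step 3, where you move between the two coordinate systems. The sentence ``Then $W$ is bounded'' and the closing ``that is'' treat boundedness of $\lake\subset\UHP$ and boundedness of $W=\Psi^{-1}(\lake)\subset\C$ as equivalent. They are not. $\Psi$ is controlled only off $\hat{\Zbold}$ (Corollary~\ref{cor:Psi-qc}), and $\Psi^{-1}$ need not stay bounded near finite points of $\R$: $\C\backslash\Hbold$ has accesses to $\infty$ other than the principal one, and $\Psi$ sends them to finite real points. Here is an example at a parabolic depth $n$:
\begin{itemize}
\item The $\Fext^{Q_n}$-orbits in $\Dext_n$ converge to $\beta_n$ (Lemma~\ref{lem:parabolic-basin}).
\item The corresponding $\Fbold^{Q_n}$-orbits converge to the parabolic point of $\Fbold^{Q_n}$ at $\infty$ (cf.\ Theorem~\ref{thm:parabolic-fatou-set}).
\item $\beta_n$ is interior to the tile $[\crit_{-Q_{n-1}},\crit_{-Q_n}]$, so Lemma~\ref{lem:primary-lake} gives $\UHP\cap N\subset\lake_{Q_n}$ for some neighbourhood $N$ of $\beta_n$.
\item Hence $\Psi^{-1}(\lake_{Q_n})$ is unbounded, although $\lake_{Q_n}$ is bounded (Corollary~\ref{cor:bounded-lakes}).
\end{itemize}
Similarly, when $\Hbold$ has empty interior, $\Psi^{-1}$ of any fjord is dense in the unbounded wedge cut out by its dam and the two internal rays landing at the dam's endpoints. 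The paper derives (3) directly from the covering property and does not spell this step out either.

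So Step 3 actually proves only a dynamical-plane dichotomy: either $\partial W$ meets $\Ibold^{\le P}$, or $W$ is unbounded in $\C$. The first alternative does give $\partial\lake\cap\Iext^{\le P}\neq\emptyset$. The second says nothing about whether $\infty\in\partial\lake$.

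What is missing is an argument excluding the following case: $\lake$ is bounded, $\partial\lake\cap\Iext^{\le P}=\emptyset$, and $W$ reaches $\infty$ through a non-principal access. Concretely:
\begin{itemize}
\item Take a path $\gamma\to\infty$ in $\C\backslash\hat{\Zbold}$, so that $\Psi(\gamma)\to\infty$.
\item Your continuity argument shows that $(\Fext^P|_{\lake})^{-1}(\Psi(\gamma))$ can accumulate only on $\R$.
\item It cannot accumulate at an interior point of a tile of $\tiling_P$. Near such a point $\lake$ coincides with the primary lake of Lemma~\ref{lem:primary-lake}, and $\Fext^P$ extends across $\R$ by reflection with finite values.
\item This leaves accumulation at the roots $\crit_{-R}$ with $0<R\le P$, or at the parabolic points where these roots accumulate.
\end{itemize}
Ruling out these remaining cases needs boundary control of $\Fext^P$ in external coordinates, at exactly the points where $\Psi^{-1}$ may fail to be bounded. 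That control cannot be read off from continuity of $\Fbold^P$ in the dynamical plane.
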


\begin{proof}
    By Lemma \ref{lem:crit-points}, the $\sigma$-proper map $\Fbold^P$ admits no critical value outside of $\Hbold$. 
    Hence, $\Fbold^P$ restricts to an infinite degree unbranched covering map of $\Dom(\Fbold^P) \backslash \Fbold^{-P}(\Hbold)$ onto $\C \backslash \Hbold$.
    This implies the lemma.
\end{proof}

Further topological and geometric properties of lakes will be deduced in the next section.

Proposition \ref{prop:trans-mother} (7) implies that $\Psi$ extends continuously to the bi-infinite critical orbit $\{C_P\}_{P\in \Kbold}$ on the boundary of $\Hbold$. 
Hence, the points
\[
    \crit_P := \Psi(C_P),\qquad P \in \Kbold
\]
are well-defined on the real line. We always have $\crit_0 = 0$ and $\crit_{-Q_{[0]}} = \varepsilon_0$. 
Denote
\[
    \mathtt{Crit} := \{ \crit_{P} \}_{P \in \Kbold}.
\]

\begin{lemma}
\label{lem:convergence-external}
    Let $(\theta_k,n_k, u_k)$ be a generating sequence for $\Fbold$.
    For every $k$, consider the Riemann mapping 
    \[
        \Psi_k: \RS \backslash u_k(H(f_{\theta_k})) \to \UHP
    \]
    fixing $0$ and $\varepsilon_0$ and sending the critical point $u_k(v_{\theta_k,-1})$ of $f_{\theta_k}$ to $\infty$.
    Then, as $k \to \infty$, $\Psi_k$ converges in the Caratheodory topology to $\Psi$.
    In particular, the set of critical points $\{u_k(v_{\theta_k,-p})\}_{2 \leq p \leq q^k_{[n_k+m]}}$ converges to the set $\{\crit_{-P}\}_{0<P\leq Q_{[m]}}$.
\end{lemma}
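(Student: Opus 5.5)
The convergence will come from the Hausdorff convergence of the rescaled Mother Hedgehogs (Proposition~\ref{prop:trans-mother}~(4)) together with Carath\'eodory's kernel theorem, applied on the Riemann sphere. The first step is to check that the domains $\Omega_k := \RS \backslash u_k(H(f_{\theta_k}))$, with basepoint chosen away from the hedgehogs (say a fixed point $w_0 \in \C\backslash \Hbold$), converge in the Carath\'eodory sense to $\Omega := \RS\backslash(\Hbold\cup\{\infty\})$. Here one direction comes for free from Hausdorff convergence: every compact subset of $\Omega$ lies in $\Omega_k$ for large $k$. The other direction holds because $\Hbold\cup\{\infty\}$ is full and connected, so no component of the kernel can be larger than the complement of the limit set.

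Next, the normalizations must match. For each $k$, take $\tilde\Psi_k : \Omega_k \to \UHP$ with $\tilde\Psi_k(w_0)$ fixed in a normalized position; the kernel theorem then gives convergence of $\tilde\Psi_k$ to a uniformization of $\Omega$. The actual normalization of $\Psi_k$ is by three boundary points: $0=u_k(v_{\theta_k,0})$, $\varepsilon_0=u_k(v_{\theta_k,-q^k_{[n_k-1]}})$, and the critical point $u_k(v_{\theta_k,-1})$, which goes to $\infty$. For $\Psi$, the three points are $0=C_0$, $\varepsilon_0=C_{-Q_{[0]}}$ and $\infty$.

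The main obstacle is this boundary-normalization step: boundary values do not follow automatically from Carath\'eodory convergence. I would handle it with the uniform geometry available.

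\begin{itemize}
\item By Corollary~\ref{cor:qc-external-coords} and Lemma~\ref{lem:qc-fjord-external}, the external coordinates extend to uniformly quasiconformal maps. Real Bounds (Theorem~\ref{thm:R-APB}) then give equicontinuity of the boundary extensions of $\tilde\Psi_k$ at the critical orbit points $u_k(v_{\theta_k,-p})$. These converge to $C_{-P}$ by Theorem~\ref{thm:transcendental-extension}~(2)(c), and $\Psi$ extends continuously to them by Proposition~\ref{prop:trans-mother}~(7).
\item The critical point $u_k(v_{\theta_k,-1})$ escapes to infinity: its distance to $0$ is at least comparable to $|v^\#_{k,-Q_{[-m]}}|$ for all $m$, so it grows by Lemma~\ref{lem:scaling-critical-value}. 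Its external image therefore converges to $\tilde\Psi(\infty)$, the point corresponding to the access to $\infty$.
\end{itemize}

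The M\"obius maps $M_k$ of $\UHP$ renormalizing $\tilde\Psi_k$ to $\Psi_k$ therefore converge to the M\"obius map $M$ renormalizing $\tilde\Psi$ to $\Psi$. It follows that $\Psi_k = M_k\circ\tilde\Psi_k \to \Psi$ in the Carath\'eodory topology.

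For the last assertion, fix $m$. For large $k$ the set of rescaled critical points $\{u_k(v_{\theta_k,-p})\}_{2\le p\le q^k_{[n_k+m]}}$ is indexed by $\{P_k^\#\}$ with $0<P\le Q_{[m]}$ as in Theorem~\ref{thm:transcendental-extension}, where only finitely many combinatorial types occur at each scale. Points with $P$ in a fixed bounded range converge to $C_{-P}$. The equicontinuous boundary extensions from the first bullet then give $\Psi_k(u_k(v_{\theta_k,-P_k^\#}))\to\crit_{-P}$. Points whose index drifts toward the near-parabolic (enriched) directions are controlled by the same Real Bounds, which force them to accumulate exactly on the corresponding $\crit_{-P}$. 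This yields convergence of the sets in the Hausdorff sense on compacts of $\R$.
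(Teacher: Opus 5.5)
Your proposal follows the same route as the paper, whose proof simply cites the Hausdorff convergence of Proposition~\ref{prop:trans-mother}~(4) for the first assertion and Theorem~\ref{thm:transcendental-extension}~(2) for the second. You go further than the paper by treating the three-point boundary normalization, which is indeed the only subtle point.

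One step needs tightening. That $u_k(v_{\theta_k,-1})\to\infty$ in the Riemann sphere does not by itself determine the limit of its external image, since a thin channel could carry the corresponding prime end elsewhere. Derive it instead from the equicontinuity of the uniformly quasiconformal extensions of Corollary~\ref{cor:qc-external-coords}; this applies because the critical point lies on $\partial u_k(\hat{Z}_{\theta_k})$ rather than inside a fjord.
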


\begin{proof}
    The first part is an immediate consequence of Proposition \ref{prop:trans-mother} (4). The second is a consequence of Theorem \ref{thm:transcendental-extension} (2)(d).
\end{proof}

Let us also consider the slow generators $\{Q_n\}_{n\in\Z}$ of the continuant group $\Kbold$ as described in \S\ref{ss:natural-extension}.
It comes with a sequence of numbers $\{a_n\}_{n\in\Z}$ in $\overline{\N}$ such that 
\[
    Q_n = a_n Q_{n-1} + Q_{n-2} \qquad \textnormal{whenever } a_{n+1}<\infty.
\]
As a consequence of the lemma above, we have:

\begin{corollary}
\label{cor:extension-critical}
    For every $P \in \Tbold$, $\Fext^P$ extends to a homeomorphism
    \[
    \Fext^{P} : \mathtt{Crit} \to \mathtt{Crit}, \qquad \crit_Q \mapsto \crit_{P+Q}
    \]
    that preserves the orientation along the real line.
    For every $n \in \Z$,
\begin{enumerate}[label = \textnormal{(\roman*)}]
    \item $\crit_{-Q_n}$ is always between $\crit_0$ and $\crit_{Q_{n-1}}$;
    \item For $n \in \Z$ with $a_{n+1} = \infty$, $\crit_{-kQ_n}$ is always between $\crit_0$ and $\crit_{Q_{n-1}}$ for all $k \geq 1$.
\end{enumerate}
\end{corollary}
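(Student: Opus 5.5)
The plan is to obtain the corollary by passing to the limit along a generating sequence $(\theta_k,n_k,u_k)$ for $\Fbold$, combining Lemma~\ref{lem:convergence-external} with the elementary circle dynamics of the external maps $\mathsf{f}_{\theta_k}$.

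\emph{The map on $\mathtt{Crit}$.} The first step is to show that $\Fext^P$ extends to $\mathtt{Crit}$. By Proposition~\ref{prop:trans-mother}(5), $\Fbold^P$ is a homeomorphism of $\Hbold$. By Theorem~\ref{thm:transcendental-extension}(2)(a), it sends $C_Q$ to $C_{P+Q}$. Since $\Psi$ extends continuously to the critical orbit (Proposition~\ref{prop:trans-mother}(7)), we may define $\Fext^P(\crit_Q):=\crit_{P+Q}$. This assignment is a bijection of $\mathtt{Crit}$, with inverse given by $\crit_R\mapsto \crit_{R-P}$.

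\emph{Monotonicity.} Order preservation will be checked on finite configurations. Fix $Q_1,Q_2,Q_3\in\Kbold$. For large $k$, the points $u_k(v_{\theta_k,(Q_i)^\#_k})$ converge to $C_{Q_i}$. Their images under $\Psi_k$ lie on $\R$ and converge to $\crit_{Q_i}$ by Lemma~\ref{lem:convergence-external}. In external coordinates of $f_{\theta_k}$, the map $\mathsf{f}_{\theta_k}$ is an orientation-preserving circle homeomorphism, so the iterate $P^\#_k$ preserves the cyclic order of the triple. After the normalization sending $u_k(v_{\theta_k,-1})$ to $\infty$, cyclic order on the circle becomes linear order on $\R$, as long as $\infty$ corresponds to a point not among the triple or its image. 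Passing to the limit, with distinctness of the limit points guaranteed by Claim~1 in the proof of Theorem~\ref{thm:transcendental-extension}, shows that the extension preserves the linear order of $\crit$'s. Continuity of the extension then follows from order preservation together with Real Bounds in the limit.

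\emph{Items (i) and (ii).} These are combinatorial facts that I would read off from the space order $\lhd$. By the same limiting argument, the map $P\mapsto \crit_P$ is monotone with respect to $\lhd$: it is increasing or decreasing according to the normalization $\crit_{-Q_{[0]}}=\varepsilon_0$, and this is exactly the realization of the topological cascade in Theorem~\ref{thm:topological-cascade}. Item (i) then reduces to the statement that $-Q_n$ is $\lhd$-between $0$ and $Q_{n-1}$. This follows from Proposition~\ref{prop:space-order}(1): $Q_n$ and $Q_{n-1}$ lie on opposite sides of $0$, so $-Q_n$ lies on the same side as $Q_{n-1}$. The remaining inequality, that $-Q_n$ is closer to $0$ than $Q_{n-1}$, comes from $Q_{n-1}+Q_n$ having the sign of $Q_{n-1}$. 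I would verify this through the relation $Q_{n+1}=a_{n+1}Q_n+Q_{n-1}$ when $a_{n+1}<\infty$, together with the rigid-rotation picture (closest returns alternate sides and shrink). Item (ii) is exactly Proposition~\ref{prop:space-order}(2).

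\emph{Main obstacle.} The step I expect to be hardest is the precise dictionary between the space order $\lhd$ and the real order of the $\crit_P$. The sign convention has to match the normalization $\crit_{-Q_{[0]}}=\varepsilon_0$. In the enriched case $a_{n+1}=\infty$, one must check that the Lavaurs-type limits preserve betweenness. My first approach is to do everything at finite $k$, where the $\crit$'s are genuine closest-return points of an irrational circle homeomorphism with rotation number $\theta_k$. The combinatorial order is then given by Proposition~\ref{prop:q[n]}(2)--(3), and by Lemma~\ref{lem:combinatorial-convergence} the symbols agree with $\tttheta$ near level $n_k+n$. For $a_{n+1}=\infty$, the approximants have $\abar_{k,n_k+n+1}\ge \threshold+m\to\infty$, so each fixed $k$ eventually satisfies $k\le \abar_{k,\cdot}$. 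The betweenness of $\crit_{-kQ_n}$ therefore holds at finite level and survives in the limit.
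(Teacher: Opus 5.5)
Your route is essentially the paper's. The paper states this corollary as a direct consequence of Lemma~\ref{lem:convergence-external}: the order structure of the critical orbit of the orientation-preserving circle homeomorphisms $\mathsf{f}_{\theta_k}$ is transferred to the limit, and strictness comes from the distinctness of the $C_P$.

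\emph{Cyclic versus linear order.} Your monotonicity step uses the wrong criterion. Knowing that $\infty$ is not among the triple or its image does not rule out wrapping: a small rotation that pushes the largest of three points past $\infty$ preserves cyclic order but not linear order. Let $g_k$ denote $\mathsf{f}_{\theta_k}^{P_k^{\#}}$ written in the $\Psi_k$-coordinates. What you need is that the image arc of $[x_1,x_3]$ under $g_k$ avoids $\infty$, i.e.\ that $g_k^{-1}(\infty)=\Psi_k\bigl(u_k(v_{\theta_k,-1-P_k^{\#}})\bigr)$ lies outside $[x_1,x_3]$. This does hold for large $k$, but you should say why. That critical point lies in a small arc around the cut point $\mathsf{v}_{\theta_k,-1}$. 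The normalization blows up the arc between $\mathsf{v}_{\theta_k,0}$ and $\mathsf{v}_{\theta_k,-q^k_{[n_k-1]}}$, which is negligible compared with the distance from $\mathsf{v}_{\theta_k,0}$ to $\mathsf{v}_{\theta_k,-1}$. So this point escapes to $\infty$ while the $x_i$ converge.

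\emph{The detour through $\lhd$.} As stated, the argument for (i) and (ii) via $\lhd$ is circular. Your limiting argument only shows that the order pulled back to $\Kbold$ from $\R$ via $P\mapsto\crit_P$ is translation invariant. To identify it with $\lhd$ (or its reverse) through the uniqueness in Proposition~\ref{prop:space-order}, you would have to verify properties (1) and (2) there for this order, and (2) is item (ii) itself.

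So drop that shortcut and use the argument of your last paragraph, which is the one that works. At finite $k$, item (i) is the alternation and monotone shrinking of closest returns. Item (ii) holds because $j$ backward steps by the $n$th closest-return time keep the point between $\mathsf{v}_{\theta_k,0}$ and the $(n-1)$st closest return as long as $j$ does not exceed the corresponding approximating partial quotient, and that quotient tends to $\infty$. Betweenness then survives in the limit by distinctness of the $C_P$. Also rename the multiplicity in (ii), which clashes with the index $k$ of the generating sequence.
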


\begin{definition}
    For $Q \in \Tbold$, we define the tiling $\tiling_Q$ of $\R$ of generation $Q$ to be the collection of the closure of the connected components of the interior of $\R \backslash \{\crit_{-P}\}_{0 < P \leq Q}$.
\end{definition}

Whenever $P>Q$, then clearly the tiling $\tiling_P$ is a refinement of $\tiling_Q$. 
For every $n \in \Z$, the tiling of generation $Q_n$ can be written explicitly:
\[
    \tiling_{Q_n} = \{[\crit_{-P+Q_{n-1}-Q_n}, \crit_{-P}]\}_{0<P\leq Q_{n-1}} \cup \{\crit_{-P}, \crit_{-P+Q_{n-1}}\}_{0<P\leq Q_{n}-Q_{n-1}}.
\]
Observe that every tile in $\tiling_{Q_{n+2}}$ is the union of at least two tiles in $\tiling_{Q_{n}}$. 
The next proposition will be extremely important in later sections.

\begin{proposition}[Real Bounds]
\label{prop:R-apb-transcendental}
    For every $P \in \Tbold$, $\Fext^P$ uniquely extends to an orientation-preserving homeomorphism of the real line. 
    Let $I$ be a tile in $\tiling_{Q_n}$ for some $n \in \Z$.
    \begin{enumerate}
        \item If $J \in \tiling_{Q_n}$ is adjacent to $I$, then $|I| \asymp |J|$.
        \item If $a_{n+1} < \infty$, then $I$ contains only a finite number of tiles $J_1,\ldots, J_k$ in $\tiling_{Q_{n+1}}$, written in consecutive order, and they satisfy
        \[
            |J_j| \asymp \frac{|I|}{\min\{j,k+1-j\}^2} \qquad \text{ for all } j \in \{1,\ldots,k\}.
        \]
        \item If $a_{n+1} = \infty$, then
        \begin{enumerate}[label=\textnormal{(\alph*)}]
            \item the interior of $I$ contains a simple parabolic fixed point $\beta_I$ of $\Fext^{Q_n}$;
            \item every connected component of $I \backslash \{\beta_I\}$ can be decomposed to an infinite number of tiles $J_1,J_2,J_3,\ldots$ in $\tiling_{Q_{n+1}}$, written in consecutive order, such that $J_j$'s accumulate towards $\beta$ and
        \[
            |J_j| \asymp \frac{|I|}{j^2} \qquad \text{ for all } j \geq 1.
        \]
        \end{enumerate}
    \end{enumerate}
\end{proposition}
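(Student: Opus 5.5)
The plan is to transfer Real Bounds for quadratic polynomials (Theorem~\ref{thm:R-APB}) to the external cascade through a generating sequence $(\theta_k,n_k,u_k)$ for $\Fbold$, using Lemma~\ref{lem:convergence-external}. For each $k$, the Riemann map $\Psi_k$ conjugates $f^{\#}_k$ on the complement of the rescaled hedgehog $u_k(H_{\theta_k})$ to a map of $\UHP$. Via a Möbius change of coordinates from $\RS\backslash\overline{\D}$, this map is the external circle map $\mathsf f_{\theta_k}$, which extends to a circle homeomorphism. Its critical points $\Psi_k(u_k(v_{\theta_k,-p}))$, for $1\le p\le q^k_{[n_k+m]}$, cut out exactly the tilings $\mathfrak D^{[n_k+m]}_{\theta_k}$ transported to $\R$. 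The point $\infty$ corresponds to $v_{\theta_k,-1}$, which lies far from the finite part of the picture, so on any fixed compact piece of $\R$ the Möbius map has bounded distortion. Thus items (1) and (2) of Theorem~\ref{thm:R-APB} hold for these real tilings with universal constants. The first step is to translate the tiles of $\tiling_{Q_n}$ for $\Fbold$ into tiles of $\mathfrak D^{[n_k+m]}_{\theta_k}$, converting between slow generators $Q_n$ and return times $Q_{[m]}$ as in \S\ref{sss:conversion}. Tilings at slow levels that are not of the form $Q_{[m]}$ are intermediate refinements, and adjacent comparability passes to them directly.

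For items (1) and (2), I fix $n$ and a tile $I=[\crit_{-P},\crit_{-P'}]$. By Lemma~\ref{lem:convergence-external}, the corresponding endpoints in the $k$-th picture converge to $\crit_{-P}$ and $\crit_{-P'}$, and these limit points are distinct by Theorem~\ref{thm:transcendental-extension}(2). Suppose $a_{n+1}<\infty$. Then for large $k$ the number of subtiles equals $a_{n+1}$ or $a_{n+1}+1$, as in Lemma~\ref{lem:combinatorial-convergence}, and the comparability estimates pass to the limit because each length converges. Extending $\Fext^P$ to a homeomorphism of $\R$ goes the same way. $\Fext^P$ is already an order-preserving map of $\mathtt{Crit}$ (Corollary~\ref{cor:extension-critical}), and Real Bounds show that the tiles shrink uniformly exponentially under refinement, so $\mathtt{Crit}$ is dense in $\R$. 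The map $\Fext^P$ is then the locally uniform limit of the homeomorphic external maps $\Psi_k\circ(f^{\#}_k)^{P^\#_k}\circ\Psi_k^{-1}$, and it extends continuously and monotonically to $\R$. Density of $\mathtt{Crit}$ together with injectivity on it makes the extension a homeomorphism, and Schwarz reflection across $\R$ gives uniqueness.

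The hard case is item (3), where $a_{n+1}=\infty$. For large $k$, the $k$-th tile $I_k$ splits into $a_{k,n_k+n+1}\to\infty$ subtiles, with lengths $\asymp |I_k|/\min\{j,a-j\}^2$. In the limit these become two infinite one-sided families with $|J_j|\asymp|I|/j^2$, accumulating from the two sides onto a single point $\beta_I$; the point is single because $\sum_j j^{-2}$ converges and the two families exhaust $I$ up to that point. Since $\Fext^{Q_n}$ maps each subtile to the next one, $\beta_I$ is a fixed point of the homeomorphism $\Fext^{Q_n}$. To show it is a \emph{simple parabolic} fixed point, I would identify $\Fext^{Q_n}$ near $\beta_I$ with the real-analytic reflection extension of the external map $\Psi\circ\Fbold^{Q_n}\circ\Psi^{-1}$. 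Here I would use that $\Fbold^{Q_n}$ is the lift of the parabolic map $f_n$ (Proposition~\ref{prop:trans-sectors}(6), Theorem~\ref{thm:renorm-limits}(3)), whose parabolic point $\beta=0$ lies on the top fjord (Theorem~\ref{thm:beta-fixed-points}(2)). The $j^{-2}$ decay of the subtiles rules out attracting or repelling behavior, and it rules out higher multiplicity, which would give $j^{-1-1/\nu}$ decay. So the fixed point is simply parabolic. I expect the main obstacle to be this analyticity: justifying real-analytic extension of $\Fext^{Q_n}$ across $\R$ near $\beta_I$. The plan is to use the reflection principle on the complement of the hedgehog, relying on the fact that the fjord has uniform angular opening in external coordinates (Lemma~\ref{lem:qc-fjord-external}).
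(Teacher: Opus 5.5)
Your treatment of items (1), (2), the extension of $\Fext^P$ to a homeomorphism of $\R$, and item (3)(b) is essentially the paper's. Everything is transported from Theorem~\ref{thm:R-APB} via the generating sequence and Lemma~\ref{lem:convergence-external}. Item (3)(b) comes from the limit of the $\min\{j,k+1-j\}^{-2}$ estimate as the number of subtiles blows up. Your remark that the middle block of subtiles has total length $\asymp |I|/J$ is exactly why the two one-sided families land at a single point.

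The genuine difference is in (3)(a). The paper argues by perturbation. It says $\beta_I$ must be parabolic because a fixed point with multiplier $\neq 1$ would persist under perturbation. It says $\beta_I$ is simple because it arises from the collision of a real-symmetric pair of repelling fixed points (cf.\ Theorem~\ref{thm:beta-fixed-points}). You instead read the answer off the asymptotics already established in (3)(b). On each side of $\beta_I$, the endpoints of the $J_j$ form a single $\Fext^{Q_n}$-orbit converging to $\beta_I$. A multiplier $\neq 1$ would therefore give geometric decay of $|J_j|$. A fixed point of multiplicity $\nu+1$ would give $|J_j|\asymp j^{-1-1/\nu}$. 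The estimate $|J_j|\asymp |I|/j^2$ leaves only $\nu=1$. This is correct and more self-contained, since it needs no information about fixed points of the approximants. The paper's route, by contrast, also explains where $\beta_I$ comes from: it is the limit of the beta points in the fjords.

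Two comments on the step you flagged as the obstacle.

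First, real-analyticity near $\beta_I$ is routine and does not need Lemma~\ref{lem:qc-fjord-external}. The critical points of $\Fext^{Q_n}$ on $\R$ are exactly the tile endpoints $\crit_{-P}$, $0<P\le Q_n$, so $\textnormal{int}(I)$ contains none of them. Hence $\Fext^{Q_n}$ extends by Schwarz reflection to a univalent real-symmetric map near $\textnormal{int}(I)$. One way to see this is as the limit of the reflected univalent branches of the approximating external iterates over the corresponding open tiles. This is precisely the reflection step in the paper's proof.

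Second, drop the proposed identification with the parabolic point of $f_n$. With $Q_n=Q_{[m]}$ the relevant map is $f_m$, not $f_n$. More importantly, the facts you cite concern the holomorphic germ of $f_m$ at its fixed point $0$. In the $\Fbold$-plane that point corresponds to the vertex $\infty$ of the infinite sector $\Sbold^m$, not to a finite point of $\partial\Hbold$. So those facts do not directly give the germ of $\Fext^{Q_n}$ at $\beta_I$. Your decay argument does not use this identification.
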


\begin{proof}
    By Lemma \ref{lem:convergence-external}, items (1) and (2) are reduced to Real Bounds for $f_\theta$ (Theorem \ref{thm:R-APB}).
    Theorem \ref{thm:R-APB} (2) implies that in the limiting parabolic case, the interior of $I$ admits a point $\beta_I$ satisfying item (3)(b).
    
    Items (1), (2), and (3)(b) imply that $\Fext^{P}: \mathtt{Crit} \to \mathtt{Crit}$ extends to an orientation-preserving homeomorphism of $\R$. 
    We can then perform Schwarz reflection and extend each map $\Fext^P$ to the lower half plane, forming a real holomorphic map.
    
    Now, let us go back to $I \in \tiling_{Q_n}$ again. 
    The point $\beta_I$ has to be a fixed point of $\Fext$. 
    The nature of $\beta_I$ as a fixed point of a real map on an interval has to be parabolic because it does not persist under perturbation.
    Moreover, $\beta$ is simple because it is the limit of collision of a pair of real-symmetric repelling fixed points (cf. Theorem \ref{thm:beta-fixed-points}).
\end{proof}

This proposition, together with Corollary \ref{cor:extension-critical}, implies that $(\Fext^P : \R \to \R)_{P \in \Tbold}$ is a (non-invertible) topological cascade with combinatorics $\tttheta$ in the sense of Theorem \ref{thm:topological-cascade}.
In later sections, we will apply Real Bounds to carefully justify uniform complex bounds for cascades, which are an analog of Complex Bounds for analytic critical circle maps.

\subsection{Near-parabolic levels}
\label{ss:near-parabolic-levels}

Let us explicitly describe the construction of the pseudo-Siegel pinched half-planes $\hat{\Zbold}^{[n]}$ from the Mother Hedgehog $\Hbold$.

For $n \in \Z$, denote
\[
    \mathtt{x}_n = \crit_{-Q_{[n]}} \qquad \mathtt{y}_n = \crit_{-\check{Q}_{[n]}}.
\]
If $n$ is an NP level ($\bar{a}_{n+1} \geq \threshold$), we are also interested in the pre-critical points 
\[
    \mathtt{x}'_n = \crit_{-Q_{[n+1]} + \threshold' Q_{[n]}},\qquad
\mathtt{y}'_n = \crit_{-\check{Q}_{[n]} - \threshold' Q_{[n]}}.
\]
We have
\[
    \mathtt{x}_n < 0 < \mathtt{x}'_n < \mathtt{y}'_n < \mathtt{y}_n \qquad \text{if } \varepsilon_{n+1} = -1, 
\]
and
\[
    \mathtt{y}_n < \mathtt{y}'_n < \mathtt{x}'_n < 0 < \mathtt{x}_n  \qquad \text{if } \varepsilon_{n+1} = +1. 
\]

For a near-parabolic level $n$, we define the level $n$ \emph{dam} $\mathtt{d}_0^{[n]}$ to be the unique hyperbolic geodesic of $\UHP$ with endpoints $\mathtt{x}'_n$ and $\mathtt{y}'_n$.
For $P \in \Tbold$ with $P \leq Q_{[n]}$, we also define $\mathtt{d}_{-P}^{[n]}$ to be the unique connected component of $\Fext^{-P}(\mathtt{d}_0^{[n]})$ that has both endpoints on $\R$.
Denote by $\fjord_{-P}^{[n]}$ the closed Jordan disk bounded by the arc $\mathtt{d}_{-P}^{[n]}$ and the interval $\Fext|_{\R}^{-P}([\mathtt{x}'_n, \mathtt{y}'_n])$. 
We denote the union by
    \[
        \Fjord^{[n]} := \bigcup_{0 \leq P < Q_{[n]}} \fjord_{-P}^{[n]}.
    \]
If $n$ is a bounded level, we will simply set $\Fjord^{[n]}$ to be the empty set.
The set $\fjord_{-P}^{[n]}$ will be called a \emph{parabolic fjord} of level $n$.

\begin{lemma}
\label{lem:fjord-pseudo-siegel-connection}
    For every $m \in \Z$, we have
    \[
        \hat{\Zbold}^{[m]} = \Hbold \cup \bigcup_{n > m} \Psi^{-1}\big( \Fjord^{[n]} \big).
    \]
\end{lemma}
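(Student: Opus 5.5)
The plan is to reduce the identity to the corresponding description of pseudo-Siegel pinched disks for each $f_n$ and then transport it through the gluing maps $\pphi_n$ and the Riemann map $\Psi$.

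\emph{Step 1: write both sides sector by sector.} By the construction in Proposition~\ref{prop:trans-pseudo-siegel}, $\hat{\Zbold}^{[m]} = \bigcup_{n \le m+1} \pphi_n^{-1}\big(\hat{Z}_n^{[m-n]}\setminus \gamma_n\big)$. By the remark after Theorem~\ref{thm:renorm-limits}, $\hat{Z}_n^{[m-n]}$ is $H_n$ together with the principal level $s$ parabolic fjords and their preimages up to time $\qq^n_{[s]}$, for all NP levels $s \ge m-n+1$ of $f_n$. Each such fjord is bounded by a dam, which is (a pullback of) the hyperbolic geodesic of $\C\setminus H_n$ joining the precritical points $v^n_{-\qq^n_{[s+1]}+\threshold'\qq^n_{[s]}}$ and $v^n_{-\check{\qq}^n_{[s]}-\threshold'\qq^n_{[s]}}$. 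Since $\pphi_n$ maps $\Hbold\cap \Sbold^n$ onto $H_n\setminus\gamma_n$ (Proposition~\ref{prop:trans-mother}(1)), it suffices to check two things:
\begin{itemize}
\item the $\pphi_n^{-1}$-lift of each level $s$ fjord of $f_n$ is $\Psi^{-1}(\fjord^{[n+s]}_{-P})$ for the matching $P$;
\item every $\Psi^{-1}(\fjord^{[N]}_{-P})$ with $N>m$ lies in $\Sbold^n$ for sufficiently negative $n$.
\end{itemize}
The second claim follows from Proposition~\ref{prop:trans-sectors}(1), since the sectors exhaust $\C$ and each fjord is bounded.

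\emph{Step 2: match the endpoints.} Under the identification of Proposition~\ref{prop:trans-sectors}(6), times $\qq^n_{[s]}$ correspond to $Q_{[n+s]}$, and $\pphi_n$ sends $C_R$ to $v^n_{R}$. So the endpoints of the lifted principal dam are $C_{-Q_{[N+1]}+\threshold' Q_{[N]}}$ and $C_{-\check{Q}_{[N]}-\threshold' Q_{[N]}}$ with $N=n+s$. Their images under $\Psi$ are exactly $\mathtt{x}'_N$ and $\mathtt{y}'_N$. The NP condition $\abar_{N+1}\ge\threshold$ is the same on both sides.

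\emph{Step 3: match the dams themselves (the main obstacle).} The dam for $f_n$ is a geodesic of $\C\setminus H_n$, while $\mathtt{d}^{[N]}_0$ is a geodesic of $\UHP$, i.e.\ of $\C\setminus\Hbold$ after applying $\Psi^{-1}$. The gluing map $\pphi_n$ is conformal on $\Sbold^n\setminus\Hbold$, but it is not a hyperbolic isometry between $\C\setminus\Hbold$ and $\C\setminus H_n$, so geodesics need not correspond exactly.

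I would handle this in one of two ways.
\begin{itemize}
\item \emph{Preferred:} treat the dams as defined in the limit from the rescaled quadratic polynomials, via Proposition~\ref{prop:trans-pseudo-siegel}(5) and Lemma~\ref{lem:convergence-external}. Under $u_k$, the dam of $f_{\theta_k}$ at level $n_k+N$ is a geodesic of $\C\setminus u_k(H_{\theta_k})$ with endpoints converging to $\Psi^{-1}(\mathtt{x}'_N)$ and $\Psi^{-1}(\mathtt{y}'_N)$. By Carath\'eodory convergence $\Psi_k\to\Psi$, these geodesics converge to $\Psi^{-1}(\mathtt{d}^{[N]}_0)$. This identifies the principal fjords directly with the Hausdorff limit of the rescaled pseudo-Siegel disks, avoiding $\pphi_n$ altogether.
\item \emph{Fallback:} take the lemma as the convention fixing the choice of dams in the limiting $\hat Z_n^{[\cdot]}$, which is permitted by the almost invariance in Theorem~\ref{thm:pseudo-siegel}(1).
\end{itemize}

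\emph{Step 4: non-principal fjords.} These are components of $\Fext^{-P}(\fjord^{[N]}_0)$ with $0\le P<Q_{[N]}$. They correspond to pullbacks under $f_n^{p}$, with $p$ below the first return time, because $\Psi$ conjugates $\Fbold^P$ to $\Fext^P$ and $\pphi_n$ conjugates $\Fbold^P$ to $f_n^p$. Taking the union over all $n\le m+1$ then yields the stated identity.
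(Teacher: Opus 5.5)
Your preferred route in Step~3, which identifies the dams and fjords as limits of the rescaled quadratic pseudo-Siegel disks via Proposition~\ref{prop:trans-pseudo-siegel}(5), Theorem~\ref{thm:transcendental-extension} and the convergence $\Psi_k\to\Psi$ of Lemma~\ref{lem:convergence-external} (which rests on Proposition~\ref{prop:trans-mother}(4)), is essentially the paper's argument: the paper derives the identity directly from the construction of fjords for $f_\theta$ together with these convergence statements. The sector-by-sector framing through $\pphi_n$ in Steps~1--2 and the ``fallback'' convention are unnecessary, and the fallback would not be a proof; your observation that $\pphi_n$ does not carry geodesic dams to geodesic dams is precisely why the limit route is the right one.
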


\begin{proof}
    This follows from the construction of fjords and pseudo-Siegel disks of $f_\theta$, $\theta \in \Irrat$, together with the convergence described in Theorem \ref{thm:transcendental-extension}, Proposition \ref{prop:trans-pseudo-siegel} (5), and Proposition \ref{prop:trans-mother} (4).
\end{proof}

\begin{lemma}[Almost invariance]
\label{lem:fjord-almost-invariance}
    There exists a universal constant $K>0$ such that for every $n \in \Z$ and $P \in \Kbold \cup \{0\}$ with $P < Q_{[n]}$, every point in the dam $\mathtt{d}^{[n]}_{-P}$ is of hyperbolic distance at most $K$ away from the hyperbolic geodesic with the same endpoints as $\mathtt{d}^{[n]}_{-P}$.
\end{lemma}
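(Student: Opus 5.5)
The plan is to transfer the almost invariance of dams for neutral quadratic polynomials (Theorem \ref{thm:pseudo-siegel}(1)) to the external cascade via a generating sequence, using the convergence of external coordinates in Lemma \ref{lem:convergence-external}. The key point is that the statement is a bound on hyperbolic distance in $\UHP$, which is conformally invariant. It is therefore preserved both under the Riemann maps to external coordinates and under Carath\'eodory limits.

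\textbf{Step 1: the bound for $f_\theta$ in external coordinates.} Fix a generating sequence $(\theta_k,n_k,u_k)$ for $\Fbold$ and an NP level $n$. By Lemma \ref{lem:combinatorial-convergence}, level $n_k+n$ of $\theta_k$ is NP for all large $k$. For each such $k$ consider the dams $d^{[n_k+n]}_{\theta_k,j}$, $0\le j\le q^k_{[n_k+n]}$. Theorem \ref{thm:pseudo-siegel}(1) bounds, in the hyperbolic metric of $\RS\setminus H_{\theta_k}$, the distance from each such dam to the geodesic with the same endpoints by a universal $C$. Transport this by the Riemann map $\Psi_k$ of Lemma \ref{lem:convergence-external}, which is an isometry onto $\UHP$ with its hyperbolic metric. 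The image dams then lie within hyperbolic distance $C$ of the half-circles spanning their endpoints on $\R$.

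\textbf{Step 2: identify the limits.} Given $P\in\Kbold\cup\{0\}$ with $0\le P<Q_{[n]}$, set $j=P^\#_k$, which is eventually in range. The endpoints of $\Psi_k(d^{[n_k+n]}_{\theta_k,j})$ are the external images of the points $v_{\theta_k,-j-q^k_{[n_k+n+1]}+\threshold' q^k_{[n_k+n]}}$ and its $\check{}$-analogue. By Lemma \ref{lem:convergence-external} and Theorem \ref{thm:transcendental-extension}(2)(c), these converge to the endpoints of $\mathtt d^{[n]}_{-P}$, namely $\Fext|_\R^{-P}(\mathtt x'_n)$ and $\Fext|_\R^{-P}(\mathtt y'_n)$. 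These endpoints are distinct points of $\R$ by Real Bounds (Proposition \ref{prop:R-apb-transcendental}).

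It remains to show that the dams themselves converge in the Hausdorff sense on compacta to $\mathtt d^{[n]}_{-P}$. Here I would use Lemma \ref{lem:fjord-pseudo-siegel-connection} together with Proposition \ref{prop:trans-pseudo-siegel}(5): the rescaled pseudo-Siegel disks converge to $\hat{\Zbold}^{[m]}$, and their boundary pieces map under $\Psi_k$ to the dams. An alternative route goes through the definitions directly. For $P=0$, the dam $\mathtt d^{[n]}_0$ is by definition the geodesic, so the claim is trivial once we note that the definitions agree. For general $P$, the image dam is the pullback of the geodesic under $\Fext^{P}$, and $\Psi_k\circ(f^\#_k)^{P^\#_k}\circ\Psi_k^{-1}\to\Fext^P$ locally uniformly on lakes. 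So the pulled-back arcs converge to $\mathtt d^{[n]}_{-P}$ on compact subsets of $\UHP$.

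\textbf{Step 3: pass to the limit.} A uniform hyperbolic neighborhood of a half-circle passes to the limit as its endpoints converge, and the bound $K=C$ survives.

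\textbf{Main obstacle.} The delicate point is the convergence of the dams, specifically the identification of the limit of the polynomial dams with the pullback $\mathtt d^{[n]}_{-P}$. The pullback branch must be the one with both endpoints on $\R$, which in particular excludes escape to $\infty$. I would handle this with the hyperbolic-neighborhood bound from Step 1 itself: it confines the approximating dams to a compact region of $\UHP\cup\R$ determined by the convergent endpoints. This gives precompactness, and uniqueness of the pullback component then identifies the limit.
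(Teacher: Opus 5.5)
Your route differs from the paper's. You import the polynomial bound (Theorem \ref{thm:pseudo-siegel}(1)) and pass to the limit through external coordinates, in the same style the paper uses for Lemmas \ref{lem:fjord-pseudo-siegel-connection} and \ref{lem:fjords-bounds}. The paper instead proves this lemma directly in the limit plane, in three steps:
\begin{itemize}
\item $\mathtt{d}^{[n]}_0$ is a semicircle by definition. The round disk $\mathtt{D}^{[n]}_0$ bounded by it and its reflection lies in the slit plane $U^{[n]}_0=\UHP\cup(0,\mathtt{y}_n)\cup-\UHP$, which contains no critical values of $\Fext^{Q_{[n]}}$, hence none of $\Fext^P$ for $P<Q_{[n]}$.
\item Real Bounds give $\textnormal{mod}(U^{[n]}_0\setminus\mathtt{D}^{[n]}_0)$ bounded below. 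So the real-symmetric univalent branch of $\Fext^{-P}$ on $U^{[n]}_0$ has uniformly bounded distortion on $\mathtt{D}^{[n]}_0$.
\item A semicircle pushed through such a map stays within bounded hyperbolic distance of the semicircle on its image endpoints.
\end{itemize}
This needs neither the polynomial theorem nor any convergence of dams.

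Your approach puts all the weight on Step 2, and that step is not yet proved.
\begin{itemize}
\item \emph{Route (a) does not work as stated.} Proposition \ref{prop:trans-pseudo-siegel}(5) gives Hausdorff convergence of compact sets in the $\Fbold$-plane. That controls neither their boundaries nor set differences like $\hat{\Zbold}^{[n-1]}\setminus\hat{\Zbold}^{[n]}$. Moreover $\Psi_k\to\Psi$ only locally uniformly away from $\Hbold$, so nothing transfers to external coordinates near $\R$.
\item \emph{Route (b) misses the branch identification.} Locally uniform convergence of $\Psi_k\circ(f^\#_k)^{P^\#_k}\circ\Psi_k^{-1}$ on compact subsets of lakes does not tell you which local inverse branches approximate a point of $\mathtt{d}^{[n]}_{-P}$. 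They could be the ones continuing to the polynomial dam with both endpoints on $\R$, or one of the other lifts, which end on hoglets.
\item \emph{The precompactness argument in your last paragraph is incomplete.} It yields a limit continuum $X$, but you still need to show that $X$ avoids hoglet boundaries and the escaping set, that $\Fext^P$ maps $X\cap\UHP$ into $\mathtt{d}^{[n]}_0$, and that $X$ contains all of $\mathtt{d}^{[n]}_{-P}$. Only the last gives the inequality in the limit.
\end{itemize}

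The ingredient that closes this gap is that the interval $\Fext|_{\R}^{-P}([\mathtt{x}'_n,\mathtt{y}'_n])$, endpoints included, contains no critical points of $\Fext^P$. Hence $\Fext^P$, and likewise its approximants, extends by Schwarz reflection to a univalent map on a neighbourhood of that interval. This pins down the correct lift near $\R$ and makes the inverse branches converge up to the endpoints. But that is exactly the paper's argument, and once you have it, Koebe distortion proves the lemma outright. The transfer from polynomials therefore adds length and an extra dependence on Theorem \ref{thm:pseudo-siegel}(1) without removing any difficulty.
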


\begin{proof}
    Let $\mathtt{D}^{[n]}_0$ be the closed round disk bounded by the union of $\mathtt{d}^{[n]}_0$ and its reflection in $\mathbb{R}$.
    Consider the domain $U_0^{[n]} = \UHP \cup (0,\mathtt{y}_n) \cup -\UHP$; it contains no critical values of $\Fext^{Q_{[n]}}$ and, by Real Bounds, $\text{mod}(U_0^{[n]} \backslash \mathtt{D}^{[n]}_0) \geq K$ for some universal constant $K >0$.
    Pick any $P \in \Tbold$ with $P < Q_{[n]}$ and let $U^{[n]}_{-P}$ and $\mathtt{D}^{[n]}_{-P}$ be the univalent lift of $U_0^{[n]}$ and $\mathtt{D}^{[n]}_0$ under $\Fext^{P}$ containing $\mathtt{d}^{[n]}_{-P}$ respectively.
    Then, the lemma follows from the property that the inverse branch $\Fext^{-P}: \mathtt{D}^{[n]}_0 \to \mathtt{D}^{[n]}_{-P}$ has uniformly bounded distortion.
\end{proof}

\begin{lemma}
\label{lem:fjords-bounds}
    There exists a universal constant $K >1$ such that for every $m \in \Z$, the domain $\UHP \backslash \bigcup_{n > m} \Fjord^{[n]}$ is $K$-quasiconformally equivalent in $\C$ to $\UHP$.
\end{lemma}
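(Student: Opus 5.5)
The plan is to show that the Jordan curve $\Gamma_m := \partial\big(\UHP \backslash \bigcup_{n>m} \Fjord^{[n]}\big)$, together with $\infty$, is a $K$-quasicircle in $\RS$ with universal $K$. The bounded turning (Ahlfors three-point) condition then gives the quasiconformal equivalence with $\UHP$. This mirrors the proof of Lemma \ref{lem:qc-fjord-external} for $f_\theta$. We may either argue directly in the transcendental setting, or pass to the limit along a generating sequence via Lemma \ref{lem:convergence-external}, Lemma \ref{lem:fjord-pseudo-siegel-connection} and Proposition \ref{prop:trans-pseudo-siegel} (5), since Hausdorff limits of uniform quasicircles through $\infty$ with non-degenerate normalization are uniform quasicircles. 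I would do it directly, and use the limit only as a sanity check for the normalization.

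First, I would establish that each dam is close to a semicircle. By Lemma \ref{lem:fjord-almost-invariance}, every dam $\mathtt{d}^{[n]}_{-P}$ lies within universal hyperbolic distance of the geodesic of $\UHP$ with the same endpoints. Such an arc is a uniform quasiarc meeting $\R$ at angles bounded away from $0$ and $\pi$, so $\fjord^{[n]}_{-P}$ is a uniform quasidisk comparable to the half-disk on its base $J^{[n]}_{-P} := \Fext|_\R^{-P}([\mathtt{x}'_n,\mathtt{y}'_n])$.

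Second, I would control how the bases are arranged. Fjords of the same level have disjoint bases, since they are univalent pullbacks of $[\mathtt{x}'_n,\mathtt{y}'_n]$ before the first return time $Q_{[n]}$. Fjords of different levels $n<n'$ are either nested or have disjoint bases, and a deeper fjord inside a shallower one is contained in it and so does not appear in the union. The key quantitative claim is \emph{separation}. By Real Bounds (Proposition \ref{prop:R-apb-transcendental}), every base $J^{[n]}_{-P}$ sits inside a tile of $\tiling_{Q_{[n]}}$ whose complementary pieces on both sides have length $\asymp |J^{[n]}_{-P}|$ times a factor depending only on $\threshold'$, because of the $\threshold'$ margins in the definitions of $\mathtt{x}'_n,\mathtt{y}'_n$. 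Combined with item (2)--(3) of Real Bounds, two maximal fjords with adjacent bases are therefore separated by a gap comparable to the smaller base. For this to yield a universal constant, I would use the quadratic profile $|J_j|\asymp |I|/j^2$: a base of $\threshold'$ tiles from an end of $I$ has length $\asymp|I|$, while the gap to the neighbouring tile is $\succ |I|/\threshold'^2$. The point to check is that the bounded-turning constant depends only on the angle bounds and a relative-gap bound that is at worst polynomial in $\threshold'$.

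Third, I would verify the bounded turning condition. For $x,y\in\Gamma_m$, the subarc between them consists of pieces of $\R$ and dams of maximal fjords. Each dam has diameter $\asymp$ its base. A dam hit by the arc either lies fully between $x$ and $y$, in which case its diameter is at most $|x-y|$ up to a constant, or it contains $x$ or $y$. In the latter case we use the angle control together with separation to bound its contribution by $C|x-y|$.

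The main obstacle is making the constant universal rather than $\threshold$-dependent, i.e. the separation in the second step. The fallback is to use the Poincar\'e-neighbourhood Schwarz Lemma of \S\ref{sss:poincare}: the univalent pullbacks $\Fext^{-P}$ of $\mathtt{D}^{[n]}_0$ have bounded distortion (as in the proof of Lemma \ref{lem:fjord-almost-invariance}). This transports the gap and angle picture at the top fjord $\fjord^{[n]}_0$, where it is explicit via the parabolic geometry near $\beta$ and the $1/j^2$ law, to every $\fjord^{[n]}_{-P}$ with a universal loss.
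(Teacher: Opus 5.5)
Your plan A, passing to the limit along a generating sequence, is exactly the paper's proof. The paper deduces the lemma in one line from Lemma~\ref{lem:qc-fjord-external} (universal $K$ for every $f_\theta$) and Lemma~\ref{lem:convergence-external}. By Lemma~\ref{lem:fjord-pseudo-siegel-connection} the domain in question is $\Psi(\C\setminus\hat{\Zbold}^{[m]})$, and normalized limits of $K$-quasicircles are $K$-quasicircles, so universality is inherited for free. The direct route you actually choose re-runs, inside the cascade, the bounded-turning argument that underlies Lemma~\ref{lem:qc-fjord-external}. That is legitimate, since Lemma~\ref{lem:fjord-almost-invariance}, Proposition~\ref{prop:R-apb-transcendental} and Lemma~\ref{lem:fjord-angle-control} are all available for $\Fext$, and it has the merit of not needing the approximating polynomials. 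However, two of its steps fail as written.

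\emph{Step one.} Lying within bounded hyperbolic distance of a geodesic does not make a dam a quasiarc, since a curve may oscillate inside that band. It only gives a cone condition at the endpoints. What makes $\mathtt{d}^{[n]}_{-P}$ a uniform quasiarc is that it is the image of the semicircle $\mathtt{d}^{[n]}_0$ under a real-symmetric univalent inverse branch of $\Fext^{P}$ with bounded distortion on $\mathtt{D}^{[n]}_0$, as in the proof of Lemma~\ref{lem:fjord-almost-invariance}. You invoke this only in your fallback, but it is needed already in step one.

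\emph{Step two.} This is the step you flag as the main obstacle, and your separation estimate there is inverted. Moreover, the criterion you propose, a relative gap controlled polynomially in $\threshold'$, would yield only a $\threshold$-dependent $K$, not the universal one the lemma asserts. The correct picture comes from Proposition~\ref{prop:R-apb-transcendental}. Let $I$ be the tile carrying a fjord. The fjord's base omits roughly the first and last $\threshold'$ subtiles $J_j$ of $I$, so its length is $\asymp |I|/\threshold'$, while its distance to $\partial I$ is at least $|J_1|\asymp |I|$. These are exactly the estimates in the proof of Lemma~\ref{lem:fjord-angle-control}. The relative gap is therefore $\succ \threshold'$, and it only improves as $\threshold$ grows.

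The nesting claims then follow from the same margins. Base endpoints are points of $\mathtt{Crit}$ that are cut points of every tiling carrying deeper fjords. Hence bases of different depths are either nested or disjoint, with the margins above. A deeper fjord over a sub-base lies inside the shallower fjord once $\nnu(\threshold)$ is below the universal angle at which dams meet $\R$. Any two disjoint maximal fjords are at distance $\succ$ the smaller base, with a universal constant. With these two corrections your bounded-turning count closes with universal constants, and the Schwarz-lemma fallback is unnecessary.
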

\begin{proof}
    This follows from Lemmas \ref{lem:qc-fjord-external} and \ref{lem:convergence-external}.
\end{proof}

Let us denote the union of all fjords by
\begin{equation}
    \label{eq:dfn:Fjord}
\Fjord := \bigcup_{n \in \Z} \Fjord^{[n]}.
\end{equation}
At the top level, we have:

\begin{corollary}[Uniform quasiconformality of $\Psi$]
\label{cor:Psi-qc}
    There exists an $\threshold$-uniform constant $K>1$ such that the map $\Psi: \C \backslash \hat{\Zbold} \to \UHP \backslash \Fjord$ extends to a $K$-quasiconformal map of the whole plane.
\end{corollary}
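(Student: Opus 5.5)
The plan is to combine two pieces of quasiconformal control. The first is the $\threshold$-uniform quasiconformality of the top pseudo-Siegel half-plane $\hat{\Zbold}$ from Proposition~\ref{prop:trans-pseudo-siegel}~(4). The second is the universal quasiconformal control of the fjord complement in external coordinates from Lemma~\ref{lem:fjords-bounds}. The map $\Psi$ restricted to $\C\backslash\hat{\Zbold}$ is conformal onto $\UHP\backslash\Fjord$. By Lemma~\ref{lem:fjord-pseudo-siegel-connection} with $m\to-\infty$, the set $\Psi^{-1}(\Fjord)\cup\Hbold$ equals $\hat{\Zbold}$, so this restriction is well defined. The task is therefore to extend a conformal map between two quasidisk-type domains of $\RS$ with $\infty$ on the boundary.

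First step. Fix a $K_1$-quasiconformal map $\Lambda\colon\C\to\C$ sending $\hat{\Zbold}$ onto the closed lower half-plane, normalized to fix $\infty$. This is possible with $K_1$ $\threshold$-uniform by Proposition~\ref{prop:trans-pseudo-siegel}~(4). Here I would note that $\hat{\Zbold}$ is $K$-qc equivalent to a closed half-plane, and the reflection across the boundary quasiline gives the global extension. Let $\lambda\colon\C\backslash\hat{\Zbold}\to\UHP$ be the restriction of $\Lambda$.

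Second step. Take the limit $m\to-\infty$ in Lemma~\ref{lem:fjords-bounds}. The domains $\UHP\backslash\bigcup_{n>m}\Fjord^{[n]}$ decrease to $\UHP\backslash\Fjord$ and are $K_2$-quasiconformally equivalent to $\UHP$ with $K_2$ universal. We normalize these equivalences at three boundary points, namely $0$, $\varepsilon_0$ (or a suitable point of $\partial\Fjord$) and $\infty$. Compactness of normalized $K_2$-qc maps then yields a global $K_2$-qc map $M\colon\C\to\C$ with $M(\UHP\backslash\Fjord)=\UHP$. Nondegeneracy of the limit uses that fjords are uniformly separated and bounded relative to scale, by Real Bounds (Proposition~\ref{prop:R-apb-transcendental}) and almost invariance (Lemma~\ref{lem:fjord-almost-invariance}). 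This guarantees that the boundary of $\UHP\backslash\Fjord$ is a $K_2$-quasiline through $\infty$.

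Third step. Consider $M\circ\Psi\circ\lambda^{-1}\colon\UHP\to\UHP$. It is a $K_1K_2$-quasiconformal self-map of the upper half-plane fixing $\infty$. Such a map extends to $\R$ as a quasisymmetric homeomorphism with constant depending only on $K_1K_2$. By Beurling--Ahlfors it then extends to a global $K_3$-qc map $E$ of $\C$; alternatively, reflect in $\R$. Define
\[
\tilde\Psi := M^{-1}\circ E\circ\Lambda .
\]
This is a global $K$-qc map with $K$ $\threshold$-uniform, and on $\C\backslash\hat{\Zbold}$ it agrees with $\Psi$.

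The main obstacle is justifying that the qc equivalences in the first two steps are genuinely global, i.e. that the boundaries $\partial\hat{\Zbold}$ and $\partial(\UHP\backslash\Fjord)$ are quasilines through $\infty$ and not merely locally quasiconformal on compacta. The issue is at $\infty$, where infinitely many scales accumulate. This is exactly where the exponential scaling of Lemma~\ref{lem:scaling-critical-value} and Proposition~\ref{prop:trans-sectors}~(1) enter. They let one check the bounded-turning (Ahlfors three-point) condition scale by scale with constants independent of the scale, which I would verify through the Carath\'eodory convergence of Lemma~\ref{lem:convergence-external} from the polynomial case (Corollary~\ref{cor:qc-external-coords}).
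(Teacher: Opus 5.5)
Your proposal is correct and follows the paper's route. The paper's proof is one line: by Proposition~\ref{prop:trans-pseudo-siegel}~(4) and Lemma~\ref{lem:fjords-bounds}, both $\partial\hat{\Zbold}$ and $\partial(\UHP\backslash\Fjord)$ are $\threshold$-uniform quasilines, so the conformal map between the complementary domains extends. Your composition $M^{-1}\circ E\circ\Lambda$, together with the limit $m\to-\infty$ in Lemma~\ref{lem:fjords-bounds}, just makes this standard extension explicit, and the global-versus-local worry in your last paragraph is already absorbed into those two cited results.
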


\begin{proof}
    This is because, according to Proposition \ref{prop:trans-pseudo-siegel} (4) and Lemma \ref{lem:fjords-bounds}, the boundaries of $\hat{\Zbold}$ and $\UHP \backslash \Fjord$ are $\threshold$-uniformly quasiconformally equivalent to a straight line.
\end{proof}

Recall from \S\ref{ss:natural-extension} the conversion from $Q_{[m]}$'s and $\abar_m$'s to $Q_n$'s and $a_n$'s.

\begin{definition}[Depths vs. levels]
\label{def:parabolic-fjord}
    If $n \in \Z$ is such that $Q_{n} = Q_{[m]}$ for some $m \in \Z$, we will denote $\Fjord^n = \Fjord^{[m]}$ and call every connected component $\fjord^n_{-P} = \fjord^{[m]}_{-P}$ of $\Fjord^n$ a \emph{parabolic fjord} of \emph{depth} $n$ and \emph{level} $m$. 
    We call
\[
    \fjord_n := \fjord^{[m]}_0.
\]
    the \emph{principal parabolic fjord} of \emph{depth} $n$ and level $m$.
    
    If $\fjord_n$ is non-empty, we say that the integer $n \in \Z$ is a \emph{near-parabolic} (\emph{NP}) \emph{depth}.
    If $a_{n+1} = \infty$, then we say that $n$ is a \emph{parabolic depth}.
\end{definition}

Near-parabolicity is governed by the combinatorial threshold $\threshold$. 
Observe that if $n$ is an NP depth, we must have $a_{n+1} \geq \threshold -1$. 
Conversely, if $a_{n+1} \geq \threshold$, then $n$ is an NP depth.

\begin{lemma}[Angular control of fjords]
\label{lem:fjord-angle-control}
    There exists an $\threshold$-uniform constant $\nnu = \nnu(\threshold) \in \left( 0,\frac{\pi}{2} \right)$ such that $\nnu(\threshold) \to 0$ as $\threshold \to \infty$ and the following holds for every fjord $\fjord$.
    Let $n$ be the depth of $\fjord$.
    Let $x_- < y_- < y_+ < x_+$ be real numbers such that $[x_-,x_+]$ be the unique tile in $\tiling_{Q_{n-1}}$ that contains $\parR \fjord$ and that both $[x_-,y_-]$ and $[y_+,x_+]$ are tiles in $\tiling_{Q_n}$. 
    Then,
    \[
        \fjord \subset \{z \in \C \: : \: 0 \leq \arg(z-y_-) \leq \nnu, \: \pi-\nnu \leq \arg(z-y_+) \leq \pi \}. 
    \]
\end{lemma}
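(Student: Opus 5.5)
We first reduce to the principal fjord and then transport the angular estimate to every other fjord by controlled pullback.

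\emph{Step 1: the principal fjord.} Let $\fjord=\fjord_n$ be principal of depth $n$ and level $m$. Its real base is $[\mathtt{x}'_n,\mathtt{y}'_n]$, and its upper boundary is the geodesic dam $\mathtt{d}^{[m]}_0$, which is a semicircle over that base. Inside the tile $[x_-,x_+]\in\tiling_{Q_{n-1}}$, the tiles $J_1,\dots,J_k$ of $\tiling_{Q_n}$ satisfy $|J_j|\asymp |I|/\min\{j,k+1-j\}^2$ by Proposition~\ref{prop:R-apb-transcendental}(2). In the parabolic case (3)(b) the analogous estimate holds with the tiles accumulating at $\beta_I$. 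The endpoints $\mathtt{x}'_n,\mathtt{y}'_n$ are separated from $y_-$ and $y_+$ by roughly $\threshold'$ tiles of $\tiling_{Q_n}$ on each side. Summing the tail $\sum_{j\ge \threshold'} |I|/j^2\asymp |I|/\threshold'$ shows that $\mathtt{x}'_n-y_-$ is much larger than the height of the semicircle near $y_-$, relative to the distance. Concretely, we will compare the radius $r$ of the semicircle with the distance $\delta$ from its endpoints to $y_\pm$. We have $\delta\gtrsim |I|\,(1/2-1/\threshold')$-type bounds at the scale of the boundary tile, whereas the points of the semicircle close to $y_-$ lie within $\asymp|I|/\threshold'$ of $\mathtt{x}'_n$.

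This is not quite right as stated: a semicircle based at $\mathtt{x}'_n$ makes angle up to $\pi/2$ with the real axis. What makes the cone angle at $y_-$ small is that the distance $\mathtt{x}'_n-y_-$ is large compared with the height of the fjord. So the real estimate needed is
\[
\frac{\operatorname{diam}\fjord}{\operatorname{dist}(\fjord,\{y_-,y_+\})}\to 0
\quad\text{as }\threshold\to\infty ?
\]
This fails if $r\sim|I|$. We therefore use the opposite observation. The point $y_-$ lies at distance $\asymp|J_1|\asymp|I|$ from $x_-$, since $J_1$ is the first tile. Hence $y_-$ and $y_+$ are near the outer ends of $I$, and the fjord, whose base is contained in $[y_-,y_+]$, subtends from $y_-$ an angle governed by $r/(\mathtt{y}'_n-y_-)$. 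Here $r\le(\mathtt{y}'_n-\mathtt{x}'_n)/2$ and $\mathtt{x}'_n-y_-\gtrsim |I|/\threshold'$. The angle at $y_-$ is at most $\arctan\big(r/(\mathtt{x}'_n-y_-)\big)$. For this to be small we need $r\ll \mathtt{x}'_n-y_-$, which does not hold for a full semicircle. So Step 1 must instead use the cone property directly: every point $z$ of the semicircle satisfies $\arg(z-y_-)\le \arg(z-\mathtt{x}'_n)\cdot$(correction). I expect this geometric bookkeeping to be the main obstacle, and the first thing I would try is to verify carefully from Real Bounds that $y_-$ and $y_+$ lie strictly outside the base at distance comparable to $r$ times a factor tending to infinity with $\threshold$.

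\emph{Step 2: all fjords.} For a general fjord $\fjord^{[m]}_{-P}$ with $P<Q_{[m]}$, we use the univalent inverse branch $\Fext^{-P}$ from the proof of Lemma~\ref{lem:fjord-almost-invariance}. This branch is defined on a slit domain $U_0$ with definite modulus around the disk $\mathtt{D}_0$. It is real-symmetric and maps the corresponding tiles $x_\pm,y_\pm$ to those of the pulled-back configuration, because $\Fext^P$ preserves $\mathtt{Crit}$ and the tilings. By Koebe distortion on a definite neighborhood that includes $[x_-,x_+]$, this branch is nearly affine up to a uniform factor. Real-symmetric maps with bounded distortion send cones of angle $\nnu$ at real points into cones of angle $C\nnu$. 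Step 1 therefore applies after replacing $\nnu$ with $C\nnu$, which still tends to $0$ as $\threshold\to\infty$.
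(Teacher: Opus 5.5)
Step 1 has a genuine gap. It never proves the estimate the lemma rests on, and in fact you have swapped the two Real Bounds quantities.

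The tail sum $\sum_{j\ge\threshold'}|I|/j^2\asymp|I|/\threshold'$ that you compute is not the separation $\mathtt{x}'_n-y_-$; it is the length of the base.
\begin{itemize}
\item The base $\parR\fjord$ is the union of the \emph{middle} subtiles $J_j$ with $\threshold'\le j\le k+1-\threshold'$, up to bounded index shifts. In the parabolic case it is the subtiles on both sides of $\beta_I$ from index about $\threshold'$ on. Hence $|\parR\fjord|\asymp|I|/\threshold'$.
\item The gap between $y_-$ and the base is $J_2\cup\dots\cup J_{\threshold'}$, of length $\asymp|I|\sum_{2\le j\le\threshold'}j^{-2}\asymp|I|$. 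The same holds at $y_+$.
\end{itemize}
So the scenario $r\sim|I|$ that derails you never occurs. You have $r=O(|I|/\threshold')$ while $\operatorname{dist}(\parR\fjord,\{y_-,y_+\})\asymp|I|\asymp y_+-y_-$. This is exactly the factor tending to infinity that you say you would still have to verify. Your fallback of comparing $\arg(z-y_-)$ with $\arg(z-\mathtt{x}'_n)$ cannot work, since the latter ranges up to $\pi/2$.

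You also need a height bound valid for every fjord, not only for the geodesic principal dam. By Lemma~\ref{lem:fjord-almost-invariance}, each dam lies within a universal hyperbolic distance of the geodesic with the same endpoints. Each fjord therefore lies under the outer equidistant circular arc over its base, a region of Euclidean diameter $O(|\parR\fjord|)$. Hence every $z\in\fjord$ satisfies $\operatorname{Im}z=O((y_+-y_-)/\threshold')$ and $\operatorname{Re}z-y_-\asymp y_+-y_-$. This gives $0\le\arg(z-y_-)=O(1/\threshold')$, and symmetrically at $y_+$. This is the paper's proof.

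Proposition~\ref{prop:R-apb-transcendental} holds for every tile and Lemma~\ref{lem:fjord-almost-invariance} holds for every dam, so this argument covers all fjords at once and your Step 2 is unnecessary. As written, Step 2 also overreaches. For the principal fjord, the tile $[x_-,x_+]$ contains the critical value $0$ in one of its end subtiles. So no univalent inverse branch has Koebe room on a neighborhood of all of $[x_-,x_+]$. The definite modulus you quote from the almost-invariance proof is that of $U_0\setminus\mathtt{D}_0$, which controls the branch only at the scale of the base. To transport cones with vertices at $y_\pm$ you would need room around all of $[y_-,y_+]$ at scale $|I|$. That requires an additional check that $0$ sits at a definite relative distance from the adjacent point $y_\pm$.
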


See Figure \ref{fig:angular-control-fjords}.
\begin{figure}
    \centering
    \begin{tikzpicture}[scale=1.2]
    \filldraw[color=green!20!white] (-0.8,0) .. controls (-1,1.2) and (1,1.2) .. (0.8,0) -- (-0.8,0);
    \draw[black,thick] (-5,0)--(5,0);
    \draw[red] (-3,0)--(0,1)--(3,0);
    
    \node at (-4,0) {\small $\bullet$};
    \node at (-3,0) {\small $\bullet$};
    \node at (3,0) {\small $\bullet$};
    \node at (4,0) {\small $\bullet$};
    
    \node at (-4,-0.3) {\small $x_-$};
    \node at (-3,-0.3) {\small $y_-$};
    \node at (3,-0.3) {\small $y_+$};
    \node at (4,-0.3) {\small $x_+$};
    \node[green!50!black] at (0,0.4) {$\fjord$};

    \draw[blue!70!black] (-2.4,0) arc (0:18.5:0.6);
    \draw[blue!70!black] (2.4,0) arc (180:161.5:0.6);
    \node[blue!70!black] at (2.1,0.12) {\small $\nnu$};
    \node[blue!70!black] at (-2.1,0.12) {\small $\nnu$};
\end{tikzpicture}
    \caption{The uniform angular control of fjords as described in Lemma \ref{lem:fjord-angle-control}.}
    \label{fig:angular-control-fjords}
\end{figure}

\begin{proof}
    Real Bounds imply the following estimates on $\parR \fjord$:
    \[
        |\parR \fjord| \asymp \frac{y_+-y_-}{\threshold'} \qquad \text{and} \qquad
        \dist(\parR \fjord, \{y_-,y_+\}) \asymp \frac{y_+-y_-}{2}\left( 1-\frac{1}{\threshold'}\right).
    \]
    Then, the desired lemma follows from the estimates above together with Lemma \ref{lem:fjord-almost-invariance}.
\end{proof}

\begin{lemma}[Beta periodic points]
\label{lem:repelling-periodic-point}
    Let $n \in \Z$ be an NP depth and $\fjord$ be a fjord of depth $n$. 
    Then, $\fjord$ contains a periodic point $\beta_\fjord$ of $\Fext$ with period $Q_n$ satisfying the following properties.
    Let $I_{\fjord}$ be the unique tile in $\tiling_{Q_{n-1}}$ that contains $\parR \fjord$.
    \begin{enumerate}
        \item If $n$ is a parabolic depth, then $\beta_\fjord$ is equal to the point $\beta_{I_{\fjord}}$ from Proposition \ref{prop:R-apb-transcendental}.
        \item Otherwise, $\beta_\fjord$ is a repelling periodic point located in the interior of $\fjord$ with 
            \[
                \textnormal{Im} \beta_\fjord \asymp \frac{|I_{\fjord}|}{a_{n+1}}.
            \]
    \end{enumerate}
    Moreover, $\{\beta_{\fjord} \: : \: \fjord \text{ is a fjord of depth }n\}$ is a single cycle of periodic points of $\Fext$.
\end{lemma}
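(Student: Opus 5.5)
The plan is to prove Lemma~\ref{lem:repelling-periodic-point} by transferring the $\beta$-fixed point of the renormalized map at level $m$ (where $Q_n=Q_{[m]}$) to the cascade and then into external coordinates. First I treat the principal fjord $\fjord_n=\fjord^{[m]}_0$; all other fjords of depth $n$ are handled by pulling back along the dynamics at the end.

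\textbf{Parabolic depth ($a_{n+1}=\infty$).} Let $I=I_{\fjord_n}\in\tiling_{Q_{n-1}}$. Proposition~\ref{prop:R-apb-transcendental}(3) gives a simple parabolic fixed point $\beta_I\in\textnormal{int}\, I$ of $\Fext^{Q_n}$. The tiles $J_j$ of $\tiling_{Q_{n+1}}$ accumulate at $\beta_I$ from both sides. By construction, $\parR\fjord_n=[\mathtt x'_n,\mathtt y'_n]$ consists of all such tiles except the first $\threshold'$ on each side. So $\beta_I\in\parR\fjord_n$, and we set $\beta_{\fjord_n}:=\beta_I$.

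\textbf{Non-parabolic NP depth, construction.} Use Proposition~\ref{prop:trans-sectors}(6) together with Proposition~\ref{prop:trans-pseudo-siegel}(2). Under $\pphi_m$, the map $\Fbold^{Q_{[m]}}$ near the top regularized fjord corresponds to $f_m$ near $\hat V_m$. Theorem~\ref{thm:beta-fixed-points}(1) provides a unique repelling fixed point $\beta\in\textnormal{int}\,\hat V_m$ of $f_m$. Lifting it by $\pphi_m^{-1}$ and applying $\Psi$ gives a repelling fixed point $\beta_{\fjord_n}$ of $\Fext^{Q_n}$ in the interior of $\fjord_n$. Since $\Psi$ conjugates $\Fbold$ to $\Fext$ off $\Hbold$, the multiplier is preserved. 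One must check that the lifted point lies in $\Psi^{-1}(\fjord_n)$ and not in a deeper fjord; this follows from Lemma~\ref{lem:fjord-pseudo-siegel-connection}.

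\textbf{Non-parabolic NP depth, height estimate.} This is the main obstacle. I would argue in external coordinates on the tile $I$, by compactness and renormalization:
\begin{enumerate}
\item Rescale $I$ to unit length. Real Bounds and the fjord bounds (Lemmas~\ref{lem:fjord-almost-invariance} and~\ref{lem:fjord-angle-control}) show that $\Fext^{Q_n}$, restricted to a neighborhood of $\parR\fjord_n$ above the dam, forms a precompact family of univalent maps close to a translation by $\asymp 1/a_{n+1}$ along $\R$ on most of $\parR\fjord_n$.
\item In the parabolic limit $a_{n+1}\to\infty$, the fixed point collides onto $\R$ as a simple parabolic point, as in the proof of Proposition~\ref{prop:R-apb-transcendental}.
\item For the rate, use the Schwarz-reflected real-analytic map $g=\Fext^{Q_n}$. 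It has a pair of complex-conjugate fixed points $\beta,\bar\beta$ near a point where $g(x)-x$ has a non-degenerate minimum of size $\asymp 1/a_{n+1}$; the minimum is read off from the tile lengths $|J_j|\asymp |I|/j^2$. Non-degeneracy of the second derivative is uniform by compactness (simplicity of the parabolic limit, Theorem~\ref{thm:beta-fixed-points}).
\item Solving $g(z)=z$ for such a map gives $\textnormal{Im}\,\beta\asymp\sqrt{1/a_{n+1}}$ in the rescaled coordinate. This is likely the wrong scale as stated, so I would instead pass to perturbed Fatou coordinates (Lemma~\ref{lem:dividing-pair-extension}, \cite{DLy26a}). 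There the fjord is uniformized to a strip of width $\asymp a_{n+1}$, and I expect the stated bound $\textnormal{Im}\,\beta\asymp|I|/a_{n+1}$ to emerge after comparing with $|\beta|\asymp|\theta|$ from Theorem~\ref{thm:beta-fixed-points}(1).
\end{enumerate}
I would resolve the correct normalization by testing it against Theorem~\ref{thm:beta-fixed-points}.

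\textbf{The cycle.} For any fjord $\fjord^{[m]}_{-P}$ with $0\le P<Q_{[m]}$, define $\beta_{\fjord^{[m]}_{-P}}$ as the preimage of $\beta_{\fjord_n}$ under the univalent inverse branch $\Fext^{-P}$ used in the proof of Lemma~\ref{lem:fjord-almost-invariance}.
\begin{enumerate}
\item By commutativity of the semigroup, each such point is fixed by $\Fext^{Q_n}$.
\item The inverse branch has bounded distortion, so the repelling property and the height estimate, scaled by the respective tile of $\tiling_{Q_{n-1}}$, are preserved.
\item These points are permuted by $\Fext^{P}$, $P<Q_n$, and return under $\Fext^{Q_n}$, so they form a single cycle.
\item Uniqueness of $\beta$ in $\hat V_m$ rules out a second cycle.
\end{enumerate}
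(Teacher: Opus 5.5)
Your existence argument (transferring the repelling fixed point of $f_m$ in $\hat V_m$ through $\pphi_m^{-1}$ and $\Psi$), the parabolic case, and the spreading to the other fjords are fine. This is a legitimate alternative to the paper, which obtains the whole lemma at once from standard parabolic bifurcation for the real-symmetric map $\Fext^{Q_n}$ (cf. \cite[Lemma 2.4]{dFdM00}).

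One small repair in the spreading step: commutativity only shows that $\Fext^{Q_n}(\beta_{-P})$ is \emph{some} $\Fext^{P}$-preimage of $\beta_{\fjord_n}$. It is cleaner to set $\beta_{-P}:=\Fext^{Q_n-P}(\beta_{\fjord_n})$, which is $Q_n$-periodic by commutativity, and then use almost invariance to place it in $\fjord^{[m]}_{-P}$.

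The genuine gap is the estimate $\textnormal{Im}\,\beta_\fjord\asymp|I_\fjord|/a_{n+1}$. You never prove it, and in step 4 you compute a different scale and doubt the statement.

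The error is the input in steps 1 and 3. On $\parR\fjord_n$, the map $g=\Fext^{Q_n}$ is not close to a translation by $\asymp 1/a_{n+1}$. By Proposition \ref{prop:R-apb-transcendental}(2), the tile $I\in\tiling_{Q_n}$ containing $\parR\fjord_n$ (comparable in length to $I_{\fjord_n}$) is cut into $k\asymp a_{n+1}$ tiles with $|J_j|\asymp |I|/\min\{j,k+1-j\}^2$. The map $g$ sends each $J_j$ onto a neighbouring tile, so $g(x)-x\asymp|J_j|$ on $J_j$. Thus the displacement decreases from $\asymp |I|/(\threshold')^2$ at the ends of $\parR\fjord_n$ to a minimum $\epsilon\asymp|I|/a_{n+1}^2$ in the middle.

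Feed this correct $\epsilon$ into your own quadratic model, together with the uniform non-degeneracy $|g''|\asymp 1/|I|$ that you correctly get from precompactness and simplicity of the parabolic limit. This gives $\textnormal{Im}\,\beta\asymp\sqrt{\epsilon\,|I|}\asymp|I|/a_{n+1}$, exactly as claimed. Equivalently, for $z\mapsto z+\epsilon+z^2$ the passage through the gate takes $\asymp 1/\sqrt{\epsilon}$ steps, so $a_{n+1}\asymp 1/\sqrt{\epsilon}$, while the fixed points are $\pm i\sqrt{\epsilon}$.

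So the route you abandoned is the right one, and it is what the paper's one-line proof refers to. The Fatou-coordinate detour is unnecessary. Comparing with $|\beta|\asymp|\theta|$ cannot work directly, because distance to $0$ in the plane of $f_m$ is not preserved by the non-linear maps $\pphi_m$ and $\Psi$.

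If you do want to use the tower, transfer the multiplier instead, since it is a conformal invariant. The holomorphic index formula gives $|1-f_m'(\beta)|\asymp|\theta_m|\asymp 1/a_{n+1}$. Writing $g(z)-z=(z-\beta)(z-\bar\beta)h(z)$ with $|h|\asymp 1/|I|$ gives $|1-g'(\beta)|=2\,\textnormal{Im}\,\beta\,|h(\beta)|$, hence the same estimate.
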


\begin{proof}
    The whole lemma follows from standard parabolic bifurcation theory (compare with \cite[Lemma 2.4]{dFdM00}).
\end{proof}

For the principal fjord $\fjord_n$, we will denote the corresponding beta periodic point in $\fjord_n$ by 
\[
\beta_n := \beta_{\fjord_n}.
\]


\section{Hoglets, pseudo-bubbles, and lakes}
\label{sec:bounds}

As mentioned in~\S\ref{ss:outline}, we are interested in the dynamics of hoglets and surrounding pseudo-bubbles. 
We emphasize that the QC geometry of hoglets on all levels remembers anti-renormalization history (cf. Remark~\ref{rem:anti.renorm.history}).

Let us fix a neutral cascade $\Fbold = (\Fbold^P)_{P \in \Tbold}$ with some fixed combinatorics $\tttheta = \seq{ (\varepsilon_n, \abar_n) }_{n \in \Z}$. 
Notation introduced in the previous section, e.g. $\{C_P\}_{P \in \Kbold}$, $\{Q_{[n]}\}_{n \in \Z}$, $\{Q_n\}_{n \in \Z}$, $\{\Sbold^m\}_{m \in \Z}$, $\{\hat{\Zbold}^{[m]}\}_{m \in \Z}$, $\Hbold$, $\Psi$, $\Fjord$, etc will be used throughout the next three sections.

We will study in detail the dynamics of $\Fbold$ away from the Mother Hedgehog $\Hbold$, or equivalently, the structure of the corresponding external cascade $\Fext = (\Fext^P)_{P \in \Tbold}$ via the Riemann mapping $\Psi: \C \backslash \Hbold \to \UHP$. 
In this section, we will describe the properties of \emph{hoglets} (strict preimages of $\Hbold$); particularly, we transfer Theorem \ref{thm:bubble-bounds} to Pseudo-Bubble Bounds (Proposition \ref{prop:bubble-bounds}) for $\Fbold$ and $\Fext$ in \S\ref{ss:bubbles-and-pseudo-bubbles}.
These are then used in \S\ref{ss:alpha-points} to obtain the existence and some uniform estimates of the alpha-point associated to each hoglet, i.e. the ``preimage`` of the neutral fixed point.
In \S\ref{ss:disjointness-of-pseudo-bubbles}, we apply Pseudo-Bubble Bounds and the angular estimate of fjords (Lemma \ref{lem:fjord-angle-control}) to prove disjointness of primary pseudo-bubbles assuming that the combinatorial threshold $\threshold$ is sufficiently large.

In \S\ref{ss:primary-lakes}, we formulate some basic properties of \emph{primary lakes} (preimages of $\UHP$ under $\Fext$ touching the real line).
In \S\ref{ss:bubble-chains}, we organize hoglets into principal periodic chains for each depth $n$, and describe how these they uniformly shadow alpha-points of primary hoglets and, when $n$ is an NP depth, interact with principal fjords and parabolic basins.
Finally, in \S\ref{ss:lake}, we combine all of the above to obtain uniform geometric bounds on primary lakes (Proposition \ref{prop:lake-bounds-0}); particularly, it implies boundedness of all lakes (Corollary \ref{cor:bounded-lakes}).

The key notations introduced in this section are available in \S\ref{sss:cascade}.

\subsection{Pseudo-Bubble Bounds}
\label{ss:bubbles-and-pseudo-bubbles}

Consider the top pseudo-Siegel half-plane $\hat{\Zbold}$ of $\Fbold$ with a fixed sufficiently high combinatorial threshold $\threshold$.

\begin{definition}
\label{dfn:prim:bubble}
    For $P \in \Tbold$, denote by $\hat{\Bbold}_{P}$ the closure of the lift of the interior of $\hat{\Zbold}$ under $\Fbold^P$ that is disjoint from $\Hbold$ but still contains the critical point $C_{-P}$ on its boundary. 
    Let $\Bbold_{P}$ be the subset of $\hat{\Bbold}_{P}$ that is the closure of the preimage of $\Hbold$ under $\Fbold^P$. 
    We will call $\Bbold_P$ the \emph{primary hoglet} of $\Fbold$ of generation $P$ and $\hat{\Bbold}_P$ the \emph{primary pseudo-bubble} of $\Fbold$ of generation $P$. 
    The critical point $C_{-P}$ is called the \emph{root} of $\Bbold_P$ and $\hat{\Bbold}_P$.
    
    In general, a \emph{hoglet} (resp. \emph{pseudo-bubble}) of generation $P \in \Tbold$ is a connected component of $\Fbold^{-(P-Q)}(\Bbold_Q)$ (resp. $\Fbold^{-(P-Q)}(\hat{\Bbold}_Q)$) for some $Q \in \Tbold$ with $Q<P$. 
    The \emph{root} is the corresponding lift of $C_{-Q}$ under $\Fbold^{P-Q}$ contained in the hoglet. 
    We say that a hoglet (resp. pseudo-bubble) is \emph{secondary} if it is not primary and its root is contained in a primary hoglet.

    In later applications, we will be primarily working with hoglets and pseudo-bubbles in the external coordinates.
    We define a \emph{hoglet} (resp. \emph{pseudo-bubble}) of $\Fext$ of some generation $P \in \Tbold$ to be the image under $\Psi$ of a \emph{hoglet} (resp. \emph{pseudo-bubble}) of $\Fbold$ of generation $P$. 
\end{definition}

\begin{proposition}[Pseudo-Bubble Bounds]
\label{prop:bubble-bounds}
    There exist universal constants $K >1$ and $\ttau \in \left( 0,\frac{\pi}{2} \right)$ and an $\threshold$-uniform constant $K' >1$ such that the following holds for every hoglet $\Bbold$ of $\Fbold$.
    Let $\hat{\Bbold}$ be the corresponding pseudo-bubble and let $\Bubb = \Psi(\Bbold)$ and $\hat{\Bubb} = \Psi(\hat{\Bbold})$ be the corresponding objects in external coordinates.
    \begin{enumerate}
        \item Both $\hat{\Bbold}$ and $\hat{\Bubb}$ are closed $K'$-quasidisks in $\C$.
        \item Suppose $\Bbold=\Bbold_P$ is a primary hoglet of some generation $P \in \Tbold$.
        \begin{enumerate}[label=\textnormal{(\alph*)}]
            \item Size control: Let $n \in \Z$ be such that $Q_n < P \leq Q_{n+1}$. We have
            \[
                K^{-1} |C_{-P} - C_{Q_n-P}| \leq \diam(\Bbold) \leq \diam(\hat{\Bbold}) \leq K|C_{-P} - C_{Q_n-P}|,
            \]
            and 
            \[
                K^{-1} |\crit_{-P} - \crit_{Q_n-P}| \leq \diam(\Bubb) \leq \diam(\hat{\Bubb}) \leq K|\crit_{-P} - \crit_{Q_n-P}|,
            \]
            \item Angular control: We have 
        \[
            \hat{\Bubb}_P \backslash \{\crit_{-P}\} \subset \left\{ z \in \UHP \: : \: \left| \arg(z-\crit_{-P}) - \frac{\pi}{2} \right| < \ttau \right\};
        \]
        \end{enumerate}
        \item If $\Bbold$ is non-primary, then $\hat{\Bubb}$ has diameter at most $K$ with respect to the hyperbolic metric of $\UHP$.
    \end{enumerate} 
\end{proposition}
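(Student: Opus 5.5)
The plan is to deduce Proposition~\ref{prop:bubble-bounds} from Theorem~\ref{thm:bubble-bounds} by passing to the limit along a generating sequence $(\theta_k,n_k,u_k)$ for $\Fbold$. All the estimates in Theorem~\ref{thm:bubble-bounds} are scale invariant: quasidisk dilatation, diameter comparisons, angular sectors and hyperbolic diameters are unchanged under the affine rescalings $u_k$. So the whole task is to identify every hoglet and pseudo-bubble of $\Fbold$ (and of $\Fext$) as a limit of rescaled hoglets and pseudo-bubbles of $f_{\theta_k}$, with external coordinates converging as well.

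\textbf{Step 1 (primary objects).} Fix $P\in\Tbold$. By Theorem~\ref{thm:transcendental-extension}, $(f_k^{\#})^{P_k^{\#}}\to\Fbold^P$ locally uniformly on $\Dom(\Fbold^P)$, and $v^{\#}_{k,-P_k^{\#}}\to C_{-P}$. By Proposition~\ref{prop:trans-pseudo-siegel}(5) and Proposition~\ref{prop:trans-mother}(4), $u_k(\hat Z_{\theta_k})\to\hat{\Zbold}\cup\{\infty\}$ and $u_k(H_{\theta_k})\to\Hbold\cup\{\infty\}$ in the Hausdorff sense. The rescaled primary pseudo-bubbles $u_k(\hat B^{P_k^{\#}}_{\theta_k})$ are uniform $K'$-quasidisks. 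By the size control in Theorem~\ref{thm:bubble-bounds}(2), together with Lemma~\ref{lem:scaling-critical-value} and Theorem~\ref{thm:transcendental-extension}(2)(c), their diameters are comparable to $|v^{\#}_{k,-P^{\#}_k}-v^{\#}_{k,(Q_n-P)^{\#}_k}|\to|C_{-P}-C_{Q_n-P}|>0$. Here we use that $q^k_{[\cdot]}$ and the slow times correspond under the conversion of \S\ref{sss:conversion}, for $k$ large.

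It follows that, after passing to a subsequence, these quasidisks converge in the Hausdorff sense to a bounded nondegenerate $K'$-quasidisk $\hat{\Bbold}'$ rooted at $C_{-P}$. The main point is to show $\hat{\Bbold}'=\hat{\Bbold}_P$. Pulling back under the univalent branch of $(f_k^{\#})^{P_k^{\#}}$ and using the locally uniform convergence, the interior of $\hat{\Bbold}'$ is a component of $\Fbold^{-P}(\textnormal{int}\,\hat{\Zbold})$ that avoids $\Hbold$ and has $C_{-P}$ on its boundary. The same argument applied to $\Hbold$ identifies the limit of the hoglets with $\Bbold_P$. This gives (1) and the first inequality of (2)(a) for $\Fbold$.

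\textbf{Step 2 (external coordinates).} By Lemma~\ref{lem:convergence-external}, $\Psi_k\to\Psi$ in the Carath\'eodory sense and $\Psi_k(u_k v_{\theta_k,\cdot})\to\crit_\cdot$. The maps $\Psi_k$ are the real-line versions of $\Psi_{\theta_k}$, obtained by postcomposing with a M\"obius map of $\RS\setminus\overline{\D}$ onto $\UHP$ sending the critical point to $\infty$. Near the relevant critical points this M\"obius change has bounded distortion at the scale of the tiles, by Real Bounds (Proposition~\ref{prop:R-apb-transcendental}). Consequently:
\begin{itemize}
\item the quasidisk property of $\hat{\mathsf B}$ transfers, with Corollary~\ref{cor:Psi-qc} available as an alternative route;
\item the size bound for $\Bubb,\hat{\Bubb}$ follows from the external size bound in Theorem~\ref{thm:bubble-bounds};
\item the angular control, namely hyperbolic distance $\le K$ from the radial ray, becomes containment in a Stolz-type cone $|\arg(z-\crit_{-P})-\pi/2|<\ttau$, with $\ttau$ universal. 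This uses the fact that $\hat{\Bubb}_P$ has diameter comparable to its distance from the nearby critical points.
\end{itemize}

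\textbf{Step 3 (non-primary objects).} A non-primary hoglet is a lift of a primary one under some $\Fbold^{P-Q}$. It is again a limit of the corresponding non-primary hoglets of $f_{\theta_k}$, by the same pullback argument. Hyperbolic diameter in $\C\setminus H$ is preserved under the Carath\'eodory limit $\Psi_k\to\Psi$, so Theorem~\ref{thm:bubble-bounds}(3) gives (3).

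\textbf{Main obstacle.} I expect the main obstacle to be the identification in Step~1: the limit of the rescaled pseudo-bubbles must be exactly $\hat{\Bbold}_P$, neither a proper subset nor a union of several components. This in turn needs the uniform lower bound on the diameters, supplied by the size control combined with the separation of the $C_P$ established in Claim~1 of the proof of Theorem~\ref{thm:transcendental-extension}. It also needs the relevant inverse branches to stay uniformly univalent near the limit; for that I would use the critical-point description in Lemma~\ref{lem:crit-points}.
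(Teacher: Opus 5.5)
Your plan follows the paper's route: pass Theorem~\ref{thm:bubble-bounds} to the limit along a generating sequence $(\theta_k,n_k,u_k)$, identify the limits of the rescaled primary pseudo-bubbles and hoglets with $\hat{\Bbold}_P$ and $\Bbold_P$, transfer the bounds to external coordinates via Lemma~\ref{lem:convergence-external}, and treat non-primary pseudo-bubbles by approximation (the paper approximates only the secondary ones directly and reaches deeper ones by a bounded-distortion pullback). For the identification step you single out, the paper resolves it with a global $K'$-quasiconformal extension $\hat{F}_k^{p_k}$ of $(f_k^{\#})^{p_k}\colon \hat{B}^{\#}_{k,p_k}\to\hat{Z}^{\#}_k$, normalized at the root and at the two boundary preimages $x_k^{\pm}$ of $v^{\#}_{k,\pm q^k_{m_k+1}}$ (kept at definite mutual distance by Koebe near the critical point); these maps then converge on all of $\RS$ to a quasiconformal $\hat{F}_P$, one sets $\hat{\Bbold}_P=\hat{F}_P^{-1}(\hat{\Zbold}\cup\{\infty\})$ and $\Bbold_P=\hat{F}_P^{-1}(\Hbold\cup\{\infty\})$, and this normalization is exactly what your pullback argument is missing to keep the limiting inverse branch from degenerating.
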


\begin{proof}
    Following Theorem \ref{thm:transcendental-extension}, let $(\theta_k,n_k,u_k)$ be a generating sequence for $\Fbold$.
    For $k \geq 1$ and $j \in \Z$, we will again use the notation
    \[
        f^{\#}_k = u_k \circ f_{\theta_k} \circ u_k^{-1}
        \quad \text{ and } \quad
        v^{\#}_{k,j} = u_k(v_{\theta_k,j}).
    \]
    For any $k \geq 1$ and $m \geq -1$, we will also denote the rescaled Mother Hedgehog and pseudo-Siegel disks by
    \[
        H^{\#}_k = u_k(H_{\theta_k}),
        \quad \text{ and } \quad
        \hat{Z}^{\#}_k = u_k(\hat{Z}_{\theta_k}).
    \]
    According to Propositions \ref{prop:trans-pseudo-siegel} and \ref{prop:trans-mother}, as $k \to \infty$,
    \begin{equation}
    \label{eqn:z-and-h-convergence}
        \hat{Z}_k^{\#} \to \hat{\Zbold} \cup \{\infty\}
        \qquad \text{ and } \qquad 
        H_k^{\#} \to \Hbold \cup \{\infty\}
    \end{equation}
    in the Carath\'eodory topology of compact subsets of $\RS$.
    For $k \geq 1$ and $j \geq 1$, let $B_{\theta_k}^j$ and $\hat{B}_{\theta_k}^j$ be the primary hoglet of $f_{\theta_k}$ of generation $j$; denote the corresponding rescalings by
    \[
    B_{k,j}^{\#} = u_k(B_{\theta_k}^j)
    \quad \text{and} \quad
    \hat{B}_{k,j}^{\#} = u_k(\hat{B}_{\theta_k}^j).
    \]
    Again, we will also use the notation $q^k_l = q_l(\theta_k)$.
    
    Pick any time $P \in \Tbold$ and let $n \in \Z$ be such that $Q_n < P \leq Q_{n+1}$.
    For $k \geq 1$, let $p_k \in \N$ be equal to the number $X_{P,k}$ from Theorem \ref{thm:transcendental-extension}.
    It has the property that $(f_k^{\#})^{p_k}$ converges to $\Fbold^P$ and the point $v_{k,-p_k}^{\#}$ converges to the critical point $C_{-P}$ as $k \to \infty$.
    Let $\{ m_k \}_{k\in \N}$ be a sequence of natural numbers such that $q^k_{m_k} < p_k \leq q^k_{m_k+1}$.
    According to Theorem \ref{thm:bubble-bounds}, there exist a universal constant $K>1$ and an $\threshold$-uniform constant $K'>1$ such that for every $k \geq 1$, we have the following properties.
    \begin{enumerate}
        \item[(i)] $\hat{B}_{k,p_k}^{\#}$ is a closed $K'$-quasidisk touching $\hat{Z}_k^{\#}$ exactly at the point $v_{k,-P_k}^{\#}$.
        \item[(ii)] We have
        \[
        K^{-1} \big| v^{\#}_{k, -p_k} - v^{\#}_{k, q^k_{m_k}- p_k} \big| \leq \diam(B_{k, p_k}^{\#}) \leq \diam(\hat{B}_{k, p_k}^{\#}) \leq 
        K \big| v^{\#}_{k, -p_k} - v^{\#}_{k, q^k_{m_k} - p_k} \big|.
        \]
        \item[(iii)] With respect to the hyperbolic metric of $\C \backslash H^{\#}_k$, every point in $\hat{B}_{k,P_k}^{\#}$ except for its root has distance at most $K$ away from the unique geodesic with endpoints $v^{\#}_{k,-P_k}$ and $\infty$.
    \end{enumerate}
    As a consequence of (i), we also have the following additional property.
    \begin{enumerate}
        \item[(iv)] The map $(f^{\#}_k)^{p_k}: \hat{B}_{k,p_k}^{\#} \to \hat{Z}^{\#}_k$ extends to a global $K'$-quasiconformal map $\hat{F}_k^{p_k}: \RS \to \RS$ that is conformal in the interior of $\hat{B}_{k, p_k}^{\#}$.
    \end{enumerate}  
    
    Let $x_{k}^+$ and $x_{k}^-$ be the unique pair of points on the boundary of $\hat{B}_{k,p_k}^{\#}$ that are mapped by $(f^{\#}_k)^{p_k}$ to the points $v^{\#}_{k,q^k_{m_k+1}}$ and $v^{\#}_{k,-q^k_{m_k+1}}$ respectively.
    By Koebe distortion control of $(f^{\#}_k)^{p_k}$ near the critical point $v^{\#}_{k,-p_k}$, the distance between any two points in the triplet $\{x_{k}^+, v^{\#}_{k,-p_k}, x_{k}^-\}$ is uniformly comparable to $| v^{\#}_{k,-p_k} - v^{\#}_{k,q^k_{m_k}-p_k} |$.
    Therefore, as $k \to \infty$, the pre-critical points $x_{k}^+$ and $x_{k}^-$ converge to distinct points $x^+$ and $x^-$ and the distance between any two points in the triple $\{x^+, C_{-P}, x^-\}$ is uniformly comparable to $|C_{-P} - C_{Q_{n}-P} |$.
    With this normalization, it is clear that the sequence of quasiconformal maps $\hat{F}_k^{p_k}$ described in (iv) is pre-compact. 
    Hence, up to subsequence, $\hat{F}_k^{p_k}$ converges uniformly on compact subsets to a $K$-quasiconformal map $\hat{F}_P : \RS \to \RS$ that sends $x^+$, $C_{-P}$, $x^-$ to $C_{Q_{n+1}}$, $C_0=0$, $C_{-Q_{n+1}}$ respectively.

    Let $\Bbold_P$ and $\hat{\Bbold}_P$ be the preimage under $\hat{F}_P$ of $\Hbold \cup \{\infty\}$ and $\hat{\Zbold} \cup \{\infty\}$ respectively.
    By \ref{eqn:z-and-h-convergence}, $\Bbold_P$ and $\hat{\Bbold}_P$ are the Hausdorff limits of $B^{\#}_{k,p_k}$ and $\hat{B}^{\#}_{k,p_k}$ as $k \to \infty$.
    Properties (i), (ii), (iii) imply that
    \begin{enumerate}
        \item[(i)'] $\hat{\Bbold}_P$ is a closed $K'$-quasidisk touching $\hat{\Zbold}$ exactly at the point $C_{-P}$;
        \item[(ii)'] we have
        \[
            K^{-1} \big| C_{-P} - C_{Q_n-P} \big| \leq \diam(\Bbold_P) \leq \diam(\hat{\Bbold}_P) \leq K \big| C_{-P} - C_{Q_n-P}|;
        \]
        \item[(iii)'] with respect to the hyperbolic metric of $\C \backslash \Hbold$, every point in $\hat{\Bbold} \backslash \{C_{-P}\}$ is of hyperbolic distance at most $K$ away from the unique geodesic with endpoints $C_{-P}$ and $\infty$.
    \end{enumerate}
    Property (i)' in particular implies that the map $\hat{F}_P$ on the interior of $\hat{\Bbold}_P$ is equal to $\Fbold^P$ and so $\hat{\Bbold}_P$ is indeed the primary pseudo-bubble of $\Fbold$ of generation $P$.
    Since Theorem \ref{thm:bubble-bounds} also describes the uniform quasiconformality and size control of the pseudo-bubbles $\hat{B}^{\#}_{k,p_k}$ in external coordinates, the uniform quasiconformality and the size control are passed down to the limit $\hat{\Bubb}_P = \Psi(\hat{\Bbold}_P)$ in external coordinates by virtue of Lemma \ref{lem:convergence-external}.

    The three paragraphs above give us the proof of the proposition for primary pseudo-bubbles.
    Now, let us consider a secondary hoglet $\Bbold$ and let $\hat{\Bbold}$ be the corresponding pseudo-bubble.
    Its root can be written as $C_{-P,-R}$ where $P \in \Tbold$ is the first time the image $\Fbold^P(\Bbold)$ is a primary hoglet and $R \in \Tbold$ is such that $\Fbold^{P}(C_{-P,-R}) = C_{-R}$.
    By Theorem \ref{thm:transcendental-extension}, there exist sequences of positive integers $p_k= X_{P,k}$ and $r_k=X_{R,k}$ such that $v^{\#}_{k,-p_k} \to C_{-P}$ and $v^{\#}_{k,-r_k} \to C_{-R}$ as $k \to \infty$.
    We then locate the critical point $v^{\#}_{k,-p_k,-r_k}$ of $f_k^{\#}$ located on $B_{k,p_k}^{\#}$ where $(f_k^{\#})^{p_k}(v^{\#}_{k,-p_k,-r_k}) = v^{\#}_{k,-r_k}$.
    Denote by $B^{\#}_k$ the secondary hoglet of $f_k^{\#}$ rooted at $v^{\#}_{k,-p_k,-r_k}$.
    Similar to the previous paragraphs, we have that as $k \to \infty$, the critical point $v^{\#}_{k,-p_k,-r_k}$ converges to $C_{-P,-R}$, the secondary hoglet $B^{\#}_k$ of $f_k^{\#}$ rooted at $v^{\#}_{k,-p_k,-r_k}$ converges to $\Bbold$, and the corresponding pseudo-bubble $\hat{B}^{\#}_k$ converges to $\hat{\Bbold}$.
    The uniform quasiconformality and hyperbolic diameter bound for the corresponding pseudo-bubble $\hat{\Bbold}$ as well as $\hat{\Bubb} = \Psi(\hat{\Bbold})$ follow from Theorem \ref{thm:bubble-bounds} (1) and (3).

    Lastly, if $\hat{\Bbold}$ is a non-primary non-secondary pseudo-bubble of $\Fbold$, then there exists some time $P \in \Tbold$ such that $\Fbold^P(\hat{\Bbold})$ is a secondary pseudo-bubble. 
    Properties (1) and (3) for $\Fbold^P(\hat{\Bbold})$ imply that $\Fbold^P: \hat{\Bbold} \to \Fbold^P(\hat{\Bbold})$ is a homeomorphism with uniformly bounded distortion.
    This implies properties (1) and (3) for $\hat{\Bbold}$.
    The same holds for $\hat{\Bubb} = \Psi(\hat{\Bbold})$.
\end{proof}

\subsection{Alpha-points}
\label{ss:alpha-points}

\begin{figure}
        \centering
       \begin{tikzpicture}
    \node[anchor=south west,inner sep=0] (image) at (0,0) {\includegraphics[width=1\linewidth]{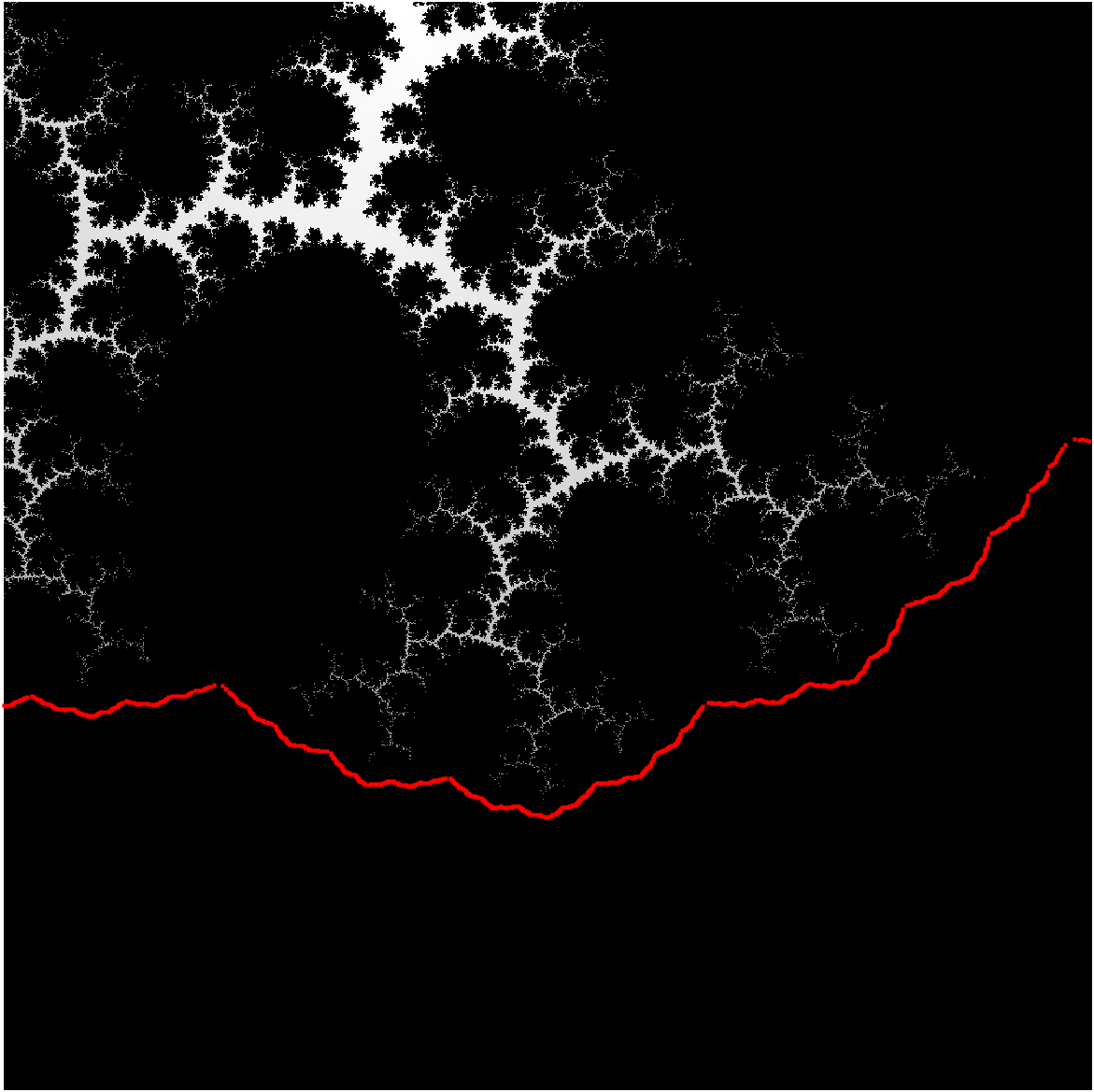}};
    \begin{scope}[
        x={(image.south east)},
        y={(image.north west)}
    ]
        \node [white] at (0.5,0.25) {$\bullet$};
        \node [white] at (0.5,0.22) {$C_0$};
        
        \node [white] at (0.645,0.355) {$\bullet$};
        \node [white] at (0.67,0.325) {$C_{-Q_0}$};
        \node [white] at (0.2,0.37) {$\bullet$};
        \node [white] at (0.195,0.34) {$C_{-Q_{-1}}$};
        \node [white] at (0.41,0.285) {$\bullet$};
        \node [white] at (0.405,0.255) {$C_{-Q_1}$};
        \node [green] at (0.32,0.775) {\large$\bullet$};
        \node [green] at (0.305,0.745) {$\alpha_{Q_{-1}}$};
        \node [green] at (0.53,0.555) {\large$\bullet$};
        \node [green] at (0.555,0.525) {$\alpha_{Q_0}$};
        \node [green] at (0.445,0.41) {\large$\bullet$};
        \node [green] at (0.435,0.38) {$\alpha_{Q_1}$};
        
        \draw[white,-latex] (0.64,0.3) .. controls (0.62,0.2) and (0.56,0.18) .. (0.53,0.2);
        \node [white] at (0.62,0.17) {$\Fbold^{Q_0}$};
        
        \draw[white,-latex] (0.41,0.23) .. controls (0.425,0.16) and (0.46,0.2) .. (0.48,0.21);
        \node [white] at (0.39,0.19) {$\Fbold^{Q_1}$};
        
        \draw[white,-latex] (0.21,0.3) .. controls (0.24,0.15) and (0.4,0.12) .. (0.48,0.18);
        \node [white] at (0.3,0.14) {$\Fbold^{Q_{-1}}$};
    \end{scope}
\end{tikzpicture}
    \caption{An approximately realistic picture of the dynamical plane of the neutral casacade $\Fbold$ with golden mean combinatorics $\langle \ldots,(+,2),(+,2), (+,2),\ldots\rangle$.
    The boundary of the Mother Hedgehog is colored red.
    The primary hoglets sit above it and each of them has an alpha-point marked in green.}
    \label{fig:bubbles-and-alphas}
\end{figure}

\begin{corollary}[Alpha-points]
\label{cor:alpha-points}
    Let $\Bbold$ be a hoglet of $\Fbold$ of some generation $P \in \Tbold$.
    There exists a unique point $\aalpha \in \C$, called the alpha-point of $\Bbold$, such that 
        \[
            \Ibold^{\leq P} \cap \partial \Bbold = \Ibold^{\leq P} \cap \partial \hat{\Bbold} = \{ \aalpha\}.
        \]
    Moreover, the map $\Fbold^P: \hat{\Bbold} \backslash \{\aalpha\} \to \hat{\Zbold}$ extends to an $\threshold$-uniformly quasiconformal map of the whole sphere $\RS$ sending $\aalpha$ to $\infty$.
\end{corollary}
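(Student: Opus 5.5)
The plan is to reduce everything to the primary case, where the proof of Proposition~\ref{prop:bubble-bounds} already supplies the needed structure. For a primary hoglet $\Bbold=\Bbold_P$ that proof produced a global $K'$-quasiconformal map $\hat F_P:\RS\to\RS$ with $\hat F_P=\Fbold^P$ on $\operatorname{int}\hat{\Bbold}_P$ and $\hat{\Bbold}_P=\hat F_P^{-1}(\hat{\Zbold}\cup\{\infty\})$, $\Bbold_P=\hat F_P^{-1}(\Hbold\cup\{\infty\})$. The map $\hat F_P$ is the limit of the QC extensions $\hat F_k^{p_k}$. I will set $\aalpha:=\hat F_P^{-1}(\infty)$. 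Since $\hat{\Zbold}\cup\{\infty\}$ is a closed quasidisk by Proposition~\ref{prop:trans-pseudo-siegel}(4), its preimage is a closed quasidisk in $\RS$. Boundedness of $\hat{\Bbold}_P$ (size control) shows that $\aalpha\in\C$ and that $\aalpha\in\partial\hat{\Bbold}_P$. Moreover, $\aalpha\in\partial\Bbold_P$, because $\infty$ lies in the closure of $\Hbold$ in $\RS$ (Proposition~\ref{prop:trans-mother}(3): all components are unbounded).

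Next, I will show $\aalpha\in\Ibold^{\le P}$ and that it is the only such point on $\partial\hat{\Bbold}_P$. On $\hat{\Bbold}_P\setminus\{\aalpha\}$, the map $\hat F_P$ agrees with $\Fbold^P$ by continuity. These points are sent into $\hat{\Zbold}$, and by Proposition~\ref{prop:trans-pseudo-siegel}(6) and the nesting of domains (Theorem~\ref{thm:transcendental-extension}(1)(b)) they lie in $\Dom(\Fbold^{P})$. The point needing care is boundary points of $\hat{\Bbold}_P$ that do not lie in the domain of the nonprimary branch. I will handle this by local $\sigma$-properness: near any $z\in\partial\hat{\Bbold}_P\setminus\{\aalpha\}$, the image $\hat F_P(z)$ is a finite point, and $\Fbold^P$ is defined on a neighborhood (the component of the preimage of a small disk containing $z$ is compact). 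Hence $z\notin\Ibold^{\le P}$. Conversely, if $\aalpha\in\Dom(\Fbold^P)$, continuity would force $\Fbold^P(\aalpha)=\lim\hat F_P=\infty$, which is absurd. So $\aalpha\in\Ibold^{\le P}$, and the intersections with $\partial\Bbold$ and $\partial\hat{\Bbold}$ both equal $\{\aalpha\}$.

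For a general hoglet $\Bbold$ of generation $P$, choose the first time $R<P$ for which $\Fbold^R(\Bbold)$ is primary, of generation $P-R$. By Proposition~\ref{prop:bubble-bounds}(1),(3) and the argument at the end of its proof, $\Fbold^R:\hat{\Bbold}\to\Fbold^R(\hat{\Bbold})$ is a homeomorphism with bounded distortion. It is also univalent on a neighborhood of $\hat{\Bbold}$ minus the preimage of the alpha-point of the primary one, since no critical values lie off $\Hbold$ by Lemma~\ref{lem:crit-points}. I will define $\aalpha$ as the pullback of the primary alpha-point under this branch. The composition $\hat F_{P-R}\circ\Fbold^R$ on $\hat{\Bbold}$ is then $\threshold$-uniformly QC up to the boundary. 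I will extend it to the sphere using the fact that both $\hat{\Bbold}$ and its image are uniform quasidisks (Proposition~\ref{prop:bubble-bounds}(1)): reflect across the quasicircles, or apply the Beurling–Ahlfors extension of the boundary quasisymmetry. The escaping set statement then transfers by the semigroup relation $\Ibold^{\le P}=\Ibold^{\le R}\cup(\Fbold^{R})^{-1}(\Ibold^{\le P-R})$.

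I expect the main obstacle to be the identification $\hat F_P=\Fbold^P$ on the whole closed set $\hat{\Bbold}_P\setminus\{\aalpha\}$, including boundary points. This is exactly the statement that the domain of $\Fbold^P$ reaches up to the boundary of the pseudo-bubble everywhere except the single point. I would argue it via the Carath\'eodory convergence of the univalent inverse branches $(f_k^{\#})^{-p_k}$ on slightly enlarged neighborhoods of compact pieces of $\hat{\Zbold}$. Those neighborhoods avoid the critical values off $\Hbold$, using Claims 3--5 in the proof of Theorem~\ref{thm:transcendental-extension}.
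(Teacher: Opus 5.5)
Your proposal is correct and follows the paper's route. The paper also takes $\aalpha:=\hat F^{-1}(\infty)$, where $\hat F$ is the $\threshold$-uniformly quasiconformal extension to $\RS$ of the conformal isomorphism $\Fbold^P:\textnormal{int}(\hat{\Bbold})\to\textnormal{int}(\hat{\Zbold})$. That extension exists because $\hat{\Bbold}$ (Proposition~\ref{prop:bubble-bounds}(1)) and $\hat{\Zbold}\cup\{\infty\}$ are uniform quasidisks, which covers every hoglet at once, so your reduction to the primary case is not needed.

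Your continuity and $\sigma$-properness check of $\Ibold^{\leq P}\cap\partial\hat{\Bbold}=\{\aalpha\}$ fills in what the paper leaves implicit, but as written it is circular: ``the component of the preimage containing $z$'' already presupposes $z\in\Dom(\Fbold^P)$. Instead take a path in $\textnormal{int}(\hat{\Bbold})$ landing at $z$ whose $\Fbold^P$-image lies in a small closed disk $\overline{D}$ about $\hat F(z)$. This path lies in one component of $(\Fbold^P)^{-1}(\overline{D})$, which is compact by $\sigma$-properness and therefore contains $z$.
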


See Figure \ref{fig:bubbles-and-alphas} for an illustration.
In \cite{DLy23}, alpha-points are constructed using boundedness of limbs and a hyperbolic contraction argument.
In the proof below, we take a shortcut and show the existence of alpha-points using the boundedness and the quasiconformality of pseudo-bubbles.

\begin{proof}
    By Proposition \ref{prop:bubble-bounds} (1), the conformal map $\Fbold^P: \textnormal{int}(\hat{\Bbold}) \to \textnormal{int}(\hat{\Zbold})$ extends to an $\threshold$-uniformly quasiconformal map $\hat{F}_{\hat{\Bbold}} : \RS \to \RS$.
    Note that the extension is unique along the boundary of $\hat{\Bbold}$.
    The desired alpha-point $\aalpha$ is equal to $(\hat{F}_{\hat{\Bbold}})^{-1}(\infty)$.
\end{proof}

In external coordinates, the primary hoglet and pseudo-bubble of $\Fext$ of generation $P \in \Tbold$ will be denoted by
\[
    \Bubb_P := \Psi(\Bbold_P) \qquad \text{ and } \qquad \hat{\Bubb}_P := \Psi(\hat{\Bbold}_P).
\]
The root is the point $\crit_{-P}$.
The alpha-point of $\Bbold_P$ will be denoted by $\aalpha_P$ and in external coordinates, it will be denoted by $\alpha_P = \Psi(\aalpha_P)$.

The following technical lemma will be useful later.

\begin{lemma}
\label{lem:estimate-lift-sector}
    There exists a universal constant $\delta>0$ such that the following holds.
    Let $m, n \in \Z$ be such that $Q_{[n-1]} < Q_m \leq Q_{[n]}$ and let
    $\Sbold_{Q_m}^{n}$ be the unique lift of the sector $\Sbold^{n}$ under $\Fbold^{Q_m}$ that contains the critical point $C_{-Q_m}$.
    Then,
    \[
        \dist \left( \Psi \left( \partial \Sbold^n_{Q_m} \backslash \Hbold \right), \crit_{-Q_m} \right) \asymp \diam(\Bubb_{Q_m}).
    \]
\end{lemma}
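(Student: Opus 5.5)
We will establish the two-sided comparison separately: the upper bound $\dist \lesssim \diam(\Bubb_{Q_m})$ and the lower bound $\dist \gtrsim \diam(\Bubb_{Q_m})$. Both reduce to understanding the lifted sector $\Sbold^n_{Q_m}$ near its root $C_{-Q_m}$ through the local degree-two branch of $\Fbold^{Q_m}$. The basic observation is that $\Sbold^n$ is an infinite sector containing $C_0=0$, and $\Sbold^n$ contains no critical value of $\Fbold^{Q_m}$ other than $0$. The latter uses Lemma \ref{lem:crit-points}(2): the critical values are $C_Q$ with $0\le Q<Q_m\le Q_{[n]}$, and these lie outside $\Sbold^n$ except $C_0$, by the first-return description in Proposition \ref{prop:trans-sectors}(6) together with the combinatorics of Proposition \ref{prop:q[n]}(3). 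Hence the lift $\Sbold^n_{Q_m}$ is a well-defined topological disk. On it, $\Fbold^{Q_m}$ is a degree-two branched cover ramified only at $C_{-Q_m}$, and the lift is obtained by square-rooting at the root.

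\textbf{Step 1: normalize the scale.} By Proposition \ref{prop:trans-sectors}(1), $\textnormal{rad}(\Sbold^n,0)\asymp|C_{Q_{[n]}}|$. By Real Bounds (Proposition \ref{prop:R-apb-transcendental}) and Corollary \ref{cor:Psi-qc}, the inequalities $Q_{[n-1]}<Q_m\le Q_{[n]}$ give that $|C_{Q_{[n]}}|$ is comparable to $|C_{Q_m}|$ up to a factor controlled by at most one level. In the slow-depth indexing, $Q_m$ sits between two consecutive $Q_{[\cdot]}$'s, with at most one intermediate slow time. We then push this scale to the root. Using the pseudo-bubble map $\hat F_P$ constructed in the proof of Proposition \ref{prop:bubble-bounds} (a global $K'$-qc extension of $\Fbold^{Q_m}$ on $\hat\Bbold_{Q_m}$), together with Koebe distortion of the square-root branch on a definite neighbourhood of $C_{-Q_m}$ (as in Claim 5 of Theorem \ref{thm:transcendental-extension}), we conclude that the part of $\Sbold^n_{Q_m}$ near $C_{-Q_m}$ has size comparable to $|C_{-Q_m}-C_{Q_{m-1}-Q_m}|\asymp\diam\Bbold_{Q_m}$, the latter by Proposition \ref{prop:bubble-bounds}(2a).

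\textbf{Step 2: transfer to external coordinates.} Consider the boundary $\partial\Sbold^n_{Q_m}\setminus\Hbold$. Its sides are lifts of $\boldsymbol A^n$ and $\Fbold^{[n-1]}(\boldsymbol A^n)$, and its summit is a lift of the summit, which is a hyperbolic geodesic of $\C\setminus\Hbold$ by Theorem \ref{thm:sectorial-bounds}(1). Passing through $\Psi$, we work with the external cascade $\Fext^{Q_m}$ near $\crit_{-Q_m}$.

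For the \emph{upper bound}, the sides of $\Sbold^n$ pass near $0=C_0$ inside the hedgehog at the internal ray $I_0$. Their lift therefore meets $\Hbold$ near the points $C_{-Q_m\pm\cdot}$ adjacent to the root on the real line, at distance $\asymp|\crit_{-Q_m}-\crit_{Q_{m-1}-Q_m}|$ by Real Bounds (Proposition \ref{prop:R-apb-transcendental}(1)). This gives $\dist\lesssim\diam\Bubb_{Q_m}$ through Proposition \ref{prop:bubble-bounds}(2a).

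For the \emph{lower bound}, we note that $\Sbold^n$ contains a definite hyperbolic neighbourhood (in $\C\setminus\Hbold$) of the valuable ray near $0$ at scale $|C_{Q_{[n]}}|$. Pulling back by the degree-two map, the lift contains a half-disk in $\UHP$ centred at $\crit_{-Q_m}$ of radius $\asymp$ the adjacent tile length. This uses the Schwarz lemma for half-Poincaré domains, applied to the inverse branch on $\UHP$ near the real interval of adjacent tiles.

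\textbf{Main obstacle.} The delicate point is the lower bound near near-parabolic or parabolic depths. There, the tiles adjacent to $\crit_{-Q_m}$ in $\tiling_{Q_m}$ may be much smaller than those in $\tiling_{Q_{m-1}}$, and the sector boundary could a priori dip close to the root along a fjord. To rule this out, we will first try the angular control of fjords (Lemma \ref{lem:fjord-angle-control}) and almost invariance of dams (Lemma \ref{lem:fjord-almost-invariance}). These show that the lifted boundary stays within bounded hyperbolic distance of semicircles over tiles of $\tiling_{Q_{m-1}}$, which are comparable to $\diam\Bubb_{Q_m}$.
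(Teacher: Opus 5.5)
Your Step~1 is, apart from the unnecessary detour through the pseudo-bubble map $\hat F_P$, the correct dynamical-plane estimate. $\Sbold^n$ contains a round disk $E=\D(0,|C_{Q_{[n+N]}}|)$ with $\modu(\Sbold^n\setminus E)\asymp 1$ (by Proposition~\ref{prop:trans-sectors}(1) and Real Bounds). Its lift $E'$ under the degree-two map is a bounded-distortion square root, so $\dist(\partial\Sbold^n_{Q_m},C_{-Q_m})\succ|C_{Q_{[n+N]}-Q_m}-C_{-Q_m}|$.

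The gap is that the lemma is a statement in external coordinates, and you never legitimately transfer this bound through $\Psi$. That map is only the Riemann map of $\C\setminus\Hbold$, and $\Hbold$ may be non-locally connected. The external-coordinate argument you substitute for the lower bound fails for two reasons.
\begin{itemize}
\item Near $\crit_{-Q_m}$ there is no degree-two map to pull back by. The hoglet $\Bubb_{Q_m}$ is attached at $\crit_{-Q_m}$ and lies outside the domain of $\Fext^{Q_m}$. Each of the two adjacent primary lakes is mapped univalently onto $\UHP$, with $\crit_{-Q_m}$ going to $0$, an endpoint of the real slice. There Koebe on the reflected slit domain gives no control.
\item The Schwarz lemma for half-Poincar\'e domains only gives inclusions $g(\poincare_\tau(J'))\subset\poincare_\tau(I)$ for the inverse branch $g$. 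This confines images; it cannot show that the lift contains a half-disk of definite size at $\crit_{-Q_m}$.
\end{itemize}
Your proposed remedy for fjords has the same defect: staying within bounded hyperbolic distance of semicircles does not keep a curve away from a tile endpoint.

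The missing idea is the observation that $X=\partial\Sbold^n_{Q_m}\setminus\Hbold$ is disjoint from the whole top pseudo-Siegel half-plane $\hat\Zbold$, not merely from $\Hbold$. On $\C\setminus\hat\Zbold$, Corollary~\ref{cor:Psi-qc} makes $\Psi$ the restriction of a global $K$-quasiconformal, hence quasisymmetric, map. The $\Fbold$-plane bound therefore transfers to $\dist(\Psi(X),\crit_{-Q_m})\succ|\crit_{Q_{[n+N]}-Q_m}-\crit_{-Q_m}|\asymp\diam(\Bubb_{Q_m})$, with the last comparison coming from Real Bounds and Pseudo-Bubble Bounds. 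This removes the fjord ``main obstacle'' entirely, since the quasiconformal extension already absorbs the fjord geometry.

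Your upper bound rests on a wrong premise. The sides of $\Sbold^n$ do not pass near $0$ along $I_0$; that ray is the core $\bigcap_n\overline{\Sbold^n}$ and lies inside every $\Sbold^n$. If the sides did pass near $0$, the lifted boundary would pass near the root, contradicting the lower bound. In fact the sides cross $\Hbold$ along the internal rays landing at $C_{-Q_{[n]}}$ and $C_{-\check Q_{[n]}}$. The correct argument is that $\crit_{-Q_{[n]}-Q_m}$ is a real endpoint of $\Psi(X)$, so $\dist(\Psi(X),\crit_{-Q_m})\le|\crit_{-Q_{[n]}-Q_m}-\crit_{-Q_m}|\asymp\diam(\Bubb_{Q_m})$.
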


\begin{figure}
        \centering
       \begin{tikzpicture}
    \node[anchor=south west,inner sep=0] (image) at (0,0) {\includegraphics[width=1\linewidth]{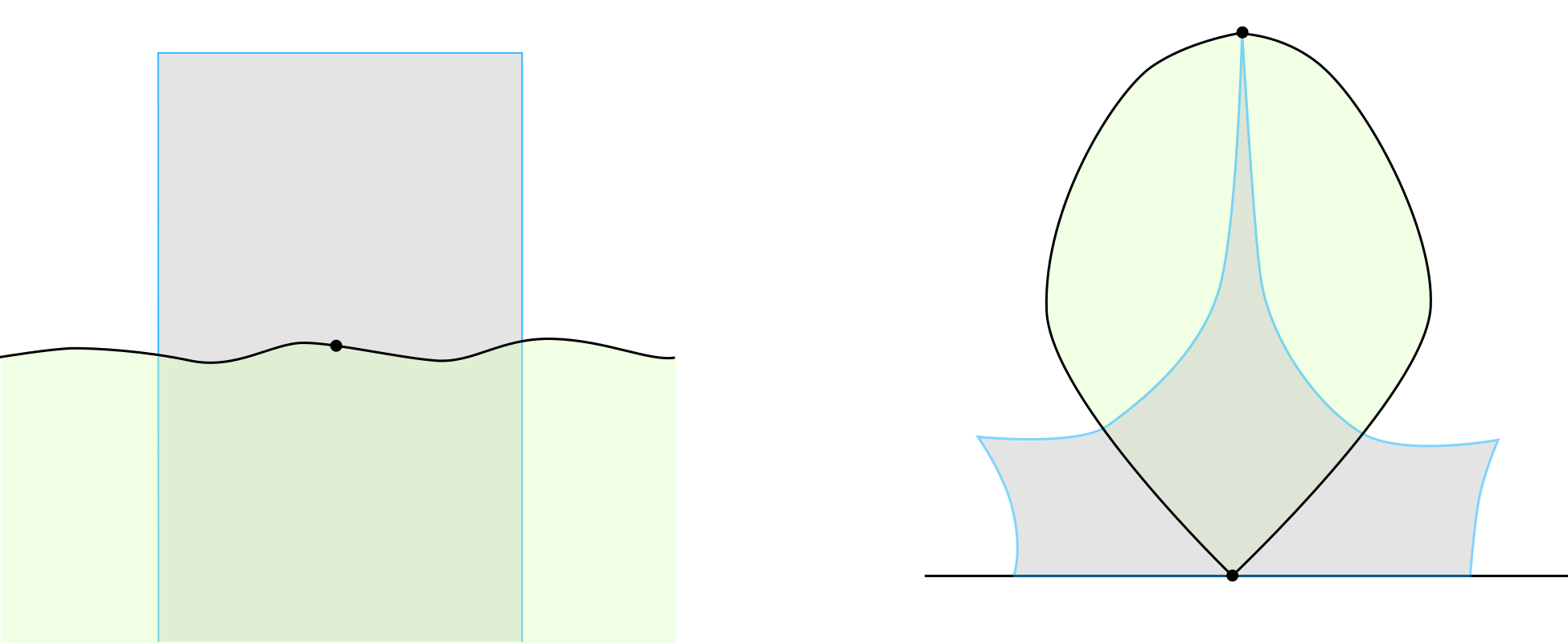}};
    \begin{scope}[
        x={(image.south east)},
        y={(image.north west)}
    ]
        \node [green!50!black] at (0.06,0.2) {$\hat{\Zbold}$};
        \node [green!50!black] at (0.87,0.6) {$\hat{\Bubb}_{Q_m}$};
        \node [black] at (0.215,0.71) {$\Sbold^{n}$};
        \node [black] at (0.8,0.03) {\scalebox{0.9}{$\crit_{-Q_{m}}$}};
        \node [black] at (0.8,0.99) {\scalebox{0.9}{$\alpha_{Q_{m}}$}};
        \node [black] at (0.213,0.41) {\small $0$};
        \node [black] at (0.54,0.55) {\scalebox{0.9}{$\Fbold^{Q_m}\circ \Psi^{-1}$}};
        \draw[-latex] (0.635,0.49) -- (0.435,0.49);
        \node [black] at (0.787,0.28) {\scalebox{0.75}{$\Psi (\Sbold^n_{Q_m} \backslash \Hbold )$}};
        \node [gray] at (0.94,0.92) {\scalebox{0.9}{level $n$}}; 
        \node [gray] at (0.94,0.85) {\scalebox{0.9}{cusp}};
        \draw [gray] (0.9,0.85) -- (0.795,0.775);
    \end{scope}
\end{tikzpicture}
    \caption{In external coordinates, the lift of the infinite sector $\Sbold^n$ under $\Fbold^{Q_{m}}$, where $Q_{[n-1]} < Q_m \leq Q_{[n]}$, has a cusp reaching all the way to $\alpha_{Q_m}$. Its distance from the critical value $\crit_{-Q_m}$ is described in Lemma~\ref{lem:estimate-lift-sector}. The QC geometry of this level $n$ cusp is described in Remark~\ref{rem:anti.renorm.history}.}
    \label{fig:inf-sec-and-lifts}
\end{figure}

See Figure \ref{fig:inf-sec-and-lifts} for illustration.

\begin{proof}
    The only critical value of $\Fbold^{Q_m}$ inside of $\Sbold^n$ is $0$.
    Therefore, the map $\Fbold^{Q_m} : \Sbold^{n}_{Q_m} \to \Sbold^{n}$ is a double covering map branched at the unique critical point $C_{-Q_{m}}$.
    Recall from Proposition \ref{prop:trans-sectors} (1) that the conformal radius of $\Sbold^{n}$ about $0$ is $\asymp |C_{Q_{[n]}}|$.
    By Real Bounds, there is a universal constant $N \in \N$ such that $\Sbold^{n}$ contains the round disk $E = \D(0,|C_{Q_{[n+N]}}|)$ and $\modu(\Sbold^n \backslash E) \asymp 1$.
    We will keep in mind that by virtue of Pseudo-Bubble Bounds and Real Bounds,
    \begin{equation}
    \label{eqn:bubble-size-estimate-1}
        \diam(\Bubb_{Q_m}) \asymp |\crit_{Q_{[n+N]}-Q_m} - \crit_{-Q_m}| \asymp |\crit_{-Q_{[n]}-Q_m} - \crit_{-Q_m}|.
    \end{equation}
    
    Let $E' \subset \Sbold_{Q_m}^{n}$ be the lift of $E$ under $\Fbold^{Q_m}$.
    By Koebe distortion theorem, $\Fbold^{Q_m}: E' \to E$ is a composition of a univalent map with uniformly bounded distortion and the power map $z \mapsto z^2$.
    This implies that 
    \[
        \dist \left( \partial \Sbold^{n}_{Q_m}, C_{-Q_m} \right) \geq \dist\left( \partial E', C_{-Q_m} \right) \succ |C_{Q_{[n+N]} - Q_m} - C_{-Q_m}|.
    \]
    Observe that $X = \partial \Sbold^n_{Q_m} \backslash \Hbold$ is disjoint from $\hat{\Zbold}$.
    Therefore, by Corollary \ref{cor:Psi-qc} and Real Bounds,
    \[
        \dist ( \Psi(X), \crit_{-Q_m} ) \succ 
        |\crit_{Q_{[n+N]}-Q_m} - \crit_{-Q_m}|.
    \]
    On the other hand, we also know from the construction of $\Sbold^n$ that $C_{-Q_{[n]}-Q_m}$ is a real endpoint of $X$.
    Therefore,
    \[
        \dist ( \Psi(X), \crit_{-Q_m} ) \leq |\crit_{-Q_{[n]}-Q_m}-\crit_{-Q_m}|. 
    \]
    These two estimates, together with (\ref{eqn:bubble-size-estimate-1}), imply the lemma.
\end{proof}

\begin{corollary}
\label{cor:location-of-alpha}
    For every $P \in \Tbold$, we have
    \[
        |\alpha_P - \crit_{-P}| \asymp \diam(\Bubb_P) \quad \text{ and } \quad
        \left| \arg(\alpha_P-\crit_{-P}) - \frac{\pi}{2} \right| < \ttau.
    \]
\end{corollary}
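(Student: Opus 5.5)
The upper bound on $|\alpha_P-\crit_{-P}|$ and the angular estimate come almost for free from Proposition~\ref{prop:bubble-bounds}. By Corollary~\ref{cor:alpha-points}, $\aalpha_P$ lies on $\partial\hat{\Bbold}_P$, so $\alpha_P\in\partial\hat{\Bubb}_P$. Since $\alpha_P\neq\crit_{-P}$ (one is escaping, the other is a critical point on $\partial\Hbold$), angular control (2)(b) gives $|\arg(\alpha_P-\crit_{-P})-\pi/2|<\ttau$. Size control (2)(a) gives $|\alpha_P-\crit_{-P}|\le\diam(\hat{\Bubb}_P)\le K^2\diam(\Bubb_P)$. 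The whole difficulty is therefore the lower bound $|\alpha_P-\crit_{-P}|\succ\diam(\Bubb_P)$, i.e. that $\alpha_P$ does not sit near the root.

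For the lower bound I would first treat slow return times $P=Q_m$ using Lemma~\ref{lem:estimate-lift-sector}. Choose $n$ with $Q_{[n-1]}<Q_m\le Q_{[n]}$ and let $\Sbold^n_{Q_m}$ be the lift of $\Sbold^n$ containing $C_{-Q_m}$. The vertex of $\Sbold^n$ is at $\infty$, which is the image of $\aalpha_{Q_m}$ under the quasiconformal extension of Corollary~\ref{cor:alpha-points}. The sides of $\Sbold^n$ run from a point of $\Hbold$ out to $\infty$ inside $\C\setminus\hat{\Zbold}$ (apart from internal rays). Their lifts through $\hat{\Bbold}_{Q_m}$ therefore run from the boundary of $\hat\Bbold_{Q_m}$ to $\aalpha_{Q_m}$; this is the ``cusp'' of Figure~\ref{fig:inf-sec-and-lifts}. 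Consequently $\alpha_{Q_m}$ lies in the closure of $\Psi(\partial\Sbold^n_{Q_m}\setminus\Hbold)$. Lemma~\ref{lem:estimate-lift-sector} then yields
\[
|\alpha_{Q_m}-\crit_{-Q_m}|\ge \dist\bigl(\Psi(\partial\Sbold^n_{Q_m}\setminus\Hbold),\crit_{-Q_m}\bigr)\succ\diam(\Bubb_{Q_m}).
\]
The point I expect to need the most care is justifying that the lifted sides actually accumulate on $\aalpha_{Q_m}$. I would argue via the extension $\hat F$ of Corollary~\ref{cor:alpha-points}: the preimage of $\infty$ is the single point $\aalpha_{Q_m}$, and the side arcs are unbounded.

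For general $P$ with $Q_n<P\le Q_{n+1}$, I would reduce to the first case by a distortion argument instead of repeating the sector construction. The map $\Fbold^{P}$ on $\hat\Bbold_P$ extends to a $K'$-quasiconformal map $\hat F_P$ of $\RS$ sending $C_{-P}\mapsto 0$ and $\aalpha_P\mapsto\infty$. It also sends the two marked boundary preimages $x^\pm$ (from the proof of Proposition~\ref{prop:bubble-bounds}) to $C_{\pm Q_{n+1}}$, and these lie at distances $\asymp|C_{-P}-C_{Q_n-P}|$ from $C_{-P}$ and from each other. Normalizing by the affine map sending $C_{-P}\mapsto0$ and $x^+\mapsto1$, the family of such maps is compact. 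Since $\infty$ has a definite spherical distance from $\{0,C_{\pm Q_{n+1}}\}$ after rescaling by $|C_{Q_{n+1}}|$, compactness gives $|\aalpha_P-C_{-P}|\asymp|C_{-P}-C_{Q_n-P}|\asymp\diam\Bbold_P$ in the dynamical plane.

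It remains to transfer this estimate to external coordinates. The points $\aalpha_P$ and $C_{-P}$ lie in $\hat\Bbold_P\cup\partial\Hbold$, which is not in the region where $\Psi$ is quasiconformal. To handle this I would push through the $\threshold$-uniformly quasiconformal model $\hat{\Bubb}_P$. By (1) it is a $K'$-quasidisk, and the map $\Psi\circ\hat F_P^{-1}$ restricted to $\hat\Zbold$ has a boundary correspondence controlled by Corollary~\ref{cor:Psi-qc} and Real Bounds. The same compactness argument, with the three points $\crit_{-P},\Psi(x^+),\Psi(x^-)$ mutually $\asymp\diam\Bubb_P$ apart, then gives $|\alpha_P-\crit_{-P}|\succ\diam\Bubb_P$. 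Should that transfer prove delicate, the fallback is to run the Lemma~\ref{lem:estimate-lift-sector} argument directly for the lift of $\Sbold^{n+1}$ under $\Fbold^P$, using Koebe at the critical point exactly as in that lemma's proof.
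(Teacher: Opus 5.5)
Your upper bound, the angular estimate, and the case $P=Q_m$ agree with the paper. One correction in the cusp argument: the unbounded parts of the sides of $\Sbold^n$ are internal rays of $\Hbold$, so they lie in $\hat\Zbold$, not in $\C\setminus\hat\Zbold$. That is exactly why their lifts through the branch at $C_{-Q_m}$ land in $\hat\Bbold_{Q_m}$, where $\Fbold^{Q_m}$ agrees with the extension of Corollary~\ref{cor:alpha-points}, and accumulate at $\aalpha_{Q_m}$.

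The gap is in the transfer to external coordinates for general $P$. You assert that $\crit_{-P}$, $\Psi(x^+)$, $\Psi(x^-)$ are mutually $\asymp\diam\Bubb_P$ apart, but nothing you cite gives this. Real Bounds only locate points of $\R$, whereas $\Psi(x^\pm)$ are points of $\partial\Bubb_P$ in $\UHP$. Bounding their distance to the root from below is the same problem as the one for $\alpha_P$, so without it your second compactness argument has nothing to normalize against. Moreover, the obstacle you name is not there. $\hat\Bbold_P$ meets $\hat\Zbold$ only at its root $C_{-P}$ (item (i)$'$ in the proof of Proposition~\ref{prop:bubble-bounds}). Hence $\hat\Bbold_P\setminus\{C_{-P}\}$, including $\aalpha_P$, lies in $\C\setminus\hat\Zbold$. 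That is precisely where $\Psi$ is the restriction of the global quasiconformal map $\tilde\Psi$ of Corollary~\ref{cor:Psi-qc}; the paper applies Corollary~\ref{cor:Psi-qc} in exactly this way to $\partial\Sbold^n_{Q_m}\setminus\Hbold$ in the proof of Lemma~\ref{lem:estimate-lift-sector}.

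With that observation the step closes in one line. $\tilde\Psi$ is $\eta$-quasisymmetric with $\eta$ depending only on $\threshold$, and $\tilde\Psi(C_{-P})=\crit_{-P}$, $\tilde\Psi(\aalpha_P)=\alpha_P$. Your dynamical-plane compactness argument, combined with Pseudo-Bubble Bounds, gives $\diam\hat\Bbold_P\le C|\aalpha_P-C_{-P}|$. Hence for every $w\in\hat\Bbold_P$,
\[
\frac{|\Psi(w)-\crit_{-P}|}{|\alpha_P-\crit_{-P}|}\le\eta\Bigl(\frac{|w-C_{-P}|}{|\aalpha_P-C_{-P}|}\Bigr)\le\eta(C),
\]
so $\diam\Bubb_P\le\diam\hat\Bubb_P\le 2\eta(C)\,|\alpha_P-\crit_{-P}|$. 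Since the dynamical-plane argument is uniform in $P$, the repaired proof needs neither the sector lift nor the special case $P=Q_m$.

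The paper argues differently. It keeps Lemma~\ref{lem:estimate-lift-sector} for $P=Q_m$, and for $Q_m<P<Q_{m+1}$ it stays in external coordinates. The branch of $\Fext^{P-Q_m}$ carrying $\hat\Bubb_P$ onto $\hat\Bubb_{Q_m}$ (with $\crit_{-P}\mapsto\crit_{-Q_m}$, $\alpha_P\mapsto\alpha_{Q_m}$) extends by reflection to a univalent map onto $W=\UHP\cup I\cup-\UHP$. Here $I$ is the component of $\R\setminus\{\crit_R\}_{0\le R<Q_{m+1}-Q_m}$ containing $\crit_{-Q_m}$. Real Bounds and Pseudo-Bubble Bounds give $\modu(W\setminus\hat\Bubb_{Q_m})\asymp 1$, and Koebe transports the estimate from $Q_m$ to $P$. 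Your route replaces this Koebe step, and the sector lift, by compactness of normalized quasiconformal maps plus quasisymmetry of $\tilde\Psi$. Your fallback of lifting a sector under $\Fbold^P$ could also work, but only after choosing the level so that $0$ is the only critical value of $\Fbold^P$ in that sector, which you have not checked.
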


\begin{proof}
    Since both $\crit_{-P}$ and $\alpha_P$ are contained in $\Bubb_P$, the estimate $|\alpha_P - \crit_{-P}| \leq \diam(\Bubb_P)$ is trivial and the angular estimate follows directly from Pseudo-Bubble Bounds. So below, we just need to prove 
    \begin{equation}
    \label{eqn:cor-location-of-alpha}
    |\alpha_P - \crit_{-P}| \succ \diam(\Bubb_P).
    \end{equation}
    When $P = Q_m$ for some $m \in \Z$, Lemma \ref{lem:estimate-lift-sector} gives us (\ref{eqn:cor-location-of-alpha}) because $\alpha_{Q_m}$ is contained in $\Psi(\partial \Sbold^n_{Q_m} \backslash \Hbold )$.
    It remains to consider the case when $Q_m < P < Q_{m+1}$ for some $m \in \Z$.
    
    We claim that $\Fext^{Q_{m+1}-Q_m}$ sends $\hat{\Bubb}_{Q_{m+1}}$ onto $\hat{\Bubb}_{Q_m}$ with uniformly bounded distortion; this implies the desired estimate (\ref{eqn:cor-location-of-alpha}) for $P$ from the same estimate for $Q_m$.
    Let $I \subset \R$ be the connected component of $\R \backslash \{\crit_{R}\}_{0\leq R < Q_{m+1}-Q_m}$ containing $\crit_{-Q_m}$, and let $I' = (\Fext^{Q_{m+1}-Q_m}|_{\R})^{-1}(I)$.
    Consider the domain $W = \UHP \cup I \cup -\UHP$.
    Then, the homeomorphism $\Fext^{Q_{m+1}-Q_m}: I' \to I$ extends to a real-symmetric univalent map $\Fext^{Q_{m+1}-Q_m}: W' \to W$ for some domain $W'$ containing $I'$.
    By Real Bounds, the endpoints of $I$ are of distance $\asymp |\crit_{-Q_m}|$ away from $\crit_{-Q_m}$, hence by Pseudo-Bubble Bounds, $\modu(W \backslash \hat{\Bubb}_{Q_m}) \asymp 1$.
    Therefore, by Koebe, $\Fext^{Q_{m+1}-Q_m}: \hat{\Bubb}_{Q_{m+1}} \to \hat{\Bubb}_{Q_{m}}$ indeed has uniformly bounded distortion. 
\end{proof}

\begin{remark}[Anti-renormalization history of hoglets near alpha-points]
\label{rem:anti.renorm.history} 
For $P \in \Tbold$, Corollary \ref{cor:alpha-points} implies that the local QC geometry of $\Bbold_{P}$ near $\alpha_P$ is equivalent to the local QC geometry of $\Hbold$ near $\infty$. The latter is governed by the anti-renormalization history of $\Fbold$. Consequently, as shown in Figure \ref{fig:inf-sec-and-lifts}, the local QC geometry of the level $n$ cusp at $\alpha_{Q_{m}}$ is likewise governed by the anti-renormalization history of $\Fbold$. This also translates to the corresponding statement concerning hoglets for sectorial towers and is one of the reasons why we do not expect a hybrid conjugacy for forward towers, as stated in Remark~\ref{rem:intro.forw.towers}.
\end{remark}

\subsection{Disjointness of primary pseudo-bubbles}
\label{ss:disjointness-of-pseudo-bubbles}

It is obvious that distinct hoglets are disjoint.
In the next three lemmas, we will deduce the disjointness of pseudo-bubbles by playing with the combinatorial threshold $\threshold$.
Recall the $\threshold$-uniform constant $\nnu(\threshold) \in (0,\frac{\pi}{2})$ from Lemma \ref{lem:fjord-angle-control} that determine the angular control of fjords; we have that as $\threshold \to \infty$, then $\nnu(\threshold) \to \infty$.
Recall also the universal constants $K>1$ and $\ttau \in (0,\frac{\pi}{2})$ from Pseudo-Bubble Bounds.
From now on, we will assume that $\threshold$ is sufficiently high enough such that
\begin{equation}
\label{eqn:tau-and-nu}
    \ttau + \nnu(\threshold) < \frac{\pi}{2}.
\end{equation}

\begin{lemma}
    \label{lem:main-bubble-disjoint-fjord}
    For every $n\in\Z$, both $\hat{\Bubb}_{Q_n}$ and $\hat{\Bubb}_{Q_n-Q_{n-1}}$ are disjoint from $\Fjord$.
\end{lemma}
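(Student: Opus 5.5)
The plan is to play the angular control of $\hat{\Bubb}_P$ at its root (Pseudo-Bubble Bounds, Proposition~\ref{prop:bubble-bounds} (2)(b)) against the angular control of fjords (Lemma~\ref{lem:fjord-angle-control}), using the choice \eqref{eqn:tau-and-nu} that $\ttau+\nnu<\pi/2$. Set $P\in\{Q_n,\,Q_n-Q_{n-1}\}$. By (2)(b), $\hat{\Bubb}_P\setminus\{\crit_{-P}\}$ lies in the open cone $\{|\arg(z-\crit_{-P})-\pi/2|<\ttau\}$ with vertex $\crit_{-P}$. Fix any fjord $\fjord$ of some depth $k$, and let $y_-<y_+$ be the points from Lemma~\ref{lem:fjord-angle-control}. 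Then $\fjord$ lies in the lens bounded by the two rays leaving $y_\pm$ at angle $\nnu$ above $\R$.

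The key combinatorial input is that the root $\crit_{-P}$ never lies in the open interval $(y_-,y_+)$ associated to $\fjord$. I would argue this by unpacking the tilings. The points $y_\pm$ are endpoints of tiles of $\tiling_{Q_k}$, and $\parR\fjord$ sits deep inside the gap between the two end tiles $[x_-,y_-]$ and $[y_+,x_+]$ of the tile $[x_-,x_+]\in\tiling_{Q_{k-1}}$. For $P=Q_n$ or $Q_n-Q_{n-1}$, the point $\crit_{-P}$ is an endpoint of a tile adjacent to $0$ in $\tiling_{Q_n}$, by Corollary~\ref{cor:extension-critical} and the explicit form of $\tiling_{Q_n}$. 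It therefore belongs to every coarser tiling as well. I would split into two cases.

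In the first case, $k\le n$. Here $\crit_{-P}$ is among the cut points of $\tiling_{Q_{k}}$, and it is not one of the points lying strictly between $y_-$ and $y_+$ in $\tiling_{Q_{k+1}}$. The reason is that those interior points are of the form $\crit_{-R}$ with $Q_k<R$, whereas $P\le Q_n$ together with the slow-return structure excludes this. I expect this step to need the most care with the indices, especially the boundary cases $k=n$ and $k=n+1$. For those I would check directly that the fjords of depth $n$ are attached to the middle of tiles of $\tiling_{Q_{n-1}}$, while $\crit_{-Q_n}$ and $\crit_{-(Q_n-Q_{n-1})}$ are the outermost depth-$n$ points next to tile endpoints.

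In the second case, $k>n$. The points $\crit_{-Q_n}$ and $\crit_{-(Q_n-Q_{n-1})}$ are then endpoints of tiles of $\tiling_{Q_{k-1}}$, so they lie outside the open interval $(x_-,x_+)$, or are equal to $x_\pm$. That interval strictly contains $[y_-,y_+]$, which settles this case.

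Once $\crit_{-P}\notin(y_-,y_+)$ is known, the geometry finishes the argument. Say $\crit_{-P}\le y_-$; the case $\crit_{-P}\ge y_+$ is symmetric. The fjord lies in $\{0\le\arg(z-y_-)\le\nnu\}$. Every point of the bubble cone satisfies $\arg(z-\crit_{-P})>\pi/2-\ttau>\nnu$. An elementary planar argument shows the two sets are disjoint: the bubble cone with vertex $\crit_{-P}\le y_-$ lies to the left of, or above, the ray $\{\arg(z-y_-)=\pi/2-\ttau\}$. More precisely, for $z$ in the bubble cone one gets $\arg(z-y_-)\ge\arg(z-\crit_{-P})$ when $\crit_{-P}\le y_-$. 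The one point to watch is when $\crit_{-P}$ coincides with $y_-$. In that case the vertex itself is excluded from $\hat{\Bubb}_P\setminus\{\crit_{-P}\}$, and we still need to see that $\crit_{-P}\notin\fjord$. This follows because $\parR\fjord$ is bounded away from $y_\pm$, since $\dist(\parR\fjord,\{y_\pm\})\asymp(y_+-y_-)/2$ by the proof of Lemma~\ref{lem:fjord-angle-control}.

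Taking the union over all fjords $\fjord\subset\Fjord$ completes the proof. I expect the main obstacle to be the careful combinatorial verification that $\crit_{-P}$ avoids $(y_-,y_+)$ for fjords of depth $n$ and $n+1$.
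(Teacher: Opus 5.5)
Your overall plan is the paper's: compare the cone at the root of $\hat{\Bubb}_P$ with thin cones containing the fjords, using $\ttau+\nnu<\pi/2$. The gap is that your ``key combinatorial input'' is false for $P=Q_n-Q_{n-1}$.

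\textbf{The counterexample.} Let $n-1$ be an NP depth, so $a_n$ is large, and take $\fjord=\fjord_{n-1}$.
\begin{itemize}
\item Its base sits in the middle of the chain $\crit_{-Q_{n-2}-jQ_{n-1}}$, $0\le j\le a_n$. This chain subdivides the tile $[\crit_{-Q_{n-2}},\crit_{-Q_{n-1}}]$ containing $0$ (compare Proposition~\ref{prop:bubble-chain-01}(2) with $n$ replaced by $n-1$).
\item The end subtiles are $[\crit_{-Q_{n-2}},\crit_{-Q_{n-2}-Q_{n-1}}]$ and $[\crit_{-Q_n},\crit_{-Q_{n-1}}]$, so $\{y_-,y_+\}=\{\crit_{-Q_{n-2}-Q_{n-1}},\crit_{-Q_n}\}$.
\item Since $Q_n-Q_{n-1}=Q_{n-2}+(a_n-1)Q_{n-1}$, the root $\crit_{-(Q_n-Q_{n-1})}$ is the point $j=a_n-1$. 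It lies strictly between $y_-$ and $y_+$.
\end{itemize}
Your planned boundary check would uncover exactly this: being next to a tile endpoint is not the same as lying outside $(y_-,y_+)$. For such a root, the lens $\{0\le\arg(z-y_-)\le\nnu\}\cap\{\pi-\nnu\le\arg(z-y_+)\le\pi\}$ does meet the bubble cone near its vertex. So Lemma~\ref{lem:fjord-angle-control} as stated cannot finish this case.

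\textbf{The case split.} Your case split is also inverted. A cut point of $\tiling_{Q_n}$ survives in every \emph{finer} tiling, not every coarser one. So your claim that $\crit_{-P}$ is a cut point of $\tiling_{Q_k}$ for $k\le n$ fails for $k<n$. Fjords whose tiles come from $\tiling_{Q}$ with $Q\ge Q_n$ are the trivial case, since the root is a cut point there. For coarser fjords you must locate the root inside the tile containing $0$. This works for $\crit_{-Q_n}$, which lands in the closed end subtile containing $0$ (or at $y_\pm$). It also works for $\crit_{-(Q_n-Q_{n-1})}$ against every fjord except $\fjord_{n-1}$.

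\textbf{How to close the gap.} Handle $\fjord_{n-1}$ by its actual shape rather than its lens.
\begin{itemize}
\item Its dam is by definition the hyperbolic geodesic over $\parR\fjord_{n-1}$, so $\fjord_{n-1}$ is the closed half-disk over its base.
\item By the proof of Lemma~\ref{lem:fjord-angle-control}, $|\parR\fjord_{n-1}|\asymp|y_+-y_-|/\threshold'$.
\item By Real Bounds, the base ends near $j\approx a_n-\threshold'$. That is at distance $\succ|y_+-y_-|$ from the root at $j=a_n-1$.
\end{itemize}
Hence, seen from $\crit_{-(Q_n-Q_{n-1})}$, the fjord lies within angle $O(1/\threshold')$ of $\R$. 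Combined with the angular control of $\hat{\Bubb}_{Q_n-Q_{n-1}}$, this gives disjointness once $\threshold$ is large.

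The paper simply asserts that the thin-cone containment at $\crit_{-Q_n}$ holds ``similarly'' at $\crit_{-Q_n+Q_{n-1}}$. The estimate above is what that step actually requires.
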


\begin{proof}
    According to Lemma \ref{lem:fjord-angle-control}, every fjord is contained in
    \[
        \{ 0 \leq \arg (z-\crit_{-Q_n}) \leq \nnu(\threshold)\} \cup \{\pi - \nnu(\threshold) \leq \arg (z-\crit_{-Q_n})  \leq \pi \}.
    \]
    Similar estimates hold when the critical point $\crit_{-Q_n}$ is replaced with $\crit_{-Q_n+Q_{n-1}}$.
    From here, it is clear that (\ref{eqn:tau-and-nu}) implies the claim.
\end{proof}

\begin{lemma}
\label{lem:bubble-disjoint-fjord}
    There exists a universal constant $\delta_1>0$ such that for every parabolic fjord $\fjord$, and for every $P \in \Tbold$,
    if $\parR \fjord$ lies in the connected component of $\R \backslash \{ 0 \}$ that does not contain $\crit_{-P}$, then
    $\fjord$ is disjoint from $\nbh_{\delta_1}(\hat{\Bubb}_P)$.
\end{lemma}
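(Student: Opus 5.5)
We compare two sizes: the distance from $\parR\fjord$ to $\crit_{-P}$, and the size of $\hat{\Bubb}_P$. The hypothesis puts the real base of $\fjord$ on the side of $0$ opposite to $\crit_{-P}$. By Pseudo-Bubble Bounds (Proposition \ref{prop:bubble-bounds}), $\hat{\Bubb}_P$ has diameter $\asymp |\crit_{-P}-\crit_{Q_n-P}|$, where $Q_n<P\le Q_{n+1}$, and lies in the cone of half-angle $\ttau$ about the vertical through $\crit_{-P}$. Write $D_P:=\diam(\hat{\Bubb}_P)$. It then suffices to show two things: $\dist(\crit_{-P},0)\succ D_P$, and the fjord stays in a region where its height is at most comparable to its horizontal distance from its base.

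\textbf{Step 1: $|\crit_{-P}|\succ D_P$.} Since $P\le Q_{n+1}$, the point $\crit_{-P}$ is a point of the tiling $\tiling_{Q_{n+1}}$. The tile of $\tiling_{Q_n}$ (or $\tiling_{Q_{n+1}}$) adjacent to $\crit_{-P}$ has length comparable to $|\crit_{-P}-\crit_{Q_n-P}|$, by Real Bounds (Proposition \ref{prop:R-apb-transcendental}(1)) and the Koebe argument used in Corollary \ref{cor:location-of-alpha}. The point $0=\crit_0$ is itself a tiling endpoint distinct from $\crit_{-P}$. Hence $|\crit_{-P}|$ is at least one tile length, and so $|\crit_{-P}|\succ D_P$.

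One subtlety occurs when $a_{n+1}=\infty$ or $a_{n+1}$ is large. In that case consecutive tiles near $\crit_{-P}$ shrink like $|I|/j^2$ (Proposition \ref{prop:R-apb-transcendental}(2),(3)). I would verify that the pseudo-bubble size $|\crit_{-P}-\crit_{Q_n-P}|$ is still comparable to the adjacent small tile and not to the parent tile. This is exactly the normalization in Proposition \ref{prop:bubble-bounds}(2)(a), because $\crit_{Q_n-P}$ is the neighbour of $\crit_{-P}$ under $\Fext^{Q_n}$.

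\textbf{Step 2: the fjord stays low near $0$.} By Lemma \ref{lem:fjord-angle-control}, $\fjord$ lies in a thin wedge of angle $\nnu$ over its tile $[y_-,y_+]$. So every point $z\in\fjord$ satisfies $\operatorname{Im} z\le \tan\nnu\cdot\dist(\operatorname{Re}z,\R\setminus(y_-,y_+))$.

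Suppose $\parR\fjord$ lies to the left of $0$ and $\crit_{-P}>0$ (the other case is symmetric). There are two cases.
\begin{itemize}
\item If $0\notin(y_-,y_+)$, the fjord lies entirely in $\{\operatorname{Re}z<0\}$. Then its distance to $\hat{\Bubb}_P$ is at least the horizontal gap. Since the bubble sits in the $\ttau$-cone above $\crit_{-P}$, every bubble point has real part $\ge \crit_{-P}-D_P\sin\ttau$. Combining this with Step 1 gives distance $\succ D_P$ only provided the cone constant is controlled. If it is not, I would instead use the angular separation: the wedge near $y_+\le0$ and the bubble cone over $\crit_{-P}>0$ are separated by an angle at least $\pi/2-\ttau-\nnu>0$ by (\ref{eqn:tau-and-nu}). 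Angular separation together with $|\crit_{-P}-y_+|\ge|\crit_{-P}|\succ D_P$ yields distance $\succ D_P$.
\item If $0\in(y_-,y_+)$, then since $0$ is a critical point we have $0\notin\parR\fjord$, and $\parR\fjord$ lies strictly left of $0$. This case is the main obstacle: $\crit_{-P}$ may also lie in $(y_-,y_+)$, under the fjord's wedge. Here I would use that the fjord's upper boundary is a dam close to a semicircle over $\parR\fjord$ (Lemma \ref{lem:fjord-almost-invariance}). Over $[0,\infty)$ the fjord therefore has height comparable to at most its distance to its own base, and at $\operatorname{Re}=\crit_{-P}$ the fjord is confined below a height $\prec$ (distance from $\crit_{-P}$ to $\parR\fjord$). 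Meanwhile the bubble rises steeply, at an angle of at least $\pi/2-\ttau$.
\end{itemize}

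To conclude, a point of the bubble at height $h$ near real part $\crit_{-P}$ is separated from the fjord's point above by a gap of order $h$ plus the horizontal distance. Near the root this amounts to a cone-versus-flat-wedge separation at $\crit_{-P}$, which gives separation $\succ D_P$ once we know the fjord is at height $\ll D_P$ near $\crit_{-P}$. This is what I expect to need, possibly using that the fjord's dam lies within the hyperbolic $K$-neighbourhood of a geodesic with base left of $0$. Finally we choose $\delta_1$ smaller than all the comparability constants.
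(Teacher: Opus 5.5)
Your Case A argument is fine, but Step~2 leaves the case $0\in(y_-,y_+)$ open. What you write there is a plan (``I would use'', ``what I expect to need''), and its key claim, that the fjord has height $\ll D_P$ above $\crit_{-P}$, is unsupported. If $\crit_{-P}$ really sat under the thin triangle of Lemma~\ref{lem:fjord-angle-control}, that triangle has height of order $\nnu\,(y_+-y_-)$ there, and there is no reason for this to be small compared to $D_P$.

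The missing idea is that this case never occurs. The tile of $\tiling_{Q_m}$ containing $0$ is $[\crit_{-Q_{m-1}},\crit_{-Q_m}]$, as used in \S\ref{ss:bubble-chains}; $0$ is interior to it because $\crit_{-Q_m}$ and $\crit_{-Q_{m-1}}$ lie on opposite sides of $0$. This tile shares the endpoint $\crit_{-Q_{m-1}}$ with the tile $[\crit_{-Q_{m-2}},\crit_{-Q_{m-1}}]$ of $\tiling_{Q_{m-1}}$ containing $0$, so it is one of the two extreme subtiles of the latter. Hence in Lemma~\ref{lem:fjord-angle-control} the point $0$ lies in $[x_-,y_-]$ or $[y_+,x_+]$, never in $[y_-,y_+]$. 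It follows that every fjord lies in $\{0\le\arg z\le\nnu\}\cup\{\pi-\nnu\le\arg z\le\pi\}$. This global double wedge at $0$ is exactly what the paper's proof uses. Combined with your angular comparison, it finishes the proof: $\ttau+\nnu<\pi/2$ puts the $\ttau$-cone at $\crit_{-P}$ and the $\nnu$-wedge on the other side of $0$ at distance at least $\cos\ttau\,|\crit_{-P}|$.

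A smaller error is in Step~1. The point $0=\crit_0$ is not a point of any tiling $\tiling_Q$, since these are cut out by the $\crit_{-R}$ with $0<R\le Q$. It lies inside the tile $[\crit_{-Q_{n+1}},\crit_{-Q_n}]\in\tiling_{Q_{n+1}}$. For $P=Q_{n+1}$ this is precisely the tile adjacent to $\crit_{-P}$ on the side of $0$, so ``at least one tile length'' does not follow. What is needed is that $0$ sits at a definite proportion inside this tile, i.e.\ $|\crit_{-Q_n}|\asymp|\crit_{-Q_{n+1}}|$. This is a genuinely critical bound: it fails for a rigid rotation with $a_{n+2}$ large. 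It can be extracted from Proposition~\ref{prop:R-apb-transcendental} by repeatedly using that extreme subtiles have size comparable to their parent. With that input, $|\crit_{-P}|\succ D_P$ holds, which is what the paper means by ``By Real Bounds''.
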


\begin{proof}
    Let $n = n(P) \in \Z$ be such that $Q_n < P \leq Q_{n+1}$.
    By Pseudo-Bubble Bounds, the pseudo-bubble $\hat{\Bubb}_P$ is contained in the set
    \[
        \Xi_P = \{ \crit_{-P}\} \cup \left\{ z \in \UHP \: : \: \left| \arg(z-\crit_{-P}) - \frac{\pi}{2} \right| \leq \ttau, |z-\crit_{-P}| \leq K |\crit_{Q_n-P}-\crit_{-P} | \right\}.
    \]
    By Real Bounds, there exists some small universal constant $\delta_1>0$ such that $\nbh_{\delta_1}(\Xi)$ is disjoint from $0$.
    We know from Lemma \ref{lem:fjord-angle-control} that every fjord is contained in
    \[
        \{ 0 \leq \arg z \leq \nnu(\threshold)\} \cup \{\pi - \nnu(\threshold) \leq \arg z \leq \pi \}.
    \]
    Then, (\ref{eqn:tau-and-nu}) implies that for every fjord $\fjord$, if $\parR \fjord$ lies in the connected component of $\R \backslash \{ 0 \}$ that does not contain $\crit_{-P}$, then $\nbh_{\delta_1}(\Xi_P)$ is disjoint from $\fjord$.
\end{proof}

\begin{lemma}
\label{lem:primary-bubble-disjoint}
    There exists a universal constant $\delta_2>0$ such that for any two $P, R \in \Tbold$ with $P>R$, the pseudo-bubble $\hat{\Bubb}_R$ is disjoint from $\nbh_{\delta_2}(\hat{\Bubb}_P)$.
\end{lemma}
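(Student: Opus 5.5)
We split according to whether the roots $\crit_{-P}$ and $\crit_{-R}$ lie on the same side of $0$ in $\R$, and in each case compare $\hat{\Bubb}_R$ with the cone $\Xi_P$ from the proof of Lemma~\ref{lem:bubble-disjoint-fjord}. That cone has vertex $\crit_{-P}$, half-angle $\ttau$ around the vertical, and height $\asymp |\crit_{Q_n-P}-\crit_{-P}|$, where $Q_n<P\le Q_{n+1}$.

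\emph{Case 1: the roots lie on opposite sides of $0$.} By Real Bounds, $|\crit_{-P}|\asymp |\crit_{Q_n-P}-\crit_{-P}|$: the tile of $\tiling_{Q_n}$ adjacent to $\crit_{-P}$ on the side of $0$ is comparable to the distance from $\crit_{-P}$ to $0$. Hence $\nbh_{\delta}(\Xi_P)$ avoids the vertical line through $0$ for a small universal $\delta$. The cone $\Xi_R$ lies entirely on the other side of that line, since its vertex is on the other side and its opening is less than $\pi/2-\ttau$ away from vertical, so the two sets are disjoint. The only delicate point is the size comparison $|\crit_{-R}|\succ$ height of $\Xi_R$, which is again Real Bounds.

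\emph{Case 2: the roots lie on the same side of $0$.} Since $P>R$ and $P\neq R$, the points $\crit_{-P}$ and $\crit_{-R}$ are distinct points of $\mathtt{Crit}$. We use the tiling $\tiling_{P}$, or more conveniently $\tiling_{Q_{n+1}}$, and we must compare the distance $|\crit_{-P}-\crit_{-R}|$ with the sizes of the two bubbles. The plan is to reduce to a scale comparison. Let $m$ be the level of $R$, with $Q_m<R\le Q_{m+1}$ and $m\le n$. By Proposition~\ref{prop:proper-discontinuity}, applied to $P-R$, one gets $|\crit_{-P}-\crit_{-R}|\succ \diam \Bubb_P$: a point of the critical orbit at time distance less than $Q_{n+1}$ cannot lie $\lhd$-between $\crit_{-P}\pm Q_{n+1}$ too closely, and Real Bounds convert this into a Euclidean lower bound comparable to the tile adjacent to $\crit_{-P}$. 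The bubble $\hat{\Bubb}_R$ may be much larger (when $m<n$), but its cone opens upward from a vertex at distance $\succ \diam\Bubb_P$ from $\crit_{-P}$, with slopes bounded away from horizontal by $\pi/2-\ttau$. A small cone $\nbh_\delta(\Xi_P)$ of height $\asymp\diam\Bubb_P$ therefore cannot reach the large cone unless the horizontal separation is $\prec\diam\Bubb_P\tan\ttau$. Elementary geometry with the two cones then gives disjointness, provided the constant in the lower bound beats $\tan\ttau$ times the height constant $K$.

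\emph{Main obstacle.} That last inequality is not automatic, because $K$ and $\ttau$ are fixed universal constants and the lower bound from Real Bounds might be weaker. To handle it I would use dynamics rather than pure cone geometry. Pull back by $\Fext^{R}$ restricted to a Koebe neighbourhood $W=\UHP\cup I\cup-\UHP$, as in Corollary~\ref{cor:location-of-alpha}. This maps $\hat{\Bubb}_R$ to a neighbourhood of $\hat{\Zbold}$ and $\hat{\Bubb}_P$ to the primary pseudo-bubble $\hat{\Bubb}_{P-R}$. Pseudo-bubbles are by definition lifts of $\operatorname{int}\hat{\Zbold}$, which the primary pseudo-bubble $\hat{\Bubb}_{P-R}$ touches only at its root, and disjoint lifts remain disjoint. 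The definite separation $\delta_2$ then comes from Koebe distortion together with the quasidisk bounds of Proposition~\ref{prop:bubble-bounds}(1).
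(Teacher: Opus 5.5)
\textbf{The proposal has a genuine gap.} The Koebe step in your last paragraph is not available in general. Let $Q_n<P\le Q_{n+1}$. The endpoints of your interval $I$ are critical values $\crit_S$, $0\le S<R$, of $\Fext^R$. Their time distance to $\crit_{-(P-R)}$ is $S+(P-R)$, which sweeps out all of $[P-R,P)$. If $P-R\le Q_n$, then $S=Q_n-(P-R)$ is admissible, so a critical value of $\Fext^R$ sits at the level-$n$ scale next to $\crit_{-(P-R)}$. Meanwhile $\diam\hat{\Bubb}_{P-R}$ is at the scale of the level of $P-R$, which can be exponentially larger. In that case $W=\UHP\cup I\cup-\UHP$ contains no definite neighbourhood of $\hat{\Bubb}_{P-R}$.

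The missing idea is to split by the \emph{time gap} $P-R$, not by the sides of $0$. Fix a large universal $N$.
\begin{itemize}
\item If $P-R<Q_{n-N}$, the spatial gap $|\crit_{-R}-\crit_{-P}|$ is at least of the level-$(n-N)$ scale, while $\diam\hat{\Bubb}_P$ is of the level-$n$ scale. By Real Bounds their ratio grows with $N$. For $N$ large, $\nbh_{\delta_1}(\Xi_P)$ lies in an isosceles triangle, as flat as you like, over an interval around $\crit_{-P}$ not containing $\crit_{-R}$. The cone containing $\hat{\Bubb}_R$ (directions within $\ttau$ of vertical) cannot enter it. This is how the inequality you flagged as ``not automatic'' becomes automatic: you choose $N$.
\item The same inequality is silently needed in your Case~1. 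Keeping $\Xi_R$ on its side of the vertical line through $0$ requires $K\sin\ttau\,|\crit_{Q_m-R}-\crit_{-R}|<|\crit_{-R}|$ for the fixed constants $K,\ttau$, and Real Bounds give only $\asymp$.
\item If instead $P-R\ge Q_{n-N}$, all gaps $S+(P-R)$ lie in $[Q_{n-N},Q_{n+1})$. So a $\delta'$-neighbourhood of $\hat{\Bubb}_{P-R}$, with $\delta'<\delta_1$ depending on $N$, avoids the critical values of $\Fext^R$. It lifts univalently, with bounded distortion, to a neighbourhood $U\supset\nbh_{\delta_2}(\hat{\Bubb}_P)$.
\end{itemize}

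Even in that second regime, the source of separation you name does not work. $\hat{\Bubb}_{P-R}$ \emph{touches} $\hat{\Zbold}$ at its root, so every neighbourhood of it meets $\hat{\Zbold}$; in external coordinates it meets the fjords attached near $\crit_{-(P-R)}$. Koebe distortion together with Proposition~\ref{prop:bubble-bounds}(1) can only transport a separation that already exists. You must identify where $\hat{\Bubb}_R\cap U$ goes.
\begin{itemize}
\item The set $U\cap\UHP$ lies in one lake of generation $R$. It can meet $\hat{\Bubb}_R$ only if this lake is adjacent to $\Bubb_R$.
\item In that case, by Lemma~\ref{lem:primary-lakes-and-adjacent-bubbles}(1), $\Fext^R$ maps the corresponding arc of $\partial\Bubb_R$ onto the component of $\R\setminus\{0\}$ \emph{not} containing $\crit_{-(P-R)}$. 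Hence it maps $\hat{\Bubb}_R\cap U$ into fjords attached on the far side of $0$.
\item Lemma~\ref{lem:bubble-disjoint-fjord} says $\nbh_{\delta_1}(\hat{\Bubb}_{P-R})$ avoids exactly those fjords. That lemma rests on the angular control of fjords and on $\ttau+\nnu(\threshold)<\pi/2$, i.e.\ on $\threshold$ being large.
\item Since $\Fext^R(U)\subset\nbh_{\delta_1}(\hat{\Bubb}_{P-R})$, this forces $U\cap\hat{\Bubb}_R=\emptyset$.
\end{itemize}
Your argument never invokes the fjord angular control or the threshold. Without them it gives only qualitative disjointness.
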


\begin{proof}
    Consider the constants $n = n(P) \in \Z$, $\delta_1>0$ and the set $\Xi_P$ in the proof of the previous lemma.
    By Real Bounds, there exists a uniform constant $N \geq 1$ such that
    $\nbh_{\delta_1}(\Xi) \cap \UHP$ is contained in the unique isosceles triangle in $\overline{\UHP}$ having two of its three vertices at $\crit_{Q_{n-N}-P}$ and $\crit_{Q_{n-N-1}-P}$ with interior angles $\ttau$.
    Then, Real Bounds guarantees that if $\crit_{-P}$ avoids the interval $[\crit_{Q_{n-N}-P}, \crit_{Q_{n-N-1}-P}]$, for example if $P-R < Q_{n-N}$, then $\hat{\Bubb}_P$ avoids $\nbh_{\delta_1}(\Xi)$.

    It remains to consider the case when $P-R \geq Q_{n-N}$.
    By Real Bounds and Pseudo-Bubble Bounds, there exists a universal constant $\delta' \in (0,\delta_1)$ (depending on $N$) such that $\nbh_{\delta'}(\hat{\Bubb}_{P-R})$ is disjoint from the set of critical values of $\Fext^R$.
    Then, $\nbh_{\delta'}(\hat{\Bubb}_{P-R})$ univalently lifts to an open disk neighborhood $U$ of $\hat{\Bubb}_P$ under $\Fext^R$.
    The set $U$ contains $\nbh_{\delta_2}(\hat{\Bubb}_{P})$ for some universal constant $\delta_2>0$. 
    By design, $U$ is also disjoint from $\hat{\Bubb}_R$ because otherwise, its image $\nbh_{\delta'}(\hat{\Bubb}_{P-R})$ would have to intersect parabolic fjords on the other side of $0$, contradicting Lemma \ref{lem:bubble-disjoint-fjord}.
\end{proof}

\subsection{Primary lakes}
\label{ss:primary-lakes}

\begin{definition}
    We say that a lake $\lake$ of $\Fext$ is a \emph{primary lake} if $\parR \lake$ contains an open interval.
\end{definition}

\begin{lemma}
\label{lem:primary-lake}
    For every $P \in \Tbold$ and every tile $I$ of $\tiling_P$, there exists a unique primary lake $\lake_P(I)$ of generation $P$ such that $I$ is a connected component of $\partial_{\R} \lake_P(I)$.
\end{lemma}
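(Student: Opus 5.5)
The plan is to construct the lake through the real dynamics of $\Fext^P$ and to read off uniqueness from univalence. By Proposition~\ref{prop:R-apb-transcendental}, $\Fext^P$ extends to an orientation-preserving homeomorphism of $\R$. By Corollary~\ref{cor:extension-critical} it carries $\crit_{-R}$ to $\crit_{P-R}$. Hence the endpoints of a tile $I\in\tiling_P$ (points $\crit_{-R}$ with $0<R\le P$) go to points $\crit_{P-R}$ with $0\le P-R<P$, which are exactly the critical values of $\Fbold^P$ on $\partial\Hbold$ by Lemma~\ref{lem:crit-points}. The interior of $I$ contains no $\crit_{-R}$ with $R\le P$, so $J:=\Fext^P(\mathrm{int}\, I)$ is an open interval free of critical values of $\Fext^P$.

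\emph{Step 1 (a local lift near $I$).} Use Schwarz reflection, as in the proof of Proposition~\ref{prop:R-apb-transcendental}, to extend $\Fext^P$ to a real-symmetric map near $\mathrm{int}\, I$. Since $\Fbold^P$ has no critical points off $\Fbold^{-P}(\Hbold)$ except along the relevant critical orbit, and these lie on $\R$ only at tile endpoints, the extension is a local homeomorphism at each point of $\mathrm{int}\, I$. Working with $W=\UHP\cup J\cup-\UHP$, take the component $W'$ of the preimage under the reflected map that contains $\mathrm{int}\, I$. On $W'$ the map is a covering onto $W$, and restricted to $\mathrm{int}\, I$ it is a homeomorphism onto $J$. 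So $W'\cap\UHP$ contains a one-sided neighborhood of $\mathrm{int}\, I$ mapped into $\UHP$. Define $\lake_P(I)$ to be the component of $\Dom(\Fext^P)$ containing this one-sided neighborhood. It is a lake of generation $P$ whose real boundary contains $I$, hence it is primary.

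\emph{Step 2 ($I$ is a full component of $\partial_\R\lake_P(I)$).} The map $\Fext^P$ extends continuously to $\mathrm{int}\, I$ with values in $\R$. At the endpoints of $I$ the lake must stop along $\R$: beyond an endpoint $\crit_{-R}$ the real line maps to the other side of the critical value $\crit_{P-R}$. Near $\crit_{-R}$ the map has local degree two by Theorem~\ref{thm:transcendental-extension}(2)(a), so the sectors of $\UHP$ near $\crit_{-R}$ lying over $\UHP$ are separated by a preimage of $\partial\Hbold$, namely the hoglet $\Bubb_R$ rooted at $\crit_{-R}$ (or its image under the appropriate iterate). That hoglet lies outside $\Dom$-lakes. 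Therefore the real boundary component of the lake containing $\mathrm{int}\, I$ is exactly $I$.

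\emph{Step 3 (uniqueness).} Suppose two lakes $\lake,\lake'$ of generation $P$ both have $I$ as a real boundary component. Points of $\UHP$ close enough to an interior point $x\in I$ lie in both. The reason is that the one-sided germ of $\UHP$ at $x$ is connected, and it is contained in $\Dom(\Fext^P)$ by Step 1, since any point there maps into $\UHP$ near $\Fext^P(x)$. Two components of the same open set that intersect coincide.

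I expect Step 2 to be the main obstacle: showing the lake does not continue past the endpoints of $I$ along $\R$. This needs the local structure at the critical points $\crit_{-R}$, together with the fact that the primary hoglet $\Bubb_R$ is attached exactly there. Proposition~\ref{prop:bubble-bounds} and Corollary~\ref{cor:alpha-points} provide this: the pseudo-bubble touches $\R$ only at its root.
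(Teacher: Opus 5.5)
Your Steps 1 and 3 are the paper's argument. The paper reflects $\UHP\cup\textnormal{int}(\Fext^P(I))\cup-\UHP$ and lifts it univalently under $\Fext^P$ to a real-symmetric domain with real slice $\textnormal{int}(I)$. It takes the upper half as $\lake_P(I)$ and gets uniqueness from the fact that distinct lakes of the same generation are disjoint. Your ``covering onto $W$'' is in fact a conformal isomorphism, since $W$ is simply connected and a lake maps conformally onto $\UHP$.

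Step 2 does not prove what it asserts. That $\Bubb_R$ splits a small half-disk at $c=\crit_{-R}$ into a left and a right part is a purely local fact. The local degree two of $\Fbold^P$ at $C_{-R}$ is also local. Neither rules out the lake going around the bounded hoglet $\Bubb_R$ and returning to $\R$ on the far side of $c$.

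This is exactly the case that must be excluded. If $[c,c+\epsilon]\subset\partial_{\R}\lake_P(I)$, then $\lake_P(I)$ meets the half-disk above an interior point of the adjacent tile $I_2$. That half-disk lies in $\lake_P(I_2)$, so $\lake_P(I)=\lake_P(I_2)$. Hence ``$I$ is a component'' is equivalent to $\lake_P(I)\neq\lake_P(I_2)$ for both tiles adjacent to $I$.

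The missing ingredient is univalence of $\Fext^P$ on the lake. Suppose $\lake:=\lake_P(I)=\lake_P(I_2)$. Let $g$ be the inverse of $\Fext^P\colon\lake\to\UHP$. It extends continuously across the interiors of $J=\Fext^P(I)$ and $J_2=\Fext^P(I_2)$, which share the endpoint $\crit_{P-R}$.

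By length--area there are semicircles in $\UHP$ centred at $\crit_{P-R}$, with radii tending to $0$, whose $g$-images have diameter tending to $0$. Each such image is an arc in $\lake$ joining $\textnormal{int}(I)$ to $\textnormal{int}(I_2)$ near $c$. Together with its mirror image it forms a closed curve winding around $c$.

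$\Bubb_R$ is a continuum containing $c$, of definite size, and meeting $\R$ only at $c$ (angular control in Proposition~\ref{prop:bubble-bounds}). So it must meet that arc. This contradicts $\Bubb_R\cap\lake=\emptyset$, which holds because $\Fbold^P$ maps $\Bbold_R$ into $\Hbold$.

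The paper does not derive this global fact inside the proof of this lemma. It is supplied afterwards by Lemma~\ref{lem:unique-primary-lake}, through a different univalence argument based on Lemma~\ref{lem:primary-lakes-and-adjacent-bubbles}.
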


\begin{proof}
    The map $\Fext^P: I \to \Fext^P(I)$ is a homeomorphism that contains no critical points of $\Fext^P$ except at the two endpoints of $I$.
    So if we consider the Schwarz reflection, the domain $\UHP \cup \textnormal{int}(\Fext^P(I)) \cup -\UHP$ lifts univalently under $\Fext^P$ to a real-symmetric domain $\lake'_P(I)$ with real slice $\textnormal{int}(I)$.
    Then, $\lake_P(I)$ is equal to $\lake'_P(I) \cap \UHP$.
    The lake $\lake_P(I)$ is uniquely determined by $P$ and $I$ because any two distinct lakes of the same generation must be disjoint.
\end{proof}

The following lemma is elementary.

\begin{lemma}
\label{lem:primary-lakes-and-adjacent-bubbles}
    Consider $P \in \Tbold$ and $I \in \tiling_P$, and let $\crit_{-R}$ be an endpoint of $I$ for some $R \in \Tbold$ with $R\leq P$. 
    \begin{enumerate}
        \item Suppose $R=P$. Then, the Carath\'eodory boundary $\lake_P(I)$ intersects the Carath\'eodory boundary of $\C \backslash \Bubb_P$ along one of the two intervals of $\partial^c(\C \backslash \Bubb_P)$ with endpoints $\crit_{-P}$ and $\alpha_P$.
        \item Suppose $R<P$. 
        Let $S \in \Tbold$ be such that 
        $\crit_{-S}$ is the critical point of $\Fext^{P-R}$ closest to $0$ such that the interval $(0,\crit_{-S})$ is disjoint from $\Fext^R(I)$.
        Let $\crit_{-R,-S}$ be the unique point on $\partial \Bubb_{-R}$ that is mapped under $\Fext^R$ to $\crit_{-S}$.
        Then, the interval of $\partial^c(\C \backslash \Bubb_P)$ that has endpoints $\crit_{-P}$, $\crit_{-P,-S}$ and is disjoint from $\alpha_P$ is a connected component of the intersection between the Carath\'eodory boundaries of $\lake_P(I)$ and $\C \backslash \Bubb_P$.
    \end{enumerate}
\end{lemma}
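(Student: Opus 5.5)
The natural approach is to pull the picture back from the image plane of $\Fext^R$ (or $\Fext^P$), where everything is explicit. By construction in Lemma~\ref{lem:primary-lake}, $\Fext^P$ maps $\lake_P(I)$ conformally onto $\UHP$. The real interval $I$ is sent homeomorphically onto $\Fext^P(I)$, which is a tile-like interval with endpoint $\crit_{P-R}$. We identify which piece of the Carath\'eodory boundary of $\lake_P(I)$ sits above the endpoint $\crit_{-R}$ by following boundary arcs of $\UHP$ near the image of that endpoint and lifting them under the univalent inverse branch.

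\emph{Case (1), $R=P$.} The endpoint of $I$ is the root $\crit_{-P}$ of $\Bubb_P$, and it maps to $\crit_0=0$. Near $0$, the boundary of $\UHP$ in external coordinates is the real line on both sides of $0$. In the dynamical plane, $\Fbold^P$ has a local degree two critical point at $C_{-P}$ (Theorem~\ref{thm:transcendental-extension}(2)(a)). So the two boundary germs at $0$, namely $\partial\Hbold$ on either side of $C_0$, lift to four germs at $C_{-P}$: two of them bound $\Hbold$, and two bound the hoglet $\Bbold_P$. The lake $\lake_P(I)$ sits in one of the two complementary sectors between $\Hbold$ and $\Bbold_P$ at $C_{-P}$. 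Continuing the lift of the real half-line from $0$ along the ray toward $\pm\infty$ inside $\partial\Hbold$, its lift runs along $\partial^c(\C\setminus\Bubb_P)$. By Corollary~\ref{cor:alpha-points}, $\Fbold^P$ extends to a homeomorphism $\hat{\Bbold}\setminus\{\aalpha\}\to\hat{\Zbold}$ sending $\aalpha\mapsto\infty$. Hence the lift of each half-line $[0,\pm\infty)$ on the hedgehog boundary is exactly one of the two boundary intervals of $\Bubb_P$ from $\crit_{-P}$ to $\alpha_P$. Since $\Fext^P(I)$ lies on one side of $0$ and $\UHP$ is adjacent to the opposite half-line only through $\infty$, we read off that precisely one of these intervals lies on $\partial^c\lake_P(I)$.

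\emph{Case (2), $R<P$.} Apply $\Fext^R$, which is univalent on $\lake_P(I)$ (its critical values all lie on $\R$, and $\lake_P(I)$ lies in a lake of generation $R$). It maps $\lake_P(I)$ onto a primary lake $\lake_{P-R}(\Fext^R(I))$ whose real boundary component has endpoint $\crit_{-(P-R)}\cdot$ at $0$. This reduces the question to understanding, in the image, the stretch of $\partial^c\lake$ adjacent to $0$. That stretch runs along $\R$ on the side of $0$ away from $\Fext^R(I)$, until it hits the first critical point $\crit_{-S}$ of $\Fext^{P-R}$; this is exactly the definition of $S$. Indeed, $(0,\crit_{-S})$ contains no critical points of $\Fext^{P-R}$, so it lies on the boundary of the same lake. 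At $\crit_{-S}$ a hoglet is attached, and the lake boundary leaves $\R$. Pulling back by the inverse branch of $\Fext^R$ near $\crit_{-R}$, which is a critical point of local degree two mapping onto $0$, the segment $[0,\crit_{-S}]$ lifts to an arc on the boundary of the hoglet rooted at $\crit_{-R}$ (the statement writes $\Bubb_{-R}$ but means $\Bubb_R$, i.e.\ $\Psi(\Fbold^{-R}$-lift$)$). That arc runs from the root to $\crit_{-R,-S}$ and avoids the alpha-point, because its image avoids $\infty$.

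The main obstacle is the bookkeeping in case (2). Two things need checking. First, the lifted arc really lies on $\partial\Bubb_P$: the hoglet containing $\crit_{-R}$ must be identified with the primary hoglet of generation $P$, which probably requires reading the statement with $P$ and $R$ indices matched, so I expect that I will have to fix the intended identification. Second, the arc must be a full connected component of the intersection, meaning the boundaries separate at both endpoints. At $\crit_{-P}$ this follows from the degree-two branching. At $\crit_{-P,-S}$ it follows because a further hoglet is attached there and the lake boundary turns into it. Throughout I would use Lemma~\ref{lem:crit-points} to guarantee there are no other critical points in the relevant arcs.
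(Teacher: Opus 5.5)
Your proposal follows the paper's proof: in case (1) you lift the half-line of $\R\setminus\{0\}$ disjoint from $\Fext^P(I)$ along the univalent branch of $\Fext^P$ on $\lake_P(I)$, and in case (2) you push $\lake_P(I)$ forward by $\Fext^R$ onto $\lake_{P-R}(J)$, where $J$ is the tile of $\tiling_{P-R}$ containing $\Fext^R(I)$ and having $0$ in its interior, and then lift $[0,\crit_{-S}]$ back. On the bookkeeping point you flag: the hoglet rooted at $\crit_{-R}$ is $\Bubb_R$ and cannot be identified with $\Bubb_P$; the conclusion of (2) should be read with $\Bubb_R,\crit_{-R},\crit_{-R,-S},\alpha_R$ in place of the $P$-indexed objects, which is how the lemma is used in the proof of Proposition~\ref{prop:lake-bounds-0}.
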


\begin{proof}
    Case (1) follows from the fact that the connected component of $\R \backslash \{0\}$ that is disjoint from $\Fext^P(I)$ lifts under $\Fext^P : \lake_P(I) \to \UHP$ to one of the two intervals of $\partial^c(\C \backslash \Bubb_P)$ with endpoints $\crit_{-P}$ and $\alpha_P$.

    In Case (2), let $J$ be the unique tile of $\tiling_{P-R}$ that contains $\Fext^R(I)$.
    By Lemma \ref{lem:primary-lake}, the image of $\lake_P(I)$ under $\Fext^R$ is equal to $\lake_{P-R}(J)$.
    The point $\crit_{-S}$ is precisely the endpoint of $J$ that is closer to $0$ than the other endpoint of $\Fext^R(I)$.
    Then, the claim follows from the fact that the subinterval $[0,\crit_{-S}] \subset J$ lifts under $\Fext^R$ to the interval of $\partial^c(\C \backslash \Bubb_P)$ that has endpoints $\crit_{-P}$, $\crit_{-P,-S}$ and is disjoint from $\alpha_P$.
\end{proof}

\begin{lemma}[Uniqueness of primary lakes]
\label{lem:unique-primary-lake}
    For every $P \in \Tbold$ and $I_1, I_2 \in \tiling_P$, $\lake_P(I_1) = \lake_P(I_2)$ if and only if $I_1 = I_2$.
\end{lemma}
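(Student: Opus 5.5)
The ``if'' direction is trivial, so the task is to show that two distinct tiles $I_1 \neq I_2$ of $\tiling_P$ cannot lie on the real boundary of the same primary lake. I would argue by contradiction: suppose $\lake := \lake_P(I_1) = \lake_P(I_2)$. By the preceding lemma on lakes, $\Fext^P : \lake \to \UHP$ is a conformal isomorphism. By the construction in Lemma~\ref{lem:primary-lake}, it extends homeomorphically across $\textnormal{int}(I_1)$ and $\textnormal{int}(I_2)$ onto the open intervals $\Fext^P(I_1)$ and $\Fext^P(I_2)$ of $\R$. Since $\Fext^P|_\R$ is an orientation-preserving homeomorphism (Real Bounds, Proposition~\ref{prop:R-apb-transcendental}) and $I_1, I_2$ have disjoint interiors, these two image intervals are also disjoint.

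Next I would use the Carath\'eodory boundary. Both $I_1$ and $I_2$ are arcs of $\partial^c\lake$ lying in $\R$, and the uniformization $\Fext^P$ sends them to disjoint arcs of $\partial \UHP = \R \cup \{\infty\}$. The two complementary arcs of $\partial^c \lake$ between $I_1$ and $I_2$ are then carried onto the two complementary components of $\R\cup\{\infty\}$ minus $\Fext^P(I_1) \cup \Fext^P(I_2)$. One of these components is a bounded real interval $J$ between the two images. It contains a point $\crit_Q$ with $0 \le Q < P$, namely a critical value of $\Fext^P$ lying between the images of the adjacent tiles; indeed $0 = \crit_0$ separates them, or some other $\crit_{Q}$ does. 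This follows because adjacent tiles of $\tiling_P$ are separated by critical points $\crit_{-R}$ with $R \le P$, which map to $\crit_{P-R}$.

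The corresponding prime-end arc $\gamma$ of $\partial^c\lake$ therefore maps homeomorphically onto $J \ni \crit_Q$. The key step is to derive a contradiction from this. The boundary arc $\gamma$ lies in $\overline{\UHP}$ and joins a point of $I_1$ to a point of $I_2$. It must therefore separate, in $\overline{\UHP}$, the real points strictly between $I_1$ and $I_2$ (including some endpoint $\crit_{-R}$) from $\lake$. On the other hand, the preimage of the critical value under the conformal $\Fext^P|_\lake$ would be a regular accessible boundary point mapping to $\crit_Q$. However, by Lemma~\ref{lem:crit-points} and Lemma~\ref{lem:primary-lakes-and-adjacent-bubbles}, the points of $\partial^c\lake$ over critical values are exactly the roots $\crit_{-R}$ and their lifts on hoglets, which lie on $\R$ adjacent to the tiles or on $\partial\Bubb_R$ attached at $\crit_{-R}$. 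Tracing $\gamma$ from the endpoint of $I_1$, it must run along $\partial^c(\C\setminus\Bubb_R)$ up to $\alpha_R$ (case (1) of Lemma~\ref{lem:primary-lakes-and-adjacent-bubbles}). At $\alpha_R \in \Iext^{\le P}$ the boundary correspondence sends it to $\infty$, not into the bounded interval $J$. This is the contradiction.

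The main obstacle is making this last topological step rigorous: controlling where $\partial^c\lake$ goes after the root $\crit_{-R}$. I would first try the cleaner route of pushing forward by $\Fext^R$ for the minimal such $R$, reducing to $R = P$ via Lemma~\ref{lem:primary-lake}'s compatibility $\Fext^R(\lake_P(I)) = \lake_{P-R}(J)$. In the case $R = P$, both tiles adjacent to $\crit_{-P}$ map into the two sides of $0$, and Lemma~\ref{lem:primary-lakes-and-adjacent-bubbles}(1) gives distinct lakes: the two lifts of $\UHP$ are separated by the hoglet $\Bubb_P$ together with $\alpha_P$. Induction on the number of critical points separating $I_1$ from $I_2$ would then finish the argument.
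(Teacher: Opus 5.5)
Your contradiction is valid only when the endpoint $\crit_{-R}$ of $I_1$ facing $I_2$ has generation $R=P$. In that case Lemma~\ref{lem:primary-lakes-and-adjacent-bubbles}(1) does put the arc of $\partial^c\Bubb_P$ from $\crit_{-P}$ to $\alpha_P$ on the boundary of the lake. For $R<P$, Lemma~\ref{lem:primary-lakes-and-adjacent-bubbles}(2) says something else: $\partial^c\lake_P(I_1)$ follows $\partial\Bubb_R$ only up to the precritical point $\crit_{-R,-S}$, and $\Fext^P$ sends that point to a bounded real point. So the step ``tracing $\gamma$ \dots\ up to $\alpha_R$'' fails in general. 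Nor is there any reason for the boundary correspondence of a generation-$P$ lake to send $\alpha_R$ to $\infty$.

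Your proposed reduction does not repair this. Pushing forward by $\Fext^S$ with $S<R$ replaces the pair (lake generation, endpoint generation) $(P,R)$ by $(P-S,R-S)$. The gap $P-R$ is therefore invariant, and you never reach the case $R=P$. Pushing forward by $\Fext^R$ itself sends $\crit_{-R}$ to $0$, which is not an endpoint of any tile. Two tiles of $\tiling_P$ adjacent at $\crit_{-R}$ then land in the single tile of $\tiling_{P-R}$ containing $0$. The putative common lake maps to a perfectly legitimate lake touching both sides of $0$, so the distinction you want to exploit is lost. In particular, the base case of your proposed induction (adjacent tiles with $R<P$) is exactly the case left open.

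The missing idea is to \emph{enlarge} the lake rather than push it forward.
\begin{itemize}
\item For $R\le P$ we have $\Dom(\Fext^P)\subset\Dom(\Fext^R)$, since $\Hbold$ is invariant. Hence $\lake=\lake_P(I_1)=\lake_P(I_2)$ is contained in the generation-$R$ lake $\lake_R(J)$, where $J\in\tiling_R$ is the tile containing $I_1$.
\item The tile $J$ has $\crit_{-R}$ as an endpoint, and $I_2\subset\partial\lake_R(J)$ still holds.
\item Now Lemma~\ref{lem:primary-lakes-and-adjacent-bubbles}(1) applies at generation $R$. So $\partial^c\lake_R(J)$ contains the arc $A$ of $\partial^c\Bubb_R$ from $\crit_{-R}$ to $\alpha_R$, and $\Fext^R$ carries $A$ onto the component of $\R\setminus\{0\}$ not containing $\Fext^R(J)$.
\item Since $\Fext^R|_{\R}$ is increasing and $I_2$ lies on the other side of $\crit_{-R}$ from $J$, the interval $\Fext^R(I_2)$ lies in that same component.
\end{itemize}
The boundary correspondence of the conformal isomorphism $\Fext^R:\lake_R(J)\to\UHP$ then covers $\Fext^R(I_2)$ twice, once from $I_2$ and once from a subarc of $A$. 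That is a contradiction. This is the paper's argument, and it needs no induction and no analysis of the inner boundary arc $\gamma$.
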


\begin{proof}
    Let us pick any two distinct intervals $I_1,I_2 \in \tiling_P$ and suppose for a contradiction that $\lake_P(I_1)$ and $\lake_P(I_2)$ are equal to the same lake $\lake$.
    Let us label $I_1 =[\crit_{-R_1^-},\crit_{-R_1^+}]$ and $I_2 =[\crit_{-R_2^-},\crit_{-R_2^+}]$ for some times $R_1^\pm$, $R_2^\pm \in \Tbold_{\leq P}$.
    Assume without loss of generality that $R_1^+ < R_2^-$ and that $\{\crit_{-R_1^+}, \crit_{-R_2^-}\}$ is contained in the interval $[\crit_{-R_1^-}, \crit_{-R_2^+}]$.
    
    Let $J$ be the unique tile in $\tiling_{R_1^+}$ that contains $I_1$.
    Then, $\lake$ is contained in $\lake_{R_1^+}(J)$ and so the real boundary of $\lake_{R_1^+}(J)$ also contains the interval $I_2$.
    From Lemma \ref{lem:primary-lakes-and-adjacent-bubbles}, the boundary of $\lake_{R_1^+}(J)$ contains an ``interval`` $A$ of $\partial^c \Bubb_{R_1^+}$ with endpoints $\crit_{-R_1^+}$ and $\alpha_{R_1^+}$.
    Then, the preimage of $\Fext^{R_1^+}(I_2)$ under $\Fext^{R_1^+}: \lake_{R_1^+}(J) \to \UHP$ contains both $I_2$ and a subinterval of $A$; this contradicts the fact that $\Fext^{R_1^+}: \lake_{R_1^+}(J) \to \UHP$ is a conformal isomorphism.
\end{proof}

\begin{remark}
    The lemma above implies that $\parRess \lake_P(I) = I$. 
    We would like to emphasize the possibility that $\parR \lake(P,I)$ contains some points outside of $I$.
    We will see later how this arises from parabolic phenomena.
\end{remark}

\subsection{Pseudo-bubble chains}
\label{ss:bubble-chains}

To understand in detail the structure of the boundary of lakes, we need to first understand the geometry of pseudo-bubble chains.

\begin{definition}
    A \emph{hoglet chain} of $\Fext$ is a sequence of hoglets $\Gamma_0, \Gamma_1, \Gamma_2, \ldots, $ such that each $\Gamma_{j+1}$ has generation greater than $\Gamma_{j}$ and the root of $\Gamma_{j+1}$ is contained in $\Gamma_{j}$.
    A hoglet chain $\{ \Gamma_j \}_{j \geq 0}$ is called \emph{periodic} with some period $P \in \Tbold$ if $\Fext^P(\Gamma_{j+1}) = \Gamma_j$ for all $j \geq 0$.
\end{definition}

For $n \in \Z$, consider the $Q_n$-periodic hoglet chain
\[
    \Gamma_{n,0}, \quad \Gamma_{n,1}, \quad \Gamma_{n,2}, \quad \Gamma_{n,3}, \quad \ldots
\]
where $\Gamma_{n,0} = \Bubb_{Q_n}$ is the principal hoglet of generation $Q_n$.
Each $\Gamma_{n,j}$ is a hoglet of generation $(j+1)Q_n$ and it is contained in the pseudo-bubble of the same generation which we will denote by $\hat{\Gamma}_{n,j}$.

For every NP depth $n$, we consider the principal parabolic fjord $\fjord_n$ of depth $n$ and the corresponding $\Fbold^{Q_n}$-periodic point $\beta_n$ in $\fjord_n$ described in Lemma \ref{lem:repelling-periodic-point}.
The fjord $\fjord_n$ is almost invariant under $\Fext^{Q_n}$ and it is attached to the tile $[\crit_{-Q_{n-1}},\crit_{-Q_n}] \in \tiling_{Q_n}$ containing $0$. 

\begin{proposition}[Bounds for $\hat{\Gamma}_{n,j}$]
\label{prop:bubble-chain-01}
    There exist universal constants $K>1$ and $\mathbf{m} \in \N$ such that the following properties hold whenever the combinatorial threshold $\threshold$ is sufficiently large.
    \begin{enumerate}
        \item For $n \in \Z$ and $1\leq j \leq a_{n+1}$, the pseudo-bubble $\hat{\Gamma}_{n,j}$ is contained in the hyperbolic disk of radius $K$ centered at the alpha-point $\alpha_{jQ_n+Q_{n-1}}$.
        \item Let $\threshold'$ be the constant from \S\ref{ss:near-parabolic-levels}. 
        If $n \in Z$ is an NP depth, then 
        \begin{enumerate}[label = \textnormal{(\alph*)}]
            \item when $\threshold' + \mathbf{m} \leq j \leq a_{n+1} -\threshold' - \mathbf{m}$, the pseudo-bubble $\hat{\Gamma}_{n,j}$ is contained in the fjord $\fjord_n$;
            \item when $0 \leq j \leq \threshold' - \mathbf{m}$ or $j \geq a_{n+1} -\threshold' + \mathbf{m}$, the pseudo-bubble $\hat{\Gamma}_{n,j}$ is disjoint from the fjord $\fjord_n$.
        \end{enumerate}
    \end{enumerate}
    When depth $n$ is parabolic, property \textnormal{(1)} holds for all $j \geq 1$, property \textnormal{(2)(a)} holds for $j \geq \threshold' + \mathbf{m}$, and property \textnormal{(2)(b)} holds for $0 \leq j \leq \threshold'-\mathbf{m}$.
\end{proposition}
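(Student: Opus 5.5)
We would prove both parts by comparing the chain $\hat{\Gamma}_{n,j}$ with primary pseudo-bubbles via the dynamics of $\Fext^{Q_{n-1}}$. The guiding observation is combinatorial. The root of $\Gamma_{n,j}$ is mapped by $\Fext^{jQ_n}$ to $\crit_{-Q_n}$. Since $Q_{n+1}=a_{n+1}Q_n+Q_{n-1}$, the primary critical points $\crit_{-(jQ_n+Q_{n-1})}$, $1\le j\le a_{n+1}$, march along the tile $[\crit_{-Q_{n-1}},\crit_{-Q_n}]$ toward the parabolic/$\beta_n$ region. Concretely, we would show that $\Fext^{Q_{n-1}}$ maps $\hat{\Gamma}_{n,j}$ onto a pseudo-bubble of generation $(j+1)Q_n-Q_{n-1}$ whose root lies on $\Bubb_{Q_n}$. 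Equivalently, we would pass to the principal chain through the anchor $\Bubb_{jQ_n+Q_{n-1}}$, which is a primary hoglet with alpha-point $\alpha_{jQ_n+Q_{n-1}}$.

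For (1) we proceed by induction on $j$. The root of $\Gamma_{n,j+1}$ lies on $\Gamma_{n,j}$ and is the preimage under $\Fext^{(j+1)Q_n}$ of $\crit_{-Q_n}$ on $\Bubb_{Q_n}$. Pulling back along the chain, the root of $\Gamma_{n,1}$ sits on $\Bubb_{Q_n}$ at a point that, by Corollary~\ref{cor:alpha-points} (the extension $\hat F$ of $\Fbold^{Q_n}$ sends $\aalpha_{Q_n}\mapsto\infty$), lies within bounded hyperbolic distance of $\alpha_{Q_n}$. This uses Real Bounds: the point $\crit_{-Q_n}$ is far from $0$ relative to $\crit_{Q_n}$-scales in $\hat{\Zbold}$ coordinates, so its $\hat F$-preimage is close to the cusp.

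Next we compare $\hat{\Gamma}_{n,j}$ with $\hat{\Bubb}_{jQ_n+Q_{n-1}}$. Both are pseudo-bubbles whose $\Fext^{Q_{n-1}}$-images have roots near the same alpha-points, and Proposition~\ref{prop:bubble-bounds}(3) gives bounded hyperbolic diameter for non-primary pseudo-bubbles. It then suffices to show that the root of $\hat{\Gamma}_{n,j}$ is at bounded hyperbolic distance from $\alpha_{jQ_n+Q_{n-1}}$. To get this, we would use the univalent lift argument from Corollary~\ref{cor:location-of-alpha}. We extend $\Fext^{jQ_n}$ univalently over a Schwarz-reflected neighborhood $W=\UHP\cup I\cup-\UHP$, where $I$ is a real tile free of critical values, and apply Koebe together with Lemma~\ref{lem:primary-bubble-disjoint} to transport the bounded-distance statement from level $j=0$ (namely $\alpha_{Q_{n-1}}$ versus $\Gamma_{n,0}$, handled by Corollary~\ref{cor:location-of-alpha}) to all $j\le a_{n+1}$. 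Uniformity in $j$ comes from the fact that the moduli of $W\setminus\hat{\Bubb}$ are bounded below by Real Bounds (Proposition~\ref{prop:R-apb-transcendental}(2),(3)), with tiles of size $\asymp |I|/\min\{j,a_{n+1}+1-j\}^2$.

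For (2) we use Lemma~\ref{lem:fjord-angle-control} and Lemma~\ref{lem:repelling-periodic-point}. By (1) and Corollary~\ref{cor:location-of-alpha}, $\hat{\Gamma}_{n,j}$ sits within bounded hyperbolic distance of $\alpha_{jQ_n+Q_{n-1}}$, which lies at height $\asymp$ the tile size $|I|/\min\{j,a_{n+1}-j\}^2$ above $\crit_{-(jQ_n+Q_{n-1})}$, inside the cone of angle $\ttau$. The fjord $\fjord_n$ has real base $[\mathtt x'_n,\mathtt y'_n]$, consisting of the tiles with index between $\threshold'$ and $a_{n+1}-\threshold'$, and it lies in the $\nnu$-angular region. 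We then argue case by case.

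In case (a), the index is $\mathbf m$ tiles inside the base. The dam is within bounded hyperbolic distance of the geodesic over the base (Lemma~\ref{lem:fjord-almost-invariance}), while the pseudo-bubble has height comparable to its own tile. Its hyperbolic distance in $\UHP$ to the base interior is therefore controlled, and for $\mathbf m$ large it is separated from the dam, placing it inside $\fjord_n$.

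In case (b), Real Bounds show the pseudo-bubble lies over tiles outside the base. By \eqref{eqn:tau-and-nu}, the $\ttau$-cone misses the $\nnu$-region of the fjord, so the pseudo-bubble is disjoint from $\fjord_n$.

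The parabolic case follows by the same estimates with the tiles of Proposition~\ref{prop:R-apb-transcendental}(3)(b) accumulating at $\beta_n$. The main obstacle is the uniform bounded-distortion transport in (1) for $j$ near $a_{n+1}/2$ in deep near-parabolic levels, where many iterates are composed. The first thing we would try there is to work in perturbed Fatou coordinates of $\Fext^{Q_n}$ on the fjord, in which the chain becomes approximately a translation.
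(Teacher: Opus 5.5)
\paragraph{Gap in part (1): uniformity in $j$.} The genuine gap is in part (1) for $j\ge 2$, exactly at the step you flag as the main obstacle. You transport the estimate along the chain by extending a long iterate $\Fext^{jQ_n}$ across a real tile and applying Koebe, and you assert that the relevant moduli are uniform in $j$ by Real Bounds. You then concede that for $j\approx a_{n+1}/2$ in deep near-parabolic levels you do not have this, and would need perturbed Fatou coordinates. Uniformity in $j$ is the whole content of (1), so as written it is not proved. Koebe alone also gives only Euclidean distortion, which you would still have to convert into a statement about $\dist_{\UHP}$.

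The missing idea is that no distortion control is needed. Let $\lake_{n,j}$ be the primary lake of generation $jQ_n$ with essential real boundary $[\crit_{-Q_{n-1}-(j-1)Q_n},\crit_{-Q_n}]$ (Lemma~\ref{lem:primary-lake}). These lakes are nested, $\Fext^{Q_n}$ maps $\lake_{n,j}$ conformally onto $\lake_{n,j-1}$, and $\lake_{n,j-1}$ carries both $\hat{\Gamma}_{n,j}$ and the primary pseudo-bubble $\hat{\Bubb}_{jQ_n+Q_{n-1}}$. So the conformal isomorphism $\Fext^{(j-1)Q_n}\colon\lake_{n,j-1}\to\UHP$ sends $\hat{\Gamma}_{n,j}$ onto $\hat{\Gamma}_{n,1}$ and $\alpha_{jQ_n+Q_{n-1}}$ to $\alpha_{Q_n+Q_{n-1}}$. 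Its inverse is a holomorphic map $\UHP\to\lake_{n,j-1}\subset\UHP$, hence by Schwarz--Pick does not increase $\dist_{\UHP}$. The case $j=1$ therefore propagates to every $j\le a_{n+1}$ (and to all $j\ge 1$ in the parabolic case) with the \emph{same} constant $K$. By contrast, an induction with bounded distortion at each step would let the constants accumulate.

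\paragraph{Base case and part (2).} Your base case also needs repair. The reduction via Proposition~\ref{prop:bubble-bounds}(3) is correct: it suffices to place the root $\mathtt{r}_{n,1}\in\partial\Bubb_{Q_n}$ of $\Gamma_{n,1}$ within bounded hyperbolic distance of $\alpha_{Q_n+Q_{n-1}}$ (equivalently of $\alpha_{Q_n}$). Given the angular and size control of $\hat{\Bubb}_{Q_n}$, what this actually requires is the lower bound $|\mathtt{r}_{n,1}-\crit_{-Q_n}|\succ|\crit_{-Q_n}|$, i.e.\ that $\mathtt{r}_{n,1}$ stays away from the root. Your heuristic through $\hat F$ does not supply it. The point $C_{-Q_n}$ sits at the same scale as $C_{Q_n}$, not ``far'' from $0$, and turning a statement about $\hat F$ into this bound requires knowing how $\hat F$ is normalized near the root of $\hat{\Bbold}_{Q_n}$. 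The paper gets the bound from Lemma~\ref{lem:estimate-lift-sector}, because $\mathtt{r}_{n,1}$ lies on the boundary of the lift of $\Sbold^m$ under $\Fbold^{Q_n}$, where $Q_{[m-1]}<Q_n\le Q_{[m]}$. Two smaller errors: a base at $j=0$ cannot work, since $\Gamma_{n,0}=\Bubb_{Q_n}$ is primary and touches $\R$; and $\Fext^{Q_{n-1}}$ does not put roots of the chain on $\Bubb_{Q_n}$ (for $j=1$ it moves the root onto $\Bubb_{Q_n-Q_{n-1}}$).

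Part (2) follows the paper's route. From (1) and Corollary~\ref{cor:location-of-alpha}, $\hat{\Gamma}_{n,j}$ lies in a truncated cone at $\crit_{-Q_{n-1}-jQ_n}$ of some universal angle and of size comparable to the $j$-th tile. The constant $\mathbf{m}$ then comes from comparing that size with the Real Bounds distance to the endpoints of $\partial_{\R}\fjord_n$. In case (b), however, you cannot simply invoke \eqref{eqn:tau-and-nu}. The cone angle is not $\ttau$, and the wedge region of Lemma~\ref{lem:fjord-angle-control} is based at $y_\pm$. For all but the first and last few $j$, the cone's vertex lies inside that wedge's base, so the two meet near $\R$. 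Disjointness from $\fjord_n$ itself must come from the truncation-versus-distance comparison.
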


\begin{figure}
        \centering
       \begin{tikzpicture}
    \node[anchor=south west,inner sep=0] (image) at (0,0) {\includegraphics[width=1\linewidth]{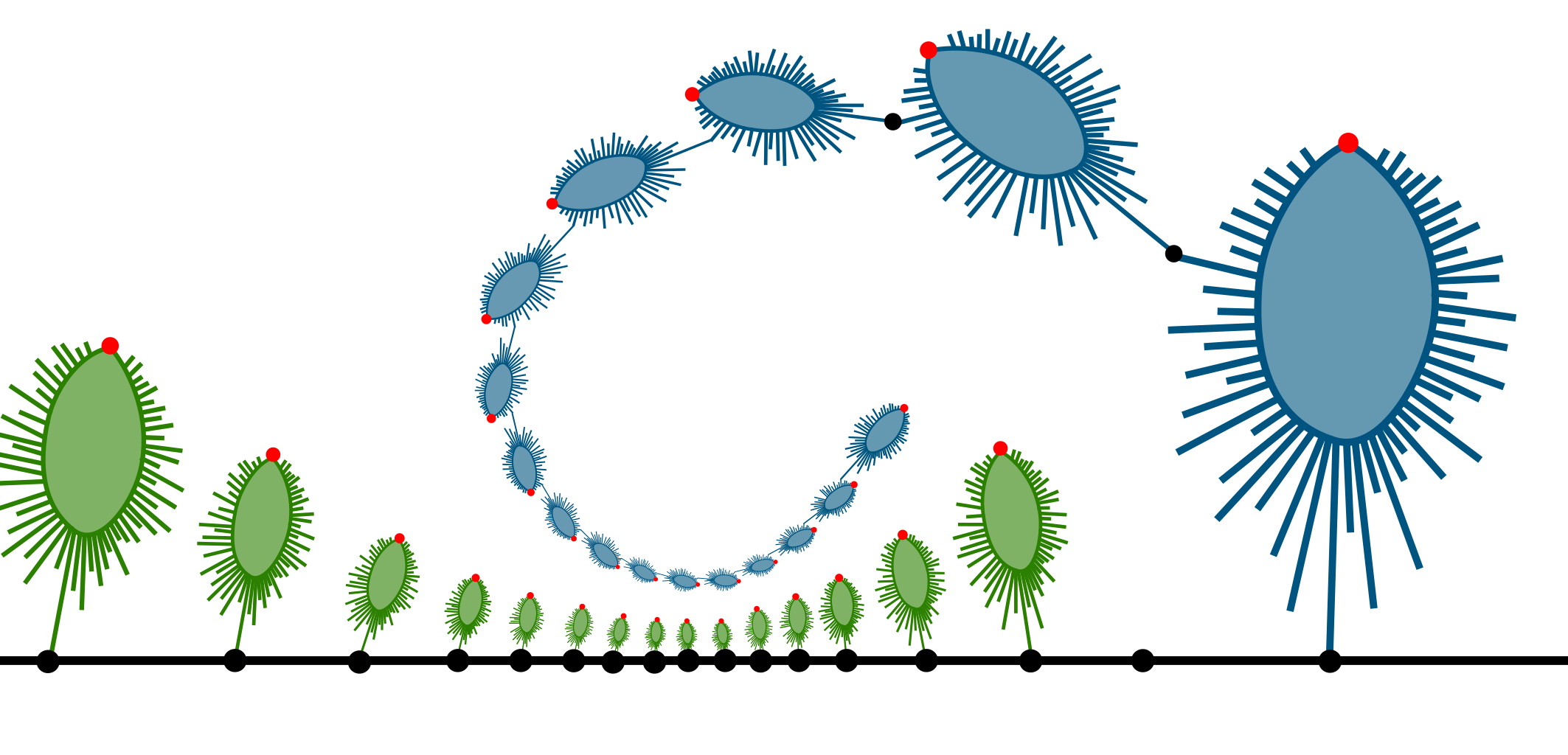}};
    \begin{scope}[
        x={(image.south east)},
        y={(image.north west)}
    ]
        \node [blue!50!black] at (0.95,0.79) {\scalebox{0.9}{$\Gamma_{n,0}$}};
        \node [blue!50!black] at (0.74,0.91) {\scalebox{0.9}{$\Gamma_{n,1}$}};
        \node [blue!50!black] at (0.5,0.97) {\scalebox{0.9}{$\Gamma_{n,2}$}};
        \node [blue!50!black] at (0.38,0.86) {\scalebox{0.9}{$\Gamma_{n,3}$}};
        \node [blue!50!black] at (0.56,0.52) {\scalebox{0.85}{$\Gamma_{n,a_{n+1}}$}};
        
        \node [black] at (0.05,0.05) {$\crit_{-Q_{n-1}-Q_n}$};
        \node [black] at (0.65,0.05) {$\crit_{-Q_{n+1}}$};
        \node [black] at (0.73,0.05) {$0$};
        \node [black] at (0.865,0.05) {$\crit_{-Q_{n}}$};

        \node [black] at (0.25,0.7) {$\Fext^{Q_n}$};
        \draw[-latex] (0.29,0.62) .. controls (0.27,0.66) and (0.28,0.75) .. (0.34,0.78);
        \draw[-latex] (0.25,0.34) .. controls (0.23,0.48) and (0.2,0.48) .. (0.18,0.44);

        \node [black] at (0.72,0.625) {\scalebox{0.9}{$\mathtt{r}_{n,1}$}};
        \node [black] at (0.56,0.78) {\scalebox{0.9}{$\mathtt{r}_{n,2}$}};
    \end{scope}
\end{tikzpicture}
    \caption{The hoglet chain $\Gamma_{n,j}$ follows the alpha-points (in red) of primary hoglets of generation $jQ_n + Q_{n-1}$ for $j = 1,\ldots,a_{n+1}$}
    \label{fig:bubble-chain-01}
\end{figure}

See Figure \ref{fig:bubble-chain-01} for an illustration.

\begin{proof}
    In the proof, we will fix $n \in \Z$ and assume that it is non-parabolic. 
    The limiting parabolic case is analogous.
    For $j \in \{0,1\ldots, a_{n+1}\}$, we will denote by $\mathtt{r}_{n,j}$ the root of the hoglet $\Gamma_{n,j}$.

    Let $m \in \Z$ be the largest integer such that $Q_{[m-1]} < Q_{n} \leq Q_{[m]}$.
    In the dynamical plane of $\Fbold$, the boundary of the sector $\Sbold^m$ contains the critical point $C_{-Q_n}$.
    Let $\Sbold^m_{Q_n}$ be the unique lift of $\Sbold^m$ under $\Fbold^{Q_n}$ that contains $C_{-Q_n}$.
    Then, the boundary of $\Psi(\Sbold^m_{Q_n} \backslash \Hbold)$ contains the root $\mathtt{r}_{n,1}$.
    By Lemma \ref{lem:estimate-lift-sector}, we have that 
    \[
    |\mathtt{r}_{n,1} - \crit_{-Q_n}| \succ |\crit_{-Q_n}|.
    \]
    Together with Real Bounds, Pseudo-Bubble Bounds, and Corollary \ref{cor:location-of-alpha}, we have that 
    \[
        \dist_{\UHP}(\mathtt{r}_{n,1},\alpha_{Q_n+Q_{n-1}}) = O(1).
    \]
    This estimate, together with the hyperbolic diameter bound on $\hat{\Gamma}_{n,1}$ (Proposition \ref{prop:bubble-bounds} (3)), implies property (1) for $j=1$.

    Let us prove (1) for $j \geq 2$.
    For $j \in \{1,\ldots,a_{n+1}\}$, let $\lake_{n,j}$ be the unique lake of generation $j Q_n$ that touches $\R$ along the interval $A_{n,j} = [\crit_{-Q_{n-1}-(j-1)Q_n},\crit_{-Q_n}] \in \tiling_{jQ_n}$,
    which exists by virtue of Lemma \ref{lem:primary-lake}.
    Notice that $A_{n,j} \cup \Fext^{Q_n}(A_{n,j})$ is a subset of $A_{n,j-1}$ for $2 \leq j \leq a_{n+1}$.
    Therefore, for $2 \leq j \leq a_{n+1}$, 
    \begin{itemize}
        \item $\lake_{n,j}$ is contained in $\lake_{n,j-1}$, and 
        \item $\Fext^{Q_n}: \lake_{n,1} \to \UHP$ sends $\lake_{n,j}$ onto $\lake_{n,j-1}$.
    \end{itemize}
    Moreover, notice that the lake $\lake_{n,1}$ contains the primary pseudo-bubble $\hat{\Bubb}_{Q_n+Q_{n-1}}$ as well as the secondary pseudo-bubble $\hat{\Gamma}_{n,1}$.
    Inductively, we conclude that for $j \in \{1,\ldots,a_{n+1}\}$, 
    \begin{itemize}
        \item $\overline{\lake_{n,j}}$ contains the primary pseudo-bubbles $\hat{\Bubb}_{k Q_n+ Q_{n-1}}$ for $j\leq k \leq a_{n+1}$, and
        \item $\lake_{n,j}$ contains the pseudo-bubble chain $\{ \hat{\Gamma}_{n,k}\}_{j \leq k \leq a_{n+1}}$. 
    \end{itemize}
    Observe that for $j \in \{2,\ldots,a_{n+1}\}$, the univalent map $\Fext^{(j-1) Q_n}: \lake_{n,j-1} \to \UHP$ sends $\hat{\Gamma}_{n,j}$ onto $\hat{\Gamma}_{n,1}$ and $\alpha_{jQ_n+Q_{n-1}}$ to $\alpha_{Q_n+Q_{n-1}}$. Therefore, by Schwarz Lemma, property (1) for $j=1$ implies (1) for $2\leq j \leq a_{n+1}$. 

    Suppose that $n$ is an NP depth.
    Recall that the principal parabolic fjord $\fjord_n$ is enclosed by the semicircular dam $\mathtt{d}_n$ with endpoints $\crit_{-Q_{n+1}+m_1 Q_n}$ and $\crit_{-Q_{n-1}-m_2 Q_n}$ for some integers $m_1 \in \{ \threshold'-1,\threshold'\}$ and $m_2 \in \{ \threshold', \threshold'+1\}$.
    According to Property (1) and Corollary \ref{cor:location-of-alpha}, there exists uniform constants $K''>0$ and $\ttau'' \in \left( 0,\frac{\pi}{2} \right)$ such that for $j \in \{1,\ldots, a_{n+1}\}$, the pseudo-bubble $\hat{\Gamma}_{n,j}$ is contained in the set
    \[
        \Xi_j = \left\{ w + \crit_{-Q_{n-1}-jQ_n} \: : \: \left| \arg(w) - \frac{\pi}{2} \right| < \ttau'', |w| \leq K'' |I_{n,j}| \right\}.
    \]
    By Real Bounds, there exists a uniform constant $\mathbf{m}_{\textnormal{out}} \in \N$ such that for $j \in \{1,\ldots, \threshold'-\mathbf{m}_{\textnormal{out}}\} \cup \{ a_{n+1}-\threshold'+\mathbf{m}_{\textnormal{out}},\ldots, a_{n+1}\}$, the set $\Xi_j$ is disjoint from $\mathtt{d}_n$.
    Real Bounds also imply the existence of another uniform constant $\mathbf{m}_{\textnormal{in}} \in \N$ such that whenever $\threshold' + \mathbf{m}_{\textnormal{in}} \leq j \leq a_{n+1}-\threshold'-\mathbf{m}_{\textnormal{in}}$, the set $\Xi_j$ avoids $\mathtt{d}_n$.
    Then, the constant $\mathbf{m} = \max\{ \mathbf{m}_{\textnormal{out}}, \mathbf{m}_{\textnormal{in}} \}$ yields property (2).
\end{proof}

In the parabolic case, the pseudo-bubbles $\hat{\Gamma}_{n,j}$ form a parabolic basin.

\begin{lemma}
\label{lem:parabolic-basin}
    Let $n$ be a parabolic depth and let $\beta_n$ be the parabolic periodic point from Lemma \ref{lem:repelling-periodic-point}.
    \begin{enumerate}
        \item The infinite pseudo-bubble chain $\{\hat{\Gamma}_{n,j}\}_{j \geq 0}$ lands at $\beta_n$, that is,
        \[
            \bigcap_{m \geq 0} \overline{ \bigcup_{j \geq m} \hat{\Gamma}_{n,j} } = \{\beta_n \}.
        \]
        \item The open disk $\Dext_n$ enclosed by $[\beta_n, \crit_{-Q_n}] \cup \{\Gamma_{n,j}\}_{j \geq 0}$ is the immediate attracting basin of $\beta_n$ as a fixed point of $\Fext^{Q_n}$. 
    \end{enumerate}
\end{lemma}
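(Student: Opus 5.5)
We treat $\beta_n$ as a simple parabolic fixed point of the real-symmetric map $\Fext^{Q_n}$, extended by Schwarz reflection as in the proof of Proposition~\ref{prop:R-apb-transcendental}. The plan is to show that the chain $\{\hat{\Gamma}_{n,j}\}$ is trapped in an attracting petal at $\beta_n$ and shrinks to it.

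\textbf{Step 1 (shrinking).} By Proposition~\ref{prop:bubble-chain-01}(1), valid for all $j\ge 1$ at a parabolic depth, $\hat\Gamma_{n,j}$ lies in a hyperbolic disk of radius $K$ around $\alpha_{jQ_n+Q_{n-1}}$. By Corollary~\ref{cor:location-of-alpha} and Pseudo-Bubble Bounds, $\alpha_{jQ_n+Q_{n-1}}$ sits at height $\asymp\diam \Bubb_{jQ_n+Q_{n-1}}$ above the root $\crit_{-Q_{n-1}-jQ_n}$. Real Bounds then give
\[
\diam \Bubb_{jQ_n+Q_{n-1}} \asymp |I|/j^2 ,
\]
while Proposition~\ref{prop:R-apb-transcendental}(3)(b) shows the roots accumulate at $\beta_n$. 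Hence the Euclidean size of $\hat\Gamma_{n,j}$ is $O(|I|/j^2)$ and its distance to $\beta_n$ is $O(|I|/j)$. Both tend to $0$, which proves (1); connectedness of the chain, since the root of $\Gamma_{n,j+1}$ lies in $\Gamma_{n,j}$, makes the accumulation set exactly $\{\beta_n\}$.

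\textbf{Step 2 (the domain is a fundamental-domain union).} Each hoglet $\Gamma_{n,j+1}$ is mapped by $\Fext^{Q_n}$ onto $\Gamma_{n,j}$, and $[\crit_{-Q_{n-1}-(j+1)Q_n},\crit_{-Q_{n-1}-jQ_n}]$ onto the preceding tile. So the closed curve
\[
[\beta_n,\crit_{-Q_n}]\cup\bigcup_j \Gamma_{n,j}
\]
bounds a Jordan domain $\Dext_n$. It is a Jordan domain by Step 1 together with disjointness of distinct hoglets, and the fact that consecutive hoglets meet only at roots. Moreover $\Fext^{Q_n}$ maps the boundary of $\Dext_n$ onto itself minus the first pieces. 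By the argument principle (univalence of $\Fext^{Q_n}$ on the lake $\lake_{n,1}$, which contains the chain by the proof of Proposition~\ref{prop:bubble-chain-01}), $\Fext^{Q_n}(\Dext_n)\subset \Dext_n$. Thus $\Dext_n$ is forward invariant and simply connected, with $\beta_n$ on its boundary.

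\textbf{Step 3 (it is the immediate basin).} Forward invariance plus the Denjoy--Wolff/Schwarz lemma applied to $\Fext^{Q_n}:\Dext_n\to\Dext_n$, which is not an automorphism since it is not surjective, gives locally uniform convergence of iterates to a boundary point. On the real segment $(\beta_n,\crit_{-Q_n})$ orbits converge monotonically to $\beta_n$ by (3)(b), so the Denjoy--Wolff point is $\beta_n$. Hence $\Dext_n$ lies in the basin. Maximality follows because $\partial\Dext_n\cap\UHP$ consists of hoglets, i.e.\ preimages of $\Hbold$, which never converge to $\beta_n$: they are mapped onto $\Bubb_{Q_n}$ and then into $\Hbold$, whose image in external coordinates is not in $\UHP$. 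So the Fatou component of the basin containing $\Dext_n$ cannot cross this boundary.

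\textbf{Main obstacle.} The delicate point is Step 2: showing the chain really closes up into a Jordan curve without pinching and that $\Dext_n$ is invariant. This requires the disjointness of non-consecutive pseudo-bubbles in the chain; I would get it from the angular control in the sets $\Xi_j$ from the proof of Proposition~\ref{prop:bubble-chain-01}.
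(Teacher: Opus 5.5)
Your Step~1 is the paper's argument: the alpha-points $\alpha_{jQ_n+Q_{n-1}}$ converge to $\beta_n$ by Real Bounds, Pseudo-Bubble Bounds and Corollary~\ref{cor:location-of-alpha}, and Proposition~\ref{prop:bubble-chain-01}(1) traps $\hat\Gamma_{n,j}$ near them. Your maximality argument at the end of Step~3 is also the paper's. The error is in Step~2, namely the claim that $\Fext^{Q_n}$ maps $\partial\Dext_n$ onto itself ``minus the first pieces'', so that $\Fext^{Q_n}(\Dext_n)\subsetneq\Dext_n$. This is false.

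The tiles $[\crit_{-Q_{n-1}-(j+1)Q_n},\crit_{-Q_{n-1}-jQ_n}]$ you track lie on the repelling side of $\beta_n$, inside the component of $[\crit_{-Q_{n-1}},\crit_{-Q_n}]\setminus\{\beta_n\}$ containing $\crit_{-Q_{n-1}}$. They are not part of $\partial\Dext_n$. The real part of $\partial\Dext_n$ is $[\beta_n,\crit_{-Q_n}]$, which $\Fext^{Q_n}$ sends onto $[\beta_n,0]$. The missing piece $[0,\crit_{-Q_n}]$ is covered as follows:
\begin{itemize}
\item The arc of $\partial^c\Dext_n$ on $\Gamma_{n,0}=\Bubb_{Q_n}$ between $\mathtt{r}_{n,0}=\crit_{-Q_n}$ and $\mathtt{r}_{n,1}$ maps onto $[0,\crit_{-Q_n}]$. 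This is because $\Bubb_{Q_n}$ is sent onto $\Hbold$, whose Carath\'eodory boundary is $\R$ in external coordinates, with $\crit_{-Q_n}\mapsto 0$ and $\mathtt{r}_{n,1}\mapsto\crit_{-Q_n}$.
\item For $j\geq 1$, the arc on $\Gamma_{n,j}$ between $\mathtt{r}_{n,j}$ and $\mathtt{r}_{n,j+1}$ maps onto the arc on $\Gamma_{n,j-1}$ between $\mathtt{r}_{n,j-1}$ and $\mathtt{r}_{n,j}$.
\end{itemize}
So the boundary correspondence is a bijection of $\partial^c\Dext_n$, and $\Fext^{Q_n}\colon\Dext_n\to\Dext_n$ is a conformal automorphism, exactly as the paper observes.

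Consequently your justification ``not an automorphism since it is not surjective'' has to be replaced. On the real part, $\Fext^{Q_n}$ is an increasing homeomorphism of $[\beta_n,\crit_{-Q_n}]$ onto $[\beta_n,0]$ whose orbits converge to $\beta_n$. The chain arcs are shifted by one. Hence the boundary extension fixes $\beta_n$ and no other prime end, so the automorphism is parabolic, has no fixed point in $\Dext_n$, and every orbit in $\Dext_n$ converges to $\beta_n$ (Denjoy--Wolff, or directly by conjugating to a translation). Even in your strict self-map picture, Denjoy--Wolff alone allows the limit to be an interior attracting fixed point, which you never exclude; the automorphism structure rules it out for free.

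Finally, $\Dext_n$ need not be a Jordan domain, since hoglets may fail to be locally connected; this is why the paper works with $\partial^c\Dext_n$. No pinching analysis with the sets $\Xi_j$ is needed: distinct hoglets are disjoint, and the landing from Step~1 is all that is required to make sense of the enclosed disk and its prime ends.
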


See Figure \ref{fig:bubble-chain-parabolic}.

\begin{proof}
    By Real Bounds, the critical points $\crit_{-Q_{n-1}-jQ_n}$ converge to $\beta_n$ as $j \to \infty$. 
    According to Pseudo-Bubble Bounds and Corollary \ref{cor:location-of-alpha}, we have
    \begin{align*}
        |\text{Re} (\alpha_{Q_{n-1} + j Q_n} - \crit_{-Q_{n-1} - jQ_n})|
        &= O( |\crit_{-Q_{n-1} - jQ_n} - \crit_{-Q_{n-1} - (j-1) Q_n}|), \\
        \text{Im} (\alpha_{Q_{n-1}+ j Q_n}) &\asymp |\crit_{-Q_{n-1} - jQ_n} - \crit_{-Q_{n-1} - (j-1) Q_n}|.
    \end{align*}
    Therefore, the alpha-points $\alpha_{Q_{n-1}+jQ_n}$ also converge to $\beta_n$. 
    Together with Proposition \ref{prop:bubble-chain-01} (1), this observation implies the landing of $\hat{\Gamma}_{n,j}$'s at $\beta_n$. 
    
    Observe that the map $\Fext^{Q_n}$ acts as a conformal automorphism of the domain $\Dext_n$. 
    Indeed, the interval $[\beta_n,\crit_{-Q_n}]$ is mapped onto the subinterval $[\beta_n,0]$, the subinterval of $\partial^c \Dext_n$ with endpoints $\mathtt{r}_{n,1}$ and $\crit_{-Q_n}$ is mapped onto the real interval $[0,\crit_{-Q_n}]$, and for $j \geq 1$, the subinterval of $\partial^c \Dext_n$ with endpoints $\mathtt{r}_{n,{j+1}}$ and $\mathtt{r}_{n,{j}}$ is mapped onto the subinterval of $\partial^c \Dext_n$ with endpoints $\mathtt{r}_{n,{j}}$ and $\mathtt{r}_{n,{j-1}}$ (where $\mathtt{r}_{n,0} = \crit_{-Q_n})$.
    Therefore, by Denjoy-Wolff, every point in $\Dext_n$ is attracted to the parabolic fixed point $\beta_n$. The disk $\Dext_n$ is the whole immediate attracting basin of $\Fext^{Q_n}$ as a map onto $\UHP$ because every point in $\UHP \cap \partial \Dext_n$ is eventually mapped to the hoglet $\Bubb_{Q_n}$ on which $\Fext^{Q_n}$ is not well-defined. 
\end{proof}

When $n$ is a parabolic depth, we will also consider the $Q_n$-periodic hoglet chain
\[
 \ldots, \quad \Gamma_{n,\iinfty-3}, \quad \Gamma_{n,\iinfty-2}, \quad \Gamma_{n,\iinfty-1}, \quad \Gamma_{n,\iinfty}
\]
that is constructed as follows.
For every $j \geq 1$ and $k \geq 0$, the hoglet $\Gamma_{n,\iinfty-k}$ is the image of $\Gamma_{n,j}$ under the inverse branch of $\Fext^{Q_{n+1}-(j+k)Q_n-Q_{n-1}}$ that sends $\crit_{-Q_{n-1}-j Q_n}$ to $\crit_{-Q_{n+1}-kQ_n}$. 
See Figure \ref{fig:bubble-chain-parabolic}.
Each $\Gamma_{n,\iinfty-k}$ is rooted at a point $\mathtt{r}_{n,\iinfty-k}$ in $\Gamma_{n,\iinfty-k-1}$.
We will denote the corresponding pseudo-bubble by $\hat{\Gamma}_{n,\iinfty-k}$.

\begin{figure}
        \centering
       \begin{tikzpicture}
    \node[anchor=south west,inner sep=0] (image) at (0,0) {\includegraphics[width=0.9\linewidth]{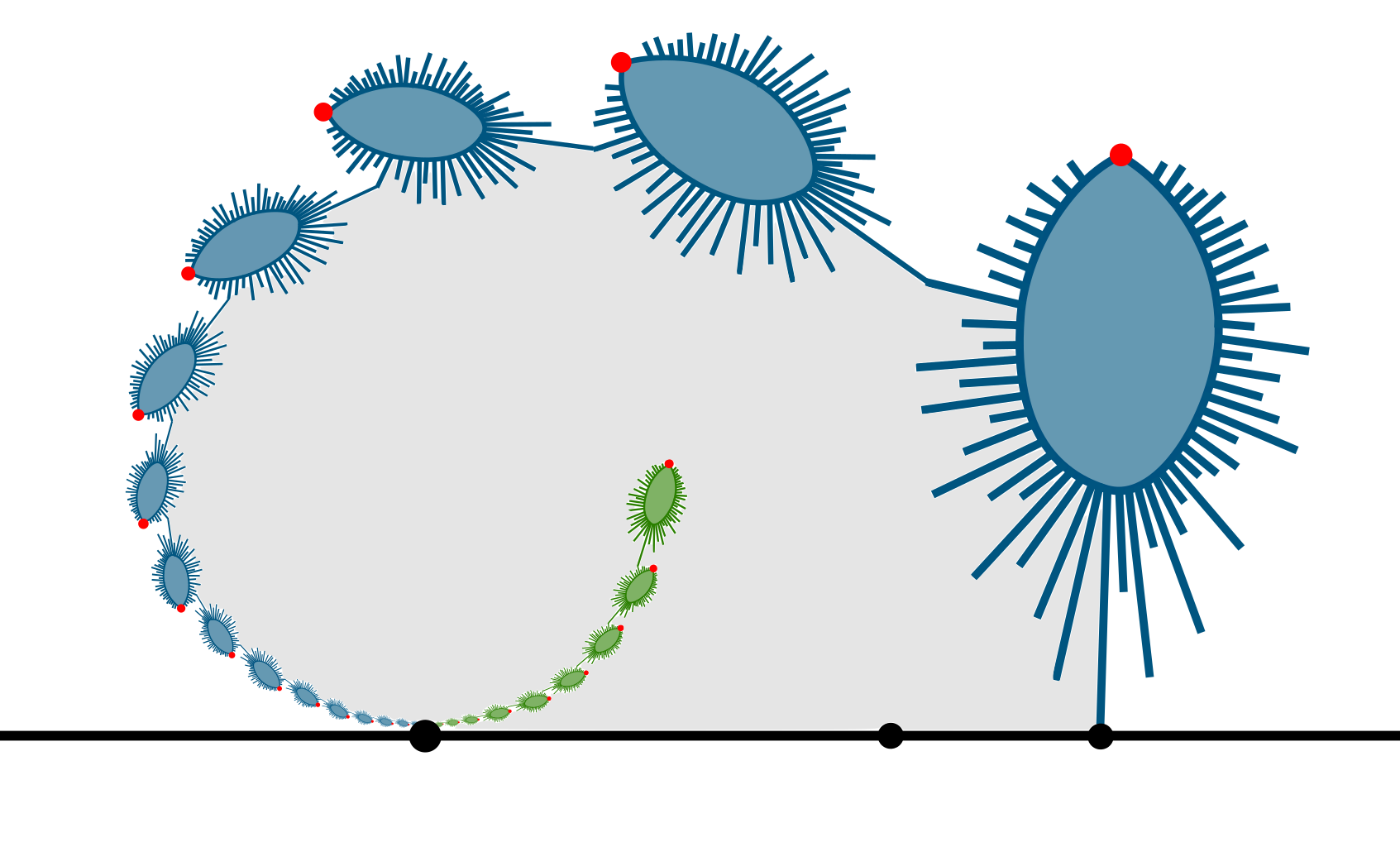}};
    \begin{scope}[
        x={(image.south east)},
        y={(image.north west)}
    ]
        \node [black] at (0.3,0.45) {\scalebox{1.1}{$\Dext_n$}};
        \node [blue!50!black] at (0.9,0.81) {$\Gamma_{n,0}$};
        \node [blue!50!black] at (0.63,0.93) {$\Gamma_{n,1}$};
        \node [blue!50!black] at (0.31,0.97) {$\Gamma_{n,2}$};
        \node [blue!50!black] at (0.16,0.82) {$\Gamma_{n,3}$};
        \node [green!50!black] at (0.535,0.41) {$\Gamma_{n,\iinfty}$};
        \node [green!50!black] at (0.515,0.255) {\scalebox{0.85}{$\Gamma_{n,\iinfty-1}$}};
        
        \node [black] at (0.64,0.06) {$0$};
        \node [black] at (0.81,0.06) {$\crit_{-Q_{n}}$};
        \node [black] at (0.3,0.06) {$\beta_n$};

        \node [black] at (0.015,0.65) {$\Fext^{Q_n}$};
        \draw[-latex] (0.08,0.58) .. controls (0.05,0.6) and (0.06,0.73) .. (0.12,0.72);
        \node [black] at (0.38,0.343) {\scalebox{0.9}{$\Fext^{Q_n}$}};
        \draw[-latex] (0.41,0.28) .. controls (0.4,0.32) and (0.38,0.3) .. (0.39,0.23);
    \end{scope}
\end{tikzpicture}
    \caption{The hoglets $\Gamma_{n,\iinfty-j}$, $j \geq 0$ inside the basin $\Dext_n$ of the parabolic periodic point $\beta_n$.}
    \label{fig:bubble-chain-parabolic}
\end{figure}

\begin{proposition}[Bounds for $\hat{\Gamma}_{n,\iinfty-k}$]
\label{prop:bubble-chain-enriched}
    Let $n$ be a parabolic depth. 
    There exist universal constants $K>1$ and $\mathbf{m} \in \N$ such that the following properties whenever the combinatorial threshold $\threshold$ is sufficiently large.
    \begin{enumerate}
        \item For every $k \geq 0$, the pseudo-bubble $\hat{\Gamma}_{n,\iinfty-k}$ is contained in the hyperbolic disk of radius $K$ centered at the alpha-point $\alpha_{Q_{n+1}-kQ_n}$.
        \item When $k \geq \threshold' + \mathbf{m}$, the pseudo-bubble $\hat{\Gamma}_{n,\iinfty-k}$ is contained in the fjord $\fjord_n$.
        When $k \leq \threshold' - \mathbf{m}$, the pseudo-bubble $\hat{\Gamma}_{n,\iinfty-k}$ is disjoint from $\fjord_n$.
        \item The pseudo-bubbles $\{ \hat{\Gamma}_{n,\iinfty-k} \}_{k \geq 0}$ accumulate at $\beta_n$ and are contained in the parabolic basin $\Dext_n$ of $\beta_n$.
    \end{enumerate}
\end{proposition}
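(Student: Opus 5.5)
The plan mirrors the proof of Proposition~\ref{prop:bubble-chain-01}, transported from the chain $\{\hat{\Gamma}_{n,j}\}_{j\geq 1}$ to the enriched chain through the defining inverse branches. Fix a parabolic depth $n$ and $k \geq 0$.

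\textbf{Step 1 (distortion of the inverse branch).} By definition, $\Gamma_{n,\iinfty-k}$ is the image of $\Gamma_{n,j}$ under the inverse branch $G$ of $\Fext^{Q_{n+1}-(j+k)Q_n-Q_{n-1}}$ sending $\crit_{-Q_{n-1}-jQ_n}$ to $\crit_{-Q_{n+1}-kQ_n}$. The first task is to locate a primary lake containing both chains. By Lemma~\ref{lem:primary-lake}, there is a primary lake $\lake$ of generation $Q_{n+1}-(j+k)Q_n-Q_{n-1}$ whose real boundary contains the tile adjacent to $\crit_{-Q_{n+1}-kQ_n}$ on the $\beta_n$ side. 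The image of this lake is $\UHP$, and the corresponding univalent branch on $\UHP$ is exactly $G$. Hence $G\colon \UHP \to \lake \subset \UHP$ is a univalent map, and by the Schwarz Lemma it contracts hyperbolic distances.

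\textbf{Step 2 (item (1)).} Proposition~\ref{prop:bubble-chain-01}(1), in its parabolic form valid for all $j\geq 1$, shows that $\hat{\Gamma}_{n,j}$ lies in the hyperbolic disk of radius $K$ about $\alpha_{jQ_n+Q_{n-1}}$. One then checks that $G$ maps $\hat{\Gamma}_{n,j}$ onto $\hat{\Gamma}_{n,\iinfty-k}$; this holds because pseudo-bubbles lift along with hoglets. One also checks that $G(\alpha_{jQ_n+Q_{n-1}})=\alpha_{Q_{n+1}-kQ_n}$. This second identity follows from the uniqueness in Corollary~\ref{cor:alpha-points}: $G$ maps the primary pseudo-bubble rooted at $\crit_{-Q_{n-1}-jQ_n}$ to the primary pseudo-bubble rooted at $\crit_{-Q_{n+1}-kQ_n}$, and the generations add correctly. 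Combining these with the hyperbolic contraction of $G$ gives item (1).

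\textbf{Step 3 (item (2)).} This step repeats the cone argument of Proposition~\ref{prop:bubble-chain-01}(2). Item (1), Corollary~\ref{cor:location-of-alpha}, and Pseudo-Bubble Bounds place $\hat{\Gamma}_{n,\iinfty-k}$ inside a cone $\Xi$ of definite aperture at $\crit_{-Q_{n+1}-kQ_n}$, with height comparable to the adjacent tile. By Real Bounds (Proposition~\ref{prop:R-apb-transcendental}(3)(b)), the tiles approaching $\beta_n$ from the $\crit_{-Q_{n+1}}$ side have lengths $\asymp |I|/k^2$. The dam $\mathtt{d}_n$ has its endpoint $\crit_{-Q_{n+1}+m_1Q_n}$ with $m_1\approx \threshold'$, and by Lemma~\ref{lem:fjord-almost-invariance} it is close to a semicircle. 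Comparing the cone heights with the dam's height over the corresponding tiles yields a threshold $\mathbf{m}$: the cone lies under the dam when $k \geq \threshold'+\mathbf{m}$ and avoids the fjord when $k\leq \threshold'-\mathbf{m}$. I need to take care with the indexing of $\crit_{-Q_{n+1}-kQ_n}$ relative to the dam endpoint, since an off-by-one there shifts $\mathbf{m}$.

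\textbf{Step 4 (item (3)).} The roots $\crit_{-Q_{n+1}-kQ_n}$ tend to $\beta_n$ as $k\to\infty$, and by Pseudo-Bubble Bounds the diameters of the pseudo-bubbles are comparable to the tile lengths, which tend to zero. Together these give accumulation at $\beta_n$. For containment in $\Dext_n$, observe that $\Gamma_{n,\iinfty}$ is rooted on the segment $(\beta_n,\crit_{-Q_n})\subset\partial\Dext_n$ and, by angular control, it enters $\Dext_n$. The chain is connected and $\Fext^{Q_n}$-invariant. Moreover, it is disjoint from the boundary hoglets $\Gamma_{n,j}$ by Lemma~\ref{lem:primary-bubble-disjoint}, which applies after pulling back. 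It therefore stays in $\Dext_n$.

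\textbf{Main obstacle.} I expect the main obstacle to be Step 1, namely verifying that $G$ is defined on all of $\UHP$ with the claimed images. This requires identifying the correct primary lake and checking that no critical values obstruct the branch; Lemma~\ref{lem:primary-lakes-and-adjacent-bubbles} should handle this.
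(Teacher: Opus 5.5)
Your treatment of (1), (2) and the accumulation half of (3) follows the paper's route. You pull back $\hat{\Gamma}_{n,j}$ by the univalent branch $G\colon\UHP\to\lake$ and apply Schwarz--Pick, which is the mechanism used for $j\ge 2$ in Proposition~\ref{prop:bubble-chain-01}(1). You then reuse the cone-versus-dam comparison of Proposition~\ref{prop:bubble-chain-01}(2), and you show that $\alpha_{Q_{n+1}-kQ_n}\to\beta_n$.

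Fix the index, though. The relevant critical point is $\crit_{-Q_{n+1}+kQ_n}$, not $\crit_{-Q_{n+1}-kQ_n}$. The paper's displayed definition of the enriched chain has a sign slip; the center $\alpha_{Q_{n+1}-kQ_n}$ in the statement and the paper's proof of (3) both use $+kQ_n$. With $R:=Q_{n+1}-(j+k)Q_n-Q_{n-1}$ one has $\Fext^R(\crit_{-Q_{n+1}+kQ_n})=\crit_{-Q_{n-1}-jQ_n}$ and $R+(jQ_n+Q_{n-1})=Q_{n+1}-kQ_n$. This is what gives $G(\alpha_{jQ_n+Q_{n-1}})=\alpha_{Q_{n+1}-kQ_n}$; with your sign the generations do not add up.

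That point has generation larger than $R$, so it lies in the interior of a tile $I\in\tiling_R$, not at an endpoint. Then $G$ is the inverse of $\Fext^R\colon\lake_R(I)\to\UHP$. Every lake maps conformally onto $\UHP$, so the ``main obstacle'' you anticipate is not one. Also, the size control in Pseudo-Bubble Bounds covers only primary pseudo-bubbles. The Euclidean smallness of $\hat{\Gamma}_{n,\iinfty-k}$ in (3) should come from your item (1) together with Corollary~\ref{cor:location-of-alpha}, as the paper does.

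The genuine error is in the containment $\hat{\Gamma}_{n,\iinfty-k}\subset\Dext_n$, a step the paper's proof leaves implicit. $\Gamma_{n,\iinfty}$ is not rooted on $(\beta_n,\crit_{-Q_n})$. Indeed $\Gamma_{n,\iinfty-k}=G(\Gamma_{n,j})$, and $\Gamma_{n,j}$ for $j\ge1$ is a compact subset of $\UHP$. So every member of the enriched chain is a compact subset of $\UHP$, rooted in the next member $\Gamma_{n,\iinfty-k-1}$, and none touches $\R$. Neither angular control nor Lemma~\ref{lem:primary-bubble-disjoint} applies, since these pseudo-bubbles are not primary.

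The fix is already in your Step 1.
\begin{itemize}
\item The critical points $\crit_{-Q_{n+1}+iQ_n}$ for $i\ge j+k+1$, and $\crit_{-Q_n}$, all have generation at most $R$; this uses $a_{n+1}=\infty$.
\item Hence $I\subset(\beta_n,\crit_{-Q_n}]$, and $\lake_R(I)$ contains points of $\UHP$ arbitrarily close to $\operatorname{int} I$. Such points lie in $\Dext_n$, because the chain $\bigcup_i\Gamma_{n,i}$ meets $\R$ only at $\crit_{-Q_n}$ and accumulates only at $\beta_n$.
\item Being a component of $\Dom(\Fext^R)$, the lake avoids every hoglet of generation at most $R$. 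In particular it avoids every $\Gamma_{n,i}$, whose generation is $(i+1)Q_n<R$, and these hoglets contain $\partial\Dext_n\cap\UHP$.
\item The lake is connected, so $\lake_R(I)\subset\Dext_n$. Therefore $\hat{\Gamma}_{n,\iinfty-k}=G(\hat{\Gamma}_{n,j})\subset G(\UHP)=\lake_R(I)\subset\Dext_n$.
\end{itemize}
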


\begin{proof}
    Properties (1) and (2) are analogous to Proposition \ref{prop:bubble-chain-01}.
    Below, we will show how (3) follows from (1).
    According to Pseudo-Bubble Bounds and Corollary \ref{cor:location-of-alpha}, we have
    \begin{align*}
        |\text{Re} (\alpha_{Q_{n+1}-kQ_n} - \crit_{-Q_{n+1}+kQ_n})|
        &= O( |\crit_{-Q_{n+1}+kQ_n} - \crit_{-Q_{n+1}+(k+1) Q_n}|), \\
        \text{Im} (\alpha_{Q_{n+1}-kQ_n}) &\asymp |\crit_{-Q_{n+1}+kQ_n} - \crit_{-Q_{n+1}+(k+1) Q_n}|.
    \end{align*}
    Since $\crit_{-Q_{n+1}+k Q_n}$ is a monotone sequence converging to $\beta_n$ as $k \to \infty$, then $\alpha_{Q_{n+1}-kQ_n}$ also converges to $\beta_n$.
    Together with property (1), the pseudo-bubble $\hat{\Gamma}_{n,\iinfty-k} = \Fext^{kQ_n}(\hat{\Gamma}_{n,\iinfty})$ also converges to $\beta_n$ as $k \to \infty$.
    Then, (3) follows.
\end{proof}

For $n \in \Z$, consider the principal (enriched) hoglet chain
\begin{align}
\label{eqn:hoglet-chain-old}
    \Bchain_n := 
    \begin{cases}
        \displaystyle \bigcup_{j=1}^{a_{n+1}} \Gamma_{n,j} & \text{ if } a_{n+1}<\infty \\
        \displaystyle \bigcup_{i=1}^{\infty} \Gamma_{n,i} \cup \bigcup_{j=0}^{\infty} \Gamma_{n,\iinfty-j} & \text{ if } a_{n+1} = \infty.
    \end{cases}
\end{align}
Let us denote by $\hat{\Bchain}_n$ the corresponding pseudo-bubble chain.

\begin{lemma}[Bounds for $\hat{\Bchain}_n$]
\label{lem:estimate-enriched-pseudo-bubble-chain}
    There exists an $\threshold$-uniform constant $K'>0$ such that for all $n \in \Z$,
    \[
        \hat{\Bchain}_n \subset \D_{\UHP}(\alpha_{Q_n}, K') \cup \fjord_n.
    \]
\end{lemma}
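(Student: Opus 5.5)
The plan is to cover $\hat{\Bchain}_n$ piece by piece using Proposition~\ref{prop:bubble-chain-01} and Proposition~\ref{prop:bubble-chain-enriched}. Each pseudo-bubble of the chain either lies in $\fjord_n$, or lies in a hyperbolic disk of radius $K$ about an alpha-point $\alpha_{jQ_n+Q_{n-1}}$ (or $\alpha_{Q_{n+1}-kQ_n}$ in the parabolic case). The indices here satisfy $j\le \threshold'+\mathbf{m}$ or $j\ge a_{n+1}-\threshold'-\mathbf{m}$ (resp.\ $k\le \threshold'+\mathbf{m}$). So it suffices to show that these finitely many, $\threshold$-uniformly many, alpha-points lie within $\threshold$-uniformly bounded hyperbolic distance of $\alpha_{Q_n}$. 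The reason is that there are only $O(\threshold')$ such indices near either end of the chain, and consecutive ones should be boundedly close.

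\textbf{Step 1 (consecutive alpha-points are close).} By Corollary~\ref{cor:location-of-alpha}, $\alpha_P$ sits at height $\asymp \diam(\Bubb_P)$ above $\crit_{-P}$, within a cone of angle $\ttau$ about the vertical. Pseudo-Bubble Bounds (size control) give $\diam(\Bubb_P)\asymp|\crit_{-P}-\crit_{Q_m-P}|$. For $P=jQ_n+Q_{n-1}$ and $P'=(j+1)Q_n+Q_{n-1}$, Real Bounds (Proposition~\ref{prop:R-apb-transcendental}(2),(3)) give the following:
\begin{itemize}
\item the distance $|\crit_{-P}-\crit_{-P'}|$ is comparable to the tile length of $\tiling_{Q_{n+1}}$ at that position;
\item the bubble diameters at $P$ and $P'$ are each comparable to that same tile length (with ratio bounded by the $j^{-2}$ law, whose consecutive ratios are bounded).
\end{itemize}
Hence $\dist_{\UHP}(\alpha_P,\alpha_{P'})=O(1)$ with a universal constant. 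The same argument handles $\alpha_{Q_{n+1}-kQ_n}$ versus $\alpha_{Q_{n+1}-(k+1)Q_n}$.

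\textbf{Step 2 (anchoring to $\alpha_{Q_n}$).} Next I need two anchoring estimates:
\begin{itemize}
\item $\dist_{\UHP}(\alpha_{Q_n},\alpha_{Q_n+Q_{n-1}})=O(1)$;
\item $\dist_{\UHP}(\alpha_{Q_n},\alpha_{Q_{n+1}})=O(1)$, or, in the finite case, $\dist_{\UHP}(\alpha_{Q_n},\alpha_{a_{n+1}Q_n+Q_{n-1}})=O(1)$.
\end{itemize}
All these critical points $\crit_{-Q_n},\crit_{-Q_n-Q_{n-1}},\crit_{-Q_{n+1}}$ lie within the two tiles of $\tiling_{Q_n}$ adjacent to $0$, which have comparable lengths $\asymp|\crit_{-Q_n}|$ by Real Bounds. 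Their bubble sizes are comparable to the respective end tiles of $\tiling_{Q_{n+1}}$, which by Real Bounds (2) with $j=1$ are comparable to the whole tile. So every such alpha-point is at height $\asymp|\crit_{-Q_n}|$ and horizontal distance $O(|\crit_{-Q_n}|)$ from $0$, which yields bounded hyperbolic distance.

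\textbf{Step 3 (conclusion).} Chaining Steps 1 and 2 over at most $\threshold'+\mathbf{m}+1$ indices from either end gives $\dist_{\UHP}(\alpha,\alpha_{Q_n})=O(\threshold')$ for every relevant alpha-point. Adding the radius $K$ then gives $K'=O(\threshold')+K$, which is $\threshold$-uniform. The remaining pseudo-bubbles lie in $\fjord_n$, and $\hat\Gamma_{n,0}=\hat{\Bubb}_{Q_n}$ lies in a bounded hyperbolic neighborhood of $\alpha_{Q_n}$ by angular and size control.

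\textbf{Main obstacle.} The delicate point is the end of the chain near $j\approx a_{n+1}$, and in the parabolic case near $\Gamma_{n,\iinfty-k}$. There one must verify that the tiles near the far end of the $Q_{n-1}$-tile are again comparable to $|\crit_{-Q_n}|$; this is the symmetric $\min\{j,k+1-j\}^{-2}$ law. In the parabolic case the relevant points accumulate at $\beta_n$, but only on the fjord side, beyond $\threshold'+\mathbf{m}$. If the direct Real Bounds comparison at $\alpha_{Q_{n+1}}$ is awkward, I would instead pull back by the univalent branch of $\Fext^{Q_n}$ on the lake $\lake_{n,1}$. This maps the chain shifted by one, and the Schwarz Lemma (hyperbolic contraction) transports the bounded-distance estimate.
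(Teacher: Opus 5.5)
Your plan matches the paper's proof. You anchor both ends of the chain at $\alpha_{Q_n}$ through $\alpha_{Q_n+Q_{n-1}}$ and $\alpha_{Q_{n+1}}$ (the paper's constant $K_0$). You then walk inward with a fixed hyperbolic increment per pseudo-bubble; this is the paper's estimate $(\clubsuit_s)$. Your Step~1, comparing consecutive alpha-points through Real Bounds and Corollary~\ref{cor:location-of-alpha}, is a valid way to get that increment, as is the fact that consecutive pseudo-bubbles of the chain meet and have bounded hyperbolic diameter. Finally, Propositions~\ref{prop:bubble-chain-01}(2)(a) and~\ref{prop:bubble-chain-enriched}(2) place the middle of the chain inside $\fjord_n$.

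\textbf{Missing case: $n$ not an NP depth.} Here $\fjord_n=\emptyset$ and Proposition~\ref{prop:bubble-chain-01}(2) gives nothing. So your assertion that only indices $j\le\threshold'+\mathbf m$ or $j\ge a_{n+1}-\threshold'-\mathbf m$ need handling is false: every $\hat\Gamma_{n,j}$ with $1\le j\le a_{n+1}$ must land in the disk. A non-NP depth only guarantees $a_{n+1}<\threshold$, so $a_{n+1}$ can be of order $\threshold\gg\threshold'$.

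Your walk still closes this case, but it needs up to $\lceil a_{n+1}/2\rceil\le\lceil\threshold/2\rceil$ steps from each end. The constant is therefore $O(\threshold)$, not $O(\threshold')+K$. This is exactly why the paper takes $K'=K_0+K\max\{\lceil\threshold/2\rceil,\threshold'+\mathbf m-1\}$. The lemma only claims $\threshold$-uniformity, so it survives, but you must treat this case explicitly.

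\textbf{Delete the remark about $\hat\Gamma_{n,0}=\hat{\Bubb}_{Q_n}$.} A primary pseudo-bubble touches $\R$ at its root $\crit_{-Q_n}$, so it lies in no hyperbolic disk of $\UHP$. Nor does it lie in $\fjord_n$, by Lemma~\ref{lem:main-bubble-disjoint-fjord}. The false claim is harmless only because $\Bchain_n$ starts at $j=1$.
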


Here, the principal fjord $\fjord_n$ is taken to be the empty set if depth $n$ is not NP.

\begin{proof}
    For convenience, we will assume that $n$ is a non-parabolic depth.
    By Real Bounds, Pseudo-Bubble Bounds, and Corollary \ref{cor:location-of-alpha}, there exists a universal constant $K_0>0$ such that
    \begin{equation}
    \label{eqn:alpha-points-relation}
        \{ \alpha_{Q_{n-1}+Q_n}, \alpha_{Q_{n+1}}\} \subset \D_{\UHP}(\alpha_{Q_n}, K_0).
    \end{equation}
    Consider the constants $K$ and $\mathbf{m}$ from Proposition \ref{prop:bubble-chain-01}.
    Then, we have that for every $s \geq 1$ with $s \leq \lceil a_{n+1}/2\rceil$, we have
    \begin{equation}
        \bigcup_{j=1}^{s} \hat{\Gamma}_{n,j} \cup 
        \bigcup_{j= a_{n+1}-s}^{a_{n+1}} \hat{\Gamma}_{n,j}
        \subset \D_{\UHP} \left(\alpha_{Q_n}, s K + K_0 \right).
        \tag{$\clubsuit_s$}
    \end{equation}
    We will set 
    \[
        K' := K_0 + K \cdot \max\left\{ \Nbold , \threshold'+\mathbf{m}-1 \right\}, \qquad \text{ where } \Nbold  = \left\lceil \frac{\threshold}{2} \right\rceil. 
    \]

    Firstly, suppose that depth $n$ is not NP, i.e. $a_{n+1} < \threshold$.
    Then by ($\clubsuit_{\lceil a_{n+1}/2\rceil}$), we have that 
    \[
        \hat{\Bchain}_n \subset \D_{\UHP}\left( \alpha_{Q_n}, \left\lceil \frac{\threshold}{2} \right\rceil K + K_0 \right).
    \]
    Secondly, suppose instead that depth $n$ is NP.
    We know from Proposition \ref{prop:bubble-chain-01} that the union of $\hat{\Gamma}_{n,j}$ across $j \in \{ \threshold' + \mathbf{m},\ldots, a_{n+1}-\threshold'-\mathbf{m} \}$ is contained in $\fjord_n$.
    By ($\clubsuit_{\threshold'+\mathbf{m}-1}$), the remainder of the pseudo-bubble chain is contained in $\D_{\UHP}\left(\alpha_{Q_n}, (\threshold'+\mathbf{m}-1) K + K_0 \right)$.
    Lastly, the limiting parabolic case is analogous to the general NP case by virtue of Proposition \ref{prop:bubble-chain-enriched}.
\end{proof}

\subsection{Geometric bounds for primary lakes}
\label{ss:lake}

We are now ready to establish uniform bounds on the size of particular primary lakes.
According to Lemmas \ref{lem:primary-lake} and \ref{lem:unique-primary-lake}, for every $P \in \Tbold$, there exists a unique primary lake of $\Fext$ of generation $P$ that contains the critical value $0$ on its boundary.
We will denote such a lake by $\lake_P$.

\begin{proposition}[Uniform bounds for $\lake_{Q_n+Q_{n+1}}$]
\label{prop:lake-bounds-0}
    There exists a universal constant $K>0$ such that for every $n \in \Z$, 
    \[
        \parR \lake_{Q_n + Q_{n-1}} = [\crit_{-Q_n-Q_{n-1}},\crit_{-Q_n}]
    \]
    and $\parH \lake_{Q_n +Q_{n-1}}$ is the union of the following non-empty sets:
    \begin{enumerate}
        \item a subset of $\partial \Bubb_{Q_n}$;
        \item a subset of $\partial \Bubb_{Q_n + Q_{n-1}}$;
        \item a subset of the hyperbolic disk $\D_{\UHP}(\alpha_{Q_n+Q_{n-1}}, K)$.
    \end{enumerate}
    Moreover,
    \[
    \partial \lake_{Q_n+Q_{n-1}} \cap \Iext^{\leq Q_n+Q_{n-1}} = \{\alpha_{Q_n+Q_{n-1}}\}.
    \]
\end{proposition}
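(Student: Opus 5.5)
Write $P := Q_n + Q_{n-1}$ and $\lake := \lake_P$. The plan is to describe $\lake$ through the conformal isomorphism $\Fext^{P}\colon \lake \to \UHP$, and to read off its boundary by pulling back the boundary of $\UHP$, namely $\R \cup \{\infty\}$, piece by piece.

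\textbf{Real boundary.} The tile of $\tiling_P$ containing $0$ is $I := [\crit_{-Q_n-Q_{n-1}},\crit_{-Q_n}]$, by the explicit form of $\tiling_{Q_{n+1}}$ restricted to times $\le Q_n+Q_{n-1}$ together with Corollary~\ref{cor:extension-critical}. Lemma~\ref{lem:primary-lake} and Lemma~\ref{lem:unique-primary-lake} then give $\lake = \lake_P(I)$ and $\parRess\lake = I$. To exclude extra real boundary points outside $I$, I would argue that $\Fext^{P}(I)$ together with its two complementary rays in $\R$ lifts along $\partial\lake$ as follows:
\begin{itemize}
\item one complementary ray lifts into $\partial\Bubb_P$, by Lemma~\ref{lem:primary-lakes-and-adjacent-bubbles}(1) applied at the endpoint $\crit_{-P}$;
\item the other lifts partly into $\partial\Bubb_{Q_n}$, by Lemma~\ref{lem:primary-lakes-and-adjacent-bubbles}(2) with $R=Q_n$, so $P-R=Q_{n-1}$;
\item the remainder of that ray is the piece to be controlled in (3).
\end{itemize}
Since none of these lifts lands on $\R\setminus I$, we get $\parR\lake = I$. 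The point to check is that no parabolic phenomenon occurs at this generation. I believe this holds because $I$ lies strictly inside a single tile of $\tiling_{Q_n}$ and no $\beta_I$ sits in it, but it must be verified against Proposition~\ref{prop:R-apb-transcendental}(3).

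\textbf{Upper boundary, items (1)--(3).} The arcs of $\partial^c\lake$ adjacent to the endpoints of $I$ are the two bubble arcs above:
\begin{itemize}
\item the interval of $\partial^c(\C\setminus\Bubb_P)$ from $\crit_{-P}$ to $\alpha_P$, giving item (2);
\item the arc of $\partial\Bubb_{Q_n}$ from $\crit_{-Q_n}$ to $\crit_{-Q_n,-Q_{n-1}}$, giving item (1). By construction this endpoint is the root $\mathtt r_{n,1}$ of $\Gamma_{n,1}$.
\end{itemize}
What remains of $\parH\lake$ is the arc joining $\mathtt r_{n,1}$ to $\alpha_P$. Under $\Fext^{Q_n}$, and then into $\lake_{Q_{n-1}}$, this arc corresponds to the boundary portion between the primary hoglet and the chain. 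I expect it to be made of $\hat\Gamma_{n,1}$ together with pieces of secondary hoglets attached to it.

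Proposition~\ref{prop:bubble-chain-01}(1) with $j=1$ places $\hat\Gamma_{n,1}$ in $\D_{\UHP}(\alpha_{Q_n+Q_{n-1}},K)$. Proposition~\ref{prop:bubble-bounds}(3) bounds the hyperbolic diameters of the intermediate secondary pseudo-bubbles. To turn this into a single uniform hyperbolic bound, I would show that only boundedly many such pieces occur before reaching $\alpha_P$. For this I would use Corollary~\ref{cor:location-of-alpha} and the Schwarz lemma for $\Fext^{Q_n}\colon\lake_{n,1}\to\UHP$, as in the proof of Proposition~\ref{prop:bubble-chain-01}.

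\textbf{Escaping points.} We have $\partial\lake\cap\Iext^{\le P}\ne\emptyset$ by the earlier lake lemma. Moreover, $\Fext^P$ extends continuously and homeomorphically along all the boundary pieces above except at the unique point sent to $\infty$. By Corollary~\ref{cor:alpha-points}, that point is $\alpha_P$.

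\textbf{Main obstacle.} The step I expect to be hardest is item (3): proving that the residual boundary arc from $\mathtt r_{n,1}$ to $\alpha_P$ stays within bounded hyperbolic distance of $\alpha_P$, uniformly in $n$ and independent of the threshold $\threshold$. The danger is long chains of hoglets. I would handle this by noting that the relevant chain at this generation has length one, since $P-Q_n=Q_{n-1}<Q_n$, so the long chains $\Gamma_{n,j}$ with $j\ge2$ lie outside $\lake_P$ and are not part of its boundary.
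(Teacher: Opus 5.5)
There is a genuine gap, and it sits where you expected the difficulty. The underlying cause is a wrong picture of $\parH\lake$.

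\textbf{The depth-$n$ chain is not on the boundary.} In Lemma~\ref{lem:primary-lakes-and-adjacent-bubbles}(2) with $R=Q_n$, the interval $\Fext^{Q_n}(I)=[\crit_{-Q_{n-1}},0]$ already ends at $\crit_{-Q_{n-1}}$. Hence $S=Q_{n-2}$, not $Q_{n-1}$. So the $\partial\Bubb_{Q_n}$-arc of $\partial\lake$ runs from $\crit_{-Q_n}$ to $\crit_{-Q_n,-Q_{n-2}}$; it is the lift of $[\crit_{Q_{n-1}},\crit_{Q_{n-1}-Q_{n-2}}]$. The root $\mathtt{r}_{n,1}=\crit_{-Q_n,-Q_n}$ of $\Gamma_{n,1}$ is an interior point of this arc. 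Moreover, $\Gamma_{n,1}$ and the rest of the depth-$n$ chain lie \emph{inside} $\lake$, not on its boundary. To see this, note that $\Fext^{Q_n}$ maps $\lake$ conformally onto $\lake_{Q_{n-1}}$, whose essential real boundary is $[\crit_{-Q_{n-1}},\crit_{-Q_{n-2}}]$. It sends $\Gamma_{n,1}$ onto $\Bubb_{Q_n}$, which is rooted at $\crit_{-Q_n}$ in the interior of that tile. Consequently Proposition~\ref{prop:bubble-chain-01}(1) with $j=1$ is not the estimate you need, and the remark that the depth-$n$ chain has length one is beside the point.

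\textbf{The residual arc is infinite.} The residual arc $X_n=\partial\lake\setminus(\Bubb_P\cup I\cup\Bubb_{Q_n})$ is not a bounded concatenation of pieces. The paper writes
$$\lake_{Q_{m+2}+Q_{m+1}}\to\Omega_{m,a_{m+1}}\to\cdots\to\Omega_{m,0}=\lake_{Q_m+Q_{m-1}}$$
and deduces $X_{m+2}\subset\Bchain^2_m\cup X^2_m$. Thus $X_n$ runs along a lift of the entire depth-$(n-2)$ chain $\Bchain_{n-2}$, then a lift of $\Bchain_{n-4}$, and so on, accumulating only at $\alpha_P$. These chains have lengths $a_{n-1},a_{n-3},\dots$, which are unbounded, and infinite (with the $\Gamma_{m,\iinfty-k}$ enrichment) at parabolic depths. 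At NP depths a large part of $\Bchain_m$ lies in the fjord $\fjord_m$, hyperbolically far from $\alpha_{Q_m}$. No finite use of Proposition~\ref{prop:bubble-bounds}(3) can give a uniform bound.

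The missing ingredients are:
\begin{enumerate}
\item this renormalization recursion;
\item a uniform bound $\Bchain^2_m\subset\D_{\UHP}(\alpha_{Q_{m+2}+Q_{m+1}},K)$, which comes from Lemma~\ref{lem:estimate-enriched-pseudo-bubble-chain} together with a separate argument for the fjord part: pull $\fjord_m$ back by $\Fext^{Q_{m+1}}$ into $\hat{\Bubb}_{Q_{m+1}}$ and use Lemma~\ref{lem:estimate-lift-sector} to place it near $\alpha_{Q_{m+1}}$;
\item uniform hyperbolic contraction of the inverse branch of $\Fext^{Q_{m+2}+Q_{m+1}-Q_m-Q_{m-1}}$ near the alpha-points, giving $\Bchain^{k-m}_m\subset\D_{\UHP}(\alpha_{Q_k+Q_{k-1}},K\lambda^{k-m})$, so the infinite union stays in a bounded disk.
\end{enumerate}

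\textbf{Consequences for your other conclusions.} Your claims $\parR\lake=I$ and $\partial\lake\cap\Iext^{\le P}=\{\alpha_P\}$ rest on the same analysis. The boundary $\partial\lake$ runs along infinitely many, possibly non-locally-connected, hoglets of generation $\le P$, each carrying an alpha-point in $\Iext^{\le P}$. A continuity argument for $\Fext^P$ along the boundary is therefore not available. What rules out other escaping points on $\partial\lake$, and real boundary points outside $I$ (such as a parabolic $\beta_m$ touched by an enriched chain before lifting), is the geometric shrinking of the lifted chains to $\alpha_P$.
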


In particular, this proposition states that $\lake_{Q_n+Q_{n-1}}$ is bounded in $\C$.

\begin{corollary}
\label{cor:bounded-lakes}
    Every lake of $\Fext$ is a bounded subset of $\C$.
\end{corollary}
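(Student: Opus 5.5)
The idea is to reduce an arbitrary lake to the model lakes $\lake_{Q_n+Q_{n-1}}$ of Proposition~\ref{prop:lake-bounds-0}, using the dynamics. Let $\lake$ be a lake of generation $P\in\Tbold$, so that $\Fext^P:\lake\to\UHP$ is a conformal isomorphism.

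\textbf{Step 1: reduce to a primary lake.} Since $\Fext^P$ has no critical values in $\UHP$ (Lemma~\ref{lem:crit-points}), the relevant question is whether $\lake$ is primary. If $\parR\lake$ contains no open interval, the closure of $\lake$ meets $\overline{\UHP}$ away from the essential real line. In that case I would follow $\lake$ forward along the semigroup, tracking the hoglet near which it sits. Then:
\begin{itemize}
\item either $\lake$ is a univalent pullback of a primary lake $\lake'$ of smaller generation $P'$, via a branch of $\Fext^{-(P-P')}$ defined on a neighborhood of $\overline{\lake'}$ that avoids the critical values;
\item or $\lake$ sits inside the hyperbolically bounded region around a non-primary pseudo-bubble given by Proposition~\ref{prop:bubble-bounds}(3).
\end{itemize}
In either case boundedness of $\lake$ reduces to boundedness of primary lakes, plus a compactness statement: a univalent branch defined on a domain avoiding the real line has a bounded image of a compact set.

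\textbf{Step 2: primary lakes.} A primary lake has the form $\lake_P(I)$ with $I\in\tiling_P$ (Lemma~\ref{lem:primary-lake}). Applying $\Fext^R$ for a suitable $R<P$ sends $\lake_P(I)$ conformally onto a lake $\lake_{P-R}(J)$ with $0\in J$, by the argument of Lemma~\ref{lem:primary-lakes-and-adjacent-bubbles}. It therefore suffices to bound the lakes $\lake_P$ having $0$ on their boundary, together with the fact that the inverse branch of $\Fext^R$ extends univalently across $\R$ by Schwarz reflection to the slit domain $\UHP\cup\operatorname{int}J\cup-\UHP$. That inverse branch maps bounded sets to bounded sets because $J$ is a bounded interval, by Real Bounds.

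\textbf{Step 3: monotonicity.} For $\lake_P$, choose $n$ with $Q_n+Q_{n-1}\le P$. The lakes are nested: if $P\ge P'$ and both lakes touch $0$, then $\lake_P\subset\lake_{P'}$, since $\Fext^P=\Fext^{P-P'}\circ\Fext^{P'}$ and $\Dom(\Fext^P)\subset\Dom(\Fext^{P'})$. Hence $\lake_P\subset\lake_{Q_n+Q_{n-1}}$. The latter is bounded by Proposition~\ref{prop:lake-bounds-0}: its boundary consists of parts of $\partial\Bubb_{Q_n}$ and $\partial\Bubb_{Q_n+Q_{n-1}}$, which are bounded by Pseudo-Bubble Bounds, and a bounded hyperbolic disk. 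For small $P$, take $n\to-\infty$, which is always possible.

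\textbf{Main obstacle.} The hard part is Step 1. One has to justify that the inverse branch used in the pullback is defined on a full neighborhood of the closure of the target lake, in particular near an alpha-point on its boundary, where $\Fext$ escapes. I would try to handle this by working with the closed lake minus the single escaping point $\alpha$, as in Proposition~\ref{prop:lake-bounds-0}. I would then use the fact that alpha-points are bounded points of $\UHP$ by Corollary~\ref{cor:location-of-alpha}. The remaining task is to argue that a bounded simply connected target together with a univalent branch defined on a simply connected neighborhood gives a bounded preimage, via Koebe applied to the branch on a slightly larger lake.
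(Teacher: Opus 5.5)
Your skeleton matches the paper's, and your Step~3 is fine: push the lake forward to a primary lake, then to the primary lake with $0$ on its boundary, nest that inside $\lake_{Q_n+Q_{n-1}}$, and invoke Proposition~\ref{prop:lake-bounds-0}. The gap is the step that transports boundedness \emph{back} from the forward image to $\lake$. You attempt this with univalent inverse branches, and that does not work as written.

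\emph{The reflected branch in Step~2 cannot exist.} For every $R>0$, $0=\crit_0$ is a critical value of $\Fext^R$ (Lemma~\ref{lem:crit-points}). Since $0$ is never a cut point of $\tiling_{P-R}$, it lies in $\operatorname{int}J$. Schwarz reflection is only available over $\operatorname{int}\Fext^R(\tilde I)$, where $\tilde I\in\tiling_R$ is the tile containing $I$. That interval has no critical value of $\Fext^R$ in its interior, so it lies on one side of $0$.

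\emph{The boundedness inference fails.} ``Bounded to bounded because $J$ is bounded'' does not follow. The inverse branch maps $\UHP$ onto the generation-$R$ primary lake containing $\lake_P(I)$. The closure of the bounded target lake meets $\R$ at points outside the reflection interval. So boundedness of the image depends on the boundary behaviour of the branch, i.e.\ on boundedness of another primary lake, which is circular.

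\emph{Step~1 has the same problem.} A branch of $\Fext^{-(P-P')}$ ``defined on a neighborhood of $\overline{\lake'}$'' makes no sense in external coordinates. The set $\overline{\lake'}$ meets $\R$, and the branch carries $\parR\lake'$ into the boundary of a hoglet rather than across $\R$. Your compactness remark only covers sets compactly contained in $\UHP$, which closures of primary lakes never are. Your ``Main obstacle'' paragraph leaves exactly this point open.

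\emph{The missing ingredient is properness.} Each $\Fbold^S$ is $\sigma$-proper (Theorem~\ref{thm:transcendental-extension}(1)(a), Definition~\ref{defn:sigma-proper}), so every connected component of the preimage of a compact set is compact. Suppose $\Fext^S(\lake)=\lake''$ is bounded. Then in the $\Fbold$-plane, $\Psi^{-1}(\lake)$ is a connected subset of $(\Fbold^S)^{-1}\bigl(\overline{\Psi^{-1}(\lake'')}\bigr)$, so it lies in a compact component. This is exactly how the paper concludes once $\lake''\subset\lake_{Q_n+Q_{n-1}}$.

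This single step replaces all of your inverse-branch, reflection and Koebe considerations, including those near alpha-points. It also makes the second alternative in your Step~1 (lakes inside non-primary pseudo-bubbles) unnecessary. You only need that every lake is mapped forward onto a primary lake, and then onto one with $0$ on its boundary.
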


\begin{proof}
    Consider any lake $\lake$ of some generation $P \in \Tbold$. There exists some time $P' \in \Tbold$ with $P' < P$ such that $\lake'=\Fext^{P'}(\lake)$ is a primary lake. 
    Then, pick $P'' \in \Tbold$ such that $P'' < P-P'$ and that the boundary of $\lake'' = \Fext^{P''}(\lake')$ contains $0$. 
    Pick $n \in \Z$ such that $Q_n + Q_{n-1} < P-P'-P''$. 
    Then, the lake $\lake''$ is contained in $\lake_{Q_n+Q_{n-1}}$, 
    hence $\lake''$ is bounded. 
    By $\sigma$-properness, the lift $\lake$ is also bounded.
\end{proof}

Before going into details, let us first sketch the main idea of the proof of the proposition.

The key step is illustrated in Figure \ref{fig:lake-boundary}.
For every $n$, the primary lake $\lake_{Q_{n-1} + Q_{n}}$ is the image of $\lake_{Q_{n+1}+Q_{n+2}}$ under $\Fext^{Q_{n+2}-Q_n}$ followed by $\Fext^{Q_{n+1}-Q_{n-1}}$.
Pulling back $\lake_{Q_{n-1} + Q_{n}}$ by the map $\Fext^{Q_{n+1}-Q_{n-1}}$ creates a new primary lake whose upper-half boundary goes along a new chain of hoglets, namely $\Bchain_n$.
We prove that under the other map $\Fext^{Q_{n+2}-Q_{n+1}}$, $\Bchain_n$ lifts to a hoglet chain along the boundary of $\lake_{Q_{n+1}+Q_{n+2}}$ that is within a definite hyperbolic disk about the alpha-point $\alpha_{Q_{n+2}+Q_{n+1}}$.
Then, we use the uniform hyperbolic expansion of $\Fext^{Q_{n+2}-Q_n} \circ \Fext^{Q_{n+1}-Q_{n-1}}$ at $\alpha_{Q_{n+2}+Q_{n+1}}$ and iterate this procedure to prove the proposition.

\begin{proof}[Proof of Proposition \ref{prop:lake-bounds-0}]
    For every $n \in \Z$, 
    the unique tile of $\tiling_{Q_n+Q_{n-1}}$ that contains $0$ is the interval 
    \[
    J_n :=[\crit_{-Q_n-Q_{n-1}}, \crit_{-Q_n}].
    \]
    By Lemma \ref{lem:unique-primary-lake}, $J_n$ is the essential real boundary of $\lake_{Q_n+Q_{n-1}}$. 
    By Lemma \ref{lem:primary-lakes-and-adjacent-bubbles}, under the map $\Fext^{Q_n+Q_{n-1}}: \lake_{Q_n+Q_{n-1}}\to \UHP$, the real interval $[\crit_{Q_{n-1}},\crit_{Q_{n-1}-Q_{n-2}}]$ lifts to a connected subset of the Carath\'eodory boundary of the hoglet $\Bubb_{Q_n}$ 
    and the connected component of $\R \backslash \{0\}$ that avoids $\crit_{Q_{n-1}}$ lifts to a connected subset of the Carath\'eodory boundary of the hoglet $\Bubb_{Q_n+Q_{n-1}}$ that starts from the root and ends at the alpha-point.
    This implies (1) and (2).
    
    Denote
    \[
        X_n := \partial \lake_{Q_n+Q_{n-1}} \backslash 
        \left( \Bubb_{Q_n+Q_{n-1}} \cup J_n \cup \Bubb_{Q_{n}}
        \right).
    \]
    Below, we will show that $X_n$ contains $\alpha_{Q_n+Q_{n-1}}$ and every other point is in a hyperbolic disk centered at $\alpha_{Q_n+Q_{n-1}}$ with radius bounded above by a uniform constant. 
    This will prove the rest of the proposition.
    To lighten the notation, we will just prove it for $n=0$.
    \vspace{0.05in}

    \noindent \underline{Claim 1:} For every even negative integer $m$, we can write 
    \[
    X_{m+2} \subset \Bchain_m^2 \cup X_m^2
    \]
    where $X_m^2$ is a subset of $X_{m+2}$ that is mapped under $\Fext^{(Q_{m+2}-Q_m)+(Q_{m+1}-Q_{m-1})}$ onto $X_{m}$, and $\Bchain_m^2$ is the lift of the hoglet chain $\Bchain_m$ under $\Fext^{Q_{m+2}-Q_{m}}$ that is attached to the principal hoglet $\Bubb_{Q_{m+2}}$.
    \vspace{0.05in}

    The proof is illustrated in Figure \ref{fig:lake-boundary}.

\begin{figure}
    \centering
    \begin{tikzpicture}
    \node[anchor=south west,inner sep=0] (image) at (0,0) {\includegraphics[width=1\linewidth]{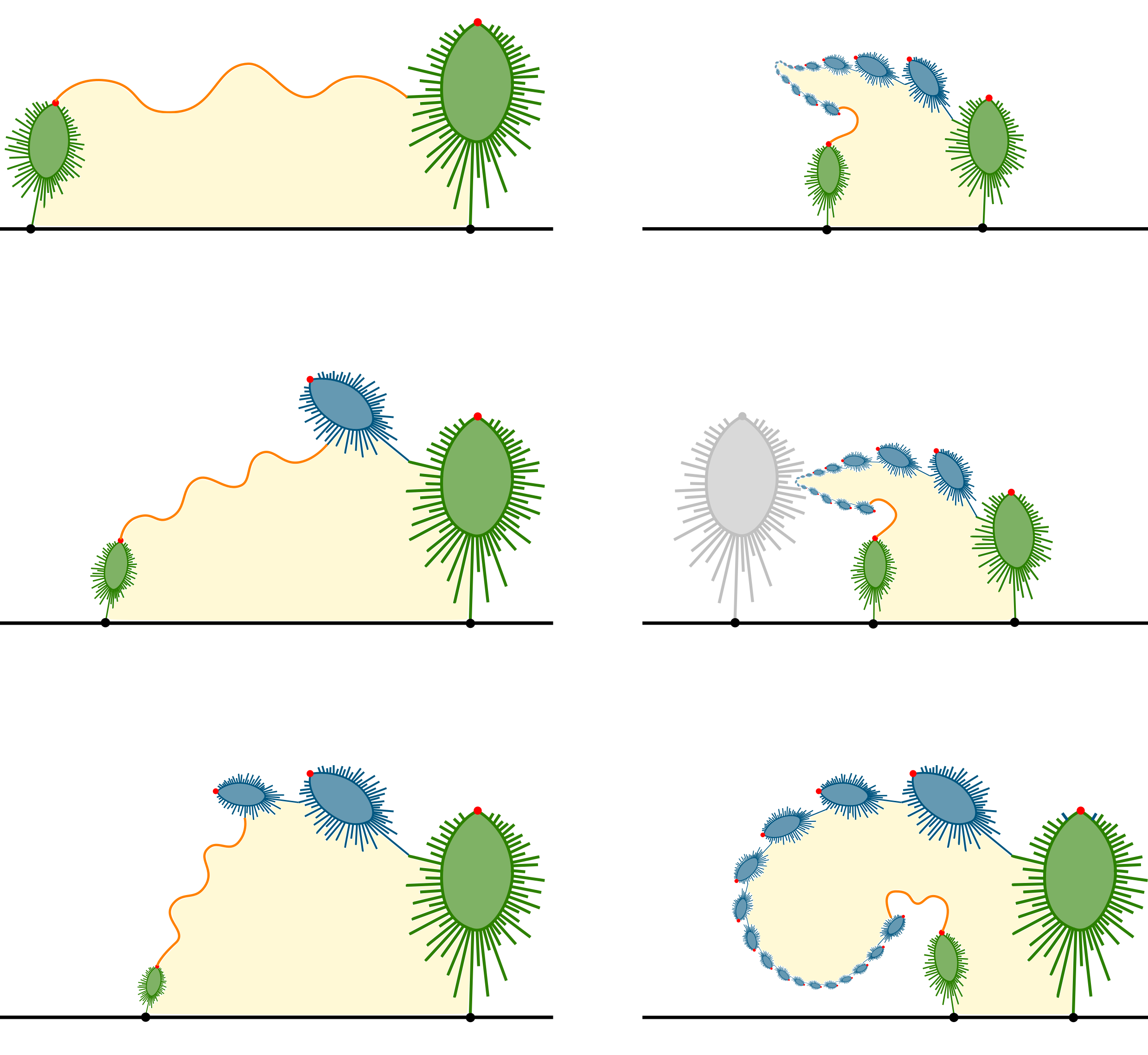}};
    \begin{scope}[
        x={(image.south east)},
        y={(image.north west)}
    ]
        \node [yellow!30!black] at (0.22,0.84) {$\lake_{Q_m+Q_{m-1}}$};
        \node [yellow!30!black] at (0.28,0.48) {$\Omega_{m,1}$};
        \node [yellow!30!black] at (0.275,0.1) {$\Omega_{m,2}$};
        \node [yellow!30!black] at (0.76,0.185) {\scalebox{0.8}{$\Omega_{m,a_{m+1}}$}};
        \node [yellow!30!black] at (0.79,0.805) {\scalebox{0.7}{$\lake_{Q_{m+2}+Q_{m+1}}$}};
        
        \node [green!50!black] at (0.5,0.92) {\scalebox{0.9}{ $\Gamma_{m,0}$}};
        \node [blue!50!black] at (0.36,0.64) {\scalebox{0.9}{ $\Gamma_{m,1}$}};
        \node [blue!50!black] at (0.165,0.23) {\scalebox{0.9}{$\Gamma_{m,2}$}};
        \node [blue!50!black] at (0.63,0.21) {\scalebox{0.9}{ $\Bchain_{m}$}};
        \node [blue!50!black] at (0.65,0.92) {\scalebox{0.9}{ $\Bchain_m^2$}};

        \node [black] at (0.058,0.755) {\scalebox{0.9}{$\crit_{-Q_{m}-Q_{m-1}}$}};
        \node [black] at (0.42,0.755) {\scalebox{0.9}{$\crit_{-Q_{m}}$}};
        \node [black] at (0.71,0.755) {\scalebox{0.9}{$\crit_{-Q_{m+2}-Q_{m+1}}$}};
        \node [black] at (0.87,0.755) {\scalebox{0.9}{$\crit_{-Q_{m+2}}$}};

        \node [black] at (0.11,0.38) {\scalebox{0.9}{$\crit_{-2Q_{m}-Q_{m-1}}$}}; 
        \node [black] at (0.42,0.38) {\scalebox{0.9}{$\crit_{-Q_{m}}$}};
        \node [black] at (0.63,0.382) {\scalebox{0.85}{$\crit_{-Q_{m+1}}$}}; 
        \node [black] at (0.77,0.382) {\scalebox{0.85}{$\crit_{-Q_m-2Q_{m+1}}$}};
        \node [black] at (0.925,0.382) {\scalebox{0.85}{$\crit_{-Q_m-Q_{m+1}}$}};

        \node [black] at (0.14,0.01) {\scalebox{0.9}{$\crit_{-3Q_{m}-Q_{m-1}}$}};
        \node [black] at (0.41,0.01) {\scalebox{0.9}{$\crit_{-Q_{m}}$}};
        \node [black] at (0.815,0.01) {\scalebox{0.9}{$\crit_{-Q_{m+1}-Q_m}$}};
        \node [black] at (0.94,0.01) {\scalebox{0.9}{$\crit_{-Q_{m}}$}};

        \node [black] at (0.265,0.7) {\scalebox{0.9}{$\Fext^{Q_m}$}};
        \draw[-latex] (0.23,0.66) -- (0.23,0.74);
        \node [black] at (0.265,0.325) {\scalebox{0.9}{$\Fext^{Q_m}$}};
        \draw[-latex] (0.23,0.285) -- (0.23,0.365);
        \node [black] at (0.55,0.16) {\scalebox{0.85}{$\Fext^{(a_{m+1}-2)Q_m}$}};
        \draw[-latex] (0.61,0.13) -- (0.48,0.13);
        \node [black] at (0.845,0.665) {\scalebox{0.9}{ $\Fext^{(a_{m+2}-1)Q_{m+1}}$}};
        \draw[-latex] (0.76,0.72) -- (0.76,0.60);
        \node [black] at (0.805,0.325) {\scalebox{0.9}{$\Fext^{Q_{m+1}}$}};
        \draw[-latex] (0.76,0.355) -- (0.76,0.285);
    \end{scope}
\end{tikzpicture}
    \caption{The creation of the hoglet chain $\Bchain_m^2$ on the boundary of the primary lake $\lake_{Q_{m+2}+Q_{m+1}}$}
    \label{fig:lake-boundary}
\end{figure}

    \begin{proof}
        For convenience, we will first assume that $m$ is a non-parabolic depth. 
        For every $j \in \{0,1,\ldots, a_{m+1}\}$, denote
    \[
        \Omega_{m,j} := \Fext^{(a_{m+1}-j)Q_m + (Q_{m+2}-Q_m)}(\lake_{Q_{m+2}+Q_{m+1}}),
    \]
    which is a lake of generation $(j+1)Q_m + Q_{m-1}$.
    We have the following sequence of conformal isomorphisms of lakes:
    \begin{center}
    \begin{tikzcd}
        \lake_{Q_{m+2}+Q_{m+1}} \arrow[r,  "\Fext^{Q_{m+2}-Q_m}"] &[3.5em] \Omega_{m,a_{m+1}} \arrow[r, "\Fext^{Q_{m}}"] &[1.4em] \ldots \arrow[r, "\Fext^{Q_{m}}"] &[1.4em] \Omega_{m,1} \arrow[r, "\Fext^{Q_m}"] &[1.4em] \Omega_{m,0}             
    \end{tikzcd}
    \end{center}
    
    Observe that 
    \[
        \Fext^{(Q_{m+2}-Q_m)+(Q_{m+1}-Q_{m-1})}(J_{m+2}) \subset J_m
    \]
    and
    \[
        \Fext^{(Q_{m+2}-Q_m)+(Q_{m+1}-Q_{m-1})-Q_m}(J_{m+2}) \subset J_m.
    \]
    By Lemma \ref{lem:unique-primary-lake}, the first inclusion implies $\Omega_{m,0}$ is equal to $\lake_{Q_m+Q_{m-1}}$, and the second inclusion implies that $\Omega_{m,1}$ is contained in $\Omega_{m,0}$.
    It can be checked from elementary calculation that for $j \in \{ 1, 2,\ldots, a_{m+1}\}$,
    \[
        J_{m,j}:= [\crit_{-(j+1)Q_m-Q_{m-1}}, \crit_{-Q_m}] = \parRess \Omega_{m,j}
    \]
    and if $j \leq a_{m+1}-1$, then $0$ remains the only critical value of $\Fext^{Q_m}$ contained in it.

    Let $g_m: \Omega_{m,0} \to \Omega_{m,1}$ be the inverse of $\Fext^{Q_m} : \Omega_{m,1} \to \Omega_{m,0}$.
    Consider the hoglet chain $\{\Gamma_{m,j} \}_{0 \leq j \leq a_{m+1}}$ from the previous section.
    Under $g_m$,
    \begin{itemize}
        \item for $j \in \{ 0, \ldots, a_{m+1}-1\}$, the subinterval $[\crit_{-(j+1)Q_m-Q_{m-1}}, 0]$ of $J_{m,j}$ is mapped onto the interval $J_{m,j+1}$,
        \item the remaining subinterval $[0, \crit_{-Q_m}]$ of $J_m$ is mapped to the connected subset of the Carath\'eodory boundary of $\Gamma_{m,0}$ between $\crit_{-Q_m}$ and the root $\mathtt{r}_{m,1}$ of $\Gamma_{m,1}$, and 
        \item for $j \in \{ 1, \ldots, a_{m+1}-1\}$, the hoglet $\Gamma_{m,j}$ is mapped onto $\Gamma_{m,j+1}$.
    \end{itemize}

    The essential real boundary of $\Omega_{m,a_{m+1}}$ is equal to
    \[
        J'_{m} := [\crit_{-Q_{m+1}-Q_m}, \crit_{-Q_m}] = \parRess \Omega_{m,a_{m+1}}
    \]
    and it does not contain $0$.
    Let 
    \[
        Y_m = \parH \Omega_{m,a_{m+1}} \backslash (\Bubb_{Q_{m+1}+Q_m} \cup \Bubb_{Q_m}). 
    \]
    Then, we can write
    \[
        Y_{m} \subset \Bchain_m \cup X_{m}^1 \qquad \text{ where } \qquad X_{m}^1 := g_m^{a_{m+1}}(X_m).
    \]
    
    If depth $m$ is parabolic, we also consider the domains 
    \[
    \Omega_{m,\iinfty-j} = \Fext^{Q_{m+2}+(j-1)Q_m}(\lake_{Q_{m+2}+Q_{m+1}}) \quad \text{ for } \quad j \geq 0.
    \]
    In this case, we also have $J'_{m} = \parRess \Omega_{m,\iinfty}$. 
    Denote by $Y_m$ the part of $\parH \Omega_{m, \iinfty}$ that avoids the primary hoglets $\Bubb_{Q_{m+1}+Q_m}$ and $\Bubb_{Q_m}$, and by $X_m^1$ the lift of $X_m$ under the map $\Fext^{Q_{m+1}-Q_{m-1}}: \Omega_{m,\iinfty} \to \Omega_{m,0}$. 
    Bear in mind that, as described in \S\ref{ss:bubble-chains}, $\Bchain_m$ is now the enriched hoglet chain $\{\Gamma_{m,j}\}_{j \geq 1} \cup \{\Gamma_{m,\iinfty-j}\}_{j\geq 0}$.
    Then, $Y_m$ is still contained in $\Bchain_m \cup X_m^1$.
    
    Lastly, consider the map 
    \[
    \Fext^{Q_{m+2}-Q_m} : \lake_{Q_{m+2}+Q_{m+1}} \to \Omega_{m,a_{m+1}}.
    \]
    Observe that the interval $J'_m$ does not contain any critical value of $\Fext^{Q_{m+2}-Q_m}$. 
    Hence, under this map, $J'_m$ lifts to the interval $J_{m+2}=\parRess \lake_{Q_{m+2}+Q_{m+1}}$, the set $Y_{m}$ lifts to $X_{m+2}$, and in particular, $X_m^1$ lifts to $X_m^2$ and $\Bchain_m$ lifts to $\Bchain_m^2$.
    \end{proof}

    \noindent \underline{Claim 2:} There exists a universal constant $K>0$ such that for every even negative integer $m$,  $\Bchain_m^2$ is contained in the hyperbolic disk $\D_{\UHP}(\alpha_{Q_{m+2}+Q_{m+1}}, K)$.

    \begin{proof}
        By Real Bounds, Pseudo-Bubble Bounds, and Corollary \ref{cor:location-of-alpha}, there is a universal constant $K_0>0$ such that the alpha-points $\alpha_{Q_m}$, $\alpha_{Q_{m+1}+Q_m}$, $\alpha_{Q_{m+1}}$, and $\alpha_{2Q_{m+1}+Q_m}$ are within hyperbolic distance $K_0$ from each other.
        According to Lemma \ref{lem:estimate-enriched-pseudo-bubble-chain}, there also exists a universal constant $K_1>0$ such that 
        \[
            \Bchain_m \subset \D_\UHP(\alpha_{Q_m}, K_1) \cup \fjord_m.
        \]
        Thus,
        \[
            \Bchain_m \subset \D_\UHP(\alpha_{Q_m+Q_{m+1}}, K_0+K_1) \cup \fjord_m.
        \]
        By Schwarz Lemma, the lift of $\D_\UHP(\alpha_{Q_{m+1}+Q_m}, K_0+K_1)$ under the map $\Fext^{Q_{m+2}-Q_m}: \lake_{Q_{m+2}+Q_{m+1}} \to \Omega_{m,a_{m+1}}$
        is contained in $\D_{\UHP}(\alpha_{Q_{m+2}+Q_{m+1}}, K_0+ K_1)$.
        If $\fjord_m$ is empty, then the desired claim follows.
        So let us suppose now that $m$ is an NP depth.
        
        An appropriate inverse branch of $\Fext^{Q_{m+1}}$ will send the fjord $\fjord_m$ into a subset $\tilde{\fjord}_m$ of $\hat{\Bubb}_{Q_{m+1}}$. 
        Let $m' \in \Z$ be such that $Q_{m+1} \leq Q_{[m']} < Q_{m+3}$.
        Since the renormalization sector $\Sbold^{m'}$ avoids the set $\Psi^{-1} (\fjord_m)$, then
        $\Psi^{-1}(\tilde{\fjord}^m)$ is outside of the lift $\Sbold_{Q_{m+1}}^{m'}$ of $\Sbold^{m'}$ under $\Fbold^{Q_{m+1}}$.
        By Lemma \ref{lem:estimate-lift-sector} and Pseudo-Bubble Bounds, $\Psi(\hat{\Bbold}_{Q_{m+1}}\backslash \Sbold^{m'}_{Q_{m+1}})$ must be contained in $\D_{\UHP}(\alpha_{Q_{m+1}}, K_2)$ for some universal constant $K_2>0$. 
        Hence, $\tilde{\fjord}_m \subset \D_{\UHP}(\alpha_{2Q_{m+1}+Q_m}, K_0+ K_2)$.
        By applying Schwarz Lemma to $\Fext^{Q_{m+2}-Q_{m+1}-Q_m}$, we deduce that the part of $\Bchain_m^2$ that gets mapped to $\fjord_m$ is also contained in $\D_{\UHP}(\alpha_{Q_{m+2}+Q_{m+1}}, K_0 + K_2)$.
    \end{proof}

    For any two distinct even integers $k,m$ with $m<k\leq 0$, let us denote by $\Bchain_{m}^{k-m}$ and $X_{m}^{k-m}$ the images of $\Bchain_m^2$ and $X_{m}^{2}$ respectively under the inverse branch of $\Fext^{Q_k+Q_{k-1}-Q_{m+2}-Q_{m+1}}: \lake_{Q_k+Q_{k-1}} \to \lake_{Q_{m+2}+Q_{m+1}}$. As we apply Claim 1 iteratively, we have
    \[
        X_{k} \, \subset \, \Bchain_{k-2}^2 \cup \Bchain_{k-4}^4 \cup \Bchain_{k-6}^6 \cup \ldots \cup \Bchain_{m+2}^{k-m-2} \cup \Bchain_{m}^{k-m} \cup X_{m}^{k-m}.
    \]

    \noindent \underline{Claim 3:} There exists some universal constants $K>0$ and $\lambda \in (0,1)$ such that for any two distinct even integers $k,m$ with $m<k\leq 0$, the hoglet chain $\Bchain_m^{k-m}$ is contained in $\D_{\UHP}(\alpha_{Q_k+Q_{k-1}}, K \lambda^{k-m})$.

    \begin{proof}
        By Real Bounds, Pseudo-Bubble Bounds, and Corollary \ref{cor:location-of-alpha}, for all $m$, the hyperbolic distance between $\alpha_{Q_{m+2}+Q_{m+1}}$ and the primary hoglet $\Bubb_{Q_{m+2}+Q_{m+1}-Q_m-Q_{m-1}}$, in particular the boundary of the primary lake $\lake_{Q_{m+2}+Q_{m+1}-Q_m-Q_{m-1}}$ containing $\alpha_{Q_{m+2}+Q_{m+1}}$, is uniformly bounded above. 
        Let $K>0$ be the constant from Claim 2. Then, the inverse branch of 
        \[
        \Fext^{Q_{m+2}+Q_{m+1}-Q_m-Q_{m-1}}: \lake_{Q_{m+2}+Q_{m+1}-Q_m-Q_{m-1}} \to \UHP
        \]
        contracts the hyperbolic metric of $\UHP$ on the hyperbolic disk $\D_\UHP(\alpha_{Q_m+Q_{m-1}},K)$ at some uniform rate $\lambda^2 \in (0,1)$. Then, the claim follows from induction.
    \end{proof}

    To complete the proof of the proposition, we will apply Claim 3 to $k=0$. For $j \geq 1$, the hoglet chain $\Bchain_{-2j}^{2j}$ will be of hyperbolic distance $K \lambda^{2j}$ away from $\alpha_{Q_0+Q_{-1}}$, and the limiting set $\bigcap_{s \geq 1} \overline{\bigcup_{ j \geq s } \Bchain_{-2j}^{2j}}$ has to be the singleton $\{\alpha_{Q_0+Q_{-1}}\}$.
\end{proof}

The proof of Proposition \ref{prop:lake-bounds-0} also gives us a uniform estimate of the primary lake $\Fext^{Q_{n-1}}(\lake_{Q_n+Q_{n-1}})$ of generation $Q_n$ for all $n$.

\begin{proposition}
\label{prop:lake-bounds-1}
    There exists a universal constant $K > 0$ such that for every $n \in \Z$,
    the lake $\Fext^{Q_{n-1}}(\lake_{Q_n+Q_{n-1}})$ satisfies the following properties. 
    We have
    \begin{align*}
        \parR \Fext^{Q_{n-1}}(\lake_{Q_n+Q_{n-1}}) &= [\crit_{-Q_n}, \crit_{-Q_n+Q_{n-1}}]
    \end{align*}
    and $\parH \Fext^{Q_{n-1}}(\lake_{Q_n+Q_{n-1}})$ is the union of the following non-empty sets:
    \begin{enumerate}
        \item a subset of $\partial \Bubb_{Q_n}$;
        \item a subset of $\partial \Bubb_{Q_n-Q_{n-1}}$;
        \item a subset of the hyperbolic disk $\D_{\UHP} (\alpha_{Q_n}, K)$.
    \end{enumerate}
    Moreover,
    \[
        \partial \Fext^{Q_{n-1}}(\lake_{Q_n+Q_{n-1}}) \cap \Iext^{\leq Q_n} = \{\alpha_{Q_n}\}.
    \]
\end{proposition}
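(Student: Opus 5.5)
I would deduce Proposition \ref{prop:lake-bounds-1} from Proposition \ref{prop:lake-bounds-0} by pushing forward under the conformal isomorphism $\Fext^{Q_{n-1}}$. Write $\lake := \lake_{Q_n+Q_{n-1}}$ and $\lake' := \Fext^{Q_{n-1}}(\lake)$. Since $\Fext^{Q_n+Q_{n-1}}:\lake\to\UHP$ is a conformal isomorphism and factors as $\Fext^{Q_n}\circ\Fext^{Q_{n-1}}$, the restriction $\Fext^{Q_{n-1}}:\lake\to\lake'$ is a conformal isomorphism onto a lake of generation $Q_n$. Moreover, $\Fext^{Q_{n-1}}$ extends as an orientation-preserving homeomorphism of $\R$ by Proposition \ref{prop:R-apb-transcendental}. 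By Corollary \ref{cor:extension-critical} it sends $J_n=[\crit_{-Q_n-Q_{n-1}},\crit_{-Q_n}]$ onto $[\crit_{-Q_n},\crit_{-Q_n+Q_{n-1}}]$, which is a tile of $\tiling_{Q_n}$. Hence $\lake'$ is the primary lake of generation $Q_n$ attached to that tile, and Lemma \ref{lem:unique-primary-lake} gives its essential real boundary. To identify the full real boundary as stated, I would use the fact that the real boundary of $\lake$ is exactly $J_n$, as asserted in Proposition \ref{prop:lake-bounds-0}, together with continuity of the extension.

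For the upper-half boundary, I would map each of the three pieces of $\parH\lake$ separately, using the boundary extension of $\Fext^{Q_{n-1}}$. First, $\Fext^{Q_{n-1}}$ sends the primary hoglet $\Bubb_{Q_n+Q_{n-1}}$ onto the primary hoglet $\Bubb_{Q_n}$, because it maps the root $\crit_{-Q_n-Q_{n-1}}$ to $\crit_{-Q_n}$. This gives piece (1). Second, the part of $\partial\lake$ lying on $\partial\Bubb_{Q_n}$ maps into $\Fext^{Q_{n-1}}(\Bubb_{Q_n})$, which is the primary hoglet $\Bubb_{Q_n-Q_{n-1}}$. This holds because $Q_{n-1}<Q_n$ and the root $\crit_{-Q_n}$ is sent to $\crit_{-Q_n+Q_{n-1}}$. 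This gives piece (2). Third, $\alpha_{Q_n+Q_{n-1}}$ maps to $\alpha_{Q_n}$, since alpha-points correspond under hoglet maps by Corollary \ref{cor:alpha-points}. The escaping sets satisfy $\Iext^{\leq Q_n+Q_{n-1}}\cap\partial\lake \mapsto \Iext^{\leq Q_n}\cap\partial\lake'$, which gives the final identity of the statement.

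The main obstacle is piece (3): controlling the image of $\D_{\UHP}(\alpha_{Q_n+Q_{n-1}},K)\cap\partial\lake$. The map $\Fext^{Q_{n-1}}$ is only defined on its lakes, so the Schwarz lemma cannot be applied directly in $\UHP$. I would handle this with a Koebe argument near the alpha-point. By Real Bounds and Corollary \ref{cor:location-of-alpha}, the points $\alpha_{Q_n+Q_{n-1}}$ and $\alpha_{Q_n}$ sit at heights comparable to $|J_n|$ and $|\crit_{-Q_n}|$ respectively, and these two lengths are comparable. As in the proof of Corollary \ref{cor:location-of-alpha}, the map $\Fext^{Q_{n-1}}$ on a neighbourhood of $\hat{\Bubb}_{Q_n+Q_{n-1}}$ extends univalently with a definite modulus of room. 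Consequently it has bounded distortion on a hyperbolic disk of definite radius around $\alpha_{Q_n+Q_{n-1}}$.

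If the radius $K$ from Proposition \ref{prop:lake-bounds-0} exceeds what this Koebe argument controls, I would instead rerun Claims 1--3 of the proof of Proposition \ref{prop:lake-bounds-0} directly for $\lake'$. The chains $\Bchain^{k-m}_m$ push forward by the same conformal isomorphism to chains accumulating at $\alpha_{Q_n}$. The uniform contraction from Claim 3, combined with the single fixed bounded-distortion step above, then yields a uniform radius around $\alpha_{Q_n}$.
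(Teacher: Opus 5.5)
Your treatment of the real tile, of pieces (1) and (2), and of $\alpha_{Q_n+Q_{n-1}}\mapsto\alpha_{Q_n}$ is fine. The gap is piece (3), and your fallback does not close it.

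\textbf{Why the push-forward fails.} The Schwarz Lemma does not control pushing hyperbolic disks forward under $\Fext^{Q_{n-1}}$: only inverse branches, which are defined on all of $\UHP$, contract $\dist_{\UHP}$. The Koebe step you borrow from Corollary~\ref{cor:location-of-alpha} controls only a radius smaller than $\dist_{\UHP}(\alpha_{Q_n+Q_{n-1}},\alpha_{Q_{n-1}})=O(1)$. The reason is that $\alpha_{Q_{n-1}}\in\Iext^{\leq Q_{n-1}}$ is an escaping point of $\Fext^{Q_{n-1}}$, and $\Bubb_{Q_{n-1}}$ is collapsed onto $\R$.

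The radius $K$ of Proposition~\ref{prop:lake-bounds-0} is not small: the radius produced in its Claim~2 already exceeds $K_0\geq\dist_{\UHP}(\alpha_{Q_n+Q_{n-1}},\alpha_{Q_{n-1}})$. The problematic part of $\partial\lake_{Q_n+Q_{n-1}}$ is also not hypothetical. Claim~2 controls the part of $\Bchain^2_{n-2}$ coming from $\fjord_{n-2}$ only by pulling it back through $\hat{\Bubb}_{Q_{n-1}}$, which is exactly the region where $\Fext^{Q_{n-1}}$ degenerates.

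Your fallback inherits this. Claim~3 only shrinks the deeper chains $\Bchain^{n-m}_m$. The top chain $\Bchain^2_{n-2}$ lies at distance up to $K$ and carries the fjord part, so a single bounded-distortion step cannot reach it.

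Likewise, continuity alone does not give the full $\parR$. Points of $\parH\lake_{Q_n+Q_{n-1}}$ lying on hoglets of generation $\leq Q_{n-1}$ would be sent into $\R$, and the statement of Proposition~\ref{prop:lake-bounds-0} does not exclude them.

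\textbf{The missing idea: obtain the lake as a pullback.} Set $\lake''_n:=\Fext^{Q_{n-1}}(\lake_{Q_n+Q_{n-1}})$; this is what the paper's one-line proof alludes to. Since $\Fext^{Q_n-Q_{n-2}}=\Fext^{Q_n-Q_{n-1}-Q_{n-2}}\circ\Fext^{Q_{n-1}}$, two conformal maps are available:
\begin{itemize}
\item $\Fext^{Q_n-Q_{n-1}-Q_{n-2}}$ sends $\lake''_n$ onto the lake $\Fext^{Q_n-Q_{n-2}}(\lake_{Q_n+Q_{n-1}})$ of Claim~1 (with $m=n-2$);
\item $\Fext^{Q_n-Q_{n-2}-Q_{n-3}}$ sends $\lake''_n$ onto $\lake_{Q_{n-2}+Q_{n-3}}$.
\end{itemize}
The corresponding inverse branches are defined on all of $\UHP$ and contract $\dist_{\UHP}$. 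They send $\alpha_{Q_{n-1}+Q_{n-2}}$ and $\alpha_{Q_{n-2}+Q_{n-3}}$ to $\alpha_{Q_n}$, and carry $\Bubb_{Q_{n-1}+Q_{n-2}}$, $\Bubb_{Q_{n-2}}$ to $\Bubb_{Q_n}$, $\Bubb_{Q_n-Q_{n-1}}$.

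Hence (1)--(3) follow from three ingredients: the decomposition $Y_{n-2}\subset\Bchain_{n-2}\cup X^1_{n-2}$ of Claim~1, Proposition~\ref{prop:lake-bounds-0} at level $n-2$ (applied to $X_{n-2}$), and the Schwarz argument of Claim~2. This works provided the fjord part of $\Bchain_{n-2}$ can be absorbed through a factor $\Fext^{Q_{n-1}}$ of $\Fext^{Q_n-Q_{n-1}-Q_{n-2}}$, which requires $a_n\geq 2$.

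\textbf{The case $a_n=1$.} Here $\lake''_n$ is the Claim~1 lake itself. By the boundary correspondence in Claim~1, its boundary contains arcs of every $\Gamma_{n-2,j}$ with $1\leq j<a_{n-1}$. This includes the arcs lying inside $\fjord_{n-2}$ (Proposition~\ref{prop:bubble-chain-01}(2)(a)), which sit at $\dist_{\UHP}$-distance of order $\log a_{n-1}$ from $\alpha_{Q_n}$. In the parabolic case they accumulate at $\beta_{n-2}\in\R$. So when $a_n=1$ and depth $n-2$ is near-parabolic with large $a_{n-1}$, piece~(3) with a universal $K$, and the stated $\parR$, appear to fail. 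This does not look like an artifact of the method.
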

For other primary lakes, the proof of Proposition \ref{prop:lake-bounds-0} gives us the following.

\begin{proposition}
\label{prop:lake-bounds-2}
    There exists a universal constant $\kappa > 0$ such that for every primary lake $\lake$ of the form $\Fext^P(\lake_{Q_{n+1}+Q_n})$ for some $0 \leq P < Q_{n+1}+ Q_n - Q_{n-1}-Q_{n-2}$, there exists a proper arc $\gamma = \gamma(\lake)$ in $\lake$ such that if we write $\parRess \lake = [\crit_{-R_1}, \crit_{-R_2}]$, then
    \begin{enumerate}
        \item $\gamma$ starts from a point on $\Bubb_{R_1}$ and ends at $\Bubb_{R_2}$, and
        \item $\gamma$ is contained in the horizontal line $\{\textnormal{Im}(z) = \kappa |\crit_{-R_1} - \crit_{-R_2}| \}$.
    \end{enumerate}
\end{proposition}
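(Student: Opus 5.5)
The plan is to build $\gamma$ first for the two model lakes and then transport it by the univalent maps $\Fext^P$, controlling the distortion by Koebe and Real Bounds. Set $\lake_0 := \lake_{Q_{n+1}+Q_n}$. Proposition~\ref{prop:lake-bounds-0} describes its boundary: the real part is $J_{n+1}=[\crit_{-Q_{n+1}-Q_n},\crit_{-Q_{n+1}}]$, and the rest lies on $\partial\Bubb_{Q_{n+1}}$, on $\partial\Bubb_{Q_{n+1}+Q_n}$, and in the hyperbolic disk $\D_{\UHP}(\alpha_{Q_{n+1}+Q_n},K)$. Corollary~\ref{cor:location-of-alpha}, Pseudo-Bubble Bounds and Real Bounds then give the following for both hoglets adjacent to the endpoints of $J_{n+1}$: each rises from its root at angle within $\ttau$ of vertical, to height $\asymp|J_{n+1}|$. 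The same holds for the alpha-point cluster. So $\lake_0$ contains a ``box'' of width and height comparable to $|J_{n+1}|$ sitting over $J_{n+1}$.

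For a suitably small universal $\kappa_0$, the horizontal segment at height $\kappa_0|J_{n+1}|$ leaves the root region of $\Bubb_{Q_{n+1}+Q_n}$ and stays below the disk $\D_{\UHP}(\alpha_{Q_{n+1}+Q_n},K)$. That disk has Euclidean height $\asymp|J_{n+1}|$ with a definite lower bound, by Corollary~\ref{cor:location-of-alpha}. The segment then crosses to $\Bubb_{Q_{n+1}}$. Its first and last hits on the two hoglets give $\gamma(\lake_0)$. Proposition~\ref{prop:lake-bounds-1} gives the same picture for $\Fext^{Q_n}(\lake_0)$, which covers the endpoint case of the allowed range of $P$.

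For general $P$ in the range $0\le P<Q_{n+1}+Q_n-Q_{n-1}-Q_{n-2}$, write $\lake=\Fext^P(\lake_0)$. Then $\Fext^{Q_{n+1}+Q_n-P}$ maps $\lake$ univalently onto $\UHP$, and $\Fext^P$ maps $\lake_0$ onto $\lake$. The key point is that on the real neighbourhood $J_{n+1}$, $\Fext^P$ extends by Schwarz reflection to a univalent map of a slit-plane domain $\UHP\cup I\cup-\UHP$. Here $I\supset J_{n+1}$ is the interval between the nearest critical points of $\Fext^P$, and by Real Bounds $I$ has definite space around $J_{n+1}$. This is exactly where the restriction on $P$ is used: the generation stays below $Q_{n+1}+Q_n$ minus a term of two depths lower, so no critical point $\crit_{-R}$ with $R\le P$ lands inside a definite neighbourhood of $J_{n+1}$. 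This mirrors the argument in Corollary~\ref{cor:location-of-alpha}.

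Koebe then gives bounded distortion of $\Fext^P$ on the box over $J_{n+1}$. So $\Fext^P(\gamma(\lake_0))$ is a proper arc in $\lake$ joining $\Bubb_{R_1}$ to $\Bubb_{R_2}$, where $\Fext^P$ maps the endpoint hoglets to the endpoint hoglets of $\lake$. This arc lies in a strip of height $\asymp|\crit_{-R_1}-\crit_{-R_2}|$ and stays a definite proportion away from $\R$ and from the upper part of $\partial\lake$.

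The main obstacle is getting an exactly horizontal arc rather than a quasi-horizontal one. To fix this, I would pick a single height $\kappa|\crit_{-R_1}-\crit_{-R_2}|$ inside the controlled strip. By the bounded distortion, the whole horizontal segment at that height between the two hoglets avoids $\parH\lake$ except on $\Bubb_{R_1}$ and $\Bubb_{R_2}$. To justify this I would use the angular control of Pseudo-Bubble Bounds, which says the endpoint hoglets are steep, together with the images of the alpha-point disks, which say the top boundary is high. This gives a universal $\kappa$, and I would define $\gamma$ directly as that segment cut at its first and last intersections with the two hoglets. Checking that no secondary hoglet hangs down into the strip is the delicate point. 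I would handle it with Proposition~\ref{prop:bubble-bounds}(3) and the lift of the bounds from Proposition~\ref{prop:lake-bounds-0}.
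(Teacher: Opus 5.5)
Your transport argument works for $P<Q_{n+1}$. There, both endpoints of $\Fext^P(J_{n+1})$, with $J_{n+1}=[\crit_{-Q_{n+1}-Q_n},\crit_{-Q_{n+1}}]$, are critical points of generation at most $Q_{n+1}+Q_n-P$. Hence $\parRess\Fext^P(\lake_0)=\Fext^P(J_{n+1})$ and the end hoglets correspond. The gap is the rest of the range. Since $Q_n-Q_{n-1}-Q_{n-2}=(a_n-1)Q_{n-1}$, the allowed range contains every $P\in[Q_{n+1},\,Q_{n+1}+(a_n-1)Q_{n-1})$, which is nonempty as soon as $a_n\ge 2$. (Also, $P=Q_n$, the case of Proposition~\ref{prop:lake-bounds-1}, is not its endpoint.)

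For such $P$ the endpoint $\crit_{-Q_{n+1}}$ of $J_{n+1}$ is itself a critical point of $\Fext^P$, so there is no Koebe space at that end. Concretely, $\Fext^{Q_{n+1}}$ opens the corner of $\lake_0$ at $\crit_{-Q_{n+1}}$ into a straight angle and maps the arc of $\partial\Bubb_{Q_{n+1}}$ on $\partial\lake_0$ into $\R$. Three things follow:
\begin{itemize}
\item $\Fext^P(\gamma(\lake_0))$ ends on $\R$, not on a hoglet;
\item $\parRess\Fext^P(\lake_0)$ is strictly larger than $\Fext^P(J_{n+1})$;
\item one end hoglet of $\Fext^P(\lake_0)$ is not the image of either end hoglet of $\lake_0$.
\end{itemize}
Already for $P=Q_{n+1}$ you get $\lake_{Q_n}$, whose base is $[\crit_{-Q_{n-1}},\crit_{-Q_n}]$ with end hoglet $\Bubb_{Q_{n-1}}$, while $\Fext^{Q_{n+1}}(J_{n+1})=[\crit_{-Q_n},0]$.

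So the restriction on $P$ does not keep critical points of $\Fext^P$ away from $J_{n+1}$. It only says that $\Fext^P(\lake_0)$ has not yet reached $\lake_{Q_{n-1}+Q_{n-2}}$; these are exactly the intermediate lakes of Claim~1 in the proof of Proposition~\ref{prop:lake-bounds-0}. The paper gives no separate argument beyond pointing to that proof. There, $\parH$ of these lakes is described directly (end hoglets, pieces of the chain $\Bchain_{n-1}$, lifts of $X_{n-1}$ near alpha-points), not by pushing one model lake forward.

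Be aware, moreover, that no argument can cover the full stated range. For $1\le j\le a_n-1$, the lake $\Omega_{n-1,j}=\Fext^{Q_{n+1}+(a_n-j-1)Q_{n-1}}(\lake_{Q_{n+1}+Q_n})$ from that proof belongs to the family. Its base $[\crit_{-(j+1)Q_{n-1}-Q_{n-2}},\crit_{-Q_{n-1}}]$ contains the end tile $[\crit_{-Q_n},\crit_{-Q_{n-1}}]$. So by Proposition~\ref{prop:R-apb-transcendental} its length is $\asymp L:=|[\crit_{-Q_{n-2}},\crit_{-Q_{n-1}}]|$. But by Proposition~\ref{prop:bubble-bounds}(2)(a) and Proposition~\ref{prop:R-apb-transcendental}(2), its end hoglet $\Bubb_{(j+1)Q_{n-1}+Q_{n-2}}$ has diameter $\asymp L/\min\{j+1,a_n+1-j\}^2$. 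For $a_n$ large and $j\approx a_n/2$, this hoglet lies entirely below the line $\{\mathrm{Im}\,z=\kappa|\parRess\lake|\}$, whatever the universal $\kappa$. Then (1)--(2) cannot both hold.

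The statement therefore has to be restricted to lakes whose two end hoglets reach height $\succ|\parRess\lake|$. Lakes whose base is a tile of $\tiling_{Q_n}$ qualify, and they are the only ones used in the proof of Proposition~\ref{prop:uniform-expansion}. Even for those (e.g.\ $\lake_{Q_n}$ itself), the case $P\ge Q_{n+1}$ must be handled through that boundary description, not by Koebe transport of $\gamma(\lake_0)$.
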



\section{Dynamical hairiness and the absence of invariant line fields}
\label{sec:hairiness-NILF}

Consider a neutral cascade $\Fbold = (\Fbold^P)_{P \in \Tbold}$ corresponding to a bi-infinite tower $\seq{f_n}_{n\in\Z} \in \attr$, and let $\tttheta=\seq{ (\varepsilon_n, \abar_n) }_{n \in \Z}$ be the associated combinatorics.
We will be using the standard notation set in the previous sections; see \S\ref{sss:notation-for-combinatorics}--\ref{sss:cascade}.
In this section, we will explore the global structure of the dynamical plane of $\Fbold$ and prove the following three theorems.

\begin{theorem}[Dynamical Hairiness]
\label{thm:no-wandering-domains}
    The grand orbit of the Mother Hedgehog of $\Fbold$ is dense in $\C$. Moreover,
    \begin{itemize}
        \item if $\Fbold$ is eventually Brjuno, then the Fatou set of $\Fbold$ is equal to the grand orbit of the interior of the Mother Hedgehog $\Hbold$;
        \item otherwise, the Julia set of $\Fbold$ is equal to $\C$.
    \end{itemize}
\end{theorem}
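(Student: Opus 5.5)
We work in external coordinates and prove that every point of $\UHP$ lies in, or is approximated by, hoglets, i.e.\ lifts of $\Hbold$. Since $\partial\Hbold$ has zero area (Proposition~\ref{prop:zero-area-trans}) and $\Psi$ is conformal off $\Hbold$, density of the grand orbit of $\Hbold$ in $\C$ is equivalent to density of the union of all hoglets $\Bubb$ of $\Fext$ in $\UHP$. Fix $z\in\UHP$ not in any hoglet and not in $\Iext^{<\infty}$. For every $P\in\Tbold$, $z$ then lies in some lake of generation $P$. By Corollary~\ref{cor:bounded-lakes} and the construction in its proof, each such lake is mapped conformally onto $\UHP$ by $\Fext^P$, and after composing with a bounded number of further maps its image is comparable to the primary lakes $\lake_{Q_n+Q_{n-1}}$.

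\textbf{Expanding iterates.} The first key step is to show that $z$ has infinitely many iterates at which it sits at bounded hyperbolic distance from a primary hoglet at the corresponding scale. Take the increasing sequence of times $P_j$ at which the orbit of $z$ enters the primary lakes $\lake_{Q_{n_j}+Q_{n_j-1}}$ (or their images from Proposition~\ref{prop:lake-bounds-1}). By Proposition~\ref{prop:lake-bounds-0}, the upper boundary of such a lake consists of pieces of $\partial\Bubb_{Q_n}$, of $\partial\Bubb_{Q_n+Q_{n-1}}$, and of a set in $\D_\UHP(\alpha_{Q_n+Q_{n-1}},K)$. Proposition~\ref{prop:lake-bounds-2} supplies the horizontal crosscut $\gamma$. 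Together with Real Bounds and the Pseudo-Bubble Bounds, these show that the hyperbolic geometry of the lake relative to $\UHP$ is uniformly controlled. Two cases arise.
\begin{enumerate}
\item The image point lies within bounded hyperbolic distance of $\parH$ of the lake. Then a hyperbolic disk of definite radius around it meets a hoglet of definite relative size.
\item The image point goes deep toward $\parR$ of the lake. Then it lies in a lake of larger generation at the next scale, and we pass to that scale.
\end{enumerate}
Pulling back by the univalent inverse branches, which are hyperbolic contractions by the Schwarz Lemma, gives a definite-size hoglet in a disk about $z$ whose hyperbolic radius relative to the lake of generation $P_j$ tends to $0$. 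Since the lakes shrink in Euclidean size (Real Bounds), these hoglets accumulate at $z$, which proves density. The fjord and parabolic cases are handled by Lemma~\ref{lem:fjord-angle-control}, Lemma~\ref{lem:parabolic-basin} and Proposition~\ref{prop:bubble-chain-enriched}: inside a parabolic basin $\Dext_n$ the orbit converges to $\beta_n$ along the hoglet chain $\Gamma_{n,j}$, which still gives hoglets at bounded hyperbolic distance. I expect this step to be the main obstacle, namely proving that an orbit cannot stay forever deep in lakes at bounded relative scale without approaching hoglets. I would first try a combinatorial pigeonhole argument on depths $n$, with uniform constants coming from Claim~3 in the proof of Proposition~\ref{prop:lake-bounds-0}.

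\textbf{Fatou and Julia sets.} Given density, any Fatou component $U$ disjoint from $\Ibold^{<\infty}$ must meet some hoglet $\Bbold$. Boundaries of hoglets lie in the Julia set, because nearby points escape: alpha-points belong to $\Ibold^{\le P}$ by Corollary~\ref{cor:alpha-points}, and critical preimages accumulate there. Hence $U\subset\operatorname{int}\Bbold$, so $U$ is a lift of a component of $\Zbold$. By Proposition~\ref{prop:eventually-brjuno}, $\Zbold\neq\emptyset$ exactly in the eventually Brjuno case, and there the lifts of $\Zbold$ are Fatou, since the dynamics is conjugate to a cascade of translations. Otherwise $\operatorname{int}\Hbold=\emptyset$, so hoglets have empty interior and the Fatou set is empty, i.e.\ the Julia set is $\C$.
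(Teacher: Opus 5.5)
Your case (1) is essentially the right mechanism. The gap sits exactly at the step you flag as the main obstacle, and the tools you propose for it (a pigeonhole on depths, angular control of fjords, convergence along $\Gamma_{n,j}$) cannot close it.

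\textbf{How the paper picks the scale.} There is no infinite descent toward $\parR$ of a lake, because the scale is not found by following the orbit of $z$ into the lakes $\lake_{Q_n+Q_{n-1}}$. It is selected geometrically at $z$ itself (Proposition~\ref{prop:uniform-expansion}). Cover $\UHP$ by the nested regions $\Xi_n$, the parts of height $\kappa|I|$ of the primary lakes sitting over the tiles $I\in\tiling_{Q_n}$. Take the \emph{maximal} $n$ with $z\in\Xi_n$, which exists since $\textnormal{Im}\,z>0$. Maximality makes $\textnormal{Im}\,z$ $\threshold$-uniformly comparable to $|J_z|$, unless $z$ lies in the depth-$n$ fjord over $J_z$, because the middle subtiles of $J_z$ are swallowed by that fjord. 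Hence $\dist_{\UHP}(z,\alpha_{P_z})=O_{\threshold}(1)$ with $\alpha_{P_z}\notin\lake_z$. This bounded distance to $\partial\lake_z$, not the Schwarz Lemma alone, is what makes the inverse branches \emph{uniformly} contracting, so that pulled-back hoglets actually shrink to $z$. The only genuine obstruction is a point inside a fjord, and that must be removed dynamically.

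\textbf{The missing ingredient: escaping fjords.} This is Proposition~\ref{prop:escaping-fjord}, which your plan does not contain.

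At a non-parabolic NP depth, a point whose $\Fext^{jQ_n}$-iterates all stay in $\fjord_n$ must equal $\beta_n$. This follows from Denjoy--Wolff for the inverse of $\Fext^{Q_n}\colon\lake_{Q_n}\to\UHP$.

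At a parabolic depth your proposed treatment fails, for three reasons.
\begin{itemize}
\item $\Fext^{Q_n}$ is a conformal automorphism of $\Dext_n$, hence a hyperbolic isometry of $\Dext_n$. Pulling back along an orbit converging to $\beta_n$ therefore shrinks nothing toward $z$.
\item $\Dext_n\subset\Dom(\Fext^{P})$ for every $P\in\Tbold_{\Arch,n}$, since $P\le kQ_n$ for some $k$ and $\Fext^{Q_n}(\Dext_n)=\Dext_n$. So no hoglet with generation in $\Tbold_{\Arch,n}$ meets $\Dext_n$, and this includes every $\Gamma_{n,j}$.
\item Indeed $\Dext_n$ is Fatou for the Archimedean sub-cascade (Theorem~\ref{thm:parabolic-fatou-set}).
\end{itemize}
Density inside the basin must therefore come from the Lavaurs times. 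The paper uses the bi-infinite chain $\Gamma_{n,\iinfty+j}$ carving out $\Dext'_n$, Denjoy--Wolff for $\Fext^{Q_{n+1}-Q_{n-1}}\colon\Dext'_n\to\Dext_n$, and the fundamental domains of Lemma~\ref{lem:Wj's}. Together these push every point of $\UHP\cap\Fjord$ out of $\Fjord$ at some time in $\Tbold_{\Arch,n+1}\setminus\Tbold_{\Arch,n}$.

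Alternating the two propositions yields infinitely many uniformly expanding times (Remark~\ref{rem:expanding.iterates}, Lemma~\ref{lem:non-linearity-2}). This gives hoglets and alpha-points at scales $\asymp\lambda^k$ around $z$.

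\textbf{A smaller omission.} You exclude $z\in\Iext^{<\infty}$, but a priori some $\Iext^{\le P}$ could have interior. Density of $\Dom(\Fbold^P)$ is a \emph{consequence} of the theorem, so you cannot assume it. The paper's expanding times can be taken below the escape time, so escaping points are covered as well.

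Your Fatou/Julia deduction is fine once density, with alpha-points accumulating at $z$, is in hand. The zero-area input is not needed for the density statement.
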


In particular, this theorem establishes the absence of wandering domains and shows that $\Dom(\Fbold^P)$ is dense for all $P \in \Tbold$.
It is an analog of the Dynamical Hairiness phenomenon for infinitely renormalizable ql maps \cite{McM96, Hin00}.

For any $P,Q \in \Tbold$, we write $P \precsim Q$ if there exists a positive integer $m$ such that $P \leq m Q$.
For any $P \in \Tbold$, the \emph{Archimedean class} of $P$ is the sub-semigroup of $\Tbold$ consisting of elements $Q$ such that $P \precsim Q \precsim P$.
The number of Archimedean classes of $\Tbold$ is one plus the number of parabolic depths.
In particular, if $\tttheta$ is irrational, then there is only one Archimedean class.

For any $n \in \Z$, denote
\[
    \Tbold_{\Arch,n} := \{ P \in \Tbold \: : \: P \precsim Q_n \}.
\]
Below is a variant of Theorem \ref{thm:no-wandering-domains}.

\begin{theorem}[Hairiness and parabolic basins]
\label{thm:parabolic-fatou-set}
    Suppose $N \in \Z$ is a parabolic depth. 
    The Fatou set of $(\Fbold^P)_{P \in \Tbold_{\Arch,N}}$ is the grand orbit of the lift of the parabolic basin of attraction of $f_N$. 
    It is the union of iterated preimages of $\Hbold(\Fbold)$ under $(\Fbold^P)_{P \in \Tbold_{\Arch,N}}$ together with the open set of points which correspond to the full attracting basin of the parabolic periodic point $\beta_N$ of $\Fext^{Q_N}$ in external coordinates.
\end{theorem}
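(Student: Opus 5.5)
We prove Theorem~\ref{thm:parabolic-fatou-set} by following the scheme of Theorem~\ref{thm:no-wandering-domains}, restricted to the Archimedean sub-semigroup $\Tbold_{\Arch,N}$. The argument is carried out in external coordinates, and the parabolic basin $\Dext_N$ plays the role of an extra absorbing region. Set
\[
\mathcal{O} := \bigcup_{P \in \Tbold_{\Arch,N}} \Fbold^{-P}(\Hbold) \;\cup\; \bigcup_{P\in\Tbold_{\Arch,N}} \Psi^{-1}\big(\Fext^{-P}(\Dext_N)\big).
\]
The plan has three steps. First, show that every point of $\mathcal{O}$ outside the boundaries is Fatou. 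Second, show that the open part of $\mathcal{O}$ is exactly the grand orbit of the lift of the parabolic basin of $f_N$. Third, the main step: show that every point of $\UHP$ whose orbit avoids hoglets and the preimages of $\Dext_N$ lies in the Julia set, and that such points are not interior to any Fatou component.

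For the first two steps, Lemma~\ref{lem:parabolic-basin} shows that $\Fext^{Q_N}$ is an automorphism of $\Dext_N$ with all orbits converging to $\beta_N$. Elements of $\Tbold_{\Arch,N}$ are bounded by multiples of $Q_N$, so they act on the grand orbit of $\Dext_N$ through finitely many branches modulo $\Fext^{Q_N}$. This gives normality there. Under $\pphi_N$, the domain $\Psi^{-1}(\Dext_N)$ corresponds to the immediate parabolic basin of $f_N$ at $0$: the hoglet chain $\Gamma_{N,j}$ and the enriched chain $\Gamma_{N,\iinfty-k}$ are the lifts of the petal structure. This uses Proposition~\ref{prop:trans-sectors}(6) and Theorem~\ref{thm:beta-fixed-points}(2).

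The main step rests on the expansion mechanism from the proof of Theorem~\ref{thm:no-wandering-domains}. Take $z \in \UHP$ outside the grand orbit of hoglets and of $\Dext_N$. Using Corollary~\ref{cor:bounded-lakes} and the bounds on primary lakes (Propositions~\ref{prop:lake-bounds-0}--\ref{prop:lake-bounds-2}), we want infinitely many times $P\in\Tbold_{\Arch,N}$ at which $\Fext^P(z)$ lands in a primary lake of controlled geometry at scale $|\crit_{Q_n}|$ with $n\le N$. At such a time we obtain definite hyperbolic expansion together with a definite-size neighborhood meeting a hoglet or the boundary of $\Hbold$. A normal-family limit of these rescaled returns would then contradict the fact that $\Dom(\Fbold^P)$ omits the preimages of $\alpha$-points; since $\Fbold^{P}$ maps hoglets onto $\Hbold$ and lakes onto $\UHP$, a disk around $z$ cannot be Fatou unless it sits inside the grand orbit of hoglets or the basin.

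The obstacle is the restriction to $\Tbold_{\Arch,N}$. Returns to depths $n>N$ would require times infinitely larger than $Q_N$, so the recurrence argument has to be run only over depths $n \le N$. At depth $N$ itself the chain $\Bchain_N$ accumulates at $\beta_N$ instead of closing up. We would handle this by splitting orbits at depth $N$ into two cases. Either an orbit enters $\Dext_N$ (or one of its preimages), and is then absorbed into the basin. Or it leaves through the fjord $\fjord_N$ and stays in a region where Proposition~\ref{prop:bubble-chain-enriched} gives bounded hyperbolic distance to $\alpha$-points of the chain; there the expansion argument applies with uniform constants.
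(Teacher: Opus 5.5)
Your overall mechanism is the one the paper uses through Lemma~\ref{lem:non-linearity-2}: infinitely many $\Tbold_{\Arch,N}$-returns with definite hyperbolic expansion near $\alpha$-points, so that escaping points accumulate at $z$. There are two genuine gaps, however. First, the proposal only states this recurrence as a goal, and the one concrete step you offer for the depth-$N$ obstacle does not work. Proposition~\ref{prop:bubble-chain-enriched} is about the enriched chain $\Gamma_{N,\iinfty-k}$, which by its item (3) lies \emph{inside} $\Dext_N$. The $\alpha$-points it is anchored to, $\alpha_{Q_{N+1}-kQ_N}$, have generations $Q_{N+1}-kQ_N\notin\Tbold_{\Arch,N}$, because $a_{N+1}=\infty$. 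Expansion there is expansion of $\Fext^{Q_{N+1}-kQ_N}$, i.e.\ it uses exactly the non-Archimedean times you said must be avoided. It also concerns a region that is Fatou for the sub-cascade, so it says nothing about orbits outside the basin.

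Moreover, your dichotomy ``either the orbit enters $\Dext_N$, or it leaves through $\fjord_N$'' is precisely what has to be proved. You must rule out points of $\fjord_N\cap\UHP$ outside $\Dext_N$ that stay in $\Fjord$ forever. The paper does this in Proposition~\ref{prop:escaping-fjord}(2) with the fundamental domains of Lemma~\ref{lem:Wj's}:
\begin{itemize}
\item $\fjord_N\cap\UHP\subset\Dext_N\cup\bigcup_{j\ge1}\overline{\Xext_j}\cup\bigcup_{j\ge1}\Gamma_{N,j}$;
\item $\Fext^{\Nbold Q_N}$ maps $\overline{\Xext_{j+1}}$ onto $\overline{\Xext_j}$;
\item $\overline{\Xext_0}$ is disjoint from $\fjord_N$.
\end{itemize}
Hence a point of $\overline{\Xext_j}$ leaves $\Fjord$ at time $j\Nbold Q_N\in\Tbold_{\Arch,N}$. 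Fjords of depths $n<N$ are handled by parts (1)--(2) of that proposition, with escape times in $\Tbold_{\Arch,n+1}\subseteq\Tbold_{\Arch,N}$.

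The second missing idea is why, once $z$ is off fjords and primary hoglets, the expanding return can be taken in $\Tbold_{\Arch,N}$; you only assert that depths $n>N$ must be avoided. In Proposition~\ref{prop:uniform-expansion}, one writes $\UHP=\bigcup_n\Xi_n$ and takes the maximal $n=n(z)$ with $z\in\Xi_n$. The trapezium estimates then place $z$ in a primary lake of generation $P_z$ with $Q_n\le P_z<\max\{Q_n+Q_{n-1},2Q_n-Q_{n-1}\}$ and $\dist_{\UHP}(z,\alpha_{P_z})=O_{\threshold}(1)$. Since $\mathtt{E}'_{N+1}(Q_N)$ and $\mathtt{E}''_{N+1}(Q_{N+1}-Q_N)$ lie in $\Dext_N$, all of $\mathtt{E}_{N+1}$ lies in the full basin of $\beta_N$. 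This forces $n(z)\le N$, hence $P_z\in\Tbold_{\Arch,N}$, for every $z$ outside that basin. Alternating the two propositions (Remark~\ref{rem:expanding.iterates}, Lemma~\ref{lem:non-linearity-2}) yields hoglets of generation in $\Tbold_{\Arch,N}$, and their escaping $\alpha$-points, accumulating at $z$.

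Two smaller points about Step 1.
\begin{itemize}
\item The claim that $\Tbold_{\Arch,N}$ acts ``through finitely many branches modulo $\Fext^{Q_N}$'' is false, since $Q_{N-1},Q_{N-2},\dots$ all lie below $Q_N$. Normality on $\Psi^{-1}(\Dext_N)$ and its preimages still follows from Montel, because all the maps take values in $\C\setminus\Hbold$.
\item The statement puts all of $\Hbold$, boundary included, in the Fatou set of the sub-cascade. The paper gets this from the fact that $\Hbold$ is attracted to $\infty$ by $\Fbold^{Q_N}$, i.e.\ lies in the lifted parabolic basin. Your Step 1 only treats points off the boundaries.
\end{itemize}
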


Together with the two theorems above, we will also establish:

\begin{theorem}[NILF]
\label{thm:NILF}
    The neutral cascade $\Fbold$ admits no invariant line field supported on its Julia set.
\end{theorem}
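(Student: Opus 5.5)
The proof will follow the classical McMullen strategy for NILF: suppose $\mu$ is an $\Fbold$-invariant line field supported on a positive-measure subset $E$ of $\mathfrak{J}(\Fbold)$, pick a Lebesgue density point $z_0$ of $E$ at which $\mu$ is almost continuous, and blow up along a sequence of iterates with bounded distortion and definite expansion relative to the scale. In the limit we obtain a univalent (or finite-degree) map from a definite disk onto a definite domain that carries the limiting line field. That limit is \emph{univalent} (it is the germ of a straight line field), and this will contradict the non-linearity of the cascade maps. We split $E$ into three pieces and treat each separately:
\[
E \cap \textstyle\bigcup_{P} \Fbold^{-P}(\partial \Hbold), \qquad E\cap \Ibold^{<\infty}, \qquad E \setminus \Big( \Ibold^{<\infty}\cup \bigcup_P \Fbold^{-P}(\Hbold)\Big).
\]

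The first piece has zero area. By Proposition~\ref{prop:zero-area-trans}, $\partial\Hbold$ has zero area, and its preimages under the $\sigma$-proper holomorphic maps $\Fbold^P$ (countably many branches) have zero area as well. The escaping set $\Ibold^{<\infty}$ is a countable union of closed sets $\Ibold^{\le P}$; I expect its measure to vanish as well. To see this, note that in external coordinates $\Iext^{\le P}$ lies on the boundaries of lakes (Proposition~\ref{prop:lake-bounds-0} and Corollary~\ref{cor:bounded-lakes}), and lake boundaries consist of hoglet boundaries together with landing points at alpha-points. If this fails, I would instead run the same blow-up argument on the escaping set. So the real work is the generic piece: points $z$ in $\UHP$ (in external coordinates) outside all hoglets whose full forward orbit is defined.

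For such $z$, I would establish the expansion property announced in Remark~\ref{rem:expanding.iterates}: there are infinitely many times $P_k$ and depths $n_k$ such that $\Fext^{P_k}(z)$ lies in a primary lake $\lake_{Q_{n_k}+Q_{n_k-1}}$ (or its image from Proposition~\ref{prop:lake-bounds-1}) at bounded hyperbolic distance from the corresponding alpha-point, or from the arc $\gamma(\lake)$ of Proposition~\ref{prop:lake-bounds-2}. The inverse branch along the orbit extends univalently to a definite neighborhood with bounded Koebe distortion. To find such times, I would use the lake nesting from the proof of Proposition~\ref{prop:lake-bounds-0}, where each $\lake_{Q_{n+2}+Q_{n+1}}$ maps onto $\lake_{Q_n+Q_{n-1}}$ with uniform hyperbolic expansion near alpha-points. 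A point that never returns to a definite part of a lake would have to stay forever inside fjords or inside a shrinking hoglet chain, and this is excluded by Theorem~\ref{thm:no-wandering-domains}. In the parabolic case, the exceptional points instead lie in basins $\Dext_n$, which are Fatou and therefore irrelevant.

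Transporting back by $\Psi^{-1}$ using the quasiconformality of Corollary~\ref{cor:Psi-qc} lets us rescale in the $\Fbold$-plane. Rescaling the renormalized pictures $\Fbold^{[n_k]}$ about the images, by the compactness of neutral cascades (Theorem~\ref{thm:transcendental-extension} applied to the shifted combinatorics), we obtain a limit cascade $\Gbold$ and a limit line field $\nu$. This $\nu$ is univalent on a definite disk and invariant under the relevant limiting map $\Gbold^{Q}$ near a hoglet's root, where $\Gbold^{Q}$ has a critical point. The uniform non-linearity (critical point at bounded distance) contradicts the invariance of a parallel line field, exactly as in McMullen's argument.

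I expect the main obstacle to be the recurrence/expansion step, uniformly in the combinatorics. In particular, one must control orbits that linger in near-parabolic fjords, where expansion degenerates. There I would pass to perturbed Fatou-coordinate style estimates via the angular control of Lemma~\ref{lem:fjord-angle-control} and the chain bounds of Proposition~\ref{prop:bubble-chain-01}, so that exits from fjords occur at a definite position relative to $\beta_\fjord$.
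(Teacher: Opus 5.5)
\textbf{Main gap: the contradiction step.} The gap is in the last step, where you need a non-linear map that preserves the limiting line field.

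\begin{itemize}
\item By Lemma~\ref{lem:crit-points}, every critical value of every $\Fbold^P$ is a critical-orbit point $C_Q\in\partial\Hbold$. In external coordinates, each $\Fext^P$ is a conformal isomorphism from a lake onto $\UHP$.
\item The univalent disk you get by pushing forward is a hyperbolic disk of bounded radius in $\UHP$ near an alpha-point. In the $\Fbold$-plane its size is therefore comparable to its distance from $\Hbold$.
\item Such a disk never contains a primary root $C_{-P}$. For any critical point it does contain (the root of a non-primary hoglet), the critical value lies on $\partial\Hbold$, outside the disk.
\item A line field that is univalent near a critical point but uncontrolled near the critical value can perfectly well be invariant. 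In local coordinates $w=z^2$, the parallel field upstairs is the pullback of $\frac{w}{|w|}\frac{d\bar w}{dw}$, which has a half-index singularity at $w=0$.
\end{itemize}

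So invariance under ``$\Gbold^Q$ near a hoglet's root'' gives no contradiction. Passing to a limit cascade does not help either, since your limit field is only known (in measure) on that disk.

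\textbf{The missing idea} is Lemma~\ref{lem:non-linearity-1}. Take the double cover $\Fbold^{Q_n}\colon U'_n\to W'_n$ near $C_{Q_{n-1}}\in\partial\Hbold$ and lift it, by an inverse branch of $\Fbold^{Q_n}$, onto the hoglet $\Bbold_{Q_n}$. This gives a degree-two map $\mathtt{h}_P$ with $\Fbold^{P}\circ\Psi^{-1}\circ\mathtt{h}_P=\Fbold^{P+Q_n}\circ\Psi^{-1}$, so it preserves any invariant line field. Crucially, its domain \emph{and} its range have definite size and lie at bounded hyperbolic distance from $\alpha_P$.

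Lemma~\ref{lem:non-linearity-2} then pulls these maps back along the expanding univalent branches to scales $r_k\to 0$ at $z$ itself. The paper can therefore zoom in at $z$, where almost continuity gives an exactly parallel limit field. That field is invariant under a degree-two cover $\mathtt{h}_\infty$, which is impossible. With $\mathtt{h}_P$ in hand your push-forward variant would also work, provided your bounded-distortion disk contains $\Uext_{P}\cup\Wext_{P}$.

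\textbf{Parabolic basins are not Fatou.} The basins $\Dext_n$ are Fatou only for $(\Fbold^P)_{P\in\Tbold_{\Arch,n}}$ (Theorem~\ref{thm:parabolic-fatou-set}). For the full cascade, the Fatou set is just the grand orbit of $\textnormal{int}\,\Hbold$ (Theorem~\ref{thm:no-wandering-domains}). So $\Dext_n$ minus hoglets lies in the Julia set and can carry the line field. The paper handles it by escaping the fjord with times in $\Tbold_{\Arch,n+1}\setminus\Tbold_{\Arch,n}$, using $\Fext^{Q_{n+1}-Q_{n-1}}$ and Denjoy--Wolff (Proposition~\ref{prop:escaping-fjord}(2)).

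\textbf{Trapped orbits in fjords.} Absence of wandering domains cannot rule out orbits trapped in fjords, since that set need not be open. The paper instead applies Denjoy--Wolff to the inverse of the univalent map $\Fext^{Q_n}\colon\lake_{Q_n}\to\UHP$. It then uses Proposition~\ref{prop:uniform-expansion}, which places every point off fjords and primary hoglets within bounded hyperbolic distance of some $\alpha_{P_z}$. No ``shrinking hoglet chain'' case arises, and no Fatou-coordinate estimates are needed.

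\textbf{Escaping set.} Your fallback is the right one. Lemma~\ref{lem:non-linearity-2} applies directly to escaping points, so you do not need $\Ibold^{<\infty}$ to be null.
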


A central conjecture in rational dynamics is the absence of invariant line fields, which implies the density of hyperbolicity conjecture. 
In our framework, an invariant line field of a cascade $\Fbold$ is defined to be a measurable Beltrami differential $\nu(z) \frac{d \bar{z}}{dz}$ on $\C$ such that $\left(\Fbold^P\right)^*\nu=\nu$ almost everywhere for all $P \in \Tbold$, $|\nu|=1$ on a positive measure set, and $\nu=0$ elsewhere. 
Theorem \ref{thm:NILF} essentially states that there is no non-trivial deformation of $\Fbold$ from the Julia set.

The leading mechanism behind these theorems is the property that every point outside of hoglets admits infinitely many iterates with definite expansion relative to the scale (Remark \ref{rem:expanding.iterates}).
This is achieved by iterating Propositions \ref{prop:escaping-fjord} and \ref{prop:uniform-expansion}.
In Proposition \ref{prop:escaping-fjord}, we prove that every point that does not escape a particular parabolic fjord $\fjord$ is equal to either the corresponding periodic point $\beta_\fjord$ or lies in the parabolic basin of attraction of $\beta_\fjord$; the proof uses the hoglet chain and lake analysis from Section \ref{sec:bounds}.
In Proposition \ref{prop:uniform-expansion}, we prove that at any point $z \in \UHP$ away from fjords and primary hoglets, there is some time $P_z$ such that $\Fext^{P_z}$ is uniformly expanding at $z$ with respect to the hyperbolic metric.

This expansion mechanism, together with definite non-linearity (depicted in Figure \ref{fig:non-linearity}), implies the main Lemma \ref{lem:non-linearity-2} which asserts that almost every point $z \in \UHP$ that is not contained in any hoglet picks up definite non-linearity from a nearby hoglet at arbitrarily small scales.
In \S\ref{ss:proof-3-theorems}, we apply this lemma and prove the three theorems \ref{thm:no-wandering-domains}, \ref{thm:parabolic-fatou-set}, and \ref{thm:NILF} simultaneously.

\subsection{Escaping fjords}
\label{ss:parabolic-levels}

Recall that in external coordinates, every fjord $\fjord \subset \Fjord$ contains a unique fixed point $\beta_{\fjord}$ of $\Fext^{Q_n}$ where $n$ is the depth of $\fjord$.
For the principal fjord $\fjord_n$, the corresponding fixed point is denoted by $\beta_n$.
In this subsection, we shall prove the following proposition.

\begin{proposition}
\label{prop:escaping-fjord}
    Let $z \in \UHP \cap \Fjord$ and suppose $n \in \Z$ is the largest NP depth such that $z$ is contained in a depth $n$ fjord.
    Let $\fjord$ be the depth $n$ fjord containing $z$.
    If $\Fext^P(z) \in \Fjord$ for all $P \in \Tbold_{\Arch,n}$, then either
    \begin{enumerate}
        \item $a_{n+1} < \infty$ and $z$ is equal to the repelling periodic point $\beta_\fjord$, or
        \item $a_{n+1} = \infty$, $z$ is in the immediate parabolic basin of $\beta_\fjord$, and there exists some $P \in \Tbold_{\Arch,n+1} \backslash \Tbold_{\Arch,n}$ such that $\Fext^{P}(z)$ is not in $\Fjord$.
    \end{enumerate}
\end{proposition}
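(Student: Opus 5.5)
First reduce to the principal fjord. Any depth $n$ fjord $\fjord$ is a univalent lift of $\fjord_n$ under some $\Fext^{P_0}$ with $0\le P_0<Q_{[m]}$ (where $Q_n=Q_{[m]}$), and $P_0 \in \Tbold_{\Arch,n}$. Such a lift carries $\beta_\fjord$ to $\beta_n$ and the parabolic basin of $\beta_\fjord$ to that of $\beta_n$. So I replace $z$ by $\Fext^{P_0}(z)\in\fjord_n$. Maximality of $n$ is also preserved, since deeper fjords are attached to finer tiles and map to deeper fjords. From now on $z \in \fjord_n$.

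The hypothesis $\Fext^{P}(z)\in\Fjord$ for all $P\in\Tbold_{\Arch,n}$ forces the orbit of $z$ under $\Fext^{Q_n}$ to stay in $\fjord_n$. To see this, use almost invariance (Lemma~\ref{lem:fjord-almost-invariance}) and the angular control of Lemma~\ref{lem:fjord-angle-control}. A point of $\fjord_n$ whose $\Fext^{Q_n}$-image leaves $\fjord_n$ must land near the dam $\mathtt{d}_n$. Its image is then outside all fjords of depth $\geq n$ (by maximality of $n$ these lie in smaller tiles), and also outside fjords of depth $<n$, which sit in other tiles of $\tiling_{Q_{n-1}}$. Hence $z\in\bigcap_{k\ge0}\Fext^{-kQ_n}(\fjord_n)$.

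Next, describe the dynamics of $\Fext^{Q_n}$ on $\fjord_n$ using the hoglet chain from \S\ref{ss:bubble-chains}. By Proposition~\ref{prop:bubble-chain-01}, the pseudo-bubbles $\hat\Gamma_{n,j}$ with $\threshold'+\mathbf{m}\le j\le a_{n+1}-\threshold'-\mathbf{m}$ lie in $\fjord_n$ and form a chain through $\beta_n$. Together with the real intervals, they cut $\fjord_n$ into ``petal'' regions that are nested lakes $\lake_{n,j}$, which $\Fext^{Q_n}$ maps onto one another.

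In the non-parabolic case $a_{n+1}<\infty$, I would conjugate $\Fext^{Q_n}|_{\fjord_n}$ to a near-translation via perturbed Fatou coordinates. Under this conjugacy $\beta_n$ corresponds to the repelling fixed point at the end of the chain, with multiplier bounded away from $1$ in terms of $a_{n+1}$. The set $\bigcap_k \Fext^{-kQ_n}(\fjord_n)$ consists of points that never cross the outgoing dam. I would then argue as follows:
\begin{itemize}
\item Every point other than $\beta_n$ is pushed along the chain and exits $\fjord_n$ after at most about $a_{n+1}$ iterates.
\item This is justified by Schwarz lemma contraction of the inverse branches $g_n$ on the lakes $\lake_{n,j}$, as in Claim~3 of Proposition~\ref{prop:lake-bounds-0}.
\item The nested preimages shrink to $\{\beta_n\}$, and $\beta_n$ itself stays fixed by Lemma~\ref{lem:repelling-periodic-point}.
\end{itemize}
This gives (1).

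In the parabolic case, a non-escaping orbit under $\Fext^{Q_n}$ must converge to $\beta_n$ within $\fjord_n$. Points on the repelling side are pushed out in the same way as before, so $z$ lies in the attracting direction, that is, in $\Dext_n$ (Lemma~\ref{lem:parabolic-basin}). Then I use the enriched chain $\Gamma_{n,\iinfty-k}$ (Proposition~\ref{prop:bubble-chain-enriched}) and the Lavaurs-type map $\Fext^{Q_{n+1}-kQ_n}$. This map sends the part of $\Dext_n$ near the chain $\hat\Gamma_{n,\iinfty-k}$ out of $\fjord_n$: for $k\le\threshold'-\mathbf{m}$, the $\hat\Gamma_{n,\iinfty-k}$ are disjoint from $\fjord_n$. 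Choosing $k$ so that $\Fext^{kQ_n}(z)$ lies in the appropriate fundamental domain gives $P=Q_{n+1}-kQ_n\in\Tbold_{\Arch,n+1}\setminus\Tbold_{\Arch,n}$ with $\Fext^P(z)\notin\Fjord$. I still need to check that this image avoids deeper fjords, using the disjointness lemmas of \S\ref{ss:disjointness-of-pseudo-bubbles}.

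I expect the main obstacle to be the uniform exit argument in the near-parabolic case: showing that every point other than $\beta_n$ leaves $\fjord_n$, rather than staying trapped near the dam or in some unseen invariant region. My first attempt would be a hyperbolic-contraction argument for the inverse branch on the lakes bounded by the chain $\Bchain_n$, combined with Real Bounds near the dam.
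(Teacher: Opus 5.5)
Your part (1) is in substance the paper's argument. After reducing to $\fjord_n$, what closes it is that $\beta_n$ lies in the principal lake $\lake_{Q_n}$. So the inverse of $\Fext^{Q_n}: \lake_{Q_n}\to\UHP$ is a self-map of $\UHP$ with attracting fixed point $\beta_n$, and Denjoy--Wolff forces $z=\beta_n$. Perturbed Fatou coordinates are not needed. Your claim that every other point leaves $\fjord_n$ within about $a_{n+1}$ iterates is false near the repelling point (escape times are unbounded there), but the contraction argument does not use it.

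The genuine gap is the escape claim in (2). You assume that after shifting $z$ by powers of $\Fext^{Q_n}$, a single Lavaurs-type map $\Fext^{Q_{n+1}-kQ_n}$ throws it out of $\Fjord$. This fails on an open part of the basin.
\begin{itemize}
\item The chain $\{\Gamma_{n,\iinfty-k}\}_{k\ge 0}$ extends to a bi-infinite $Q_n$-periodic chain $\{\Gamma_{n,\iinfty+j}\}_{j\in\Z}$ in $\Dext_n$ that lands at $\beta_n$ at both ends. It cuts off a subdomain $\Dext'_n\subset\Dext_n$ on which $\Fext^{Q_{n+1}-Q_{n-1}}:\Dext'_n\to\Dext_n$ is a conformal isomorphism onto the \emph{whole} basin.
\item Points of $\Dext'_n$ are therefore sent back into $\Dext_n$. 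If the image sits high in the attracting \'Ecalle cylinder, its entire $\Fext^{Q_n}$-orbit stays in a small half-disk around $\beta_n$, which lies inside $\fjord_n$. Then no single Lavaurs step, for any $k$, gets $z$ out of the fjords.
\item Only the part of $\Dext_n$ on or below the bi-infinite chain escapes in one step. There $\Fext^{Q_{n+1}-Q_{n-1}+jQ_n}$ lands in $\Gamma_{n,0}=\Bubb_{Q_n}$ or in $\overline{\Xext_0}$, both disjoint from $\fjord_n$.
\item Your remark that $\hat\Gamma_{n,\iinfty-k}$ avoids $\fjord_n$ for $k\le\threshold'-\mathbf{m}$ concerns where those hoglets sit in the domain, not where the Lavaurs map sends points. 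It does not supply the escape.
\end{itemize}

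The missing idea is to iterate. If $z\in\Dext'_n$, replace it by $\Fext^{Q_{n+1}-Q_{n-1}}(z)\in\Dext_n$ and repeat. Then rule out an infinite stay in $\Dext'_n$ by Denjoy--Wolff for the inverse $\Dext_n\to\Dext'_n$, as the paper does. One clean way to run this:
\begin{itemize}
\item The inverse commutes with $\Fext^{Q_n}$, so it descends to a self-map of the punctured disk $\Dext_n/\Fext^{Q_n}$, which extends over the puncture.
\item The chain projects to a compact loop, so the image is compactly contained in the filled-in disk.
\item By the Schwarz lemma the iterates shrink everything to the puncture, and no point of $\UHP$ represents the puncture.
\end{itemize}
The resulting escape time is a sum of copies of $Q_{n+1}-Q_{n-1}$ and integer multiples of $Q_n$, hence lies in $\Tbold_{\Arch,n+1}\setminus\Tbold_{\Arch,n}$.

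Relatedly, to place $z$ in $\Dext_n$ the right dichotomy is above versus below the chain $\{\Gamma_{n,j}\}_{j\ge0}$, not attracting versus repelling side. $\Dext_n$ also contains part of the repelling petal. What escapes is the horn between that chain and $\R$, which is handled by the fundamental domains $\Xext_j$ of Lemma~\ref{lem:Wj's}.
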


Part (1) follows easily from Denjoy-Wolff.

\begin{proof}[Proof of Proposition \ref{prop:escaping-fjord} \textnormal{(1)}]
    After replacing $z$ with $\Fext^P(z)$ for some time $P \in \Tbold$ with $0 \leq P < Q_n$, we assume that $z$ is contained in the principal parabolic fjord $\fjord_n$.
    In particular, $\fjord_n$ is not contained in a larger fjord and so $\Fext^{jQ_n}(z)$ is in $\fjord_n$ for all $j \geq 0$.
    When $a_{n+1}<\infty$, the periodic point $\beta_n$ in $\fjord_n$ is contained in the principal lake $\lake_{Q_n}$.
    By Denjoy-Wolff, it is the unique attracting fixed point of the inverse of the map $\Fext^{Q_n}: \lake_{Q_n} \to \UHP$.
    Therefore, we must have $z = \beta_n$.
\end{proof}

For the proof of (2), we will make use of the $Q_n$-periodic hoglets discussed previously in \S\ref{ss:bubble-chains}. 
Suppose depth $n$ is parabolic. 
According to Lemma \ref{lem:parabolic-basin}, 
the hoglet chain $\Gamma_{n,j}$, $j \geq 0$ lands at $\beta_n$ and defines the immediate basin of attraction $\Dext_n$ of $\beta_n$ of the map $\Fext^{Q_n}$. 
Inside of $\Dext_n$, there also exists a bi-infinite hoglet chain $\Gamma_{n,\iinfty+j}$, $j \in \Z$ with the property that 
\[
    \Fext^{Q_n}(\Gamma_{n,\iinfty+j}) = \Gamma_{n,\iinfty+j-1}
    \qquad \text{ for all } j \in \Z
\]
and
\[
    \Fext^{Q_{n+1}-Q_{n-1}}(\Gamma_{n,\iinfty+j}) = \Gamma_{n,j}
    \qquad \text{ for all } j \geq 0.
\]
On both ends, this hoglet chain lands at $\beta_n$, hence it bounds a sub-domain $\Dext'_n$ of $\Dext_n$ where
\[
    \Fext^{Q_{n+1}-Q_{n-1}}: \Dext'_n \to \Dext_n
\]
is a conformal isomorphism and $\partial \Dext'_n$ touches $\partial \Dext_n$ exactly at $\beta_n$. 
See Figure \ref{fig:double-enrichment}.

\begin{figure}
        \centering
       \begin{tikzpicture}
    \node[anchor=south west,inner sep=0] (image) at (0,0) {\includegraphics[width=0.95\linewidth]{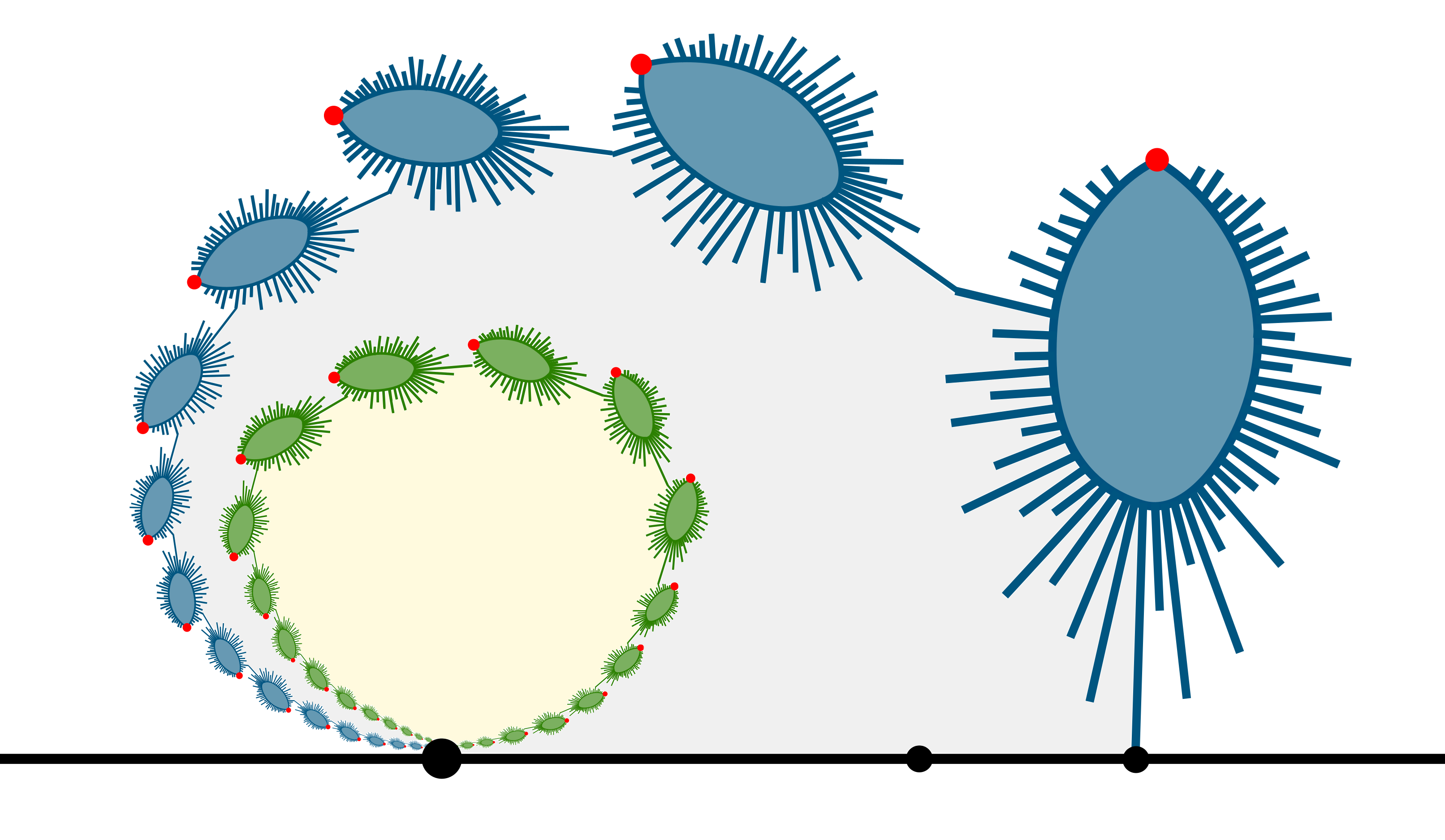}};
    \begin{scope}[
        x={(image.south east)},
        y={(image.north west)}
    ]
        \node [black] at (0.31,0.03) {$\beta_n$};
        \node [black] at (0.785,0.03) {$\crit_{-Q_n}$};
        
        \node [gray!50!black] at (0.32,0.3) {$\Dext'_n$};
        \node [gray!50!black] at (0.6,0.3) {$\Dext_n$};
        
        \draw[-latex] (0.62,0.90) .. controls (0.67,0.93) and (0.71,0.93) .. (0.77,0.85);
        \node [black] at (0.7,0.96) {\small $\Fext^{Q_n}$};
        \draw[-latex] (0.47,0.54) -- (0.51,0.66);
        \node [black] at (0.56,0.575) {\scalebox{0.85}{$\Fext^{Q_{n+1}-Q_{n-1}}$}};
        \draw[-latex] (0.48,0.28) .. controls (0.5,0.26) and (0.48,0.23) .. (0.45,0.215);
        \node [black] at (0.51,0.225) {\scalebox{0.85}{$\Fext^{Q_n}$}};
        
        \node [blue!50!black] at (0.9,0.8) {$\Gamma_{n,0}$};
        \node [blue!50!black] at (0.48,0.985) {$\Gamma_{n,1}$};
        \node [blue!50!black] at (0.26,0.95) {\scalebox{0.9}{$\Gamma_{n,2}$}};
        \node [green!50!black] at (0.52,0.37) {\scalebox{0.8}{$\Gamma_{n,\iinfty}$}};
        \node [green!50!black] at (0.505,0.49) {\scalebox{0.8}{$\Gamma_{n,\iinfty+1}$}};
        \node [green!50!black] at (0.36,0.64) {\scalebox{0.8}{$\Gamma_{n,\iinfty+2}$}};
    \end{scope}
    \end{tikzpicture}
    \caption{The bi-infinite $\Fext^{Q_n}$-periodic hoglet chain $\Gamma_{n,\iinfty+j}$, $j \in \Z$ carves the domain $\Dext'_n$ inside of the parabolic basin $\Dext_n$ of $\beta_n$.}
    \label{fig:double-enrichment}
\end{figure}

\begin{lemma}
    \label{lem:Wj's}
    There exist a universal constant $\Nbold  \in \N$ such that for any parabolic depth $n$, there exist infinite collections of disjoint arcs $\gamma_0$, $\gamma_1$, $\gamma_2$, $\ldots$ and disjoint domains $\Xext_0$, $\Xext_1$, $\Xext_2$, $\ldots$ with the following properties.
    \begin{enumerate}[label=\textnormal{(\alph*)}]
        \item Each $\gamma_j$ is a proper arc in $\UHP \backslash \cup_{k\geq 0} \Gamma_{n,k}$ that starts at $\crit_{-Q_{n-1}-j\Nbold Q_n}$ and ends at the root of the hoglet $\Gamma_{n,1+j\Nbold }$.
        \item Each $\Xext_j$ is the unique bounded open domain in $\UHP$ enclosed by the arcs $\gamma_j$, $\gamma_{j+1}$, and the hoglets $\Gamma_{n,k}$, $1+j \Nbold  \leq k \leq (j+1)\Nbold $.
        \item For $j \geq 0$, $\Fext^{\Nbold  Q_n}$ maps $\gamma_{j+1}$ onto $\gamma_j$ and $\overline{\Xext_{j+1}}$ onto $\overline{\Xext_{j}}$.
        \item $\Xext_j$'s accumulate at $\beta_n$, that is,
        \[
            \bigcap_{k \geq 1} \overline{\bigcup_{j \geq k} \Xext_j} = \{\beta_n\}.
        \]
        \item If a point $z$ is such that $\Fext^{k Q_n}(z)$ is in $\overline{\cup_{j \geq 1} \Xext_j}$ for all $k \geq 0$, then $z = \beta_n$.
        \item $\overline{\Xext_0}$ is disjoint from the principal fjord $\fjord_n$.
    \end{enumerate}
\end{lemma}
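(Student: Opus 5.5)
The plan is to build everything out of a single fundamental domain for $\Fext^{\Nbold Q_n}$ acting on the closure of the basin $\Dext_n$, and then to spread it by pullback. We first choose $\gamma_0$ as a proper arc in $\UHP\backslash\cup_k\Gamma_{n,k}$ joining $\crit_{-Q_{n-1}}$ to the root $\mathtt{r}_{n,1}$ of $\Gamma_{n,1}$, lying in the lake $\lake_{n,1}$ from the proof of Proposition~\ref{prop:bubble-chain-01}. Away from its endpoints, the arc is kept at bounded hyperbolic distance from the alpha-point $\alpha_{Q_n+Q_{n-1}}$. The conformal isomorphisms $\Fext^{Q_n}:\lake_{n,j}\to\lake_{n,j-1}$ (including the parabolic enriched versions, where $a_{n+1}=\infty$ and all $j\ge1$ are allowed) give univalent inverse branches. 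We then define $\gamma_j$ as the lift of $\gamma_0$ under $\Fext^{j\Nbold Q_n}$ along the chain $\lake_{n,1+j\Nbold}\to\lake_{n,1}$. This branch sends $\crit_{-Q_{n-1}}$ to $\crit_{-Q_{n-1}-j\Nbold Q_n}$ and $\mathtt{r}_{n,1}$ to $\mathtt{r}_{n,1+j\Nbold}$, since $\Fext^{Q_n}$ maps $\Gamma_{n,k+1}$ onto $\Gamma_{n,k}$. This proves (a), and (c) holds by construction. The domains $\Xext_j$ of (b) are the regions cut out by consecutive arcs and the intervening hoglets, and they are disjoint once the $\gamma_j$ are.

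Disjointness of the $\gamma_j$ reduces to showing that $\gamma_1\cap\gamma_0=\emptyset$ and that $\gamma_1$ lies on the correct side of $\gamma_0$; the rest follows by applying the injective inverse branches. For this we use Real Bounds (Proposition~\ref{prop:R-apb-transcendental}(3)(b)), the chain bounds of Proposition~\ref{prop:bubble-chain-01}(1), and the location of alpha-points (Corollary~\ref{cor:location-of-alpha}). Together these show that the pieces near $\Gamma_{n,1}$ and near $\Gamma_{n,1+\Nbold}$ sit over real intervals at distance of order $|I|/k^2$ scales. Choosing $\Nbold$ as a large universal constant separates them, because $\gamma_1$ is contained in a bounded hyperbolic neighbourhood of $\alpha_{(1+\Nbold)Q_n+Q_{n-1}}$, by the Schwarz lemma applied to the inverse branch. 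The same choice of $\Nbold$, together with Proposition~\ref{prop:bubble-chain-01}(2)(b), yields (f): $\Xext_0$ only involves $\Gamma_{n,k}$ with $k\le\Nbold$ and arcs near the corresponding alpha-points. These lie in the angular region of Pseudo-Bubble Bounds, which avoids $\fjord_n$ by \eqref{eqn:tau-and-nu} and Lemma~\ref{lem:fjord-angle-control}, provided $\Nbold<\threshold'-\mathbf m$. Since $\threshold$ is large, this is compatible with $\Nbold$ being universal.

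For (d), each $\overline{\Xext_j}$ is contained in a bounded hyperbolic neighbourhood of the points $\crit_{-Q_{n-1}-k Q_n}$ and $\alpha_{kQ_n+Q_{n-1}}$ for $j\Nbold\le k\le (j+1)\Nbold$. By Real Bounds and the argument in Lemma~\ref{lem:parabolic-basin}, both families converge to $\beta_n$ with Euclidean size of order $|I|/k^2$, so the diameters of the $\Xext_j$ shrink to zero while the sets converge to $\beta_n$.

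For (e), the main obstacle is to rule out orbits that stay forever in $\overline{\cup_{j\ge1}\Xext_j}$ without being $\beta_n$. The union $\overline{\cup_{j\ge1}\Xext_j}$ is exactly the preimage under $\Fext^{\Nbold Q_n}$ of $\overline{\cup_{j\ge0}\Xext_j}$ along the chosen branch. If $\Fext^{kQ_n}(z)$ stays in $\overline{\cup_{j\ge1}\Xext_j}$ for all $k$, then $z$ lies in $\Xext_{j}$ for arbitrarily large $j$, because $\Fext^{\Nbold Q_n}$ lowers the index $j$ by one at each step. Indeed, if $z\in\overline{\Xext_{j_0}}$, then $\Fext^{j_0\Nbold Q_n}(z)\in\overline{\Xext_0}$. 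Since $\overline{\Xext_0}$ is disjoint from $\overline{\cup_{j\ge1}\Xext_j}$ except along $\gamma_1$, the only way to stay is to remain on boundary arcs forever. Iterating further then forces $z$ into the intersection in (d), so $z=\beta_n$. The delicate point is the boundary bookkeeping along $\gamma_1$ and the hoglets, which have no escape issue since points on the hoglets eventually leave the domain. We handle this by using the closedness and the strict nesting produced by the branch.
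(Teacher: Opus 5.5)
Your architecture is the paper's. You pick $\gamma_0$ from $\crit_{-Q_{n-1}}$ to the root of $\Gamma_{n,1}$ and obtain $\gamma_j$ and $\Xext_j$ by pulling back under univalent inverse branches of $\Fext^{\Nbold Q_n}$. You separate $\gamma_0$ from $\gamma_1$ using Real Bounds, $|I_{n,j}|\asymp|\crit_{-Q_{n-1}}-\beta_n|/j^2$ for $I_{n,j}=[\crit_{-Q_{n-1}-(j-1)Q_n},\crit_{-Q_{n-1}-jQ_n}]$, together with a Schwarz lemma. Finally you get (f) from Proposition~\ref{prop:bubble-chain-01}(2) with $\threshold$ large.

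The gap is the localization that all of this rests on. You claim $\gamma_1$ lies in a bounded hyperbolic neighbourhood of $\alpha_{(1+\Nbold)Q_n+Q_{n-1}}$. For (d) you claim $\overline{\Xext_j}$ lies in bounded hyperbolic neighbourhoods of points that include the real points $\crit_{-Q_{n-1}-kQ_n}$. Neither can hold: every $\gamma_j$ has an endpoint on $\R$, so it leaves every bounded hyperbolic disk of $\UHP$. Your caveat ``away from its endpoints'' for $\gamma_0$ gives up exactly the part of the arc that the Schwarz lemma for $\dist_{\UHP}$ does not control. As written, nothing prevents the hyperbolically unbounded end of $\gamma_1$, or the part of $\gamma_0$ near $\crit_{-Q_{n-1}}$, from meeting the other arc. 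Consequently three things are not established: the disjointness $\gamma_0\cap\gamma_1=\emptyset$ (which is what fixes $\Nbold$), the Euclidean shrinking in (d), and the fjord-avoidance of the arcs in (f).

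The missing tool is the half-Poincar\'e domain and its Schwarz lemma from \S\ref{sss:poincare}, which control sets attached to the real line.
\begin{enumerate}
\item Proposition~\ref{prop:bubble-chain-01}(1) and Corollary~\ref{cor:location-of-alpha} give a uniform $\tau$ with $\Gamma_{n,1}\subset\poincare_{\tau}(I_{n,1})$. Pulling back then gives $\Gamma_{n,j}\subset\poincare_{\tau}(I_{n,j})$ for all $j$.
\item The length estimate shows the root of $\Gamma_{n,1}$ is accessible from $I_{n,1}$ inside $\poincare_{\tau}(I_{n,1})$ minus the remaining hoglets. This also justifies the existence of $\gamma_0$, which you take for granted.
\item Choosing $\gamma_0\subset\poincare_{\tau}(I_{n,1})$, the Schwarz lemma gives $\gamma_j\subset\poincare_{\tau}(I_{n,1+j\Nbold})$. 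Then $\Nbold$ is chosen so that $\poincare_{\tau}(I_{n,1+\Nbold})$ misses $\poincare_{\tau}(I_{n,1})$.
\item The same containments place $\overline{\Xext_j}$ in half-Poincar\'e domains over intervals shrinking to $\beta_n$, which gives (d).
\item They also keep $\partial\Xext_0$ at distance of order $|\crit_{-Q_{n-1}}-\beta_n|/\Nbold$ from $\beta_n$. For $\threshold$ large, $\fjord_n$ is confined to a small neighbourhood of $\beta_n$ and so avoids it, which gives (f).
\end{enumerate}
Note that the angular control in Pseudo-Bubble Bounds and \eqref{eqn:tau-and-nu} concern primary pseudo-bubbles; they do not apply to the secondary hoglets $\Gamma_{n,k}$.

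Once these containments hold, your argument for (e) is sound. The index drops by one under $\Fext^{\Nbold Q_n}$, and $\overline{\Xext_0}$ meets $\overline{\bigcup_{j\ge1}\Xext_j}$ only along $\gamma_1$. This actually spells out what the paper leaves implicit.
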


    See Figure \ref{fig:W-X-enrichment}.

\begin{proof}
    For $j \geq 1$, denote 
        \[
        I_{n,j} := [\crit_{-Q_{n-1}-(j-1)Q_n},\crit_{-Q_{n-1}-jQ_n}].
        \]
    We have
        \[
            \ldots \: \xrightarrow[]{\: \Fext^{Q_n} \:} \:  I_{n,4}
            \: \xrightarrow[]{\: \Fext^{Q_n} \:} \:  I_{n,3}
            \: \xrightarrow[]{\: \Fext^{Q_n} \:} \:  I_{n,2}
            \: \xrightarrow[]{\: \Fext^{Q_n} \:} \: I_{n,1}.
        \]
    According to Real Bounds, we have
        \begin{align}
        \label{eqn:Inj-estimate}
            |I_{n,j}| \asymp \frac{|\crit_{-Q_{n-1}}-\beta_n|}{j^2}.
        \end{align}
        
    By Proposition \ref{prop:bubble-chain-01} (1) and Corollary \ref{cor:location-of-alpha},
    there exists a uniform constant $\tau\in (0,\pi)$ such that the secondary hoglet $\Gamma_{n,1}$ is contained in the half-Poincar\'e domain $\poincare_{\tau}(I_{n,1})$.
    Then, by repeatedly pulling back under $\Fbold^{Q_n}$, Schwarz Lemma guarantees that $\Gamma_{n,j}$ is contained in $\poincare_{\tau}(I_{n,j})$ for all $j \geq 2$.
    By (\ref{eqn:Inj-estimate}), we know that the entirety of the boundary of $\Gamma_{n,1}$ is accessible from $I_{n,1}$ on the domain $\poincare_{\tau}(I_{n,1})$ with the remaining hoglets $\Gamma_{n,j}$, $j \geq 2$ removed.
    Therefore, there exists a proper arc $\gamma_0$ in $\poincare_{\tau}(I_{n,1})$ that starts from $\crit_{-Q_{n-1}}$ and ends at the root of $\Gamma_{n,1}$.

    By (\ref{eqn:Inj-estimate}), there is a universal constant $\Nbold  \in \N$ such that $\poincare_{\tau}(I_{n,1+\Nbold })$ is disjoint from $\poincare_{\tau}(I_{n,1})$.
    Let $\gamma_1$ be the lift of $\gamma_0$ under $\Fbold^{\Nbold Q_n}$ that starts from $\crit_{-Q_{n-1}-\Nbold Q_n}$ and the root of the hoglet $\Gamma_{n,1+\Nbold }$.
    Since, by Schwarz Lemma, $\gamma_1$ is contained in $\poincare_{\tau}(I_{n,1+\Nbold })$, it is disjoint from $\gamma_0$.
    This implies that the topological disk $\Xext_0 \subset \UHP$ bounded by $\gamma_0$, $\gamma_1$, and the hoglets $\Gamma_{n,j}$, $1\leq j \leq \Nbold $ is well-defined.

    By Proposition \ref{prop:bubble-chain-01} (2), we can assume that the combinatorial threshold $\threshold$ is sufficiently large and so $\fjord_n$ is sufficiently small enough to avoid the boundary of $\Xext_{n,0}$.
    By repeatedly lifting under $\Fbold^{\Nbold  Q_n}$, 
    we obtain the arcs $\gamma_j$ and domains $\Xext_j$ for all $j \geq 1$ satisfying all the desired properties.
\end{proof}

\begin{figure}
        \centering
       \begin{tikzpicture}
    \node[anchor=south west,inner sep=0] (image) at (0,0) {\includegraphics[width=0.98\linewidth]{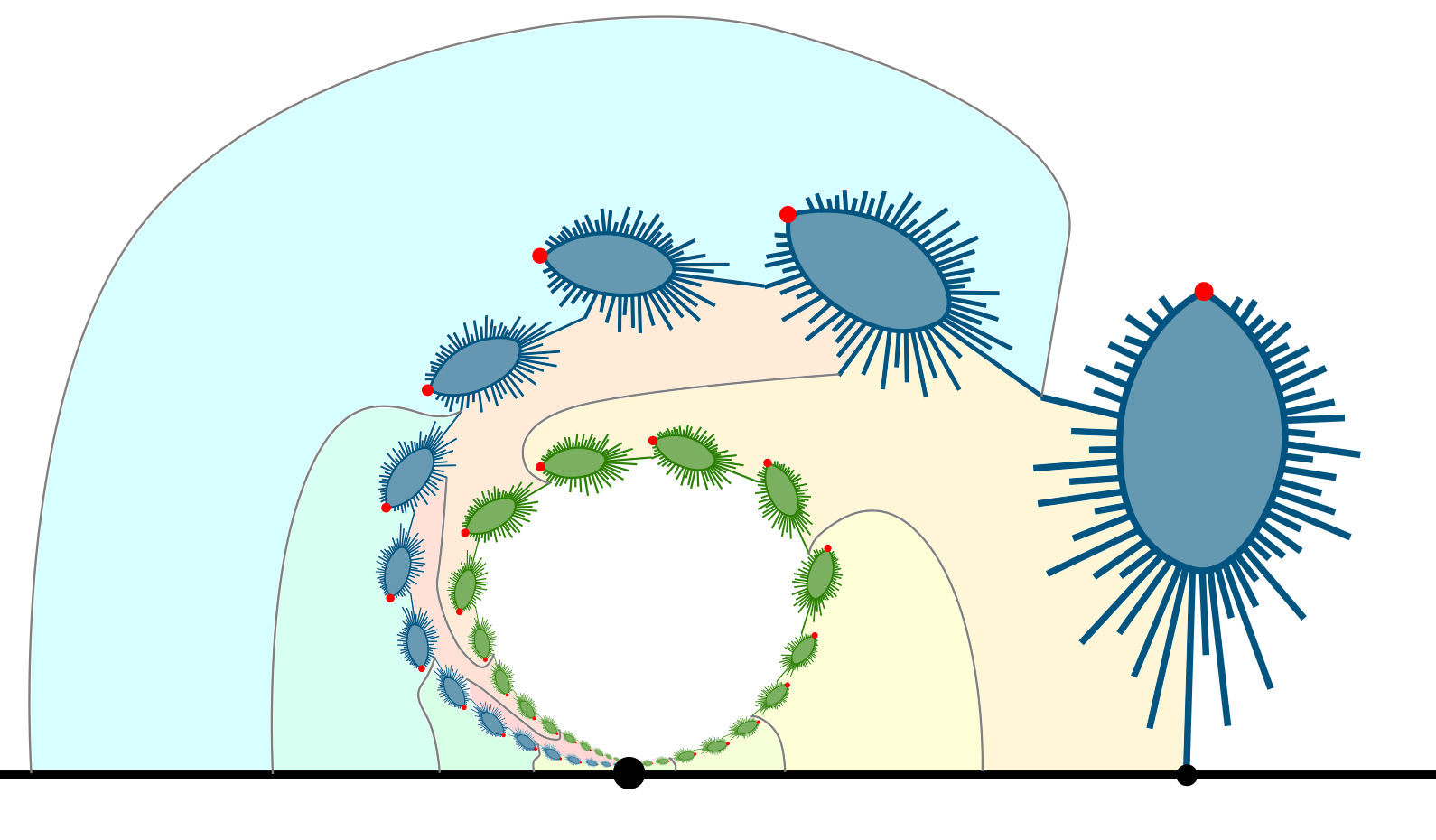}};
    \begin{scope}[
        x={(image.south east)},
        y={(image.north west)}
    ]
        \node [blue!50!black] at (0.115,0.3) {$\Xext_0$};        
        \node [blue!50!black] at (0.245,0.17) {$\Xext_1$};
        \node [yellow!30!black] at (0.615,0.2) {$\Yext_{-1}$};
        \node [yellow!30!black] at (0.695,0.33) {$\Yext_0$};
        \node [yellow!30!black] at (0.51,0.585) {$\Yext_1$};
        
        \node [black] at (0.44,0.025) {$\beta_n$};
        \node [black] at (0.835,0.03) {$\crit_{-Q_n}$};
        \node [black] at (0.03,0.03) {$\crit_{-Q_{n-1}}$};
        
        \draw[-latex] (0.68,0.48) .. controls (0.75,0.9) and (0.75,0.98) .. (0.6,0.89);
        \node [black] at (0.81,0.9) {\scalebox{0.9}{$\Fbold^{Q_{n+1}-Q_{n-1}}$}};
        \draw[-latex] (0.73,0.12) .. controls (0.69,0.04) and (0.63,0.04) .. (0.59,0.12);
        \node [black] at (0.65,0.02) {\scalebox{0.9}{$\Fbold^{\Nbold Q_n}$}};
        \draw[-latex] (0.23,0.12) .. controls (0.20,0.04) and (0.14,0.04) .. (0.11,0.12);
        \node [black] at (0.17,0.02) {\scalebox{0.9}{$\Fbold^{\Nbold Q_n}$}};
    \end{scope}
    \end{tikzpicture}
    \caption{Cartoon picture of the fundamental domains $\Xext_0, \Xext_1,\ldots$ and $\ldots, \Yext_{-1}, \Yext_0, \Yext_1,\ldots$ if $\Nbold =3$.}
    \label{fig:W-X-enrichment}
\end{figure}

Now, let us apply this lemma to prove the second part of Proposition \ref{prop:escaping-fjord}. 

\begin{proof}[Proof of Proposition \ref{prop:escaping-fjord} \textnormal{(2)}]
    Suppose $a_{n+1} = \infty$.
    Consider a point $z \in \UHP \cap \Fjord$.
    Similar to the proof of (1), we will assume that $\Fext^{jQ_n}(z)$ is in $\fjord_n$ for all $j \geq 0$.
    
    Consider the domains $\Xext_j$'s from Lemma \ref{lem:Wj's}.
    We have that
    \[
        \fjord_n \cap \UHP \subset \Dext_n \cup \bigcup_{j=1}^\infty \overline{\Xext_j} \cup \bigcup_{j=1}^\infty \Gamma_{n,j}.
    \]
    Since the primary hoglet $\Gamma_{n,0}$ is disjoint from $\fjord_n$ (Lemma \ref{lem:main-bubble-disjoint-fjord}), $z$ cannot lie in $\Gamma_{n,j}$ for any $j \geq 1$.
    By property (c) in the claim above, $z$ also cannot lie in $\overline{\Xext_j}$ for any $j \geq 1$.
    Hence, $z$ is contained in the immediate parabolic basin $\Dext_n$.

    Lastly, it remains to show that $z$ escapes $\Fjord$ after some time in $\Tbold_{\Arch,n+1} \backslash \Tbold_{\Arch,n}$.
    Consider the bi-infinite sequence of domains $\Yext_j$, $j \in \Z$ inside $\Dext_n$ with the property that both
    \[
        \Fext^{\Nbold  Q_n}: \Yext_j \to \Yext_{j-1} \quad \text{ and } \quad \Fext^{Q_{n+1}-Q_{n-1}}: \Yext_0 \to \Xext_0
    \]
    are conformal isomorphisms. See Figure \ref{fig:W-X-enrichment}.
    Observe that 
    \[
        \Dext_n = \Dext_n' \cup \bigcup_{j \in \Z} \Gamma_{\iinfty+j} \cup \bigcup_{j \in \Z} \Yext_j.
    \]
    Set $z = z_0$. 
    There are two cases to consider. 
    \begin{enumerate}[leftmargin=0.6in]
        \item[\underline{Case 1:}] $z_0$ is in $\Gamma_{n,\iinfty+j} \cup \Yext_j$ for some $j \in \Z$. 
        In this case, the map $\Fext^{Q_{n+1}-Q_{n-1}+jQ_n}$ would send $z$ to a point in either the primary hoglet $\Gamma_{n,0}$ or the set $\overline{\Xext_0}$, both of which are outside of $\fjord_n$. 
        \item[\underline{Case 2:}] $z_0$ is in $\Dext_n'$. In this case, $z_1 := \Fext^{Q_{n+1}-Q_{n-1}}(z_0)$ is in $\Dext_n$.
    \end{enumerate}
    If Case 2 occurs, we can rerun the argument for the point $z_1$.
    Now, suppose Case 2 occurs all the time, so we inductively obtain an infinite orbit $z_0, z_1,z_2,\ldots$ of $\Fext^{Q_{n+1}-Q_{n-1}}$ in $\Dext_n$. 
    Applying Denjoy-Wolff Theorem to the map $\Fext^{Q_{n+1}-Q_{n-1}}: \Dext'_n \to \Dext_n$, we conclude that $z_0$ has to be equal to the unique fixed point of $\Fext^{Q_{n+1}-Q_{n-1}}$ in $\overline{\Dext'_n}$, which is $\beta_n$.
    But this would violate the assumption that $z$ is in $\UHP$.
\end{proof}

\subsection{Uniform expansion}

In this subsection, we shall prove the following.

\begin{proposition}
\label{prop:uniform-expansion}
    Let $z$ be a point in $\UHP$ that avoids fjords $\Fjord$ and primary hoglets $\Bubb_P$ for all $P \in \Tbold$. 
    There exists some time $P_z\in \Tbold$ such that
    \begin{enumerate}
        \item $z$ is contained in a primary lake $\lake_z$ of generation $P_z$,
        \item the critical point $\crit_{-P_z}$ is an endpoint of $\partial_{\R} \lake_z$, and
        \item $\dist_{\UHP}(z, \alpha_{P_z}) = O_{\threshold}(1)$.
    \end{enumerate}
    Moreover, if $N$ is a parabolic depth and $z$ is not in the full parabolic basin of $\beta_N$, then $P_z$ is contained in $\Tbold_{\Arch,N}$.
\end{proposition}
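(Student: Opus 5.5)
The plan is to locate $z$ in the nested family of primary lakes containing the critical value $0$ on their boundary, $\lake_{Q_n+Q_{n-1}}$, and their images $\Fext^{Q_{n-1}}(\lake_{Q_n+Q_{n-1}})$ of generation $Q_n$. Then I would apply Propositions \ref{prop:lake-bounds-0}--\ref{prop:lake-bounds-2}, together with Pseudo-Bubble Bounds and Corollary \ref{cor:location-of-alpha}, to see that the "top" of each such lake is a bounded hyperbolic neighborhood of the relevant alpha-point.

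First I will reduce to a point near $0$. By Real Bounds the tilings $\tiling_{Q_n}$ become arbitrarily coarse as $n\to-\infty$, since tiles containing $0$ grow exponentially. So for $n$ very negative, the tile of $\tiling_{Q_n+Q_{n-1}}$ containing $0$ has length much larger than $|z|$. By Proposition \ref{prop:lake-bounds-0}, the lake $\lake_{Q_n+Q_{n-1}}$ has upper boundary made of $\partial\Bubb_{Q_n}$, $\partial\Bubb_{Q_n+Q_{n-1}}$ and a bounded hyperbolic blob around $\alpha_{Q_n+Q_{n-1}}$. Since $\alpha_{Q_n+Q_{n-1}}$ sits at height $\asymp|J_n|$ above $0$ (Corollary \ref{cor:location-of-alpha}), the lake contains $z$ for $n\ll 0$. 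As $n\to+\infty$ the lakes shrink to $0$, so $z$ lies outside them. Let $n$ be the largest depth with $z\in\lake_{Q_n+Q_{n-1}}$.

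The set $\lake_{Q_n+Q_{n-1}}\setminus\overline{\lake_{Q_{n+1}+Q_n}}$ is then decomposed using the same pullback structure as in Claim 1 of the proof of Proposition \ref{prop:lake-bounds-0}. It is covered by finitely many primary lakes $\Omega$ of generations $jQ_n+Q_{n-1}$ (or of the forms appearing in Proposition \ref{prop:lake-bounds-2}), together with the hoglet chain $\Bchain_n$ and the fjord $\fjord_n$. For a point in such a piece, consider two cases. If $z$ lies in the "upper" part of its primary lake $\lake$, i.e. above the horizontal arc $\gamma(\lake)$ from Proposition \ref{prop:lake-bounds-2}, then Proposition \ref{prop:lake-bounds-0}/\ref{prop:lake-bounds-1} (transported by Schwarz Lemma under the conformal maps $\Fext^{P}$ between lakes) gives $\dist_\UHP(z,\alpha_{P_z})=O(1)$ with $P_z$ the generation and $\crit_{-P_z}$ an endpoint of $\parR\lake$. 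If $z$ lies below $\gamma(\lake)$, then $z$ is in a smaller primary lake of larger generation, and we descend. Descending terminates by the choice of $n$ and the bounded combinatorics of each step, except inside near-parabolic fjords; those are excluded by hypothesis, which is where the $O_{\threshold}(1)$ constant enters via Lemma \ref{lem:estimate-enriched-pseudo-bubble-chain}. Points of $\hat{\Bchain}_n$ outside the primary hoglets and outside $\fjord_n$ are within $K'$ of $\alpha_{Q_n}$, so taking $P_z=Q_n$ and $\lake_z=\Fext^{Q_{n-1}}(\lake_{Q_n+Q_{n-1}})$ works.

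For the parabolic statement: if $N$ is a parabolic depth and $z$ is not in the full basin of $\beta_N$, the descent never passes through the infinitely many lakes $\Omega_{N,j}$ accumulating at $\beta_N$ inside $\Dext_N$, which are the ones of generations beyond $\Tbold_{\Arch,N}$. Hence all generations used are finite combinations $\precsim Q_N$. The main obstacle I expect is the middle step, namely making the descent precise. One must show that every point of $\lake_{Q_n+Q_{n-1}}$ away from fjords and primary hoglets is either in the bounded top region of some intermediate primary lake, or strictly inside a sub-lake, with no region escaping control near the cusps between consecutive hoglets. I would first handle the non-NP case using Proposition \ref{prop:lake-bounds-2}, then the NP case using Proposition \ref{prop:bubble-chain-01} (2) to see that chain pseudo-bubbles not in $\fjord_n$ lie within $\threshold'+\mathbf{m}$ steps of the ends.
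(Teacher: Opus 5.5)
\textbf{There is a genuine gap: the descent is exactly the step that needs proof, and as set up it does not close.}

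First, your terminal case is only available for two families of lakes. The claim ``$z$ above $\gamma(\lake)$ $\Rightarrow$ $\dist_{\UHP}(z,\alpha_{P_z})=O(1)$, with $P_z$ the generation of $\lake$'' holds for $\lake_{Q_n+Q_{n-1}}$ and $\Fext^{Q_{n-1}}(\lake_{Q_n+Q_{n-1}})$, whose whole upper boundary is controlled by Propositions~\ref{prop:lake-bounds-0} and~\ref{prop:lake-bounds-1}.
\begin{itemize}
\item It cannot be transported to other primary lakes by the Schwarz Lemma under forward maps $\Fext^P$. These maps expand the hyperbolic metric of $\UHP$ (only inverse branches contract), and $\alpha_{P_z}$ lies on the lake boundary.
\item It is not true in general. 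The point $\alpha_{P_z}$ lives at the scale of the root hoglet $\Bubb_{P_z}$, which can be much smaller than $\parRess\lake$. For example, the lakes of generation $jQ_n+Q_{n-1}$ with $0$ on their boundary that you list have base comparable to $J_n$. Yet for $1\ll j\ll a_{n+1}$ their root hoglet has diameter $\ll|J_n|$, by Pseudo-Bubble Bounds and Real Bounds.
\end{itemize}

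Second, your dichotomy is not exhaustive: being below $\gamma(\lake)$ does not place $z$ in a smaller primary lake. Over the small middle subtiles $I_j$ of the next tiling, consider the region between height $\asymp|I_j|$ and $\kappa|\parRess\lake|$. In general it contains points of non-primary lakes and non-primary hoglets of the next generation, which your hypothesis does not exclude. Such points fall into neither of your cases; this is the ``cusp'' problem you flag.

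Third, Claim~1 in the proof of Proposition~\ref{prop:lake-bounds-0} describes the boundary of $\lake_{Q_{n+2}+Q_{n+1}}$. It gives no cover of $\lake_{Q_n+Q_{n-1}}\setminus\overline{\lake_{Q_{n+1}+Q_n}}$. A family of lakes nested around $0$ also never reaches points lying low over $J_n$ far from $0$. Separately, $\hat{\Bchain}_n$ lies in the lake of generation $Q_n$ over $[\crit_{-Q_{n-1}},\crit_{-Q_n}]$, on the other side of $\Bubb_{Q_n}$ from $\Fext^{Q_{n-1}}(\lake_{Q_n+Q_{n-1}})$, so your choice of $\lake_z$ for chain points is wrong.

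\textbf{The missing idea: terminate in the lower parts of lakes, taken over all tiles of a fixed depth rather than only those at $0$.}

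The construction runs as follows.
\begin{enumerate}
\item For each $n$ and every tile $I\in\tiling_{Q_n}$, take the primary lake with essential base $I$ whose generation $P$ has $\crit_{-P}$ as an endpoint of $I$, so that $\diam\Bubb_P\asymp|I|$.
\item Let $\Xi_n$ be the union of the components of these lakes intersected with $\{0<\textnormal{Im}\,w<\kappa|I|\}$ that rest on $I$. These are well defined by Proposition~\ref{prop:lake-bounds-2}.
\item By angular control of pseudo-bubbles, each component lies in a trapezium of height $\kappa|I|$ over $I$. The sets are nested, $\Xi_{n+1}\subset\Xi_n$, and $\bigcup_n\Xi_n=\UHP$.
\item Take $n$ maximal with $z\in\Xi_n$, and let $J_z$ be the base of the component containing $z$.
\item By Proposition~\ref{prop:R-apb-transcendental}(2), the components of $\Xi_{n+1}$ over the subtiles of $J_z$ not swallowed by the fjord reach height at least an $\threshold$-dependent multiple of $|J_z|$. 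Together with the fjord they therefore cover, apart from primary hoglets, a strip of height $\kappa'|J_z|$ over $J_z$.
\item Since $z$ avoids these sets, $\kappa'|J_z|<\textnormal{Im}\,z<\kappa|J_z|$ and $\dist(\textnormal{Re}\,z,J_z)\le\cot\ttau\,|J_z|$. Corollary~\ref{cor:location-of-alpha} then gives (3).
\end{enumerate}
No control of the upper parts of lakes is needed. The $\threshold$-dependence enters through $\kappa'$, not through Lemma~\ref{lem:estimate-enriched-pseudo-bubble-chain}.

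For the parabolic addendum, the lakes underlying $\Xi_{N+1}$ lie in the full basin of $\beta_N$. So $z$ outside that basin forces $n\le N$, and since the generation of a lake over a tile of $\tiling_{Q_n}$ is $\precsim Q_n$, you get $P_z\in\Tbold_{\Arch,N}$. Your identification of the offending lakes is off. The lakes of generation $jQ_N+Q_{N-1}$ you single out have generations in $\Tbold_{\Arch,N}$ and are not contained in $\Dext_N$, since their bases contain $\beta_N$. The lakes inside $\Dext_N$ with generations outside $\Tbold_{\Arch,N}$ are the enriched ones, whose generations involve $Q_{N+1}$.
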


In particular, this proposition tells us that with respect to the hyperbolic metric, the map 
\begin{equation}
    \label{eq:defin.expans}
\Fext^{P_z}: \lake_z \to \UHP
\end{equation}
is $\threshold$-uniformly expanding at $z$.

\begin{remark}[Infinitely many iterates with definite expansion]
    \label{rem:expanding.iterates} Consider any point $z\in \UHP$ outside of hoglets. Propositions~\ref{prop:escaping-fjord} (escaping fjords) and~\ref{prop:uniform-expansion} imply that there are infinitely many times $T_1<T_2<T_3<\dots$ in $\Tbold$ such that the associated images $z_n\coloneq \Fext^{T_n}(z)$ are well-defined and satisfy Proposition~\ref{prop:uniform-expansion}: the map $\Fext^{P_{z_n}}: \lake_{z_n} \to \UHP$ as in~\eqref{eq:defin.expans} is $\threshold$-uniformly expanding, where $P_{z_n}\le T_{n+1}-T_{n}$. If $z$ is in $\Iext^{\leq S}$, then $T_n\leq S$ for all $n \geq 1$.
\end{remark}

Consider the renormalization tiling $\tiling_{Q_n}$, $n \in \Z$ which is given by
\[
    \tiling_{Q_n} = 
    \{[\crit_{-P}, \crit_{-P+Q_{n-1}-Q_n} ]\}_{0<P\leq Q_{n-1}} 
    \cup 
    \{[\crit_{-P+Q_{n-1}},\crit_{-P}]\}_{0<P\leq Q_{n}-Q_{n-1}}.
\]
The core idea in the proof of Proposition~\ref{prop:uniform-expansion} is to find a tile in $\tiling_{Q_n}$ for some $n$ such that its diameter is comparable to both its distance from $z$ and the value $\textnormal{Im}(z)$.

Let us go into the technical details.
Consider lakes
\[
    \mathtt{E}'_n(P) := \lake_{Q_n+Q_{n-1}-P}[\crit_{-P}, \crit_{-P+Q_{n-1}-Q_n}] \qquad \text{ for } 0 < P \leq Q_{n-1},
\]
and
\[
    \mathtt{E}''_n(P) := \lake_{2Q_n-Q_{n-1}-P}[\crit_{-P-Q_{n-1}}, \crit_{-P}] \qquad \text{ for } 0 < P \leq Q_n - Q_{n-1}.
\]
These have the property that
\[
    \Fext^R(\mathtt{E}'_n(P+R)) = \mathtt{E}'_n(P) \qquad \text{ for } P,R \in \Tbold \text{ with } P+R \leq Q_{n-1},
\]
\[
    \Fext^R(\mathtt{E}''_n(P+R)) = \mathtt{E}''_n(P) \qquad \text{ for } P,R \in \Tbold \text{ with } P+R \leq Q_{n} - Q_{n-1}.
\]
Let us denote
\[
    \mathtt{E}_n := \bigcup_{0 < P \leq Q_{n-1}} \mathtt{E}'_n(P) \cup \bigcup_{0 < P \leq Q_{n}-Q_{n-1}} \mathtt{E}''_n(P)
\]
For every $n \in \Z$, $\mathtt{E}_n$ is contained in $\mathtt{E}_{n-1}$.

Let $\kappa >0$ be the universal constant from Proposition \ref{prop:lake-bounds-2}.
For every connected component of $\mathtt{E}_n$ of the form $\mathtt{E}'_n(P)$, there exists a unique connected component $\Xi'_n(P)$ of the intersection
\[
    \mathtt{E}'_n(P) \cap \left\{ 0 < \textnormal{Im}(z) < \kappa |\parRess \mathtt{E}'_n(P)| \right\}
\]
such that $\parR \Xi'_n(P) = \parRess \mathtt{E}'_n(P)$.
Similarly, we define $\Xi''_n(P) \subset \mathtt{E}''_n(P)$.
Denote
\[
    \Xi_n := \bigcup_{0 < P \leq Q_{n-1}} \Xi'_n(P) \cup \bigcup_{0 < P \leq Q_{n}-Q_{n-1}} \Xi''_n(P).
\]
We still have the property that $\Xi_n$ is contained in $\Xi_{n-1}$ for all $n \in \Z$.

Consider the universal angle $\ttau \in \left( 0, \frac{\pi}{2} \right)$ from Pseudo-Bubble Bounds.

\begin{lemma}
    \label{lem:estimate-Yn}
    For every $n \in \Z$ and every connected component $X$ of $\Xi_n$, $X$ is contained in the trapezium with real base $\parR X$, interior angles $\pi - \ttau$ at the two vertices on the base, and height $\kappa |\parR X|$.
\end{lemma}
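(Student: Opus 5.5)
The height bound is built into the definition: $X = \Xi'_n(P)$ or $\Xi''_n(P)$ lies in the strip $\{0<\textnormal{Im}(z)<\kappa|\parRess \mathtt{E}|\}$, and $\parR X = \parRess \mathtt{E}$. So the content of the lemma is the two slanted sides. Writing $\parR X = [\crit_{-R_1},\crit_{-R_2}]$, I will show that
\[
X \subset \{\arg(z-\crit_{-R_1}) \le \pi-\ttau\}\cap\{\arg(z-\crit_{-R_2}) \ge \ttau\}.
\]
Together with the strip, this region is exactly the trapezium described in the statement.

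The plan is to look at what bounds $X$ near each endpoint. Since $X$ is a connected component of $\mathtt{E}\cap\{\textnormal{Im}<\kappa|\parRess\mathtt{E}|\}$ attached to the real base, its boundary in $\UHP$ consists of pieces of $\parH \mathtt{E}$ and pieces of the horizontal line. By Lemma~\ref{lem:primary-lakes-and-adjacent-bubbles}, near the endpoint $\crit_{-R_1}$ the upper boundary of the primary lake runs along the Carath\'eodory boundary of the primary hoglet $\Bubb_{R_1}$, starting at its root $\crit_{-R_1}$. The same holds at $\crit_{-R_2}$ with $\Bubb_{R_2}$. By the angular control in Proposition~\ref{prop:bubble-bounds}(2)(b), each $\hat{\Bubb}_{R_i}\setminus\{\crit_{-R_i}\}$ lies in the cone $|\arg(z-\crit_{-R_i})-\pi/2|<\ttau$. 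The lake lies on one side of this hoglet, the side facing the interior of the tile. Hence near $\crit_{-R_1}$ the lake, and so $X$, cannot enter the sector $\{\arg(z-\crit_{-R_1})>\pi/2+\ttau\}$, and symmetrically at $\crit_{-R_2}$. For the final angles to be $\ttau$ and $\pi-\ttau$, I will read the convention "interior angle $\pi-\ttau$" as measured from the outward real ray.

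The key step, which I expect to be the main obstacle, is upgrading this local picture near the roots to the global containment at all heights up to $\kappa|\parR X|$. The obstruction is that the cone-shaped hoglet only separates nearby points; the lake boundary could in principle wrap around $\Bubb_{R_1}$ elsewhere. I will handle this with Proposition~\ref{prop:lake-bounds-2}. It supplies a proper arc $\gamma$ in $\mathtt{E}$ on the line $\textnormal{Im}=\kappa|\parR X|$, running from $\Bubb_{R_1}$ to $\Bubb_{R_2}$. The component $X$ lies in the region cut off below $\gamma$, which is bounded by $\parR X$, a subarc of $\partial\Bubb_{R_1}$ from the root up to $\gamma$, the arc $\gamma$ itself, and a subarc of $\partial\Bubb_{R_2}$.

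Each of these two hoglet subarcs lies inside the corresponding $\ttau$-cone, by angular control again. The arc $\gamma$ lies at height exactly $\kappa|\parR X|$. So this whole Jordan region lies to the right of the ray $\arg(z-\crit_{-R_1})=\pi/2+\ttau$ and to the left of the ray $\arg(z-\crit_{-R_2})=\pi/2-\ttau$, which is the trapezium, provided the two cones do not cross below that height. If $\kappa$ from Proposition~\ref{prop:lake-bounds-2} is not already small enough to guarantee this, I will shrink it, which is harmless since Proposition~\ref{prop:lake-bounds-2} remains valid for smaller horizontal lines after truncating $\gamma$ by the hoglet boundaries. Finally, the cases $\mathtt{E}'_n(P)$ and $\mathtt{E}''_n(P)$ are handled identically, since both are of the form $\Fext^P(\lake_{Q_{n+1}+Q_n})$ covered by Proposition~\ref{prop:lake-bounds-2}.
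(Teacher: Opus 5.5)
Your proposal is correct and follows the paper's route: the paper's proof is the one-liner ``definition of $\Xi_n$ plus angular control of hoglets,'' where the cap at height $\kappa|\parR X|$ implicitly comes from the arc of Proposition~\ref{prop:lake-bounds-2}, which you use explicitly to make the containment global rather than only near the roots. One bookkeeping caveat: the cones actually yield the trapezium bounded by the outer rays $\arg(z-\crit_{-R_1})=\pi/2+\ttau$ and $\arg(z-\crit_{-R_2})=\pi/2-\ttau$, and this lies inside the stated trapezium with base angles $\pi-\ttau$ exactly when $\ttau\le\pi/4$. These outer rays diverge, so your proviso about the cones crossing and any shrinking of $\kappa$ are unnecessary. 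Drop the ``outward real ray'' reinterpretation, since it does not produce the stated angles; note instead that the paper tacitly makes the same assumption on $\ttau$, and that only some universal angle is needed downstream.
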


\begin{proof}
    This immediately follows from the definition of $\Xi_n$ and the angular control of hoglets.
\end{proof}

\begin{lemma}
    We have
    \[
        \UHP = \bigcup_{n \in \Z} \Xi_n.
    \]
\end{lemma}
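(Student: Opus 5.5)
The plan is to show that every $z\in\UHP$ lies in $\Xi_n$ for all sufficiently negative $n$. Since the $\Xi_n$ are nested ($\Xi_n\subset\Xi_{n-1}$), it suffices to produce, for each $z$, a single $n$ with $z\in\Xi_n$. The natural candidate is an $n$ for which the tiles of $\tiling_{Q_n}$ near $\textnormal{Re}(z)$ have length comparable to, and much larger than, $\textnormal{Im}(z)$. We then check that $z$ lies below the horizontal cut at height $\kappa|\parR X|$ over some tile, and inside the corresponding lake.

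\emph{Step 1: the tiles cover $\R$ and become large as $n\to-\infty$.} For each $n$, $\tiling_{Q_n}$ is a tiling of $\R$ by the tiles $\parRess\mathtt{E}'_n(P)$ and $\parRess\mathtt{E}''_n(P)$; this is the explicit description of $\tiling_{Q_n}$ recalled above. By Real Bounds (Proposition~\ref{prop:R-apb-transcendental}) together with Lemma~\ref{lem:scaling-critical-value}, the tile of $\tiling_{Q_n}$ containing a fixed point $x\in\R$ has length tending to $\infty$ as $n\to-\infty$. Indeed, each tile of $\tiling_{Q_{n+2}}$ is a union of at least two tiles of $\tiling_{Q_n}$, adjacent tiles are comparable, and the tile containing $0$ has length $\asymp|\crit_{-Q_n}|$, which grows exponentially as $n\to-\infty$. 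Fix $z$, let $x=\textnormal{Re}(z)$, and choose $n$ so negative that the tile $I\ni x$ and both its neighbors have length at least $C\,\textnormal{Im}(z)/\kappa$, for a large universal $C$.

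\emph{Step 2: $z$ lies in the truncated lake.} Let $X$ be the component of $\Xi_n$ with $\parR X=I$. Its lake $\mathtt{E}$ is the univalent lift of $\UHP$ carrying $I$ to a real interval. By Proposition~\ref{prop:lake-bounds-2}, $\mathtt{E}$ contains a proper arc $\gamma$ at height $\kappa|I|$ joining the hoglets at the two endpoints of $I$. The region $X$ is bounded by $I$, by $\gamma$, and by parts of the two rooted hoglets. By the angular control of Pseudo-Bubble Bounds (Proposition~\ref{prop:bubble-bounds}(2b)), these hoglet pieces lie in cones of half-angle $\ttau$ around the verticals at the endpoints. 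If $x$ stays away from the endpoints by more than $\textnormal{Im}(z)\cot(\pi/2-\ttau)$, then $z$ is in $X$. Here it is also necessary to check that no other part of $\partial\mathtt{E}$ dips below $\gamma$ over $I$. This follows because, by Lemma~\ref{lem:unique-primary-lake}, $\parRess\mathtt{E}=I$, and the portion of $\mathtt{E}$ under $\gamma$ is by construction the component adjacent to $I$.

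\emph{Step 3: the endpoint case, which is the main obstacle.} If $x$ is close to an endpoint $c$ of $I$ (within $O(\textnormal{Im}(z))$), we may instead use the adjacent tile $I'$ on the other side of $c$. However, $z$ could lie in the cone above $c$ that holds the hoglet rooted at $c$, and that region belongs to neither $X$ nor $X'$. To resolve this, I would pass to a coarser level $n'<n$. Choose $n'$ so that $c$ is interior to the $\tiling_{Q_{n'}}$-tile containing $x$, at distance $\gg\textnormal{Im}(z)$ from that tile's endpoints. This is possible because of the refinement structure: a given point is an endpoint of $\tiling_{Q_m}$ only for $m$ beyond its generation, so for $m$ sufficiently negative, $c$ is no longer a division point. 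With the length comparabilities from Real Bounds, we can then find a level at which $x$ sits at a definite proportional distance from both endpoints of its tile, while the tile length remains $\gg\textnormal{Im}(z)$. Step 2 then applies at that level, giving $z\in\Xi_{n'}$. The delicate point is to make "definite proportion" uniform across parabolic depths, where tiles accumulate at $\beta_I$ according to the $1/j^2$ law. There I would apply the same argument in the next-coarser Archimedean-free level containing $\beta_I$ in its interior.
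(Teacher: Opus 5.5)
Your argument can be completed, and its core ingredient is the same one the paper uses. Over a tile $I$, the part of the strip $\{0<\textnormal{Im}<\kappa|I|\}$ that lies outside the two cones of half-angle $\ttau$ at the endpoints of $I$ belongs to the component of $\Xi_n$ over $I$. Where you diverge is in how you choose the level, and that choice creates an obstacle the paper never meets.

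The paper always uses the tile of $\tiling_{Q_n}$ containing $0$, namely $[\crit_{-Q_{n-1}},\crit_{-Q_n}]$. Its endpoints lie on opposite sides of $0$, and by Real Bounds both tend to infinity in modulus, uniformly exponentially, as $n\to-\infty$. The trapezium $\square_n$ over this tile (base angles $\ttau$, height $\kappa|\crit_{-Q_{n-1}}-\crit_{-Q_n}|$) is contained in $\Xi_n$ and exhausts $\UHP$. In your terms: for $n\ll 0$ the tile containing $x=\textnormal{Re}(z)$ \emph{is} this central tile, and $x$ is at distance $\to\infty$ from both of its endpoints. So the endpoint case of your Step~3 never arises. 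This observation is also what actually proves Step~1. Comparability of adjacent tiles alone does not control the tile containing a fixed $x$ unless you know $x$ is boundedly many tiles away from $0$.

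If you want to keep Step~3, it closes in one line with no proportionality. Division points of a coarser tiling are a subset of those of a finer one. So once $c$ stops being a division point at some level $n'<n$, the endpoints of the $\tiling_{Q_{n'}}$-tile containing $x$ are no closer to $x$ than the far endpoints of the two level-$n$ tiles adjacent to $c$. That distance is at least $C\,\textnormal{Im}(z)/\kappa-O(\textnormal{Im}(z))$, which is all Step~2 needs. Your requirement of a ``definite proportional distance'' and the uniformity concern at parabolic depths are red herrings, since $z$ is fixed and a single $n$ suffices. As written, that part is not an argument.

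In Step~2, the fact $\parRess\mathtt{E}=I$ says nothing about the upper boundary of $\mathtt{E}$. What keeps $\partial\mathtt{E}$ out of the truncated trapezium is different. $\gamma$ is a crosscut of $\mathtt{E}$ from $\Bubb_{R_1}$ to $\Bubb_{R_2}$, so the piece below it is bounded by $I$, $\gamma$, and arcs along the two root hoglets (Lemma~\ref{lem:primary-lakes-and-adjacent-bubbles}). Those arcs lie in the cones.
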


\begin{proof}
    For $n \in \N$, let $\square_n$ be the maximal open trapezium with base $[\crit_{-Q_{n-1}},\crit_{-Q_n}]$, interior angle $\ttau$ radian at both $\crit_{-Q_{n-1}}$ and $\crit_{-Q_n}$, and height $\leq \kappa |\crit_{-Q_{n-1}} - \crit_{-Q_n}|$.
    The domain $\Xi_n$ contains $\square_n$.
    As $n \to -\infty$, $|\crit_{-Q_n}|$ grows to $\infty$ uniformly exponentially.
    Hence, $\square_n$ grows to the whole plane as $n \to -\infty$.
\end{proof}

\begin{proof}[Proof of Proposition \ref{prop:uniform-expansion}]
    Let $z$ be a point in $\UHP$ that avoids fjords and primary hoglets.
    Let $n = n(z) \in \Z$ be the maximal integer such that $z$ is contained in $\Xi_n$.
    Let $X_z$ and $\mathtt{L}_z$ be the connected component of $\Xi_n$ and $\mathtt{E}_n$ containing $z$ respectively.
    Let $P_z \in \Tbold$ be the generation of $\lake_z$.
    We have
    \begin{align}
    \label{eqn:Pz-bound}
    Q_n \leq P_z < \max\{ Q_n + Q_{n-1}, 2 Q_n- Q_{n-1}\}.
    \end{align}
    When depth $n$ is NP, we denote by $\fjord_z$ be the depth $n$ parabolic fjord contained in $\mathtt{L}_z$; otherwise, $\fjord_z$ is set to be empty.

    We will assume for convenience that $a_{n+1}<\infty$; the limiting parabolic case will follow from an analogous argument.
    Let $\mathtt{x}_0, \mathtt{x}_1, \ldots, \mathtt{x}_k$ be the set of critical points of $\Fext^{Q_{n+1}}$ that are contained on $\parR X_z$, labeled in order from left to right. 
    The number $k$ is equal to either $a_{k+1}$ or $a_{k+1}+1$.
    The interval $J_z := [\mathtt{x}_0 , \mathtt{x}_k]$ is the real boundary of $X_z$.
    
    For $j \in \{1,\ldots, k\}$, let $Y_j$ be the unique connected component of $\Xi_{n+1}$ with real boundary $[\mathtt{x}_{j-1}, \mathtt{x}_{j}]$.
    By Real Bounds, we have
    \[
        |x_{j}-x_{j-1}| \asymp \frac{|J_z|}{\min\{j,k+1-j\}^2}.
    \]
    Then by Lemma \ref{lem:estimate-Yn}, there exists a universal constant $\mathbf{m}' \in \N$ such that $Y_j$ is contained in $\fjord$ for $\threshold' + \mathbf{m}' \leq j \leq a_{n+1}-\threshold' - \mathbf{m}'$.
    Altogether, since $z$ is not contained in $\fjord \cup Y_1 \cup \ldots \cup Y_k$, then there exists an $\threshold$-uniform constant $\kappa' \in (0, \kappa)$ such that
    \begin{align}
        \label{eqn:imag-estimate}
        \kappa'|J_z| < \textnormal{Im}(z) < \kappa|J_z|.
    \end{align}
    By Lemma \ref{lem:estimate-Yn}, we also have that
    \begin{align}
        \label{eqn:real-estimate}
        \dist(\textnormal{Re}(z), J_z) \leq \cot \ttau \cdot |J_z|.
    \end{align}
    Observe that the critical point $\crit_{-P_z}$ is an endpoint of $J$ (so either $\mathtt{x}_0$ or $\mathtt{x}_k$).
    The diameter of the primary hoglet $\Bubb_{P_z}$ is uniformly comparable to $|J_z|$.
    Therefore, by (\ref{eqn:imag-estimate}), (\ref{eqn:real-estimate}), and Corollary \ref{cor:location-of-alpha}, we have:
    \[
        \dist_{\UHP}(z, \alpha_{P_z}) = O_{\threshold}(1).
    \]
    
    It remains to prove the final claim in the proposition.
    Let $N$ be a parabolic depth.
    Observe that both $\mathtt{E}'_{N+1}(Q_N)$ and $\mathtt{E}''_{N+1}(Q_{N+1}-Q_N)$ are contained in the immediate parabolic basin $\Dext_{N}$ of $\beta_N$.
    Therefore, the union of primary lakes $\mathtt{E}_{N+1}$ is contained in the full parabolic basin of $\beta_N$.
    This would mean that if $z$ is not in the full parabolic basin of the parabolic periodic point $\beta_N$ of $\Fext$, then the integer $n=n(z)$ in the paragraphs above is bounded above by $N$.
    In this case, by (\ref{eqn:Pz-bound}), the time $P_z$ is contained in $\Tbold_{\Arch,n}$.
\end{proof}

\subsection{Uniform non-linearity everywhere}
The next two lemmas can be seen as an analog of \cite[Section 6]{dFdM00} from classical theory of critical circle maps.

\begin{lemma}
\label{lem:non-linearity-1}
    For every $P \in \Tbold$, there exist pointed disks $(\Uext_P, \mathtt{u}_P)$ and $(\Wext_P, \mathtt{w}_P)$ in $\UHP$ and a double branched covering map
    \[
        \mathtt{h}_P : ( \Uext_P, \mathtt{u}_P) \to (\Wext_P, \mathtt{w}_P)
    \]
    with the following properties. 
    Let $n \in \Z$ be such that $Q_n < P \leq Q_{n+1}$.
    \begin{enumerate}
        \item We have $\Fbold^{P} \circ \Psi^{-1} \circ \mathtt{h}_P \equiv \Fbold^{P+Q_n} \circ  \Psi^{-1}$.
        \item $\mathtt{u}_P$ is the critical point of $\mathtt{h}_P$ and both $\mathtt{u}_P$ and $\mathtt{w}_P$ are on the boundary of the primary hoglet $\Bubb_P$.
        \item $(\Uext_P, \mathtt{u}_P)$ and $(\Wext_P, \mathtt{w}_P)$ have uniformly bounded shape.
        \item The disks $\Uext_P$ and $\Wext_P$ have definite hyperbolic diameters:
        \[
        \diam_{\UHP}(\Uext_P) \asymp \diam_{\UHP}(\Wext_P) \asymp 1.
        \]
        \item We have
        \[
            \dist_{\UHP}(\mathtt{u}_P, \alpha_P) = O(1) 
            \quad \text{ and } \quad
            \dist_{\UHP}(\mathtt{w}_P, \alpha_P) = O(1).
        \]
    \end{enumerate}
\end{lemma}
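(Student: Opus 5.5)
The map $\mathtt{h}_P$ will be a local branch of $\Fext^{-P}\circ\Fext^{P+Q_n}$. It is modelled on the observation that $\Fbold^{P+Q_n} = \Fbold^{P}\circ\Fbold^{Q_n}$ has an extra critical point coming from $\Fbold^{Q_n}$, relative to $\Fbold^P$, near the primary hoglet $\Bbold_P$. I plan to build $\mathtt{h}_P$ first at a model (principal) scale, where the relevant objects are uniformly controlled, and then transport it by univalent pullbacks with bounded distortion, using the Schwarz Lemma in hyperbolic metric.

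The construction runs in four steps.

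\textbf{Step 1: locate the critical point.} Inside $\Bbold_P$ there is a critical point of $\Fbold^{P+Q_n}$ that is not a critical point of $\Fbold^P$. By Lemma~\ref{lem:crit-points}, it is the point $w\in\partial\Bbold_P$ with $\Fbold^{P}(w)=C_{-Q_n}$, the root of the hoglet of generation $P+Q_n$ attached to $\Bbold_P$. Set $\mathtt{u}_P:=\Psi(w)$.

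\textbf{Step 2: the model disk.} In the dynamical plane take a small disk around $C_{-Q_n}$ in $\Hbold$-complement coordinates. More precisely, take a uniformly bounded-shape disk $D\subset\UHP$ about $\crit_{-Q_n}$'s hoglet $\Bubb_{Q_n}$, touching it near the root. Its hyperbolic size is definite, and it avoids all critical values of $\Fext^{Q_n}$ except $0$. This uses Real Bounds (Proposition~\ref{prop:R-apb-transcendental}), Pseudo-Bubble Bounds (Proposition~\ref{prop:bubble-bounds}) and the sector estimate of Lemma~\ref{lem:estimate-lift-sector}.

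\textbf{Step 3: define $\mathtt{h}_P$ on the model.} On the model, $\Fext^{Q_n}$ restricted to the lift of a neighborhood of $0$ is a degree-two branched map. Composing with the appropriate inverse branch, I define $\mathtt{h}_P$ via
\[
\Fbold^{P}\circ\Psi^{-1}\circ\mathtt{h}_P=\Fbold^{P+Q_n}\circ\Psi^{-1},
\]
where $\mathtt{h}_P$ is the lift of $\Fbold^{Q_n}$ through the univalent branch of $\Fbold^{P}$ on the pseudo-bubble $\hat{\Bbold}_P$. This inverse branch exists by Corollary~\ref{cor:alpha-points}, since $\Fbold^P\colon\hat{\Bbold}_P\setminus\{\aalpha_P\}\to\hat{\Zbold}$ is univalent and extends quasiconformally. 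Concretely, $\mathtt{h}_P=\Psi\circ(\Fbold^P|_{\hat{\Bbold}_P})^{-1}\circ\Fbold^{Q_n}\circ\Fbold^{P}\circ\Psi^{-1}$ near $\mathtt{u}_P$. The map is well defined where $\Fbold^{Q_n}$ carries a neighborhood of $\Fbold^P(w)=C_{-Q_n}$ (near $\Hbold$) back into $\hat{\Zbold}$, with $\Fbold^{Q_n}(C_{-Q_n})=C_0$. Take $\Uext_P$ to be a pullback, through the univalent branch $\Fbold^P|_{\hat{\Bbold}_P}$, of a bounded-shape neighborhood $V$ of $C_{-Q_n}$ that meets $\partial\Hbold$ inside $\hat{\Zbold}$. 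Then $\mathtt{w}_P=\Psi((\Fbold^{P}|_{\hat{\Bbold}_P})^{-1}(0))\in\partial\Bubb_P$ and $\Wext_P=\mathtt{h}_P(\Uext_P)$. Properties (1) and (2) then hold by construction.

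\textbf{Step 4: geometry.} By Lemma~\ref{lem:scaling-critical-value} and Real Bounds, $|C_{-Q_n}|$ and $|C_{Q_n}|$ are comparable, so I can choose $V$ of diameter $\asymp|C_{Q_n}|$ with $\Fbold^{Q_n}\colon V\to\Fbold^{Q_n}(V)$ quadratic with bounded distortion (Koebe, as in Lemma~\ref{lem:estimate-lift-sector}). Since both $V$ and its image sit at bounded distance from $C_0$ relative to the scale $|C_{Q_n}|$, the point $\aalpha_P$ corresponds under $\Fbold^P$ to $\infty$. Under the quasiconformal extension $\hat F$ of Corollary~\ref{cor:alpha-points}, the pulled-back disks therefore have definite hyperbolic size in $\UHP$ and lie at bounded hyperbolic distance from $\alpha_P$. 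Transferring through $\Psi$ with Corollary~\ref{cor:Psi-qc} and Corollary~\ref{cor:location-of-alpha} gives (3)–(5).

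The main obstacle is Step 4. Passing from the dynamical plane to external coordinates via the only quasiconformally controlled $\hat F$ and $\Psi$ must not destroy bounded shape, and constants must be universal rather than $\threshold$-uniform. To handle this I would normalize using the size estimate of Proposition~\ref{prop:bubble-bounds}(2a). I would then apply Koebe to $\Fext^{P}$ on a lake neighborhood, as in the proof of Corollary~\ref{cor:location-of-alpha}, whose modulus is bounded below by Real Bounds. This bypasses the $\threshold$-dependent dilatation, at the cost of shrinking $V$ by a universal factor.
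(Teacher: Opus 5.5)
There is a genuine gap in Step~1. The critical point you choose cannot satisfy (1) and (2) together, and it violates (5).

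\textbf{Why your choice of $w$ fails.} Your $w\in\partial\Bbold_P$ satisfies $\Fbold^P(w)=C_{-Q_n}$, so $\Fbold^{P+Q_n}(w)=C_0=0$. The only $\Fbold^P$-preimage of $0$ in $\hat{\Bbold}_P$ is the root $C_{-P}\in\partial\Hbold$. So your formula gives $\mathtt{w}_P=\Psi(C_{-P})=\crit_{-P}\in\R$. Then $\Wext_P$ cannot be a disk in $\UHP$ around $\mathtt{w}_P$, and $\dist_{\UHP}(\mathtt{w}_P,\alpha_P)=\infty$.

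Also, $0$ is a critical value of $\Fbold^P$ lying on $\partial\hat{\Zbold}$. So $(\Fbold^P|_{\hat{\Bbold}_P})^{-1}$ is not defined on any neighborhood of $0=\Fbold^{Q_n}(C_{-Q_n})$. Your composite lives only on the part of $V$ that $\Fbold^{Q_n}$ maps into $\hat{\Zbold}$, which is not a disk.

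Choosing another branch does not help. Every $\Fbold^P$-preimage of $C_0$ is a critical point of $\Fbold^P$, as for the approximating quadratic polynomials, whose critical value has only the critical point as preimage. On the other hand, $\Fbold^{P+Q_n}$ has local degree exactly $2$ at $w$, because $w$ is regular for $\Fbold^P$. Hence any local factorization $\Fbold^{P+Q_n}=\Fbold^P\circ h$ near $w$ forces $h$ to be injective near $w$, contradicting (2). The heuristic in Step~4 has the same defect: since $\aalpha_P$ corresponds to $\infty$, neighborhoods of $0$ pull back to neighborhoods of the root, not of $\alpha_P$.

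\textbf{The missing idea.} You need a critical value of \emph{positive} index, whose preimage on the primary hoglet is a regular point off $\Hbold$. The paper builds a model at $P=Q_n$ around the critical point $C_{-Q_n+Q_{n-1}}$ of $\Fbold^{Q_n}$, whose critical value is $C_{Q_{n-1}}$.
\begin{itemize}
\item It takes a definite quasidisk $W'_n\ni C_{Q_{n-1}}$, kept at distance $\asymp|C_{Q_{n-1}}|$ from the other critical values of $\Fbold^{Q_n}$ by pseudo-Siegel bounds and Real Bounds, together with its quadratic lift $U'_n$.
\item Both are pulled back by the univalent branch of $(\Fbold^{Q_n})^{-1}$ sending $C_{Q_{n-1}}$ to a regular point of $\partial\Bbold_{Q_n}$.
\item This gives $g_n$ with $\Fbold^{Q_n}\circ g_n=\Fbold^{2Q_n}$, and both marked points lie in $\C\setminus\Hbold$.
\end{itemize}
General $P$ is obtained by lifting $\mathtt{h}_{Q_n}$ through the conformal map $\Fext^{P-Q_n}$ on the primary lake of generation $P-Q_n$ adjacent to $\crit_{-P}$. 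Koebe gives (3). The Schwarz Lemma gives (5) and the upper bound in (4). Bounded distortion on the lake doubled by reflection gives the lower bound in (4).

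Your outline can be repaired the same way, for instance by taking $w$ with $\Fbold^P(w)=C_{-Q_n+Q_{n-1}}$ instead of $C_{-Q_n}$.
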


This lemma states that primary hoglets always capture uniform non-linearity of the holomorphic dynamical system coming from $\Fbold$.

\begin{figure}
        \centering
       \begin{tikzpicture}
    \node[anchor=south west,inner sep=0] (image) at (0,0) {\includegraphics[width=\linewidth]{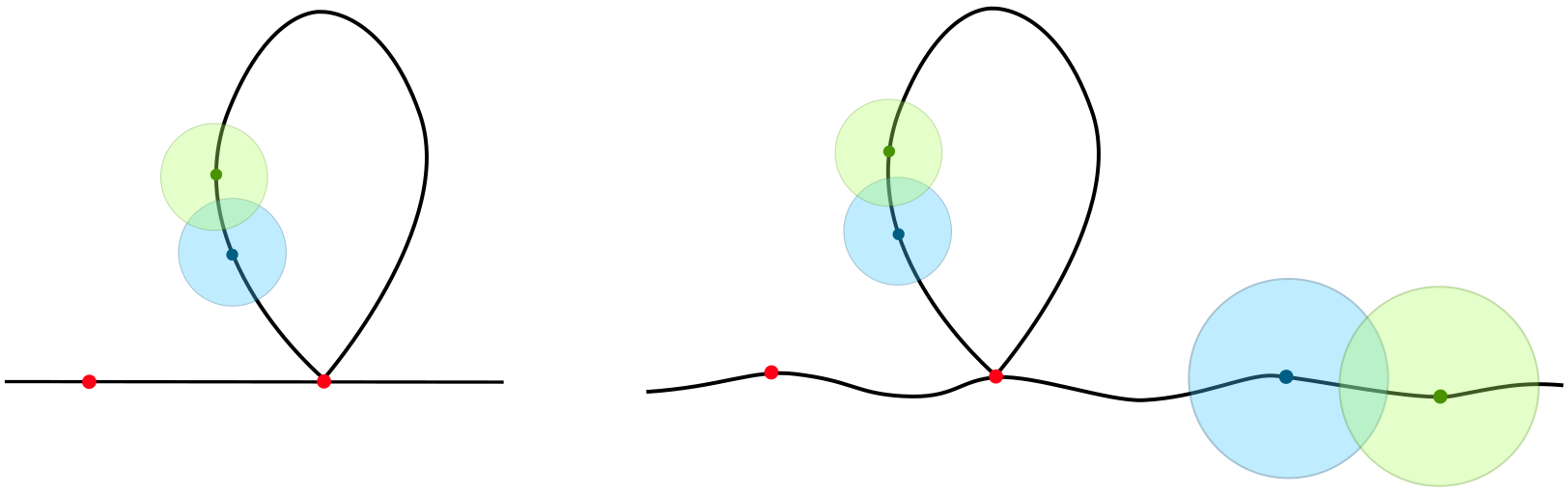}};
    \begin{scope}[
        x={(image.south east)},
        y={(image.north west)}
    ]
        \node [black] at (0.56,0.04) {\small $\hat{\Zbold}$};
        \node [black] at (0.66,0.58) {\scalebox{0.85}{$\hat{\Bbold}_{Q_n}$}};
        \node [red!90!black] at (0.493,0.15) {\small $0$};
        \node [red!90!black] at (0.65,0.135) {\small $C_{-Q_n}$};
        \node [cyan!70!black] at (0.86,0.47) {\small $W'_n$};
        \node [green!80!black] at (0.95,0.47) {\small $U'_n$};
        \draw[-latex] (0.92,0.08) .. controls (0.9,-0.09) and (0.83,-0.09) .. (0.81,0.08);
        \node [black] at (0.87,-0.12) {\small $\Fbold^{Q_n}$};
        \draw[-latex] (0.625,0.75) .. controls (0.7,0.8) and (0.75,0.75) .. (0.81,0.55);
        \node [black] at (0.75,0.81) {\small $\Fbold^{Q_n}$};
        \node [cyan!70!black] at (0.56,0.37) {\small $W''_n$};
        \node [green!80!black] at (0.55,0.85) {\small $U''_n$};
        \draw[-latex] (0.52,0.675) .. controls (0.49,0.655) and (0.49,0.515) .. (0.525,0.495);
        \node [black] at (0.475,0.56) {\small $g_n$};
        
        \node [black] at (0.35,0.87) {\small $\Psi^{-1} \circ \Fext^{P-Q_n}$};
        \draw[-latex] (0.22,0.75) .. controls (0.26,0.8) and (0.42,0.8) .. (0.46,0.75);
        \node [green!70!black] at (0.12,0.8) {\small $\Uext_P$};
        \node [cyan!80!black] at (0.13,0.34) {\small $\Wext_P$};
        \node [black] at (0.05,0.54) {\small $\mathtt{h}_P$};
        \draw[-latex] (0.095,0.67) .. controls (0.07,0.65) and (0.07,0.52) .. (0.105,0.49);
        \node [black] at (0.21,0.97) {\footnotesize $\bullet$};
        \node [black] at (0.21,1.025) {\small $\alpha_P$};
        \node [black] at (0.23,0.6) {\small $\hat{\Bubb}_P$};
        \node [red!90!black] at (0.07,0.13) {\small $\crit_{-P+Q_n}$};
        \node [red!90!black] at (0.21,0.13) {\small $\crit_{-P}$};
    \end{scope}
\end{tikzpicture}
    \caption{The construction of the branched covering map $\mathtt{h}_P: \Uext_P \to \Wext_P$ capturing definite non-linearity near the primary hoglet of generation $P$ as described in Lemma \ref{lem:non-linearity-1}.}
    \label{fig:non-linearity}
\end{figure}

\begin{proof}
    We will first construct $h_{Q_n}$ for $P = Q_n$, and then later $h_P$ for arbitrary $P$. 
    The proof is illustrated in Figure \ref{fig:non-linearity}.
    
    Let us pick an open quasidisk neighborhood $W'_n$ of $C_{Q_{n-1}}$ such that 
    \begin{enumerate}[label=\textnormal{(\roman*)}]
        \item there is a uniform quasiconformal map of $\C$ sending $C_{Q_{n-1}}$ to $0$ and $\partial \hat{\Zbold} \cup \partial W'_n$ onto the union of $\R$ and the unit circle $\{|z|=1\}$;
        \item the distance between $W'_n$ and all the critical values of $\Fbold^{Q_n}$ with the exception of $C_{Q_{n-1}}$ is uniformly comparable to $|C_{Q_{n-1}}|$.
    \end{enumerate} 
    Property (i) is possible by virtue of pseudo-Siegel bounds (Proposition \ref{prop:trans-pseudo-siegel} (4)), and property (ii) is possible because Real Bounds uniformly control the spacing of critical values of $\Fext^{Q_n}$ away from fjords. 
    Denote by $U'_n$ the lift of $W'_n$ under $\Fbold^{Q_n}$ that contains $C_{-Q_n+Q_{n-1}}$. 
    Then, 
    \[
    \Fbold^{Q_n}: (U'_n, C_{-Q_n+Q_{n-1}}) \to (W'_n, C_{Q_{n-1}})
    \]
    is a double covering map branched at $C_{-Q_n+Q_{n-1}}$. 
    By shrinking $(W'_n,C_{Q_{n-1}})$ and $(U'_n,C_{-Q_n+Q_{n-1}})$ by a little bit, we can further assume that they have uniformly bounded shape and that there exists an annulus of modulus uniformly bounded from below that separates $W'_n \cup U'_n$ from the connected component of $\partial \hat{\Zbold} \backslash \{0\}$ that does not contain $C_{Q_{n-1}}$.
    
    The inverse branch of $\Fbold^{Q_n}$ sending $C_{Q_{n-1}}$ to a point on the hoglet $\Bbold_{Q_n}$ sends $(W'_n,C_{Q_{n-1}})$ and $(U'_n,C_{-Q_n+Q_{n-1}})$ onto pointed disks $(W''_n, w''_n)$ and $(U''_n, u''_n)$. 
    The map $\Fbold^{Q_n}$ lifts to a holomorphic double branched covering 
    \[
    g_n: (U''_n,u''_n) \to (W''_n,w''_n).
    \]
    Then, set
    \[
    (\Uext_{Q_n}, \mathtt{u}_{Q_n}) := 
    \Psi(U''_n,u''_n), \qquad 
    (\Wext_{Q_n}, \mathtt{w}_{Q_n}) := \Psi(W''_n,w''_n),
    \]
    and 
    \[
        \mathtt{h}_{Q_n} := \Psi \circ g_n \circ \Psi^{-1}: (\Uext_{Q_n}, \mathtt{u}_{Q_n}) \to (\Wext_{Q_n}, \mathtt{w}_{Q_n}).
    \]
    From the way we select $U'_n$ and $W'_n$, Koebe distortion theorem guarantees that $\Uext_{Q_n}$ and $\Wext_{Q_n}$ satisfy all the desired properties; note that property (5) follows from the estimate on the location of $\alpha_{Q_n}$ (Corollary \ref{cor:location-of-alpha}).

    Let us now consider the general case. 
    In external coordinates, let $\lake$ be the unique primary lake of generation $P-Q_n$ that contains $\crit_{-P}$ on its boundary.
    Define 
    \[
        \mathtt{h}_P: (\Uext_P, \mathtt{u}_P) \to (\Wext_P, \mathtt{w}_P)
    \]
    to be the lift of the map $\mathtt{h}_{Q_n}$ under $\Fext^{P-Q_n}: \lake \to \UHP$ such that the marked points $\mathtt{u}_P$ and $\mathtt{w}_P$ are on the boundary of the hoglet $\Bubb_{P}$.
    Then, (1) and (2) follow immediately, and (3) follows from applying Koebe distortion to the map $\Fext^{P-Q_n}: \lake \to \UHP$. 
    Moreover, Schwarz Lemma immediately gives us property (5) as well as an upper bound of the hyperbolic diameter of $\Uext_P$ and $\Wext_P$.
    
    It remains to deduce the lower bound of the hyperbolic diameter of $\Uext_P$ and $\Wext_P$. 
    To do so, we will consider the extension of $\lake$ by reflection along the real axis. 
    Let $J \subset \R$ be the interior of the unique tile in $\tiling_{P-Q_n}$ that contains $\crit_{-P}$, and let $J' = \Fext^{P-Q_n}(J)$.
    One of the endpoints of $J$ is $\crit_{-P+Q_n}$ and one of the endpoints of $J'$ is $0$.
    Let $\lake^{\textnormal{new}}$ be the union of $\lake$, its reflection in the real axis, and the interval $J$.
    Then, $\Fext^{P-Q_n}: \lake \to \UHP$ extends to a conformal isomorphism 
    \[
        \Fext^{P-Q_n} : \lake^{\textnormal{new}} \to \C \backslash (\R \backslash J').
    \]
    By virtue of Pseudo-Bubble Bounds and property (4) for $Q_n$, each of the sets $\Uext_{Q_n}$, $\Wext_{Q_n}$, and $\hat{\Bubb}_{Q_n}$ and their union have diameter $\asymp 1$ with respect to the hyperbolic metric of $\C \backslash (\R \backslash J')$.
    Therefore, the map $\Fext^{P-Q_n}: \Uext_P \cup \Wext_P \cup \hat{\Bubb}_P \to \Uext_{Q_n} \cup \Wext_{Q_n} \cup \hat{\Bubb}_{Q_n}$ has uniformly bounded distortion.
    In particular, this implies that the diameters of $\Uext_P$ and $\Wext_P$ are uniformly comparable to $\hat{\Bubb}_{P}$.
    This implies property (4).
\end{proof}

The next lemma states that almost every point in the Julia set admits uniform non-linearity at arbitrarily small scales.
It is essentially a combination of Lemma \ref{lem:non-linearity-1} and Propositions \ref{prop:escaping-fjord} and \ref{prop:uniform-expansion}, and it will serve as the key towards proving Theorems \ref{thm:no-wandering-domains}, \ref{thm:parabolic-fatou-set}, and \ref{thm:NILF}.

\begin{lemma}
\label{lem:non-linearity-2}
    Let $z \in \UHP$ be a point that is not eventually mapped into any of the beta periodic points $\beta_n$ nor any of the primary hoglets $\Bubb_P$ under $\Fext$. 
    There exist an infinite sequence $\{r_k\}_{k \geq 1}$ of positive real numbers decreasing to $0$, sequences of pointed disks $\{(\Uext_k, \mathtt{u}_k)\}_{k \geq 1}$ and $\{(\Wext_k, \mathtt{w}_k)\}_{k \geq 1}$ in $\UHP$, and a sequence of double branched covering maps 
    \[
        \big\{ \mathtt{h}_k : (\Uext_k, \mathtt{u}_k) \to (\Wext_k, \mathtt{w}_k) \big\}_{k\geq 1}
    \]
    such that for every $k \geq 1$,
    \begin{enumerate}
        \item we have $\Fbold^{S_k} \circ \Psi^{-1} \circ \mathtt{h}_k = \Fbold^{S_k + S'_k} \circ \Psi^{-1}$ for some $S_k, S'_k \in \Tbold$;
        \item $\mathtt{u}_k$ is the critical point of $\mathtt{h}_k$ and both $\mathtt{u}_k$ and $\mathtt{w}_k$ are on the boundary of a common hoglet of $\Fext$;
        \item both $(\Uext_k, \mathtt{u}_k)$ and $(\Wext_k, \mathtt{w}_k)$ have uniformly bounded shape;
        \item $\diam(\Uext_k) \asymp \diam(\Wext_k) \asymp r_k$, 
        $|z - \mathtt{u}_k| \asymp r_k$, 
        and $|z - \mathtt{w}_k| \asymp r_k$.
    \end{enumerate}
    Additionally, if $N$ is a parabolic depth and $z$ is not in the full parabolic basin of $\beta_N$, then the times $S_k$ and $S'_k$ in \textnormal{(1)} can be picked to be in $\Tbold_{\Arch,N}$.
\end{lemma}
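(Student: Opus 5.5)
We build the data by following the orbit of $z$ through the times of definite expansion (Remark~\ref{rem:expanding.iterates}) and pulling back the model maps $\mathtt{h}_P$ of Lemma~\ref{lem:non-linearity-1}.

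\textbf{Step 1: the expansion times.} Since $z$ is never mapped into a primary hoglet or a beta point, Propositions~\ref{prop:escaping-fjord} and~\ref{prop:uniform-expansion} apply along its orbit. They give times $T_1<T_2<\dots$ in $\Tbold$ with points $z_n=\Fext^{T_n}(z)$. Each $z_n$ avoids fjords and primary hoglets, lies in a primary lake $\lake_{z_n}$ of generation $P_n:=P_{z_n}$, and satisfies $\dist_{\UHP}(z_n,\alpha_{P_n})=O_{\threshold}(1)$. Put $R_n:=T_n+P_n$; then $\Fext^{R_n}$ is defined at $z$.

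\textbf{Step 2: pulling back the model map.} At $z_n$ take the map $\mathtt{h}_{P_n}:(\Uext_{P_n},\mathtt{u}_{P_n})\to(\Wext_{P_n},\mathtt{w}_{P_n})$ of Lemma~\ref{lem:non-linearity-1}. By items (4)--(5) there, $z_n$, $\mathtt{u}_{P_n}$ and $\mathtt{w}_{P_n}$ all lie within bounded hyperbolic distance of $\alpha_{P_n}$, and both disks have hyperbolic diameter $\asymp1$. Let $\Omega_n\ni z$ be the component of the domain of $\Fext^{T_n}$ containing $z$; it is a lake, so $\Fext^{T_n}:\Omega_n\to\UHP$ is a conformal isomorphism. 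Pull back through it:
\[
\Uext_n:=(\Fext^{T_n}|_{\Omega_n})^{-1}(\Uext_{P_n}),\qquad \Wext_n:=(\Fext^{T_n}|_{\Omega_n})^{-1}(\Wext_{P_n}),\qquad \mathtt{h}_n:=(\Fext^{T_n}|_{\Omega_n})^{-1}\circ\mathtt{h}_{P_n}\circ\Fext^{T_n}.
\]
Choose the marked points $\mathtt{u}_n,\mathtt{w}_n$ as the corresponding preimages. This gives (2), since the preimage of $\Bubb_{P_n}$ is a hoglet. It also gives (1) with $S_k=T_n+P_n$ and $S'_k=Q_m$, where $Q_m<P_n\le Q_{m+1}$: conjugating item (1) of Lemma~\ref{lem:non-linearity-1} by the conformal map $\Fext^{T_n}$ and using the commutative semigroup law~\eqref{eq:thm:transcendental-extension} turns $\Fbold^{P_n}\circ\Psi^{-1}\circ\mathtt{h}_{P_n}=\Fbold^{P_n+Q_m}\circ\Psi^{-1}$ into the required identity. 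The Archimedean clause follows from the last assertion of Proposition~\ref{prop:uniform-expansion} and the corresponding clause of Remark~\ref{rem:expanding.iterates}.

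\textbf{Step 3: Koebe control (main obstacle).} We need a definite hyperbolic neighbourhood of the set $Y_n:=\{z_n\}\cup\Uext_{P_n}\cup\Wext_{P_n}$, of bounded hyperbolic diameter, on which the inverse branch $(\Fext^{T_n}|_{\Omega_n})^{-1}$ has bounded distortion. Simply citing ``$\UHP$ minus nothing'' fails, because $\Omega_n\subsetneq\UHP$. The plan is to reflect across the real tile as in the proof of Lemma~\ref{lem:non-linearity-1}. Write $\Omega_n=\lake_{T_n}(I)$ for the relevant tile $I$ when $\Omega_n$ is primary. The map $\Fext^{T_n}$ then extends to a conformal isomorphism
\[
\Omega^{\mathrm{new}}\to\C\setminus\bigl(\R\setminus\Fext^{T_n}(I)\bigr).
\]
Because $\alpha_{P_n}$ sits above the tile of scale $|\parRess\lake_{z_n}|$, the set $Y_n$ has bounded diameter in the hyperbolic metric of this slit plane. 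If $\Omega_n$ is not primary, first write $T_n$ as a sum and reduce to the primary case via Corollary~\ref{cor:bounded-lakes} and univalence. Koebe then gives uniformly bounded shape, which is (3). It also gives $\diam\Uext_n\asymp\diam\Wext_n\asymp|z-\mathtt{u}_n|\asymp|z-\mathtt{w}_n|=:r_n$, which is (4). The delicate point is the comparability $|z-\mathtt{u}_n|\asymp\diam\Uext_n$: it needs the hyperbolic distance between $z_n$ and $\mathtt{u}_{P_n}$ to be bounded below as well as above. Item (4) of Lemma~\ref{lem:non-linearity-1} and the fact that $z_n\notin\Bubb_{P_n}$ suggest this; failing that, we shrink $\Uext_{P_n},\Wext_{P_n}$ slightly, or replace $\mathtt{u}$ by a nearby point, to keep $z_n$ at definite distance.

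\textbf{Step 4: $r_k\to0$.} The total expansion of the composition of the $\threshold$-uniformly expanding steps grows without bound, so $r_n\lesssim\lambda^n\operatorname{Im}(z)$ for some $\lambda<1$. Passing to a subsequence makes $r_k$ strictly decrease to $0$.
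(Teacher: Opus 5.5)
Your proposal is correct in its essentials and follows the paper's proof. The paper likewise takes the escape and expansion times from Propositions~\ref{prop:escaping-fjord} and~\ref{prop:uniform-expansion} and pulls back the model maps $\mathtt{h}_{P_j}$ of Lemma~\ref{lem:non-linearity-1} through the conformal inverse branches. It gets $r_k\to 0$ from a uniform hyperbolic contraction of these branches on $K$-balls, via McMullen's Theorem~2.25: $\alpha_{P_j}$ lies outside the lake of generation $R_{j+1}-R_j$ containing $z_{R_j}$, at bounded distance from $z_{R_j}$, which is the same mechanism as in Remark~\ref{rem:expanding.iterates}. Items (3)--(4) then come from Koebe.

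The reflection in your Step~3 is unnecessary. The inverse branch is univalent on all of $\UHP$ and $Y_n$ has bounded $\UHP$-hyperbolic diameter, so Koebe applies directly.

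You are right that the lower bound $|z-\mathtt{u}_k|\succ r_k$ needs the hyperbolic distance from $z_n$ to $\mathtt{u}_{P_n}$ to be bounded below. The paper does not argue this either, and it is not needed in \S\ref{ss:proof-3-theorems}. However, your suggested fixes would not supply it. Shrinking $\Uext_{P_n},\Wext_{P_n}$ does not move $\mathtt{u}_{P_n}$, and moving $\mathtt{u}$ off the critical point breaks item~(2).
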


\begin{proof}
    Let 
    \[
    \Tbold_z := \{ P \in \Tbold \: : \: z \textnormal{ is in a lake of generation }P\}.
    \]
    Denote $z_0 = z$ and for $P \in \Tbold_z$, we will denote $z_P = \Fext^P(z)$.

    If $z$ is contained in $\Fjord$, then according to Proposition \ref{prop:escaping-fjord}, there exists some time $R_0 \in \Tbold_z$ such that $z_{R_0}$ is in $\UHP \backslash \Fjord$.
    Otherwise, we will just set $R_0 = 0$.
    By Proposition \ref{prop:uniform-expansion}, there exists some other time $P_0 \in \Tbold_z$ such that $z_{R_0}$ is contained in a primary lake of generation $P_0$ and $\dist_{\UHP}(z_{R_0}, \alpha_{P_0}) = O_{\threshold}(1)$.
    As we repeat this argument inductively, 
    we obtain sequences of times $P_0, P_1, P_2,\ldots$ and $R_0, R_1, R_2,\ldots$ in $\Tbold_z$ such that for all $j \geq 0$, we have 
    \begin{enumerate}
        \item[(i)] $R_j < R_j + P_j \leq R_{j+1}$, and
        \item[(ii)] $\dist_{\UHP}(z_{R_j}, \alpha_{P_j}) = O_{\threshold}(1)$.
    \end{enumerate}
    
    For every $j \geq 0$, associated to $P_j$ is the double branched covering map 
    \[
        \mathtt{h}_{P_j} : (\Uext_{P_j}, \mathtt{u}_{P_j}) \to (\Wext_{P_j}, \mathtt{w}_{P_j})
    \]
    described in Lemma \ref{lem:non-linearity-1}. 
    By (ii), there exists a $\threshold$-uniform constant $K>0$ such that
    \begin{equation}
    \label{eqn:Usn-Wsn}
        \Uext_{P_j} \cup \Wext_{P_j} \subset \D_{\UHP}({z_{R_j}}, K).
    \end{equation}

    By (i), each $z_{R_j}$ is contained in a unique lake $\lake(j)$ of generation $R_{j+1}-R_j$.
    Since $R_{j+1}-R_j \geq P_j$, then $\alpha_{P_j}$ is outside of $\lake(j)$ and consequently (ii) implies
    \[
        \dist_{\UHP}(z_{R_j}, \partial \lake(j) ) = O_{\threshold}(1).
    \]
    By \cite[Theorem 2.25]{McM94}, this implies that the inverse $\mathtt{g}_{j+1}$ of the map $\Fext^{R_{j+1} - R_j}: \lake(j) \to \UHP$ is $\threshold$-uniformly contracting near $z_{R_{j+1}}$. 
    More precisely, there exists an $\threshold$-uniform constant $\lambda \in (0,1)$ such that for all $j \geq 0$, 
    \begin{equation}
    \label{eqn:absolute-expansion}
        \| \mathtt{g}_{j+1}'(w) \| \leq \lambda
        \quad \text{ for all }
        w \in \D_{\UHP}(z_{R_{j+1}},K).
    \end{equation}
    For convenience, we will also denote by $\mathtt{g}_{0}$ the inverse branch of $\Fext^{R_0}$ sending $z_{R_0}$ back to $z_0$.
    
    For $k \geq 0$, denote by $(\Uext_k, \mathtt{u}_k)$ and $(\Wext_k, \mathtt{w}_k)$ the image under the univalent map $\mathtt{g}_0 \circ \mathtt{g}_1 \circ \ldots \circ \mathtt{g}_k$ of the pointed disks $(\Uext_{R_k}, \mathtt{u}_{R_k})$ and $(\Wext_{R_k}, \mathtt{w}_{R_k})$ respectively.
    Then, by (\ref{eqn:Usn-Wsn}) and (\ref{eqn:absolute-expansion}),
    \[
        \Uext_k \cup \Wext_k \subset \D_{\UHP}( z, \lambda^k K )
        \qquad \textnormal{ for all } k \geq 0.
    \]
    Let $\mathtt{h}_k: \Uext_k \to \Wext_k$ be the double branched covering map that is the lift of $\mathtt{h}_{R_k}$ under $\mathtt{g}_0 \circ \ldots \circ \mathtt{g}_k$.
    Then, properties (1) and (2) are immediately satisfied from the construction, and properties (3) and (4) follow from the fact that for every $k$, the composition $\mathtt{g}_0 \circ \ldots \circ \mathtt{g}_k$ is a univalent map with uniformly bounded distortion on $\Uext_{R_k} \cup \Wext_{R_k}$.

    Lastly, suppose that $N \in \Z$ is a parabolic depth and $z$ is not in the full parabolic basin of $\beta_N$.
    Recall that each map $\mathtt{h}_k$ satisfies $\Fbold^{P_k + R_k} \circ \Psi^{-1} \circ \mathtt{h}_k = \Fbold^{P_k + R_k + Q_{n_k}} \circ \Psi^{-1}$ where $n_k \in \Z$ is such that $Q_{n_k} < P_k \leq Q_{n_k+1}$.
    According to Propositions \ref{prop:uniform-expansion} and \ref{prop:escaping-fjord}, the times $P_k$ and $R_k$ can be picked to be in $\Tbold_{\Arch,N}$.
    Hence, $Q_{n_k}$ is also contained in $\Tbold_{\Arch,N}$.
\end{proof}

\subsection{Proof of Theorems \ref{thm:no-wandering-domains}, \ref{thm:parabolic-fatou-set}, and \ref{thm:NILF}}
\label{ss:proof-3-theorems}

According to Lemma \ref{lem:non-linearity-2}, almost every point outside of the grand orbit of the Mother Hedgehog $\Hbold$ is an accumulation point of hoglets of $\Fbold$. 
Hence, the grand orbit of $\Hbold$ is dense in the plane and every connected component of the Fatou set is eventually mapped into $\Hbold$.
By Proposition \ref{prop:eventually-brjuno}, $\Hbold$ has interior if and only if $\Fbold$ is eventually Brjuno. 
This implies Theorem \ref{thm:no-wandering-domains}.

Theorem \ref{thm:parabolic-fatou-set} works in a similar way. Let $N \in \Z$ be a parabolic depth. 
Lemma \ref{lem:non-linearity-2} states that in external coordinates, hoglets of generation in $\Tbold_{\Arch,N}$ are dense in the complement of the full basin of attraction of the parabolic periodic point $\beta_N$.
Back on the $\Fbold$-plane, since $\Hbold$ is contained in the basin of attraction of $\Fbold^{Q_N}$ at $\infty$, then the full basin of attraction is dense in $\C$.
This establishes Theorem \ref{thm:parabolic-fatou-set}.

Next, let us prove Theorem \ref{thm:NILF}. 
We will adapt McMullen's argument for rational maps that robust non-linearity prevents any invariant parallel line field. 
Fine details will be spared and the interested reader may refer to \cite[Proposition 3.2]{She03}.
    
Suppose for a contradiction that $\Fbold$ admits an invariant line field $\nu$ supported on the Julia set. 
According to Proposition \ref{prop:zero-area-trans}, the boundary of the Mother Hedgehog $\Hbold$ of $\Fbold$ has zero area. 
In particular, the support of $\nu$ must be disjoint from the grand orbit of the boundary of $\Hbold$.
As such, we can lift $\nu$ to an invariant line field $\nu_{\textnormal{ext}}$ of $\Fext$ in external coordinates supported on a subset of $\UHP$ that avoids all hoglets.
    
Pick a point $z$ on the support of $\nu_{\textnormal{ext}}$ at which $\nu_{\textnormal{ext}}$ is almost continuous (cf. \cite[Corollary 2.15]{McM94}). 
This means that $\nu_{\textnormal{ext}}$ is nearly parallel on small disks centered at $z$. 
Consider the maps $\mathtt{h}_k: (\Uext_k, \mathtt{u}_k) \to (\Wext_k, \mathtt{w}_k)$ of scale $r_k$ from Lemma \ref{lem:non-linearity-2}. 
Consider the affine map $A_k(w) = r_k w + z$. 
Then, as $k \to \infty$, $A_k^{-1} \circ \mathtt{h}_k \circ A_k$ converges in the Carath\'eodory topology to a double covering map $\mathtt{h}_\infty: (\Uext, \mathtt{u}) \to (\Wext,\mathtt{w})$ branched at a point $\mathtt{u}$. 
By almost continuity at $z$, the rescaling of $\nu_{\textnormal{ext}}$ by $A_k$ also converges in measure to an almost parallel line field $\nu_\infty$ on the plane. 
By \cite[Theorem 5.14]{McM94}, $\nu_\infty$ is invariant under $\mathtt{h}_\infty$. But this would imply that the derivative of $\mathtt{h}_\infty$ is constant, which is impossible.


\section{Hoglet butterfly bounds}
\label{sec:butterfly}

In this section, we justify a key construction of the paper, hoglet butterflies~\eqref{eq:dfn.HB_n}, illustrated in Figure~\ref{fig:pseudo-butterfly}, and establish various geometric bounds for them. These bounds will yield QC Thurston equivalence on butterflies in~\S\ref{ss:QC.Th.eq.batt}. 

Following the last two sections, we will fix a neutral cascade $\Fbold = (\Fbold^P)_{P \in \Tbold}$ and consider the corresponding external cascade $\Fext = (\Fext^P)_{P \in \Tbold}$. In \S\ref{ss:butterfly}, we apply the uniform geometric control of pseudo-bubbles and lakes from Section \ref{sec:bounds} to construct hoglet butterflies, i.e. pairs of maps of the form
\begin{equation}
    \label{eq:dfn.HB_n}
    \butterfly_n = \left\{ \Fext^{Q_n+Q_{n-1}}: \Uext'_n \to \Wext_n, \; \Fext^{Q_n}: \Uext''_n \to \Wext_n \right\},
    \qquad
    n \in \Z,
\end{equation}
where $\Uext'_n$ and $\Uext''_n$ are disjoint open domains near $0$ that are well-contained in $\Wext_n$.
See Figure \ref{fig:pseudo-butterfly} for an illustration. Unlike the classical butterfly construction for critical circle maps, which uses the Commuting Pair return times $Q_{n+1},Q_n$, we use the Sectorial return times $Q_n+Q_{n-1}, Q_n$; see Remark \ref{rem:sector.return.time} below.

The boundaries of the butterfly wings $\Uext'_n$ and $\Uext''_n$ may be non-locally connected.
In \S\ref{ss:regularity-of-butterflies}, we overcome this issue by thickening the hoglets adjacent to $\Uext'_n$ and $\Uext''_n$ to pseudo-bubbles and apply the disjointness results in \S\ref{ss:disjointness-of-pseudo-bubbles} to prove that the new boundary is uniformly quasiconformally controlled (Proposition \ref{prop:uniform-qc-bounded}).

At a parabolic depth $n$, the non-escaping set of $\butterfly_n$ will contain a parabolic basin.
To address this issue, in \S\ref{ss:enriched-butterfly}, we introduce the enrichment of hoglet butterflies $\butterfly_n^m$ obtained from $\butterfly_n$ by adding deeper level maps.
Finally, in \S\ref{ss:non-esc-set-butterfly}, we show that the non-escaping set of the fully enriched hoglet butterflies $\butterfly_n^\infty$ will always be nowhere dense.

\subsection{The construction of hoglet butterflies}

\label{ss:butterfly}

Recall that for any closed bounded interval $I \subset \R$, $\poincare_{\tau} (I)$ denotes the half-Poincar\'e domain of $I$ with angle $\tau>0$.
Recall that for any $P \in \Tbold$, $\lake_{P}$ denotes the unique primary lake of generation $P$ that contains $0$ on its boundary.
For $n \in \Z$, denote
\[
    \lake'_n := \lake_{Q_n+Q_{n-1}} \qquad \text{and} \qquad
    \lake''_n := \Fext^{Q_{n-1}}(\lake_{Q_n+Q_{n-1}}),
\]
which are disjoint primary lakes of generation $Q_n+Q_{n-1}$ and $Q_n$ respectively.

For $n \in \Z$ and $\tau \in (0 ,\pi)$, denote
\[
    \Wext_{n,\tau} := \poincare_{\tau}[\crit_{-Q_{n-1}}, \crit_{Q_{n-1}-2Q_{n}}]
\]
and let $\Uext'_{n,\tau}$ and $\Uext''_{n,\tau}$ be the unique domains in $\lake'_n$ and $\lake''_n$ respectively such that the following is a commutative diagram of conformal isomorphisms.
\[
        \begin{tikzcd}[column sep=large, row sep=large]
	& \Wext_{n,\tau} & \\
	\Uext'_{n,\tau} & & \Uext''_{n,\tau} 
	\arrow["\Fext^{Q_{n-1}}"', from=2-1, to=2-3]
	\arrow["\Fext^{Q_n+Q_{n-1}}", from=2-1, to=1-2]
	\arrow["\Fext^{Q_n}"', from=2-3, to=1-2]
        \end{tikzcd}
\]
Denote 
\[
    \Pi_{n,\tau} := \parH \Wext_{n,\tau},
\]
and let $\Pi'_{n,\tau}$ (resp. $\Pi''_{n,\tau}$) be the part of the boundary of $\Uext'_{n,\tau}$ (resp. $\Uext''_{n,\tau}$) that gets mapped to $\Pi_{n,\tau}$.

\begin{lemma}[Structure of $\Uext'_{n,\tau}$ and $\Uext''_{n,\tau}$]
    Let $n \in \Z$ and $\tau \in (0 ,\pi)$.
    The boundaries of $\Uext'_{n,\tau}$ and $\Uext''_{n,\tau}$ are combinatorially tame:
        \begin{align*}
            \parR \Uext'_{n,\tau} &= [\crit_{-Q_{n}-Q_{n-1}}, \crit_{-Q_{n}}],
            & \qquad
            \parH \Uext'_{n,\tau} &\subset \Pi'_{n,\tau} \cup 
            \Bubb_{Q_n} \cup \Bubb_{Q_n+Q_{n-1}}, \\
            \parR \Uext''_{n,\tau} &= [\crit_{-Q_{n}}, \crit_{-Q_{n}+Q_{n-1}}],
            & \qquad
            \parH \Uext''_{n,\tau} &\subset \Pi''_{n,\tau} \cup 
            \Bubb_{Q_n} \cup \Bubb_{Q_n-Q_{n-1}}.
        \end{align*}
    The arc $\Pi'_{n,\tau}$ connects $\Bubb_{Q_n+Q_{n-1}}$ and $\Bubb_{Q_n}$, and the arc $\Pi''_{n,\tau}$ connects $\Bubb_{Q_n}$ and $\Bubb_{Q_n-Q_{n-1}}$.
\end{lemma}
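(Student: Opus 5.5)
The plan is to read off the structure of $\Uext'_{n,\tau}$ and $\Uext''_{n,\tau}$ by pulling back the simple shape of $\Wext_{n,\tau}$ under the conformal isomorphisms $\Fext^{Q_n+Q_{n-1}}:\lake'_n\to\UHP$ and $\Fext^{Q_n}:\lake''_n\to\UHP$. The boundary behaviour of these isomorphisms is already described by Propositions \ref{prop:lake-bounds-0} and \ref{prop:lake-bounds-1}, together with Lemma \ref{lem:primary-lakes-and-adjacent-bubbles}. Throughout I would work first with $\Uext'_{n,\tau}$. The statement for $\Uext''_{n,\tau}$ then follows by transporting everything through the conformal isomorphism $\Fext^{Q_{n-1}}:\lake'_n\to\lake''_n$. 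This map sends $\crit_{-Q_n-Q_{n-1}},\crit_{-Q_n}$ to $\crit_{-Q_n},\crit_{-Q_n+Q_{n-1}}$. It also sends the relevant boundary pieces of $\Bubb_{Q_n+Q_{n-1}},\Bubb_{Q_n}$ to those of $\Bubb_{Q_n},\Bubb_{Q_n-Q_{n-1}}$, consistent with Proposition \ref{prop:lake-bounds-1}.

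\textbf{Step 1: real boundary of $\Wext_{n,\tau}$.} I would first check via Real Bounds (Proposition \ref{prop:R-apb-transcendental}) and Corollary \ref{cor:extension-critical} that the base $[\crit_{-Q_{n-1}},\crit_{Q_{n-1}-2Q_n}]$ contains the interval $\Fext^{Q_n+Q_{n-1}}(J_n)$, where $J_n=[\crit_{-Q_n-Q_{n-1}},\crit_{-Q_n}]$. Explicitly, $\Fext^{Q_n+Q_{n-1}}$ sends $\crit_{-Q_n-Q_{n-1}}\mapsto 0$ and $\crit_{-Q_n}\mapsto\crit_{Q_{n-1}}$. I would also check that the base lies inside $\Fext^{Q_n+Q_{n-1}}$ of the Carath\'eodory boundary of $\lake'_n$, i.e. $\partial^c\lake'_n\cap\R$ plus the boundary arcs along $\Bubb_{Q_n+Q_{n-1}}$ and $\Bubb_{Q_n}$. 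The point is that the base extends past $0$ to $\crit_{-Q_{n-1}}$ on one side and past $\crit_{Q_{n-1}}$ to $\crit_{Q_{n-1}-2Q_n}$ on the other.

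\textbf{Step 2: pulling back each boundary piece of $\Wext_{n,\tau}$.} By Lemma \ref{lem:primary-lakes-and-adjacent-bubbles} and the proof of Proposition \ref{prop:lake-bounds-0}, under $(\Fext^{Q_n+Q_{n-1}}|_{\lake'_n})^{-1}$:
\begin{itemize}
\item the segment of the base between $0$ and $\crit_{Q_{n-1}}$ lifts to $J_n$;
\item the segment beyond $\crit_{Q_{n-1}}$ lifts into $\partial\Bubb_{Q_n}$;
\item the segment on the other side of $0$, up to $\crit_{-Q_{n-1}}$, lifts into $\partial\Bubb_{Q_n+Q_{n-1}}$, running from the root towards the alpha-point.
\end{itemize}
The piece $[\crit_{-Q_{n-1}},0]$ lies in the component of $\R\setminus\{0\}$ avoiding $\crit_{Q_{n-1}}$. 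The piece $[\crit_{Q_{n-1}},\crit_{Q_{n-1}-2Q_n}]$ lies inside $[\crit_{Q_{n-1}},\crit_{Q_{n-1}-Q_{n-2}}]$, which lifts to $\partial\Bubb_{Q_n}$; I would confirm this containment by checking the space order with Corollary \ref{cor:extension-critical}. The interior of $\Wext_{n,\tau}$ pulls back to $\Uext'_{n,\tau}\subset\lake'_n$. Hence $\parR\Uext'_{n,\tau}=J_n$, and $\parH\Uext'_{n,\tau}$ consists of $\Pi'_{n,\tau}$ together with the lifts of the outer base segments, which lie in $\Bubb_{Q_n}\cup\Bubb_{Q_n+Q_{n-1}}$.

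\textbf{Step 3: endpoints of $\Pi'_{n,\tau}$.} $\Pi_{n,\tau}$ is an arc with endpoints $\crit_{-Q_{n-1}}$ and $\crit_{Q_{n-1}-2Q_n}$. Its lift $\Pi'_{n,\tau}$ is an arc whose endpoints are the lifts of these points, which by Step 2 lie on $\partial\Bubb_{Q_n+Q_{n-1}}$ and $\partial\Bubb_{Q_n}$ respectively. So $\Pi'_{n,\tau}$ connects the two hoglets.

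\textbf{Main obstacle.} The hard step is Step 2: one must verify that the lifted pieces of the base genuinely land on the hoglets and are not cut off by other parts of $\partial\lake'_n$. By Proposition \ref{prop:lake-bounds-0}, the remaining boundary of $\lake'_n$ is the hoglet-chain part $X_n$, accumulating at $\alpha_{Q_n+Q_{n-1}}$, and this part is the lift of the far ends of the two rays of $\R\setminus\{0\}$. I would handle this by checking that the finite endpoints $\crit_{-Q_{n-1}}$ and $\crit_{Q_{n-1}-2Q_n}$ precede $\infty$ and precede $\crit_{Q_{n-1}-Q_{n-2}}$ along the respective rays. Then their lifts stay on the hoglet arcs, strictly before reaching $\alpha_{Q_n+Q_{n-1}}$ or $X_n$.
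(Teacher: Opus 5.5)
Your proposal is correct and is essentially the paper's argument run in the opposite order. The paper first handles $\Uext''_{n,\tau}$ by lifting the three pieces of $\parR\Wext_{n,\tau}$ cut out by $0$ and $\crit_{Q_{n-1}}$ under $\Fext^{Q_n}\colon\lake''_n\to\UHP$ (middle piece onto $[\crit_{-Q_n},\crit_{-Q_n+Q_{n-1}}]$, outer pieces into $\Bubb_{Q_n}$ and $\Bubb_{Q_n-Q_{n-1}}$), then pulls back by $\Fext^{Q_{n-1}}$ to get $\Uext'_{n,\tau}$; you start from $\Uext'_{n,\tau}$ via $\Fext^{Q_n+Q_{n-1}}\colon\lake'_n\to\UHP$ and push forward, and your explicit check via Lemma~\ref{lem:primary-lakes-and-adjacent-bubbles} that $[\crit_{Q_{n-1}},\crit_{Q_{n-1}-2Q_n}]\subset[\crit_{Q_{n-1}},\crit_{Q_{n-1}-Q_{n-2}}]$ is a slightly more careful way than the paper's critical-value count to certify that the outer lifts stay on the hoglet boundaries.
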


\begin{proof}
    The only critical values of $\Fext^{Q_n}$ that are located in $\partial_{\R} \Wext_{n,\tau}$ are $0$ and $\crit_{Q_{n-1}}$ and it splits $\parR \Wext_{n,\tau}$ into three subintervals. 
    The middle subinterval $[0,\crit_{Q_{n-1}}]$ is lifted under $\Fext^{Q_n}$ onto the interval $\parR \lake''_n = [\crit_{-Q_n}, \crit_{-Q_n+Q_{n-1}}]$. 
    The left and the right subintervals are lifted into the Carath\'eodory boundary of $\Bubb_{Q_n}$ and $\Bubb_{Q_n-Q_{n-1}}$ respectively.
    Therefore $\Pi''_{n,\tau}$ starts from $\Bubb_{Q_n}$ and ends at $\Bubb_{Q_n-Q_{n-1}}$.

    Observe that the interval $\parR \lake'_n$ does not contain any critical value of $\Fext^{Q_{n-1}}$.
    Therefore, $\parR \lake'_n$ lifts under $\Fext^{Q_{n-1}}$ to the real interval $\parR \Uext'_{n,\tau} = [\crit_{-Q_{n}-Q_{n-1}}, \crit_{-Q_{n}}]$, and $\parH \Uext''_{n,\tau}$ lifts to $\parH \Uext'_{n,\tau}$ as described in the lemma.
\end{proof}

\begin{lemma}
\label{lem:W-contains-bubbles}
    There exists a universal constant $\ttau' \in \left( 0, \frac{\pi}{2} \right]$ such that for all $n \in \Z$, the domain $\Wext_{n,\ttau'}$ contains every primary pseudo-bubble rooted at a point on the interval $[\crit_{-Q_{n-1}}, \crit_{Q_{n-1}-2Q_{n}}]$.
\end{lemma}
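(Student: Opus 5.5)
The idea is to combine the angular and size control from Pseudo-Bubble Bounds (Proposition \ref{prop:bubble-bounds}) with Real Bounds, and then compare with the half-Poincar\'e domain $\Wext_{n,\tau}$ over the interval $I_n := [\crit_{-Q_{n-1}}, \crit_{Q_{n-1}-2Q_n}]$. Let $\hat{\Bubb}_P$ be a primary pseudo-bubble rooted at $\crit_{-P} \in I_n$, and let $m$ be such that $Q_m < P \leq Q_{m+1}$. By Proposition \ref{prop:bubble-bounds}(2), $\hat{\Bubb}_P \setminus \{\crit_{-P}\}$ lies in the cone $\{|\arg(z-\crit_{-P}) - \pi/2| < \ttau\}$ and has diameter at most $K|\crit_{-P}-\crit_{Q_m-P}|$. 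So $\hat{\Bubb}_P$ lies in the truncated cone $\Xi_P$ used in the proof of Lemma \ref{lem:bubble-disjoint-fjord}. It then suffices to find a universal $\ttau'$ with $\Xi_P \subset \Wext_{n,\ttau'} \cup \{\crit_{-P}\}$ for every such $P$.

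The key step is to compare the height of $\Xi_P$ with the distance from $\crit_{-P}$ to the endpoints of $I_n$. Since $\crit_{-P} \in I_n$, one should have $P \geq Q_{n-1}$, or $\crit_{-P}$ is an endpoint. By Real Bounds (Proposition \ref{prop:R-apb-transcendental}) together with the tiling structure of $\tiling_{Q_n}$, the tile $[\crit_{-P},\crit_{Q_m-P}]$ has length comparable to the distance from $\crit_{-P}$ to the nearest point of $\partial I_n$, or smaller. I would check this by cases.
\begin{itemize}
\item If $\crit_{-P}$ lies deep inside $I_n$, then $P$ is large relative to $Q_n$ and the tile around $\crit_{-P}$ is much smaller than its distance to $\partial I_n$.
\item If $\crit_{-P}$ is an endpoint, or adjacent to one, the tile length is comparable to the length of an adjacent tile of $\tiling_{Q_n}$.
\end{itemize}

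In the endpoint case, the cone at $\crit_{-P}$ has opening $\ttau < \pi/2$ around the vertical. Choosing $\ttau'$ small, the circular arc bounding $\poincare_{\ttau'}(I_n)$ meets $\R$ at the endpoint with angle close to $\pi/2$, that is, nearly vertically. It therefore contains any vertical cone of half-angle $\ttau$ of height comparable to $|I_n|$, since height $\asymp |I_n|$ and the arc's top has height $\asymp |I_n|\cot(\ttau'/2)$. Interior roots at definite distance $d$ from $\partial I_n$ are handled the same way: the Poincar\'e domain contains the truncated cone of height $\leq C d$ once $\ttau'$ is small, depending on $C$ and $\ttau$.

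The main obstacle is the uniform comparison ``height of $\Xi_P \lesssim$ distance to $\partial I_n$ or $\asymp |I_n|$'' across all combinatorics, including parabolic depths $a_{n+1}=\infty$. In those cases the tiles accumulate at $\beta$, but there they only shrink, so the estimate is easier. Here I would lean on items (2) and (3)(b) of Proposition \ref{prop:R-apb-transcendental}, which give the $1/j^2$ decay, and on item (1) of the same proposition for the endpoint tiles.
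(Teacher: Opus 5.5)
Your argument is correct and is essentially the paper's proof. Real Bounds bound the diameter of every such $\hat{\Bubb}_P$ by a uniform multiple of the length of $[\crit_{-Q_{n-1}},\crit_{Q_{n-1}-2Q_n}]$, so no comparison with the distance to the endpoints is needed; the angular control from Pseudo-Bubble Bounds then puts these pseudo-bubbles inside $\Wext_{n,\ttau'}$ once $\ttau'$ is small. One correction to your geometric picture: for small $\ttau'$ the boundary arc of $\Wext_{n,\ttau'}$ leaves each endpoint almost tangent to the outer part of $\R$ (external angle $\ttau'$), not nearly vertically. This is exactly why a vertical cone of half-angle $\ttau<\pi/2$ rooted at an endpoint fits inside, whereas a vertical arc would cut it in half.
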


\begin{proof}
    Every critical point $\crit_{-R}$ located on the interval $[\crit_{-Q_{n-1}}, \crit_{Q_{n-1}-2Q_{n}}]$ has generation $R \geq Q_{n-1}$.
    By Real Bounds, there exists a universal constant $K>1$ such that for all such $R$ and for all $k \geq n-2$, we have 
    \[
    |\crit_{-R}-\crit_{-R+Q_k}| \leq K |\crit_{-Q_{n-1}} - \crit_{Q_{n-1}-2Q_{n}}|.
    \]
    By Pseudo-Bubble Bounds, the inequality above implies that there is a sufficiently small universal constant $\ttau' \in \left( 0,\frac{\pi}{2} \right]$ such that for all such $R$, the primary pseudo-bubble $\hat{\Bubb}_R$ is contained in the half-Poincar\'e domain $\Wext_{n,\ttau'}$.  
\end{proof}

From now on, we will fix the universal constant $\ttau'$ from the lemma above.
For $n \in \Z$ and $\tau \in (0 ,\pi)$, denote
    \[
        \hat{\Uext}_{n,\tau} := \overline{ \Uext''_{n,\tau} } \cup \overline{ \Uext'_{n,\tau} } \cup \hat{\Bubb}_{Q_n+Q_{n-1}} \cup \hat{\Bubb}_{Q_{n+1}} \cup \hat{\Bubb}_{Q_n-Q_{n-1}}.
    \]
We claim that when $\tau$ is sufficiently small, $\hat{\Uext}_{n,\tau}$ is contained in the closure of $\Wext_{n,\tau}$. 
See Figure \ref{fig:pseudo-butterfly}.
    
\begin{proposition}[Uniform moduli]
\label{prop:butterflies}
    There exist universal constants $K>0$ and $\ttau \in ( 0, \ttau']$ such that for every $n \in \Z$,
    \begin{enumerate}
        \item $\hat{\Uext}_{n,\ttau}$ is contained in $\overline{\Wext_{n, \ttau}}$, and
        \item the Euclidean distance between $\hat{\Uext}_{n, \ttau}$ and $\Pi_{n,\ttau}$ satisfies 
    \[
         K^{-1} |\crit_{-Q_n}| \leq \dist \left( \hat{\Uext}_{n, \ttau}, \Pi_{n,\ttau} \right) \leq K |\crit_{-Q_n}|.
    \]
    \end{enumerate}
\end{proposition}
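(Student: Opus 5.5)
The plan is to show that, in the hyperbolic metric of $\Wext_{n,\tau}$ (or rather of the slit plane $\C\backslash(\R\backslash \parR\Wext_{n,\tau})$), the set $\hat{\Uext}_{n,\tau}$ stays within a bounded region near $0$ for a $\tau$-independent scale, and then to shrink $\tau$ so that the arc $\Pi_{n,\tau}$ lies far out. First, by Real Bounds (Proposition \ref{prop:R-apb-transcendental}) the interval $J:=[\crit_{-Q_{n-1}},\crit_{Q_{n-1}-2Q_n}]$ has length $\asymp|\crit_{-Q_n}|$, and its endpoints are at distance $\asymp|\crit_{-Q_n}|$ from the tile $[\crit_{-Q_n-Q_{n-1}},\crit_{-Q_n+Q_{n-1}}]$ containing $0$. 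This already gives the upper bound in (2): $\Pi_{n,\ttau}$ meets $\R$ at the endpoints of $J$, and $\hat{\Uext}_{n,\ttau}$ contains points of that tile.

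The key step is a $\tau$-independent bound: there is a universal compact region $E_n\subset \overline{\UHP}$, of diameter $\asymp|\crit_{-Q_n}|$ and at distance $\succ|\crit_{-Q_n}|$ from $\R\backslash J$, containing $\hat{\Uext}_{n,\tau}$ for all $\tau\le\ttau'$. For the three pseudo-bubbles, this follows from Lemma \ref{lem:W-contains-bubbles} together with Pseudo-Bubble Bounds (size and angular control) and Real Bounds. For $\overline{\Uext'_{n,\tau}}\subset\overline{\lake'_n}$ and $\overline{\Uext''_{n,\tau}}\subset\overline{\lake''_n}$, we use Propositions \ref{prop:lake-bounds-0} and \ref{prop:lake-bounds-1}. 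Their upper-half boundaries consist of pieces of the hoglets $\Bubb_{Q_n}$, $\Bubb_{Q_n\pm Q_{n-1}}$, already controlled, and a set inside $\D_\UHP(\alpha_{Q_n+Q_{n-1}},K)$, respectively $\D_\UHP(\alpha_{Q_n},K)$. By Corollary \ref{cor:location-of-alpha}, these alpha-points lie at height $\asymp|\crit_{-Q_n}|$ above the tile near $0$, so those hyperbolic disks are Euclidean sets of size $\asymp|\crit_{-Q_n}|$ near $0$. Since the lakes are bounded (Corollary \ref{cor:bounded-lakes}) and their boundaries are covered by these pieces, the lakes themselves lie in the filled region $E_n$.

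To finish, I take $\ttau\le\ttau'$ small, universal. As $\tau\to0$ the arc $\Pi_{n,\tau}$ (the circular arc over $J$ meeting $\R$ at angle $\tau$) grows: after rescaling by $|J|$, its distance from any fixed compact subset of $\UHP\cup\textnormal{int}J$ tends to infinity. Hence for small universal $\ttau$ the region $E_n$ sits inside $\Wext_{n,\ttau}$ at distance $\succ|\crit_{-Q_n}|$ from $\Pi_{n,\ttau}$. The points of $E_n$ on $\R$ lie in the interior of $J$ at definite distance from its endpoints. This gives (1) and the lower bound in (2).

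The main obstacle is the uniformity of $E_n$ near the real axis. The lake bounds are stated in hyperbolic terms, so I must check that the pieces near $\R$ (the real boundary and the hoglet boundaries) stay away from the endpoints of $J$ by a definite Euclidean amount, and in particular do not hug $\R$ all the way to $\partial J$. I would handle this with the angular control of primary pseudo-bubbles, which keeps them in cones of angle $\ttau$ about the vertical. I would also use the fact that the real boundaries $[\crit_{-Q_n-Q_{n-1}},\crit_{-Q_n+Q_{n-1}}]$ are compactly inside $J$ by Real Bounds. The parabolic case, where some depths have $a=\infty$, should go through verbatim, since only Real Bounds at depths $n-1,n$ are used.
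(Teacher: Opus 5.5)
Your proposal is correct and is essentially the paper's argument. The paper also observes that, for every $\tau$, $\hat{\Uext}_{n,\tau}$ lies in $\lake'_n\cup\lake''_n$ together with the three primary pseudo-bubbles. It then uses Propositions \ref{prop:lake-bounds-0} and \ref{prop:lake-bounds-1}, Pseudo-Bubble Bounds and Corollary \ref{cor:location-of-alpha} to trap this set in a fixed half-Poincar\'e domain $\poincare_{\tau_0}$ over the middle interval $[\crit_{-Q_n-Q_{n-1}},\crit_{-Q_n+Q_{n-1}}]$, which plays exactly the role of your region $E_n$. Finally it sets $\ttau=\min\{\tau_0,\ttau'\}$ and concludes from Real Bounds, which split $[\crit_{-Q_{n-1}},\crit_{Q_{n-1}-2Q_n}]$ into three comparable subintervals; your filling argument and your check of uniformity near the endpoints of $J$ make explicit what the paper leaves implicit.
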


\begin{figure}
        \centering
       \begin{tikzpicture}
    \node[anchor=south west,inner sep=0] (image) at (0,0) {\includegraphics[width=1\linewidth]{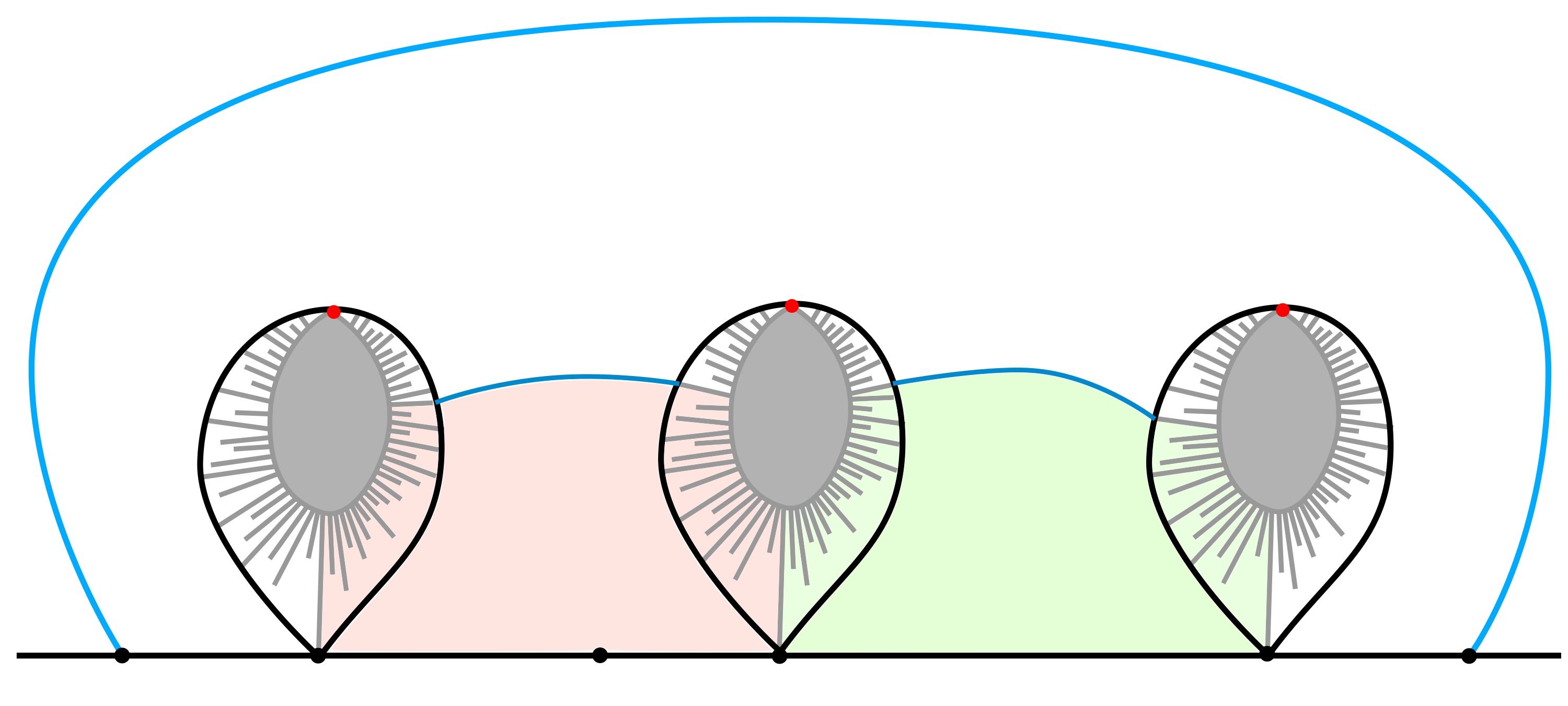}};
    \begin{scope}[
        x={(image.south east)},
        y={(image.north west)}
    ]
        \node [blue!50!black] at (0.5,0.75) {$\Wext_{n,\tau}$};
        \node [red!50!black] at (0.34,0.33) {$\Uext'_{n,\tau}$};
        \node [green!50!black] at (0.67,0.33) {$\Uext''_{n,\tau}$};
        
        \node [black] at (0.384,0.01) {$0$};
        \node [black] at (0.51,0.01) {$\crit_{-Q_n}$};
        \node [black] at (0.22,0.01) {$\crit_{-Q_n-Q_{n-1}}$};
        \node [black] at (0.79,0.01) {$\crit_{-Q_n+Q_{n-1}}$};
        
        \node [black] at (0.94,0.01) {$\crit_{-2Q_{n}+Q_{n-1}}$};
        \node [black] at (0.07,0.01) {$\crit_{-Q_{n-1}}$};

        \draw[orange, thick, -latex] (0.38,0.2) -- (0.62,0.2);
        \draw[orange, thick, -latex] (0.35,0.5) -- (0.4,0.9);
        \draw[orange, thick, -latex] (0.65,0.5) -- (0.6,0.9);
        \node[orange!80!black] at (0.57,0.15) {$\Fext^{Q_{n-1}}$};
        \node[orange!80!black] at (0.305,0.7) {$\Fext^{Q_n+Q_{n-1}}$};
        \node[orange!80!black] at (0.67,0.7) {$\Fext^{Q_{n}}$};

        \node[black] at (0.11,0.55) {\scalebox{0.8}{$\hat{\Bubb}_{Q_n+Q_{n-1}}$}};
        \node[black] at (0.55,0.585) {\scalebox{0.8}{$\hat{\Bubb}_{Q_n}$}};
        \node[black] at (0.91,0.55) {\scalebox{0.8}{$\hat{\Bubb}_{Q_n-Q_{n-1}}$}};
    \end{scope}
\end{tikzpicture}
    \caption{Hoglet butterflies, a key construction of the paper. When the exterior angle $\tau$ is sufficiently small  (Lemma~\ref{lem:W-contains-bubbles}), the half-Poincar\'e domain $\Wext_{n,\tau}$ well-contains the butterfly wings $\Uext'_{n,\tau}$, $\Uext''_{n,\tau}$ and the primary pseudo-bubbles $\hat{\Bubb}_{Q_n+Q_{n-1}}$, $\hat{\Bubb}_{Q_n}$, $\hat{\Bubb}_{Q_n-Q_{n-1}}$.}
    \label{fig:pseudo-butterfly}
\end{figure}

\begin{proof}
    Fix $n \in \Z$.
    For any $\tau \in (0 ,\pi)$, the closed set $\hat{\Uext}_{n,\tau}$ is contained in 
    \[
        \mathtt{A}_n := \lake'_n \cup \lake''_n \cup \hat{\Bubb}_{Q_n-Q_{n-1}} \cup \hat{\Bubb}_{Q_{n+1}} \cup \hat{\Bubb}_{Q_n-Q_{n-1}}.
    \]
    Our goal is then to find a universal constant $\ttau$ such that $\mathtt{A}_n$ is well-contained in $\Wext_{n,\ttau}$.
    
    By Propositions \ref{prop:lake-bounds-0} and \ref{prop:lake-bounds-1}, there exists a universal constant $K>0$ such that the upper-half boundaries of $\lake'_n$ and $\lake''_n$ are contained in
    \[
        \partial \Bubb_{Q_n-Q_{n-1}} \cup \partial \Bubb_{Q_{n+1}} \cup \partial \partial \Bubb_{Q_n-Q_{n-1}} \cup \D_{\UHP}(\alpha_{Q_n+Q_{n-1}}, K) \cup \D_{\UHP}(\alpha_{Q_n}, K).
    \]
    Then, we deduce from Pseudo-Bubble Bounds and the estimate on the location of $\alpha_{Q_n}$ (Corollary \ref{cor:location-of-alpha}) that there exists an $\threshold$-uniform constant $\tau_0 \in \left( 0, \frac{\pi}{2} \right]$ such that $\mathtt{A}_n$ is contained in the half-Poincar\'e neighborhood 
    \begin{align*}
    \label{eqn:inclusion-Uext}
        \mathtt{A}_{n} = \poincare_{\tau_0}[\crit_{-Q_{n-1}},\crit_{-Q_n+Q_{n-1}}].
    \end{align*}

    Let us set $\ttau := \min \{ \tau_0 , \ttau'\}$.
    By Real Bounds, the points $\crit_{-Q_{n-1}-Q_n}, \crit_{Q_{n-1}-Q_n}$ split the interval $[\crit_{-Q_{n-1}}, \crit_{Q_{n-1}-2Q_n}]$ into three sub-intervals of uniformly comparable length.
    Then, this observation, together with (\ref{eqn:inclusion-Uext}), implies the desired properties (1) and (2).
\end{proof}

From now on, we will fix the universal constant $\ttau$ from the proposition above.
We will also assume that the combinatorial threshold $\threshold$ is sufficiently high such that $\ttau$ is greater than the angle $\nnu = \nnu(\threshold)$ controlling the fjords (cf. Lemma \ref{lem:fjord-angle-control}) so that the circular arc $\Pi_n$ is disjoint from $\Fjord$.
We will write
\begin{gather*}
    \Wext_n = \Wext_{n,\ttau}, \quad 
    \Uext'_n = \Uext'_{n,\ttau}, \quad \Uext''_n = \Uext''_{n,\ttau}, \quad \hat{\Uext}_n = \hat{\Uext}_{n,\ttau}, \\
    \Pi_n = \Pi_{n,\ttau}, \quad  \Pi'_n = \Pi'_{n,\ttau}, \quad \Pi''_n = \Pi''_{n,\ttau}.
\end{gather*}

\begin{remark}[Sectorial vs. Commuting Pair return times]
\label{rem:sector.return.time} Unlike the butterfly constructions for critical circle maps in~\cite{dF99,dFdM00,Y99}, the butterflies defined in~\eqref{eq:dfn.HB_n} use the Sectorial return times $Q_n+Q_{n-1}, Q_n$ rather than the Commuting Pair return times $Q_{n+1},Q_n $. With additional care (namely, using the geometric bounds for hoglet chains described in \S\ref{ss:bubble-chains}), we can also construct and control butterflies relative to the return times $Q_{n+1}, Q_{n}$, as illustrated in Figure~\ref{fig:commuting-pair-butterfly}. The difference between Figures~\ref{fig:pseudo-butterfly} and~\ref{fig:commuting-pair-butterfly} is due to the following combinatorial fact:

\vspace{-0.08cm}
\[Q_n+Q_{n-1} \ \asymp\  Q_n\qquad\text{ but }\qquad Q_{n+1} \ \not\asymp \ Q_n.\]
\end{remark}

\begin{figure}
        \centering
       \begin{tikzpicture}
    \node[anchor=south west,inner sep=0] (image) at (0,0) {\includegraphics[width=1\linewidth]{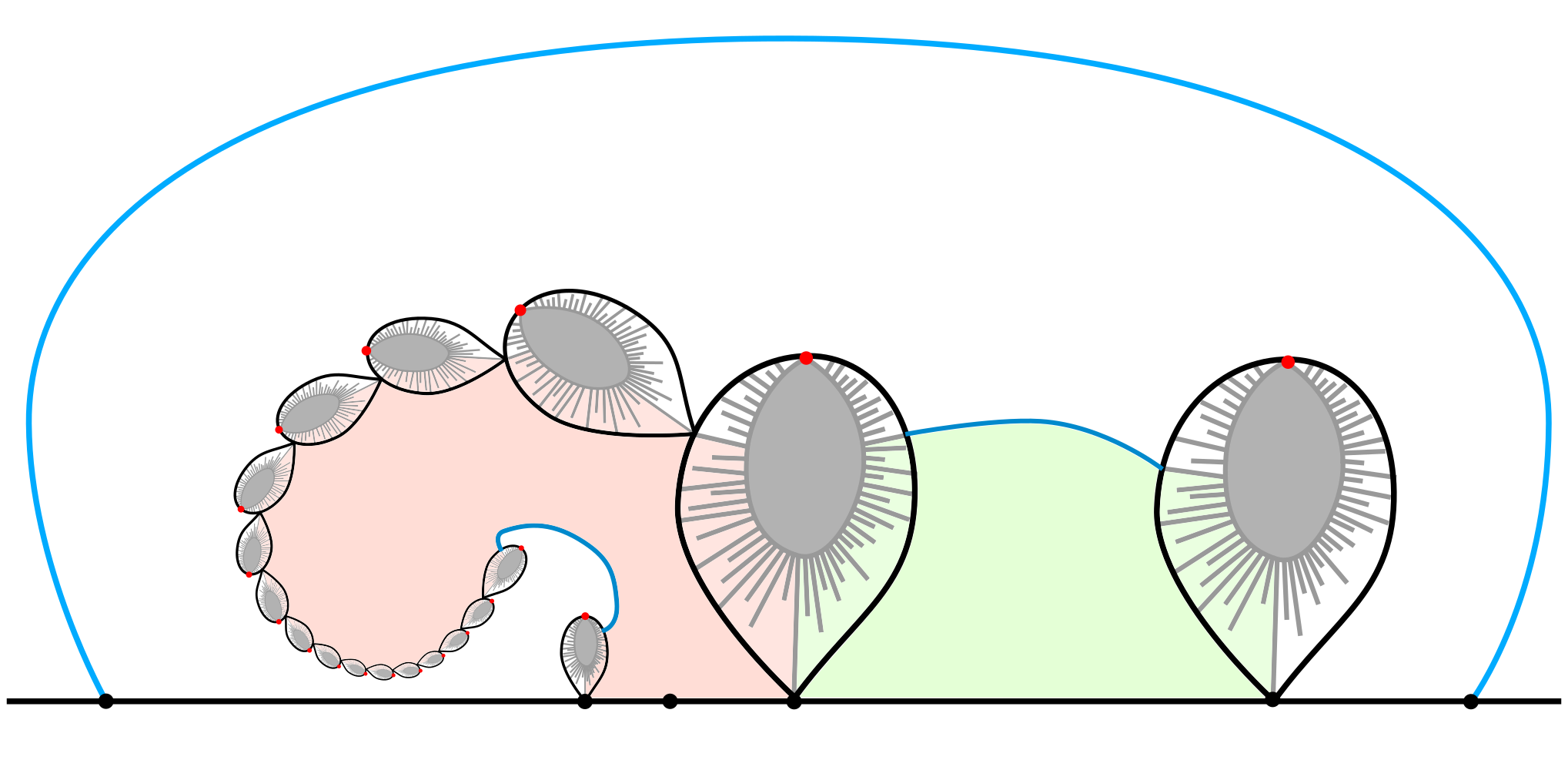}};
    \begin{scope}[
        x={(image.south east)},
        y={(image.north west)}
    ]
        \node [blue!50!black] at (0.5,0.775) {$\Wext_{n}$};
        \node [black] at (0.43,0.025) {$0$};
        \node [black] at (0.515,0.025) {$\crit_{-Q_n}$};
        \node [black] at (0.35,0.025) {$\crit_{-Q_{n+1}}$};
        \node [black] at (0.785,0.025) {$\crit_{-Q_n+Q_{n-1}}$};
        
        \node [black] at (0.94,0.025) {$\crit_{-2Q_{n}+Q_{n-1}}$};
        \node [black] at (0.07,0.025) {$\crit_{-Q_{n-1}}$};

        \draw[orange, thick, -latex] (0.3,0.61) -- (0.35,0.9);
        \draw[orange, thick, -latex] (0.65,0.5) -- (0.6,0.9);
        \node[orange!80!black] at (0.275,0.75) {$\Fext^{Q_{n+1}}$};
        \node[orange!80!black] at (0.67,0.7) {$\Fext^{Q_{n}}$};
    \end{scope}
\end{tikzpicture}
    \caption{Butterfly structure
    relative to the Commuting Pair return times $Q_{n+1}, Q_n$ as illustrated; see Remark~\ref{rem:sector.return.time}. Compare with \cite[Figure 6]{Y99} for critical circle maps. The left butterfly wing in this figure is contained in the the left wing in Figure \ref{fig:pseudo-butterfly}.}
    \label{fig:commuting-pair-butterfly}
\end{figure}

\subsection{The regularity of hoglet butterflies}
\label{ss:regularity-of-butterflies}

Recall that even though the boundary of the Mother Hedgehog $\Hbold$ may have complicated topology, the boundary of the pseudo-Siegel half-plane $\hat{\Zbold}$ is simply a uniform quasi-line.
In this subsection, we will justify a similar observation that while the boundaries of the butterfly wings $\Uext'_n$ and $\Uext''_n$ go along hoglets and may have complicated topology, the outer boundary of $\hat{\Uext}_n$ is quasiconformally tame.
This crucial property will allow us to perform quasiconformal interpolation in the next section.

\begin{proposition}
    \label{prop:uniform-qc-bounded}
    There exists an $\threshold$-uniform constant $K>0$ such that for all $n \in \Z$, the set $\Wext_n \backslash \hat{\Uext}_n$ is a $K$-quasidisk.
\end{proposition}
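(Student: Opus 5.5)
The plan is to describe the boundary of $\Wext_n \backslash \hat{\Uext}_n$ as a concatenation of finitely many arcs, each uniformly quasiconformally tame, joined at corners with definite angles and separated elsewhere at definite relative distances. Then the Ahlfors bounded-turning criterion gives the uniform bound. Concretely, $\partial(\Wext_n\backslash\hat{\Uext}_n)$ should consist of four kinds of pieces:
\begin{enumerate}
\item the circular arc $\Pi_n$;
\item the two outer real intervals $[\crit_{-Q_{n-1}},\crit_{-Q_n-Q_{n-1}}]$ and $[\crit_{-Q_n+Q_{n-1}},\crit_{Q_{n-1}-2Q_n}]$;
\item the outer parts of the pseudo-bubbles $\hat{\Bubb}_{Q_n+Q_{n-1}}$, $\hat{\Bubb}_{Q_n}$ and $\hat{\Bubb}_{Q_n-Q_{n-1}}$;
\item the outer parts of $\partial\Uext'_n$ and $\partial\Uext''_n$ lying in $\Pi'_n\cup\Pi''_n$, away from the hoglets.
\end{enumerate}
By the structure lemma for $\Uext'_{n,\tau}$ and $\Uext''_{n,\tau}$, the real boundaries of the wings fill the middle interval. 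So the only possibly wild part of $\partial\Uext'_n\cup\partial\Uext''_n$, namely the part running along the hoglets $\Bubb_{Q_n+Q_{n-1}}$, $\Bubb_{Q_n}$ and $\Bubb_{Q_n-Q_{n-1}}$, is swallowed by the pseudo-bubbles. The outer boundary of $\hat{\Uext}_n$ then consists of arcs of the $K'$-quasicircles $\partial\hat{\Bubb}_P$ (Proposition~\ref{prop:bubble-bounds}(1)) together with the arcs $\Pi'_n,\Pi''_n$. (The definition of $\hat{\Uext}_n$ lists $\hat{\Bubb}_{Q_{n+1}}$; I read this as $\hat{\Bubb}_{Q_n}$, since $\hat{\Bubb}_{Q_n}$ is the pseudo-bubble adjacent to both wings.)

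The steps are as follows.
\begin{itemize}
\item \emph{Step 1: the arcs $\Pi'_n,\Pi''_n$.} These are lifts of the circular arc $\Pi_n$ under the univalent maps $\Fext^{Q_n}:\lake''_n\to\UHP$ and $\Fext^{Q_n+Q_{n-1}}:\lake'_n\to\UHP$. By Proposition~\ref{prop:butterflies}(2), $\Pi_n$ stays a definite distance $\asymp|\crit_{-Q_n}|$ from the critical values $0$ and $\crit_{Q_{n-1}}$. Hence, after Schwarz reflection across the appropriate real intervals, Koebe gives bounded distortion on a neighbourhood of $\Pi_n$ away from its endpoints. So $\Pi',\Pi''$ are uniform quasiarcs there.
\item \emph{Step 2: the endpoints.} The arcs $\Pi'_n,\Pi''_n$ land on hoglets at points which are preimages of the real endpoints of $\Pi_n$. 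Near these landing points, the local picture is a lift of a neighbourhood of a real point of $\partial\Wext_n$ meeting $\R$ at angle $\ttau$. This neighbourhood is carried to a neighbourhood of a boundary point of $\hat{\Bubb}_P$ by the uniformly qc extension of Corollary~\ref{cor:alpha-points}, composed with the fact that $\partial\hat{\Zbold}$ is a uniform quasiline in external coordinates (Corollary~\ref{cor:Psi-qc}). This yields uniform bounded turning at the junctions.
\item \emph{Step 3: global separation.} Using Lemma~\ref{lem:primary-bubble-disjoint}, the three pseudo-bubbles are pairwise separated by $\delta_2$-neighbourhoods. Using Lemma~\ref{lem:main-bubble-disjoint-fjord} and the choice $\ttau>\nnu$, the arcs avoid fjords. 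By Proposition~\ref{prop:butterflies}, $\hat{\Uext}_n$ stays at distance $\asymp|\crit_{-Q_n}|$ from $\Pi_n$. By Real Bounds and Pseudo-Bubble Bounds, all pieces have diameter comparable to $|\crit_{-Q_n}|$, and they meet $\R$ at the points $\crit_{-Q_n-Q_{n-1}}$ and $\crit_{-Q_n+Q_{n-1}}$ at definite angles (angular control $\ttau$ versus $\pi/2$).
\item \emph{Step 4: conclusion.} Non-adjacent pieces are at distance $\succ$ their diameters, and adjacent pieces meet in uniform quasi-corners. The finite chain is therefore a uniform quasicircle.
\end{itemize}

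I expect the main obstacle to be Step 2, the junction of $\Pi'_n$ and $\Pi''_n$ with the hoglet boundaries. The pseudo-bubble boundary is qc, but the point where $\Pi'$ meets it sits on $\partial\Bubb_P\subset\hat{\Bubb}_P$, which may be in the interior of the pseudo-bubble. In that case $\Pi'$ must be truncated at its first hit of $\partial\hat{\Bubb}_P$, and one needs that this first hit is transverse in a controlled way. I would handle this by pushing forward under $\Fext^{P}$ (respectively $\Fbold^P$) to the top pseudo-Siegel half-plane. There the picture becomes a geodesic-like curve crossing the uniform quasiline $\partial\hat{\Zbold}$ near a point of $\R$ with definite angle, using Lemma~\ref{lem:fjord-almost-invariance}. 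I would then pull back via the $\threshold$-uniform qc extension from Corollary~\ref{cor:alpha-points}.
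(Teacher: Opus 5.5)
Your skeleton matches the paper's. You decompose $\partial(\Wext_n\backslash\hat{\Uext}_n)$ into $\Pi_n$, the two outer real intervals, the outer arcs of $\partial\hat{\Bubb}_{Q_n+Q_{n-1}}$, $\partial\hat{\Bubb}_{Q_n}$ and $\partial\hat{\Bubb}_{Q_n-Q_{n-1}}$, and $\Pi'_n,\Pi''_n$, and then check bounded turning; your reading of $\hat{\Bubb}_{Q_{n+1}}$ as $\hat{\Bubb}_{Q_n}$ is also right.

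The gap is in Step~4. You claim that non-adjacent pieces are $\succ|\crit_{-Q_n}|$ apart, but nothing supports this for the pairs $(\Pi'_n,\Pi''_n)$, $(\Pi''_n,\partial\hat{\Bubb}_{Q_n+Q_{n-1}})$ and $(\Pi'_n,\partial\hat{\Bubb}_{Q_n-Q_{n-1}})$. Lemma~\ref{lem:primary-bubble-disjoint} only separates pseudo-bubbles from each other. Proposition~\ref{prop:butterflies} only separates $\hat{\Uext}_n$ from $\Pi_n$. Lemma~\ref{lem:main-bubble-disjoint-fjord} concerns fjords attached to $\R$. None of these keeps a wing arc away from a pseudo-bubble it is not attached to.

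This is a genuine issue. A hoglet of generation $P$ cannot enter a lake of generation $\geq P$, but the filled-in fjord parts of $\hat{\Bubb}_P$ are preimages of $\Fjord\cap\UHP$, so they lie \emph{inside} lakes of generation $P$. For example, part of $\hat{\Bubb}_{Q_n-Q_{n-1}}$ lies in $\lake_{Q_n-Q_{n-1}}$, which contains $\lake'_n\supset\Pi'_n$. Moreover, you only have Koebe control of $\Pi'_n,\Pi''_n$ away from their endpoints, so even topological disjointness would not give a definite distance.

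The paper handles this in Lemma~\ref{lem:disjointness-arcs}, in three steps.
\begin{itemize}
\item First it builds nested neighbourhoods of $\Pi''_n$ with definite moduli. It lifts the arcs $\parH\poincare_{\ttau}$ over slightly shrunk and enlarged copies of $[\crit_{-Q_{n-1}},\crit_{Q_{n-1}-2Q_n}]$ under $\Fext^{Q_n}\colon\lake''_n\to\UHP$, closes them by geodesics inside the two adjacent pseudo-bubbles, and lifts once more by $\Fext^{Q_{n-1}}$ to get neighbourhoods of $\Pi'_n$. These lie in $\lake''_n\cup\hat{\Bubb}_{Q_n}\cup\hat{\Bubb}_{Q_n-Q_{n-1}}$ and $\lake'_n\cup\hat{\Bubb}_{Q_n+Q_{n-1}}\cup\hat{\Bubb}_{Q_n}$ respectively.
\item The first two pairs then follow because $\lake'_n$ and $\hat{\Bubb}_{Q_n+Q_{n-1}}\backslash\{\crit_{-Q_n-Q_{n-1}}\}$ lie in $\lake_{Q_n}$, which is a different lake of generation $Q_n$ from $\lake''_n$.
\item The third pair needs a genuinely new step. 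Push forward by $\Fext^{Q_n-Q_{n-1}}$ on $\lake_{Q_n-Q_{n-1}}$. Then $\lake'_n$ becomes a lake of generation $2Q_{n-1}$, which the analysis in Proposition~\ref{prop:lake-bounds-0} confines to a half-Poincar\'e domain of definite angle over $[\crit_{-2Q_{n-1}},\crit_{-Q_{n-1}}]$. Meanwhile $\hat{\Bubb}_{Q_n-Q_{n-1}}\cap\lake_{Q_n-Q_{n-1}}$ becomes the fjords attached on the side of $0$ containing $\crit_{Q_{n-1}}$. Taking $\threshold$ large, so that $\nnu(\threshold)$ is below that angle, separates the two. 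This is exactly where ``sufficiently high $\threshold$'' enters, and it is missing from your plan.
\end{itemize}

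Separately, Step~2 uses the wrong map. The qc extension of Corollary~\ref{cor:alpha-points} agrees with the dynamics only on $\hat{\Bbold}_P$, so it cannot carry $\Pi_n$ to $\Pi'_n$ or $\Pi''_n$, which lie outside $\hat{\Bbold}_P$. Instead, note that $\Psi^{-1}(\Pi_n)$ is a uniform quasiarc transverse to $\partial\hat{\Zbold}$: this follows from Corollary~\ref{cor:Psi-qc}, since $\Pi_n$ avoids $\Fjord$. Then pull it back by the genuine iterate $\Fbold^{Q_n}$ (resp.\ $\Fbold^{Q_n+Q_{n-1}}$), which is univalent with bounded distortion near the junction points. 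This also shows the junction points lie on $\partial\hat{\Bbold}_P$, so the truncation you worry about never occurs.
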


In the proof, we will need the following lemma.

\begin{lemma}
\label{lem:disjointness-arcs}
    For $n \in \Z$, consider the ordered list $\mathcal{L}_n$ of compact sets
    \[
        \hat{\Bubb}_{Q_n+Q_{n-1}}, \quad 
        \Pi'_n, \quad 
        \hat{\Bubb}_{Q_n}, \quad
        \Pi''_n, \quad 
        \hat{\Bubb}_{Q_n-Q_{n-1}}. 
    \]
    For sufficiently high $\threshold$ and for all $n \in \Z$, every element $A$ of $\mathcal{L}_n$ has an open neighborhood $D(A)$ with the following properties.
    \begin{enumerate}
        \item Every element of $\mathcal{L}_n$ is either a uniform quasiarc or an $\threshold$-uniform quasidisk.
        \item Every element of $\mathcal{L}_n$ has diameter $\asymp |\crit_{Q_n}|$.
        \item There exists a universal constant $\delta>0$ such that for all $A \in \mathcal{L}_n$,
        \[
        \nbh_{\delta}(A) \subset D(A).
        \]
        \item For $A \in \{ \Pi'_n, \Pi''_n \}$,
        \[
            D(A) \subset \UHP \quad \text{ and } \quad
            \diam_{\UHP}(D(A)) \asymp 1.
        \]
        \item For any two non-consecutive elements $A_1, A_2$ of $\mathcal{L}_n$, $D(A_1)$ and $D(A_2)$ are disjoint.
        \item For any two consecutive elements $A_1, A_2$ of $\mathcal{L}_n$,
        \begin{enumerate}[label=\textnormal{(\roman*)}]
            \item the intersection $A_1 \cap A_2$ is a singleton at which $A_1$ meets $A_2$ at a definite angle, and
            \item $D(A_1) \cap D(A_2)$ is a small disk neighborhood of $A_1 \cap A_2$.
        \end{enumerate}
    \end{enumerate}
\end{lemma}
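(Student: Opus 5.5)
We will verify the six properties element by element, working throughout in external coordinates at the scale $|\crit_{Q_n}|$. By Real Bounds (Proposition \ref{prop:R-apb-transcendental}), the scales $|\crit_{-Q_n}|$, $|\crit_{Q_n}|$, $|\crit_{-Q_n-Q_{n-1}}-\crit_{-Q_n}|$ and $|\crit_{-Q_n+Q_{n-1}}-\crit_{-Q_n}|$ are all comparable. We therefore normalize by the affine map sending $0\mapsto 0$ and $\crit_{-Q_n}\mapsto 1$. After this normalization every estimate becomes a compactness-free statement about a configuration of bounded geometry.

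For the three pseudo-bubbles, properties (1) and (2) come directly from Pseudo-Bubble Bounds (Proposition \ref{prop:bubble-bounds}). That result gives $\threshold$-uniform quasidisks of diameter comparable to the distance from the root to a neighboring critical point, which is $\asymp |\crit_{Q_n}|$. For the arcs, $\Pi_n$ is a circular arc of the half-Poincar\'e domain $\Wext_n$. By Proposition \ref{prop:butterflies} it stays at distance $\asymp|\crit_{-Q_n}|$ from $\hat{\Uext}_n$, so $\diam_{\UHP}$ of a definite neighborhood of $\Pi_n$ is $O(1)$. The arcs $\Pi'_n$ and $\Pi''_n$ are univalent pullbacks of $\Pi_n$ under $\Fext^{Q_n+Q_{n-1}}$ and $\Fext^{Q_n}$ restricted to the lakes $\lake'_n,\lake''_n$. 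To control them we apply Koebe to the Schwarz-reflected extension of these branches, exactly as in the proof of Corollary \ref{cor:location-of-alpha}. This uses that the relevant real intervals contain no critical values and that Propositions \ref{prop:lake-bounds-0} and \ref{prop:lake-bounds-1} bound the lakes. The conclusion is that $\Pi'_n,\Pi''_n$ are uniform quasiarcs of diameter $\asymp|\crit_{Q_n}|$. We then set $D(\Pi')$ to be the pullback of a hyperbolic $O(1)$-neighborhood of $\Pi_n$, which gives (4) by the Schwarz Lemma together with bounded distortion. For each pseudo-bubble $A$ we take $D(A)=\nbh_{\delta}(A)$ with a slightly larger $\delta$; property (3) is then immediate.

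For (5), disjointness of pseudo-bubble neighborhoods is Lemma \ref{lem:primary-bubble-disjoint}. Each arc lies at hyperbolic height $\asymp 1$, while its non-adjacent pseudo-bubble sits over a different real tile. The angular control (Proposition \ref{prop:bubble-bounds}(2b), Corollary \ref{cor:location-of-alpha}) together with Real Bounds separates them by a distance $\succ|\crit_{Q_n}|$. The arcs $\Pi'_n$ and $\Pi''_n$ are separated from each other through $\hat{\Bubb}_{Q_n}$, since they lie in the disjoint lakes $\lake'_n$ and $\lake''_n$ and at height comparable to scale.

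The main obstacle is (6): the arc must meet the adjacent pseudo-bubble at a single point and at a definite angle. A priori, $\Pi'_n$ lands on the hoglet $\Bubb_{Q_n}$, whose boundary may be non-locally connected, and it may also cross the thickened part $\hat{\Bubb}_{Q_n}\setminus\Bubb_{Q_n}$. To handle this, we plan to transport the question to $\Wext_n$. The endpoint of $\Pi_n$ is the real point $\crit_{Q_{n-1}-2Q_n}$ (resp. $\crit_{-Q_{n-1}}$), where $\Pi_n$ meets $\R$ at angle $\ttau$. Under $\Fext^{Q_n}$ the pseudo-bubble $\hat{\Bubb}_{Q_n}$ goes to $\hat{\Zbold}$ in the $\Fbold$-plane, via the quasiconformal extension of Corollary \ref{cor:alpha-points}. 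Near the real endpoint, $\Psi$ is $\threshold$-uniformly qc (Corollary \ref{cor:Psi-qc}), and since $\ttau>\nnu(\threshold)$ the arc avoids fjords there. Consequently the pulled-back arc meets $\partial\hat{\Bubb}_{Q_n}$ transversally, with uniformly quasiconformal local geometry: a definite angle in the qc sense, at exactly one point. Finally we choose the neighborhoods $D(A)$ near that point small enough to yield (ii).
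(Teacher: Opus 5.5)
Your treatment of the three pseudo-bubbles matches the paper: Pseudo-Bubble Bounds and Real Bounds for (1)--(2), and Lemma \ref{lem:primary-bubble-disjoint} for their mutual separation. Your transport argument for (6)(i) through $\Fbold^{Q_n}\circ\Psi^{-1}$ and Corollary \ref{cor:Psi-qc} is a reasonable expansion of what the paper calls ``by design''. The genuine gap is in the arcs and their neighborhoods.

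\textbf{The arcs and their neighborhoods.} The arc $\Pi_n$ lands on $\R$ at $\crit_{-Q_{n-1}}$ and $\crit_{Q_{n-1}-2Q_n}$. So your claim that a definite neighborhood of $\Pi_n$ has $\diam_{\UHP}=O(1)$ is false, and the Schwarz Lemma applied to the inverse branch into $\lake''_n$ bounds nothing. Likewise, after Schwarz reflection across $(0,\crit_{Q_{n-1}})$, the endpoints of $\Pi_n$ lie on the slit $\R\setminus(0,\crit_{Q_{n-1}})$. Koebe therefore gives no control near them, and this is exactly where $\Pi''_n$ lands on the possibly wild hoglet $\Bubb_{Q_n}$.

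Your choice of $D(\Pi''_n)$ as a pullback into $\lake''_n$ causes a further problem. That set lies inside the lake, while the endpoints of $\Pi''_n$ lie on $\partial\Bubb_{Q_n}$ and $\partial\Bubb_{Q_n-Q_{n-1}}$, outside it. So it can never contain $\nbh_\delta(\Pi''_n)$, which breaks (3). Its intersection with $D(\hat{\Bubb}_{Q_n})$ is also not a neighborhood of the meeting point, which breaks (6)(ii).

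The paper's construction avoids all of this. It thickens $\Pi_n$ by the region between the fjord-avoiding arcs $\gamma_{\pm,k}=\parH\poincare_{\ttau}[\crit_{-Q_{n-1}\mp kQ_{n+3}},\crit_{Q_{n-1}-2Q_n\pm kQ_{n+3}}]$ for $k=1,2,3$. It lifts these arcs into $\lake''_n$ (respectively $\lake'_n$) and closes the lifts off by hyperbolic geodesics inside the interiors of the two adjacent pseudo-bubbles. The resulting nest $D_1\subset D_2\subset D_3\subset\UHP$ has annuli of definite modulus. This gives $\diam_{\UHP}(D_2)=O(1)$ and hence bounded distortion of $\Fbold^{Q_n}\circ\Psi^{-1}$ on $D_2$. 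Consequently $\Pi''_n$ is a uniform quasiarc because $\Psi^{-1}(\Pi_n)$ is one (Corollary \ref{cor:Psi-qc}). Taking $D(\Pi''_n)=D_1(\Pi''_n)$ then yields (3), (4) and (6)(ii).

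\textbf{Separation of an arc from its non-adjacent pseudo-bubble in (5).} Your argument here is insufficient. Angular control only confines $\hat{\Bubb}_{Q_n-Q_{n-1}}$ to a cone of half-angle $\ttau$ and radius $K|\crit_{Q_n}|$ at its root, which allows it to lean over the neighbouring tile. Real Bounds plus angular control therefore do not separate it from $\Pi'_n$.

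Hoglets of smaller generation never meet these lakes. Only the thickening $\hat{\Bubb}\setminus\Bubb$ can, and it is mapped onto fjords. The paper handles this dynamically in two steps.
\begin{itemize}
\item $\hat{\Bubb}_{Q_n+Q_{n-1}}$ and $\lake'_n$ lie in $\lake_{Q_n}$, a primary lake of generation $Q_n$ different from $\lake''_n$. This separates $D_2(\Pi''_n)$ from both $\hat{\Bubb}_{Q_n+Q_{n-1}}$ and $D_2(\Pi'_n)$.
\item For $\Pi'_n$ against $\hat{\Bubb}_{Q_n-Q_{n-1}}$, push forward by $\Fext^{Q_n-Q_{n-1}}$ on $\lake_{Q_n-Q_{n-1}}\supset\lake'_n$. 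The set $\hat{\Bubb}_{Q_n-Q_{n-1}}\cap\lake_{Q_n-Q_{n-1}}$ becomes the fjords attached on the $\crit_{Q_{n-1}}$-side of $0$. Meanwhile $\lake'_n$ becomes a lake of generation $2Q_{n-1}$, which by the analysis in Proposition \ref{prop:lake-bounds-0} lies in a definite half-Poincar\'e domain over $[\crit_{-2Q_{n-1}},\crit_{-Q_{n-1}}]$. These are disjoint once $\nnu(\threshold)$ is small.
\end{itemize}
This second step is where the hypothesis ``$\threshold$ sufficiently high'' genuinely enters the lemma, and your proposal does not supply it.
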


\begin{proof}
    Let $-\hat{\UHP} := \C \backslash \left( \overline{\UHP} \backslash \Fjord \right)$. 
    It is uniformly quasiconformally equivalent to the lower half plane $-\UHP$.
    Throughout this proof, we fix $n \in \Z$.
    
    According to Lemma \ref{lem:main-bubble-disjoint-fjord}, the primary hoglets in the list $\mathcal{L}_n$ are disjoint from $-\hat{\UHP}$.
    For every pseudo-bubble $A$ in the list $\mathcal{L}_n$, 
    properties (1) and (2) follows from Pseudo-Bubble Bounds and Real Bounds.
    
    For $P \in \{Q_n+Q_{n-1},Q_n,Q_n-Q_{n-1}\}$, 
    we define $D_\delta(\hat{\Bubb}_P)$ to be the Jordan disk neighborhood of $\hat{\Bubb}_P$ such that its boundary is the union of 
        \begin{itemize}
            \item the subarc $\gamma_{P,\delta}$ of the boundary of $\nbh_{\delta} (\hat{\Bubb}_P)$ that is a proper arc in $\UHP \backslash \Fjord$ that surrounds $\hat{\Bubb}_P$, and
            \item the hyperbolic geodesic of $-\hat{\UHP}$ that connects the endpoints of $\gamma_{P,\delta}$.
        \end{itemize}
    Pseudo-Bubble Bounds tell us that $\hat{\Bubb}_P$ is a uniform quasidisk with diameter $\asymp |\crit_{Q_n}|$ meeting the real line at its root with definite angle.
    As such, there exists some universal constant $\lambda \in (0,1)$ independent of $\delta$ such that 
        \[
            \nbh_{\lambda \delta}(\hat{\Bubb}_P) \subset D_\delta(\hat{\Bubb}_P) \subset \nbh_{\lambda^{-1} \delta}(\hat{\Bubb}_P).
        \]

    \noindent \underline{Claim 1:} There exists a universal constant $\delta_1>0$ such that
    \[
        D_{\delta_1}(\hat{\Bubb}_{Q_{n+1}}), \qquad D_{\delta_1}(\hat{\Bubb}_{Q_{n}}), \qquad D_{\delta_1}(\hat{\Bubb}_{Q_n-Q_{n-1}})
    \]
    are pairwise disjoint.
    
    \begin{proof}
        This follows from Lemma \ref{lem:primary-bubble-disjoint} and the fact that $\hat{\Bubb}_{Q_{n+1}}$, $\hat{\Bubb}_{Q_{n}}$, and $\hat{\Bubb}_{Q_n-Q_{n-1}}$ have uniformly comparable diameter.
    \end{proof}
    
    Next, the neighborhoods of $\Pi_n'$ and $\Pi_n''$ are constructed as follows.
    
    For $k \in \{1,2,3\}$, consider the circular arcs $\gamma_{+,k}$ and $\gamma_{-,k}$ given by
        \[
        \gamma_{\pm,k} = \parH \poincare_{\ttau}
        [\crit_{-Q_{n-1} \mp kQ_{n+3}},\crit_{Q_{n-1}-2Q_{n} \pm k Q_{n+3}}].
        \]
    The rectangle in $\UHP$ bounded by $\gamma_{-,3}$ and $\gamma_{+,3}$ is a definite thickening of the arc $\Pi_n$ in $\UHP$ with respect to the Euclidean metric.
    According to Real Bounds, this rectangle is split by the curves $\gamma_{-,2}$, $\gamma_{-,1}$, $\Pi_n$, $\gamma_{+,1}$, and $\gamma_{+,2}$ into six sub-rectangles of uniformly comparable moduli.
    
    For $k \in \{1,2,3\}$, let us define the neighborhood $D_k(\Pi_n'')$ of $\Pi_n''$ such that the boundary is a union of the following arcs:
        \begin{itemize}
            \item $\gamma''_{-,k}$ = the lift of $\gamma_{-,k}$ under $\Fext^{Q_n} : \lake''_n \to \UHP$;
            \item $\gamma''_{+,k}$ = the lift of $\gamma_{+,k}$ under $\Fext^{Q_n} : \lake''_n \to \UHP$;
            \item the hyperbolic geodesic $\sigma''_{1,k}$ of the interior of $\hat{\Bubb}_{Q_{n}}$ connecting the starting points of $\gamma''_{-,k}$ and $\gamma''_{+,k}$;
            \item the hyperbolic geodesic $\sigma''_{2,k}$ of the interior of $\hat{\Bubb}_{Q_n+Q_{n-1}}$ that connects the endpoints of $\gamma''_{-,k}$ and $\gamma''_{+,k}$.
        \end{itemize}
    It is an elementary combinatorial exercise to check that the endpoints of $\gamma''_{\pm,k}$ are indeed on the boundary of $\hat{\Bubb}_{Q_n}$ and $\hat{\Bubb}_{Q_{n}+Q_{n-1}}$ and thus the construction above makes sense.
    We then take $D_k(\Pi''_n)$ to be the lift of $D_k(\Pi'_n)$ under $\Fext^{Q_{n-1}}$; it is enclosed by a concatenation of four analogous arcs.
    \vspace{0.1in}
    
    \noindent \underline{Claim 2:}
    The neighborhood $D_3(\Pi''_n)$ is contained in $\UHP$ and each of annuli 
    \[
    D_3(\Pi'_n) \backslash D_2(\Pi''_n), \quad D_2(\Pi''_n) \backslash D_1(\Pi''_n), \quad D_1(\Pi''_n) \backslash \Pi''_n
    \]
    has modulus $\asymp 1$.
    The same holds for $\Pi'_n$.

    \begin{proof}
        This follows immediately by design.
    \end{proof}

    \noindent \underline{Claim 3:}
    The set $\Pi''_n$ is a uniform quasiarc with Euclidean diameter $\asymp |\crit_{Q_n}|$, and for $k \in \{1,2\}$, the neighborhood $D_k(\Pi''_n)$ has Euclidean diameter $\asymp |\crit_{Q_n}|$ and hyperbolic diameter $\asymp 1$.
    The same holds for $\Pi'_n$.

    \begin{proof}
        Claim 2 implies that
        \begin{align}
        \label{eqn:hyp-diam-lower-bound}
            \diam_{\UHP}(D_2(\Pi''_n)) = O(1).
        \end{align}
        Hence, $\Fbold^{Q_n} \circ \Psi$ sends $D_2(\Pi''_n)$ onto its image with uniformly bounded distortion.
        By Lemma \ref{cor:Psi-qc}, $\Psi^{-1}(\Pi_n)$ is a uniform quasiarc because $\Pi_n$ is a uniform quasiarc in $\UHP$ that avoids all fjords in external coordinates.
        Therefore, $\Pi''_n$ is also uniform quasiarc.
    
        By Lemma \ref{lem:primary-bubble-disjoint}, the pseudo-bubbles $\hat{\Bubb}_{Q_n}$ and $\hat{\Bubb}_{Q_n-Q_{n-1}}$ are disjoint and have distance $\asymp |\crit_{Q_n}|$ from one another.
        Thus,
        \begin{align}
        \label{eqn:euc-diam-lower-bound}
            \diam( \Pi''_n ) \succ |\crit_{Q_n}|.
        \end{align}
        Altogether, (\ref{eqn:euc-diam-lower-bound}) and (\ref{eqn:hyp-diam-lower-bound}) imply the rest of Claim 3 for $\Pi''_n$.

        The argument for $\Pi'_n$ is analogous.
    \end{proof}

    The two claims above imply that for $k \in \{1,2\}$, $D_k(\Pi'_n)$ and $D_k(\Pi''_n)$ satisfy properties (1)--(4).
    \vspace{0.1in}

    \noindent \underline{Claim 4:} $D_2(\Pi''_n)$ is disjoint from both $D_2(\Pi''_n)$ and $\hat{\Bubb}_{Q_n+Q_{n+1}}$.

    \begin{proof}
        The neighborhood $D_2(\Pi'_n)$ is contained in the union of $\hat{\Bubb}_{Q_n+Q_{n-1}}$, $\hat{\Bubb}_{Q_n}$, and $\lake'_n$, whereas the neighborhood $D_2(\Pi''_n)$ is contained in the union of $\hat{\Bubb}_{Q_n}$, $\hat{\Bubb}_{Q_n-Q_{n-1}}$, and $\lake''_n$. 
        The lake $\lake'_n$ is disjoint from the lake $\lake''_n$ because it is contained in a different primary lake of generation $Q_n$, namely $\lake_{Q_n}$.
        So if $D_2(\Pi'_n)$ and $D_2(\Pi''_n)$ intersect, the intersection would be contained in $\hat{\Bubb}_{Q_n}$, but this is prevented by the geodesic construction.
        Hence, $D_2(\Pi'_n)$ and $D_2(\Pi''_n)$ are disjoint.

        The bubble $\hat{\Bubb}_{Q_n+Q_{n+1}}$ is contained in the lake $\lake_{Q_n}$ of generation $Q_n$ distinct from $\lake''_n$.
        Hence, $D_2(\Pi''_n)$ is disjoint from $\hat{\Bubb}_{Q_n+Q_{n+1}}$.
    \end{proof}

    Unlike $\Pi''_n$, the argument for $\Pi'_n$ requires a finer analysis.
    \vspace{0.1in}

    \noindent \underline{Claim 5:} For sufficiently high $\threshold$, $D_2(\Pi'_n)$ is disjoint from $\hat{\Bubb}_{Q_n-Q_{n-1}}$.
    
    \begin{proof}
        Recall again that $D_2(\Pi'_n)$ is contained in the union of $\hat{\Bubb}_{Q_n}$, $\hat{\Bubb}_{Q_n+Q_{n-1}}$, and $\lake'_n$.
        By Claim 1, it suffices to show that the lake $\lake''_n$ is disjoint from $\hat{\Bubb}_{Q_n-Q_{n-1}}$.
        
        The pseudo-bubble $\hat{\Bubb}_{Q_n-Q_{n-1}}$ is contained in two lakes of generation $Q_n-Q_{n-1}$; one of them is $\lake_{Q_n-Q_{n-1}}$ and it contains $\lake'_n$.
        The image of $\lake'_n$ under the conformal map $\Fext^{Q_n-Q_{n-1}}: \lake_{Q_n-Q_{n-1}} \to \UHP$ is a lake $\lake'''_n$ of generation $2Q_{n-1}$.
        Meanwhile, the image of $\hat{\Bubb}_{Q_n-Q_{n-1}} \cap \lake_{Q_n-Q_{n-1}}$ under $\Fext^{Q_n-Q_{n-1}}: \lake_{Q_n-Q_{n-1}} \to \UHP$ is precisely equal to the set $\mathcal{V}_n$ of fjords that are attached to the connected component of $\R \backslash \{0\}$ containing $\crit_{Q_{n-1}}$.
        Below, we will show that $\lake'''_n$ is disjoint from $\mathcal{V}_n$.
        
        By the analysis in the proof of Proposition \ref{prop:lake-bounds-0}, there exists a uniform constant $K'>0$ such that the upper-half boundary of $\lake'''_n$ is contained in
        \[
            \partial_{\UHP} \lake'''_n \subset \Bubb_{Q_{n-1}} \cup \Bubb_{2Q_{n-1}} \cup \D_{\UHP}(\alpha_{2Q_{n-1}},K').
        \]
        Together with Real Bounds and Corollary \ref{cor:location-of-alpha}, this implies that $\partial_{\UHP} \lake'''_n$ is contained in the half-Poincar\'e neighborhood $\Delta_{\delta'''}[\crit_{-2Q_{n-1}},\crit_{-Q_{n-1}}]$ for some universal constant $\delta'''>0$.
        We will assume that $\threshold$ is sufficiently high such that the angle $\nnu(\threshold)$ from Lemma \ref{lem:fjord-angle-control} is smaller than $\varepsilon>0$.
        Then, the set $\Delta_{\varepsilon}[\crit_{-2Q_{n-1}},\crit_{-Q_{n-1}}]$ and therefore the lake $\lake'''_n$ is disjoint from $\mathcal{V}_n$.
    \end{proof}

    All the five claims above imply that there exists $\delta_2 \in (0,\delta_1)$ such that the neighborhoods
    \[
        D(A) = \begin{cases}
            D_1(A) & \textnormal{ for } A \in \{ \Pi'_n, \Pi''_n \}, \\
            D_{\delta_2}(A) & \textnormal{ for } A \in \{ \hat{\Bubb}_{Q_n+Q_{n-1}}, \hat{\Bubb}_{Q_n}, \hat{\Bubb}_{Q_n-Q_{n-1}} \}
        \end{cases}
    \]
    satisfy property (5).
    Property (6)(i) follows by design.
    Property (6)(ii) can be arranged by local modification of these neighborhoods around the intersection points.
\end{proof}

\begin{proof}[Proof of Proposition \ref{prop:uniform-qc-bounded}]
    Fix $n \in \Z$ and denote by $X_n$ the boundary of $\Wext_n \backslash \hat{\Uext}_n$. 
    To prove that $X_n$ is a $K(N)$-quasicircle, it suffices to prove the following property: 
    there exist constants $\eta \in \N$ and $\kappa>0$ depending only on $N$ such that $X$ is $(\eta,\kappa)$-\emph{bounded turning}. 
    That is, there exists some constant $\varepsilon>0$ such that $X_n$ can be covered by $\eta$ arcs of diameter $\leq \varepsilon$ and for any two distinct points $x$ and $y$ on $X_n$ with distance $\leq \varepsilon$, the smallest diameter of an arc on $X_n$ joining $x$ and $y$ is at most $\kappa |x-y|$.
    The dilatation of $X_n$ depends only on $\eta$ and $\kappa$.

    Let us present $X_n$ as the concatenation of closed arcs 
    \begin{equation}
    \label{eqn:quasiarc-pieces}
        \Pi_n, \quad \Pi'_n, \quad \Pi''_n, \quad J'_n, \quad J''_n, \quad X'_n, \quad X^\dagger_n, \quad X''_n
    \end{equation}
    where 
    \begin{itemize}
        \item $J'_n = [\crit_{-Q_{n-1}},\crit_{-Q_n-Q_{n-1}}]$ and $J''_n=[\crit_{Q_{n-1}-Q_n},\crit_{Q_{n-1}-2 Q_{n}}]$;
        \item $X'_n$ is the subarc of $\partial \hat{\Bubb}_{Q_n+Q_{n-1}}$ that contains the alpha-point $\alpha_{Q_n-Q_{n-1}}$ and connects the root $\crit_{Q_n+Q_{n-1}}$ to the endpoint of $\Pi'_n$;
        \item $X''_n$ is the subarc of $\partial \hat{\Bubb}_{Q_{n}-Q_{n-1}}$ that contains the alpha-point $\alpha_{Q_{n}-Q_{n-1}}$ and connects the root $\crit_{Q_{n}-Q_{n-1}}$ to the endpoint of $\Pi''$;
        \item $X^\dagger_n$ is the subarc of $\partial \hat{\Bubb}_{Q_n}$ that contains the alpha-point $\alpha_{Q_{n}}$ and connects an endpoint of $\Pi'$ to an endpoint of $\Pi''$.
    \end{itemize}
    Again, see Figure \ref{fig:pseudo-butterfly} for illustration.
    We say that any two distinct arcs on the list (\ref{eqn:quasiarc-pieces}) is adjacent if they share a common endpoint.
    \vspace{0.1in}

    \noindent \underline{Claim 1:}
        Every arc $A$ on the list (\ref{eqn:quasiarc-pieces}) has Euclidean diameter $\asymp |\crit_{Q_n}|$.

    \begin{proof}
        This follows from Real Bounds and Lemma \ref{lem:disjointness-arcs}.
    \end{proof}
    
    \noindent \underline{Claim 2:}
        For every pair of adjacent arcs $A_1$ and $A_2$ on the list (\ref{eqn:quasiarc-pieces}), the union $A_1 \cup A_2$ is a uniform quasiarc.

    \begin{proof}
        Lemma \ref{lem:disjointness-arcs} (1) tells us that every arc on the list (\ref{eqn:quasiarc-pieces}) is a uniform quasiarc.
        To deal with adjacent pairs, we apply the property that adjacent arcs meet at a definite angle; this property follows from Lemma \ref{lem:disjointness-arcs} (6) as well as Pseudo-Bubble Bounds.
    \end{proof}
    
    \noindent \underline{Claim 3:} Every arc on the list (\ref{eqn:quasiarc-pieces}) is of distance $\succ|\crit_{Q_n}|$ away from every other arc in (\ref{eqn:quasiarc-pieces}) not adjacent to $A$.

    \begin{proof}
        This follows from Proposition \ref{prop:butterflies} and Lemma \ref{lem:disjointness-arcs}.
    \end{proof}

    These three claims clearly imply the bounded turning property for $X$.
\end{proof}

\setlength\arraycolsep{0pt}
    
\begin{figure}
    \centering
    \begin{tikzpicture}[scale=1.1]
        \filldraw[white,fill=blue!10!white] (-6,4) -- (-5.5,6) -- (0.5,6) -- (1,4) -- (-6,4);
        \filldraw[white,fill=red!10!white] (1,4) -- (1.5,6) -- (3.5,6) -- (4,4) -- (1,4);
        \draw[blue!75!black,line width=1pt] (-6,4) -- (1,4);
        \draw[red,line width=1pt] (1,4) -- (4,4);
        
        \draw[line width=0.5pt] (-6,4.1) -- (-6,3.9);
        \node [black, font=\bfseries] at (-6,3.6) {\scalebox{0.8}{$-P_{n,m}$}};
        \draw[line width=0.5pt] (4,4.1) -- (4,3.9);
        \node [black, font=\bfseries] at (4,3.6) {\scalebox{0.8}{$-P_{n,m+1}$}};
        
        \draw[line width=0.5pt] (-4.5,4.1) -- (-4.5,3.9);
        \node [black, font=\bfseries] at (-4.5,2.8) {\scalebox{0.75}{$-Q_n-a_{m+1}Q_m$}};
        \draw[gray,line width=0.5pt,-latex] (-4.5,3) -- (-4.5,3.8);
        \draw[line width=0.5pt] (-3.2,4.1) -- (-3.2,3.9);
        \node [black, font=\bfseries] at (-2.8,3.3) {\scalebox{0.75}{$- Q_n - (a_{m+1}-1)Q_m$}};
        \draw[gray,line width=0.5pt,-latex] (-3.2,3.5) -- (-3.2,3.8);

        \draw[line width=0.5pt] (-2.5,4.1) -- (-2.5,3.9);
        \draw[line width=0.5pt] (-2.05,4.1) -- (-2.05,3.9);
        \draw[line width=0.5pt] (-1.75,4.1) -- (-1.75,3.9);
        \draw[line width=0.5pt] (-1.45,4.1) -- (-1.45,3.9);
        \draw[line width=0.5pt] (-1,4.1) -- (-1,3.9);
        
        \draw[line width=0.5pt] (-0.3,4.1) -- (-0.3,3.9);
        \node [black, font=\bfseries] at (-0.3,3.43) {\scriptsize  $\begin{array}{rl}
            - & Q_n \\
            - & Q_m
        \end{array}$};
        \draw[line width=0.5pt] (1,4.1) -- (1,3.9);
        \node [black, font=\bfseries] at (1,3.6) {\scriptsize $-Q_n$};
        \draw[line width=0.5pt] (2.5,4.1) -- (2.5,3.9);
        \node [black, font=\bfseries] at (2.5,3.43) {\scriptsize  $\begin{array}{rl}
            - & Q_n \\
            + & Q_m
        \end{array}$};
        
        \draw[blue,line width=0.5pt,-latex] (-3.8,4.4) .. controls (-3.7,4.9) and (-3.05,4.9) .. (-2.95,4.4);
        \draw[blue,line width=0.5pt,-latex] (-2.75,4.4) .. controls (-2.65,4.8) and (-2.4,4.75) .. (-2.3,4.35);
        \draw[blue,line width=0.5pt,-latex] (-2.2,4.3) .. controls (-2.15,4.6) and (-2.00,4.55) .. (-1.95,4.25);
        \draw[blue,line width=0.5pt,-latex] (-1.85,4.25) .. controls (-1.8,4.5) and (-1.7,4.5) .. (-1.65,4.25);
        \draw[blue,line width=0.5pt,-latex] (-1.55,4.25) .. controls (-1.5,4.55) and (-1.35,4.6) .. (-1.3,4.3);
        \draw[blue,line width=0.5pt,-latex] (-1.2,4.35) .. controls (-1.1,4.75) and (-0.85,4.8) .. (-0.75,4.4);
        \draw[blue,line width=0.5pt,-latex] (-0.55,4.4) .. controls (-0.45,4.9) and (0.1,4.9) .. (0.2,4.4);
        \draw[blue,line width=0.5pt,-latex] (0.6,4.4) .. controls (0.7,4.9) and (1.4,4.9) .. (1.5,4.4);
        \node [blue, font=\bfseries] at (-0.9,5) {\footnotesize $\Fext^{Q_m}$};
        \node [blue!60!black, font=\bfseries] at (-4.5,5.25) {\small $\Yext_{n,m}$};
        \node [red, font=\bfseries] at (3,5.25) {\small $\Yext_{n,m+1}$};
\end{tikzpicture}

    \caption{The action of $\Fext^{Q_m}$ on the real line near $\Yext_{n,m}$ when $a_{m+1}=8$.}
    \label{fig:interval-orbit}
\end{figure}

\subsection{Parabolic enrichment}
\label{ss:enriched-butterfly}

Let us fix $n \in \Z$. 
Our goal in this subsection is to carefully study the iterated preimages of $\Wext_n$ along the real line.

For $m \geq n$, denote
\[
    P_{n,m} := \begin{cases}
        Q_n - Q_{n-1} & \text{ if } m = n, \\
        Q_n - Q_{m-1} + Q_m & \text{ if } m \geq n+1.
    \end{cases}
\]
Denote by $\Yext_{n,m}$ the unique domain with essential real boundary
\[
\parRess \Yext_{n,m} = [\crit_{-Q_n}, \crit_{-P_{n,m}}]
\]
such that
\[
    \Fext^{Q_m}: \Yext_{n,m} \to \Wext_n
\]
is a conformal isomorphism.
It is well-defined because $\parRess \Yext_{n,m}$ is in $\tiling_{Q_m}$. 
The interval dynamics of the map $\Fext^{Q_m}$ on $\Yext_{n,m}$ is illustrated in Figure \ref{fig:interval-orbit}. 
For $m=n$, we have
\[
\Yext_{n,n} = \Uext''_n.
\]

For every integer $1\leq j \leq a_{m+1}$, denote by $\Yext^j_{n,m}$ the unique domain with essential real boundary
\[
\parRess \Yext^j_{n,m} = [\crit_{-Q_n-(j-1)Q_m},\crit_{-P_{n,m}}].
\]
such that 
\[
    \Fext^{jQ_m}: \Yext^j_{n,m} \to \Wext_n
\]
is a conformal isomorphism.
The following sequence of maps are conformal isomorphisms:
\[
    \Yext^{a_{m+1}}_{n,m} \; \xrightarrow[\;\Fext^{Q_m}\;]{} \; 
    \Yext^{a_{m+1}-1}_{n,m} \; \xrightarrow[\;\Fext^{Q_m}\;]{} \;
    \ldots \; \xrightarrow[\;\Fext^{Q_m}\;]{} \;
    \Yext^{1}_{n,m} = \Yext_{n,m} \; \xrightarrow[\;\Fext^{Q_m}\;]{} \; \Wext_n.
\]
When $a_{m+1}=\infty$, the sequence above is infinite and we can extend it further as follows. 
For $k \geq 0$, we will use the notation
\[
    (\iinfty-k)Q_m := Q_{m+1}-Q_{m-1}-kQ_m, 
\]
and let $\Yext^{\iinfty-k}_{n,m}$ be the domain with essential real boundary 
\[
    \parRess \Yext^{\iinfty-k}_{n,m} = [\crit_{-Q_n-(\iinfty-k-1)Q_m},\crit_{-P_{n,m}}]
\]
such that
\[
    \Fext^{(\iinfty-k)Q_m}: \Yext^{\iinfty-k}_{n,m} \to \Wext_n
\]
is a conformal isomorphism. 
See Figure \ref{fig:Y-domains}.
Then, for any $k \geq 0$ and $j \geq 1$, the following maps are conformal isomorphisms.
\[
    \Yext^{\iinfty}_{n,m} \xrightarrow[\;\Fext^{Q_m}\;]{} 
    \Yext^{\iinfty-1}_{n,m} \xrightarrow[\;\Fext^{Q_m}\;]{} 
    \Yext^{\iinfty-2}_{n,m} \xrightarrow[\;\Fext^{Q_m}\;]{} 
    \ldots \xrightarrow[\;\Fext^{Q_m}\;]{} 
    \Yext^{\iinfty-k}_{n,m} \xrightarrow[\;\Fext^{(\iinfty-k-j)Q_m}\;]{} 
    \Yext^{j}_{n,m}.
\]

When depth $m$ is parabolic, recall that the principal fjord $\fjord_m$ of level $m$ contains a unique parabolic fixed point $\beta_m$ of $\Fext^{Q_m}$.
In this case, we will denote
\[
    \beta_{n,m} := \Fext^{Q_m-P_{n,m}}(\beta_n).
\]
The following lemma summarizes the elementary properties of these $\Yext$ domains.

\begin{figure}
        \centering
    \begin{tikzpicture}
    \node[anchor=south west, inner sep=0] (image) at (0,0) {\includegraphics[width=0.98\linewidth]{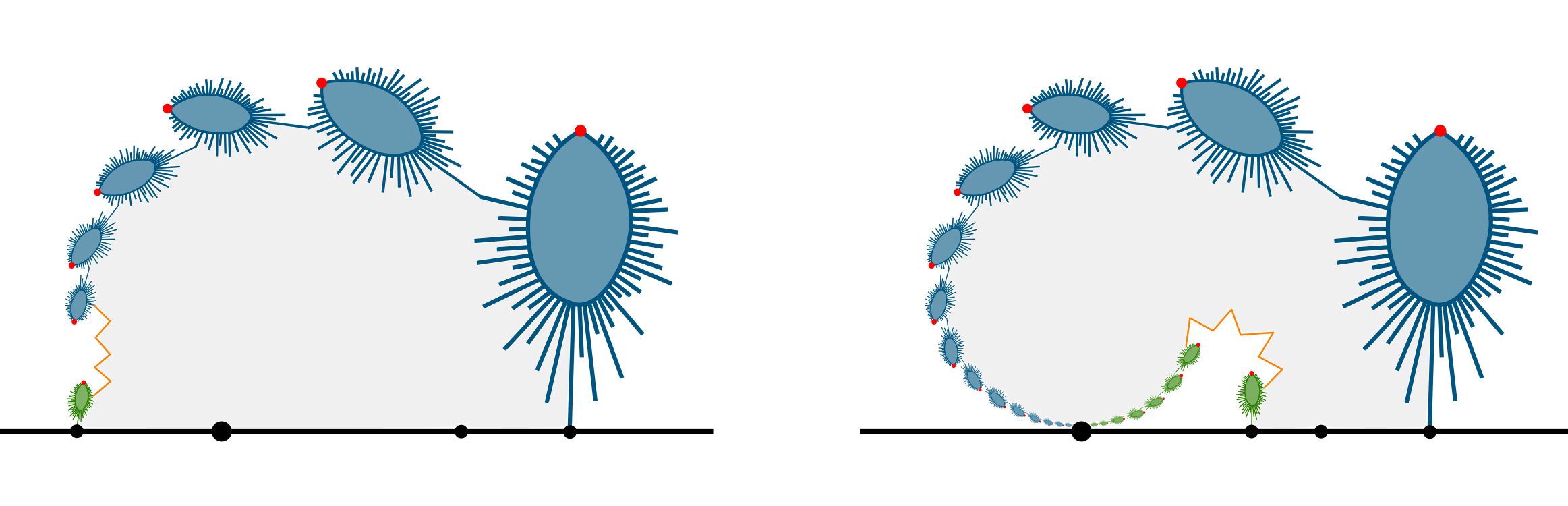}};
    \begin{scope}[
        x={(image.south east)},
        y={(image.north west)}
    ]
        \node [black, font=\bfseries] at (0.1,-0.15) {$\crit_{-Q_n-(j+1)Q_m}$};
        \draw[gray,line width=0.5pt,-latex] (0.05,-0.08) -- (0.05,0.075);
        
        \node [black, font=\bfseries] at (0.8,-0.15) {$\crit_{-Q_n-(\iinfty-k-1)Q_m}$};
        \draw[gray,line width=0.5pt,-latex] (0.797,-0.08) -- (0.797,0.075);
        
        \node [black, font=\bfseries] at (0.38,0.04) {$\crit_{-P_{n,m}}$};
        \node [black, font=\bfseries] at (0.925,0.04) {$\crit_{-P_{n,m}}$};
        \node [black, font=\bfseries] at (0.2,0.4) {$\Yext^j_{n,m}$};
        \node [black, font=\bfseries] at (0.72,0.5) {$\Yext^{\iinfty-k}_{n,m}$};
        \node [black, font=\bfseries] at (0.145,0.04) {$\beta_{n,m}$};
        \node [black, font=\bfseries] at (0.685,0.04) {$\beta_{n,m}$};
    \end{scope}
\end{tikzpicture}
        
        \caption{Domains $\Yext^j_{n,m}$ and $\Yext^{\iinfty-k}_{n,m}$}
        \label{fig:Y-domains}
\end{figure}

\begin{lemma}
\label{lem:non-esc-parab}
    For any $n \in \Z$ and $m \geq n$,
    \begin{enumerate}[label = \textnormal{(\arabic*)}]
        \item $\Yext_{n,m}$ is contained in $\Wext_n$;
        \item for every integer $1 \leq j < a_{m+1}$,
        \[
        \Yext^{j+1}_{n,m} \subset \Yext^j_{n,m};
        \]
        \item if $a_{m+1}=\infty$, then
        \begin{enumerate}[label=\textnormal{(\alph*)}]
            \item the non-escaping set of $\Fext^{Q_m}: \Yext_{n,m} \to \Wext_n$ is equal to 
            \[
            \Dext_{n,m} := \bigcap_{j\geq 1} \Yext^j_{n,m},
            \]
            and it is the immediate attracting basin of $\beta_{n,m}$;
            \item for every $k \geq 0$, we have 
            \[
            \Yext^{\iinfty-k}_{n,m} \subset \Yext^{\iinfty-k-1}_{n,m} \subset \Dext_{n,m};
            \]
        \end{enumerate}
        \item regardless of the value of $a_{m+1} \in \{1,2,\ldots,\iinfty\}$, we have the following commutative diagram.
\begin{center}
 \begin{tikzcd}
 & \Yext^{a_{m+1}}_{n,m} \arrow[dd, "\Fext^{Q_{m+1}-Q_{m-1}}"] \\
 \Yext_{n,m+1} \arrow[ur, "\Fext^{Q_{m-1}}"] \arrow[dr, "\Fext^{Q_{m+1}}"'] &
 \\
 & \Wext_n
 \end{tikzcd}
\end{center}
    \end{enumerate}
\end{lemma}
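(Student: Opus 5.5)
The whole lemma will come from one mechanism. Each domain $\Yext$ is the univalent lift of $\Wext_n$, or of another $\Yext$-domain, along a real tile that carries no critical values. Lifts are therefore determined by their essential real boundaries, by the uniqueness argument of Lemmas~\ref{lem:primary-lake} and~\ref{lem:unique-primary-lake}. I will derive every inclusion from an inclusion of real intervals plus Schwarz reflection, and then check which critical values lie in the relevant images.

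\textbf{Step 1: items (4) and (2).} For (4), I will verify the combinatorial identity $\Fext^{Q_{m-1}}([\crit_{-Q_n},\crit_{-P_{n,m+1}}]) = \parRess \Yext^{a_{m+1}}_{n,m}$ using the relation $Q_{m+1}=a_{m+1}Q_m+Q_{m-1}$. In the parabolic case I use the convention $(\iinfty)Q_m = Q_{m+1}-Q_{m-1}$ instead. The map $\Fext^{Q_{m-1}}$ has no critical point in the interior of this tile, so it is a conformal isomorphism from $\Yext_{n,m+1}$ onto the lift with that real boundary. Composing with $\Fext^{Q_{m+1}-Q_{m-1}}$ gives $\Fext^{Q_{m+1}}$, and uniqueness of lifts closes the diagram.

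For (2), the real boundary of $\Yext^{j+1}_{n,m}$ lies inside that of $\Yext^j_{n,m}$; this is the interval picture of Figure~\ref{fig:interval-orbit}. Since $\Fext^{Q_m}(\Yext^{j+1}_{n,m}) = \Yext^j_{n,m}$, the inclusion reduces to showing $\Yext^j_{n,m}\subset \Yext^{j-1}_{n,m}$, with $\Yext^0_{n,m}:=\Wext_n$. So (2) follows by induction once (1) holds, by pulling back inclusions under the univalent branch of $\Fext^{-Q_m}$.

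\textbf{Step 2: item (1), the main obstacle.} I need $\Yext_{n,m}\subset\Wext_n$ uniformly in $m\ge n$. For $m=n$ this is Proposition~\ref{prop:butterflies}, since $\Yext_{n,n}=\Uext''_n$. For $m>n$ I first try induction through item (4): $\Yext_{n,m+1}$ is the $\Fext^{Q_{m-1}}$-lift of $\Yext^{a_{m+1}}_{n,m}\subset\Yext_{n,m}$. This only places $\Yext_{n,m+1}$ inside some lift of $\Wext_n$, so it does not close on its own.

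I would therefore argue directly. $\Fext^{Q_m}$ maps the tile $[\crit_{-Q_n},\crit_{-P_{n,m}}]\in\tiling_{Q_m}$ homeomorphically into $\parR\Wext_n$. The reflected domain $\UHP\cup J\cup -\UHP$ lifts univalently, as in the proof of Lemma~\ref{lem:primary-lake}. The Schwarz Lemma for half-Poincar\'e domains then puts $\Yext_{n,m}$ in $\poincare_{\ttau}[\crit_{-Q_n},\crit_{-P_{n,m}}]$. By Real Bounds this base sits inside $[\crit_{-Q_{n-1}},\crit_{Q_{n-1}-2Q_n}]$ with definite margin, and the Poincar\'e domain over a sub-interval is contained in the one over the larger interval. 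The delicate point is whether the lift's real boundary contains extra points from parabolic phenomena. I would handle this using Lemma~\ref{lem:W-contains-bubbles} together with the bounded primary-lake estimates of Propositions~\ref{prop:lake-bounds-0}--\ref{prop:lake-bounds-1}.

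\textbf{Step 3: item (3).} When $a_{m+1}=\infty$, the decreasing sequence $\Yext^j_{n,m}$ has real boundaries shrinking to $[\beta_{n,m},\crit_{-P_{n,m}}]$ by Proposition~\ref{prop:R-apb-transcendental}(3). A point is non-escaping exactly when all its $Q_m$-iterates stay in $\Yext_{n,m}$, that is, when it lies in every $\Yext^j_{n,m}$. On the intersection $\Dext_{n,m}$, the map $\Fext^{Q_m}$ is a conformal self-map. Transporting Lemma~\ref{lem:parabolic-basin} by the univalent map $\Fext^{Q_m-P_{n,m}}$, which carries $\beta_m$ to $\beta_{n,m}$, identifies $\Dext_{n,m}$ with the immediate basin of $\beta_{n,m}$; Denjoy--Wolff gives the attraction.

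For (3)(b), I compare real boundaries, which increase in the order $\iinfty-k$, and apply the same lifting uniqueness. Each $\Yext^{\iinfty-k}_{n,m}$ has real boundary contained in every $\Yext^j$ tile, so it lies in every $\Yext^j_{n,m}$ and hence in $\Dext_{n,m}$.
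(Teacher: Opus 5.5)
Your Step~2 has a genuine gap, and it is the core of the lemma. The Schwarz Lemma for half-Poincar\'e domains only controls the lift of $\poincare_{\ttau}(J')$, where $J'=\Fext^{Q_m}([\crit_{-Q_n},\crit_{-P_{n,m}}])$ is the image tile across which the inverse branch extends. However, $\Wext_n$ is the half-Poincar\'e domain over the much larger interval $[\crit_{-Q_{n-1}},\crit_{Q_{n-1}-2Q_n}]$. The endpoints of $J'$ are critical values of $\Fext^{Q_m}$ lying in the interior of that interval; already for $m=n$ one has $J'=[0,\crit_{Q_{n-1}}]$.

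Consequently $\Wext_n\not\subset\poincare_\tau(J')$ for any $\tau$. The inverse branch does not extend over $\parR\Wext_n$ as a real homeomorphism, and the parts of $\parR\Wext_n$ outside $J'$ lift onto boundaries of hoglets. Nothing in your argument places $\Yext_{n,m}$ inside $\poincare_{\ttau}[\crit_{-Q_n},\crit_{-P_{n,m}}]$. The case $m=n$ shows what is really at stake: $\Yext_{n,n}=\Uext''_n$ is bounded by pieces of $\partial\Bubb_{Q_n}$ and $\partial\Bubb_{Q_n-Q_{n-1}}$, and its containment in $\Wext_n$ needed the lake and pseudo-bubble bounds behind Proposition~\ref{prop:butterflies}. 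Propositions~\ref{prop:lake-bounds-0}--\ref{prop:lake-bounds-1} describe only $\lake'_n$ and $\lake''_n$, so they give nothing for $\Yext_{n,m}$ with $m>n$.

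The missing observation is that the induction you abandoned does close, as a two-step induction. The branch of $\Fext^{Q_{m-1}}$ in item (4) is the restriction of the conformal isomorphism $\Fext^{Q_{m-1}}\colon\Yext_{n,m-1}\to\Wext_n$. This holds because $[\crit_{-Q_n},\crit_{-P_{n,m+1}}]$ lies in the tile $[\crit_{-Q_n},\crit_{-P_{n,m-1}}]\in\tiling_{Q_{m-1}}$; the domains $\Yext_{n,k}$ alternate sides of $\crit_{-Q_n}$. So the ``some lift of $\Wext_n$'' you found is $\Yext_{n,m-1}$ itself. Pulling back $\Yext^{a_{m+1}}_{n,m}\subset\Yext_{n,m}\subset\Wext_n$ then gives $\Yext_{n,m+1}\subset\Yext_{n,m-1}\subset\Wext_n$.

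The base cases are $\Yext_{n,n}=\Uext''_n$ and, for the step $m=n$, the conformal map $\Fext^{Q_{n-1}}\colon\Uext'_n\to\Uext''_n$, which yields $\Yext_{n,n+1}\subset\Uext'_n\subset\Wext_n$. This is exactly the paper's proof: (1) at depth $m$ gives (2), (3), (4) at depth $m$, and (4) combined with (1) at depths $m-1$ and $m$ gives (1) at depth $m+1$.

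The same issue recurs in your item (3): inclusion of essential real boundaries does not give inclusion of domains, because the upper boundaries are again uncontrolled.
\begin{itemize}
\item For (3)(a) you need $\Dext_{n,m}\subset\Yext^j_{n,m}$ for all $j$. The paper obtains this by checking that the $\Fext^{Q_m}$-periodic hoglet chain bounding $\Dext_{n,m}$ lies in $\Wext_n$ (Lemma~\ref{lem:W-contains-bubbles} together with (1)), and that the connected non-escaping set must avoid this chain.
\item For (3)(b), the paper uses that the lake of generation $(\iinfty-k)Q_m$ containing $\Yext^{\iinfty-k}_{n,m}$ cannot cross $\parH\Dext_{n,m}$, since $\parH\Dext_{n,m}$ consists of hoglets of smaller generation.
\end{itemize}
A minor point: for $m>n$, the identity you plan to verify in (4) is only an inclusion. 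There $\Fext^{Q_{m-1}}$ maps $[\crit_{-Q_n},\crit_{-P_{n,m+1}}]$ onto a proper subinterval of $\parRess\Yext^{a_{m+1}}_{n,m}$ that shares only the endpoint $\crit_{-Q_n-(a_{m+1}-1)Q_m}$. This does not affect the argument.
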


\begin{proof}
    We will prove this by induction over $m$. 
    For any $j \in \{1,2,3,4\}$ and $m \geq n$, we will denote by $(j)_m$ the property described in item $(j)$ above applied for depth $m$. 
    The induction begins with the observation that $(1)_n$ holds trivially. 
    Let us fix $m \geq n$ and assume $(1)_m$.
    
    First, observe that $(2)_m$ directly follows from $(1)_m$ and the fact that each $\Yext^{j+1}_{n,m}$ is the lift of $\Yext^j_{n,m}$ under the map $\Fext^{Q_m}:\Yext_{n,m} \to \Wext_n$.
    
    Suppose that $a_{m+1}=\infty$. 
    The interval $\parRess \Yext_{n,m}$ contains a unique parabolic fixed point of $\Fext^{Q_m}$ which is $\beta_{n,m}$. 
    The attracting basin $\Dext_{n,m}$ of $\beta_{n,m}$ is equal to $\Fext^{P_{n,m}-Q_m}(\Dext_n)$ where $\Dext_n$ (from \S\ref{ss:bubble-chains}) is equal to the immediate attracting basin of $\beta_n$.
    The domain $\Dext_{n,m}$ is enclosed by a $\Fext^{Q_m}$-periodic hoglet chain $\Gamma_{n,m,j} := \Fext^{P_{n,m}-Q_m}(\Gamma_{m,j})$, $j \geq 0$ starting from the primary hoglet $\Gamma_{n,m,0} = \Bubb_{P_{n,m}}$.
    By Lemma \ref{lem:W-contains-bubbles}, $\Gamma_{n,m,0}$ is contained in $\Wext_n$.
    As such, the lift $\Gamma_{n,m,0}$ is contained in $\Yext_{n,m}$, which, by $(1)_m$, is contained in $\Wext_n$. 
    Inductively, the entire hoglet chain is contained in $\Wext_n$. 
    Therefore, $\Dext_{n,m}$ is contained in $\Wext_{n,m}$ and hence in $\Yext^j_{n,m}$ for all $j \geq 1$. 
    Since the non-escaping set of $\Fext^{Q_m}: \Yext_{n,m} \to \Wext_n$ is connected, contains $\Dext_{n,m}$, and has to avoid the hoglets $\Gamma_{n,m,j}$, $j \geq 0$, then the non-escaping set is precisely equal to $\Dext_{n,m}$. 
    This establishes $(3)_m$(a).
    
    For every $k \geq 0$, $\Yext^{\iinfty-k}_{n,m}$ is contained in a unique lake $\lake^{\iinfty-k}_{n,m}$ of generation $(\iinfty-k)Q_m$. 
    Observe that $\parRess \lake^{\iinfty-k}_{n,m}$ is contained in $\parR \Dext_{n,m} = [\crit_{-P_{n,m}}, \beta_{n,m}]$ but $\lake^{\iinfty-k}_{n,m}$ must avoid $\parH \Dext_{n,m}$ because it consists of hoglets of generation less than $(\iinfty-k)Q_m$. 
    Thus, $\Yext^{\iinfty-k}_{n,m}$ and $\lake^{\iinfty-k}_{n,m}$ are contained in $\Dext_{n,m}$. 
    Lastly, the containment $\Yext^{\iinfty-k}_{n,m} \subset \Yext^{\iinfty-k-1}_{n,m}$ follows from $(2)_m$ and the fact that these domains are mapped by $\Fext^{(\iinfty-k-2)Q_m}$ onto $\Yext^2_{n,m} \subset \Yext_{n,m}$. This establishes $(3)_m$(b).

    The map $\Fext^{Q_{m-1}}$ sends the interval $J =\parRess \Yext_{n,m+1}$ onto the interval 
    \[
    J' = [\crit_{-Q_n+Q_{m-1}},\crit_{-Q_n+Q_{m-1}+Q_{m}-Q_{m+1}}],
    \]
    which is contained in the interval $\parRess \Yext^{a_{m+1}}_{n,m}$. Hence, the lift of $\Yext^{a_{m+1}}_{n,m}$ along the map $\Fext^{Q_{m-1}}: J \to J'$ is the unique domain $\mathtt{E}$ containing $J$ on its boundary such that the composition
    \[
        E \xrightarrow[\; \Fext^{Q_{m-1}} \;]{} \Yext^{a_{m+1}}_{n,m} \xrightarrow[\; \Fext^{Q_{m+1}-Q_{m-1}} \;]{} \Wext_n
    \]
    is a conformal isomorphism. 
    By the definition of $\Yext_{n,m+1}$, this domain $\mathtt{E}$ must be equal to $\Yext_{n,m+1}$. 
    This establishes $(4)_m$.

    So far, we know that $(1)_n$ holds trivially and that for $m \geq n$, we have
    \begin{align}
        \label{eqn:1-implies-234}
        (1)_m \enspace \Longrightarrow \enspace (2)_m, \, (3)_m, \, (4)_m.
    \end{align}
    By $(2)_n$ and $(3)_n$, $\Yext^{a_{n+1}}_{n,n}$ is contained in $\Wext_n$, and by $(4)_n$, $\Yext_{n,n+1}$ is the lift of $\Yext^{a_{n+1}}_{n,n}$ under the map $\Fext^{Q_{n-1}}: \Uext''_n \to \Wext_n$.
    Therefore, the domain $\Yext_{n,n+1}$ is contained in $\Wext_n$ and so $(1)_{n+1}$ holds.
    
    To finish the proof, pick $m \geq n+1$. 
    Assuming $(1)_m, (2)_m, (3)_m, (4)_m$, the nest of domains $\Yext^{a_{n+1}}_{n,m} \subset \Wext_n$ lifts under $\Fext^{Q_{m-1}}$ to the nest of domains $\Yext_{n,m+1} \subset \Yext_{n,m-1}$. 
    Therefore, $(1)_{m-1}$ and $(j)_m$, $j \in \{1,2,3,4\}$ together imply $(1)_{m+1}$. 
    This completes the induction process.
\end{proof}

\begin{definition}
\label{def:enriched-butterfly}
    For $n \in \Z$, we define the $n$\textsuperscript{th} \emph{hoglet butterfly pair} to be the pair of maps
\[
    \butterfly_{n} = \left\{ \Fext^{Q_n+Q_{n-1}}: \Uext'_n \to \Wext_n, \quad \Fext^{Q_n}: \Uext''_n \to \Wext_n \right\}.
\]
For $m \geq n$, we define the \emph{depth $m$ enrichment} $\butterfly_n^m$ of the $n$\textsuperscript{th} hoglet butterfly to be the collection of maps defined as follows. 
Denote $\butterfly_n^{n-1} := \butterfly_n$ for convenience.
Inductively, for $m \geq n$, 
\[
    \butterfly_n^m := \begin{cases}
        \butterfly_n^{m-1} & \text{ if } a_{m+1}<\infty,\\
        \butterfly_n^{m-1} \cup \left\{ \Fext^{(\iinfty-k)Q_m}: \Yext_{n,m}^{\iinfty-k} \to \Wext_n \right\}_{k \geq 0} & \text{ if } a_{m+1}= \infty. 
    \end{cases}
\]
Also, we define the $n$\textsuperscript{th} \emph{fully enriched hoglet butterfly} to be the set of maps
\[
\butterfly_n^{\infty} := \bigcup_{m\geq n} \butterfly_n^{m}.
\]
For every $m \in \{n,n+1,\ldots,\infty\}$, $\butterfly_n^{m}$ generates a semigroup $\Butterfly_n^{m}$ under composition. 
\end{definition}

By Lemma \ref{lem:non-esc-parab} (4), $\Butterfly_n^{m}$ always contains the map 
\[
\Fext^{Q_{m+1}}: \Yext_{n,m+1} \to \Wext_n.
\]

\begin{lemma}[Uniqueness of parabolic cycles]
\label{lem:uniqueness-parabolic-cycle}
    For every $n \in \Z$ and every parabolic depth $m \in \Z$ greater than or equal to $n$, 
    every parabolic fixed point of $\Fext^{Q_m}$ that is periodic under $\butterfly_n^{m-1}$ is eventually mapped to $\beta_{n,m}$ under $\butterfly_n^{m-1}$.
\end{lemma}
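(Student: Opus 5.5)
Let $\beta$ be a parabolic fixed point of $\Fext^{Q_m}$ that is periodic under $\butterfly_n^{m-1}$. The plan is to reduce to real dynamics via Real Bounds. First I would show that $\beta$ lies on $\R$ and not in $\UHP$. Since $\Fext^{Q_m}$ restricted to a lake is a conformal isomorphism onto $\UHP$, a fixed point in $\UHP$ inside a lake $\lake$ with $\Fext^{Q_m}:\lake\to\UHP$ is the Denjoy--Wolff repelling fixed point of the inverse branch. By Schwarz, it is strictly repelling, so it cannot be parabolic. Hence $\beta\in\R$, and $\beta$ lies in the essential real boundary of one of the domains making up $\butterfly_n^{m-1}$, all of which are contained in $\Wext_n$ by Lemma~\ref{lem:non-esc-parab}(1).

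Next I would use the real topological cascade structure (Proposition~\ref{prop:R-apb-transcendental} together with Corollary~\ref{cor:extension-critical}). On $\R$, the fixed points of the homeomorphism $\Fext^{Q_m}$ with $a_{m+1}=\infty$ are, by Proposition~\ref{prop:R-apb-transcendental}(3)(a), exactly the points $\beta_I$, one in each tile $I\in\tiling_{Q_{m-1}}$. This follows because $\Fext^{Q_m}$ acts with translation-like dynamics on each tile of $\tiling_{Q_m}$ away from $\beta_I$, by (3)(b). By Lemma~\ref{lem:repelling-periodic-point}, these points form a single cycle $\{\beta_\fjord\}$ under $\Fext$, and the cycle is generated by times $P<Q_{m-1}$ moving tile to tile. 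So $\beta=\beta_I$ for some tile $I\subset[\crit_{-Q_{n-1}},\crit_{Q_{n-1}-2Q_n}]$.

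The remaining task is to show that some element of the semigroup $\Butterfly_n^{m-1}$ maps $\beta_I$ to $\beta_{n,m}=\Fext^{Q_m-P_{n,m}}(\beta_m)$. I would do this by tracking first returns. The maps in $\butterfly_n^{m-1}$ act on real intervals as the first return of the topological cascade to $\parR\Wext_n$, as in the sector-renormalization combinatorics of Lemma~\ref{lem:triangulation-first-return}. Consequently, the $\Butterfly_n^{m-1}$-orbit of any point of $\parR\Wext_n\cap\Dom$ agrees with its full $\Fext$-orbit intersected with $\parR\Wext_n$, up to the stopping rule. Since $\beta_I$ and $\beta_{n,m}$ lie on the same $\Fext$-cycle, composing branches of $\butterfly_n^{m-1}$ carries $\beta_I$ to $\beta_{n,m}$. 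The point $\beta_{n,m}$ is the unique cycle element in $\parRess\Yext_{n,m}$ (Lemma~\ref{lem:non-esc-parab}(3)), and the periodic orbit of $\beta$ is by assumption defined for all times, which rules out escape along the way.

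I expect the main obstacle to be this last step: verifying that branches of $\butterfly_n^{m-1}$, which are defined only via the times $Q_n+Q_{n-1}$, $Q_n$ and the enrichments at parabolic depths below $m$, realize the needed first-return times on the real line. For this I would induct on $m$ using Lemma~\ref{lem:non-esc-parab}(4). That lemma shows that $\Butterfly_n^{m-1}$ contains $\Fext^{Q_m}:\Yext_{n,m}\to\Wext_n$, and its branches $\Fext^{Q_{m-1}}$ relate $\Yext_{n,m}$ to $\Yext^{a_m}_{n,m-1}$. These branches let me push any tile of $\tiling_{Q_{m-1}}$ inside $\parR\Wext_n$ onto $\parRess\Yext_{n,m}$.
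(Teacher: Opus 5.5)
Your first two steps are fine. The Schwarz--Pick argument that a fixed point of $\Fext^{Q_m}$ inside a lake is repelling, so that $\beta\in\R$, is correct, and the paper leaves it implicit. Real Bounds also do give that the real fixed points of $\Fext^{Q_m}$ form a single $\Fext$-cycle.

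The gap is the step that turns \emph{same $\Fext$-cycle} into \emph{connected by a finite composition of maps in $\butterfly_n^{m-1}$}. You justify it by saying these maps are first returns, so that the $\Butterfly_n^{m-1}$-orbit of a point equals its $\Fext$-orbit intersected with $\parR\Wext_n$. That principle is false, and it fails exactly where the lemma has content:
\begin{itemize}
\item The maps of $\butterfly_n$ are only defined over $[\crit_{-Q_n-Q_{n-1}},\crit_{-Q_n+Q_{n-1}}]\subsetneq\parR\Wext_n$, so they are not return maps to all of $\parR\Wext_n$.
\item The enrichment maps $\Fext^{(\iinfty-j)Q_k}$ are not first returns at all.
\item At a parabolic depth $k$ with $n\le k<m$, the return times of a point are not well ordered. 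The depth-$k$ returns at times $t+iQ_k$ converge to the parabolic point $\beta_{n,k}$. The later returns, at times $t'+Q_{k+1}-jQ_k$, decrease in $j$ and have no least element.
\end{itemize}
So no first-return bookkeeping yields the required finite composition, and asserting that it does amounts to assuming the lemma.

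Your closing induction does not repair this. Lemma~\ref{lem:non-esc-parab}(4) factors the depth-$m$ return map through $\Fext^{Q_{m-2}}\colon\Yext_{n,m}\to\Yext^{a_m}_{n,m-1}$ (not $\Fext^{Q_{m-1}}$). That shows $\Fext^{Q_m}|_{\Yext_{n,m}}$ lies in $\Butterfly_n^{m-1}$, but it is a lifting statement. It says nothing about moving an arbitrary point forward into $\parRess\Yext_{n,m}$.

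What is needed is the paper's forward climbing argument. Let $k$ be the largest index in $\{n,\dots,m-1\}$ with $\beta\in\parRess\Yext_{n,k}$.
\begin{itemize}
\item If $a_{k+1}<\infty$, at most $a_{k+1}+2$ iterates of $\Fext^{Q_k}\colon\Yext_{n,k}\to\Wext_n$ push $\beta$ into $\parRess\Yext_{n,k+1}$ (Figure~\ref{fig:interval-orbit}).
\item If $a_{k+1}=\infty$, first note that $\beta\neq\beta_{n,k}$, since a fixed point of $\Fext^{Q_m}$ is not a fixed point of $\Fext^{Q_k}$. Then one or two iterates of $\Fext^{Q_k}$, followed by an enrichment map $\Fext^{j_2Q_k}$ with $j_2\in\{1,2,\dots\}\cup\{\iinfty,\iinfty-1,\dots\}$, carry $\beta$ past $\beta_{n,k}$ into $\parRess\Yext_{n,k+1}$.
\end{itemize}
Repeat until $k=m$. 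Then use that $\beta_{n,m}$ is the unique parabolic fixed point of $\Fext^{Q_m}$ on $\parRess\Yext_{n,m}$.

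The idea your proposal is missing is this explicit use of the Lavaurs maps to cross each intermediate parabolic point.
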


Here, the phrase ``under $\butterfly_{n}^{m-1}$'' is to be interpreted as ``under a finite composition of maps in $\butterfly_{n}^{m-1}$''.

\begin{proof}
    The case $m=n$ is trivial because $\beta_{n,n}$ is the unique fixed point of $\Fext^{Q_n}$ on the closure of $\Uext''_n$. 
    To proceed, let us assume $m > n$.

    Pick a parabolic fixed point $\beta$ of $\Fext^{Q_m}$ that is periodic under $\butterfly_n^{m-1}$. 
    If $\beta$ lies on the interval $\parRess \Yext_{n,m}$ of $\Yext_{n,m}$, then $\beta$ is equal to $\beta_{n,m}$ because $\beta_{n,m}$ is the unique parabolic fixed point of $\Fext^{Q_m}$ on $\parRess \Yext_{n,m}$. 
    So let us assume otherwise. 
    We need to show that a finite composition of maps in $\butterfly_n^{m-1}$ sends $\beta$ to the interval $\parRess \Yext_{n,m}$. 

    Let 
    \[
        k := \max \big\{ k \in \Z \: : \: n \leq k \leq m-1, \: \beta \in \parRess \Yext_{n,k} \big\}.
    \]
    Denote $\zeta_k = \beta$.
    We split the argument into two cases.
    \vspace{0.1in}
    
    \noindent \underline{Case 1:} Suppose $a_{k+1} < \infty$. \\
    According to Figure \ref{fig:interval-orbit}, there exists some integer $j \in \{1,\ldots,a_{k+1}+2\}$ such that the $j$\textsuperscript{th} iterate of $\Fext^{Q_k}: \Yext_{n,k} \to \Wext_n$ sends $\zeta_k$ to a point $\zeta_{k+1}$ on the interval $\parRess \Yext_{n,k+1}$.
    \vspace{0.1in}
    
    \noindent \underline{Case 2:} Suppose $a_{k+1} = \infty$. \\
    Since $k \neq m$, the point $\zeta_k$ is not equal to the fixed point $\beta_{n,k}$ of $\Fext^{Q_k}: \Yext_{n,k} \to \Wext_n$. 
    Again there exist some $j_1 \in \{1,2\}$ and $j_2 \in \{1,2,\ldots\} \cup \{\iinfty, \iinfty-1,\iinfty-2,\ldots \}$ such that the composition of $\Fext^{j_1 Q_k}: \Yext^{j_1}_{n,k} \to \Wext_n$ followed by $\Fext^{j_2 Q_k} : \Yext^{j_2}_{n,k} \to \Wext_n$ sends $\zeta_k$ to a point $\zeta_{k+1}$ on the interval $\parRess \Yext_{n,k+1}$.
    \vspace{0.1in}

    If $k+1 < m$, we can apply the same argument to find a map in the semigroup $\Butterfly_n^{m-1}$ sending $\zeta_{k+1}$ to a point $\zeta_{k+2}$ on the interval $\parRess \Yext_{n,k+2}$. By repeating this argument, we conclude that there exists a map in $\Butterfly_n^{m-1}$ that sends $\beta$ to a point on $\parRess \Yext_{n,m}$.
\end{proof}

\subsection{The non-escaping set}
\label{ss:non-esc-set-butterfly}

\begin{definition}
    For $n \in \Z$, we define the \emph{domain} of $\butterfly_n$ to be
\[
    \Dom(\butterfly_n) = \Uext'_n \cup \Uext''_n,
\]
    and the \emph{non-escaping set} $\Lambda_n$ of $\butterfly_n$ to be the set of points in the domain of $\butterfly_n$ that do not escape under iteration of the two-to-one map $\Fext^{Q_n+Q_{n-1}}|_{\Uext'_n} \cup \Fext^{Q_n}|_{\Uext''_n}$. 
\end{definition}

\begin{lemma}[Structure of $\Lambda_n$]
\label{lem:nowhere-dense}
    Let $n \in \Z$. The non-escaping set $\Lambda_n$ of $\butterfly_n$ has no wandering domains. The interior $\textnormal{int}(\Lambda_n)$ of $\Lambda_n$ can be described as follows.
    \begin{enumerate}
        \item If depth $m$ is non-parabolic for all $m \geq n$, then $\textnormal{int}(\Lambda_n)$ is empty.
        \item Else, if $m$ is the smallest parabolic depth greater than or equal to $n$, then $\textnormal{int}(\Lambda_n)$ is the union of the immediate parabolic basin $\Dext_{n,m}$ of $\beta_{n,m}$ and its iterated preimages under $\butterfly_n$.
    \end{enumerate}
\end{lemma}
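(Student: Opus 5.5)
The plan is to transfer the global results of Section~\ref{sec:hairiness-NILF} to the butterfly, using that the butterfly maps are restrictions of elements of $\Fext$. First note that every point $z\in\Lambda_n$ has a well-defined forward orbit under $\butterfly_n$, and each step is an application of $\Fext^{Q_n+Q_{n-1}}$ or $\Fext^{Q_n}$. So the $\butterfly_n$-orbit of $z$ is a sub-orbit of its $\Fext$-orbit along times $T_1<T_2<\dots$ in $\Tbold_{\Arch,n}$ with gaps at most $Q_n+Q_{n-1}$. In particular, $z$ never enters a hoglet of generation $\le$ the relevant time, because such points escape; hoglets $\Bubb_{Q_n}$, $\Bubb_{Q_n\pm Q_{n-1}}$ lie outside $\Dom(\butterfly_n)$ by the lemma on the structure of $\Uext'_n,\Uext''_n$. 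Since $\Fext^P$ is conformal on lakes, $\textnormal{int}(\Lambda_n)$ consists of Fatou-type components of the semigroup $(\Fext^P)_{P\in\Tbold_{\Arch,n}}$ that contain no hoglets.

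\emph{Case (1).} Suppose no depth $m\ge n$ is parabolic, and let $U$ be a component of $\textnormal{int}(\Lambda_n)$. Pulled back by $\Psi^{-1}$, $U$ is an open set containing no hoglets. By Theorem~\ref{thm:no-wandering-domains}, the grand orbit of $\Hbold$ is dense, so $U$ meets a preimage of $\Hbold$. Lemma~\ref{lem:non-linearity-2} (equivalently Remark~\ref{rem:expanding.iterates}) makes this quantitative and local: almost every point of $U$ lies within distance $\asymp r_k\to0$ of hoglets $\Uext_k,\Wext_k$ of comparable size. These hoglets have generation in $\Tbold_{\Arch,n}$, since the only way to leave the Archimedean class is through a parabolic basin, and by hypothesis there is none below depth $n$ in the relevant sense. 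Hence such hoglets lie inside $U$ for small $r_k$, contradicting $U\subset\Lambda_n$. The main work in this step is checking that the times $S_k,S_k'$ produced there are realized by $\butterfly_n$-compositions. I would derive this from Lemma~\ref{lem:non-esc-parab}(4): the maps $\Fext^{Q_{m+1}}:\Yext_{n,m+1}\to\Wext_n$ lie in $\Butterfly_n$, so returns to $\Wext_n$ at every depth $m\ge n$ are generated. Wandering domains are excluded by the same argument.

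\emph{Case (2).} Let $m$ be the smallest parabolic depth $\ge n$. Lemma~\ref{lem:non-esc-parab}(3)(a) shows that $\Dext_{n,m}$ is the non-escaping set of $\Fext^{Q_m}:\Yext_{n,m}\to\Wext_n$, and this map belongs to $\Butterfly_n$. Hence $\Dext_{n,m}$ and all its $\butterfly_n$-preimages lie in $\textnormal{int}(\Lambda_n)$. For the converse, take a point of $\textnormal{int}(\Lambda_n)$ not in the grand orbit of $\Dext_{n,m}$. Theorem~\ref{thm:parabolic-fatou-set}, applied with $N=m$ together with the last clause of Lemma~\ref{lem:non-linearity-2}, supplies nearby hoglets of generation in $\Tbold_{\Arch,m}$ at arbitrarily small scales. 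As in Case~(1), this gives a contradiction. Points in the full parabolic basin of $\beta_m$ that lie in $\Lambda_n$ must enter $\Dext_{n,m}$ under $\butterfly_n$, by the interval-dynamics reduction in the proof of Lemma~\ref{lem:uniqueness-parabolic-cycle}: one pushes $\parRess\Yext_{n,k}$ forward to $\parRess\Yext_{n,k+1}$ until reaching depth $m$. Deeper parabolic depths $m'>m$ do not contribute, because their basins require the enrichment maps $\Fext^{(\iinfty-k)Q_{m}}$, which are absent from $\butterfly_n$. Such points therefore escape through $\Yext^{\iinfty-k}_{n,m}\subset\Dext_{n,m}$, which is not the butterfly domain in the needed way.

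I expect the main obstacle to be the precise matching between the times produced in Lemma~\ref{lem:non-linearity-2} and the compositions available in $\Butterfly_n$, especially near the parabolic depth.
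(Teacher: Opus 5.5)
Your proposal is correct in outline and follows essentially the paper's route. The lift of a component of $\textnormal{int}(\Lambda_n)$ lies in the Fatou set of the semigroup indexed by $\Tbold_{\Arch,n}$, which equals $\Tbold$ in case (1) and $\Tbold_{\Arch,m}$ in case (2). Theorem~\ref{thm:no-wandering-domains} then rules such a component out in case (1), and Theorem~\ref{thm:parabolic-fatou-set} places it in the full basin of $\beta_m$ in case (2). Lemma~\ref{lem:non-esc-parab} gives $\Dext_{n,m}\subset\textnormal{int}(\Lambda_n)$, and Lemma~\ref{lem:uniqueness-parabolic-cycle} pushes every remaining component into $\Dext_{n,m}$. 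That lemma should be applied to the real butterfly-periodic parabolic point whose basin eventually contains the component, not to points of $\UHP$ directly.

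Two of your side remarks are wrong or unneeded and should be dropped. First, you do not need to match $S_k,S_k'$ with butterfly compositions: a hoglet of generation in $\Tbold_{\Arch,n}$ meeting $U$ already forces escape, because the butterfly times $T_j\ge jQ_n$ are cofinal in $\Tbold_{\Arch,n}$. Second, points of $\Yext^{\iinfty-k}_{n,m}\subset\Dext_{n,m}$ do not escape; they lie in $\Lambda_n$. Deeper parabolic depths need no separate treatment, since Theorem~\ref{thm:parabolic-fatou-set} with $N=m$ already describes the whole Fatou set of the semigroup indexed by $\Tbold_{\Arch,m}=\Tbold_{\Arch,n}$.
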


\begin{proof}
    Suppose that $\textnormal{int}(\Lambda_n)$ is non-empty. 
    Let us pick a connected component $\mathtt{E}$ of $\textnormal{int}(\Lambda_n)$. 
    Since the forward orbit of $\mathtt{E}$ under $\butterfly_n$ remains in $\Uext'_n \cup \Uext''_n$, then in the dynamical plane of $\Fbold$, the domain $\Psi^{-1}(\mathtt{E})$ is contained in the Fatou set of $(\Fbold^P)_{P \in \Tbold_{\Arch,n}}$ outside of $\Hbold$. 
    By Theorems \ref{thm:no-wandering-domains} and \ref{thm:parabolic-fatou-set}, the domain $\mathtt{E}$ must be contained in an attracting basin of a parabolic periodic point $\beta$ of $\butterfly_n$.
    Let $m \geq n$ be such that $\Fext^{Q_m}(\beta) = \beta$. 
    Since the period $Q_m$ is a linear combination of $Q_n$ and $Q_n+Q_{n-1}$, then $m$ is the smallest parabolic depth greater than or equal to $n$.
    
    By forward iteration under $\butterfly_n$, we can assume without loss of generality that $\mathtt{E}$ is a subset of the immediate attracting basin of $\beta$. 
    By Lemma \ref{lem:non-esc-parab}, the map $\Fext^{Q_m}: \Yext_{n,m} \to \Wext_n$ is a finite composition of maps in $\butterfly_n$ and its non-escaping set is precisely $\Dext_{n,m}$. Hence, $\Dext_{n,m}$ is a connected component of $\Lambda_n$. 
    The commuting pair
    \[
        \Fext^{Q_n+Q_{n-1}} : [0,\crit_{-Q_n}] \to [\crit_{Q_n+Q_{n-1}}, \crit_{Q_{n-1}}], \quad
        \Fext^{Q_n} : [\crit_{-Q_n}, \crit_{Q_{n-1}}] \to [0, \crit_{Q_n+Q_{n-1}}]
    \]
    contains a unique cycle of periodic points which consists of $\beta_{n,m}$ and its orbit under the commuting pair. 
    By Lemma \ref{lem:uniqueness-parabolic-cycle}, the periodic point $\beta$ has to be eventually mapped under $\butterfly_n$ to the point $\beta_{n,m}$, and $\mathtt{E}$ is eventually mapped into $\Dext_{n,m}$.
    Therefore, $\mathtt{E}$ must be an iterated preimage of $\Dext_{n,m}$.
\end{proof}

The lemma above can be enhanced by bringing in enrichments into the discussion. 

\begin{definition}
    For $m \in \{n, n+1, \ldots, \infty\}$, define the \emph{non-escaping set} $\Lambda_n^m$ of $\butterfly_n^m$ to be the largest subset of $\Dom(\butterfly_{n})$ with the property that 
\[
    h(\Lambda_n^m \cap \Dom(h)) = \Lambda_n^m \quad \text{ for every map } h \in \butterfly_n^m. 
\]
\end{definition}

The non-escaping set $\Lambda_n^m$ is always invariant under $\Butterfly_n^m$.
Moreover, we have the following nesting property:
\[
    \Lambda_n \supset \Lambda_n^n \supset \Lambda_n^{n+1} \supset \Lambda_n^{n+2} \supset \ldots \supset \Lambda_n^\infty.
\]

\begin{lemma}[Structure of $\Lambda_n^m$]
\label{lem:nowhere-dense-02}
    Let $n \in \Z$ and $m \in \Z \cup \{\infty\}$ such that $m \geq n$.
    \begin{enumerate}
        \item If every depth greater than $m$ is non-parabolic, then $\Lambda_n^{m}$ is nowhere dense.
        \item Else, if $k \in \Z$ is the smallest parabolic depth greater than $m$, then the interior of $\Lambda_n^m$ is the union of the iterated preimages of the immediate parabolic basin $\Dext_{n,k}$ under $\butterfly_n^m$.
    \end{enumerate}
\end{lemma}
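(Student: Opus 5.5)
I would follow the proof of Lemma~\ref{lem:nowhere-dense}, with Lemma~\ref{lem:non-esc-parab} and Lemma~\ref{lem:uniqueness-parabolic-cycle} used at each enrichment depth.

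\textbf{Localizing interior components.} Let $\mathtt{E}$ be a connected component of $\textnormal{int}(\Lambda_n^m)$. The domain of every map of $\butterfly_n^m$ lies in $\Wext_n$; by Lemma~\ref{lem:non-esc-parab}(3)(b) the enriched domains $\Yext^{\iinfty-k}_{n,j}$ lie in $\Dext_{n,j}$, hence in $\Wext_n$. Every map in $\butterfly_n^m$ is some $\Fext^P$ with $P\in\Tbold$. So $\Psi^{-1}(\mathtt{E})$ has a forward orbit under $\Fbold$ that stays, along infinitely many times, in a bounded region avoiding $\Hbold$. By Theorem~\ref{thm:no-wandering-domains} (no wandering domains, and the Fatou set is the grand orbit of $\textnormal{int}\,\Hbold$), together with Theorem~\ref{thm:parabolic-fatou-set} applied to the relevant Archimedean classes, $\mathtt{E}$ must eventually land in the attracting basin of a parabolic point $\beta$. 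Here $\beta$ is a fixed point of some $\Fext^{Q_k}$ with $k$ a parabolic depth, and it is periodic under $\Butterfly_n^m$. In case (1) no such $k>m$ exists. Any parabolic depth $k\le m$ has its basin removed by the enrichment maps, so the interior is empty; this is the content of the next step.

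\textbf{Killing shallow basins.} Suppose $k\le m$ is parabolic. The non-escaping set of $\Fext^{Q_k}:\Yext_{n,k}\to\Wext_n$ is $\Dext_{n,k}$ (Lemma~\ref{lem:non-esc-parab}(3)(a)). The maps $\Fext^{(\iinfty-j)Q_k}:\Yext^{\iinfty-j}_{n,k}\to\Wext_n$ belong to $\butterfly_n^m$. I would use the decomposition of the basin from the proof of Proposition~\ref{prop:escaping-fjord}(2): $\Dext_{n,k}$ splits into a domain $\Dext'_{n,k}$, the hoglet chain $\Gamma_{\iinfty+j}$, and the fundamental domains $\Yext_j$, transported by $\Fext^{P_{n,k}-Q_k}$. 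A point of $\Dext_{n,k}$ either escapes under the enrichment map $\Fext^{Q_{k+1}-Q_{k-1}}$ into a hoglet or into $\overline{\Xext_0}$, or it stays forever in the copy of $\Dext'_{n,k}$. In the second case the Denjoy--Wolff argument forces it to equal $\beta_{n,k}$. Hence $\Lambda_n^m\cap\Dext_{n,k}$ has empty interior.

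By Lemma~\ref{lem:uniqueness-parabolic-cycle}, every parabolic cycle of depth $k$ is eventually mapped to $\beta_{n,k}$. Therefore every interior component landing in a depth-$k$ basin lands in $\Dext_{n,k}$, which is impossible. This step is the main obstacle. One must check that the enriched maps really realize the return $\Fext^{Q_{k+1}-Q_{k-1}}$ on $\Dext'_{n,k}$, which follows from Lemma~\ref{lem:non-esc-parab}(4). One must also check that the escape into $\overline{\Xext_0}$ or into hoglets actually leaves $\Lambda_n^m$.

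\textbf{Case (2).} Let $k$ be the smallest parabolic depth with $k>m$. The argument above leaves only basins of parabolic points of depth $k$. As in Lemma~\ref{lem:nowhere-dense}, $\Fext^{Q_k}:\Yext_{n,k}\to\Wext_n$ lies in $\Butterfly_n^{m}$, because $\Fext^{Q_{j+1}}:\Yext_{n,j+1}\to\Wext_n$ belongs to $\Butterfly_n^{j}$ by Lemma~\ref{lem:non-esc-parab}(4). Its non-escaping set is $\Dext_{n,k}$, and no map of $\butterfly_n^m$ kills it, since there are no enrichments at depth $k$. So $\Dext_{n,k}\subset\Lambda_n^m$. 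Lemma~\ref{lem:uniqueness-parabolic-cycle} (with $\butterfly_n^{k-1}\supseteq\butterfly_n^m$ generating the same moves needed) sends any such $\beta$ to $\beta_{n,k}$. Hence every interior component is an iterated preimage of $\Dext_{n,k}$.

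The case $m=\infty$ falls under (1) directly. Finally, nowhere density in (1) follows because $\Lambda_n^m$ is closed in $\Dom(\butterfly_n)$ and has empty interior.
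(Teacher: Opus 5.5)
There is a genuine gap in your step \emph{Killing shallow basins}. Its conclusion is in fact false in case (2). Take a parabolic depth $j$ with $n\le j\le m$, and let $k$ be the smallest parabolic depth $>m$. Then $\Dext_{n,k}\subset\Wext_n$ is a component of $\textnormal{int}(\Lambda_n^m)$. The Lavaurs maps $\Fext^{(\iinfty-i)Q_j}:\Yext^{\iinfty-i}_{n,j}\to\Wext_n$ belong to $\butterfly_n^m$, are conformal onto $\Wext_n$, and have domains inside $\Dext_{n,j}$ by Lemma~\ref{lem:non-esc-parab}(3)(b). Pulling $\Dext_{n,k}$ back by them gives open subsets of $\Dext_{n,j}$. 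These are exactly the iterated preimages that statement (2) itself places in $\textnormal{int}(\Lambda_n^m)$. So $\Lambda_n^m\cap\Dext_{n,j}$ does not have empty interior, and an interior component sitting in a shallow basin is no contradiction.

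The error is at the point you flagged. The Denjoy--Wolff decomposition from Proposition~\ref{prop:escaping-fjord}(2) only shows that points leave $\Dext_{n,j}$, either into a hoglet or into the transported copy of $\overline{\Xext_0}$. Leaving into $\overline{\Xext_0}$ is not leaving $\Lambda_n^m$. In Proposition~\ref{prop:escaping-fjord} that device only served to escape the fjord $\fjord_j$, not the domain of the butterfly. For the same reason your argument does not prove case (1), even though the conclusion there is true.

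The missing idea is to apply the Fatou-set theorems at the correct Archimedean class, as the paper does. Every time occurring in $\Butterfly_n^m$ lies in $\Tbold_{\Arch,k}$: namely $Q_n$, $Q_n+Q_{n-1}$, and $Q_{j+1}-Q_{j-1}-iQ_j$ for parabolic $j\le m$, hence also $Q_{m+1}$ via Lemma~\ref{lem:non-esc-parab}(4). These times are cofinal in that class because depths $m+1,\dots,k-1$ are non-parabolic. In case (1) they are cofinal in all of $\Tbold$.

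Hence $\Psi^{-1}(\mathtt{E})$ lies in the Fatou set of that larger semigroup. Theorem~\ref{thm:parabolic-fatou-set} with $N=k$ identifies this Fatou set with the grand orbit of the depth-$k$ basin. In case (1), Theorem~\ref{thm:no-wandering-domains} applies instead, and the only Fatou points are in hoglets, which escape. Shallow basins are therefore not Fatou components of the enriched semigroup, so there is nothing to kill, and interior pieces inside them are automatically preimages of $\Dext_{n,k}$.

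After this, your Case (2) reasoning agrees with the paper: $\Dext_{n,k}$ is a component by Lemma~\ref{lem:non-esc-parab}, and Lemma~\ref{lem:uniqueness-parabolic-cycle} applies because $\butterfly_n^{k-1}=\butterfly_n^m$.
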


\begin{proof}
    Let $\mathtt{E}$ be a connected component of $\textnormal{int}(\Lambda_n^{m})$.
    Then, $\Psi^{-1}(\mathtt{E})$ is contained in the Fatou set of $(\Fbold^P)_{P \in \Tbold_{\Arch,m}}$ outside of the Mother Hedgehog. 
    By Theorems \ref{thm:no-wandering-domains} and \ref{thm:parabolic-fatou-set}, $\mathtt{E}$ can only be contained in the attracting basin of a parabolic periodic point of $\Fext$ with some period $Q_k$ with $k \geq m+1$. 
    Since $Q_k$ has to be an integral linear combination of $Q_n, Q_{n+1}, \ldots, Q_{m+1}$, then $k$ has to be the smallest parabolic depth greater than $m$. 
    According to Lemma \ref{lem:non-esc-parab}, the basin $\Dext_{n,k}$ is indeed a connected component of the interior of $\Lambda_n^m$.
    Then by Lemma \ref{lem:uniqueness-parabolic-cycle}, $\mathtt{E}$ must be eventually mapped onto the immediate parabolic basin $\Dext_{n,k}$.
\end{proof}


\section{Rigidity of bi-infinite towers}
\label{sec:qc-rigidity}

In this section, we will prove the following theorem.

\begin{theorem}[Affine Rigidity of neutral cascades]
\label{thm:qc-rigidity}
    For any two combinatorially equivalent neutral cascades $\Fbold = \{\Fbold^P\}_{P \in \Tbold}$ and $\Gbold = \{\Gbold^P\}_{P \in \Tbold}$, there exists a quasiconformal map $\Phi : \C \to \C$ such that 
    \[
    \Phi \circ \Fbold^P = \Gbold^P \circ \Phi \qquad \text{ for all } P \in \Tbold
    \]
    and that $\Phi$ is conformal on the interior of $\Hbold(\Fbold)$.
\end{theorem}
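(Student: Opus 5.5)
We follow the roadmap of Figure~\ref{fig:roadmap}: first produce a QC Thurston equivalence on butterflies, then upgrade it to a genuine conjugacy by the Pullback Argument, and finally pass to a global limit.

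\textbf{Step 1: QC Thurston equivalence on butterflies.} Both $\Fbold$ and $\Gbold$ arise from bi-infinite towers in $\attr$ with the same combinatorics $\tttheta$, via Proposition~\ref{prop:trans-sectors}. Applying Theorem~\ref{thm:qc-thurston-equivalence} to the forward towers $\fbold_n,\gbold_n$ for every $n$, and lifting through the gluing maps $\pphi_n$, we will obtain a $K(\threshold)$-quasiconformal map $h:\C\to\C$ with the following properties:
\begin{itemize}
\item it sends $\hat{\Zbold}^{[m]}(\Fbold)$ onto $\hat{\Zbold}^{[m]}(\Gbold)$ for every $m$;
\item it conjugates $\Fbold^P|_{\Hbold}$ to $\Gbold^P|_{\Hbold}$;
\item it is conformal on $\mathrm{int}\,\Hbold$.
\end{itemize}
Compatibility across levels (item (5) of Theorem~\ref{thm:qc-thurston-equivalence}) makes these lifts agree on overlaps of sectors. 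A diagonal limit is then taken as $n\to-\infty$.

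Pulling $h$ back by $\Fbold^P,\Gbold^P$ on primary pseudo-bubbles gives uniformly QC maps $\hat{\Bbold}_P(\Fbold)\to\hat{\Bbold}_P(\Gbold)$; their boundedness and regularity are supplied by Corollary~\ref{cor:alpha-points}. For each $n$, we will then fix the hoglet butterfly $\butterfly_n$. Using Proposition~\ref{prop:uniform-qc-bounded} and Lemma~\ref{lem:disjointness-arcs} in both planes, and transporting through $\Psi_{\Fbold},\Psi_{\Gbold}$ (Corollary~\ref{cor:Psi-qc}), we interpolate to a $K$-QC map $\Phi_{n,0}:\Wext_n(\Fbold)\to\Wext_n(\Gbold)$ with two properties. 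First, it agrees with the lifted $h$ on the three primary pseudo-bubbles. Second, it is equivariant on $\partial\Uext'_n\cup\partial\Uext''_n$, i.e.\ it respects $\Fext^{Q_n+Q_{n-1}}$ and $\Fext^{Q_n}$ from the wing boundaries onto $\Pi_n$. The interpolation uses the quasiarcs and definite angles of Lemma~\ref{lem:disjointness-arcs} and the uniform moduli of Proposition~\ref{prop:butterflies}. All constants are $\threshold$-uniform and independent of $n$.

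\textbf{Step 2: Pullback Argument.} Define $\Phi_{n,k+1}$ by lifting $\Phi_{n,k}$ through the enriched butterfly maps $\butterfly_n^\infty$ of Definition~\ref{def:enriched-butterfly}, and keeping $\Phi_{n,k}$ outside the domain. Since the lifts are conformal, the dilatation stays bounded by $K$. The maps stabilize off the non-escaping set $\Lambda_n^\infty$, which is nowhere dense by Lemma~\ref{lem:nowhere-dense-02}. A limit $\Phi_n$ therefore exists along subsequences and conjugates $\butterfly_n^\infty(\Fbold)$ to $\butterfly_n^\infty(\Gbold)$ on the closure of the escaping points. Parabolic basins are handled through the enrichment, so that no open non-escaping set remains. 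On hoglets, $\Phi_n$ is the dynamical pullback of $h$, hence conformal on their interiors; on the remaining Fatou components it can be made conformal.

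\textbf{Step 3: Global conjugacy.} We spread $\Phi_n$ by the cascade: the sectorial first-return structure lets every point of $\Sbold^{n}$ reach the butterfly under $\Fbold$ before returning. Letting $n\to-\infty$ along the exhaustion $\cup\Sbold^n=\C$ (Proposition~\ref{prop:trans-sectors}(1)), uniform $K$ gives a subsequential limit $\Phi:\C\to\C$. This $\Phi$ conjugates every $\Fbold^{Q_m}$, hence every $\Fbold^P$, since these generate $\Tbold$. It is conformal on $\mathrm{int}\,\Hbold$ because it agrees there with $h$.

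\textbf{Main obstacle.} We expect Step 1 to be the main obstacle: ensuring that the lifted QC Thurston equivalence is genuinely equivariant on the hoglet boundaries bounding the butterfly wings. These boundaries may be non-locally connected and encode anti-renormalization history (Remark~\ref{rem:anti.renorm.history}). We would control this by working with pseudo-bubbles rather than hoglets, and by using Dynamical Hairiness (Theorem~\ref{thm:no-wandering-domains}) to pass from equivariance on a dense set to equivariance everywhere.
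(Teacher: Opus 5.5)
Your architecture is the paper's: the QC Thurston equivalence on $\hat{\Zbold}$ (Lemma \ref{lem:qc-thurston-equivalence-02}), its dynamical lift to the three primary pseudo-bubbles bounding the wings, QC interpolation across $\Wext_n\setminus\hat{\Uext}_n$ via Proposition \ref{prop:uniform-qc-bounded}, a pullback argument whose non-escaping set is nowhere dense by Lemma \ref{lem:nowhere-dense-02}, and finally spreading by the cascade with $n\to-\infty$.

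\textbf{The gap is in Step 2.} You cannot lift $\Phi_{n,k}$ ``through the enriched butterfly maps $\butterfly_n^\infty$'' in one pullback, because those maps have overlapping domains.
\begin{itemize}
\item For the first parabolic depth $s_1\ge n$, the Lavaurs maps $\Fext^{(\iinfty-k)Q_{s_1}}$ live on nested domains $\Yext^{\iinfty}_{n,s_1}\subset\Yext^{\iinfty-1}_{n,s_1}\subset\cdots\subset\Dext_{n,s_1}$ (Lemma \ref{lem:non-esc-parab}).
\item $\Dext_{n,s_1}$ lies in the wings, inside the non-escaping set $\Lambda_n$ of the basic two-to-one map.
\item So a point of $\Dext_{n,s_1}$ lies in the domain of the basic butterfly and of infinitely many Lavaurs maps, with different images. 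The corresponding lifts of $\Phi_{n,k}$ need not agree.
\item Always preferring the basic map does not help. Points of $\Dext_{n,s_1}$ never escape under it, so the pullback never determines the map on the basin, and the limit need not conjugate the Lavaurs maps. This residual freedom is exactly what the enrichment is meant to remove.
\end{itemize}
The paper does this in stages.
\begin{enumerate}
\item Pull back by $\butterfly_n$ alone; the wings are disjoint, so the lift is well defined. The limit conjugates off $\Lambda_n$.
\item By Lemma \ref{lem:nowhere-dense}, each component $E^j$ of $\mathrm{int}\,\Lambda_n$ is sent onto $E^0=\Dext_{n,s_1}$ by a first-landing map $\rho_j$.
\item Run a second iteration. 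A point $z\in E^0$ is lifted through the Lavaurs map with minimal index $k(z)$ such that $z\in\Yext^{\iinfty-k(z)}_{n,s_1}$. The $E^j$ are filled in by $\rho_j$-equivariance, and the map is kept unchanged elsewhere.
\item The limit conjugates $\butterfly_n^{s_1}$ off $\Lambda_n^{s_1}$. Repeat over the successive parabolic depths $s_1<s_2<\cdots$, and only then invoke Lemma \ref{lem:nowhere-dense-02}.
\end{enumerate}

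\textbf{Smaller remarks.}
\begin{itemize}
\item Keep every map in the pullback \emph{suitable}, i.e.\ with boundary values on $\R$ equal to the quasisymmetric conjugacy $\Phi_2$ of Lemma \ref{lem:qs-rigidity} induced by $h$. This gives equivariance along the real parts of $\partial\Uext'_n$ and $\partial\Uext''_n$. It is also what makes your Step 3 gluing of $h|_{\Hbold}$ with the transported butterfly conjugacy continuous across the possibly non-locally connected $\partial\Hbold$. Indeed, a QC self-map of $\UHP$ with the same boundary values as $\Phi_2$ stays within bounded hyperbolic distance of it.
\item The obstacle you anticipate in Step 1 is not there. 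The map on each primary pseudo-bubble is, by definition, the lift of $h|_{\hat{\Zbold}}$ under $\Fbold^P\circ\Psi_{\Fbold}^{-1}$ and $\Gbold^P\circ\Psi_{\Gbold}^{-1}$. It is therefore automatically equivariant on hoglets, and no density argument is needed. Hairiness enters only through Lemma \ref{lem:nowhere-dense-02}.
\item $\{Q_m\}$ generates $\Kbold$ but not $\Tbold$ at parabolic depths. Conjugacy of the elements $Q_{m+1}-kQ_m$ still follows, because $\Fbold^{kQ_m}$ is onto $\C$ and $\Dom(\Fbold^{Q_{m+1}})=\Fbold^{-kQ_m}\big(\Dom(\Fbold^{Q_{m+1}-kQ_m})\big)$ (Theorem \ref{thm:transcendental-extension}).
\end{itemize}
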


Theorems \ref{thm:no-wandering-domains} and \ref{thm:NILF} tell us that the conjugacy $\Phi$ has to be conformal almost everywhere, hence affine.
Altogether, these theorems imply Theorem \ref{main-thm:cascades}.

Figure \ref{fig:roadmap} gives a summary of the proof of Theorem \ref{thm:qc-rigidity}.
In \S\ref{ss:reduction-to-butterflies}, we first transfer the QC Thurston equivalence of bi-infinite towers $\fboldbar$ and $\gboldbar$ to the QC Thurston equivalence of the corresponding neutral cascades $\Fbold$ and $\Gbold$.
We pass to external coordinates in which the corresponding external cascades $\Fext$ and $\Gext$ are quasisymmetrically conjugate.
In \S\ref{ss:proof-qc-rigidity}, we show how quasiconformal conjugacy between the fully enriched hoglet butterflies of $\Fext$ and $\Gext$ implies global quasiconformal conjugacy, hence the desired theorem.
In \S\ref{ss:proof-main-lemma}, we apply the pullback argument to construct quasiconformal conjugacy between the fully enriched hoglet butterflies of $\Fext$ and $\Gext$ of the same depth using the machinery prepared in Section \ref{sec:butterfly}.

Lastly, in \S\ref{ss:two-main-theorems}, we show how Theorem \ref{main-thm:cascades} implies a quantitative version of our first main Theorem \ref{main-thm:rigidity}.

\subsection{Reduction to hoglet butterflies in external coordinates}
\label{ss:reduction-to-butterflies}

For the rest of the section, we will fix two neutral cascades $\Fbold = \{\Fbold^P\}_{P \in \Tbold}$ and $\Gbold = \{\Gbold^P\}_{P \in \Tbold}$ with the same combinatorics $\tttheta = \seq{(\varepsilon_n, \abar_n)}_{n\in\Z} \in \TheBiCpt$.
We will denote by $\Kbold$ and $\Tbold$ the continuant group and the time semigroup of $\tttheta$; we will be using the slow generating set $\{Q_n = Q_n(\tttheta)\}_{n\in\Z}$.

We will denote by $\hat{\Zbold}^{[m]}(\Fbold)$ the level $m$ pseudo-Siegel pinched half-plane of $\Fbold$ for $m \in \Z$. 
The union is the pseudo-Siegel half-plane $\hat{\Zbold}(\Fbold)$ and the intersection is the Mother Hedgehog $\Hbold(\Fbold)$.
We will also denote by $\hat{\Zbold}^{[m]}(\Gbold)$ and $\Hbold(\Gbold)$ the analogous objects for $\Gbold$.

\begin{lemma}[QC Thurston equivalence on $\hat{\Zbold}$]
\label{lem:qc-thurston-equivalence-02}
    There exist a $\threshold$-uniform constant $K>1$ and a global $K$-quasiconformal map $\Phi_1: (\C,0) \to (\C,0)$ such that
    \begin{enumerate}
        \item for $m \in \Z$, $\Phi_1$ sends $\hat{\Zbold}^{[m]}(\Fbold)$ onto $\hat{\Zbold}^{[m]}(\Gbold)$;
        \item $\Phi_1$ conjugates $\Fbold : \Hbold(\Fbold) \to \Hbold(\Fbold)$ and $\Gbold : \Hbold(\Gbold) \to \Hbold(\Gbold)$, and it is conformal on the interior of $\Hbold(\Fbold)$.
    \end{enumerate}
\end{lemma}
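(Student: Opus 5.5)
The plan is to transfer Theorem \ref{thm:qc-thurston-equivalence} from forward towers to the cascades via the gluing maps of Proposition \ref{prop:trans-sectors}, and then pass to a limit.

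First, I realize $\Fbold$ and $\Gbold$ as the cascades of bi-infinite towers $\fboldbar=\seq{f_n}_{n\in\Z}$ and $\gboldbar=\seq{g_n}_{n\in\Z}$ with the same combinatorics $\tttheta$. If the cascades are given abstractly through generating sequences, the rescaled sector renormalizations produce such towers, and after passing to a subsequence I may assume that $\Fbold,\Gbold$ come with sectors $\Sbold^n(\Fbold),\Sbold^n(\Gbold)$ and gluing maps $\pphi_{n,\Fbold},\pphi_{n,\Gbold}$ as in Proposition \ref{prop:trans-sectors}. For each $N\in\Z$, the forward towers $\fbold_N=\seq{f_{N+k}}_{k\ge0}$ and $\gbold_N$ are combinatorially equivalent. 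Theorem \ref{thm:qc-thurston-equivalence} therefore gives $K$-qc maps $h^{(N)}_{n}\colon\D\to\D$, $n\ge N$, with $K$ depending only on $\threshold$ (uniform in $N$), which are equivariant under the gluing maps $\psi_n$.

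I then set $\Phi^{(N)}:=\pphi_{N,\Gbold}^{-1}\circ h^{(N)}_N\circ\pphi_{N,\Fbold}$ on $\Sbold^N(\Fbold)$. By item (5) of Theorem \ref{thm:qc-thurston-equivalence} and Proposition \ref{prop:trans-sectors}(3), this map agrees with the analogous lift of $h^{(N)}_n$ on $\Sbold^n$ for all $n\ge N$. By Proposition \ref{prop:trans-pseudo-siegel}(2) and Proposition \ref{prop:trans-mother}(1), $\Phi^{(N)}$ maps $\hat\Zbold^{[m]}(\Fbold)\cap\Sbold^N$ onto $\hat\Zbold^{[m]}(\Gbold)\cap\Sbold^N$ for $m\ge N-1$. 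It also conjugates $\Fbold^P$ to $\Gbold^P$ on $\Hbold(\Fbold)\cap\Sbold^N$ whenever both sides stay in the sector. This follows from Proposition \ref{prop:trans-sectors}(6) together with the conjugacy on Mother Hedgehogs, spread around by the first-return structure of Lemma \ref{lem:triangulation-first-return}, and the map is conformal on the interior of $\Hbold$. One subtlety is the slit: the sides of $\Sbold^N$ are glued by $\Fbold^{[N-1]}$, and equivariance of $h^{(N)}_N$ on internal rays guarantees that the lift is continuous across the glued sides. Using the equivariance, I extend $\Phi^{(N)}$ along the gluing to a $K$-qc map on all of $\Sbold^N$, and then to a $K$-qc self-map of $\C$.

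Next I normalize. The map $\Phi^{(N)}$ fixes $0=C_0$ and sends $C_{-Q_{[0]}}(\Fbold)=\varepsilon_0$ to $C_{-Q_{[0]}}(\Gbold)=\varepsilon_0$, because it conjugates the critical orbits on the hedgehogs and $\Psi$ is normalized in this way. Since the family of $K$-qc maps fixing $0$, $\varepsilon_0$ and $\infty$ is normal, I extract a subsequence $N\to-\infty$ converging locally uniformly to a $K$-qc map $\Phi_1$. Because $\bigcup_N\Sbold^N=\C$ by Proposition \ref{prop:trans-sectors}(1), every compact set is eventually covered. The properties pass to the limit. Property (1) holds because $\hat\Zbold^{[m]}$ is closed and the images are fixed sets independent of $N$. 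The conjugacy on $\Hbold$ passes to the limit by pointwise convergence.

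The main obstacle is conformality on $\mathrm{int}\,\Hbold(\Fbold)$, since a locally uniform limit of maps that are conformal on an open set is still conformal there only if the maps are conformal on a fixed open set. This is indeed the case here: $\Phi^{(N)}$ is conformal on $\mathrm{int}\,\Hbold(\Fbold)\cap\Sbold^N$, and these sets exhaust $\mathrm{int}\,\Hbold(\Fbold)$, so the limit is conformal there. The remaining care point is that the conformal surgery of Case 2 in the proof of Theorem \ref{thm:qc-thurston-equivalence} modifies the sectors slightly inside the Siegel set. I expect to handle this by noting that the modified sectors differ from the original ones only within $\mathrm{int}\,\Hbold$, so the gluing argument is unaffected.
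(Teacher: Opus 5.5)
Your route differs from the paper's in how the bi-infinite index is handled. The paper takes equivariant maps $h_n$ for all $n\in\Z$. It lifts each one only to $\hat{\Zbold}(\Fbold)\cap\Sbold^n(\Fbold)=\pphi_{n,\Fbold}^{-1}(\hat Z_n)$ and uses item (5) to check that consecutive lifts agree. It then glues them into a $K'$-qc map $\hat{\Zbold}(\Fbold)\to\hat{\Zbold}(\Gbold)$ with no limiting procedure. Only at the end does it extend to $\C$, using that both top pseudo-Siegel half-planes are $\threshold$-uniform quasi-half-planes. You instead apply Theorem~\ref{thm:qc-thurston-equivalence} to each forward tower $\fbold_N$ separately and let $N\to-\infty$. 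That is a reasonable way to handle the fact that the theorem is stated only for $n\ge 0$, whereas the paper's proof uses it for all $n\in\Z$ without comment. The price is a normal-families argument in place of exact gluing.

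However, the step where you lift $h^{(N)}_N$ to all of $\Sbold^N$ and then extend to $\C$ does not work as written. The slit $\gamma_N$ of $\pphi_N$ is the internal ray of $H_N$ to the critical point, followed by an arc through a hoglet (the projection of the arc $\Gamma_n$ of Theorem~\ref{thm:sectorial-bounds}(1)). That second arc lies outside $\hat Z_N$ except at the critical point. The base map $h^{(N)}_N$ of a forward tower is constrained only on $\hat Z_N$ and on the tilings of its boundary. Off $\hat Z_N$ it is an arbitrary interpolation, so it need not carry the hoglet part of $\gamma_N(\fboldbar)$ onto that of $\gamma_N(\gboldbar)$. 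Equivariance on internal rays controls only the portion of the sides of $\Sbold^N$ lying in $\Hbold$. Hence $\pphi_{N,\Gbold}^{-1}\circ h^{(N)}_N\circ\pphi_{N,\Fbold}$ is not well defined, let alone continuous, on the rest of $\Sbold^N$. Moreover, extending a qc map from $\Sbold^N$ to $\C$ with uniform dilatation would require uniform quasidisk bounds for the sectors, which you do not have.

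The fix is to do what the paper does and restrict to $\hat{\Zbold}(\Fbold)\cap\Sbold^N$. There the slit meets $\hat Z_N$ only along the internal ray to the critical point, and $h^{(N)}_N$ preserves that ray because it respects the triangulation $\hat{\Delta}^{[-1]}_N$. Take your limit on the exhaustion of $\hat{\Zbold}(\Fbold)$ by these sets to obtain a $K$-qc homeomorphism $\hat{\Zbold}(\Fbold)\to\hat{\Zbold}(\Gbold)$. Only then extend to $\C$ via Proposition~\ref{prop:trans-pseudo-siegel}(4). Alternatively, if you are willing to use item (5) on the full sector as stated, lift $h^{(N)}_{N+1}$ to $\Sbold^{N+1}$ instead. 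That equivariance forces $h^{(N)}_{N+1}(\gamma_{N+1}(\fboldbar))=\gamma_{N+1}(\gboldbar)$, and normal families on the exhaustion $\Sbold^{N+1}\uparrow\C$ need no global extension. With either change, your limiting arguments for (1), for the conjugacy on $\Hbold$, and for conformality on $\textnormal{int}\,\Hbold(\Fbold)$ go through.
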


\begin{proof}
    The neutral cascade $\Fbold$ comes with a bi-infinite orbit $\fboldbar = \seq{f_n}_{n\in\Z} \in \attr$ with combinatorics $\tttheta$, a bi-infinite nest of infinite sectors $\{\Sbold_{n}(\Fbold)\}_{n\in \Z}$, and a sequence of conformal gluing maps $\{ \pphi_{n,\Fbold}: (\Sbold^n(\Fbold),0) \to (\D, v_0(f_n)) \}_{n\in\Z}$ satisfying Proposition \ref{prop:trans-sectors}.
    For every $n \in \Z$, let $S_{n}(\fboldbar)$ be the first renormalization sector of $f_n$ and let $\psi_{n,\fboldbar}$ be the conformal gluing map for $S_{n}(\fboldbar)$ projecting the first return map $f_n^{[1]}$ (mod $f_n$) to $f_{n+1}$.
    We have for all $n \in \Z$, 
    \[
    \psi_{n,\fboldbar} \circ \pphi_{n,\Fbold} = \pphi_{n+1,\Fbold} \qquad \text{ on } \Sbold^{n+1}(\Fbold).
    \]
    On the dynamical plane of $f_n$, we also have a semigroup of maps $\mathcal{F}_n$ which is essentially generated by $f_n$ and pre-renormalizations $f_n^{[m]}$, $m \geq 1$.
    The Mother Hedgehog $H_n(\fboldbar)$ is the fill of the postcritical set of $\mathcal{F}_n$.
    Similarly, $\Gbold$ comes with a bi-infinite tower $\gboldbar = \seq{g_n}_{n\in\Z}$, a nest of sectors $\Sbold^n(\Gbold)$ with gluing maps $\pphi_{n,\Gbold}$, first renormalization sectors $S_{n}(\gboldbar)$ with gluing maps $\psi_{n,\gboldbar}$, semigroup $\mathcal{G}_n$, and the Mother Hedgehog $H_n(\gboldbar)$ for $n \in \Z$.

    According to Theorem \ref{thm:qc-thurston-equivalence},
    there exists an $\threshold$-uniform constant $K'>1$ and a sequence of $K'$-quasiconformal maps 
    \[
    h_n: (\D, v_0(f_n)) \to (\D, v_0(g_n)), \qquad n \in \Z,
    \]
    where each $h_n$ conjugates $\mathcal{F}_n|_{H_n(\fboldbar)}$ and $\mathcal{G}_n|_{H_n(\gboldbar)}$.
    These can be arranged such that for all $n \in \Z$,
    \begin{itemize}
        \item $h_n$ is conformal on the interior of $H_n(\fboldbar)$,
        \item for all $m \geq -1$, $h_n$ sends $\hat{Z}^{[m]}_n(\fboldbar))$ onto $\hat{Z}^{[m]}_n(\gboldbar)$, and 
        \item on $S_n(\fboldbar)$, we have $h_{n+1} \circ \psi_{n,\fboldbar} = \psi_{n,\gboldbar} \circ h_n$.
    \end{itemize} 
    For every $n\in\Z$, define the quasiconformal map
    \[
        \mathbf{h}_n: \hat{\Zbold}(\Fbold) \cap \Sbold^n(\Fbold) \to \hat{\Zbold}(\Gbold) \cap \Sbold^n(\Gbold), \quad 
        \mathbf{h}_n = \pphi_{n,\gboldbar}^{-1} \circ h_n \circ \pphi_{n,\fboldbar}.
    \]
    This is a well-defined $K'$-quasiconformal map fixing $0$ because for every $n$,
    \[
        \hat{\Zbold}(\Fbold) \cap \Sbold^n(\Fbold) 
        = \pphi_{n,\fboldbar}^{-1} \big( \hat{Z}_{n}(\fboldbar) \big) \quad \text{and} \quad \hat{\Zbold}(\Gbold) \cap \Sbold^n(\Gbold) 
        = \pphi_{n,\gboldbar}^{-1}(\hat{Z}_{n} \big(\gboldbar) \big).
    \]
    On the set $\hat{\Zbold}(\Fbold) \cap \Sbold^{n+1}(\Fbold)$, we have
    \begin{align*}
    \mathbf{h}_{n+1} 
    &= \pphi_{n+1,\gboldbar}^{-1} \circ h_{n+1} \circ \pphi_{n+1,\fboldbar} \\
    &= \pphi_{n+1,\gboldbar}^{-1} \circ \psi_{n,\gboldbar} \circ h_n \circ \psi_{n,\fboldbar}^{-1} \circ \pphi_{n+1,\fboldbar} \\
    &= \pphi_{n,\gboldbar}^{-1} \circ h_n \circ \pphi_{n,\fboldbar} = \mathbf{h}_n.
    \end{align*}
    The equation above implies that $\mathbf{h}_n$'s glue to a $K'$-quasiconformal map
    \[
        \Phi: \hat{\Zbold}(\Fbold) \to \hat{\Zbold}(\Gbold).
    \]
    and it automatically satisfy properties (1) and (2) of $\Phi$.
    Since both $\hat{\Zbold}(\Fbold)$ and $\hat{\Zbold}(\Gbold)$ are $\threshold$-uniformly quasiconformal, $\Phi$ extends to a global $\threshold$-uniformly quasiconformal map $\Phi: \C \to \C$.
\end{proof}

Next, consider the uniformizations 
\[
\Psi_{\Fbold}: \big( \C \backslash \Hbold(\Fbold), 0 \big) \to (\UHP,0)
\quad \text{ and } \quad 
\Psi_{\Gbold}: \big( \C \backslash \Hbold(\Gbold), 0 \big) \to (\UHP,0)
\]
associated to $\Fbold$ and $\Gbold$. 
These maps bring $\Fbold$ and $\Gbold$ to external cascades $\Fext=(\Fext^P)_{P \in \Tbold}$ and $\Gext=(\Gext^P)_{P \in \Tbold}$ respectively.
(See \S\ref{ss:trans-external-coordinates}.)
Consider the $K$-quasiconformal map
\[
    \Phi_2 := \Psi_{\Gbold} \circ \Phi_1|_{\C \backslash \Hbold(\Fbold)} \circ \Psi_{\Fbold}^{-1} : \UHP \to \UHP.
\]

\begin{lemma}
\label{lem:qs-rigidity}
    The map $\Phi_2$ extends to a uniformly quasisymmetric homeomorphism $\Phi_2: (\R,0) \to (\R,0)$ of the real line conjugating $\Fext|_{\R}$ and $\Gext|_{\R}$ for all $P \in \Tbold$.
\end{lemma}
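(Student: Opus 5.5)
The extension will come from the general theory of quasiconformal maps of the half-plane, and the conjugacy property from continuity on the dense critical orbit.

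First, $\Phi_2:\UHP\to\UHP$ is a $K$-quasiconformal homeomorphism with $K$ $\threshold$-uniform. Reflecting across $\R$, it extends to a $K$-quasiconformal self-map of $\C$ preserving $\R$ and fixing $0$ and $\infty$. Its restriction to $\R$ is therefore a $k(K)$-quasisymmetric homeomorphism of $(\R,0)$, so uniform quasisymmetry is automatic. A priori $\Phi_1$ is only defined off $\Hbold(\Fbold)$, so the boundary values of $\Phi_2$ must be read through prime ends. Since $\Phi_1$ is a global homeomorphism of $\C$ mapping $\Hbold(\Fbold)$ onto $\Hbold(\Gbold)$ (Lemma~\ref{lem:qc-thurston-equivalence-02}), it induces a bijection of prime ends. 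Under $\Psi_{\Fbold}$ and $\Psi_{\Gbold}$ this bijection is exactly the boundary map of $\Phi_2$ on $\R\cup\{\infty\}$.

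Next, identify the boundary map on the critical orbit. Each $C_P(\Fbold)$, $P\in\Kbold$, is accessible from $\C\setminus\Hbold(\Fbold)$ (Proposition~\ref{prop:trans-mother}~(7)), and the limit $\crit_P=\Psi_{\Fbold}(C_P)$ is well defined. Since $\Phi_1$ conjugates $\Fbold$ and $\Gbold$ on the Mother Hedgehogs and maps $0=C_0(\Fbold)$ to $0=C_0(\Gbold)$, it sends the bi-infinite critical orbit to the critical orbit: $\Phi_1(C_P(\Fbold))=C_P(\Gbold)$. The point to check is that the prime end of $\C\setminus\Hbold(\Fbold)$ at $C_P$ given by $\crit_P$ is sent to the prime end giving $\crit_P(\Gext)$; a priori a boundary point could carry several accesses. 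Two facts settle it. First, $C_0$ is non-dividing (Proposition~\ref{prop:trans-mother}~(7)), so each $C_P=\Fbold^{P}(C_0)$ or its appropriate preimage has essentially one access. Second, the order of $\{\crit_P\}$ on $\R$ is dictated purely by the combinatorics through the space order $\lhd$ (Corollary~\ref{cor:extension-critical}, Proposition~\ref{prop:space-order}), which is the same for $\Fext$ and $\Gext$. Hence $\Phi_2(\crit_P(\Fext))=\crit_P(\Gext)$ for all $P\in\Kbold$.

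Then conclude. For every $P\in\Tbold$ and $Q\in\Kbold$,
\[
\Phi_2\circ\Fext^P(\crit_Q)=\crit_{P+Q}(\Gext)=\Gext^P\circ\Phi_2(\crit_Q).
\]
By Proposition~\ref{prop:R-apb-transcendental} each $\Fext^P$ and $\Gext^P$ is a homeomorphism of $\R$, and by Real Bounds the set $\{\crit_Q\}_{Q\in\Kbold}$ is dense in $\R$: tiles of $\tiling_{Q_n}$ shrink as $n\to\infty$, including near parabolic points $\beta_I$, where the tiles accumulate. By continuity, $\Phi_2\circ\Fext^P=\Gext^P\circ\Phi_2$ on all of $\R$.

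The main obstacle is the prime-end identification in the second step: showing that the accesses to the critical orbit from the complement of the hedgehog correspond, so that $\Phi_2$ genuinely sends $\crit_P$ to $\crit_P$ rather than merely inducing some homeomorphism. If the non-dividing argument proves delicate, I would instead use the approximating quadratic towers: compare accesses for the approximating maps, where $\Psi_k\to\Psi$ by Lemma~\ref{lem:convergence-external}, and pass to the limit.
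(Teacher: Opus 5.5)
Your proposal is correct and follows the paper's own argument: the boundary extension of the quasiconformal self-map $\Phi_2$ of $\UHP$ gives quasisymmetry, accessibility of the critical orbit forces $\crit_P(\Fext)\mapsto\crit_P(\Gext)$, and density of the critical orbit (Real Bounds) together with continuity yields the conjugacy on all of $\R$. The one thing the paper adds is that $\Phi_2|_{\R}$ is determined by the critical orbit alone, hence is independent of $\threshold$, so the quasisymmetry constant is universal rather than merely $\threshold$-uniform as your reflection argument gives.
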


\begin{proof}
    As a quasiconformal self-map of $\UHP$, $\Phi_2$ uniquely extends to a quasisymmetric homeomorphism of $\R$.
    Since the critical orbit on the Mother Hedgehog is accessible from its complement, then $\Phi_2|_{\R}$ has to send $\crit_{P,\Fbold}$ to $\crit_{P,\Gbold}$ for all time $P \in \Tbold$. 
    By virtue of Real Bounds (Proposition \ref{prop:R-apb-transcendental}), these critical orbits are dense on $\R$ and so $\Phi_2$ is indeed a conjugacy between $\Fext$ and $\Gext$ on $\R$.
    Moreover, since $\Phi_2: \R \to \R$ is completely determined by the critical orbit, it is independent of $\threshold$.
\end{proof}

For $n \in \Z$, consider for $\Fext$ the domains $\Uext'_{n,\Fext}$, $\Uext''_{n,\Fext}$, $\Wext_{n,\Fext}$, the collection of enriched hoglet butterflies $\butterfly_{n,\Fext}^m$ and the corresponding semigroups $\Butterfly_{n,\Fext}^m$ for $m \in \{n,n+1,\ldots\infty\}$ as introduced in \S\ref{ss:butterfly}--\ref{ss:enriched-butterfly}.
Similarly, we consider
$\Uext'_{n,\Gext}$, $\Uext''_{n,\Gext}$, $\Wext_{n,\Gext}$, $\butterfly_{n,\Gext}^m$, and $\Butterfly_{n,\Gext}^m$ associated to $\Gext$.

\begin{lemma}
\label{lem:main-lemma}
    There exists a universal constant $K \geq 1$ such that for every $n \in \Z$, there exists a $K$-quasiconformal map $\Phi_{2,n}: \UHP \to \UHP$ where
    \begin{enumerate}
        \item the continuous extension of $\Phi_{2,n}$ to the real line is equal to $\Phi_2: \R \to \R$;
        \item $\Phi_{2,n}$ sends the domains $\Uext'_{n,\Fext}$, $\Uext''_{n,\Fext}$, $\Wext_{n,\Fext}$ onto the domains $\Uext'_{n,\Gext}$, $\Uext''_{n,\Gext}$, $\Wext_{n,\Gext}$ respectively;
        \item $\Phi_{2,n}$ conjugates $\Butterfly_{n,\Fext}^\infty$ with $\Butterfly_{n,\Gext}^\infty$.
    \end{enumerate}
\end{lemma}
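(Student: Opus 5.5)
We will construct $\Phi_{2,n}$ by a pullback argument on the fully enriched butterfly, in the spirit of the classical argument for critical circle maps and quadratic-like maps.

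\textbf{Step 1: initial map.} We first build a $K_0$-quasiconformal map $\Phi^{(0)}:\UHP\to\UHP$ with boundary values $\Phi_2|_\R$ that sends $\Wext_{n,\Fext}$ onto $\Wext_{n,\Gext}$ and the closed set $\hat{\Uext}_{n,\Fext}$ onto $\hat{\Uext}_{n,\Gext}$. Moreover, on $\hat{\Uext}_n$ it should be equivariant in the following sense: its restriction to the three primary pseudo-bubbles and to the two wings is compatible with the maps of $\butterfly_n$ on the boundary pieces $\Pi'_n,\Pi''_n$ and on the hoglets.

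\begin{itemize}
\item On the pseudo-bubbles $\hat{\Bubb}_{Q_n+Q_{n-1}}$, $\hat{\Bubb}_{Q_n}$, $\hat{\Bubb}_{Q_n-Q_{n-1}}$, define $\Phi^{(0)}$ as the lift of $\Psi_{\Gbold}\circ\Phi_1\circ\Psi_{\Fbold}^{-1}$ (restricted to $\Psi(\hat{\Zbold})$) under $\Fext^P$ and $\Gext^P$. This lift is well defined by Corollary~\ref{cor:alpha-points}. It conjugates the dynamics on the hoglets, since $\Phi_1$ conjugates on $\Hbold$ by Lemma~\ref{lem:qc-thurston-equivalence-02}.
\item On $\partial\Wext_n$, take $\Phi^{(0)}$ to be any uniformly quasisymmetric map of the circular arc $\Pi_n$ matching the real endpoints. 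Then lift it to $\Pi'_n,\Pi''_n$ via $\Fext^{Q_n+Q_{n-1}}$ and $\Fext^{Q_n}$, respectively their $\Gext$ counterparts.
\item The map is now prescribed on the boundary of the quasidisk $\Wext_n\setminus\hat{\Uext}_n$. By Proposition~\ref{prop:uniform-qc-bounded} and Lemma~\ref{lem:disjointness-arcs}, each boundary piece is a uniform quasiarc, adjacent pieces meet at definite angles, and nonadjacent pieces are definitely separated. Hence the boundary correspondence is uniformly quasisymmetric, and a Beurling--Ahlfors/Douady--Earle-type interpolation extends it with $\threshold$-uniform dilatation.
\item Outside $\Wext_n$, extend from $\Pi_n$ and $\R$ by the same interpolation.
\end{itemize}

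\textbf{Step 2: pullback.} Since $\Phi^{(0)}$ is equivariant on the boundary of the domains of all maps in $\butterfly^\infty_n$, we define $\Phi^{(k+1)}$ as follows. On each domain $D$ of a map $h\in\butterfly^\infty_{n,\Fext}$ with corresponding map $h'$ for $\Gext$, set
\[
\Phi^{(k+1)}=(h')^{-1}\circ\Phi^{(k)}\circ h .
\]
Off these domains, set $\Phi^{(k+1)}=\Phi^{(k)}$.

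\begin{itemize}
\item For the wings $\Uext'_n,\Uext''_n$ and for the enrichment domains $\Yext^{\iinfty-k}_{n,m}$, we use that they are univalent preimages of $\Wext_n$ with matching real traces. This is where the combinatorial matching of Lemma~\ref{lem:non-esc-parab}, together with $\Phi_2$ being a real conjugacy, is needed.
\item The enrichment domains are nested inside the wings, inside the parabolic basins $\Dext_{n,m}$. So we must take care that the definitions agree where domains overlap. To do this, pull back first by the depth-$m$ maps in order of increasing $m$, and use the fact that each enrichment map factors through the butterfly maps on the relevant sets.
\end{itemize}

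Each pullback is conformal composition, so the dilatation stays bounded by $K_0$. Gluing is continuous because of the boundary equivariance established in Step 1.

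\textbf{Step 3: limit.} The maps $\Phi^{(k)}$ stabilize off the non-escaping set $\Lambda_n^\infty$. By Lemma~\ref{lem:nowhere-dense-02}(1), $\Lambda^\infty_n$ is nowhere dense, since every parabolic depth beyond $n$ has been absorbed by the full enrichment. Hence a subsequential limit is a $K_0$-quasiconformal conjugacy on a dense open set, and so, by continuity, on all of $\Dom(\butterfly_n^\infty)$. This gives (1)--(3). We record that on the Fatou-type components, the components of $\Psi(\textnormal{int}\,\Hbold)$ lifted to hoglets, the conjugacy is conformal by construction.

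\textbf{Main obstacle.} The hard part is Step 1, specifically the uniform quasisymmetry of the prescribed boundary correspondence near the alpha-points $\alpha_{Q_n}$, $\alpha_{Q_n\pm Q_{n-1}}$, where hoglet boundaries may be non-locally connected. We plan to work with pseudo-bubble boundaries rather than hoglets, using the QC Thurston equivalence $\Phi_1$ on $\hat{\Zbold}$. The residual difficulty is the compatibility near the parabolic points $\beta_{n,m}$ under the infinitely many enrichment maps. There we will argue with Lemma~\ref{lem:uniqueness-parabolic-cycle} and the Fatou-coordinate structure of $\Dext_{n,m}$ to ensure that the pullbacks are consistent.
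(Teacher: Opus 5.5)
Your architecture matches the paper's. The initial map lifts $\Phi_1$ to the three primary pseudo-bubbles, takes a quasisymmetric choice on $\Pi_n$ lifted to $\Pi'_n,\Pi''_n$, and interpolates across the quasidisk $\Wext_n\setminus\hat{\Uext}_n$ of Proposition~\ref{prop:uniform-qc-bounded}; that proposition already disposes of your ``main obstacle'' at the alpha-points. One then pulls back, passes to a limit, and concludes from the nowhere density of $\Lambda_n^\infty$.

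The gap is in Step~2, the treatment of the enrichments. The rule $\Phi^{(k+1)}=(h')^{-1}\circ\Phi^{(k)}\circ h$ on the domain of every $h\in\butterfly^\infty_{n,\Fext}$ does not define a map. A point of $\Yext^{\iinfty-k}_{n,m}$ also lies in the wings and in every $\Yext^{\iinfty-k'}_{n,m}$ with $k'>k$, and the competing prescriptions agree only where $\Phi^{(k)}$ is already a conjugacy.

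Your remedy rests on the claim that each enrichment map factors through the butterfly maps, and this is false. Since $a_{m+1}=\infty$, the time $Q_{m+1}-Q_{m-1}-kQ_m$ exceeds every multiple of $Q_m\geq Q_n$. So it is not in $\Tbold_{\Arch,n}$, and hence not a combination of $Q_n$ and $Q_n+Q_{n-1}$. Indeed, if it were, points of $\Yext^{\iinfty-k}_{n,m}\subset\Dext_{n,m}\subset\Lambda_n$ would escape under $\butterfly_n$, and no enrichment would be needed. What is true is only that Lavaurs maps of consecutive indices differ by a butterfly composite: $\Fext^{(\iinfty-k)Q_m}=\Fext^{(\iinfty-k-1)Q_m}\circ\Fext^{Q_m}$ on $\Yext^{\iinfty-k}_{n,m}$.

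The paper avoids the conflict by nesting the limits rather than pulling back by everything at once.
\begin{enumerate}
\item Pull back by $\butterfly_n$ alone, obtaining $h_\infty$, a conjugacy off $\Lambda_n$. Its values on $\textnormal{int}\,\Lambda_n$ carry no dynamical information and are overwritten at the next stage.
\item By Lemma~\ref{lem:nowhere-dense}, the components of $\textnormal{int}\,\Lambda_n$ are $E^0=\Dext_{n,s_1}$ and its iterated preimages $E^j$. Each $E^j$ is carried onto $E^0$ by a minimal-generation map $\rho_j\in\Butterfly^{s_1}_n$. This is where Lemma~\ref{lem:uniqueness-parabolic-cycle} enters; no Fatou-coordinate analysis is needed.
\item On $E^0$, redefine the map by pulling back through the Lavaurs map of the \emph{smallest} index $k$ with $z\in\Yext^{\iinfty-k}_{n,s_1}$; these domains increase and exhaust $\Dext_{n,s_1}$. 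The relation above, together with the equivariance of $h_\infty$ at the escaping points of $\partial\Yext_{n,s_1}$, makes the pieces glue continuously.
\item On $E^j$, transport by $\rho_j$ and its $\Gext$-counterpart; elsewhere keep $h_\infty$.
\item Iterate and pass to a limit to obtain a conjugacy of $\butterfly^{s_1}_n$ off $\Lambda^{s_1}_n$.
\item Repeat at the successive parabolic depths $s_2<s_3<\cdots$, each stage modifying the map only on the interior of the previous non-escaping set. Take a final limit, to which Lemma~\ref{lem:nowhere-dense-02} applies.
\end{enumerate}
Your phrase ``in order of increasing $m$'' points in this direction. But without the smallest-index decomposition and the staged overwriting on $\textnormal{int}\,\Lambda$, Step~2 does not yet define the maps $\Phi^{(k)}$.
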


We will prove this lemma later in \S\ref{ss:proof-main-lemma}.

\subsection{Proof of Theorem \ref{thm:qc-rigidity} assuming Lemma \ref{lem:main-lemma}}
\label{ss:proof-qc-rigidity}

    Fix $n \in \Z$.
    Denote by $\Ubold'_{n,\Fbold}$, $\Ubold''_{n,\Fbold}$, $\Wbold_{n,\Fbold}$, $\Ubold'_{n,\Gbold}$, $\Ubold''_{n,\Gbold}$, $\Wbold_{n,\Gbold}$ the lift of
    the domains $\Uext'_{n,\Fext}$, $\Uext''_{n,\Fext}$, $\Wext_{n,\Fext}$, $\Uext'_{n,\Gext}$, $\Uext''_{n,\Gext}$, $\Wext_{n,\Gext}$ under $\Psi_\Fbold$ and $\Psi_\Gbold$ respectively. 
    Consider the quasiconformal maps $\Phi_1: \C \to \C$ and $\Phi_{2,n}: \UHP \to \UHP$ from Lemmas \ref{lem:qc-thurston-equivalence-02} and \ref{lem:main-lemma}.
    Define
    \[
        \Phi_{3,n}: \C \to \C, \qquad
        \Phi_{3,n} = \begin{cases}
            \Phi_{1} & \text{ on } \Hbold(\Fbold), \\
            \Psi_{\Gbold}^{-1} \circ \Phi_{2,n} \circ \Psi_{\Fbold} & \text{ on } \C \backslash \Hbold(\Fbold).
        \end{cases}
    \]
    By design, $\Phi_{3,n}$ is continuous on the boundary of $\Hbold(\Fbold)$ and so it is a global quasiconformal map.
    Moreover,
    \begin{enumerate}[label=\textnormal{(\roman*)}]
        \item \label{equivariance-hedgehog} $\Phi_{3,n}$ is a conjugacy between $\Fbold|_{\Hbold(\Fbold)}$ and $\Gbold|_{\Hbold(\Gbold)}$;
        \item \label{equivariance-butterfly} $\Phi_{3,n}$ sends $\Ubold'_{n,\Fbold}$, $\Ubold''_{n,\Fbold}$, $\Wbold_{n,\Fbold}$ onto $\Ubold'_{n,\Gbold}$, $\Ubold''_{n,\Gbold}$, $\Wbold_{n,\Gbold}$ respectively, and it is a conjugacy between the semigroups $\Psi_{\Fbold}^{-1} \circ \Butterfly_{n,\Fext}^\infty \circ \Psi_{\Fbold}$ and $\Psi_{\Gbold}^{-1} \circ \Butterfly_{n,\Gext}^\infty \circ \Psi_{\Gbold}$.
    \end{enumerate}
    Below, we will spread around $\Phi_{3,n}$ to a neighborhood of the depth $n$ pseudo-Siegel pinched half-plane $\hat{\Zbold}^n(\Fbold)$ (i.e. the union of $\Hbold(\Fbold)$ and all depth $\geq n+1$ fjords).

    Let $\widetilde{\Wbold}_{n,\Fbold}$ be the open simply connected domain bounded by the internal rays $I_{-Q_{n-1},\Fbold}$, $I_{-2Q_n+Q_{n-1}}$, and the arc $\partial \Wbold_{n,\Fbold} \backslash \Hbold$ with the internal rays $I_{0,\Fbold}$ and $I_{Q_{n-1},\Fbold}$ removed.
    Observe that $\widetilde{\Wbold}_{n,\Fbold}$ contains the domain $\Wbold_{n,\Fbold}$.
    Let $\Sigma^\dagger_{n,\Fbold}$ be the closure of the unique lift of $\widetilde{\Ubold'}_{n,\Fbold}$ under $\Fbold^{Q_n+Q_{n-1}}$ that contains $\Ubold'_{n,\Fbold}$; it is an unbounded closed topological disk and has a unique critical value of $\Fbold^{Q_n+Q_{n-1}}$ at $0$.

    Let $\Sigma'_{n,\Fbold}$ be the closure of the unique lift of $\Sigma^\dagger_{n,\Fbold} \backslash I_{0,\Fbold}$ under $\Fbold^{Q_n+Q_{n-1}}$ that contains both $I_{-Q_n-Q_{n-1},\Fbold}$ and $I_{-2Q_n-Q_{n-1},\Fbold}$ on its boundary.
    Similarly, let $\Sigma''_{n,\Fbold}$ be the closure of the unique lift of $\Sigma^\dagger_{n,\Fbold} \backslash I_{0,\Fbold}$ under $\Fbold^{Q_n}$ that contains both $I_{-2Q_n-Q_{n-1},\Fbold}$ and $I_{-Q_n,\Fbold}$ on its boundary.
    Both $\Sigma'_{n,\Fbold}$ and $\Sigma''_{n,\Fbold}$ are contained in $\Sigma^\dagger_{n,\Fbold}$.
    
    Define
    \[
        \Sigma'_{n,\Fbold}(P) := \Fbold^P( \Sigma'_{n,\Fbold} ) \qquad
        \text{ for } P \in \Tbold \text{ with } P < Q_n + Q_{n-1}
    \]
    and
    \[
        \Sigma''_{n,\Fbold}(P) := \Fbold^P( \Sigma''_{n,\Fbold} ) \qquad
        \text{ for } P \in \Tbold \text{ with } P < Q_n.
    \]
    The collection
    \[
        \boldsymbol{\Sigma}_{n,\Fbold} := \Sigma^\dagger_{n,\Fbold} \cup  \left\{ \Sigma'_{n,\Fbold}(P) \right\}_{P \in \Tbold, P< Q_n+Q_{n-1}} 
        \cup
        \left\{ \Sigma''_{n,\Fbold}(P) \right\}_{P \in \Tbold, P<Q_n} 
    \]
    forms a triangulation of a neighborhood of $\hat{\Zbold}^n (\Fbold)$.
    Any two distinct triangles in $\boldsymbol{\Sigma}_{n,\Fbold}$ are either disjoint or meet along an arc that is the concatenation of the internal ray $I_{-P,\Fbold}$ of $\Hbold(\Fbold)$ and the internal ray of a primary hoglet of generation $P$ for some $P \in \Tbold$, $P < 2Q_n+Q_{n-1}$.
    See Figure \ref{fig:renorm-tiling-cascade}.

\begin{figure}
        \centering
    \begin{tikzpicture}
    \node[anchor=south west, inner sep=0] (image) at (0,0) {\includegraphics[width=1\linewidth]{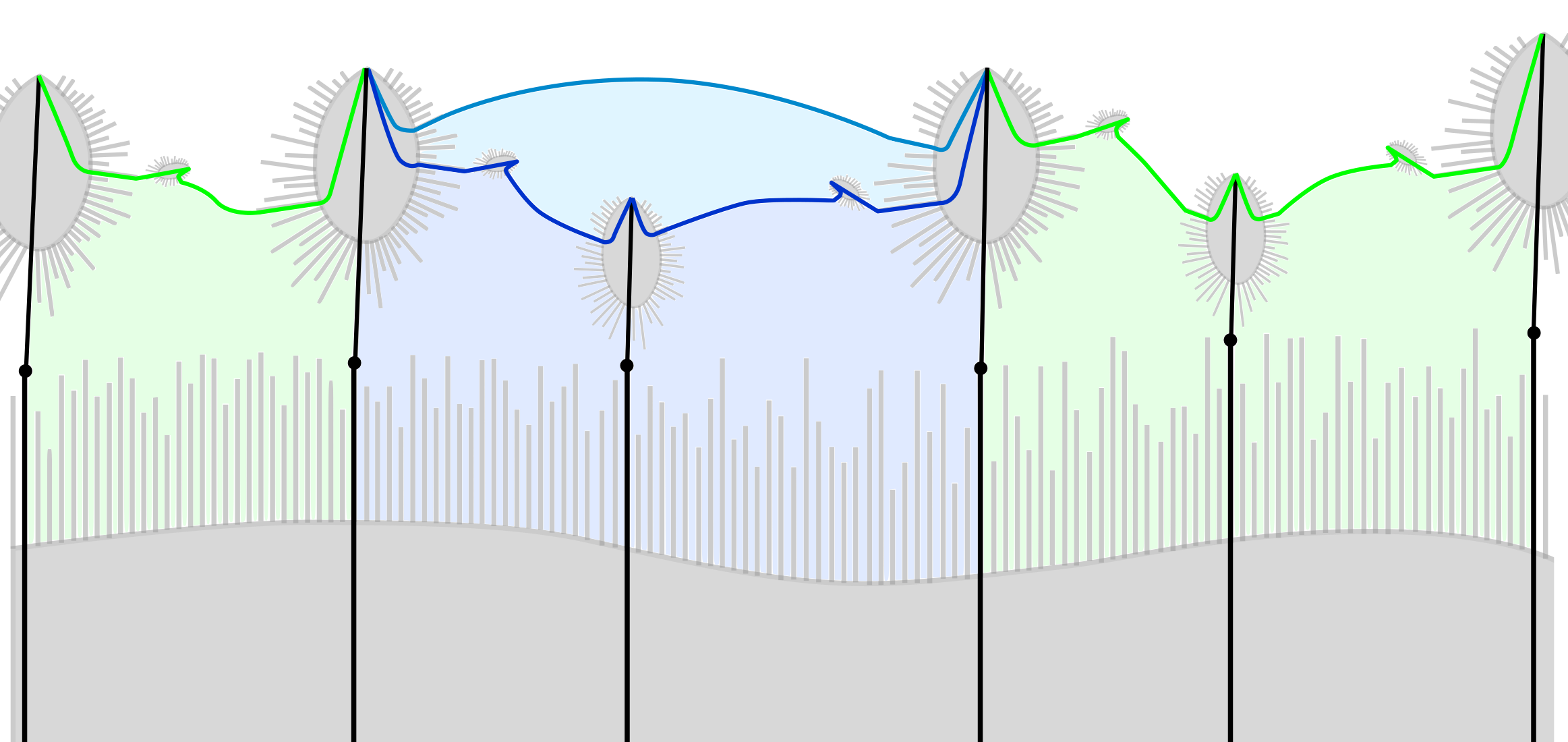}};
    \begin{scope}[
        x={(image.south east)},
        y={(image.north west)}
    ]
        \node [black!50!blue] at (0.415,0.81) {\scalebox{0.9}{$\Sigma^\dagger_{n,\Fbold}$}};
        \node [black!60!green] at (0.12,0.225) {\scalebox{0.85}{$\Sigma'_{n,\Fbold}(Q_n)$}};
        \node [black!70!blue] at (0.31,0.225) {\scalebox{0.9}{$\Sigma'_{n,\Fbold}$}};
        \node [black!70!blue] at (0.515,0.225) {\scalebox{0.9}{$\Sigma''_{n,\Fbold}$}};
        \node [black!60!green] at (0.705,0.225) {\scalebox{0.85}{$\Sigma'_{n,\Fbold}(Q_{n-1})$}};
        \node [black!60!green] at (0.885,0.225) {\scalebox{0.85}{$\Sigma''_{n,\Fbold}(Q_{n-1})$}};
        
        \node [black] at (0.64,-0.04) {\scalebox{0.85}{$I_{-Q_n, \Fbold}$}};
        \node [black] at (0.41,-0.04) {\scalebox{0.85}{$I_{-2Q_n-Q_{n-1},\Fbold}$}};
        \node [black] at (0.23,-0.04) {\scalebox{0.85}{$I_{-Q_n-Q_{n-1},\Fbold}$}};
        \draw [gray!50!black,-latex] (0.3,0.65) .. controls (0.32,0.95) and (0.34,0.95) .. (0.36,0.84);
        \node [black] at (0.33,0.96) {\scalebox{0.9}{$\Fbold^{Q_n+Q_{n-1}}$}};
        \draw [gray!50!black,-latex] (0.52,0.65) .. controls (0.5,0.95) and (0.48,0.95) .. (0.46,0.84);
        \node [black] at (0.52,0.96) {\scalebox{0.9}{$\Fbold^{Q_n}$}};
    \end{scope}
\end{tikzpicture}
        \caption{The triangulation $\mathbf{\Sigma}_{n, \Fbold}$ obtained by spreading around $\Sigma^{\dagger}_{n,\Fbold}$.}
        \label{fig:renorm-tiling-cascade}
\end{figure}

    In the dynamical plane of $\Gbold$, we define internal rays of $\Hbold(\Gbold)$ to be compatible with internal rays of $\Hbold(\Fbold)$ via $\Phi_{3,n}$. 
    Then, we define the triangulation 
    \[
        \boldsymbol{\Sigma}_{n,\Gbold} := \Sigma^\dagger_{n,\Gbold} \cup  \left\{ \Sigma'_{n,\Gbold}(P) \right\}_{P \in \Tbold, P< Q_n+Q_{n-1}} 
        \cup
        \left\{ \Sigma''_{n,\Gbold}(P) \right\}_{P \in \Tbold, P<Q_n} 
    \]
    of $\hat{\Zbold}^n(\Gbold)$ in a way analogous to $\boldsymbol{\Sigma}_{n,\Fbold}$.
    On $\Sigma^\dagger_{n,\Fbold}$, we define $\Phi_{4,n} := \Phi_{3,n}$.
    On each tile $\Sigma'_{n,\Fbold}(P)$ and $\Sigma''_{n,\Fbold}(P)$, we define $\Phi_{4,n}$ to be the unique map such that the two diagrams below are commutative.
\[
        \begin{tikzcd}[column sep=2.3cm,row sep=large]
	\Sigma'_{n,\Fbold}(P) & \Sigma^\dagger_{n,\Fbold} & \Sigma'_{n,\Fbold}(-P) & \Sigma^\dagger_{n,\Fbold} \\
	\Sigma'_{n,\Gbold}(P) & \Sigma^\dagger_{n,\Gbold} & \Sigma'_{n,\Gbold}(-P) & \Sigma^\dagger_{n,\Gbold}
	\arrow["\Fbold^{Q_n+Q_{n-1}-P}", from=1-1, to=1-2]
	\arrow["\Fbold^{Q_n-P}", from=1-3, to=1-4]
	\arrow["\Gbold^{Q_n+Q_{n-1}-P}"', from=2-1, to=2-2]
	\arrow["\Gbold^{Q_n-P}"', from=2-3, to=2-4]
	\arrow["\Phi_{4,n}"', from=1-1, to=2-1]
	\arrow["\Phi_{4,n}", from=1-2, to=2-2]
	\arrow["\Phi_{4,n}"', from=1-3, to=2-3]
	\arrow["\Phi_{4,n}", from=1-4, to=2-4]
        \end{tikzcd}
\]
    Properties \ref{equivariance-hedgehog} and \ref{equivariance-butterfly} of $\Phi_{3,n}$ imply that $\Phi_{4,n}$ glues continuously to a uniformly quasiconformal map 
    \[
        \Phi_{4,n} : \cup \boldsymbol{\Sigma}_{n,\Fbold} \to \cup \boldsymbol{\Sigma}_{n,\Gbold}.
    \]
    that conjugates $\Fbold$ and $\Gbold$.

    As $n \to -\infty$, $\cup \mathbf{\Sigma}_{n,\Fbold}$ converges to the whole plane and $\Phi_{4,n}$ converges in subsequence to a global quasiconformal conjugacy $\Phi: \C \to \C$ between $\Fbold$ and $\Gbold$.
    This concludes the proof of the theorem.

\subsection{The proof of Lemma \ref{lem:main-lemma}}
\label{ss:proof-main-lemma}

In this subsection, we will freeze the index $n \in \Z$. 
For convenience, we will drop the subscript $n$, e.g. 
$\butterfly_{\Fext} = \butterfly_{n,\Fext}$, $\Butterfly^m_{\Fext} = \Butterfly^m_{n,\Fext}$ for $m \in \{n,n+1,\ldots,\infty\}$, 
$\Wext_\Fext = \Wext_{n,\Fext}$, $\Uext'_\Fext = \Uext'_{n,\Fext}$, $\Uext''_{\Fext} = \Uext''_{n,\Fext}$, etc. 

For any quasiconformal self map $h$ of the upper half plane $\UHP$, we say that $h$ is \emph{suitable} if its continuous extension onto the real line is equal to the quasisymmetric map $\Phi_2: \R \to \R$ from Lemma \ref{lem:qs-rigidity}.
To prove Lemma \ref{lem:main-lemma}, we will construct a suitable uniformly quasiconformal self-map of $\UHP$ that conjugates the two fully enriched butterfly semigroups $\Butterfly_{\Fext}^\infty$ and $\Butterfly_{\Gext}^\infty$.

\subsubsection{QC Thurston equivalence on butterflies: an equivariant map $h_1 : \UHP \to \UHP$}\label{ss:QC.Th.eq.batt}
Recall from Section \ref{sec:butterfly} that the boundary of the domain of $\butterfly_{\Fext}$ is surrounded by the collection of hoglets
\[
    \mathscr{B}_{\Fext} = 
        \big\{
        \Bubb_{Q_n+Q_{n-1}, \Fext}, \; \Bubb_{Q_{n},\Fext}, \; \Bubb_{Q_n-Q_{n-1},\Fext}
        \big\}.
\]
Every hoglet in $\mathscr{B}_{\Fext}$ is contained in a unique pseudo-bubble of the same generation; we will denote the collection of such pseudo-bubbles by $\hat{\mathscr{B}}_{\Fext}$. 
Recall from Lemma \ref{lem:primary-bubble-disjoint} that pseudo-bubbles in $\hat{\mathscr{B}}_{\Fext}$ are pairwise disjoint.
Similarly, we define $\mathscr{B}_{\Gext}$ and $\hat{\mathscr{B}}_{\Gext}$ for $\Gext$.

Consider the QC Thurston equivalence $\Phi_1$ from Lemma \ref{lem:qc-thurston-equivalence-02}. 
After lifting $\Phi_1$ and passing to external coordinates, we obtain the quasiconformal map 
\[
    h_{-1} : \bigcup \hat{\mathscr{B}}_{\Fext} \to \bigcup \hat{\mathscr{B}}_{\Gext}
\]
The map $h_{-1}$ sends any pseudo-bubble $\hat{\Gamma}_{\Fext} \in \hat{\mathscr{B}}_{\Fext}$ of $\Fext$ of some generation, say $P$, onto its counterpart $\hat{\Gamma}_{\Gext} \in \hat{\mathscr{B}}_{\Gext}$ with the same generation $P$, and it is the unique map such that the following diagram commutes.
\begin{center}
    \begin{tikzcd}[column sep = huge, row sep = large]
 \hat{\Gamma}_{\Fext} \arrow[ r , "\Fbold^P \circ \Psi_\Fbold"] 
 \arrow[d, "h_{-1}"']
 & \hat{\Zbold}(\Fbold) \arrow[d, "\Phi_1"] \\
 \hat{\Gamma}_{\Gext} \arrow[r, "\Gbold^P \circ \Psi_\Gbold"']
 & \hat{\Zbold}(\Gbold)
    \end{tikzcd}
\end{center}

The pullback argument starts by selecting a suitable quasiconformal map $h_0 : \UHP \to \UHP$ sending $\partial \Wext_{\Fext}$ to $\partial \Wext_{\Gext}$.
This is possible because both $\parH \Wext_{\Fext}$ and $\parH \Wext_{\Gext}$ are uniform quasiarcs meeting the real line at some definite angle.
Then, we lift $h_0: \Wext_{\Fext} \to \Wext_{\Gext}$ to a quasiconformal map $h_1: \Dom(\butterfly_{\Fext}) \to \Dom(\butterfly_{\Gext})$ such that the following diagram commutes.
\begin{center}
    \begin{tikzcd}[column sep = huge, row sep = large]
 \Uext'_{\Fext} \arrow[ r , "\Fext^{Q_{n+1}}"] \arrow[d, "h_1"']
 & \Wext_{\Fext} \arrow[d, "h_0"] 
 & \Uext''_{\Fext} \arrow[ l , "\Fext^{Q_{n}}"'] \arrow[d, "h_1"]
 \\
 \Uext'_{\Gext} \arrow[r, "\Gext^{Q_{n+1}}"']
 & \Wext_{\Gext}
 & \Uext''_{\Gext} \arrow[l, "\Gext^{Q_{n}}"]
    \end{tikzcd}
\end{center}
Denote
\[
    \Omega_{\Fext} := \Wext_{\Fext} \backslash 
    \left( \overline{\Dom(\butterfly_{\Fext})} \cup \bigcup \hat{\mathscr{B}}_{\Fext} \right).
\]
Let us extend $h_1$ to a suitable quasiconformal self-map of $\UHP$ by setting
\[
    h_1 := \begin{cases}
        h_0 & \text{ on } \UHP \backslash \overline{\Wext_{\Fext}}, \\
        h_{-1} & \text{ on } \bigcup \hat{\mathscr{B}}_{\Fext} \backslash 
        \Dom(\butterfly_{\Fext}), \\
        \textnormal{QC interpolation} & \text{ on } \Omega_{\Fext}.
    \end{cases}
\]
Observe that $h_1$ is continuous on the boundary of $\Dom(\butterfly_{\Fext})$ because the equivariance of $\Phi_1$ induces the equivariance of $h$ on the hoglets $\bigcup \mathscr{B}_{\Fext}$. 
The existence of the quasiconformal interpolation between $\Omega_{\Fext}$ and its counterpart $\Omega_{\Gext}$ for $\Gext$ follows from the fact that $h_1$ is quasisymmetric on the boundary of $\Omega_\Fbold$ and that both $\partial \Omega_\Fbold$ and $\partial \Omega_\Gbold$ are uniform quasicircles (Proposition \ref{prop:uniform-qc-bounded}).

\subsubsection{The Pullback Argument}\label{sss:PA} For $k \geq 1$, let us denote 
\[
\Dom_k(\butterfly_{\Fext}) := (\butterfly_{\Fext})^{-k}(\Wext_{\Fext})
\]
where $\butterfly_{\Fext}$ is treated as the two-to-one map $\Fext^{Q_n+Q_{n-1}}|_{\Uext'_n} \cup \Fext^{Q_n}|_{\Uext''_n}$. 
Similarly, we define the sets $\Dom_k(\butterfly_{\Gext})$, $k \geq 1$. 
Let us lift $h_1$ to an infinite sequence of uniformly quasiconformal self-homeomorphisms $h_2, h_3,\ldots $ of $\UHP$ where for every $j \geq 1$, we have 
\[
h_{j+1} = h_j \qquad \text{ on } \Wext_{\Fbold} \backslash \Dom_j(\butterfly_{\Fext})
\]
and the following diagram commutes.
\begin{center}
    \begin{tikzcd}[column sep = huge, row sep = large]
 \Dom_j(\butterfly_{\Fext}) \arrow[ r , "\butterfly_{\Fext}"] \arrow[d, "h_{j+1}"']
 & \Dom_{j-1}(\butterfly_{\Fext}) \arrow[d, "h_j"] 
 \\
 \Dom_j(\butterfly_{\Gext}) \arrow[r, "\butterfly_{\Gext}"']
 & \Dom_{j-1}(\butterfly_{\Gext})
    \end{tikzcd}
\end{center}
Note that each $h_j$ is again continuous on $\UHP$ because the initial map $h_1$ is equivariant on the boundary of $\Dom(\butterfly_{\Fext})$. 
Let $K \geq 1$ be the QC dilatation of $h_1$. 
Since we are pulling back by a conformal map, the dilatation of each $h_j$ is also $K$. 
Moreover, $h_j$ is suitable.

By the compactness of normalized $K$-QC maps, as $j \to \infty$,
$h_j$ converges to a suitable $K$-QC map $h_\infty: \UHP \to \UHP$ such that the following diagram commutes.
\begin{center}
    \begin{tikzcd}[column sep = huge, row sep = large]
 \Dom(\butterfly_{\Fext}) \backslash \Lambda_{\Fext} \arrow[ r , "\butterfly_{\Fext}"] \arrow[d, "h_{\infty}"']
 & \Wext_{\Fext} \backslash \Lambda_{\Fext} \arrow[d, "h_{\infty}"] 
 \\
 \Dom(\butterfly_{\Gext}) \backslash \Lambda_{\Gext} \arrow[r, "\butterfly_{\Gext}"']
 & \Wext_{\Gext} \backslash \Lambda_{\Gext}
    \end{tikzcd}
\end{center}
Here, 
\[
\Lambda_{\Fext} = \cap_k \Dom_k(\butterfly_{\Fext})
\quad \text{ and } \quad
\Lambda_{\Gext} = \cap_k \Dom_k(\butterfly_{\Gext})
\]
are the non-escaping sets of $\butterfly_\Fext$ and $\butterfly_\Gext$ respectively.

If $n, n+1, n+2,\ldots$ are all non-parabolic depths, then by Lemma \ref{lem:nowhere-dense}, $\Lambda_{\Fext}$ has no interior. 
In this case, by continuity, $h_\infty$ must be a conjugacy between $\butterfly_{\Fext}$ and $\butterfly_{\Gext}$ on their whole domains and we are done. 
Otherwise, let us consider the smallest parabolic depth $s_1$ that is greater or equal to $n$.

\subsubsection{Resolution of the first parabolic enrichment}

Consider the parabolic fixed point $\beta_{\Fext,s_1}$ of $\Fext^{Q_{s_1}}$ contained in the principal fjord of depth $s_1$,
and let $\Dext_{\Fext,s_1}$ be the corresponding immediate parabolic basin.
Let us label the connected components of the interior of $\Lambda_{\Fext}$ by 
\[
    E^0_{\Fext} = \Dext_{\Fext,s_1}, \quad E^1_{\Fext}, \quad E^2_{\Fext}, \quad E^3_{\Fext}, \quad \ldots.
\]
According to Lemma \ref{lem:nowhere-dense}, for every $j \geq 1$, there exists a unique map $\rho_{\Fext,j}$ in the enriched semigroup $\Butterfly_{\Fext}^{s_1}$ with the smallest generation such that $\rho_{\Fext,j}$ sends $E^j_{\Fext}$ onto $E^0_{\Fext}$. 
In the same way, we can also enumerate the components of the interior of $\Lambda_{\Gext}$ by $E^j_{\Gext}$, $j \geq 0$ and the corresponding maps $\rho_{\Gext, j}$ such that for all $j \geq 0$, 
\[
    \rho_{\Gext,j} \circ h_\infty = h_\infty \circ \rho_{\Fext,j} \qquad \text{ on } E^j_{\Fext}.
\]

Recall from \S\ref{ss:enriched-butterfly} that the enriched butterfly $\butterfly_{\Fext}^{s_1}$ is defined by adding into $\butterfly_{\Fext}$ the Lavaurs maps 
\[
\Fext^{Q_{s_1+1} - Q_{s_1-1} - kQ_{s_1}}: \Yext_{\Fext,s_1}^{\iinfty-k} \to \Wext_{\Fext}, \qquad k \in \Z.
\]
We have
\[
    \Yext_{\Fext,s_1}^{\iinfty} \;\subset\;
    \Yext_{\Fext,s_1}^{\iinfty-1} \;\subset\;
    \Yext_{\Fext,s_1}^{\iinfty-2} \;\subset\;
    \ldots 
    \quad \text{ and } \quad
    \Dext_{\Fext,s_1} = \bigcup_{k = 0}^\infty \Yext_{\Fext,s_1}^{\iinfty-k}.
\]
Let us define a sequence of $K$-QC maps $h_{k,1}: \UHP \to \UHP$, $k \geq 0$ as follows. 
We set $h_{0,1}$ to be equal to $h_\infty$. For $k \geq 1$, $h_{k,1}$ is constructed out of $h_{k-1,0}$ as follows.
\begin{itemize}
    \item On $E^0_{\Fext}$, $h_{k,1}$ sends $E^0_{\Fext}$ onto $E^0_{\Gext}$, and it is the lift of $h_{k-1,1}$ under the corresponding Lavaurs map. More precisely, for any $z \in \Dext_{\Fext,s_1}$, let $k=k(z) \geq 0$ be the smallest natural number such that the domain $\Yext_{\Fext,s_1}^{\iinfty-k}$ contains $z$. Then, $h_{k,1}(z)$ is the unique point in $\Yext_{\Gext,s_1}^{\iinfty-k}$ such that 
    \[
    \Gext^{Q_{s_1+1}-Q_{s_1-1}-kQ_{s_1}} \circ h_{k,1}(z) = h_{k-1,1} \circ \Fext^{Q_{s_1+1}-Q_{s_1-1}-kQ_{s_1}}(z).
    \]
    \item On $E^j_{\Fext}$ for $j \geq 1$, $h_{k,1}$ sends $E^j_{\Fext}$ onto $E^j_{\Gext}$, and it is the unique map such that 
    \[
    \rho_{\Gext, j} \circ h_{k,1} = h_{k,1} \circ \rho_{\Fext, j}.
    \]
    \item Elsewhere on $\UHP \backslash \textnormal{int}\Lambda_{\Fext}$, we set $h_{k,1} = h_{k-1,1}$. 
\end{itemize}
By equivariance, $h_{k,1}$ glues to a continuous and therefore $K$-QC self-map of $\UHP$ that is suitable.
As $k \to \infty$, $h_{k,1}$ converges in subsequence to a suitable $K$-QC map $h_{\infty, 1}: \UHP \to \UHP$ that conjugates $\butterfly_{\Fext}^{s_1}$ and $\butterfly_{\Gext}^{s_1}$ away from their non-escaping sets $\Lambda_{\Fext}^{s_1}$ and $\Lambda_{\Gext}^{s_1}$.

\subsubsection{Resolution of all parabolic enrichments}

In general, consider the sequence of all parabolic depths 
\[
    s_1, \quad s_2, \quad s_3, \quad  \ldots
\]
that are greater or equal to $n$, labeled in increasing order. 
By adapting the argument above, we construct a sequence of $K$-QC suitable maps $h_{\infty,j}: \UHP \to \UHP$ such that for $j \geq 1$,
\begin{itemize}
    \item $h_{\infty,j+1} = h_{\infty,j}$ outside of the interior of the non-escaping set $\Lambda_{\Fext}^{s_j}$ of $\butterfly_{\Fext}^{s_j}$, and
    \item $h_{\infty,j}$ conjugates the depth $s_j$ enriched butterflies $\butterfly_{\Fext}^{s_j}$ and $\butterfly_{\Gext}^{s_j}$ away from $\Lambda_{\Fext}^{s_j}$ and $\Lambda_{\Gext}^{s_j}$ respectively.
\end{itemize}
As $j$ increases, $h_{\infty,j}$ converges in subsequence to a suitable quasiconformal map $h$ that conjugates $\butterfly_{\Fext, \infty}$ and $\butterfly_{\Gext, \infty}$ away from their non-escaping sets $\Lambda_{\Fext}^{\infty}$ and $\Lambda_{\Gext}^{\infty}$. 
By Lemma \ref{lem:nowhere-dense-02}, both $\Lambda_{\Fext}^{\infty}$ and $\Lambda_{\Gext}^{\infty}$ are nowhere dense. 
Hence, by continuity, $h$ is a conjugacy between $\Butterfly_{\Fext}^\infty$ and $\Butterfly_{\Gext}^{\infty}$ on their whole domains.
This concludes the proof of Lemma \ref{lem:main-lemma}.

\subsection{Back to Theorem \ref{main-thm:rigidity}}
\label{ss:two-main-theorems}

\begin{definition}
    For $\varepsilon>0$, we say that two bi-infinite towers $\fboldbar = \seq{f_n}_{n\in\Z}$ and $\gboldbar = \seq{g_n}_{n\in\Z}$ in $\attr$ are $\varepsilon$-\emph{conformally equivalent} if there exists a sequence of conformal maps $\phi_n: X_n \to Y_n$ conjugating $f_n$ and $g_n$, where $X_n$ (resp. $Y_n$) is an open disk containing the $\varepsilon$-neighborhood of the $n$\textsuperscript{th} top pseudo-Siegel disk $\hat{Z}_{n}(\fboldbar)$ (resp. $\hat{Z}_n(\gboldbar)$).
\end{definition}

The following theorem is a quantitative version of Theorem \ref{main-thm:rigidity}.

\begin{theorem}
\label{main-thm:rigidity-precise}
    There exists a universal constant $\boldsymbol{\epsilon} > 0$ such that 
    any two combinatorially equivalent bi-infinite towers $\fboldbar$ and $\gboldbar$ in $\attr$ are $\boldsymbol{\epsilon}$-conformally equivalent.
\end{theorem}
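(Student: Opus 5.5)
Given two combinatorially equivalent towers $\fboldbar=\seq{f_n}_{n\in\Z}$ and $\gboldbar=\seq{g_n}_{n\in\Z}$ in $\attr$, I will first realize each tower as a neutral cascade $\Fbold$, $\Gbold$ with the same combinatorics $\tttheta$ via Proposition~\ref{prop:trans-sectors}. Each comes with infinite sectors $\Sbold^n$ and gluing maps $\pphi_n$ that project $\Fbold^{[n]}$ (mod $\Fbold^{[n-1]}$) to $f_n$. By Theorem~\ref{main-thm:cascades} (i.e. Theorem~\ref{thm:qc-rigidity} together with NILF), there is an affine map $h$ with $h\circ\Fbold^P=\Gbold^P\circ h$ for all $P\in\Tbold$. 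The plan is to descend $h$ through the gluing maps. Set
\[
\phi_n:=\pphi_{n,\Gbold}\circ h\circ\pphi_{n,\Fbold}^{-1}.
\]
Since $h$ conjugates $\Fbold^{[n-1]}$ to $\Gbold^{[n-1]}$, it respects the side identifications of the sectors. Hence $\phi_n$ is well defined and conformal on a neighbourhood of $0$, and it conjugates $f_n$ to $g_n$ wherever the lifted dynamics make sense.

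The first point to check is that $h$ maps $\Sbold^n(\Fbold)$ to a sector of $\Gbold$ combinatorially equivalent to $\Sbold^n(\Gbold)$. Only the sides $\boldsymbol{A}^n$ and $\Fbold^{[n-1]}(\boldsymbol{A}^n)$ matter, and these agree up to a choice of arc outside $\Hbold$. I will handle this exactly as in Case~2 of the proof of Theorem~\ref{thm:qc-thurston-equivalence}. I replace $\Sbold^n(\Gbold)$ by $h(\Sbold^n(\Fbold))$, with gluing map $\tilde\pphi_{n,\Gbold}$. Its projection of $\Gbold^{[n]}$ gives a map conformally conjugate to $g_n$, because the two gluing maps differ by a conformal change of coordinates commuting with the dynamics. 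Concretely, both are quotients of the same cylinder-type domain by $\Gbold^{[n-1]}$, so the transition map is a conformal conjugacy on the overlap. With this adjustment, $\phi_n$ is a conformal conjugacy between $f_n$ and $g_n$ on $\pphi_{n,\Fbold}(\Sbold^n(\Fbold)\cap h^{-1}\Sbold^n(\Gbold))$, and it maps $H_n(\fboldbar)$ onto $H_n(\gboldbar)$ and $\hat Z_n(\fboldbar)$ onto $\hat Z_n(\gboldbar)$.

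The main obstacle is the quantitative part: the domain of $\phi_n$ must contain the $\boldsymbol{\epsilon}$-neighbourhood of $\hat Z_n(\fboldbar)$, with $\boldsymbol{\epsilon}$ universal. I will get this from compactness and bounded geometry. By Theorem~\ref{thm:renorm-limits}(4)(c), $\hat Z_n\subset\D(0,R')$ with $R'<1$ universal. By Proposition~\ref{prop:trans-sectors}(1), the sector $\Sbold^n$ has conformal radius comparable to $|C_{Q_{[n]}}|$, and its sides are uniformly controlled. It therefore suffices to show that $\pphi_{n,\Fbold}^{-1}$ of an $\epsilon$-neighbourhood of $\hat Z_n$, minus the slit, lands in a region where $h^{-1}\Sbold^n(\Gbold)$ and the relevant domains of $\Fbold^{[n]}$ are defined. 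I will argue by contradiction. Suppose there are sequences of pairs whose maximal conjugacy domain has neighbourhood size $\epsilon_k\to0$. Pass to limits in $\attr\times\attr$, using compactness of $\overline{\towsec}$ and continuity of $\tttheta$ (Proposition~\ref{prop:continuity-of-combinatorics}). The limiting pair is still combinatorially equivalent, so by the above it is conformally conjugate on some definite neighbourhood $\epsilon_\infty>0$. Normalized affine conjugacies then converge. Since $h$ fixes $0$ and $\pphi$ depends continuously on the tower, the conjugacies $\phi_n^{(k)}$ converge uniformly, and for large $k$ they extend to an $\epsilon_\infty/2$-neighbourhood, a contradiction.

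The delicate part is this continuity step. Affine conjugacies are unique only up to the normalization $C_0=0$ and $C_{-Q_{[0]}}=\varepsilon_0$, which fixes them, and uniqueness forces the limit of conjugacies to be the conjugacy of the limit. Shifting by $\Rsec$ makes the bound independent of $n$.
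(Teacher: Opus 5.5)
The qualitative part of your plan matches the paper's proof. You use Theorem~\ref{main-thm:cascades} to place both towers in a single normalized cascade, where $h$ is the identity. You then compare the two sector choices through the side identification $z\sim\Fbold^{[n-1]}(z)$.

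The gap is in how you get the universal $\boldsymbol{\epsilon}$. In your contradiction argument, the step ``the conjugacies $\phi_n^{(k)}$ converge uniformly, and for large $k$ they extend to an $\epsilon_\infty/2$-neighbourhood'' does not follow. Convergence of $\phi_n^{(k)}$ to $\phi_n^{(\infty)}$ on the domains where the $\phi_n^{(k)}$ are already defined says nothing about extending them beyond those, possibly shrinking, domains. You would need the domain of the descended map to depend lower semicontinuously on the pair of towers. That domain is governed by the transition between $h(\Sbold^n(\Fbold))$ and $\Sbold^n(\Gbold)$, which you dismissed with ``both are quotients of the same cylinder-type domain by $\Gbold^{[n-1]}$.'' That quotient is not a cylinder: $\Gbold^{[n-1]}$ is a $\sigma$-proper map with critical points, not a deck transformation. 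Moving a point from one fundamental sector to the other is legitimate only where $\Gbold^{[n-1]}$, or an inverse branch of it, is univalent onto the matching region at the other side. Continuity of $\pphi_n$ and of the cascade in the tower is also asserted rather than proved; Proposition~\ref{prop:trans-sectors} only gives subsequential limits. Without control of the transition region, even the qualitative statement that you get a neighbourhood of $\hat Z_n$ (including near the tips where the sides leave $\Hbold$) is unproven.

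The missing input is the geometry of that transition, which the paper supplies directly.
\begin{itemize}
\item The two sectors have the same sides inside $\Hbold$, because they are chosen compatibly with the QC Thurston equivalence. This also matters in the Brjuno case, where internal rays in the Siegel set are not canonical.
\item Outside $\Hbold$, the sides are quasiarcs of diameter $\asymp |C_{Q_{[n]}}|$ running through primary hoglets and meeting $\Hbold$ at definite angles, by Theorem~\ref{thm:sectorial-bounds}(1) and Pseudo-Bubble Bounds.
\item Hence, for $\varepsilon$ below a universal threshold, the part of $\pphi_{n,\Fbold}^{-1}$ of the $\varepsilon$-neighbourhood of $\hat Z_n$ lying outside the other sector sits in two primary pseudo-bubbles related by $\Fbold^{[n-1]}$.
\item On these pseudo-bubbles $\Fbold^{Q_{[n-1]}}$ is univalent, so you can move such points across by $\Fbold^{Q_{[n-1]}}$ or its inverse branch.
\end{itemize}
Combined with the universal $2\boldsymbol{\epsilon}$-neighbourhood of $\hat Z_n$ inside $\Dom(f_n)$ coming from precompactness (Theorem~\ref{thm:renorm-limits}), this gives $\boldsymbol{\epsilon}$ at once. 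Once you have this control, the compactness detour is unnecessary; without it, the detour cannot be closed.
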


The proof is essentially to transfer the affine rigidity of neutral cascades back to the level of towers of neutral maps.

\begin{proof}
    Let $\fboldbar = \seq{f_n}_{n\in\Z}$ and $\gboldbar = \seq{g_n}_{n\in\Z}$ be two combinatorially equivalent bi-infinite towers in $\attr$.
    In this subsection, we will show that $\fboldbar$ and $\gboldbar$ are $\varepsilon$-conformally equivalent where $\varepsilon$ is independent of $\fboldbar$ and $\gboldbar$, thereby proving Theorem \ref{main-thm:rigidity}.

    For $n \in \Z$, let $H_{n,\fboldbar}$ be the $n$\textsuperscript{th} Mother Hedgehog of $\fboldbar$ and let $S_{n,\fboldbar}^m$, $m \geq 1$ be the associated nest of sectors in the dynamical plane of $f_n$.
    For $n \in \Z$, there exists an associated gluing map $\psi_{n,\fboldbar} : S_{n,\fboldbar}^1 \to \D$ that identifies the sides of $S_{n,\fboldbar}^1$ via $z \sim f_n(z)$ and essentially sends $H_{n,\fboldbar}$ to $H_{n+1,\fboldbar}$.
    We will denote the slit of $\psi_{n,\fboldbar}$ by $\gamma_{n+1, \fboldbar}$.
    Recall from Theorem \ref{thm:qc-thurston-equivalence} that $\fboldbar$ is QC Thurston equivalent to $\gboldbar$.
    Via this equivalence, $\gboldbar$ admits analogous objects $H_{n,\gboldbar}$, $S_{n,\gboldbar}$, $\psi_{n,\gboldbar}$, $\gamma_{n,\gboldbar}$, etc.

    According to Theorem \ref{main-thm:cascades}, both $\fboldbar$ and $\gboldbar$ admit the same normalized neutral cascade $\Fbold = (\Fbold^P)_{P \in \Tbold}$.
    By Proposition \ref{prop:trans-sectors}, there exist nests of infinite sectors $\Sbold_{\fboldbar}^m$, $m \in \Z$ and $\Sbold_{\gboldbar}^m$, $m \in \Z$ in the dynamical plane of $\Fbold$ together with gluing maps $\pphi_{n,\fboldbar} : \Sbold_{\fboldbar}^m \to \D \backslash \gamma_{m, \fboldbar}$ and $\pphi_{n,\gboldbar} : \Sbold_{\gboldbar}^m \to \D \backslash \gamma_{m, \gboldbar}$ respectively.

    Let $\Hbold$ be the Mother Hedgehog and $\hat{\Zbold}$ be the top pseudo-Siegel half-plane of $\Fbold$.
    Denote the sides of $\Sbold^m_{\fboldbar}$ by $\Upsilon^m_{\fboldbar,1}$ and $\Upsilon^m_{\fboldbar,2}$.
    Recall that for $j \in \{1,2\}$, $\Upsilon^m_{\fboldbar,j}$ is the concatenation of an internal ray $I^m_{\fboldbar,j}$ of $\Hbold$ and a subarc $\delta^m_{\fboldbar,j}$ of an internal ray of a hoglet of $\Fbold$.
    We also know that $\delta^m_{\fboldbar,j}$ is a quasiarc meeting $\Hbold$ with definite angles (cf. Pseudo-Bubble Bounds) and satisfies $\diam(\delta^m_{\fboldbar,j}) \asymp |C_{Q_{[m]}}|$ (cf. Theorem \ref{thm:sectorial-bounds} (1)).
    Similarly, the sides of the sector $\Sbold_{\gboldbar}^m$ satisfy analogous properties.
    Since we select the sectors to respect the QC Thurston equivalence, this would mean that the sides of $\Sbold_{\gboldbar}^m$ coincide with that of $\Sbold_{\fboldbar}^m$ within the Mother Hedgehog.
    The difference in part of the sides that are outside of $\Hbold$ are contained in the bubbles.
    
    There exists a universal constant $\boldsymbol{\epsilon} > 0$ such that the domain of $f_n$ contains the $(2\boldsymbol{\epsilon})$-neighborhood of the top pseudo-Siegel disk $\hat{Z}_{n,\fboldbar}$ of $\fboldbar$.
    Pick $\varepsilon \in (0,\boldsymbol{\epsilon}]$ and an open quasidisk $X_n$ in the domain of $f_n$ that contains the $\varepsilon$-neighborhood of $\hat{Z}_{n,\fboldbar}$ such that $\partial X_n$ intersects the slit $\gamma_{n,\fboldbar}$ transversally at one point.
    Then,
    \[
        \mathbf{X}_n := \pphi_{n,\fboldbar}^{-1}(X_n \backslash \gamma_n)
    \]
    is a domain in $\Sbold_{\fboldbar}^n$.
    Since the sectors for $\gboldbar$ are constructed respecting QC Thurston equivalence, we have that 
    \[
        \mathbf{X}_n \cap \Hbold = \Sbold_{\fboldbar}^n \cap \Hbold = \Sbold_{\gboldbar}^n \cap \Hbold.
    \]
    Beyond $\Hbold$, points in $\mathbf{X}_n \backslash \Hbold$ may not be contained in $\Sbold_{\gboldbar}^n$.
    Nonetheless, the observation in the last paragraph implies that, assuming that $\varepsilon$ is sufficiently small, the set $\mathbf{X}_n \backslash \Sbold_{\gboldbar}^n$ is contained in two primary pseudo-bubbles that are dynamically related by $\Fbold^{[n-1]}$.
    In particular, $\mathbf{X}_n$ can be modified to a domain $\mathbf{Y}_n$ that is contained in $\Sbold_{\gboldbar}^n$ by replacing every point $z$ in $\mathbf{X}_n \backslash \Sbold_{\gboldbar}^n$ with the image of $z$ under either $\Fbold^{Q_{[n-1]}}$ or an inverse branch of $\Fbold^{Q_{[n-1]}}$.
    
    Lastly, we push $\mathbf{Y}_n$ onto the dynamical plane of $g_n$ under $\pphi_{n,\gboldbar}$ and obtain a disk neighborhood $V_n$ of $H_{n,\gboldbar}$.
    In this way, we also obtain a conformal isomorphism $\phi_n : U_n \to V_n$ such that $\phi_n(z) = \pphi_{n,\gboldbar} \circ \pphi_{n,\fboldbar}^{-1}(z)$ for points $z$ such that $\pphi_{n,\fboldbar}(z)$ is in $\mathbf{Y}_n$.
    Then, by design, $\phi_n$ is the desired conformal conjugacy between $f_n$ and $g_n$.
\end{proof}

\section{Rigidity of backward parabolic towers}
\label{sec:backward-rigidity}

Let us fix two bi-infinite sequences $\tttheta = \seq{ (\varepsilon_n, \abar_n) } _{n \in \Z}$ and $\tttheta' = \seq{ (\varepsilon'_n, \abar'_n) } _{n \in \Z}$ in $\TheBiCpt$ such that 
\begin{align}
\label{eqn:backward-equivalence}
    \abar_1 = \abar'_1 = \infty
    \qquad \text{ and } \qquad 
    (\varepsilon'_n, \abar'_n) = (\varepsilon_n, \abar_n) \quad \text{ for all } n \leq 1.
    \tag{$\spadesuit$}
\end{align}
Consider the sub-semigroup $\Tbold_{\Arch,0}$ of $\Tbold_{\tttheta}$ (from Section \ref{sec:hairiness-NILF}) that is generated by the elements $Q_0,Q_{-1}, Q_{-2},\ldots$ and all parabolic enrichments up to depth $-1$.
The corresponding sub-semigroup for $\tttheta'$ is isomorphic to $\Tbold_{\Arch,0}$, so we will simply use the same notation for both of them.
Our goal in this section is to prove the following theorem.

\begin{theorem}[Backward rigidity of parabolic bi-infinite towers]
\label{thm:parabolic-backward-rigidity}
    Let $\Fbold$ and $\Gbold$ be the (unique normalized) neutral cascades with combinatorics $\tttheta$ and $\tttheta'$ respectively satisfying \textnormal{(\ref{eqn:backward-equivalence})}.
    Then, there exists an affine map $\Phi: \C \to \C$ such that
    \[
        \Phi \circ \Fbold^P = \Gbold^P \circ \Phi \qquad \text{ for all } P \in \Tbold_{\Arch,0}.
    \]
\end{theorem}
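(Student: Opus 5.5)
The plan is to rerun the proof of Theorem \ref{thm:qc-rigidity}, restricted to the Archimedean sub-semigroup $\Tbold_{\Arch,0}$, with the Mother Hedgehog replaced by an object that only sees the backward combinatorics. Since $\abar_1=\infty$, every map $\Fbold^P$ with $P\in\Tbold_{\Arch,0}$ is obtained, via the sectors $\Sbold^n$ and the gluing maps $\pphi_n$ for $n\le 0$ (Proposition \ref{prop:trans-sectors}), from the parabolic map $f_0$ and its backward tower $\seq{f_n}_{n\le 0}$. The invariant set $\Hbold(\Fbold)$ depends on the forward enrichment. I would therefore introduce, for the parabolic map $f_0$, its \emph{Mother Flower}: the closure of the repelling petal structure at $0$ together with the critical orbit under the semigroup generated by $f_0$ and the backward pre-renormalizations. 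This is the Hausdorff limit of the Mother Hedgehogs along any forward enrichment, truncated at level $0$. I would then transfer it, as in Proposition \ref{prop:trans-mother}, to a $\Tbold_{\Arch,0}$-invariant set $\Hbold_0(\Fbold)$. Filling in the parabolic fjords of levels $\le 0$, together with the parabolic fjord at the depth corresponding to $\abar_1=\infty$, gives a pseudo-Siegel flower $\hat{\Zbold}_0(\Fbold)$. By Proposition \ref{prop:trans-pseudo-siegel} and the limiting argument, it is uniformly quasiconformal. By construction, $\hat{\Zbold}_0$ and its dynamics depend only on $\seq{(\varepsilon_n,\abar_n)}_{n\le 1}$.

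The steps are as follows.
\begin{enumerate}
\item QC Thurston equivalence. Rerun Theorem \ref{thm:qc-thurston-equivalence} and Lemma \ref{lem:qc-thurston-equivalence-02}, using triangulations only from levels $\le 0$ together with the Fatou-coordinate tiling of the top parabolic fjord. This yields a uniformly quasiconformal map $\Phi_1$ with $\Phi_1(\hat{\Zbold}_0(\Fbold))=\hat{\Zbold}_0(\Gbold)$, conjugating the $\Tbold_{\Arch,0}$-dynamics on the flowers.
\item External coordinates and real bounds. Uniformize the complement of the flower and obtain an external cascade parametrized by $\Tbold_{\Arch,0}$. Real Bounds (Proposition \ref{prop:R-apb-transcendental}) apply at depths $\le 0$. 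At the parabolic depth the tile containing $0$ now carries the parabolic point $\beta$ with tiles of size $\asymp j^{-2}$, exactly as in item (3) of that proposition. As in Lemma \ref{lem:qs-rigidity}, this gives a quasisymmetric conjugacy $\Phi_2$ on $\R$.
\item Bounds. Pseudo-Bubble Bounds, alpha-points, bubble chains and lake bounds (Section \ref{sec:bounds}) are all statements at finitely many depths $\le 0$ plus parabolic enrichments. Their proofs go through verbatim for hoglets defined as preimages of the flower.
\item Hairiness and NILF. Apply Theorem \ref{thm:parabolic-fatou-set} with $N$ equal to the parabolic depth. The Fatou set of $(\Fbold^P)_{P\in\Tbold_{\Arch,0}}$ is then the grand orbit of the parabolic basin (now containing the flower's interior), and NILF (Theorem \ref{thm:NILF}) holds on the Julia set. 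The non-linearity Lemma \ref{lem:non-linearity-2} already includes the clause restricting times to $\Tbold_{\Arch,N}$. The zero-area input for the flower boundary follows from Theorem \ref{thm:zero-area} applied to any forward enrichment, since the flower's boundary is contained in a limit of postcritical sets.
\item Pullback. Build butterflies $\butterfly_n$ for $n\le 0$ (Section \ref{sec:butterfly}). The only parabolic enrichment needed is at depth $0$, handled by Lemma \ref{lem:nowhere-dense-02}. Run the pullback argument of \S\ref{ss:proof-main-lemma}, spread around as in \S\ref{ss:proof-qc-rigidity}, and let $n\to-\infty$ to get a global QC conjugacy. By NILF and no wandering domains, this conjugacy is conformal off the grand orbit of the parabolic basin.
\end{enumerate}

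The main obstacle is making $\Phi$ conformal on the parabolic basins rather than merely QC there. On these basins the conjugacy is constructed via Fatou coordinates, so it is conformal only after a surgery replacing $\Phi$ on the immediate basin with the conjugacy of the attracting Fatou coordinates. This is possible because both Ecalle cylinders are the same, determined by the backward combinatorics up to translation. The surgery must be consistent with the QC Thurston equivalence on the flower. I would first try to do it at the level of Step 1, arranging $\Phi_1$ to be conformal on the basin as in the Brjuno case of Theorem \ref{thm:qc-thurston-equivalence}, by conjugating Fatou coordinates and interpolating on a fundamental crescent. A secondary difficulty is verifying the quasiconformality of the pseudo-Siegel flower near the parabolic point uniformly, which I would take from the same perturbed Fatou coordinate argument underlying Lemma \ref{lem:dividing-pair-extension}.
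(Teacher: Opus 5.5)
Your proof fails at the definition of the Mother Flower, and the one mechanism that could fix conformality on the parabolic basin is left unreconciled with your Step 1.

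\textbf{The Mother Flower.} As you describe it (a hedgehog-type set, ``the Hausdorff limit of the Mother Hedgehogs along any forward enrichment, truncated at level $0$''), it is not well defined. The claim that it depends only on $\langle(\varepsilon_n,\abar_n)\rangle_{n\le 1}$ is false for any hedgehog-type choice. $H_0$ is the $f_0$-saturation of the lift of $H_1$ under the sector gluing, so its topology remembers $f_1$. For instance, $\operatorname{int}H_0\neq\emptyset$ if and only if $\langle(\varepsilon_n,\abar_n)\rangle_{n\ge 2}$ is eventually Brjuno (Theorem~\ref{thm:eventually-brjuno}). Condition $(\spadesuit)$ does not control this, so the hedgehog parts of the two flowers need not even be homeomorphic. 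Adding only the fjords of levels $\le 0$ leaves this forward-dependent core untouched. It also does not produce a quasi-half-plane, since Proposition~\ref{prop:trans-pseudo-siegel}(4) needs the fjords of \emph{all} levels, including the forward levels $\ge 1$.

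The missing idea is to swallow every forward level into the flower. The paper takes $\check{\Hbold}$ to be the $\Tbold_{\Arch,0}$-saturation of $\hat{\Zbold}^{[0]}\cap\Sbold^1$, whose image under $\pphi_1$ is the top pseudo-Siegel disk $\hat Z_1^{[-1]}$ of $f_1$. This has three consequences:
\begin{itemize}
\item All forward data are hidden in the interior of the flower, where $f_0$ is conformally a translation of $\UHP$.
\item The sets $\check{\Zbold}^{[m]}$ differ from $\hat{\Zbold}^{[m]}$ only by almost-invariant spreading (Lemma~\ref{lem:fjord-almost-invariance}), so all bounds carry over.
\item The QC Thurston equivalence is seeded by the unique conformal conjugacy of the flower interiors, normalized at the critical values.
\end{itemize}
With this flower the rest of your outline matches the paper, with one correction. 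The butterflies are enriched only up to depth $-1$. The depth-$0$ Lavaurs maps are not in $\Tbold_{\Arch,0}$ and are not conjugate for $\Fbold$ and $\Gbold$, so the depth-$0$ basin stays as the interior of the non-escaping set (Lemma~\ref{lem:nowhere-dense-02} with $m=-1$).

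\textbf{The basin.} Your ``main obstacle'' is genuine, and the paper's seed does not dispose of it. In the plane of $f_0$, $\operatorname{int}\check Z_0^{[0]}$ modulo $f_0$ is $\operatorname{int}\hat Z_1^{[-1]}\setminus\{0\}$, a punctured disk. So it fills only one end of the attracting \'Ecalle cylinder. The part of the immediate basin over the other end lies in the Fatou set of $(\Fbold^P)_{P\in\Tbold_{\Arch,0}}$ (Theorem~\ref{thm:parabolic-fatou-set}) but outside the grand orbit of $\operatorname{int}\check{\Hbold}$. Theorem~\ref{thm:NILF} says nothing there.

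Moreover, an affine conjugacy restricts on the basin to the unique conformal conjugacy of the basins, i.e.\ a translation in Fatou coordinates fixing the critical value. This map generally does not send $\check{\Hbold}(\Fbold)$ onto $\check{\Hbold}(\Gbold)$, because the flowers remember $f_1$ versus $g_1$. Concretely, if $f_n=g_n$ for all $n\le 0$ but the forward tails differ, the affine conjugacy is the identity while $\check{\Hbold}(\Fbold)\neq\check{\Hbold}(\Gbold)$. So a conformal seed between flower interiors, followed by the pullback, cannot in general produce an affine map.

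What is needed is what you suggest: use the Fatou-coordinate conjugacy on the whole immediate basin. The relevant input is not that the \'Ecalle cylinders agree, which is automatic, but that a degree-two parabolic basin with one critical point is conformally rigid. This must be combined with \emph{defining} the $\Gbold$-flower as the image of the $\Fbold$-flower under that conjugacy, as in Case~2 of the proof of Theorem~\ref{thm:qc-thurston-equivalence}. You would then have to check that the transported flower keeps the pseudo-Siegel, real and pseudo-bubble bounds. Your sketch instead keeps $\Phi_1$ flower-to-flower and ``interpolates on a fundamental crescent''. These two requirements are incompatible, so the argument is not complete as it stands.
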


The proof will mimic the proof of Theorem \ref{thm:qc-rigidity} in the previous section.
Repeated details will be spared and new additions will be highlighted.
Following the discussion in \S\ref{ss:two-main-theorems}, we also know that Theorem \ref{thm:parabolic-backward-rigidity} implies Theorem \ref{main-thm:backward-rigidity}.

\begin{remark}
    It may be possible that the conclusion of Theorem \ref{thm:parabolic-backward-rigidity} also holds under the condition that 
    \begin{itemize}
        \item $(\varepsilon'_n, \abar'_n) = (\varepsilon_n, \abar_n)$ for all $n \leq -1$,
        \item $(\varepsilon_1,\abar_1) = (+ \infty)$ and $(\varepsilon'_1,\abar_1) = (-, \infty)$, and
        \item either $(\varepsilon_0, \abar_0) = (-,2)$ and $(\varepsilon'_0, \abar'_0) = (+,2)$, or $\varepsilon'_0 = \varepsilon_0 = +$ and $\abar'_0 = \abar_0 + \varepsilon_0$.
    \end{itemize}
    If this is true, it means that $\Fbold$ and $\Gbold$ have bi-infinite tower representatives $\seq{f_n}_{n\in \Z}$ and $\seq{g_n}_{n\in \Z}$ respectively in $\attr$ such that $f_0$ and $g_0$ are equal to a real-symmetric parabolic map.
    The methods in this paper are not enough to detect such symmetry.
\end{remark}

\subsection{The Mother Flower $\check{\Hbold}$}

From now on, we will fix a neutral cascade $\Fbold$ with combinatorics $\tttheta$ associated to a bi-infinite tower $\fboldbar = \seq{f_n}_{n\in\Z}$ in $\overline{\towsec}$ corresponding to $\Fbold$.
We will use the standard notation summarized in \S\ref{sss:cascade}.

The primary modification is the introduction of the Mother Flower, which plays the role of the Mother Hedgehog in this backward/parabolic setting and is built only from the sub-semigroup $\Tbold_{\Arch,0}$.
Let us go over the details now.

\begin{definition}
    Denote $\check{\Zbold}^{[0]}[0] := \Zbold^{[0]} \cap \Sbold^1$, and for $P \in \Tbold_{\Arch,0}$, denote 
    \begin{itemize}
        \item $\check{\Zbold}^{[0]}[P] := \Fbold^P \big(\check{\Zbold}^{[0]}[0] \big)$,
        \item $\check{\Zbold}^{[0]}[-P] :=$ the closure of the lift under $\Fbold^{-P}$ of the interior of $\check{\Zbold}_0^{[0]}[0]$ that intersects with the Mother Hedgehog of $\Fbold$.
    \end{itemize}
    Then, we define the \emph{Mother Flower} of $\Fbold$ to be 
    \[
        \check{\Hbold} \equiv \check{\Hbold}(\Fbold) := \bigcup_{P \in \Tbold_{\Arch,0} \cup \{0\} \cup -\Tbold_{\Arch,0}} \check{\Zbold}^{[0]}[P].
    \]
\end{definition}

By design, we have:

\begin{lemma}
\label{lem:modified-mother-hedgehog}
    The map $\Fbold^{[0]}$ restricts to a self-homeomorphism of $\check{\Hbold}$.
\end{lemma}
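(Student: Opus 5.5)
The plan is to check separately that $\Fbold^{[0]}$ is well defined and injective on $\check{\Hbold}$, that it maps $\check{\Hbold}$ onto itself, and that the result is a homeomorphism. The argument is essentially bookkeeping on the index set $\mathcal{P}:=\Tbold_{\Arch,0}\cup\{0\}\cup-\Tbold_{\Arch,0}$ of petals $\check{\Zbold}^{[0]}[P]$.

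\textbf{Step 1 (the index set is shift-invariant).} First observe that $Q_{[0]}\in\Tbold_{\Arch,0}$, so adding $Q_{[0]}$ permutes $\mathcal{P}$. Indeed, since $\mathcal{P}$ is a subgroup of $\Kbold$ containing $Q_{[0]}$, the map $P\mapsto P+Q_{[0]}$ is a bijection of $\mathcal{P}$.

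\textbf{Step 2 (petals map to petals).} Next verify the petal relation
\[
\Fbold^{[0]}\big(\check{\Zbold}^{[0]}[P]\big)=\check{\Zbold}^{[0]}[P+Q_{[0]}] \qquad \text{for all } P\in\mathcal{P}.
\]
For $P\ge 0$ this follows from the definition and the semigroup law \eqref{eq:thm:transcendental-extension}. For $P<0$ with $P+Q_{[0]}\le 0$, the petal $\check{\Zbold}^{[0]}[P]$ is by definition the distinguished lift under $\Fbold^{-P}$ touching $\Hbold$. Its image under $\Fbold^{[0]}$ is then a lift under $\Fbold^{-P-Q_{[0]}}$ of the interior of $\check{\Zbold}^{[0]}[0]$, and it still meets $\Hbold$ because $\Fbold^{[0]}(\Hbold)=\Hbold$ (Proposition~\ref{prop:trans-mother}~(5)). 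By uniqueness of that lift it equals $\check{\Zbold}^{[0]}[P+Q_{[0]}]$. The mixed case, where $P<0<P+Q_{[0]}$, reduces to the previous two by splitting $\Fbold^{[0]}=\Fbold^{P+Q_{[0]}}\circ\Fbold^{-P}$.

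\textbf{Step 3 (injectivity and surjectivity).} Each petal lies in the domain of $\Fbold^{[0]}$. For nonnegative indices the petals sit in $\hat{\Zbold}^{[-1]}$ or in its forward images inside the Siegel-type region, and $\Fbold^{[0]}$ is defined and injective on $\hat{\Zbold}^{[-1]}$ (Proposition~\ref{prop:trans-pseudo-siegel}~(6)). For negative indices the petals are univalent lifts whose images under $\Fbold^{-P}$ stay in $\check{\Zbold}^{[0]}[0]$, where the lift is conformal. Hence $\Fbold^{[0]}$ is injective on each petal. Step~2 together with Step~1 then gives surjectivity onto $\check{\Hbold}$. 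Global injectivity requires that distinct petals have images that overlap only along shared boundary. This follows from the petal relation applied bijectively: two petals $\check{\Zbold}^{[0]}[P]$ and $\check{\Zbold}^{[0]}[R]$ meet exactly when their images $\check{\Zbold}^{[0]}[P+Q_{[0]}]$ and $\check{\Zbold}^{[0]}[R+Q_{[0]}]$ meet, and in the same configuration. This is inherited from the sector picture, where $\pphi_0$ conjugates $\Fbold^{[0]}$ on $\Sbold^1$ to $f_0$ acting on its parabolic flower near $0$.

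\textbf{Step 4 (homeomorphism).} Continuity of the restriction is automatic. Continuity of the inverse follows because the inverse is given petalwise by the univalent branches above, and these branches agree on overlaps by Step~3. The local finiteness needed to glue them comes from the lifts being bounded within each Archimedean level (Corollary~\ref{cor:bounded-lakes}).

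The main obstacle is Step~3: the petals do not visibly lie in a single injectivity domain of $\Fbold^{[0]}$, and for negative indices several lifts of $\check{\Zbold}^{[0]}[0]$ a priori touch $\Hbold$. To handle this, I would pass to the dynamical plane of $f_0$ via $\pphi_0$. There $\check{\Hbold}\cap\Sbold^0$ corresponds to the union of the forward and backward $f_0$-images of $\hat{Z}_0^{[0]}\cap S_0^1$. Since $f_0$ is parabolic ($\abar_1=\infty$), this is the attracting/repelling flower at $0$, on which $f_0$ is injective, and I would then transport that injectivity back to the cascade plane.
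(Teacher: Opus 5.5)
Your Steps 1--2 are exactly what the paper means by ``by design''; it offers no further argument. Indeed $\mathcal P$ is a convex subgroup of $\Kbold$ containing $Q_{[0]}$, and $\Fbold^{[0]}$ carries the petal of index $P$ onto the petal of index $P+Q_{[0]}$.

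The gap is Step~3, the only part that is not bookkeeping. The claim that ``two petals meet exactly when their images meet, and in the same configuration'' does not follow from the petal relation, which says nothing about how distinct petals sit relative to one another. It is simply a restatement of the injectivity you are trying to prove.

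Your fallback does not close this either. First, ``$f_0$ is injective on its flower'' is asserted, not proved, and it is the same statement transported to the $f_0$-plane. Second, even granting it, $\pphi_0$ is defined only on $\Sbold^0$ and conjugates $\Fbold^{[0]}$ to $f_0$ only modulo the gluing $z\sim\Fbold^{[-1]}(z)$. So the $f_0$-plane says nothing about petals lying outside $\Sbold^0$ (e.g.\ $\check{\Zbold}^{[0]}[Q_{[-1]}]=\Fbold^{[-1]}(\check{\Zbold}^{[0]}[0])$), nor about points of $\check{\Hbold}$ identified by lower-level maps. You would need the analogous injectivity of $f_n^{[-n]}$ on the flower of $f_n$ for every $n\le 0$, together with the spreading-around from the sectors, and neither appears in your outline.

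The missing input is that the petals \emph{tile} $\check{\Hbold}$: pairwise disjoint interiors, meeting only along common sides. The natural source is the fundamental-domain property of $\Sbold^1$. By Proposition~\ref{prop:trans-sectors}~(6), the cascade form of Lemma~\ref{lem:triangulation-first-return}, points of $\hat{\Zbold}^{[0]}\cap\Sbold^1$ first return to $\Sbold^1$ only at the times $Q_{[1]}$ and $\check{Q}_{[1]}$. Since $\abar_1=\infty$, both exceed every element of $\Tbold_{\Arch,0}$ (Proposition~\ref{prop:time-order}). In the $f_0$-plane this says that $\hat{Z}_0^{[0]}\cap S_0^1$ is a fundamental domain for $f_0$, with returns only through Lavaurs maps. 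That can be turned into injectivity, e.g.\ via attracting Fatou coordinates on the interior, where $f_0$ becomes the unit translation. The conclusion then has to be propagated through the tower.

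Two smaller points.
\begin{itemize}
\item Injectivity on a single nonnegative petal should not be argued via $\hat{\Zbold}^{[-1]}$ or a ``Siegel-type region'': $f_0$ is parabolic, and $\check{\Hbold}$ is only almost equal to $\hat{\Zbold}^{[0]}$. Instead, $\Fbold^{[1]}$ is injective on $\hat{\Zbold}^{[0]}\supset\check{\Zbold}^{[0]}[0]$ (Proposition~\ref{prop:trans-pseudo-siegel}~(6)). Hence each $\Fbold^{S}$ with $S\in\Tbold_{\Arch,0}$, being the inner factor of $\Fbold^{[1]}=\Fbold^{Q_{[1]}-S}\circ\Fbold^{S}$, is injective there; apply this with $S=P+Q_{[0]}$.
\item In Step~4, Corollary~\ref{cor:bounded-lakes} is about lakes (components of $\Fext$-preimages of $\UHP$), not petals. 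The petals are unbounded (already $\check{\Zbold}^{[0]}[0]$ contains the internal ray $I_0$), so that corollary gives you no local finiteness.
\end{itemize}
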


Recall that in external coordinates, for every near-parabolic level $n$, $\Fjord^{[n]}$ denotes the union of all level $n$ fjords.

\begin{definition}
    For $m \leq 0$, we define the \emph{level $m$ pseudo-Siegel flower} $\check{\Zbold}^{[m]}$ of $\Fbold$ to be
    \[
        \check{\Zbold}^{[m]} \equiv \check{\Zbold}^{[m]}(\Fbold) = 
        \begin{cases}
            \displaystyle \check{\Hbold} & \textnormal{ if } m = 0,\\
            \displaystyle \check{\Hbold} \cup \bigcup_{n = m+1}^{0} \Psi^{-1}(\Fjord^{[n]}) & \textnormal{ if } m \leq -1.
        \end{cases}
    \]
    The \emph{top pseudo-Siegel flower} is
    \[
        \check{\Zbold} = \bigcup_{m \leq -1} \check{\Zbold}^{[m]}.
    \]
\end{definition}

By almost invariance (Lemma \ref{lem:fjord-almost-invariance}), 
the modified $\check{\Zbold}^{[m]}$ is almost identical to the original $\hat{\Zbold}^{[m]}$ for all $m \leq 0$.
The boundary of $\check{\Zbold}$ remains a uniform quasi-line.

For all $n \leq 0$ and $m \geq -1$ with $n+m \leq 0$, we will also replace the level $m$ pseudo-Siegel pinched disk $\hat{Z}_n^{[m]}$ of $f_n$ with the modified version $\check{Z}_n^{[m]} = \check{Z}_n^{[m]}(\fboldbar)$ that projects onto $\check{\Zbold}^{[m+n]}$ under the gluing map $\pphi_n$ described in Proposition \ref{prop:trans-sectors}.
We still have that $\check{Z}_n^{[0]}$ and $\check{Z}_n^{[-1]}$ are uniform quasidisks.
The modification gives us the new property that for $m \geq 0$, $\check{Z}_{-m}^{[m]}$ is completely invariant under $f_{-m}^{[m]}$.

In the dynamical plane of $\Fbold$, we enlarge every hoglet of generation $P \in \Tbold_{\Arch,0}$ to what shall be called the \emph{floret} of the same generation $P$, which is the appropriate lift of $\check{\Hbold}$ under $\Fbold^P$. 
The lift of the top pseudo-Siegel flower $\check{\Zbold}$ will again be referred to as \emph{pseudo-bubbles}.
They admit Pseudo-Bubble Bounds in the sense of Proposition \ref{prop:bubble-bounds}.

\subsection{External coordinate analysis}

We define the new external coordinates in terms of the exterior of $\check{\Hbold}$.
More precisely, consider the Riemann mapping
\[
    \check{\Psi} = \check{\Psi}_{\Fbold} : \C \backslash \check{\Hbold} \to \UHP
\]
normalized to fix $0$, $\varepsilon_0$, and $\infty$.
Then, the new external cascade is defined by
\[
    \check{\Fext}^P := \check{\Psi} \circ \Fbold^P \circ \Psi^{-1} : \check{\Psi}(\Fbold^{-P}(\C \backslash \check{\Hbold}) \to \UHP
\]
for $P \in \Tbold_{\Arch,0}$.
Again, connected components of the domain of $\check{\Fext}^P$ are called lakes of generation $P$.
With essentially the same proof, we have the following properties:
\begin{enumerate}
    \item Real Bounds for the critical orbit of $\check{\Fext}$ along the real line, mimicking Proposition \ref{prop:R-apb-transcendental};
    \item Pseudo-Bubble Bounds, mimicking \ref{prop:bubble-bounds};
    \item Floret-Butterfly Bounds, mimicking what was constructed in Section \ref{sec:butterfly}.
\end{enumerate}
Let us elaborate on the last point.
For $n < 0$, we define the depth $n$ butterfly
\[
    \big( \check{\Fext}^{Q_{n+1}}:\check{\Uext}'_{n, \check{\Fext}} \to \check{\Wext}_{n, \check{\Fext}}, \quad  \Fext^{Q_{n}}:\check{\Uext}''_{n, \check{\Fext}} \to \check{\Wext}_{n, \check{\Fext}} \big)
\]
where $\check{\Wext}_{n,\check{\Fext}}$ is constructed using a circular arc analogous to $\Wext_{n,\Fext}$, and $\check{\Uext}'_{n,\check{\Fext}}$ and $\check{\Uext}''_{n,\check{\Fext}}$ are the corresponding preimages.
It also comes with the collection of enriched butterfly maps $\check{\butterfly}_{n,\check{\Fext}}^m$, $n \leq m \leq -1$ associated to $\check{\Fext}$.
These objects admit all the properties described in Section \ref{sec:butterfly}.

\subsection{Constructing quasiconformal conjugacy}

Let $\Gbold$ be another neutral cascade with combinatorics $\tttheta'$ having the same backward tail as $\tttheta$ in the sense of (\ref{eqn:backward-equivalence}). 
Let $\gboldbar = \seq{g_n}_{n\in\Z}$ be a bi-infinite tower in $\overline{\towsec}$ corresponding to $\Gbold$.

This theorem is a variant of Theorem \ref{thm:qc-thurston-equivalence}.

\begin{theorem}
    There exist a universal constant $K>1$ and a sequence of $K$-quasiconformal maps $\{h_{-n} : \D \to \D\}_{n \geq 0}$ such that for any $n \geq 0$,
    \begin{enumerate}
        \item $h_{-n}$ sends the critical value of $f_{-n}$ to the critical value of $g_{-n}$;
        \item $h_{-n}$ sends $\check{Z}^{[m]}_{-n}(\fboldbar)$ onto $\check{Z}^{[m]}_{-n}(\gboldbar)$ for every $m \in \{-1,0,\ldots, n\}$;
        \item $h_{-n}$ is conformal on the interior of $\check{Z}^{[n]}_{-n}(\fboldbar)$;
        \item $h_{-n}$ is a conjugacy between the semigroups $\mathcal{F}_n|_{\check{Z}^{[n]}_{-n}(\fboldbar)}$ and $\mathcal{G}_n|_{\check{Z}^{[n]}_{-n}(\gboldbar)}$;
        \item for $m \geq 1$, $\psi_{n,\gboldbar} \circ h_n = h_{n+1} \circ \psi_{n,\fboldbar}$ on the sector $S_{n,1}(\fboldbar)$.
    \end{enumerate}
\end{theorem}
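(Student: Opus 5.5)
We follow the proof of Theorem \ref{thm:qc-thurston-equivalence} essentially verbatim. The only changes are the direction of the induction and the replacement of Mother Hedgehogs by Mother Flowers. The point of the modification $\check{Z}$ is that for each $n\geq 0$ the pinched disk $\check{Z}^{[n]}_{-n}(\fboldbar)$ projects under $\pphi_{-n}$ onto the Mother Flower $\check{\Hbold}(\Fbold)\cap\Sbold^{-n}$, which is completely invariant under $f_{-n}^{[n]}$. Moreover $\check{\Hbold}$ is built only from the dynamics parametrized by $\Tbold_{\Arch,0}$, that is, from the data $(\varepsilon_k,\abar_k)$ with $k\leq 1$. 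Hence under (\ref{eqn:backward-equivalence}) the combinatorics of all the objects entering the construction coincide for $\fboldbar$ and $\gboldbar$. The forward data beyond level $1$, which differ, are hidden inside the parabolic fixed point of $f_0$ (recall $\abar_1=\infty$), which lies in the interior of $\check{Z}^{[0]}_0$ by Theorem \ref{thm:beta-fixed-points}.

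The first step is to redo \S\ref{ss:renorm-tiling} for the flower-based disks. For $n\geq 0$ and $-1\leq m\leq n$, triangulate $\check{Z}^{[m]}_{-n}$ by pulling back the two-triangle triangulation of $\check{Z}^{[-1]}_{-n+m+1}$ under the sector gluing map and spreading it around by the first return (Lemma \ref{lem:triangulation-first-return}). At the bottom level $m=n$, the piece $\check{Z}^{[0]}_0\cap S_0^1$ is a petal-type region. Its triangulation is given only by the two internal rays/arcs $I_0,I_{-1}$ of the flower, since we do not refine inside $\check{Z}^{[0]}_0$. The diffeo-tilings of Theorem \ref{thm:diffeo-tiling} (4)--(5) are needed only for the fjord boundaries of levels $\leq 0$. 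These depend only on $\tttheta$ up to level $1$, and their bounded geometry passes to the limit unchanged. A version of Lemma \ref{lem:dividing-pair-extension} for $\check{Z}^{[-1]}_{0}$ follows as before, using perturbed Fatou coordinates in the near-parabolic case.

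Next we run the induction. For fixed $N\geq 0$, construct maps $\xi_{-n,m}$ for $0\leq n\leq N$ and $-1\leq m\leq n$. The base case $m=-1$ comes from matching the tilings $\mathscr{T}^{[-1]}$ by the unique quasisymmetric map and extending by the analogue of Lemma \ref{lem:dividing-pair-extension}. The step from $\xi_{-n+1,m-1}$ to $\xi_{-n,m}$ lifts via $\psi_{-n}$ and spreads around by $\mathcal{F}_{-n}$ up to first return, exactly as before. Both the dilatation bound and properties (a)--(e) are proved as in Theorem \ref{thm:qc-thurston-equivalence}.

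The new step, and the main obstacle, is the top case $m=n$. Here we must make $\xi_{0,0}$ a genuine conformal conjugacy on $\check{Z}^{[0]}_0$, rather than only matching tilings and then passing to a limit over deeper levels; there are no deeper levels to use. Our plan is to first build a conjugacy on $\textnormal{int}\,\check{Z}^{[0]}_0(\fboldbar)$. This interior is the immediate parabolic basin of $f_0$ together with its preimages under $\mathcal{F}_0$ for $\Tbold_{\Arch,0}$-times. Take the attracting Fatou coordinates of $f_0$ and $g_0$. They conjugate both maps to $z\mapsto z+1$ on the basin. Normalize them by sending the critical value to the same point, which is allowed since the $\Tbold_{\Arch,0}$-critical orbits correspond. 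Then $\Phi_{f}^{-1}\circ\Phi_g$ gives a conformal conjugacy on the immediate basin; spreading it by the dynamics defines the conjugacy on all of $\textnormal{int}\,\check{Z}^{[0]}_0$.

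The key check is that this conjugacy extends continuously to $\partial\check{Z}^{[0]}_0$ and agrees there with the tiling-matching quasisymmetric map. Both maps conjugate the dynamics on the boundary of the flower and send the critical orbit to the critical orbit, which is dense by Real Bounds (Theorem \ref{thm:R-APB}). The boundary is a uniform quasicircle with bounded-geometry tilings, which gives the continuity we need. The conformal piece can then be glued to the quasiconformal one outside, with uniform dilatation. Pulling this back along $\psi_{-n}$ gives (3) and (4) for all $n$ simultaneously, and a diagonal limit as $N\to\infty$ yields the sequence $h_{-n}$ satisfying (1)--(5).
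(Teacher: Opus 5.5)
Your architecture matches the paper's. At level $0$ you build a map that is a conformal conjugacy on the flower $\check Z_0^{[0]}$ and sends critical value to critical value. You extend it quasiconformally to $\D$ so that $\partial\check Z_0^{[-1]}(\fboldbar)$ goes to $\partial\check Z_0^{[-1]}(\gboldbar)$. Then $h_{-1},h_{-2},\ldots$ come from lifting through the gluing maps and spreading around as in Theorem~\ref{thm:qc-thurston-equivalence}.

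The gap is in how you carry out the top step. First, the Fatou-coordinate construction does not give the required map.
\begin{itemize}
\item The interior of $\check Z_0^{[0]}$ is not the immediate basin together with its preimages. It is a single completely invariant quasidisk, with the critical orbit and the parabolic point on its boundary. Theorem~\ref{thm:beta-fixed-points} puts $0$ on $\partial\hat Z_0^{[0]}$, not in the interior.
\item The attracting Fatou coordinate is not injective on the immediate basin, since the critical point lies there. So your composite of Fatou coordinates is not a conjugacy on the basin.
\item Even restricted to the flowers, the Fatou-coordinate images of $\check Z_0^{[0]}(\fboldbar)$ and $\check Z_0^{[0]}(\gboldbar)$ are different $1$-periodic regions. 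The composite therefore does not carry flower onto flower, as item (2) with $m=n=0$ requires.
\end{itemize}
What works is to uniformize the flower itself. The quotient of $\textnormal{int}\,\check Z_0^{[0]}$ by $f_0$ is a punctured disk, so the Riemann map conjugates $f_0$ to $z\mapsto z+1$ on $\UHP$. The conjugacy between the two translations is then $z\mapsto z+c$, with $c$ fixed by the critical values.

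Second, your gluing step rests on density of the critical orbit on $\partial\check Z_0^{[0]}$, and that is false here. At level $0$ the only dynamics coming from the common backward data is $f_0$ itself. The orbit $\{v^0_k\}_{k\in\Z}$ is a discrete subset of the flower boundary, accumulating only at the parabolic point. Theorem~\ref{thm:R-APB} gives density only through forward refinement, which is exactly where $\fboldbar$ and $\gboldbar$ differ. So no tiling-matching map on $\partial\check Z_0^{[0]}$ is uniquely determined: the tiles between consecutive $v^0_k$ are never refined. The same non-uniqueness affects your base case $m=-1$. Nothing forces such a map to agree with the boundary values of the conformal conjugacy, so the gluing is unjustified.

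The fix, which is what the paper does, has three parts.
\begin{itemize}
\item Define the map on $\partial\check Z_0^{[0]}$ as the boundary extension of the conformal conjugacy. It is quasisymmetric because both flowers are uniform quasidisks and the map is a translation in uniformizing coordinates.
\item Extend across the level-$0$ fjord to all of $\D$, using only the uniform quasiconformality of $\check Z_0^{[0]}$ and $\check Z_0^{[-1]}$.
\item Obtain $h_{-n}$ from this $h_0$ by $n$ lifts. It inherits its dilatation on $\check Z_{-n}^{[0]}$ and picks up only a fixed constant on the new fjords.
\end{itemize}
With $h_0$ fixed this way, your double induction and the diagonal limit over $N$ are unnecessary.
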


\begin{proof}
    In the interior of $\check{Z}_0^{[0]}(\fboldbar)$, the dynamics of $f_0$ is a simple parabolic one, conjugate to a translation of the upper half plane. 
    Thus, there exists a unique conjugacy $h_0: \check{Z}_0^{[0]}(\fboldbar) \to \check{Z}_0^{[0]}(\gboldbar)$ between $f_0$ and $g_0$ such that it is conformal on the interior and it sends the critical value of $f_0$ to the critical value of $g_0$.
    Using the uniform quasiconformality of both $\check{Z}_0^{[0]}(\fboldbar)$ and $\check{Z}_0^{[-1]}(\fboldbar)$, we extend $h_0$ to a uniformly quasiconformal map from $\D$ to $\D$ that sends $\partial \check{Z}_0^{[-1]}(\fboldbar)$ to $\partial \check{Z}_0^{[-1]}(\gboldbar)$.
    To construct $h_{-1}$, $h_{-2}$, $\ldots$, we lift and propagate in a way analogous to the proof of Theorem \ref{thm:qc-thurston-equivalence}.
\end{proof}

Similar to Lemma \ref{lem:qc-thurston-equivalence-02}, this theorem implies:

\begin{lemma}
    There exists a universal constant $K>1$ and a global $K$-quasiconformal map $\check{\Phi}_1: (\C,0) \to (\C,0)$ such that
    \begin{enumerate}
        \item for $m \leq 0$, $\check{\Phi}_1$ sends $\check{\Zbold}^m(\Fbold)$ onto $\check{\Zbold}^m(\Gbold)$;
        \item $\check{\Phi}_1$ is a conformal conjugacy between $\Fbold: \textnormal{int}(\check{\Hbold}(\Fbold)) \to \textnormal{int}(\check{\Hbold}(\Fbold))$ and $\Gbold: \textnormal{int}(\check{\Hbold}(\Gbold)) \to \textnormal{int}(\check{\Hbold}(\Gbold))$.
    \end{enumerate}
\end{lemma}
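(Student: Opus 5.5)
The plan is to mimic the proof of Lemma \ref{lem:qc-thurston-equivalence-02}, transferring the maps $h_{-n}$ of the preceding theorem from the tower planes to the cascade plane by means of the gluing maps $\pphi_{-n}$ of Proposition \ref{prop:trans-sectors}. For each $n\ge 0$ I would define
\[
\mathbf{h}_{-n} := \pphi_{-n,\Gbold}^{-1}\circ h_{-n}\circ \pphi_{-n,\Fbold}
\qquad \text{on } \check{\Zbold}(\Fbold)\cap \Sbold^{-n}(\Fbold),
\]
using that, by construction of the modified disks $\check Z^{[m]}_{-n}$, the image of $\check{\Zbold}^{[m-n]}\cap\Sbold^{-n}$ under $\pphi_{-n}$ is $\check Z^{[m]}_{-n}$ with the slit removed. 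These are $K$-quasiconformal with $K$ universal, and they fix $0$ because $h_{-n}$ sends critical values to critical values.

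Next I would check compatibility on overlaps. On $\Sbold^{-n+1}$, item (5) of the theorem together with Proposition \ref{prop:trans-sectors}(3) gives, exactly as in the displayed chain of equalities in the proof of Lemma \ref{lem:qc-thurston-equivalence-02}, that $\mathbf{h}_{-n+1}=\mathbf{h}_{-n}$. Since $\bigcup_n\Sbold^{-n}=\C$ by Proposition \ref{prop:trans-sectors}(1), the maps glue to a $K$-quasiconformal map $\check{\Zbold}(\Fbold)\to\check{\Zbold}(\Gbold)$. Property (1) then follows from item (2) of the theorem. For property (2), I would use that $\check{\Hbold}\cap\Sbold^{-n}$ projects to $\check Z^{[n]}_{-n}$ together with items (3) and (4): the lifts are conformal there and conjugate $\mathcal F_{-n}$ to $\mathcal G_{-n}$, which lifts to the conjugacy of $\Fbold$ and $\Gbold$ on $\check{\Hbold}$ by Proposition \ref{prop:trans-sectors}(6). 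Only $\Tbold_{\Arch,0}$ is involved here, and this semigroup is common to both combinatorics by \eqref{eqn:backward-equivalence}.

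Finally, because $\partial\check{\Zbold}(\Fbold)$ and $\partial\check{\Zbold}(\Gbold)$ are uniform quasi-lines, I would extend the glued map across the complement to a global $K$-quasiconformal homeomorphism. The main obstacle I expect lies in the overlap step. The index range is now one-sided ($n\ge 0$ only), so I need $\check{\Zbold}^{[m]}$ for every $m\le 0$ to be exhausted by the backward sectors $\Sbold^{-n}$. I also need the modified Mother Flower to project correctly under every $\pphi_{-n}$, even though it differs from $\Hbold$ inside parabolic petals. I would handle this by verifying $\pphi_{-n}(\check{\Zbold}^{[0]}[P]\cap\Sbold^{-n})\subset\check Z^{[n]}_{-n}$ directly from the definition via $\Fbold^{[0]}$-invariance, which is Lemma \ref{lem:modified-mother-hedgehog}.
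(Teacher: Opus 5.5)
Your proposal is correct and is the argument the paper intends: the paper proves this lemma only by remarking that it follows from the preceding theorem just as Lemma~\ref{lem:qc-thurston-equivalence-02} follows from Theorem~\ref{thm:qc-thurston-equivalence}, which is exactly your route of conjugating $h_{-n}$ through the gluing maps $\pphi_{-n}$, gluing on the nested sectors, and extending across the uniform quasi-line $\partial\check{\Zbold}$. The two obstacles you flag are harmless. The sectors $\Sbold^{-n}$, $n\ge 0$, increase and already exhaust $\C$ by Proposition~\ref{prop:trans-sectors}(1). The paper also defines the modified disks $\check Z^{[m]}_{-n}$ precisely as the sets corresponding to $\check{\Zbold}^{[m-n]}$ under $\pphi_{-n}$, so no separate verification is needed.
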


The map $\check{\Phi}_1$ above induces the $K$-quasiconformal map
\[
    \check{\Phi}_2 := \check{\Psi}_{\Gbold} \circ 
    \check{\Phi}_1|_{\C \backslash \check{\Hbold}(\Fbold)} \circ 
    \check{\Psi}_{\Fbold}^{-1} : \UHP \to \UHP
\]
in external coordinates.
Similar to Lemma \ref{lem:qs-rigidity}, we have:

\begin{lemma}
    The map $\check{\Phi}_2$ extends to a $K$-quasisymmetric homeomorphism $\check{\Phi}_2: (\R,0) \to (\R,0)$ conjugating $\check{\Fext}^P|_{\R}$ and $\check{\Gext}^P|_{\R}$ for all $P \in \Tbold_{\Arch,0}$.
\end{lemma}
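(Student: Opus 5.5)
The plan is to copy the proof of Lemma \ref{lem:qs-rigidity}, adapted to the Mother Flower. First, $\check{\Phi}_1$ is a global $K$-quasiconformal map. It sends $\check{\Hbold}(\Fbold)$ onto $\check{\Hbold}(\Gbold)$, since these are the level $0$ pseudo-Siegel flowers. Hence its restriction to $\C \backslash \check{\Hbold}(\Fbold)$ is a $K$-quasiconformal homeomorphism onto $\C\backslash\check{\Hbold}(\Gbold)$. Conjugating by the normalized Riemann maps $\check{\Psi}_{\Fbold}, \check{\Psi}_{\Gbold}$ gives a $K$-quasiconformal self-map $\check{\Phi}_2$ of $\UHP$. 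This map fixes $\infty$, and it fixes $0$ because $\check{\Phi}_1(0)=0$ and both Riemann maps send $0$ to $0$. By the standard Beurling--Ahlfors correspondence, it therefore extends to a quasisymmetric homeomorphism of $\R$ fixing $0$. The quasisymmetry constant depends only on $K$; after renaming the constant, this is the claimed $K$-quasisymmetric extension.

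Next I identify the boundary map on the critical orbit. The critical orbit $\{C_P\}$ for $P$ in the group generated by $\Tbold_{\Arch,0}$ lies on $\partial\check{\Hbold}(\Fbold)$. By the same argument as Proposition \ref{prop:trans-mother}(7), each such point is accessible from the complement, so the external points $\check{\crit}_P := \check{\Psi}_\Fbold(C_P)$ are well defined on $\R$. Since $\check{\Phi}_1$ is a homeomorphism sending $C_P(\Fbold)$ to $C_P(\Gbold)$, it maps accesses to accesses. Consequently $\check{\Phi}_2(\check{\crit}_{P,\Fbold})=\check{\crit}_{P,\Gbold}$ for all such $P$. I expect the equivariance in the preceding Lemma to make this immediate on the flower boundary: $\check{\Phi}_1$ conjugates $\Fbold^P$ to $\Gbold^P$ on $\check{\Hbold}$ for $P\in\Tbold_{\Arch,0}$, by the tower-level conjugacy in item (4) of the theorem. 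Because the combinatorics agree on this sub-semigroup under (\ref{eqn:backward-equivalence}), the critical orbits carry the same labels and the same space order on both sides.

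Finally I deduce conjugacy. For $P\in\Tbold_{\Arch,0}$, both $\check{\Fext}^P|_\R$ and $\check{\Gext}^P|_\R$ are orientation-preserving homeomorphisms; this is the Real Bounds analogue of Proposition \ref{prop:R-apb-transcendental} quoted above. They send $\check{\crit}_Q$ to $\check{\crit}_{Q+P}$. Hence $\check{\Phi}_2\circ\check{\Fext}^P=\check{\Gext}^P\circ\check{\Phi}_2$ holds on the critical orbit, and it will hold everywhere by continuity once density is established.

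The main obstacle is density of $\{\check{\crit}_P\}$ in $\R$ when the parameter ranges only over the Archimedean class generated by $\Tbold_{\Arch,0}$. Depth $0$ is parabolic, since $\abar_1=\infty$. In the original external coordinates the orbit under this sub-semigroup accumulates only at $\beta$ and misses the basin interval. The way around this should be the flower construction itself: in $\check{\Psi}$-coordinates, the parabolic basin of $\Fbold^{Q_0}$ is absorbed into $\check{\Hbold}$, so the gap disappears. I would verify density directly from the Real Bounds for $\check{\Fext}$, namely that the tiles of $\tiling_{Q_n}$ shrink to zero as $n\to-\infty$ within the rescaled picture. For the decisive step, I would show that no tile survives at depth $0$, because the interval $[\beta,\cdot]$ is collapsed into a single point of $\partial\check{\Hbold}$.
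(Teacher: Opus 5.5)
You follow the template the paper invokes (it only says the proof is as for Lemma~\ref{lem:qs-rigidity}). Your extension step and the matching $\check{\Phi}_2(\check{\crit}_{P,\Fbold})=\check{\crit}_{P,\Gbold}$ are fine. The gap is the density step. In $\check{\Psi}$-coordinates the orbit $\{\check{\crit}_P\}_{P\in\Kbold_{\Arch,0}}$ is \emph{not} dense, and the collapse you propose does not happen. By Proposition~\ref{prop:trans-sectors}(6), the first return of the cascade to $\Sbold^1$ is $(\Fbold^{Q_{[1]}},\Fbold^{\check{Q}_{[1]}})$, which lies outside the Archimedean class of $Q_0$. Hence each petal piece $\check{\Zbold}^{[0]}[P]$ of the flower contains exactly one orbit point, namely $C_P$. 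Yet its exposed boundary is a nondegenerate arc: the lift of the quasicircle $\partial\hat{Z}_1^{[-1]}$, with corners at $C_{P-Q_{[1]}}$ and $C_{P-\check{Q}_{[1]}}$, which are not in the orbit. Equivalently, in the attracting Fatou coordinate of $f_0$, the flower is the region above a $1$-periodic curve carrying exactly one critical-orbit point per period. Filling the level $\geq 1$ fjords collapses nothing. Nor does $\check{\Hbold}$ absorb the parabolic basin: it only contains the lift of $\hat{Z}_1^{[-1]}$, a neighborhood of one end of the \'Ecalle cylinder. So $\R$ is cut into nondegenerate intervals, each carrying a single $\check{\crit}_P$, and no tiling $\tiling_R$ with $R\in\Tbold_{\Arch,0}$ refines them. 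Note also that $\tiling_{Q_n}$ becomes coarser, not finer, as $n\to-\infty$.

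The repair uses a fact you mention but only apply to the critical orbit: $\check{\Phi}_1$ is a conjugacy on all of $\check{\Hbold}(\Fbold)$. It conjugates on $\textnormal{int}\,\check{\Hbold}(\Fbold)$, and $\check{\Hbold}$ is the closure of its interior, being a locally finite union of the closed pieces $\check{\Zbold}^{[0]}[P]$. By continuity, $\check{\Phi}_1\circ\Fbold^P=\Gbold^P\circ\check{\Phi}_1$ on $\partial\check{\Hbold}(\Fbold)$ for all $P\in\Tbold_{\Arch,0}$. The finite part of $\partial\check{\Hbold}$ is a chain of quasiarcs glued at corner points. Hence every real point outside a countable set corresponds to a single finite boundary point with a single access; the exceptional set consists of the prime ends given by the accesses to $\infty$ between petals. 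For such $x$, send $x$ to $z=\check{\Psi}_{\Fbold}^{-1}(x)$, apply the equivariance of $\check{\Phi}_1$ at $z$, and return through $\check{\Psi}_{\Gbold}$. This gives $\check{\Phi}_2\circ\check{\Fext}^P(x)=\check{\Gext}^P\circ\check{\Phi}_2(x)$, and continuity extends the identity to all of $\R$. No density of the critical orbit is needed.
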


For $n <0$, consider the butterfly domains 
$\check{\Uext}'_{n,\check{\Fext}}$, $\check{\Uext}''_{n,\check{\Fext}}$, $\check{\Wext}_{n,\check{\Fext}}$ associated to $\check{\Fext}$
as well as the collection of enriched butterfly maps $\check{\butterfly}_{n,\check{\Fext}}^m$, $n \leq m \leq -1$ associated to $\check{\Fext}$.
Similarly, we have 
$\check{\Uext}'_{n,\check{\Gext}}$, $\check{\Uext}''_{n,\check{\Gext}}$, $\check{\Wext}_{n,\check{\Gext}}$, and $\check{\butterfly}_{n,\check{\Gext}}^m$ associated to $\check{\Gext}$.

\begin{lemma}
    There exists an absolute constant $K \geq 1$ such that for every $n \in \Z$, there exists a $K$-quasiconformal map $\check{\Phi}_{2,n}: \UHP \to \UHP$ where
    \begin{enumerate}
        \item the continuous extension of $\check{\Phi}_{2,n}$ to the real line is equal to $\check{\Phi}_2: \R \to \R$;
        \item $\check{\Phi}_{2,n}$ sends the domains $\check{\Uext}'_{n,\check{\Fext}}$, $\check{\Uext}''_{n,\check{\Fext}}$, $\check{\Wext}_{n,\check{\Fext}}$ onto the domains $\check{\Uext}'_{n,\check{\Gext}}$, $\check{\Uext}''_{n,\check{\Gext}}$, $\check{\Wext}_{n,\check{\Gext}}$ respectively;
        \item $\check{\Phi}_{2,n}$ conjugates $\check{\Butterfly}_{n,\check{\Fext}}^{-1}$ with $\check{\Butterfly}_{n,\check{\Gext}}^{-1}$.
    \end{enumerate}
\end{lemma}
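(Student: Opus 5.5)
The plan is to rerun the pullback argument of \S\ref{ss:proof-main-lemma} verbatim in the flower setting, with $\check{\Phi}_2$ playing the role of $\Phi_2$ and with all enrichments truncated at depth $-1$. Fix $n<0$ and drop it from the notation. Call a quasiconformal self-map of $\UHP$ \emph{suitable} if its boundary values on $\R$ coincide with $\check{\Phi}_2$.

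\textbf{Step 1: equivariance on the boundary pseudo-bubbles.} Take the three floret pseudo-bubbles of generations $Q_n+Q_{n-1}$, $Q_n$ and $Q_n-Q_{n-1}$ that surround $\Dom(\check{\butterfly}_{\check{\Fext}})$. All three generations lie in $\Tbold_{\Arch,0}$ because $n<0$. Pulling $\check{\Phi}_1$ back along $\Fbold^P\circ\check{\Psi}_{\Fbold}$ and $\Gbold^P\circ\check{\Psi}_{\Gbold}$ gives a map $h_{-1}$ between these pseudo-bubbles. The map $h_{-1}$ is uniformly quasiconformal, because $\check{\Phi}_1$ is and the pseudo-bubbles are uniform quasidisks by Pseudo-Bubble Bounds for florets. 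It is also equivariant on the florets, since $\check{\Phi}_1$ conjugates the dynamics on $\check{\Hbold}$. This step matters because the dynamics on $\partial\check{\Hbold}$ is only a conjugacy of $\Fbold^P$ for $P\in\Tbold_{\Arch,0}$, and every generation used has to stay inside that semigroup.

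\textbf{Step 2: the initial map.} Choose a suitable $h_0$ sending $\parH\check{\Wext}_{\check{\Fext}}$ to $\parH\check{\Wext}_{\check{\Gext}}$; both are uniform quasiarcs meeting $\R$ at a definite angle. Lift $h_0$ through the two butterfly maps to obtain $h_1$ on the wings. On the three pseudo-bubbles, take $h_1=h_{-1}$. On the complementary region $\Omega$, interpolate quasiconformally. This interpolation requires the flower analogue of Proposition~\ref{prop:uniform-qc-bounded}: $\check{\Wext}\setminus\hat{\check{\Uext}}$ must be a uniform quasidisk. That analogue is exactly the content of the Floret-Butterfly Bounds, which transfer the arguments of Lemma~\ref{lem:disjointness-arcs}, Lemma~\ref{lem:primary-bubble-disjoint} and Proposition~\ref{prop:butterflies} using only Real Bounds and Pseudo-Bubble Bounds for $\check{\Fext}$.

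\textbf{Step 3: pullback.} Pull back by the two-to-one butterfly map to obtain suitable $K$-qc maps $h_j$. The limit $h_\infty$ conjugates the butterflies off the non-escaping set $\Lambda$.

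\textbf{Step 4: parabolic enrichments and the main obstacle.} Then treat parabolic enrichments one depth at a time, $s_1<s_2<\dots\le -1$ among depths in $[n,-1]$. At each $s_j$, repeat the Lavaurs-lift construction of \S\ref{ss:proof-main-lemma}: redefine the map on the immediate basin $\Dext_{s_j}$ via the maps $\check{\Fext}^{(\iinfty-k)Q_{s_j}}$, and spread it to the preimages of the basin using Lemma~\ref{lem:uniqueness-parabolic-cycle}.

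I expect this last step to be the main difficulty. Because the enrichment stops at depth $-1$, the final non-escaping set $\Lambda^{-1}$ is not nowhere dense: depth $0$ is parabolic ($\abar_1=\infty$), and Lemma~\ref{lem:nowhere-dense-02}(2) then says $\textnormal{int}\Lambda^{-1}$ consists of the iterated preimages of the parabolic basin attached to depth $0$. The plan is to show that these basins are precisely the lifts of $\textnormal{int}\,\check{\Hbold}$ under the butterfly maps. This should follow from the construction of the Mother Flower, because $\check{\Zbold}^{[0]}[0]$ fills the depth-$0$ parabolic basin; hence in the external coordinates of $\check{\Hbold}$ there should be no leftover basin. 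Granting this, $\textnormal{int}\Lambda^{-1}$ lies inside florets, and on florets equivariance is already guaranteed by $\check{\Phi}_1$ through $h_{-1}$. The remaining step is to check, via Theorems~\ref{thm:no-wandering-domains} and~\ref{thm:parabolic-fatou-set} applied to $\Tbold_{\Arch,0}$, that $\Lambda^{-1}$ contains no other interior. Continuity then extends the conjugacy to all of $\Dom$, which gives (3), while (1) and (2) hold by construction.
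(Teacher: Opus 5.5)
Your Steps 1--3 are the argument the paper intends. The paper gives no separate proof of this lemma, only the analogy with Lemma~\ref{lem:main-lemma}. You are also right that stopping the enrichment at depth $-1$ is exactly where the analogy breaks. Since $\abar_1=\infty$, depth $0$ is parabolic, so $\textnormal{int}\,\Lambda^{-1}$ consists of the preimages of the immediate basin $\check{\Dext}_{n,0}$, and the closing ``by continuity'' step of \S\ref{ss:proof-main-lemma} is unavailable.

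\textbf{The gap in Step 4.} Your way around this rests on a false claim: the Mother Flower does not fill this basin, and florets cannot contain $\textnormal{int}\,\Lambda^{-1}$.
\begin{itemize}
\item In $\check{\Psi}$-coordinates, $\check{\Fext}^{Q_0}$ still has a parabolic fixed point on $\R$, with a $\check{\Fext}^{Q_0}$-periodic floret chain landing at it. The Jordan domain they bound is a nonempty open set on which $\check{\Fext}^{Q_0}$ acts as an automorphism. This is precisely the structure behind the analogue of Lemma~\ref{lem:nowhere-dense-02}(2) that you invoke yourself.
\item None of its points lies in a floret. If $\Fbold^R$ sent such a point into $\check{\Hbold}$ for some $R\in\Tbold_{\Arch,0}$, then so would $\Fbold^{kQ_0}$ for any $kQ_0\ge R$. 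But $\check{\Fext}^{Q_0}$ preserves the basin. The same holds for the preimages of the basin.
\item The intuition that $\check{\Zbold}^{[0]}[0]$ fills the basin is wrong. Up to the almost-invariance correction, $\check{\Hbold}=\hat{\Zbold}^{[0]}=\Hbold\cup\bigcup_{m>0}\Psi^{-1}(\Fjord^{[m]})$ (Lemma~\ref{lem:fjord-pseudo-siegel-connection}). That is, it is the spread-around lift of $\hat{Z}_1^{[-1]}$, and it carries the critical orbit on its boundary.
\item In attracting Fatou coordinates of $f_0$, the flower occupies only a neighborhood of one end of the \'Ecalle cylinder. The imaginary part of the Fatou coordinate is invariant, so basin points near the other end never reach $\check{\Hbold}$.
\item In $\Psi$-coordinates the same picture appears. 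The added pieces are thin bumps along $\R$ (Lemma~\ref{lem:fjord-angle-control}). By contrast, $\Dext_0$ reaches up to the hoglet chain near the alpha-points (Proposition~\ref{prop:bubble-chain-01}). This leftover region is the ``basin of $\beta_N$'' part of the Fatou set, which Theorem~\ref{thm:parabolic-fatou-set} lists separately from the preimages of $\Hbold$.
\end{itemize}
Consequently, neither $h_{-1}$, nor the pullback, nor the depth-$\le -1$ Lavaurs lifts determine the map on $\check{\Dext}_{n,0}$.

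\textbf{What is needed.} You need a separate equivariant extension over this basin.
\begin{itemize}
\item After Step~3 and the depth-$\le -1$ resolutions, $h$ sends $\check{\Dext}_{n,0}(\check{\Fext})$ onto $\check{\Dext}_{n,0}(\check{\Gext})$ with equivariant boundary values. This is because $\partial\check{\Dext}_{n,0}$ consists of a real interval, where $h=\check{\Phi}_2$, and a floret chain of escaping points, where $h$ already conjugates.
\item Uniformize both basins by Riemann maps onto $\UHP$ that conjugate the return maps $\check{\Fext}^{Q_0}$ and $\check{\Gext}^{Q_0}$ to $w\mapsto w+1$. This is possible because they are automorphisms of the basins whose only fixed prime end is the parabolic point, as in Lemma~\ref{lem:parabolic-basin}.
\item The induced boundary map is quasisymmetric, with constant depending only on $K$, and commutes with $x\mapsto x+1$. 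Hence its Beurling--Ahlfors (or Douady--Earle) extension commutes with $w\mapsto w+1$ and is uniformly quasiconformal.
\item Transport this extension back and redefine $h$ on $\check{\Dext}_{n,0}$ by it. Then spread it to the other components of $\textnormal{int}\,\Lambda^{-1}$ via the maps $\rho_j$, as in \S\ref{ss:proof-main-lemma}. Only then does continuity give (3) on the whole domain.
\end{itemize}
Note that this extension is merely quasiconformal and is not inherited from $\check{\Phi}_1$. So, contrary to what your Step~4 suggests, no conformality on this part of the Fatou set comes for free.
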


Similar to Lemma \ref{lem:main-lemma}, this lemma implies the existence of a quasiconformal conjugacy $\check{\Phi}: \C \to \C$ between $(\Fbold^P)_{P \in \Tbold_{\Arch,0}}$ and $(\Gbold^P)_{P \in \Tbold_{\Arch,0}}$ that coincides with $\check{\Phi}_1$ on the Mother Flower $\check{\Hbold}(\Fbold)$.
Observe that $\check{\Phi}$ is conformal everywhere on the grand orbit of the interior of $\check{\Hbold}(\Fbold)$ under $(\Fbold^P)_{P \in \Tbold_{\Arch,0}}$.
Theorems \ref{thm:parabolic-fatou-set} and \ref{thm:NILF} imply that $\check{\Phi}$ is conformal almost everywhere, hence affine.
This concludes the proof of Theorem \ref{thm:parabolic-backward-rigidity}.

\subsection{Generalization}

Observe that the proof of Theorem \ref{thm:parabolic-backward-rigidity} does not involve any of the forward renormalization data at all.
In this subsection, we will formulate a general version of Theorem \ref{main-thm:backward-rigidity}.

Consider a backward tower of neutral renormalizations $\fbold = \seq{ f_n }_{n\leq 0}$ with backward combinatorics $\ttheta = \seq{ (\varepsilon_n, \abar_n) }_{n\leq 1}$.
Suppose that $f_0$ has a parabolic fixed point at $0$.
Then $\ttheta$ induces a natural time semigroup $\Tbold_{\Arch,0}$.
We will impose the following hypothesis.
\begin{enumerate}
    \item $f_0$ has an invariant Mother Flower $\check{H}$ where $\partial \check{H}$ contains the critical orbit and $\text{int} (\check{H})$ is contained in the basin of attraction of $0$.
    \item Each $f_n$ has pseudo-Siegel flowers $\check{Z}_n^{[m]}$ for $-1 \leq m \leq -n + N$ where $\check{H} = \check{Z}_0^{[N]}$ for some $N \geq 0$. 
    These objects are well-contained in the domain of $f_n$ and come with Pseudo-Siegel Bounds.
    \item $\fbold$ induces a $\sigma$-proper cascade $\Fbold = (\Fbold^P)_{P \in \Tbold_{\Arch,0}}$ with Mother Flower $\check{\Zbold}^{[N]} = \check{\Hbold}$ and pseudo-Siegel flowers $\check{\Zbold}^{[m]}$, $m \leq N$, where $\check{\Zbold} = \cup_{m\leq N} \check{\Zbold}^{[m]}$ is uniformly QC.
    \item $\Fbold$ satisfies Pseudo-Bubble Bounds for the preimages of $\check{\Zbold}$.
\end{enumerate}

\begin{theorem}[Rigidity of parabolic backward-infinite towers]
\label{thm:backward-rigidity-general}
    Let $\fbold = \seq{ f_n }_{n\leq 0}$ and $\gbold = \seq{ g_n }_{n\leq 0}$ be two backward towers. 
    Suppose that $f_0$ and $g_0$ have the same rational rotation number at $0$, that $\fbold$ and $\gbold$ have equivalent backward combinatorics, that they satisfy hypotheses $(1)$--$(4)$.
    Then, for all $n \leq 0$, $f_n$ and $g_n$ are conformally conjugate on a definite neighborhood of their Mother Flowers.
\end{theorem}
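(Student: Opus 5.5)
The plan is to run the proof of Theorem \ref{thm:parabolic-backward-rigidity} in the abstract setting given by hypotheses (1)--(4), and then push the resulting affine conjugacy of cascades down to the towers as in \S\ref{ss:two-main-theorems}. The central observation is that the argument in the previous subsections never uses forward renormalization data. It only uses four inputs: a uniformly QC top pseudo-Siegel flower, Pseudo-Bubble Bounds, Real Bounds, and the parabolic local dynamics of $f_0$ on $\textnormal{int}(\check H)$. So the first step is to fix cascades $\Fbold$ and $\Gbold$ from hypothesis (3), normalized so that $C_0=0$ and $C_{-Q_{[0]}}=\varepsilon_0$. For these cascades the semigroups $\Tbold_{\Arch,0}$ coincide, because the backward combinatorics agree.

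The second step is QC Thurston equivalence on the flowers. On $\textnormal{int}(\check H)$ the map $f_0$ is conjugate to translation in a parabolic Fatou coordinate, so there is a unique conformal conjugacy $h_0\colon \check Z_0^{[N]}(\fbold)\to \check Z_0^{[N]}(\gbold)$ matching critical values. This map extends continuously to $\partial\check H$, because the critical orbit is dense there; I would check this using the Real Bounds induced from (2)--(3).

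Next, extend $h_0$ across the nest $\check Z_0^{[m]}$, $-1\le m<N$. This uses the diffeo-tiling argument of Theorem \ref{thm:diffeo-tiling}, with tilings built from the pseudo-Siegel bounds in (2), and Lemma \ref{lem:dividing-pair-extension}. Then lift to $h_{-n}$ through the gluing maps $\psi_{-n}$ exactly as in the proof of Theorem \ref{thm:qc-thurston-equivalence}. Gluing via the sector charts as in Lemma \ref{lem:qc-thurston-equivalence-02} gives a global QC map $\check\Phi_1$. It is equivariant on $\check\Hbold$ and sends $\check\Zbold^{[m]}(\Fbold)$ to $\check\Zbold^{[m]}(\Gbold)$. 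Passing to external coordinates $\check\Psi$, it induces a quasisymmetric conjugacy $\check\Phi_2$ on $\R$, as in Lemma \ref{lem:qs-rigidity}.

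The third step is the butterfly pullback. Using (4) and the external Real Bounds, I would redo \S\ref{sec:bounds} and \S\ref{sec:butterfly} for $\check\Fext$ on the Archimedean class $\Tbold_{\Arch,0}$. This gives alpha-points, disjointness of primary pseudo-bubbles, boundedness of lakes (Proposition \ref{prop:lake-bounds-0}), and uniformly QC butterflies (Proposition \ref{prop:uniform-qc-bounded}) for every depth $n<0$, together with enrichments up to depth $-1$. The pullback argument of \S\ref{ss:proof-main-lemma} then produces conjugacies on $\check\Butterfly^{-1}_n$ with uniform dilatation. Spreading these around as in \S\ref{ss:proof-qc-rigidity} and letting $n\to-\infty$ yields a global QC conjugacy $\check\Phi$ on $\Tbold_{\Arch,0}$, and $\check\Phi$ is conformal on the grand orbit of $\textnormal{int}\check\Hbold$.

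The step I expect to be the main obstacle is upgrading $\check\Phi$ to an affine map. Theorems \ref{thm:parabolic-fatou-set} and \ref{thm:NILF} were proved for cascades arising from $\attr$, and their proofs use zero area of $\partial\Hbold$ (Proposition \ref{prop:zero-area-trans}). Here $\partial\check\Hbold$ plays that role, so I would try to prove that $\partial\check\Hbold$ has zero area directly. Every point of $\partial\check\Hbold$ away from the critical orbit is non-escaping for the Lavaurs-type maps, and Lebesgue density points can be ruled out by pulling back the uniform non-linearity of Lemma \ref{lem:non-linearity-1} to scales near the boundary; if needed, I would add this as a hypothesis. With zero area in hand, the expansion argument of Lemma \ref{lem:non-linearity-2} goes through within $\Tbold_{\Arch,0}$, so $\check\Phi$ is conformal almost everywhere and hence affine.

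The final step is to descend to the towers. Project $\check\Phi$ through the charts $\pphi_n$ for each $n\le 0$, modifying the domains near the sector sides inside pseudo-bubbles as in the proof of Theorem \ref{main-thm:rigidity-precise}. This yields conformal conjugacies between $f_n$ and $g_n$ on a definite neighborhood of their Mother Flowers, which is the claim of Theorem \ref{thm:backward-rigidity-general}.
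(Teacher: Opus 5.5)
Your route is the paper's route. The paper proves this statement only by remarking that the proof of Theorem~\ref{thm:parabolic-backward-rigidity} uses no forward data, and that proof follows the same sequence as yours: a Fatou-coordinate conjugacy $h_0$ on the flower, lifting to $\check{\Phi}_1$, the boundary map $\check{\Phi}_2$, floret butterflies enriched up to depth $-1$, spreading, Hairiness/NILF, and descent as in \S\ref{ss:two-main-theorems}.

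One small correction on the way. $h_0$ does not extend to $\partial\check{H}$ because the critical orbit is dense there: the $f_0$-orbit of the critical value accumulates only at the parabolic point, and $\partial\check{H}$ contains dam arcs. It extends because $\check{Z}^{[0]}_0$ is a quasidisk. In Riemann coordinates of the two interiors, where $f_0$ and $g_0$ both become $z\mapsto z+1$ on $\UHP$, $h_0$ is a real translation, so Carath\'eodory's theorem gives the extension.

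The genuine gap is in the affine upgrade, and it is not where you locate it.

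\begin{itemize}
\item \emph{Zero area is not the issue.} In the model case, $\partial\check{\Hbold}$ is covered by countably many conformal images of the quasicircle $\partial\check{Z}^{[0]}_0$, so it has zero area for free. Your density-point argument could not work in any case. $\check{\Hbold}$ lies in the lifted parabolic basin of $f_0$, i.e.\ in the Fatou set of $(\Fbold^P)_{P\in\Tbold_{\Arch,0}}$ (Theorem~\ref{thm:parabolic-fatou-set}). There the iterates form a normal family, so no small-scale non-linearity can be pulled back.

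\item \emph{The inference that fails.} The step ``conformal on the grand orbit of $\textnormal{int}\,\check{\Hbold}$ plus NILF implies conformal a.e.'' does not hold, because that Fatou set is strictly larger than the grand orbit. In the flower's external coordinates depth $0$ is parabolic, so $\check{\Fext}^{Q_0}$ has a parabolic point $\beta_0$ whose immediate basin $\Dext_0$ is bounded by a floret chain (Lemma~\ref{lem:parabolic-basin}). In the full cascade, the hoglets that sit inside $\Dext_0$ (the chains $\Gamma_{0,\iinfty-k}$ and their relatives) have generations involving $Q_1$, and $Q_1\notin\Tbold_{\Arch,0}$.

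\item \emph{What this leaves open.} $\Dext_0$ and its preimages are therefore open Fatou components avoiding the grand orbit of $\check{\Hbold}$. They are exactly $\textnormal{int}\,\Lambda^{-1}_n$ for $\check{\butterfly}^{-1}_n$ (Lemma~\ref{lem:nowhere-dense-02}(2) with $k=0$). On them the pullback does not determine $\check{\Phi}$, so a QC conjugacy must be inserted by hand. NILF says nothing there, since it concerns the Julia set. Lemma~\ref{lem:non-linearity-2} explicitly excludes the basin of $\beta_N$.

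\item \emph{This is not a technicality.} An affine $\check{\Phi}$ agreeing with $\check{\Phi}_1$ fixes $C_0$ and the normalizing point of the critical orbit, so it is the identity. That would force the two Archimedean sub-cascades to coincide and $\check{\Hbold}(\Fbold)=\check{\Hbold}(\Gbold)$. However, $\pphi_1$ carries the basic petal of the flower onto $\hat{Z}_1^{[-1]}$. So even for a fixed Archimedean sub-cascade, the flower is the lift of the pseudo-Siegel disk of a Lavaurs-type return map whose phase is fixed by the forward data. The hypotheses allow that data to differ, and for generic choices (e.g.\ golden versus silver forward rotation numbers) the two flowers are different.
\end{itemize}

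So some new input is needed at this step, for instance a conjugacy that is conformal on the entire lifted basin of $f_0$, which then cannot also be required to carry flower to flower. The paper's own sketch invokes Theorems~\ref{thm:parabolic-fatou-set} and~\ref{thm:NILF} at exactly this point, so you cannot import the missing argument from it.
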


We expect to apply this theorem in the upcoming hyperbolicity program.

\section{The Renormalization Horseshoe and further applications}
\label{sec:horseshoe}

In this final section, we prove Corollary \ref{main-corollary} and the uniform continuity of Mother Hedgehogs on the combinatorics, discuss how cylinder renormalization can be derived, and elaborate on a strategy to proving uniform hyperbolicity of renormalization.

\subsection{Uniform continuity}
\label{ss:uniform-continuity}

Let $\boldsymbol{\epsilon}>0$ be from Theorem \ref{main-thm:rigidity-precise}.
The notion of $\boldsymbol{\epsilon}$-conformal equivalence induces a closed equivalence relation on $\attr$ which we will denote by $\underset{\textnormal{conf}}{\sim}$.
The quotient
\[
    \Hors := \attr/_{\underset{\textnormal{conf}}{\sim}}
\]
is a well-defined compact Hausdorff space, which we will refer to as the 
\emph{full horseshoe} of \emph{neutral renormalization}.

Consider the map
    \[
        \textnormal{comb} : \attr \to \TheBiCpt, \qquad \fboldbar \mapsto \tttheta(\fboldbar),
    \]
which is a surjective continuous function by virtue of Proposition \ref{prop:continuity-of-combinatorics}.
The ``first isomorphism theorem'' for topological spaces, together with Theorem \ref{main-thm:rigidity-precise}, gives us:

\begin{corollary}
    The induced combinatorics map
    \[
    \Hors \to \TheBiCpt, \qquad [\fboldbar] \mapsto \tttheta(\fboldbar)
    \]
    is a homeomorphism.
\end{corollary}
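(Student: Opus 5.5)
The statement to prove is the last corollary above: the induced combinatorics map $\Hors \to \TheBiCpt$ is a homeomorphism. I would deduce it from a standard topological lemma: a continuous bijection from a compact space onto a Hausdorff space is a homeomorphism.

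First, well-definedness and continuity. The map $\textnormal{comb}:\attr\to\TheBiCpt$ is continuous. Indeed, by Proposition \ref{prop:continuity-of-combinatorics} the map $\ttheta:\overline{\towsec}\to\TheCpt$ is continuous, and $\tttheta(\fboldbar)$ is recovered from the forward combinatorics of the shifted towers $\textnormal{Proj}(\Rsec^{m}\fboldbar)$, $m\in\Z$. Each coordinate of the product $\TheBiCpt$ is therefore a composition of continuous maps, since $\Rsec$ and $\textnormal{Proj}$ are continuous on $\attr$. Combinatorics is an invariant of $\boldsymbol{\epsilon}$-conformal equivalence: a conformal conjugacy near the pseudo-Siegel disks preserves rotation numbers and the Lavaurs enrichment data of every $f_n$. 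Hence $\textnormal{comb}$ factors through the quotient. By the universal property of the quotient topology, the induced map is continuous.

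Second, bijectivity. Injectivity is exactly Theorem \ref{main-thm:rigidity-precise}: combinatorially equivalent towers are $\boldsymbol{\epsilon}$-conformally equivalent, so they define the same class. For surjectivity, take any $\tttheta\in\TheBiCpt$ and approximate it by irrational sequences. Concretely, choose $\theta_k$ so that the coordinates $n_k+j$ of $\mathfrak{X}^{-1}(\theta_k)$ match $\tttheta$ for $|j|\le k$, with $\abar$ replaced by large finite values at parabolic coordinates. Take limits of $\Rsec^{n_k+m}f_{\theta_k}$ using precompactness (Theorem \ref{thm:renorm-limits}) and a diagonal argument; this is as in Lemma \ref{lem:combinatorial-convergence} and the proof of Proposition \ref{prop:trans-sectors}. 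The limit is a tower in $\attr$, and by continuity of $\textnormal{comb}$ its combinatorics is $\tttheta$.

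Third, the topological conclusion. $\TheBiCpt$ is a product of compact metrizable spaces, so it is compact Hausdorff. The space $\attr$ is compact: it is a closed subset of a product of compact spaces built from $\overline{\towsec}$. Hence $\Hors$ is compact, and the continuous bijection $\Hors\to\TheBiCpt$ is a homeomorphism; in particular $\Hors$ is Hausdorff automatically.

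The main obstacle is the compactness of $\attr$ in its stated topology. I would handle it by embedding $\attr$ into $\prod_{m\in\Z}\overline{\towsec}$ via $\fboldbar\mapsto(\textnormal{Proj}\,\Rsec^m\fboldbar)_m$. The image is closed because it is cut out by the continuous shift-compatibility relations, and Tychonoff then gives compactness.
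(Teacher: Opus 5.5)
Your proof is correct and follows the paper's route. The paper likewise combines continuity and surjectivity of $\textnormal{comb}\colon \attr\to\TheBiCpt$ (Proposition~\ref{prop:continuity-of-combinatorics}), injectivity from Theorem~\ref{main-thm:rigidity-precise}, and the compact-to-Hausdorff ``first isomorphism theorem''; you additionally spell out surjectivity (by irrational approximation) and compactness of $\attr$ (by Tychonoff). Like the paper, you take for granted that $\boldsymbol{\epsilon}$-conformal equivalence preserves $\tttheta$, which the statement presupposes, but your one-line justification for it is thin: the $\phi_n$ only conjugate $f_n$ and $g_n$, not their Lavaurs maps, and equal multipliers do not determine the enrichment data.
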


This settles Corollary \ref{main-corollary}.
For any $\tttheta \in \TheBiCpt$, let $\Fbold_{\tttheta}$ denote the unique normalized neutral cascade with combinatorics $\tttheta$.

\begin{proposition}
\label{prop:uniform-continuity}
    The Mother Hedgehog $\Hbold(\Fbold_{\tttheta})$ varies continuously in $\tttheta \in \TheBiCpt$ in the Carath\'eodory topology.
\end{proposition}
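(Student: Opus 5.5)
The plan is a compactness-plus-uniqueness argument. Take a sequence $\tttheta_k \to \tttheta$ in $\TheBiCpt$ and let $\Fbold_k := \Fbold_{\tttheta_k}$. I will show that every subsequence has a further subsequence along which $\Hbold(\Fbold_k) \to \Hbold(\Fbold_{\tttheta})$ in the Carath\'eodory topology, with basepoint the normalized complement (equivalently, the normalized Riemann maps $\Psi_{\Fbold_k}$ converge). Since $\TheBiCpt$ is compact, continuity then upgrades to uniform continuity.

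\textbf{Step 1: realize each cascade by a tower.} By Corollary~\ref{main-corollary}, each $\Fbold_k$ comes from some bi-infinite tower $\fboldbar_k \in \attr$ with $\tttheta(\fboldbar_k)=\tttheta_k$. Since $\attr$ is compact, I pass to a subsequence with $\fboldbar_k \to \fboldbar$. By continuity of combinatorics (Proposition~\ref{prop:continuity-of-combinatorics}), $\tttheta(\fboldbar)=\tttheta$.

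\textbf{Step 2: pass to the limit in the cascade construction.} I would rerun the construction in Proposition~\ref{prop:trans-sectors} along the sequence. The rescaled sectors $\Sbold^m_k$ have conformal radii $\asymp |C_{Q_{[m]}}|$ with universal constants, by Lemma~\ref{lem:scaling-critical-value} and Proposition~\ref{prop:trans-sectors}(1). The gluing maps $\pphi_{m,k}$ form a normal family. After a diagonal subsequence, I expect $\pphi_{m,k}\to\pphi_m$ and $\Fbold_k^P\to\Fbold^P$ locally uniformly, where $\Fbold$ is a normalized neutral cascade with combinatorics $\tttheta$.

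There is a subtlety: when $\abar_{n}=\infty$ but $\abar_{k,n}<\infty$, the relevant times are $P_k^{\#}$-type approximations. This is exactly the parabolic-implosion convergence already built into Theorem~\ref{thm:renorm-limits}(5), so the same diagonal argument as in Theorem~\ref{thm:transcendental-extension} applies.

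\textbf{Step 3: identify the limit and conclude.} By Theorem~\ref{main-thm:cascades}, the limit satisfies $\Fbold=\Fbold_{\tttheta}$, since the normalization fixing $C_0=0$ and $C_{-Q_{[0]}}$ kills the affine freedom. For the hedgehogs, $\Hbold(\Fbold_k)\cap\Sbold^m_k = \pphi_{m,k}^{-1}(H_m(\fboldbar_k))$. Theorem~\ref{thm:renorm-limits}(6)(b) gives Carath\'eodory convergence of the complements of $H_m(\fboldbar_k)$, together with convergence of their normalized Riemann maps. Pulling this back by the converging $\pphi_{m,k}$ and exhausting $\C$ by $\Sbold^m$ as $m\to-\infty$ yields Carath\'eodory convergence of $\C\setminus\Hbold(\Fbold_k)$ to $\C\setminus\Hbold(\Fbold_\tttheta)$.

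\textbf{Main obstacle.} I expect the hardest point to be uniformity in $m$ of the sector exhaustion. The convergence on each fixed $\Sbold^m$ must glue into convergence on all of $\C$, and the $\Psi_k$ must not degenerate at $\infty$. I would control this with the uniform quasiconformality of $\hat{\Zbold}$ and of $\Psi$ away from fjords (Proposition~\ref{prop:trans-pseudo-siegel}(4), Corollary~\ref{cor:Psi-qc}), both of which are independent of $k$. These bounds make the family $\{\Psi_k\}$ precompact with nondegenerate limits, and the pointwise identification from the sectors then forces the limit to be $\Psi_{\Fbold_\tttheta}$.
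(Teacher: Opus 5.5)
Your proposal is correct and follows essentially the same route as the paper. Both realize each $\tttheta_k$ by a tower $\fboldbar_k\in\attr$, extract a limit tower with combinatorics $\tttheta$, use precompactness of the gluing maps $\pphi_{n,k}$ (via Theorem~\ref{thm:renorm-limits} and Proposition~\ref{prop:trans-sectors}) together with convergence of the tower hedgehogs $H_n(\fboldbar_k)$, and reassemble $\Hbold$ sector by sector through Proposition~\ref{prop:trans-mother}, with rigidity identifying the limit as $\Hbold(\Fbold_{\tttheta})$. The paper's proof only uses the convergence of the $\pphi_{n,k}$ and of the $H_n(\fboldbar_k)$; your Step~2 convergence of the maps $\Fbold_k^P$ and the extra quasiconformal control at $\infty$ are additional care rather than required ingredients.
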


The proof is a combination of pre-compactness and rigidity.

\begin{proof}
    Let $\{\tttheta_m\}_{m\geq 1}$ be a sequence in $\TheBiCpt$ converging to $\tttheta_\infty \in \TheBiCpt$.
    For $m \geq 1$, let $\fboldbar_m = \seq{ f_{n,m} }_{n\geq 1}$ be a bi-infinite tower in $\attr$ with combinatorics $\tttheta_m$.
    Up to subsequence, as $m \to \infty$, $\fboldbar_m$ converges to a limit $\fboldbar_\infty \in \attr$ and it must have combinatorics $\tttheta$.
    For $m \in \N$, consider the conformal gluing maps $\pphi_{n,m}: (\Sbold^n_m, 0) \to (\D, v_0(f_{n,m}))$ from the $n$\textsuperscript{th} sector of the neutral cascade
    $\Fbold_{\tttheta_m}$ to the dynamical plane of $f_{n,m}$.
    With respect to the Carath\'eodory topology of conformal maps between pointed disks, for all $n \in \Z$, the collection of maps $\{ \pphi_{n,m} \}_{m \in \N \cup \{\infty\}}$ is pre-compact by virtue of Theorem \ref{thm:renorm-limits} and Proposition \ref{prop:trans-sectors}.
    Hence, up to further subsequence, we have that for all $n \in \Z$, as $m \to \infty$,
    \begin{itemize}
        \item the conformal maps $\pphi_{n,m}$ converge to a conformal gluing map $\pphi_{n,\infty}: (\Sbold^n_\infty, 0) \to (\D, v_0(f_{n,m})$, and
        \item the $n$\textsuperscript{th} Mother Hedgehog $H_{n,m}$ of $\fboldbar_m$ converges to the Mother Hedgehog $H_{n,\infty}$ of $\fboldbar_\infty$.
    \end{itemize}
    By Theorem \ref{prop:trans-mother}, these imply that $\Hbold(\Fbold_{\tttheta_m})$ converges to $\Hbold(\Fbold_{\tttheta})$ as $m \to \infty$.
\end{proof}

\subsection{On the fixed point $\mathbf{b}_n$ and cylinder renormalization}
\label{ss:cylinder}

In an upcoming work, we will prove that for every $\tttheta \in \TheBiCpt$, there exists a bi-infinite neutral renormalization tower $\fboldbar = \seq{f_n}_{n \geq 1}$ with combinatorics $\tttheta$ such that for every $n \in \Z$,
\begin{enumerate}[label = \textnormal{(\alph*)}]
    \item \label{cyl-a} $f_n$ admits a fixed point $\mathbf{b}_n$ of $f_n$ such that
    \begin{itemize}
        \item if $n$ is a near-parabolic level, then $\mathbf{b}_n$ is the fixed point described in Theorem \ref{thm:beta-fixed-points}, and
        \item otherwise, $\mathbf{b}_n$ is outside of the top pseudo-Siegel disk $\hat{Z}_n$ of $f_n$ and $|f_n'(\mathbf{b}_n)|> 1+ \varepsilon$ for some uniform constant $\varepsilon > 0$;
    \end{itemize}
    \item \label{cyl-b} there exists a \emph{renormalization croissant} $\mathcal{C}_n$ that contains the critical value of $f_n$ and is made of an arc $\delta_n$ with ends $0$ and $\mathbf{b}_n$ and its image $f_n(\delta_n)$,
    \item \label{cyl-c} there exists a conformal gluing map $\psi_n : \mathcal{C}_n \to \C^*$ that projects 
    \begin{itemize}
        \item the first return map of $f_n$ back to $\mathcal{C}_n$ to the map $f_{n+1}$,
        \item the Mother Hedgehog $H_n$ of $f_n$ restricted to $\mathcal{C}_n$ onto the Mother Hedgehog $H_{n+1}$ of $f_{n+1}$,
        \item the zeroth pseudo-Siegel disk $\hat{Z}_n^{[0]}$ of $f_n$ restricted to $\mathcal{C}_n$ onto the top pseudo-Siegel disk $\hat{Z}_n^{[-1]}$ of $f_{n+1}$,
    \end{itemize}
    \item \label{cyl-d} there is a ``definite separation'' between $\mathcal{C}_n$ and the infinite slit of $\psi_{n-1}$ away from the fixed point at $0$, and
    \item \label{cyl-e} $f_n$ admits a pacman structure with uniform improvement of domain; compare with \cite{DLS}.
\end{enumerate}

Similar to our sector renormalization, the arc $\delta_n$ can be chosen to start with an internal ray of $H_n$. To go beyond, we perform the construction in the level of the external cascade $\Fext = (\Fext^P)_{P \in \Tbold}$.
Let us consider the increasing sequence $\{t_n\}_{n \in \Z}$ where $Q_{t_n} = Q_{[n]}$ for all $n$.
The fixed point $\mathbf{b}_n$ of $f_n$ corresponds to the point $\beta_{t_n}$ described in the end of \S\ref{ss:parabolic-levels}.
The point $\beta_{t_n}$ is ultimately the fixed point of the conformal map $\Fext^{[n]}: \lake_{Q_{[n]}} \to \UHP$ on the principal lake $\lake_{Q_{[n]}}$ of generation $Q_{[n]}$.
The construction of the outer part of the croissant will rely on the structure of principal lakes which we have worked out in \S\ref{ss:lake}.

The operation above is a refinement of sector renormalization called \emph{cylinder renormalization} $\renorm_{\textnormal{cyl}}$.
Unlike sector, the cylinder renormalization of a map is unique up to linear conjugacy.
Hence, such a bi-infinite tower is unique once we normalize some point, e.g. the critical value to be $1$.
Denote such normalized bi-infinite towers by $\fboldbar_{\tttheta} = \seq{ f_{n,\tttheta}}_{n\in\Z}$ for $\tttheta \in \TheBiCpt$; clearly, it satisfies $f_{n+1,\tttheta} = f_{n,\shift (\tttheta)}$ for all $n$.

For every $\tttheta \in \TheBiCpt$, let $U_{\tttheta}$ be a definite open disk neighborhood of the Mother Hedgehog $H_{\tttheta}$ of $f_{0,\tttheta}$.
For $\tau >0$, let $\mathcal{B}^*(\tttheta,\tau)$ be the space of holomorphic maps $g$ on a neighborhood of $U_{\tttheta}$ that have a neutral fixed point at $0$ with the same multiplier as $f_{0,\tttheta}$ and 
\[
\sup_{z \in U_{\tttheta}} |f(z)-g(z)| < \tau.
\]
It is naturally equipped with the sup norm, forming a natural Banach neighborhood of $f_{0,\tttheta}$.
We claim the neighborhoods $U_{\tttheta}$ can be chosen such that the following properties hold.
\begin{itemize}
    \item For sufficiently small $\tau>0$, 
    \[
        \mathcal{B}^*(\tau) = \bigsqcup_{\tttheta \in \TheBiCpt} \mathcal{B}^*_{\tttheta}(\tau)
    \]
    is a trivial Banach analytic bundle over $\TheBiCpt$.
    \item There exist uniform constants $\ttau > \ttau' > 0$ such that the cylinder renormalization operator $\renorm_{\textnormal{cyl}}: f_{0,\tttheta} \mapsto f_{0,\shift(\tttheta)}$ extends to a compact analytic operator from $\mathcal{B}^*(\tttheta,\ttau')$ into $\mathcal{B}^*(\shift(\tttheta),\ttau)$.
\end{itemize}
Together, these properties imply that the cylinder renormalization operator acting on $\Hors$ extends to a compact analytic skew product
\[
    \renorm_{\textnormal{cyl}}: \mathcal{B}^*(\ttau') \to \mathcal{B}^*(\ttau).
\]
The proof of these claims will be supplied in detail in a follow-up paper.
Ultimately, it follows from the uniform continuity of Mother Hedgehogs (Proposition \ref{prop:uniform-continuity}).

\subsection{The $\boldsymbol\alpha'$-hyperbolicity}\label{ss:alpha'.hyperb}

Consider a neutral cascade $\Fbold = (\Fbold^{P})_{P \in \Tbold}$ and the corresponding external cascade $\Fext = (\Fext^P)_{P \in \Tbold}$. 
Also, let $\{Q_{[n]}\}_{n\in\Z}$ be the generators of the time semigroup $\Tbold$.
This subsection discusses how alpha-points $\boldsymbol{\alpha}_{Q_{[n]}}$ of $\Fbold$ associated to principal primary hoglets come with some uniform hyperbolicity.
This property is expected to have applications in the future hyperbolicity program when perturbations of maps are explored; see \S\ref{ss:uniform-hyperbolicity}.

\begin{proposition}
    There exist a uniform constant $\boldsymbol{\lambda}>1$ (independent of $\Fbold$) such that for all $n \in \Z$, we have a definite expansion of $\Fext^{Q_{[n]}-Q_{[n-1]}}$ at the alpha-point $\alpha_{Q_{[n]}}$ with respect to the hyperbolic metric of $\UHP$:
    \[
        \| (\Fext^{Q_{[n]}-Q_{[n-1]}})'(\alpha_{Q_{[n]}}) \| \geq \boldsymbol{\lambda}.
    \]
\end{proposition}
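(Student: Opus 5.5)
The plan is to reduce the expansion at $\alpha_{Q_{[n]}}$ to the Schwarz--Pick principle for a univalent map between a primary lake and $\UHP$. The key geometric input is that the lake touches $\alpha_{Q_{[n]}}$ on its boundary but stays within bounded hyperbolic distance of it. Write $R := Q_{[n]}-Q_{[n-1]}\in \Tbold$.

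First, I would identify the map. Since $\Fbold^{Q_{[n-1]}}$ sends the hoglet $\Bbold_{Q_{[n]}}$ onto $\Bbold_{R}$, the alpha-points satisfy $\Fext^{Q_{[n-1]}}(\alpha_{Q_{[n]}}) = \alpha_{R}$. The point $\alpha_{Q_{[n]}}$ itself is not in $\Dom(\Fext^{Q_{[n]}})$, since it lies in $\Iext^{\leq Q_{[n]}}$ by Corollary~\ref{cor:alpha-points}. However, because $R<Q_{[n]}$, it lies in the domain of $\Fext^{R}$. Indeed, $\alpha_{Q_{[n]}}$ is the point of $\partial\Bubb_{Q_{[n]}}$ escaping at exactly time $Q_{[n]}$, and by Lemma~\ref{lem:crit-points} and the construction of pseudo-bubbles, $\Fext^{R}$ is defined and univalent near $\hat{\Bubb}_{Q_{[n]}}\setminus\{\crit_{-Q_{[n]}}\}$.

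Let $\mathtt{L}$ be the lake of generation $R$ containing $\alpha_{Q_{[n]}}$, so that $\Fext^{R}\colon \mathtt{L}\to\UHP$ is a conformal isomorphism. The inverse branch $g\colon\UHP\to\mathtt{L}$ is then a hyperbolic isometry onto $\mathtt{L}$. Consequently, $\|(\Fext^R)'(\alpha_{Q_{[n]}})\|_{\UHP}$ equals the ratio $\rho_{\mathtt{L}}/\rho_{\UHP}$ at $\alpha_{Q_{[n]}}$. By \cite[Theorem 2.25]{McM94}, this ratio is bounded below by a constant $\boldsymbol{\lambda}>1$ depending only on an upper bound for $\dist_{\UHP}(\alpha_{Q_{[n]}},\UHP\setminus\mathtt{L})$. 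This is exactly the mechanism already used in the proof of Lemma~\ref{lem:non-linearity-2}.

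So the whole statement reduces to one estimate: $\dist_{\UHP}(\alpha_{Q_{[n]}},\partial\mathtt{L}) = O(1)$, uniformly in $n$ and $\Fbold$. To get it, I would observe that $\Fext^{R}(\alpha_{Q_{[n]}}) = \Fext^{Q_{[n]}}(\cdot)$ up to the hoglet $\Bubb_{Q_{[n]}}$. Concretely, $\mathtt{L}$ cannot contain the hoglet $\Bubb_{Q_{[n]}}\ni\alpha_{Q_{[n]}}$'s part mapped by $\Fext^{R}$ into $\Bubb_{Q_{[n-1]}}$. That part is the hoglet chain piece $\Fext^{R}$-preimage of $\Bubb_{Q_{[n-1]}}$ attached to $\Bubb_{Q_{[n]}}$ near $\alpha_{Q_{[n]}}$, namely a secondary hoglet $\Gamma$ of generation $Q_{[n]}+Q_{[n-1]}$ rooted on $\Bubb_{Q_{[n]}}$. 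I would then locate $\Gamma$: by the analysis of Proposition~\ref{prop:bubble-chain-01}(1), Corollary~\ref{cor:location-of-alpha} and Pseudo-Bubble Bounds (Proposition~\ref{prop:bubble-bounds}(3)), secondary pseudo-bubbles attached to a primary hoglet at the appropriate combinatorial position lie within uniformly bounded hyperbolic distance of the corresponding alpha-point. Alternatively, I would use Proposition~\ref{prop:lake-bounds-1} applied at the appropriate depth (converting $Q_{[n]}$ to $Q_{t_n}$ via \S\ref{sss:conversion}), which says the boundary of the relevant primary lake near $\alpha_{Q_{[n]}}$ is contained in $\D_\UHP(\alpha_{Q_{[n]}},K)$, together with the fact that $\alpha$ lies on $\partial$ of that lake while $\mathtt{L}$ is a sublake. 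Either route gives a uniform $K$ with $\partial\mathtt{L}\cap\D_\UHP(\alpha_{Q_{[n]}},K)\neq\emptyset$, and hence $\boldsymbol{\lambda}=\boldsymbol{\lambda}(K)>1$.

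The main obstacle is making this boundary estimate rigorous: checking the combinatorics that $\alpha_{Q_{[n]}}\in\mathtt{L}$, that the obstructing hoglet or escaping point is genuinely in $\partial\mathtt{L}$, and that the bound is uniform even at near-parabolic and parabolic levels. Fjords could in principle interfere there. I would handle those levels using Lemma~\ref{lem:main-bubble-disjoint-fjord} and Lemma~\ref{lem:estimate-enriched-pseudo-bubble-chain}: they ensure that the relevant pseudo-bubbles near $\alpha_{Q_{[n]}}$ avoid $\Fjord$, so the bounded-distance estimate survives in the limit.
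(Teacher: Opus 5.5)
Your skeleton matches the paper's proof. With $R=Q_{[n]}-Q_{[n-1]}$, the map $\Fext^{R}$ sends the lake $\mathtt{L}\ni\alpha_{Q_{[n]}}$ of generation $R$ conformally onto $\UHP$, and $\Fext^{R}(\alpha_{Q_{[n]}})=\alpha_{Q_{[n-1]}}$. By \cite[Theorem 2.25]{McM94}, everything then reduces to a uniform bound $\dist_{\UHP}(\alpha_{Q_{[n]}},\UHP\setminus\mathtt{L})=O(1)$.

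The gap is that neither of your candidate obstacles lies outside $\mathtt{L}$. A point of $\UHP$ is excluded from $\Dom(\Fext^{R})$ only if it lies in $\Iext^{\leq R}$ or in a hoglet of generation at most $R$.
\begin{itemize}
\item The secondary hoglet $\Gamma$ of generation $Q_{[n]}+Q_{[n-1]}>R$ rooted on $\Bubb_{Q_{[n]}}$ is mapped by $\Fext^{R}$ onto a secondary hoglet of generation $2Q_{[n-1]}$ rooted on $\Bubb_{Q_{[n-1]}}$.
\item $\Bubb_{Q_{[n]}}$ itself is mapped onto $\Bubb_{Q_{[n-1]}}\subset\UHP$.
\end{itemize}
So $\Bubb_{Q_{[n]}}\cup\Gamma$ is connected, contained in $\Dom(\Fext^{R})$, and contains $\alpha_{Q_{[n]}}$. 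It therefore lies inside $\mathtt{L}$ and obstructs nothing.

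Your alternative through Proposition~\ref{prop:lake-bounds-1} has the inclusion backwards. Lakes of larger generation sit inside lakes of smaller generation, so the lake of generation $Q_{[n]}$ having $\alpha_{Q_{[n]}}$ on its boundary is contained in $\mathtt{L}$. Moreover $\alpha_{Q_{[n]}}$ is an interior point of $\mathtt{L}$. Boundary information about that smaller lake therefore gives no bound on $\dist_{\UHP}(\alpha_{Q_{[n]}},\partial\mathtt{L})$.

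What you need is an obstacle of generation exactly $R$. The paper uses the primary hoglet $\Bubb_{R}=\Bubb_{Q_{[n]}-Q_{[n-1]}}$, rooted at $\crit_{-R}=\crit_{-Q_{[n]}+Q_{[n-1]}}$. Since $\Fext^{R}$ sends it into the Mother Hedgehog (out of $\UHP$), it is disjoint from every lake of generation $R$. Its position relative to $\alpha_{Q_{[n]}}$ is controlled as follows.
\begin{itemize}
\item By Real Bounds, together with the size control in Pseudo-Bubble Bounds, $\crit_{-R}$ lies at distance comparable to $\diam\Bubb_{Q_{[n]}}$ from $\crit_{-Q_{[n]}}$.
\item By Pseudo-Bubble Bounds, $\Bubb_{R}$ rises from $\crit_{-R}$ at a definite angle to a height comparable to that same scale.
\item By Corollary~\ref{cor:location-of-alpha}, $\alpha_{Q_{[n]}}$ sits at comparable height above $\crit_{-Q_{[n]}}$, with horizontal offset bounded by a multiple of that scale.
\end{itemize}
Hence $\dist_{\UHP}(\alpha_{Q_{[n]}},\Bubb_{R})=O(1)$ uniformly. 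This bounds $\dist_{\UHP}(\alpha_{Q_{[n]}},\UHP\setminus\mathtt{L})$, and the Schwarz Lemma yields $\boldsymbol{\lambda}>1$. No separate treatment of fjords is needed, since fjords are not obstructions to $\Dom(\Fext^{R})$.
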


A variant of this proposition was already used in Claim 3 in the proof of Proposition \ref{prop:lake-bounds-0}.
We provide the proof again for the sake of completeness.

\begin{proof}
    The alpha-point $\alpha_{Q_{[n]}}$ is contained in the principal lake $\lake_{Q_{[n]}-Q_{[n-1]}}$ of generation $Q_{[n]}-Q_{[n-1]}$, which avoids the principal hoglet $\Bubb_{Q_{[n]}-Q_{[n-1]}}$.
    The hyperbolic distance between $\alpha_{Q_{[n]}}$ and $\Bubb_{Q_{[n]}-Q_{[n-1]}}$ is uniformly bounded above by virtue of Real Bounds, Pseudo-Bubble Bounds, and Corollary \ref{cor:location-of-alpha}.
    Then, the proposition follows from Schwarz Lemma applied to the map $\Fext^{Q_{[n]}-Q_{[n-1]}}: \lake^{Q_{[n]}-Q_{[n-1]}} \to \UHP$.
\end{proof}

Here is a quantitative version of the proposition in the dynamical plane $\Fbold$.

\begin{proposition}
    There exist uniform constants $\mathbf{r}>0$ and $\boldsymbol{\epsilon}>0$ such that for any $r \in (0,\mathbf{r})$, there exist two sequences of open disks $\{ D_n \}_{n \in \Z}$ and $\{ D'_n \}_{n \in \Z}$ where for every $n \in \Z$,
    \begin{enumerate}
        \item $\aalpha_{Q_{[n]}} \in D_n \Subset D'_n$;
        \item $(D'_n, \aalpha_{Q_{[n]}})$ is a pointed disk in $\C \backslash \UHP$ with uniformly bounded shape and outer radius $\asymp r |C_{Q_{[n]}}|$;
        \item $\modu(D'_n \backslash D_n) \geq \boldsymbol{\epsilon}$;
        \item $\Fbold^{Q_{[n]}-Q_{[n-1]}}$ sends $(D_n, \aalpha_{Q_{[n]}})$ conformally onto $(D'_{n-1}, \aalpha_{Q_{[n-1]}})$.
    \end{enumerate}
\end{proposition}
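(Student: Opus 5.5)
The plan is to transfer the external-coordinate expansion of the previous proposition back to the $\Fbold$-plane, using the uniform quasiconformality of $\Psi$ on the complement of $\hat{\Zbold}$ together with Koebe distortion. Set $T_n := Q_{[n]}-Q_{[n-1]}$. The key inputs are the following.
\begin{itemize}
\item Corollary~\ref{cor:location-of-alpha} gives $|\alpha_{Q_{[n]}}-\crit_{-Q_{[n]}}|\asymp \diam(\Bubb_{Q_{[n]}})$ and places $\alpha_{Q_{[n]}}$ at definite angle above the root. So in the hyperbolic metric of $\UHP$, $\alpha_{Q_{[n]}}$ sits at bounded distance from a point of height $\asymp|\crit_{-Q_{[n]}}|$.
\item By Pseudo-Bubble Bounds and Corollary~\ref{cor:Psi-qc}, the Euclidean scale around $\aalpha_{Q_{[n]}}$ in the $\Fbold$-plane is $\asymp|C_{Q_{[n]}}|$.
\item The map $\Fext^{T_n}\colon\lake_{T_n}\to\UHP$ sends $\alpha_{Q_{[n]}}$ to $\alpha_{Q_{[n-1]}}$, since $\Fbold^{T_n}$ maps $\Bbold_{Q_{[n]}}$ onto $\Bbold_{Q_{[n-1]}}$ and alpha-points are the unique escaping boundary points (Corollary~\ref{cor:alpha-points}).
\end{itemize}

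\textbf{Construction of the disks.} Fix $r$ small. In external coordinates, let $\mathtt{D}'_{n-1}:=\D_{\UHP}(\alpha_{Q_{[n-1]}},\rho(r))$, with $\rho(r)$ chosen so that the Euclidean radius is $\asymp r\,|\crit_{-Q_{[n-1]}}|$. Let $\mathtt{D}_n$ be its univalent lift under $\Fext^{T_n}\colon \lake_{T_n}\to\UHP$ containing $\alpha_{Q_{[n]}}$. Then set $D'_n:=\Psi^{-1}(\mathtt{D}'_n)$ and $D_n:=\Psi^{-1}(\mathtt{D}_n)$, so that (4) holds by construction.

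\textbf{Expansion gives (1) and (3).} As in the preceding proposition, $\alpha_{Q_{[n]}}$ is at bounded hyperbolic distance from $\partial\lake_{T_n}$ (via the hoglet $\Bubb_{T_n}$). By Schwarz--Pick, the inverse branch therefore contracts $\D_{\UHP}(\alpha_{Q_{[n-1]}},\rho)$ by a uniform factor $\boldsymbol{\lambda}^{-1}<1$ for $\rho\le \rho_0$. Hence $\mathtt{D}_n\subset \D_{\UHP}(\alpha_{Q_{[n]}},\rho/\boldsymbol{\lambda}+o(1))\Subset \mathtt{D}'_n$, which yields a definite annulus. Since $\Psi^{-1}$ is $K$-qc on the relevant region (Corollary~\ref{cor:Psi-qc}), the modulus bound transfers with $\boldsymbol{\epsilon}$ depending only on $\boldsymbol{\lambda}$ and $K$. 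This gives (1) and (3).

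\textbf{Shape and radius give (2).} Bounded shape of $D'_n$ follows from the qc-ness of $\Psi$ and the roundness of $\mathtt{D}'_n$. The outer radius $\asymp r|C_{Q_{[n]}}|$ follows by comparing $|\crit_{-Q_{[n]}}|$ with $|C_{Q_{[n]}}|$ via Real Bounds, Lemma~\ref{lem:scaling-critical-value}, and the qc distortion of $\Psi$ at the scale of $\Bbold_{Q_{[n]}}$ (Proposition~\ref{prop:bubble-bounds}).

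\textbf{Main obstacle.} The difficulty is making sure that $\Psi^{-1}$ really is uniformly qc on $\mathtt{D}'_n$, that is, that $\mathtt{D}'_n$ avoids $\Fjord$ and $\hat{\Zbold}$. Near $\alpha_{Q_{[n]}}$ the hoglet can be wild, and the local geometry there encodes the anti-renormalization history (Remark~\ref{rem:anti.renorm.history}).

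To control this, I would first choose $\mathbf r$ so small that $\mathtt{D}'_n$ stays in a bounded hyperbolic neighbourhood of $\alpha_{Q_{[n]}}$. By angular control (Lemma~\ref{lem:fjord-angle-control}) with $\nnu<\ttau$, such a neighbourhood is disjoint from fjords. Failing that, I would instead pull back the qc extension $\hat F$ of $\Fbold^{Q_{[n]}}$ from Corollary~\ref{cor:alpha-points}, which sends $\aalpha_{Q_{[n]}}$ to $\infty$. Under $\hat F$, disks around $\aalpha$ correspond to neighbourhoods of $\infty$ relative to $\hat{\Zbold}$, which are uniformly controlled by Proposition~\ref{prop:trans-pseudo-siegel}(4).
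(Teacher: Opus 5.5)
Your construction is the paper's. Because $\Psi\colon\C\setminus\Hbold\to\UHP$ is a conformal isomorphism, your $D'_n=\Psi^{-1}(\D_{\UHP}(\alpha_{Q_{[n]}},\rho))$ is exactly a hyperbolic disk of $\C\setminus\Hbold$ centred at $\aalpha_{Q_{[n]}}$. Your $D_n$ is its pullback under $\Fbold^{Q_{[n]}-Q_{[n-1]}}$, and (1), (3), (4) come from the uniform expansion of the preceding proposition via Schwarz--Pick, just as in the paper. Your indexing, which pulls back $D'_{n-1}$, is the one consistent with item (4); the paper's proof has an index slip there. In the containment you write $\rho/\boldsymbol\lambda+o(1)$; this should be multiplicative, i.e. $\mathtt D_n\subset\D_{\UHP}(\alpha_{Q_{[n]}},\rho/\boldsymbol\lambda')$ with a uniform $\boldsymbol\lambda'>1$ for all $\rho\le\mathbf r$.

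The genuine problem is that you treat $\Psi^{-1}$ as merely quasiconformal. It is conformal on all of $\UHP$; Corollary~\ref{cor:Psi-qc} only concerns extending $\Psi$ across $\hat\Zbold$. So your ``main obstacle'' (keeping $\mathtt D'_n$ off $\Fjord$ and $\hat\Zbold$) does not exist. The modulus in (3) transfers with no loss, and the wild geometry of the hoglet near $\aalpha_{Q_{[n]}}$ is irrelevant, since the disks are hyperbolic disks of $\C\setminus\Hbold$.

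Worse, the qc route would not prove (2). A $K$-qc map distorts small relative scales only H\"older-continuously; the radial stretch $z\mapsto z|z|^{1/K-1}$ sends a disk of radius $r$ to one of radius $r^{1/K}$. So qc-ness alone yields an outer radius somewhere between $c\,r^{K}|C_{Q_{[n]}}|$ and $C\,r^{1/K}|C_{Q_{[n]}}|$, not $\asymp r|C_{Q_{[n]}}|$ with constants independent of $r$.

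What you need instead is Koebe distortion for the conformal map $\Psi^{-1}$ on, say, $\D_{\UHP}(\alpha_{Q_{[n]}},1)$. Equivalently, use that the hyperbolic density of the simply connected domain $\C\setminus\Hbold$ is comparable to $1/\dist(\cdot,\Hbold)$. For $r<\mathbf r$ this gives bounded shape and outer radius $\asymp r\,\dist(\aalpha_{Q_{[n]}},\Hbold)$. It then remains only to show $\dist(\aalpha_{Q_{[n]}},\Hbold)\asymp|C_{Q_{[n]}}|$, which is what the paper extracts from Corollary~\ref{cor:location-of-alpha}. That single-scale comparison is the only place where quasiconformality of the extension of $\Psi$ can legitimately enter.

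Your fallback via the extension $\hat F$ of Corollary~\ref{cor:alpha-points} should be dropped. It is again only qc, and it agrees with $\Fbold^{Q_{[n]}}$ only on the pseudo-bubble.
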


\begin{proof}
    For $n \in \Z$, we define $D'_n$ to be the round disk with radius $r$ and center $\aalpha_{Q_{[n]}}$ with respect to the hyperbolic metric of $\C \backslash \Hbold$, and $D_n$ to be the lift of $D'_{n+1}$ under $\Fbold^{Q_{[n+1]}-Q_{[n]}}$ about the point $\aalpha_{Q_{[n]}}$.
    Properties (1), (3), and (4) follow immediately from the previous proposition if we take $r$ to be uniformly sufficiently small.
    The only property left to check is that the size of $D'_n$ is $\asymp r |C_{Q_{[n]}}|$.
    This follows from Corollary \ref{cor:location-of-alpha} which guarantees that the distance from the alpha-point $\aalpha_{Q_{[n]}}$ to $\Hbold$ is $\asymp |C_{Q_{[n]}}|$.
\end{proof}

Let $\seq{f_n}_{n \in \Z}$ be the bi-infinite tower associated to $\Fbold$ that is described in \S\ref{ss:cylinder}.
Let $\psi_n$'s be the associated gluing maps described in \ref{cyl-c}.
For every $n \in \Z$, in the dynamical plane of $f_n$, we expect to be able to locate the expected preimage of $0$, which we will denote by $\alpha'_n$.
Each $\alpha'_n$ does not exist in the domain of $f_n$ because the point $\aalpha_{Q_{[n]}}$ is on the boundary of $\Dom(\Fbold^{[n]})$.
However, we claim that the disks $D_n$ and $D_n'$ around $\aalpha_{Q_{[n]}}$ can be transferred to disks $E_n$ and $E_n'$ living on the dynamical plane of the $f_n$ with the property that for all $n \in \Z$,
\begin{itemize}
    \item $E_n'$ well-contains $E_n$ and lives in the dynamical plane of $f_n$,
    \item $E_{n+1}$ lifts under $\psi_n$ to a disk which we will simply denote by $\psi_{n}^{-1}(E_{n+1})$, and 
    \item $f_n^{\qq^n_{[1]}-1}$ is univalent on $\psi_{n}^{-1}(E_{n+1})$ and its image is equal to $E'_n$.
\end{itemize}
As $n \to \infty$, the disks
\[
    \psi_{-1} \circ \left( f_{-1}^{\qq^{-1}_{[1]}-1} \right)^{-1} \circ \ldots \circ \psi_{-n+1} \circ \left( f_{-n+1}^{\qq^{-n+1}_{[1]}-1} \right)^{-1}\circ \psi_{-n} \circ \left( f_{-n}^{\qq^n_{[1]}-1} \right)^{-1}(E_{-n})
\]
shrink to a singleton, which is the desired point $\alpha'_0$.

\subsection{Uniform hyperbolicity of renormalization}
\label{ss:uniform-hyperbolicity}

In this final subsection, we propose a strategy to prove the uniform hyperbolicity of the full horseshoe of neutral renormalization.
The goal is to show that the cylinder renormalization skew product $\renorm_{\textnormal{cyl}}: \mathcal{B}^*(\ttau') \to \mathcal{B}^*(\ttau)$ 
is uniformly contracting on fibers.
\begin{enumerate}
    \item Suppose for a contradiction that the top Lyapunov exponent $\boldsymbol{\chi}$ is non-negative.
    Apply an adaptation of the Oseledets-Pesin Theory to Banach analytic bundle maps to show that there exist some $\tttheta \in \TheBiCpt$ and some infinite backward orbit 
    \[
    g_{n} \in \mathcal{B}^*_{\shift^{n}(\tttheta)}(\ttau'), \quad n \leq 0
    \]
    shadowing the bi-infinite orbit $\fboldbar = \seq{f_n = f_{n,\tttheta}}_{n \in \Z}$ in the horseshoe.
    \item Argue that the backward orbit $\{g_n\}_{n\leq 0}$ admits the structure of a transcendental cascade $\mathbf{G}$, a commutative semigroup of $\sigma$-proper maps onto $\C$ parametrized by the sub-semigroup $\Tbold_{\Arch,0}$ of $\Tbold_{\tttheta}$.
    In the periodic-type setting, this step is called the Key Lemma in \cite{DLS}.
    In our setting, the general Key Lemma should follow from Hoglet Butterfly Bounds together with the following observation. 
    The disks $E_n$ and $E'_n$ associated to $f_n$ that are described in the previous subsection persist under perturbation.
    In particular, in the dynamical plane of $g_n$ for $n \leq 0$, these disks will recover the alpha-points $\alpha'_n(g_n)$ of $g_n$.
    \item By a slight perturbation, assume that both $f_0$ and $g_0$ are parabolic with the same rotation number. 
    Ultimately, we would want to apply Theorem \ref{thm:backward-rigidity-general} to $\seq{g_n}_{n \leq 0}$ and $\seq{f_n}_{n \leq 0}$ to obtain a contradiction.
    To do this, we argue that $\mathbf{G}$ has a controlled Mother Flower with Pseudo-Siegel Bounds and Pseudo-Bubble Bounds.
\end{enumerate}
We aim to realize this strategy in the near future.
Uniform hyperbolicity is expected to have strong consequences such as the universal scaling law of the Mandelbrot set along the main cardioid, extending the periodic-type parameter self-similarity in \cite{DLS}.


\bibliographystyle{alpha}
 

\end{document}